\definecolor{aliceblue}{rgb}{0.9, 0.95, 1.0}
\numberwithin{equation}{section}
\newcommand\Z{{\mathbb Z}}
\newcommand\R{{\mathbb R}}
\newcommand\Q{{\mathbb Q}}
\newcommand{\C}{{\mathbb C}}
\newcommand{\sorr}{\mathrm{SO}(2,\R)}
\newcommand{\spz}{\mathrm{Sp}(2g,\Z)}
\newcommand{\glplus}{\mathrm{GL}^+(2,\mathbb{R})}
\newcommand{\cp}{\mathbb{C}\mathrm{\mathbf{P}}^1}
\newcommand{\modul}{\textnormal{Mod}(S_{g,n})}
\newcommand{\cext}{\mathcal{C}_{\textnormal{ext}}}
\newcommand{\homolz}{\text{H}_1(X,\mathbb{Z})}
\newcommand{\homolzz}{\textnormal{H}_1(X,\mathbb{Z})}
\newcommand{\shomolzn}{\textnormal{H}_1(S_{g,n},\mathbb{Z})}
\newcommand{\shomolzo}{\textnormal{H}_1(S_{g,1},\mathbb{Z})}
\newcommand{\shomolzon}{\textnormal{H}_1(S_{1,\,n},\mathbb{Z})}
\newcommand{\shomolzoo}{\textnormal{H}_1(S_{1,1},\mathbb{Z})}
\newcommand{\shomolznps}{\textnormal{H}_1(S_{0,n},\mathbb{Z})}
\newcommand{\shomolz}{\textnormal{H}_1(S_{g},\mathbb{Z})}
\theoremstyle{plain}                    
\newtheorem{thm}{Theorem}[section]
\newtheorem{thma}{Theorem}
\newtheorem{cora}[thma]{Corollary}
\newtheorem{lem}[thm]{Lemma}
\newtheorem{prop}[thm]{Proposition}
\newtheorem{cor}[thm]{Corollary}
\newtheorem*{thmnn}{Theorem}
\theoremstyle{definition}
\newtheorem{defn}[thm]{Definition}
\newtheorem{ex}[thm]{Example}
\newtheorem{rmk}[thm]{Remark}
\newcommand*{\defeq}{\mathrel{\vcenter{\baselineskip0.5ex \lineskiplimit0pt
                     \hbox{\scriptsize.}\hbox{\scriptsize.}}}%
                     =}
\tikzstyle{rb} = [rectangle, rounded corners, minimum width=3cm, minimum height=1cm, text width=3cm, text centered, draw=black, fill=blue!30]
\tikzstyle{sb} = [rectangle, minimum width=2cm, minimum height=1cm, text width=3cm, text centered, draw=black, fill=violet!30]
\title[Period realization of meromorphic differentials with prescribed invariants]{Period realization of meromorphic differentials with prescribed invariants}
\author{Dawei Chen}
\address[Dawei Chen]{Department of Mathematics, Boston College, Chestnut Hill, MA 02467, USA}
\email{dawei.chen@bc.edu}
\author{Gianluca Faraco}
\address[Gianluca Faraco]{Dipartimento di Matematica e Applicazioni U5, Universita` degli Studi di Milano-Bicocca, Via Cozzi 55, 20125 Milano, Italy}
\email{gianluca.faraco@unimib.it}
\date{\today}
\begin{document}

\keywords{}
\subjclass[]{}%
\dedicatory{}

\begin{abstract}
We provide a complete description of realizable period representations for meromorphic differentials on Riemann surfaces with prescribed orders of zeros and poles, hyperelliptic structure, and spin parity. 
\end{abstract}

\maketitle
\tableofcontents

\section{Introduction} 

\noindent Let $S_{g,n}$ be the connected and oriented surface of genus $g$ and with $n$ punctures, and let $\mathcal{M}_{g,n}$ be the moduli space of unmarked complex structures on $S_{g,n}$. For a complex structure $X\in\mathcal{M}_{g,n}$, let $\Omega(X)$ denote the space of holomorphic abelian differentials with at most finite-order poles at the punctures that we refer as \textit{meromorphic differentials} on $X$. Let $\Omega\mathcal{M}_{g,n}$ denote the moduli space of pairs $(X,\omega)$, where $X$ is a punctured Riemann surface and $\omega\in\Omega(X)$ is an abelian differential on $X$. Such a moduli space admits a natural stratification, given by the strata $\mathcal{H}_g(m_1,\dots,m_k; -p_1,\dots,-p_n)$, enumerated by unordered partitions of $2g-2$ where we allow negative integers corresponding to the orders of poles. A stratum $\mathcal{H}_g(m_1,\dots,m_k; -p_1,\dots,-p_n)$ consists of equivalent classes of pairs $(X,\omega)$ where $\omega$ has $k$ zeros and $n$ poles of orders $m_1,\dots, m_k, -p_1,\dots,-p_n$ respectively. A stratum of differentials can be disconnected and in fact it can have at most three connected components according to some additional properties like the presence of a \textit{hyperelliptic involution} or a topological invariant known as \textit{spin structure}. For holomorphic differentials on compact Riemann surfaces the connected components of the strata have been classified by Kontsevich-Zorich in \cite{KZ} and subsequently by Boissy in \cite{BC} in the case of meromorphic differentials. 

\medskip 

\noindent The \textit{period character} of an abelian differential $\omega$ on a compact Riemann surface $X$ is defined as the homomorphism
\begin{equation}
    \chi\colon\homolz\longrightarrow \mathbb{C} \,\, \text{ such that } \,\, \gamma\longmapsto\int_\gamma \omega,
\end{equation}
\noindent and the \textit{period map} is the association 
\begin{equation}\label{permap}
    \text{Per}\colon\Omega\mathcal{M}_{g,n}\longrightarrow \text{Hom}\Big(\shomolzn,\,\mathbb{C}\,\Big) 
\end{equation}
mapping an abelian differential $\omega$ to its character. For holomorphic differentials on compact Riemann surfaces (\textit{i.e.} $n=0$) the image of the period map has originally been studied by Haupt in \cite{OH} and subsequently rediscovered by Kapovich in \cite{KM2} by using Ratner's theory. It turns out that there are two obstructions for a representation $\chi$ to be realized as the period character of an abelian differential which we shall refer in the following as \textit{Haupt's conditions}, see Section \S\ref{agv}. Realizing a representation as a character in a prescribed stratum turns out to be a more subtle problem because, in the realizing process, the orders of zeros can no longer be ignored. In the same spirit of \cite{KM2}, Le Fils has provided in \cite{fils} necessary and sufficient conditions for a representation $\chi$ to be a character in a given stratum. Around the same time, Bainbridge-Johnson-Judge-Park in \cite{BJJP} have provided, with an independent and alternative approach, necessary and sufficient conditions for a representation to be realized in a connected component of a prescribed stratum. 

\smallskip

\noindent For meromorphic differentials on compact Riemann surfaces, equivalently holomorphic differentials on punctured complex curves (\textit{i.e.} $n\ge1$), the image of the period mapping has been recently determined by Chenakkod-Faraco-Gupta in \cite[Theorem A]{CFG}. In this case no obstructions appear for realizing a representation $\chi$ as the period character of some meromorphic differential, \textit{i.e.} the period map \eqref{permap} turns out to be surjective. Moreover, they have described necessary and sufficient conditions for realizing $\chi$ as the period of some $(X,\omega)$ in a prescribed stratum $\mathcal{H}_g(m_1,\dots,m_k;-p_1,\dots,-p_n)$, see \cite[Theorems B, C, D]{CFG}. Sometimes, when necessary, we shall adopt “exponential” notation to denote multiple zeros or poles, \textit{e.g.} $\mathcal H_1(2,2;\,-2,-2)=\mathcal H_1(2^2;\,-2^2)$.

\smallskip 

\noindent The aim of the present paper is to extend the above study of the period map \eqref{permap} to connected components of the strata of meromorphic differentials whenever they are not connected. Our first result states as follows.

\begin{thma}\label{mainthm}
Let $\chi\colon\homolzz\longrightarrow\mathbb C$ be a non-trivial representation arising as the period character of some meromorphic genus-$g$ differential in a stratum $\mathcal{H}_g(m_1,\dots,m_k;-p_1,\dots,-p_n)$. Then $\chi$ can be realized in each of its connected components. 
\end{thma}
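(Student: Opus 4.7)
The plan is to start from a single realization $(X_0,\omega_0)$ of $\chi$ in some component of $\mathcal{H}_g(m_1,\dots,m_k;-p_1,\dots,-p_n)$ and produce, from it, a realization of $\chi$ in every other connected component of the stratum. According to Boissy's classification, the distinct components of a meromorphic stratum are detected by two invariants: a hyperelliptic flag, present only on a very restricted list of strata (those with at most two zeros and at most two poles of equal order), and a spin parity, present when every $m_i$ and every $p_j$ is even. The proof therefore naturally splits into two independent tasks: flipping the spin parity while keeping $\chi$ fixed, and moving in or out of the hyperelliptic locus while keeping $\chi$ fixed.

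For the change of spin parity, the strategy is a local, period-preserving surgery. The idea is to isolate an embedded topological disk $U\subset X_0$ on which $\omega_0$ has a standard planar model carrying two prescribed singularities (two zeros or, when necessary, a zero and a pole of appropriate order), with boundary identifications that introduce no new homologically nontrivial cycle. One then replaces $U$ by a combinatorially different disk with the same boundary data, so that every period of a cycle lying outside $U$ is unchanged but the contribution of $U$ to the Arf invariant flips. Such local flips already appear in \cite{KZ} in the holomorphic case and have been adapted by Boissy in \cite{BC} to the meromorphic setting; all that is needed here is the geometric flexibility to place $U$ inside the given realization, which can always be arranged by working in a small standard chart around a chosen marked point.

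For the crossing between the hyperelliptic and non-hyperelliptic components, a local surgery does not suffice because the hyperelliptic locus has positive real codimension in its ambient component. Instead, we build the hyperelliptic realization directly. Every stratum in Boissy's hyperelliptic list has a rigid symmetric shape, and its hyperelliptic differentials admit polygon presentations in which the hyperelliptic involution acts as a central rotation by $\pi$. Using the symplectic basis on which $\chi$ is prescribed, one reads off the complex side lengths and assembles a centrally symmetric polygon whose side identifications realize $\chi$; the CFG realization procedure and obstructions serve as a template for this assembly, the new ingredient being that all choices must respect the central symmetry. In the opposite direction, starting from a hyperelliptic realization we apply a period-preserving surgery supported away from the Weierstrass points in order to break the involution and land in the non-hyperelliptic component.

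The main obstacle is this second task. Case by case, for each stratum on Boissy's hyperelliptic list, one must show that the centrally symmetric polygon model can be tuned so that its periods match any $\chi$ known to arise in that stratum; this amounts to reconciling the rigid combinatorics forced by the involution with the Haupt-type constraints already singled out in \cite{CFG}, and will occupy the bulk of the argument. The spin-parity task is more uniform but still requires a modest case analysis depending on whether the convenient singularities available in $X_0$ are zeros or poles, and on whether the surgery needs to be localized near a pair of zeros, a pair of poles, or a mixed pair.
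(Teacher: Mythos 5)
Your decomposition of the problem into ``flip the spin parity'' and ``enter/leave the hyperelliptic locus'' misses the largest and most delicate part of the theorem: the genus-one case. For $g=1$, Boissy's classification does not use spin or hyperellipticity at all; the components of $\mathcal{H}_1(m_1,\dots,m_k;-p_1,\dots,-p_n)$ are indexed by the positive divisors of $\gcd(m_1,\dots,m_k,p_1,\dots,p_n)$ (the \emph{rotation number}), so a stratum can have arbitrarily many components. Your opening claim that components are detected by a hyperelliptic flag and a spin parity is therefore false in genus one, and nothing in your plan addresses how to realize a prescribed non-trivial $\chi$ with a prescribed rotation number. In the paper this is Proposition \ref{prop:genusone}, proved by an extensive case-by-case construction (Section \S\ref{genusonemero}), and it is not optional even if one only cares about $g\ge 2$: the higher-genus spin argument of Section \S\ref{sec:hgcpar} is an induction whose base case is precisely a genus-one differential with rotation number $1$ or $2$, the parity of the final surface being determined by that rotation number via Table \ref{tab:indexes}.

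The second gap is your spin-parity step. You propose a local, period-preserving surgery inside a disk $U$ that flips the Arf invariant, citing \cite{KZ} and \cite{BC}. But the relevant statement in \cite{KZ} (their Lemma 11) concerns how bubbling a handle after breaking a zero changes the parity — it is not a topology- and period-preserving local flip — and the paper explicitly distinguishes its own bubbling from that of \cite{KZ} for exactly this reason (Remark \ref{rmk:spininv}). Since the parity is constant on components, any such surgery must produce a surface in a different component while keeping all $2g$ absolute periods and all residues fixed and keeping the zero/pole orders fixed; asserting that ``the geometric flexibility to place $U$'' can ``always be arranged'' is precisely the content that needs proof, and you give no mechanism for it. The paper avoids this entirely: it never flips the parity of an existing surface, but instead builds a surface of each parity from scratch by choosing the rotation number of the genus-one base case and then bubbling handles, having first shown (Lemmas \ref{lem:spininv} and \ref{lem:spininv2}) that bubbling preserves parity. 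Your hyperelliptic step, by contrast, is essentially the paper's: centrally symmetric polygon/half-plane presentations on which the involution acts by rotation by $\pi$ (Section \S\ref{sec:hgchyp}), together with an asymmetric reassembly to reach the non-hyperelliptic component in genus two — though your justification for why a direct construction is needed (``positive real codimension'') is off, since the hyperelliptic locus in these strata is an entire connected component, not a thin sublocus.
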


\noindent The trivial representation is the homomorphism $\chi\colon\homolzz\longrightarrow \mathbb C$ such that $\chi(\gamma)=0$ for all $\gamma\in \homolzz$. Our second result handles this special representation which is exceptional as follows.

\begin{thma}\label{thm:mainthm2}
Suppose the trivial representation arises as the period character of some genus-$g$ meromorphic differential in a stratum $\mathcal{H}_g(m_1,\dots,m_k;-p_1,\dots,-p_n)$. Then $\chi$ can be realized in each of its connected components with the only exceptions being the strata:
\begin{itemize}
    \item[1.] $\mathcal{H}_1(3,3;\,-3,-3)$,
    \smallskip
    \item[2.] $\mathcal H_g(2^{g+2};\,-2^3)\,$, for $\,g\ge1$,
    \smallskip
    \item[3.] $\mathcal H_g(2^{g+1};\,-4)\,$, for $\,g\ge1$.
\end{itemize}
\noindent Moreover in these exceptional strata, for $g=1$ these strata exhibit two connected components, one of which is primitive and the other is not. The trivial representation can only be realized in the non-primitive component. For $g\ge2$, the strata $\mathcal H_g(2^{g+2};\,-2^3)$ and $\mathcal H_g(2^{g+1};\,-4)$ have two connected components distinguished by the spin parity. The trivial representation can only be realized in the connected component with parity equal to $g\,(\textnormal{mod}\,2)$.
\end{thma}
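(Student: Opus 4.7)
The plan is to recast the trivial representation condition as a branched-cover condition on $\cpp$, then analyze the connected components via Boissy's classification.

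\textbf{Reduction.} A meromorphic differential $\omega$ on $X$ has trivial period character if and only if it is globally exact, equivalently $\omega = f^*(dz)$ for a meromorphic function $f\colon X\to\cpp$. Under this identification a zero of $\omega$ of order $m_i$ corresponds to a critical point of $f$ of local degree $m_i+1$, and a pole of $\omega$ of order $p_j$ (necessarily $p_j\ge 2$, since $\chi\equiv 0$ forces all residues to vanish) corresponds to a pole of $f$ of order $p_j-1$. Realizing $\chi\equiv 0$ in a connected component $\mathcal{C}$ of a stratum thus amounts to constructing a branched cover $f\colon X\to\cpp$ of the prescribed ramification profile such that $(X,f^*dz)\in\mathcal{C}$.

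\textbf{Non-exceptional strata.} For each stratum not in the exceptional list I would go through Boissy's components and produce one realization in each. Hyperelliptic components are reached by choosing $f$ invariant under a hyperelliptic involution, so that $f$ factors through the hyperelliptic quotient. For spin components (strata with all even orders) one achieves both parities by a local surgery at a pair of critical points of $f$: excise a disc and reglue a different local model with the same boundary monodromy but the opposite local spin contribution. Away from the exceptional list the combinatorial flexibility of the stratum is ample for such surgeries. For genus-$1$ strata whose components are distinguished by the primitive versus non-primitive invariant, realizations are built by arranging $f$ to factor, or not, through an intermediate cover of $\cpp$.

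\textbf{Exceptional strata.} In each exceptional stratum, Riemann--Hurwitz together with the monodromy constraints pins $f$ to a connected family whose differentials all lie in a single component. For $\mathcal{H}_1(3,3;-3,-3)$ the map $f$ has degree $4$, with monodromy of cycle types $(2,2)$ over $\infty$ and $(4)$ over each of the two zero-images; the subgroup of $S_4$ generated by such permutations is the dihedral group $D_4$, which is imprimitive (it preserves the partition $\{\{1,3\},\{2,4\}\}$), so $f$ always factors through a degree-$2$ intermediate cover, placing $(X,f^*dz)$ in the non-primitive component. For $\mathcal{H}_g(2^{g+2};-2^3)$ and $\mathcal{H}_g(2^{g+1};-4)$ the map $f$ has degree $3$ with monodromy $(3)$ at every ramified point; since the monodromy group is generated by $3$-cycles it equals $\mathbb{Z}/3$, so $f$ is a cyclic triple cover of $\cpp$. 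The corresponding Hurwitz space is connected, so the relevant invariant is constant over the entire family; a direct local computation identifies it as spin parity $g\pmod 2$ (for $g\ge 2$) or places the differential in the non-primitive component (for $g=1$).

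\textbf{Main obstacle.} The technical heart of the argument is the local Arf/spin computation for the cyclic triple covers, which must be shown to give exactly $g\pmod 2$ by summing the local contributions at the $2g+3$ (resp.\ $2g+2$) ramification points and matching them to Boissy's invariant. The parallel matching of the genus-$1$ computation to Boissy's primitive/non-primitive invariant requires a separate verification. Combined with the surgery argument for the non-exceptional case, these computations pin down the exact list of exceptions and the specific components realizing the trivial representation.
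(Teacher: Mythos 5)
Your overall framework---identifying exact differentials with derivatives of rational functions $f\colon X\to\cp$ and using the monodromy of the resulting branched cover to obstruct realization in certain components---is indeed the mechanism the paper uses for the non-realizability half of the theorem. But there are two genuine gaps. First, the realizability half is not actually argued. The claim that ``the combinatorial flexibility of the stratum is ample'' for a disc-excision surgery flipping the local spin contribution is precisely what must be proved, and it is where essentially all of the work lies: the paper runs an induction on genus built on bubbling handles with trivial periods, preceded by a reduction algorithm on the zero orders and a long case-by-case construction of genus-one exact differentials with prescribed rotation number (including delicate sporadic strata such as $\mathcal H_1(2h,2h,2;-2^{2h-1},-4)$, $\mathcal H_1(2,4,4;-2^5)$, $\mathcal H_1(4,4,4;-2^6)$). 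Your proposal gives no criterion for when the surgery exists, and hence cannot distinguish the exceptional strata from the non-exceptional ones on the constructive side.

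Second, and more seriously, the argument for the exceptional strata $\mathcal H_g(2^{g+2};-2^3)$ and $\mathcal H_g(2^{g+1};-4)$ rests on a false premise: the Hurwitz space of these cyclic triple covers is \emph{not} connected in general. Each totally ramified point carries monodromy $c=(123)$ or $c^{-1}$, and the number $k$ of points with monodromy $c$ (subject to $2k\equiv g+2 \pmod 3$) is a deformation invariant splitting the Hurwitz space into several components; for instance $g=4$ gives two components, $k=0$ and $k=3$. So ``the invariant is constant because the family is connected'' does not go through; one must show that \emph{every} component of the Hurwitz space maps into the same spin component. The paper does this by degenerating each component, in the sense of admissible covers, to a cover glued from a genus-$(g-1)$ piece and a genus-one piece at a separating node, and then invoking additivity of the Arf invariant on curves of compact type together with induction on $g$. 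A purely local summation at the branch points, as you propose, would at minimum have to be shown independent of $k$. (Two smaller slips: for $\mathcal H_1(3,3;-3,-3)$ the monodromy group forced by two $4$-cycles multiplying to a $(2,2)$-element is $\Z/4$, not $D_4$---imprimitive either way, but you still need to connect imprimitivity of $f$ to Boissy's torsion-number invariant, which the paper does directly via the linear equivalence $Z_1+Z_2\sim P_1+P_2$; and the covers in the higher-genus exceptional strata have $g+2$ branch points, not $2g+3$ or $2g+2$.)
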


\noindent Besides strata, there is another type of subspaces in $\Omega\mathcal{M}_{g,n}$ which is worth studying, called {\em isoperiodic leaves}, defined as the (non-empty) fibres of the period map in \eqref{permap}. As an immediate corollary of our main Theorems \ref{mainthm} and \ref{thm:mainthm2}, we obtain the following result. 

\begin{cora}
For a non-trivial representation its isoperiodic leaf $\mathcal{L}$ intersects each connected component of each stratum of meromorphic differentials. The same statement holds for the trivial representation except for the strata in Theorem~\ref{thm:mainthm2}. 
\end{cora}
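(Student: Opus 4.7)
The proof is essentially a tautological reformulation of Theorems~\ref{mainthm} and \ref{thm:mainthm2} using the definition of the isoperiodic leaf, so the plan is to unwind definitions rather than prove anything new.

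First I would recall that, by definition, the isoperiodic leaf of a representation $\chi$ is the fibre $\mathcal{L}=\text{Per}^{-1}(\chi)$ of the period map \eqref{permap}. Consequently, for any stratum $\mathcal{H}_g(m_1,\dots,m_k;-p_1,\dots,-p_n)$ and any connected component $\mathcal{C}$ of it, the intersection $\mathcal{L}\cap\mathcal{C}$ is non-empty if and only if there exists a pair $(X,\omega)\in\mathcal{C}$ whose period character equals $\chi$, that is, if and only if $\chi$ is realized in $\mathcal{C}$. Thus the statement to be proved is entirely equivalent to saying that if $\chi$ is realized somewhere in a stratum, then it is realized in every connected component of that stratum.

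For the non-trivial case, I would fix a non-trivial $\chi$ and a stratum $\mathcal{H}_g(m_1,\dots,m_k;-p_1,\dots,-p_n)$ such that $\mathcal{L}$ meets this stratum. Then $\chi$ arises as the period character of some meromorphic differential lying in the stratum, so the hypothesis of Theorem~\ref{mainthm} is satisfied. Applying Theorem~\ref{mainthm} yields a realization of $\chi$ in every connected component, which by the equivalence above gives $\mathcal{L}\cap\mathcal{C}\neq\emptyset$ for each such $\mathcal{C}$. The same reduction, applied verbatim with Theorem~\ref{thm:mainthm2} in place of Theorem~\ref{mainthm}, handles the trivial representation, with the caveat that in the strata listed in Theorem~\ref{thm:mainthm2} the trivial $\chi$ is realizable only in the distinguished component (non-primitive when $g=1$, or of parity $g\pmod 2$ when $g\ge 2$), and this is precisely the exception recorded in the corollary.

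Since the corollary is a direct consequence of Theorems~\ref{mainthm} and \ref{thm:mainthm2}, there is no genuine obstacle; the only care required is to state the equivalence between \emph{realizability in a component} and \emph{non-empty intersection of $\mathcal{L}$ with that component}, and to transport the exceptional strata from Theorem~\ref{thm:mainthm2} without alteration.
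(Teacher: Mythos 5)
Your proposal is correct and is exactly the argument intended in the paper, which presents this corollary as an immediate consequence of Theorems~\ref{mainthm} and \ref{thm:mainthm2}: the isoperiodic leaf is the fibre $\text{Per}^{-1}(\chi)$, so meeting a connected component is synonymous with realizability of $\chi$ in that component. No further comment is needed.
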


\noindent It is interesting to study under which conditions an isoperiodic leaf is connected. For holomorphic differentials it is known that the isoperiodic leaves are generically connected up to a few exceptions, see \cite[Theorems 1.2 and 1.3]{CDF2}. For meromorphic differentials with two simple poles an analogous result has recently been obtained in \cite{CD}. It remains to study connected components of isoperiodic leaves for meromorphic differentials with general pole orders. An ongoing collaboration of the second named author with Guillaume Tahar and Yongquan Zhang aims to study the isoperiodic foliations for $\mathcal H_1(1,1;\,-2)$, see \cite{FTZ}. Another direction is to study period realization of {\em $k$-differentials} for $k > 1$, \textit{e.g.} $k=2$ is the case of quadratic differentials {\em i.e.} half-translation surfaces. We plan to treat these questions in future work.  

\subsection{Structure of the paper and strategy of the proof} This paper is organized as follows. In Section \S\ref{sec:tswp} we review basic concepts about differentials and translation surfaces, their strata with prescribed orders of zeros and poles, and the geometric invariants that can distinguish connected components of the strata. In Section \S\ref{sec:surgeries} we introduce several surgery operations that can be used to construct translation surfaces of higher genera and study how the concerned invariants change under these operations. In Section \S\ref{sec:mcga} we discuss a system of handle generators for the period domain in order to carry out inductive constructions. In Sections \S\ref{genusonemero}, we first prove Theorem \ref{mainthm} for surfaces of genus one. More precisely, for a non-trivial representation $\chi$ we provide a direct construction to realize a meromorphic genus-one differentials with period character $\chi$ and prescribed invariants. The proof involves a case-by-case discussion according to Table \ref{tab:indexes}, see Appendix \ref{appfdgo}. In Section \S\ref{sec:hgchyp}, we consider higher genus surfaces, \textit{i.e.} we shall suppose $g\ge2$. This section is entirely devoted to realize a representation as the period character of some hyperelliptic translation surface with poles and prescribed singularities. Next, in Section \S\ref{sec:hgcpar} we still consider higher genus surfaces and we aim to realize a given non-trivial representation as the period character of some translation structure with spin parity. The proof is based on an induction foundation having genus-one differentials as the base case. Therefore this section is mainly devoted to show how to run the inductive process. We finally consider the trivial representation on its own right. In Section \S\ref{sec:trirep} we shall prove Theorem \ref{thm:mainthm2} for the cases of genus one, hyperelliptic surfaces, surfaces with spin parity, and the trivial representation (which corresponds to exact differentials), respectively. Along the way we shall encounter and classify some exceptional strata in which the trivial representation cannot be realized for certain connected components. Finally in Appendix~\S\ref{appfdgo} we provide a flowchart to illustrate the relations of the various cases in the course of the proof. 

\smallskip

\noindent We remark that, comparing to \cite{CFG}, a series of new challenges that require distinct techniques appear for dealing with connected components of strata. Let us consider non-trivial representations in the first place. As in the holomorphic case, to realize a given representation $\chi$ as the period character of some meromorphic differential in a given stratum is a subtle problem because the order of singularities cannot be ignored. For a generic representation $\chi$, \textit{i.e.} $\textnormal{Im}(\chi)\not\subseteq \Z$, the problem essentially reduces to determine a suitable collection of (possibly unbounded) polygons that, once glued together along slits in a proper way, provide a translation surface in a stratum with a single zero of maximal order and prescribed orders of poles. Then, by breaking the sole zero into zeros of lower order, realizing a generic representation in a stratum with several zeros becomes a straightforward consequence of the former construction. Some obstructions appear in realizing a non-trivial \textit{integral representations} ($\textnormal{Im}(\chi)=\Z$) in a given stratum with \textit{all} simple poles -- these obstructions completely vanish in realizing an integral representation in a stratum with at least one pole of higher order, see \cite[Theorems D]{CFG}. However, whenever these obstructions are satisfied, the realization boils down once again to find an appropriate collection of polygons to glue together that provide the desired structure. For dealing with connected components, the gist of the idea is still the same but now the way these polygons are glued together also does matter. This is particularly evident in Section \S\ref{genusonemero} where, in order to realize a genus-one differential with prescribed rotation number, several copies of the standard differential $(\C,\,dz)$ need to be glued together by following a certain pattern. Different ways of gluing provide structures in the same stratum but lying in different connected components. This motivates the several constructions summarized in Appendix \ref{appfdgo}. A similar phenomenon can be seen in Section \S\ref{sec:hgchyp} in which we aim to realize a hyperelliptic translation surface with prescribed period character in a given stratum. Although realizing genus-one differential requires a deeper and more detailed argument, realizing higher genus surfaces with prescribed parity simplifies considerably due to the inductive foundation we have already alluded above, see Section \S\ref{sec:hgcpar}.

\smallskip

\noindent Let us finally discuss the trivial representation. Even the trivial representation can be realized in a stratum under certain necessary condition and, in this case, the realization relies on an inductive foundation on the genus. In other words, in \cite{CFG} the trivial representation is realized starting from a genus-zero differential with trivial periods and bubbling handle with trivial periods in order to obtain an exact differential in the desired stratum. To realize an exact differential in a given stratum with prescribed rotation number (if $g=1$), parity or hyperelliptic involution (if $g\ge2$) turns out to be an arduous challenge and not always possible. In fact, although a non-trivial representation can be realized in every connected component of every stratum in which it appears, the trivial representation may appear in a stratum without being realizable in each connected component of the same stratum, see Theorem \ref{thm:mainthm2}. The case of higher genus surfaces with prescribed parity relies on the realization of genus-one differentials with prescribed rotation number in a certain stratum. In other words, also in this case we use an inductive foundation having genus-one exact differential as the base case. Despite a geometric argument works for realizing an exact differential with prescribed invariants, it fails to show that the trivial representation cannot be realized in certain connected components of certain strata. In order to prove the second part of Theorem \ref{thm:mainthm2} we shall need to use an argument from the viewpoint of algebraic geometry. In fact, an exact differential $(X,\omega)$ yields a rational map $f\colon X\longrightarrow \cp$ ({\em i.e.} a branched cover of the sphere) and we shall use a monodromy argument associated to the cover to determine whether the trivial representation can be realized in certain connected components of the exceptional strata listed above.

\subsection{Acknowledgments}
This paper was initiated during the Richmond Geometry Festival in Summer 2021. We thank the conference organizers Marco Aldi, Allison Moore, and Nicola Tarasca for providing a wonderful platform of online communications. We also thank Quentin Gendron, Subhojoy Gupta, Guillaume Tahar, and Yongquan Zhang for helpful discussions on related topics. The first named author is partially supported by National Science Foundation Grant 2001040 and Simons Foundation Collaboration Grant 635235. The second named author is partially supported by GNSAGA of INDAM. When this project started, the second named author was affiliated with the MPIM Bonn and subsequently with the University of Bonn and is grateful to both institutions for the pleasant and stimulating environment in which he worked. Finally, he is indebted to Ursula Hamenst\"adt for the invaluable support he has received from her and which has helped him bring this current partnership to fruition.

\section{Translation surfaces with poles and meromorphic differentials}\label{sec:tswp}

\noindent We begin by recalling the notion of translation structure on a topological surface by providing a geometric and a complex-analytic perspective. 

\medskip

\noindent A \textit{translation structure} on a surface $S_{g,n}$ is a branched $(\C,\C)$-structure, \textit{i.e.} the datum of a maximal atlas where local charts in $\C$ have the form  $z\longmapsto z^k$, for $k\ge1$, and transition maps given by translations on their overlappings. Any such an atlas defines an underlying complex structure $X$ and the pullbacks of the $1$-form $dz$ on $\C$ via local charts globalize to a holomorphic differential $\omega$ on $X$. Vice versa, a holomorphic differential $\omega$ on a complex structure $X$ defines a singular Euclidean metric with isolated singularities corresponding to the  zeros of $\omega$. In a neighborhood of a point $P$ which is not a zero of $\omega$, a local coordinate is defined as
\begin{equation}
    z(Q)=\int_P^Q \omega 
\end{equation} 
in which  $\omega=dz$, and the coordinates of two overlapping neighborhoods differ by a translation $z\mapsto z+c$ for some $c\in\mathbb C$. Around a zero, say $P$ of order $k\ge1$, there exists a local coordinate $z$ such that $\omega=z^kdz$. The point $P$ is also called a \textit{branch point} because any local chart around it is locally a branched $k+1$ covering over $\C$ which is totally ramified at $P$.

\begin{defn}[Translation surfaces with poles]\label{tswp}
Let $\omega$ be a meromorphic differential on a compact Riemann surface $\overline{X}\in\mathcal{M}_g$. We define a \emph{translation surface with poles} to be the structure induced by $\omega$ on the surface $X = \overline{X}\setminus\Sigma$, where $\Sigma$ is the set of poles of $\omega$.
\end{defn}

\noindent Given a translation structure $(X,\omega)$ on a surface $S_{g,n}$, the local charts globalize to a holomorphic mapping $\text{dev}\colon\widetilde{S}_{g,n}\longrightarrow \C$ called the \textit{developing map}, where $\widetilde{S}_{g,n}$ is the universal cover of $S_{g,n}$. The translation structure on $S_{g,n}$ lifts to a translation structure $(\widetilde{X},\widetilde{\omega})$ and the developing map turns out to be locally univalent away from the zeros of $\widetilde{\omega}$. The developing map, in particular, satisfies an equivariant property with respect to a representation $\chi\colon\homolz\longrightarrow\C$ called \textit{holonomy} of the translation structure. The following Lemma establishes the relation between holonomy representations and period characters.

\begin{lem}
A representation $\chi\colon\shomolzn\longrightarrow \C$ is the period of some abelian differential $\omega\in\Omega(X)$ with respect to some complex structure $X$ on $S_{g,n}$ if and only if it is the holonomy of the translation structure on $S_{g,n}$ determined by $\omega$.
\end{lem}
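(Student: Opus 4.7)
The plan is to prove both implications by identifying one common quantity — the integral of the form along a cycle — which computes both the period character and the translation holonomy simultaneously. No new technology is required beyond carefully unpacking the developing map together with its equivariance property, already recalled in the paragraph preceding the lemma.

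For the forward implication, I would fix a basepoint $P_0\in S_{g,n}$ together with a lift $\widetilde{P}_0$ to the universal cover $\widetilde{S}_{g,n}$. Given $\omega\in\Omega(X)$ with period character $\chi$, the developing map of the induced translation structure is given by
\[
\text{dev}(\widetilde Q) \;=\; \int_{\widetilde P_0}^{\widetilde Q} \widetilde\omega,
\]
which is well-defined because $\widetilde{S}_{g,n}$ is simply connected and $\widetilde\omega$ is closed. Post-composing with a deck transformation $\gamma$ yields
\[
\text{dev}(\gamma\cdot\widetilde Q) \,-\, \text{dev}(\widetilde Q) \;=\; \int_{\widetilde Q}^{\gamma\cdot\widetilde Q} \widetilde\omega \;=\; \int_\gamma \omega \;=\; \chi([\gamma]),
\]
so the holonomy representation of the translation structure coincides with $\chi$.

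For the converse, I would reverse this construction. Given a translation structure on $S_{g,n}$ with holonomy $\chi$, the pullbacks of $dz$ via local charts glue to a global holomorphic differential $\omega$ on $S_{g,n}$, which extends to a meromorphic differential on the compact Riemann surface $\overline X$ obtained by filling in the punctures (Definition \ref{tswp}). For any loop $\gamma$ based at $P_0$, I would lift it to a path $\widetilde\gamma$ in the universal cover starting at $\widetilde P_0$; using $\widetilde\omega = \text{dev}^* dz$, the computation
\[
\int_\gamma \omega \;=\; \int_{\widetilde\gamma} \text{dev}^* dz \;=\; \text{dev}(\gamma\cdot\widetilde P_0) \,-\, \text{dev}(\widetilde P_0) \;=\; \chi([\gamma])
\]
identifies $\chi$ with the period character of $\omega$.

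I do not anticipate any substantial obstacle: the statement is a compatibility check between two equivalent viewpoints on a translation surface. The only mild subtlety is to verify that $\chi$ is well-defined on $\shomolzn$ rather than merely on $\pi_1(S_{g,n})$, which is automatic because the image lies in the abelian group $(\C,+)$; on the analytic side, the same fact reflects that $\int_\gamma \omega$ depends only on the homology class of $\gamma$ since $\omega$ is closed.
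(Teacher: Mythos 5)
Your proof is correct, and it is the standard verification: the paper actually states this lemma without any proof at all, treating it as immediate from the definition of the developing map as $\widetilde Q\mapsto\int_{\widetilde P_0}^{\widetilde Q}\widetilde\omega$ (the global version of the local coordinate $z(Q)=\int_P^Q\omega$ given in Section \S\ref{sec:tswp}) and from the equivariance of that map under deck transformations. Your computation of $\text{dev}(\gamma\cdot\widetilde Q)-\text{dev}(\widetilde Q)=\int_\gamma\omega$ in both directions, together with the remark that the resulting homomorphism factors through homology because the target $(\C,+)$ is abelian, is exactly the argument the authors are implicitly relying on.
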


\noindent This twofold nature of a representation $\chi$ permits us to tackle our problem by adopting a geometric approach. More precisely, in order to realize a representation $\chi$ on a prescribed connected component of some stratum of differentials, we shall realize it as the holonomy of some translation surface with poles $(X,\omega)$ with prescribed zeros and poles and, whenever they are defined, with prescribed spin structure or hyperelliptic involution. Some remarks are in order.

\begin{rmk}\label{gbcond}
Let $\omega$ be a meromorphic differential on a compact Riemann surface $\overline{X}\in\mathcal{M}_g$ which yields a translation surface $(X,\omega)$ with poles of finite orders $p_1,\dots,p_n$, $n\ge1$. Let $m_1,\dots,m_k$ be the orders of zeros of $\omega$. Then it is well-known that the following equality holds
\begin{equation}\label{gbeq}
    \sum_{i=1}^k m_i - \sum_{j=1}^n p_j=2g-2.
\end{equation}
\end{rmk}

\begin{rmk}\label{partrans}
Let $(X,\omega)$ be a translation surface, possibly with poles, and let us denote $X^*=X\setminus\{\text{zeros of }\omega\}$ and pick any point $x_0\in X^*$. Since the structure is flat, the parallel transport induced by the flat connection yields a homomorphism from $\pi_1(X^*,\,x_0)$ to $\textnormal{SO}(2,\mathbb R)\cong\mathbb S^1$ which acts on the tangent space of $x_0$. Since $\mathbb S^1$ is abelian, such a homomorphism factors through the homology group and hence we have a well-defined homomorphism $\text{PT}\colon\textnormal{H}_1(X^*,\,\mathbb Z)\to \textnormal{SO}(2,\mathbb R)\cong\mathbb S^1$. For translation surfaces this homomorphism turns out to be trivial in the sense that a parallel transport of a vector tangent to the Riemann surface $X$ along any closed path avoiding the zeros of $\omega$ brings the vector back to itself.
\end{rmk}

\begin{rmk}\label{foliations}
Let $x_0\in(X,\omega)$ be a non-singular point, \textit{i.e.} $x_0$ is not a zero for $\omega$. A given tangent direction at $x_0$ can be extended to all other non-singular points by means of the parallel transport. This yields a non-singular foliation which extends to a singular foliation with singularities at the branch points. Let $z=x+iy$ be a local coordinate at $x_0$. The \textit{horizontal foliation} is the oriented foliation determined by the positive real direction in the coordinate $z$. Notice that this is well-defined because different local coordinates differ by a translation. In the same fashion, the \textit{vertical foliation} is the oriented foliation determined by the positive purely imaginary direction in the coordinate $z$.
\end{rmk}

\subsection{Volume}\label{agv} We now discuss the notion of \textit{volume} which plays an important role in the theory. For our purposes, we shall mostly interested in the algebraic volume which is a topological invariant associated to a representation $\chi$.
\smallskip

\noindent Let us recall this notion in the holomorphic case. For a symplectic basis $\{\alpha_1,\beta_1,\dots,\alpha_g,\beta_g\}$ of $\shomolz$ we define the volume of a representation $\chi\colon\shomolz\longrightarrow \C$ as the quantity
\begin{equation}\label{algvol}
    \text{vol}(\,\chi)=\sum_{i=1}^g \Im\big(\,\overline{\chi(\alpha_i)}\,\chi(\beta_i)\big),
\end{equation} 
where $\Im(\overline{z}w)$ is the usual symplectic form on $\C$. As a consequence, this algebraic definition of volume of a character $\chi$ is invariant under precomposition with any automorphisms in $\textnormal{Aut}\,\big(\shomolz\big)\cong\spz$. The image of $\chi$, provided it has rank $2g$, turns out to be a polarized module.

\smallskip

\textit{Haupt's conditions.} As already alluded in the introduction, there are some obstructions for realizing $\chi$ as the period character of some holomorphic differential $\omega$ on a compact Riemann surface $X$ and both of these concern the volume. We shall recall them here for the reader's convenience. The first requirement is that the volume of $\chi$ has to be positive with respect to some symplectic basis of $\shomolz$. Indeed, one can show that this equals the area of the surface $X$ endowed with the singular Euclidean metric induced by $\omega$. There is a second obstruction that applies in the case $g\geq 2$ and only when the image of $\chi\colon\shomolz\to\C$ is a lattice, say $\Lambda$ in $\C$. In fact, one can show that in this special case $(X,\omega)$ arises from a branched cover of the flat torus $\C/\Lambda$. In particular, the following inequality must hold: $\textnormal{vol}(\chi)\ge 2\,\text{{Area}}(\mathbb C/\Lambda)$. Haupt's Theorem says that these are the only obstructions for realizing $\chi$ as the period of some holomorphic differential.

\smallskip

\begin{rmk} We provide here an alternative and more geometric definition of volume. Let $X$ be a compact Riemann surface and let $\omega\in\Omega(X)$ be a (possibly meromorphic) differential with period character $\chi$. Let $E$ be the complex line bundle over $X$ canonically associated to $\chi$ and let $f\colon X\longrightarrow E$ be a section. By lifting the map $f$ to $\widetilde{f}\colon \widetilde{X}\longrightarrow \widetilde{X}\times\mathbb C$ and projecting onto the second coordinate, the natural volume form $\frac{i}{2}dz\wedge d\overline{z}$ on $\mathbb C$ pulls back to a volume form over $X$. The quantity given by the integration of this form over $X$, that is 
\begin{equation}
\frac{i}{2}\int_X f^*\Big( \,dz\wedge d\overline{z}\,\Big)=\frac{i}{2}\int_X \omega\wedge\varpi, \,\,\, \text{ where } \omega=f^*dz \end{equation}
is the area of the singular Euclidean metric determined by $\omega$ on $X$. In particular, by means of Riemann's bilinear relations one can show that it agrees with the algebraic definition of volume as in \eqref{algvol}.
\end{rmk} 

\begin{defn}\label{algvoldef}
Let $\omega\in\Omega(X)$ be a holomorphic differential on a compact Riemann surface $X$ with period character $\chi\colon\shomolz\longrightarrow \C$. We  define the \textit{algebraic volume} of $\omega$ as the quantity $\textnormal{vol}(\,\chi)$ defined in formula \eqref{algvol}. Notice that the algebraic volume is well-defined since the volume of $\chi$ does not depend on the choice of the symplectic basis for $\shomolz$.
\end{defn}

\noindent We now extend the notion of volume to meromorphic differentials. In this case, the notion of volume relies on a choice of a splitting of $S_{g,n}$ as the connected sum of the closed surface $S_{g}$ and the $n$-punctured sphere. Let $\gamma$ be a simple closed separating curve in $S_{g,n}$ that bounds a sub-surface homeomorphic to $S_{g,1}$. There is a natural embedding $S_{g,1}\hookrightarrow S_{g,n}$ and thence an injection $\imath_g\colon\shomolz \longrightarrow \shomolzn$ because the curve $\gamma$ is trivial in homology. In the same fashion, $\gamma$ bounds a sub-surface homeomorphic to $S_{0,n+1}$ and the natural embedding $S_{0,n+1}\hookrightarrow S_{g,n}$ yields an injection $\imath_n\colon\textnormal{H}_1(S_{0,n},\,\mathbb Z)\to \shomolzn$. Therefore, given a splitting as above, any representation $\chi\colon\shomolzn\longrightarrow \C$ gives rise to two representations $\chi_g=\chi\circ\imath_g$ and $\chi_n=\chi\circ\imath_n$ that determine $\chi$ completely.

\begin{defn}\label{def:triend}
A representation $\chi$ is said to be of \textit{trivial-ends type} if $\chi_n$ is trivial, otherwise it is said to be of \textit{non-trivial-ends type}. The representation $\chi_n$ is always well-defined and in fact it does not depend on the choice of any splitting.
\end{defn}

\begin{rmk}\label{rmk:chigwelldef}
It is worth noticing that a representation $\chi_g$ generally depends on the embedding $S_{g,1}\hookrightarrow S_{g,n}$ and in fact on the curve $\gamma$ along which we split $S_{g,n}$. However, it is not hard to see that $\chi_g$ is uniquely determined if and only if $\chi$ is of trivial-ends type.
\end{rmk} 

\noindent We can now introduce the following

\begin{defn}\label{algvoldef2}
Let $\chi\colon\shomolzn\to \C$ be a representation of trivial-ends type. We  define the volume of $\chi$ as the volume of $\chi_g$ as defined in \eqref{algvol}. Let $\omega\in\Omega(X)$ be a holomorphic differential on $X\in\mathcal{M}_{g,n}$ with finite-order poles at the punctures and such that its period character $\chi$ is of trivial-ends type. Then we can define the \textit{algebraic volume} of $\omega$ as the volume of $\chi$. Notice that the algebraic volume does not depend on the choice of the splitting.
\end{defn}

\noindent In the case that a meromorphic differential admits poles with non-zero residues, \textit{i.e.} its period character has non-trivial $\chi_n$ part, then the algebraic volume as in Definition \ref{algvoldef2} does depend on the splitting. In fact if an absolute homology class goes across a pole with nonzero residue, its period will change by the amount of the residue. In other to define the algebraic volume for a generic meromorphic differential one has to consider homologically marked translation structures, \textit{i.e.} translation surfaces with an additional structure given by an isomorphism $m\colon\mathbb Z^{2g}\longrightarrow \shomolzn$. Notice that a marking naturally yields a splitting as above, however the same splitting is generally induced by different markings. In our constructions we mostly deal with translation surfaces corresponding to meromorphic differentials whose character has trivial $\chi_n$ part because we shall rely on those for the generic case. For this reason, we shall not need to introduce marked structures for our purposes.

\subsection{Further invariants on translation surfaces} In Section \S\ref{agv} we have introduced the volume as an algebraic invariant naturally attached to a representation $\chi$. In the present section we are going to introduce geometric invariants which we shall use to distinguish the connected components of strata whenever they fail to be connected. 

\subsubsection{Rotation number}\label{rotation} In this subsection $\overline{X}$ is a compact Riemann surface of genus one, \textit{i.e.} an elliptic curve $\mathbb C/\Lambda$ with $\Lambda$ is a rank two lattice in $\mathbb C$. We are going to introduce an invariant very specific to genus one translation surfaces with poles.

\smallskip

\noindent Let $X$ be a complex structure on $S_{1,n}$, where $n\ge1$, and let $\omega\in\Omega(X)$ be an abelian differential on $X$ with finite order poles at the punctures. Recall that away from the singularities of $\omega$ there is a well-defined horizontal direction and hence a non-singular horizontal foliation on $X\setminus \{\text{ zeros of }\omega\,\}$. Such a foliation extends to a singular horizontal foliation over the zeros of $\omega$. In fact, for a zero, say $P$, of order $k$ there are exactly $k+1$ horizontal directions leaving from $P$. Let $\gamma$ be a simple smooth curve parametrized by the arc length. Assume in addition that $\gamma$ avoids all the singularities of $\omega$. We shall define the index of $\gamma$ as a numerical invariant given by the comparison of the unit tangent field $\dot\gamma(t)$ and the unit vector field along $\gamma$ given by unit vectors tangent to the horizontal direction. More precisely, let us denote by $u(t)$ the unit vector at $\gamma(t)$ tangent to the leaf of the horizontal foliation through $\gamma(t)$, then the assignment 
\begin{equation}\label{comphorfol}
    t\longmapsto \theta(t)=\angle\big(\dot\gamma(t),\,u(t)\big)
\end{equation} defines a mapping $f_\gamma\colon\mathbb S^1\longrightarrow \mathbb S^1$.

\begin{defn}\label{index}
The index of $\gamma$ is defined as $\deg(f_\gamma)$ and denoted by $\textnormal{Ind}(\gamma)$. 
\end{defn}

\noindent The index $\textnormal{Ind}(\gamma)$ of a closed curve $\gamma$ measures the number of times the unit tangent vector field spins with respect to the direction given by the horizontal foliation. Since for translation surfaces the parallel transport yields a trivial homomorphism, see Remark \ref{partrans}, the total change of the angle is $2\pi\,\textnormal{Ind}(\gamma)$.

\medskip

\noindent We now allow perturbations of $\gamma$ in its homotopy class. The index of any curve remains unchanged if, while deforming it, we do not cross a singularity of $\omega$. On the other hand, whenever we cross a singularity of order $k$ the index of $\gamma$ changes by $\pm k$. As a consequence, the index of a curve is well-defined for homotopy classes in $\pi_1(\overline{X}\setminus\{\text{singularities of }\omega\})$. We can now define the rotation number.

\begin{defn}\label{rotnum}
Let $(X,\omega)\in \mathcal{H}_1(m_1,\dots,m_k; -p_1,\dots,-p_n)$ be a genus one translation surface with poles. Let $\{\alpha,\beta\}$ be a symplectic basis of $\textnormal{H}_1(\overline{X},\,\mathbb Z)$, the first homology group of $\overline{X}$. Let $\gamma_\alpha$ and $\gamma_\beta$ be the representatives of $\alpha$ and $\beta$ respectively. Assume they are both parametrized by the arc length and avoid the singularities of $\omega$. The \textit{rotation number} of $(X,\omega)$ is defined as
\begin{equation}
    \textnormal{rot}(X,\omega)=\gcd\big(\textnormal{Ind}(\gamma_\alpha),\,\textnormal{Ind}(\gamma_\beta),\,m_1,\dots,m_k,p_1,\dots,p_n\big).
\end{equation}
\end{defn}

\smallskip

\noindent In the next subsection we briefly introduce the rotation number in a more algebraic way that we shall use only in the final Section \S\ref{sec:trirep}. Notice that the well-known properties of $\gcd$ make the rotation number $\textnormal{rot}(X,\omega)$ well-defined for a given element $(X,\omega)\in\mathcal{H}_1(m_1,\dots,m_k; -p_1,\dots,-p_n)$. As we shall see below in Section \S\ref{mscc}, the rotation number is used to distinguish the connected components of a generic stratum of genus one-differentials  $\mathcal{H}_1(m_1,\dots,m_k; -p_1,\dots,-p_n)$. 

\smallskip

\subsubsection{Hyperelliptic involution}\label{sssec:hypinv} For a complex structure $X$ on $S_{g,n}$ we shall denote by $\overline{X}$ the compact Riemann surface obtained by filling the punctures with complex charts. Let us now introduce the following

\begin{defn}\label{hypdef}
A translation surface with poles $(X,\omega)$ is said to be \textit{hyperelliptic} if the Riemann surface $\overline{X}$ admits a hyperelliptic involution $\tau$ such that $\tau^*\omega=-\omega$.
\end{defn}

\noindent The mapping $\tau$ realizes an isometry between the singular Euclidean structures determined by the differentials $\omega$ and $-\omega$ on $\overline{X}$. In particular, the translation surfaces corresponding to these meromorphic differentials are equivalent as Euclidean structures, \textit{i.e.} geometric structures modelled on $(\mathbb C,\,\mathbb S^1\ltimes\mathbb C)$ but they are not equivalent as translation surfaces. 

\smallskip

\noindent For a holomorphic differential $\omega$ on a compact Riemann surface $X$, the translation surface $(X,\omega)$ is hyperelliptic if and only if the underlying Riemann surface $X$ is hyperelliptic because there are no non-zero holomorphic one-form on the sphere. For meromorphic differentials such a scenario is no longer true as shown by the following example.

\begin{ex}
Let us consider the translation structure with a single pole on the Riemann sphere $\cp$ determined by the meromorphic differential $dz$. Let $l_1,\,l_2,\,l_3 \subset \cp$ be three geodesic segments for the standard Euclidean structure and slit $\cp$ along them. The resulting slit surface is homeomorphic to a sphere with three open disjoint disks removed. Geometrically, the resulting surface is a pair of pants equipped with a Euclidean structure and piece-wise geodesic boundary. In fact, each boundary component comprises two geodesic segments, say $l_{ij}$ with $i=1,2,3$ and $j=1,2$, and two corner points each one of angle $2\pi$. We consider two copies of this latter surface and we glue them by identifying the boundary segments having the same label. The final surface turns out to be a compact Riemann surface $X$ of genus two equipped with a meromorphic differential $\omega$ with six zeros of order $1$ and two poles of order $2$. The Riemann surface $X$, being compact of genus two, admits a hyperelliptic involution $\tau$. We want to show with a direct computation that $(X,\omega)$ is not hyperelliptic.

\begin{figure}[!ht]
    \centering
    \begin{tikzpicture}[scale=0.6, every node/.style={scale=0.7}]
    \draw [thick] (-3.5,0) ellipse (5cm and 2cm);
    \draw [thick] (-2.7,0.1) to [out=330, in=210] (-0.3,0.1);
    \draw [thick] (-2.5,0) to [out=30, in=150] (-0.5,0);
    \draw [thick] (-6.7,0.1) to [out=330, in=210] (-4.3,0.1);
    \draw [thick] (-6.5,0) to [out=30, in=150] (-4.5,0);
    \draw [thin, blue] (-8.5,0) to [out=330, in=210] (-6.5,0);
    \draw [thin, blue, dashed] (-8.5,0) to [out=30, in=150] (-6.5,0);
    \draw [thin, blue] (-4.5,0) to [out=330, in=210] (-2.5,0);
    \draw [thin, blue, dashed] (-4.5,0) to [out=30, in=150] (-2.5,0);
    \draw [thin, blue] (-0.5,0) to [out=330, in=210] (1.5,0);
    \draw [thin, blue, dashed] (-0.5,0) to [out=30, in=150] (1.5,0);
    \draw [thick] (6.5,0) circle (2cm);
    \draw [thin, blue] (5.5,-0.5) to [out=330, in=210] (7.5,-0.5);
    \draw [thin, blue] (5.25,0.5) to [out=30, in=150] (6.25,0.5);
    \draw [thin, blue] (6.75,0.5) to [out=30, in=150] (7.75,0.5);
    \draw [-latex, thick] (2,0) to (4,0);
    \node at (3,0.375) {$2:1$};
\end{tikzpicture}

    \caption{A $2$-fold branched covering $\pi: X\longrightarrow \cp$.}
    \label{fig:my_label}
\end{figure}
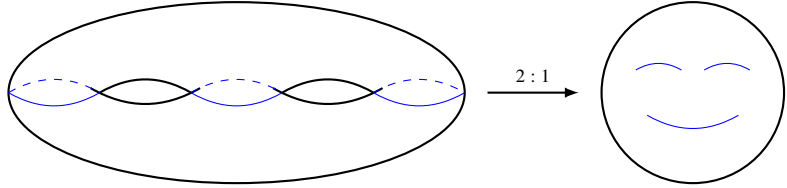

\noindent Let $\pi\colon X\longrightarrow \cp$ be the $2$-fold branched covering that naturally arises from our construction. The differential $\omega$ satisfies the equation $\omega=\pi^*(dz)$.
According to Definition \ref{hypdef} above, $(X,\omega)$ is hyperelliptic if and only if $\tau^*(\omega)=-\omega$. On the other hand, being of genus two, the hyperelliptic involution $\tau$ is an automorphism of $X$ and commutes with the projection $\pi\colon X\longrightarrow \cp$ in the sense that $\pi\circ \tau=\pi$. Therefore, $\omega = \pi^{*}(dz) = \tau^{*}(\omega)$ and hence 
\begin{equation}
    \tau^*(\omega)=-\omega \iff \omega=0
\end{equation}
which provides the desired contradiction.
\end{ex}

\subsubsection{Spin structure}\label{sssec:spinpar} The last geometric invariant we shall introduce is the spin structure. Let $X$ be a compact Riemann surface and let $P$ be a principal $\mathbb S^1$-bundle over $X$. A \textit{spin structure} on $X$ is a choice of a linear functional $\xi\colon\textnormal{H}_1(P,\, \mathbb Z_2)\to\mathbb Z_2$ having non-zero value on the cycle representing the fiber $\mathbb S^1$ of $P$. This is equivalent to a choice of a double covering $Q\longrightarrow P$ whose restriction to each fibre of $P$ is a $2$-fold covering of $\mathbb S^1$.

\smallskip

\noindent Recall that an element of $\textnormal{H}_1(P,\, \mathbb Z_2)$ can be regarded as a \textit{framed closed curve} in $X$, by which we mean a closed curve in $X$ and a smooth unit vector field along it. For a simple closed curve $\gamma$ in $X$ there is a preferred frame given its unit tangent vector field. We shall denote such a framed closed curve as $\vec{\gamma}$. This is well-defined in the sense that two homologous closed curves $\gamma,\,\delta\in \textnormal{H}_1(X,\, \mathbb Z_2)$ yield homologous framed closed curves $\vec{\gamma},\,\vec{\delta}$, see \cite[Section 3]{JO}.\\
The \textit{canonical lift} of a simple closed curve $\gamma$ is defined as $\widetilde{\gamma}=\vec{\gamma}+\textbf{z}$ where $\textbf{z}\in \textnormal{H}_1(P,\, \mathbb Z_2)$ is the homology class represented by the fibre $\mathbb S^1$. The assignment $\gamma\longmapsto \widetilde{\gamma}$ defines a mapping of sets $\textnormal{H}_1(X,\, \mathbb Z_2)\to \textnormal{H}_1(P,\, \mathbb Z_2)$ which fails to be a homomorphism, see \cite{JO}. Nevertheless, post-composition of such a map with a spin structure $\xi$ yields a quadratic form $\Omega_\xi\colon\textnormal{H}_1(X,\, \mathbb Z_2)\to \mathbb Z_2$ defined as 
\begin{equation}
    \Omega_\xi(\gamma)=\xi\big(\,\widetilde{\gamma}\,\big)=\langle \,\xi,\,\widetilde{\gamma}\,\rangle.
\end{equation}

\noindent According to \cite{AC} and \cite[Section 5]{JO} we shall introduce the following

\begin{defn}\label{parity}
For a symplectic basis $\{\alpha_1,\beta_1,\dots,\alpha_g,\beta_g\}$ of $\textnormal{H}_1(X,\, \mathbb Z_2)$ the Arf-invariant of $\Omega_\xi$ is defined by the formula
\begin{equation}\label{arfinv}
    \sum_{i=1}^g \Omega_\xi(\alpha_i)\Omega_\xi(\beta_i) \,\,(\text{mod}\,2).
\end{equation}
The \textit{parity} of a spin structure $\xi$ is defined as the Arf-invariant of $\Omega_\xi$. In particular, the parity of $\xi$ does not depend on the choice of the symplectic basis.
\end{defn}

\medskip 

\noindent Let $X$ be a complex structure on $S_{g,n}$, with $g\ge1$, and $\omega\in\Omega(X)$ be an abelian differential on $X$ with poles of finite order at the punctures. Recall that we can regard $\omega$ as a meromorphic differential on a compact Riemann surface $\overline{X}$. Whenever $(X,\omega)\in\mathcal{H}_g(2m_1,\dots,2m_k;-2p_1,\dots,-2p_n)$ then $\omega$ defines a natural spin structure on $\overline{X}$ as follows. 

\smallskip
\noindent Let $\gamma$ be a simple closed curve parametrized by the arc length and let $\vartheta$ be a unit vector field along $\gamma$. The couple $(\gamma, \vartheta)$ defines a framed closed curve, \textit{i.e.} an element of $\textnormal{H}_1(P,\,\mathbb Z_2)$. Recall that $\omega$ defines a non-singular horizontal foliation away from the singularities and hence the unit vector field $\vartheta$ can be compared to the unit vector field along $\gamma$ tangent to the horizontal foliation. In fact, as in Section \S\ref{rotation}, this can be done by means of a mapping $f_{(\gamma,\vartheta)}\colon\mathbb S^1\to \mathbb S^1$ defined as in the equation \eqref{comphorfol}. The spin structure induced by the meromorphic differential $\omega$ on $\overline{X}$ is then defined as the $\mathbb Z_2$-value linear functional
\begin{equation}\label{spinstrome}
    \xi\colon \textnormal{H}_1(P,\,\mathbb Z_2)\longrightarrow \mathbb Z_2,\,\, \text{ defined as }\,\, \xi(\gamma,\vartheta)=\deg(\,f_{(\gamma,\vartheta)}\,)\,\,(\text{mod}\,2).
\end{equation}

\noindent A direct application of \eqref{spinstrome} shows that, for the canonical lift $\widetilde{\gamma}$ of $\gamma$, the spin structure determined by $\omega$ satisfies the property
\begin{equation}\label{spindef}
    \xi(\,\widetilde{\gamma}\,)=\langle\,\xi,\,\widetilde{\gamma}\,\rangle=\langle\,\xi,\,\vec{\gamma}+\textbf{z}\,\rangle= \text{Ind}(\gamma)+1\,\,(\text{mod}\,2).
\end{equation}

\noindent As a consequence, we can apply formula \eqref{arfinv} to compute the parity $\varphi(\omega)$ of the spin structure determined by a meromorphic differential $\omega$. Given a symplectic basis $\{\alpha_1,\beta_1,\dots,\alpha_g,\beta_g\}$ of $\textnormal{H}_1(X,\, \mathbb Z_2)$, it follows that
\begin{equation}\label{eq:spinparity}
    \varphi(\omega)=\sum_{i=1}^g \big(\,\text{Ind}(\alpha_i)+1\big)\big(\,\text{Ind}(\beta_i)+1\big) \,\,(\text{mod}\,2).
\end{equation}

\noindent It is straightforward to check that $\varphi(\omega)$ does not depend on the choice of the symplectic basis. Finally, it follows from works of Atiyah in  \cite{AM} and Mumford in \cite{MD} that the spin parity is invariant under continuous deformations.

\begin{rmk}\label{twosimplepoles}
Let $(X,\omega)\in\mathcal{H}_g(2m_1,\dots,2m_k,-1,-1)$ be a genus-$g$ meromorphic differential with two simple poles. The meromorphic differential $\omega$ does not define any spin structure as in \eqref{spinstrome} because, for any curve $\gamma$, the index $\textnormal{Ind}(\gamma)$ is not longer well-defined as an element of $\mathbb Z_2$, see formula \eqref{spindef}. Nevertheless, we can still define an invariant that turns out to be the spin parity of some structure $(Y,\xi)$ in $\mathcal{H}_{g+1}(2m_1,\dots,2m_k)$. Recall that a neighborhood of a simple pole is an infinite cylinder. Around a puncture of $(X,\omega)$, we can find a simple closed geodesic curve, in fact there are infinitely many of such curves. Choose a waist geodesic curve on each cylinder and truncate $(X,\omega)$ along them. Since  $(X,\omega)$ has only two simple poles and their residues are opposite (and non-zero),  the curves above are isometric. We glue them and the resulting object is a translation structure on a closed surface of genus $g+1$. We define this latter as $(Y, \xi)$ and it lies in the stratum $\mathcal{H}_{g+1}(2m_1,\dots,2m_k)$ by construction. Therefore it makes sense to consider a \textit{parity of the spin structure} for differentials in $\mathcal{H}_g(2m_1,\dots,2m_k,-1,-1)$. See \cite[Section 5.3]{BC} for more details.
\end{rmk}

\subsection{Moduli spaces and connected components}\label{mscc} As alluded in the introduction, the moduli space $\Omega\mathcal{M}_{g,n}$ admits a natural stratification given by unordered partitions $(m_1,\dots,m_k;-p_1,\dots,-p_n)$ of $2g-2$, where negative integers are allowed only in the case of meromorphic differentials. In the present subsection we aim to recall for the reader's convenience the classifications of connected components of the strata both in the holomorphic and meromorphic cases.

\smallskip

\noindent Let us discuss first the case of holomorphic differentials which has been studied by Kontsevich-Zorich in \cite[Theorems 1 and 2]{KZ}. For low-genus surfaces, \textit{i.e.} $g=2,3$, we have the following

\begin{thmnn}[Kontsevich-Zorich]
The moduli space of abelian differentials on a curve of genus $g = 2$ contains two strata: $\mathcal{H}_2(1,1)$ and $\mathcal{H}_2(2)$. Each of them is connected and coincides with its hyperelliptic component.
Each of the strata $\mathcal{H}_3(2,2)$, $\mathcal{H}_3(4)$ of the moduli space of abelian differentials on a curve of genus $g = 3$ has two connected components: the hyperelliptic one, and one having odd spin structure. The other strata are connected for genus $g = 3$.
\end{thmnn}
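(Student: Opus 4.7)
The plan is to separate the listed strata by locally constant invariants — the presence of a hyperelliptic involution with $\tau^{*}\omega=-\omega$, and the spin parity from Section \S\ref{sssec:spinpar} when all zero orders are even — and then to establish connectedness within each invariant class by surgery arguments that reduce every point to a common normal form. The invariants give an upper bound on the number of connected components and the surgeries supply the matching lower bound.

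For genus two, I would start from the classical fact that every smooth curve of genus two is hyperelliptic, since the canonical map has degree two. If $\tau$ denotes the hyperelliptic involution, then $\tau^{*}\omega$ is a holomorphic one-form whose zero divisor equals that of $\omega$, so $\tau^{*}\omega=\pm\omega$; the $+$ sign is excluded because the fixed locus of $\tau$ is finite while the positivity locus of $|\omega|^{2}$ has full measure. Hence every point of $\mathcal H_2(2)$ and $\mathcal H_2(1,1)$ is hyperelliptic. For connectedness I would present $(X,\omega)$ as a double cover of $\cp$ branched at six points, pulling back a prescribed meromorphic differential on $\cp$; the parameter space of admissible branch configurations is a connected open subset of a configuration space on $\cp$, and a path in that parameter space together with a continuous choice of pullback gives a path in the stratum.

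For genus three, the strata $\mathcal H_3(4)$ and $\mathcal H_3(2,2)$ carry a well-defined spin parity, and their hyperelliptic locus has a specific parity (even) which one computes by choosing a symplectic basis adapted to $\tau$. The first step is to check that the hyperelliptic locus is a full connected component: hyperellipticity is closed under degeneration and, via deformation of the involution, open along hyperelliptic points, so it is a union of components, and a dimension count singles out exactly one. The second and harder step is to show that the non-hyperelliptic locus with odd parity is connected. Here I would invoke the surgery toolbox of Section \S\ref{sec:surgeries} — breaking a zero of order four into two zeros of order two, splitting or merging handles, and bubbling cylinders — to reduce any such $(X,\omega)$ to a model obtained from a genus-two surface by attaching an extra handle in a standard way, keeping precise track of how each move alters the spin parity.

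For the remaining strata of genus three, namely $\mathcal H_3(1,1,1,1)$, $\mathcal H_3(2,1,1)$, and $\mathcal H_3(3,1)$, no spin parity is defined because of the odd-order zeros, and the hyperelliptic locus is a proper subvariety rather than a component. There are thus no locally constant obstructions, and connectedness follows from the same surgery toolbox by reducing to a base genus-two surface via zero-splitting and handle-bubbling. The main obstacle in the whole argument is the surgery bookkeeping for the two-component case: one must verify that the chosen sequence of cut-and-paste moves really joins any two odd-spin non-hyperelliptic surfaces without ever crossing the hyperelliptic locus, which would flip the parity. The cleanest route to this is a Rauzy--Veech-style combinatorial analysis of separatrix diagrams, translating the topological problem into the connectedness of extended Rauzy classes.
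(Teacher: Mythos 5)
First, a point of reference: the paper does not prove this statement at all. It is quoted verbatim as a theorem of Kontsevich--Zorich, cited from \cite{KZ}, and used as a black box to organize the case analysis of Theorem \ref{mainthm}. So there is no ``paper's proof'' to match your argument against; your proposal has to stand on its own as a proof of the classification in genus $2$ and $3$.

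On its own terms, the proposal correctly identifies the two invariants and the soft half of the argument, but the hard half is missing. The genus-$2$ part and the claim that the hyperelliptic locus of $\mathcal H_3(4)$ and $\mathcal H_3(2,2)$ is open and closed (hence a union of components, and a single one by the dimension count) are essentially right, although your reason for excluding $\tau^*\omega=+\omega$ is garbled: $|\omega|^2$ is $\tau$-invariant in either case, so the ``full measure'' argument proves nothing; the correct argument is that a $\tau$-invariant holomorphic form would descend to a holomorphic form on the quotient $\cp$, which has none. The genuine gap is the core of the theorem: (i) you never show that every non-hyperelliptic surface in $\mathcal H_3(4)$ or $\mathcal H_3(2,2)$ has \emph{odd} parity, i.e.\ that no non-hyperelliptic even component exists in genus $3$ (this is a low-genus coincidence that requires proof, not a formal consequence of the parity being locally constant); and (ii) the connectedness of the odd non-hyperelliptic locus, and of the strata $\mathcal H_3(1,1,1,1)$, $\mathcal H_3(2,1,1)$, $\mathcal H_3(3,1)$, is asserted to follow from ``the surgery toolbox of Section \S\ref{sec:surgeries}'', which it does not. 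Those surgeries (breaking a zero, bubbling a handle, gluing along rays) are constructions that move \emph{between} strata or raise the genus; they do not provide a path \emph{inside} a fixed stratum from an arbitrary $(X,\omega)$ to a normal form, which is what connectedness requires. You acknowledge at the end that the real mechanism is the connectedness of extended Rauzy classes of separatrix diagrams, but that computation -- which is the actual content of the Kontsevich--Zorich proof in low genus -- is not carried out, so the proposal is a roadmap rather than a proof.
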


\noindent For surfaces of genus $g\ge4$, the following classification holds.

\begin{thmnn}[Kontsevich-Zorich]
All connected components of any stratum of abelian differentials on a curve of genus $g\ge 4$ are described by the following list:
\begin{itemize}
    \item The stratum $\mathcal{H}_g(2g-2)$ has three connected components: the hyperelliptic one, and two other components corresponding to even and odd spin structures;
    \item the stratum $\mathcal{H}_g(2l,2l)$, $l\ge2$ has three connected components: the hyperelliptic one and two other components distinguished by the spin parity;
    \item all the other strata of the form $\mathcal{H}_g(2l_1,\dots,2l_n)$, where all $l_i \ge 1$, have two connected components corresponding to even and odd spin structures; 
    \item the strata  $\mathcal{H}_g(2l-1, 2l-1)$, $l\ge 2$, have two connected components: one comprises hyperelliptic structures and the other does not;  finally
    \item all the other strata of Abelian differentials on the curves of genus $g\ge4$ are nonempty and connected.
    \end{itemize}
\end{thmnn}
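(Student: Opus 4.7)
The plan is to follow the original Kontsevich--Zorich strategy, whose core is to prove (i) that the listed invariants (hyperellipticity and spin parity, when defined) are locally constant on each stratum, and (ii) that any two differentials in the same stratum carrying the same invariants can be joined by a continuous path. Direction (i) is the easy half: hyperellipticity is both open and closed in the stratum by a standard argument using the continuity of the hyperelliptic involution, while the spin parity defined in Section \S\ref{sssec:spinpar} is, by the Atiyah--Mumford invariance result, a deformation invariant integer mod $2$. Together these already show that the listed pieces are unions of connected components. Direction (ii), that there are no further components, is the content of the theorem and occupies almost all of the work.

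For direction (ii) I would proceed by induction on the genus $g$, using the lower-genus cases (notably $g=2,3$) as the base. The inductive tools are two local surgeries on translation surfaces of the kind introduced in Section \S\ref{sec:surgeries}: \emph{breaking up a zero} of order $m_i+m_j$ into two nearby zeros of orders $m_i$ and $m_j$ (a reversible surgery supported in a disk), and \emph{bubbling a handle} at a zero of order $m$ to produce a zero of order $m+2$ on a surface of genus $g+1$. The key observations are that (a) the inverse of breaking a zero, namely \emph{collapsing a saddle connection}, lets one reduce any stratum with at least two zeros to a stratum with fewer zeros of total higher order; and (b) bubbling a handle can be performed in two distinct ways that differ by the parity of the rotation of the handle, and these two operations change the spin invariant by a controlled amount (one preserves it, the other flips it). These allow a traversal within any fixed value of the invariants between all possible orderings/orders of the zeros, eventually reducing the problem to the minimal stratum $\mathcal{H}_g(2g-2)$.

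The heart of the argument, and the main obstacle, is then the connectedness statement for $\mathcal{H}_g(2g-2)$ itself with its three candidate components (hyperelliptic, even spin, odd spin). Here one must exhibit, for each spin parity, a single concrete non-hyperelliptic differential, and show that the non-hyperelliptic locus of each fixed parity is connected. Kontsevich--Zorich do this by constructing explicit representatives as ramified covers of genus-two differentials (using the classification in genus $2$ and $3$ as a seed) and then using the breaking-zero surgery to bring an arbitrary differential into a neighbourhood of such a representative. The delicate points are (1) verifying that the hyperelliptic locus is a single component (not several) by using the rigidity of the hyperelliptic involution together with the fact that the quotient is a stratum on $\cp$, which is connected; and (2) ensuring one never accidentally slips into the hyperelliptic locus when performing surgeries, which is arranged by always modifying the surface in a region disjoint from the Weierstrass points.

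Finally, the remaining bullet points follow by similar methods once $\mathcal{H}_g(2g-2)$ is handled: for strata $\mathcal{H}_g(2\ell,2\ell)$ the hyperelliptic locus is again closed-open and spin distinguishes the rest; for the strata $\mathcal{H}_g(2\ell-1,2\ell-1)$ only the hyperelliptic invariant can survive since the orders are odd and the spin parity is undefined; and for all remaining strata with at least one odd-order zero of distinct orders, no invariant survives and so, by the surgery reductions above, such strata are connected. The single step I expect to dominate the proof is the explicit identification of the \emph{odd/even spin} representatives in $\mathcal{H}_g(2g-2)$ and the verification that their full neighbourhoods under the surgery moves cover the entire non-hyperelliptic stratum; this is where Kontsevich--Zorich's use of the principal boundary and of Jenkins--Strebel differentials with prescribed combinatorics is essential.
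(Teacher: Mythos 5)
This statement is not proved in the paper at all: it is quoted verbatim in Section \S\ref{mscc} as a known theorem of Kontsevich--Zorich, with a citation to \cite{KZ}, and the paper then simply uses the resulting list of components as input for its own realization results. There is therefore no ``paper's own proof'' to compare your proposal against, and you should not have attempted to reconstruct one as if it were an internal result of this article.

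As an outline of the Kontsevich--Zorich argument your text is broadly faithful (invariance of hyperellipticity and of the Arf invariant, induction on genus via bubbling handles and breaking/collapsing zeros, reduction to the minimal stratum $\mathcal{H}_g(2g-2)$, explicit representatives for each parity), but it is a plan rather than a proof: every genuinely hard step is named and deferred. In particular, the connectedness of the non-hyperelliptic even and odd loci of $\mathcal{H}_g(2g-2)$, the verification that the hyperelliptic locus forms exactly one component, and the bookkeeping of how bubbling after breaking a zero shifts the parity (compare Remark \ref{rmk:spininv}, where the present paper is careful to distinguish its bubbling from that of \cite{KZ} precisely on this point) are all asserted without argument. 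One small inaccuracy: the original Kontsevich--Zorich proof of connectedness of the minimal-stratum components proceeds via explicit polygonal/permutation representatives and an inductive traversal, not via the principal boundary, which belongs to the later Eskin--Masur--Zorich machinery cited here as \cite{EMZ}.
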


\smallskip

\noindent Let us discuss the case of meromorphic differentials and begin from the case of genus one meromorphic differentials. Let $\mathcal{H}_{1}(m_1,\dots,m_k;-p_1,\dots,-p_n)$ be a stratum in $\Omega\mathcal{M}_{1,n}$ and let $d$ be the number of positive divisors of $\gcd(m_1,\dots,m_k,p_1,\dots,p_n)$. Recall that a stratum is non-empty as soon as the Gauss-Bonnet condition holds for some non-negative integer $g$, see Remark \ref{gbcond}, and $\sum p_i>1$. In \cite[Theorem 1.1]{BC}, Boissy showed the following

\begin{thmnn}[Boissy]
Let $\mathcal{H}_{1}(m_1,\dots,m_k;-p_1,\dots,-p_n)$ be a non-empty stratum of genus one meromorphic differentials. Denote by $d$  the number of positive divisors of $\gcd(m_1,\dots,m_k,p_1,\dots,p_k)$. Then the number of connected components of this  stratum is:
\begin{itemize}
    \item $d-1$ connected components if $k=n=1$. In this case the stratum is $\mathcal{H}_1(m,-m)$ and the connected components are parametrized by the positive divisors of $m$ different from $m$ itself. 
    \item $d$ otherwise. In this case each connected component is parametrized by a positive divisor of $\gcd(m_1,\dots,m_k,p_1,\dots,p_k)$.
\end{itemize}
\end{thmnn}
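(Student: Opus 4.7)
The plan is to show that the rotation number from Definition~\ref{rotnum} is a complete invariant for the connected components of any stratum $\mathcal{H}_1(m_1,\dots,m_k;-p_1,\dots,-p_n)$, and then to account for the exceptional case $k=n=1$ separately. Throughout, let $D=\gcd(m_1,\dots,m_k,p_1,\dots,p_n)$ so that the possible values of $\textnormal{rot}(X,\omega)$ are a priori the positive divisors of $D$.

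First I would verify that the rotation number is locally constant on a stratum, hence constant on each connected component. For a continuous family $(X_t,\omega_t)$, representatives $\gamma_\alpha,\gamma_\beta$ of a symplectic basis of $H_1(\overline{X}_t,\mathbb{Z})$ can be chosen to vary continuously. Their indices $\textnormal{Ind}(\gamma_\alpha),\textnormal{Ind}(\gamma_\beta)$ are integers and therefore locally constant provided the curves avoid the singularities, which can always be arranged. When deforming the basis curves through a singularity of order $k$ the index jumps by $\pm k$, but since $k$ divides the corresponding $m_i$ or $p_j$, the $\gcd$ defining $\textnormal{rot}$ is preserved. This shows $\textnormal{rot}$ is well-defined on the connected components and bounds the number of components by the number of positive divisors of $D$.

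Second, I would realize each valid divisor $r\mid D$ as the rotation number of an explicit translation surface in the stratum. Since $r$ divides all the orders, one can build such a surface by gluing a number of copies of $(\mathbb{C},dz)$ along slits following a cyclic pattern of order $r$, producing a representative whose two standard cycles $\alpha,\beta$ each have index a multiple of $r$, with the $\gcd$ being exactly $r$. Concretely, bubbling handles or attaching basic polygonal pieces of ``angular size'' $2\pi r$ at each zero and pole reproduces the desired orders and keeps the rotation number equal to $r$. This furnishes at least $d$ components, except possibly for the value $r=D$ in the exceptional case.

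Third, I would prove that any two surfaces $(X,\omega),(X',\omega')$ in the same stratum with the same rotation number $r$ lie in a common connected component. The strategy is to act by $\mathrm{GL}^+(2,\mathbb{R})$ and by surgery moves (merging zeros, splitting them, moving saddle connections) to bring each surface into a standard normal form depending only on $r$ and the partition of orders; each elementary move preserves $\textnormal{rot}$ by Step 1 and stays in the same stratum. Combining with Step 2 this matches the upper bound and yields exactly $d$ components in the generic case.

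The main obstacle is the exceptional stratum $\mathcal{H}_1(m;-m)$, where the residue theorem forces the single pole to have zero residue. Here I expect the value $r=m$ to be unattainable: if $\textnormal{Ind}(\gamma_\alpha)$ and $\textnormal{Ind}(\gamma_\beta)$ were both divisible by $m$, then the horizontal foliation on $\overline{X}$ minus the pole would descend through a cyclic $m$-fold cover $\overline{X}\to E$ of an elliptic curve pulling back a meromorphic differential with a single zero and a single pole, both of order $1$ on $E$; such a differential does not exist by a residue argument on $E$. This contradiction shows $\textnormal{rot}\ne m$, so only the $d-1$ proper divisors of $m$ are realized, recovering Boissy's count. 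I would make the cover argument precise by exhibiting, for any $(X,\omega)$ with $\textnormal{rot}=m$, the character $\chi$ of period integrals landing in $m\cdot\Lambda$ for some lattice $\Lambda$ and invoking the Haupt-type realization theory recalled in Section~\ref{agv} to derive the incompatibility with having only a single simple-type zero--pole pair.
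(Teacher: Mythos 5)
Note first that this theorem is quoted from Boissy \cite{BC}; the paper cites it rather than proving it, so there is no internal proof to compare against, and I am assessing your argument on its own merits. Your Steps~1 and~2 (local constancy of $\textnormal{rot}$ and realization of each divisor of $\gcd(m_1,\dots,m_k,p_1,\dots,p_n)$ by explicit gluings) are correct in outline and mirror the constructions the paper itself carries out in Section~\S\ref{genusonemero}. The serious gap is Step~3: proving that any two surfaces in the stratum with the same rotation number lie in a common connected component is the technical heart of Boissy's theorem, and ``act by $\mathrm{GL}^+(2,\mathbb{R})$ and surgery moves to reach a normal form'' only restates the goal. You would need to specify the normal form, verify that every elementary move preserves both the stratum and $\textnormal{rot}$, and show these moves actually reach the normal form from an arbitrary starting point; without that argument you only obtain the lower bound $d$ (respectively $d-1$) on the number of components, not the matching upper bound.

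Step~4 contains an actual error. If $\pi\colon\overline{X}\to E$ were a cyclic degree-$m$ cover totally ramified over a simple zero $z_0$ and a simple pole $p_0$ of a form $\eta$ on $E$, then in ramified local coordinates $u=z^m$ one computes $\pi^*(u\,du)=m\,z^{2m-1}\,dz$ and $\pi^*(du/u)=m\,dz/z$; the pullback therefore has a zero of order $2m-1$ and a pole of order $1$, not a zero and a pole both of order $m$. Hence no such cover produces an element of $\mathcal{H}_1(m;-m)$, and the non-existence of $\eta$ on $E$ yields no contradiction with $\textnormal{rot}=m$. The clean route is the torsion-number description the paper records in Section~\S\ref{mscc}: rotation number $l$ for $(X,\omega)\in\mathcal{H}_1(m;-m)$ is equivalent to the linear equivalence $\tfrac{m}{l}\,Z-\tfrac{m}{l}\,P\sim 0$ on the elliptic curve $\overline{X}$. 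Taking $l=m$ would force $Z-P\sim 0$, that is, the existence of a degree-one meromorphic function on a genus-one curve, which is impossible since $Z\neq P$. This two-line observation should replace the covering heuristic.
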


\noindent Common divisors of the zero and pole orders in the above result are called {\em rotation numbers}.  
It is worth mentioning that connected components of the strata in genus one can also be classified by another invariant from the algebraic viewpoint, see \cite[Section \S3.2]{CC}. For a stratum $\mathcal H_1(m_1,\dots,m_k;\,-p_1,\dots,-p_n)$ of genus-one differentials, let $l$ be any positive divisor of $\gcd(m_1,\dots,m_k,p_1,\dots,p_n)$ -- with the only exception being $l=n$ for signatures of the form $(n;-n)$. We shall say that $(X,\omega)\in\mathcal H_1(m_1,\dots,m_k;\,-p_1,\dots,-p_n)$ has {\em torsion number} $l$ if 
\begin{equation}
    \sum_{j=1}^k \left(\frac{m_j}{l}\right)\,Z_j\,-\,\sum_{i=1}^n\left(\frac{p_i}{l}\right)\,P_i \,\sim \,0
\end{equation}
where $\{Z_i,\dots,Z_n\}$ and $\{P_1,\dots,P_n\}$ are respectively the set of zeros and poles of $\omega$. Here $\sim$ stands for linear equivalence which means there exists a meromorphic function on the underlying Riemann surface with zeros at $\{Z_1,\dots, Z_k\}$ and poles at $\{P_1,\dots,P_n\}$ with orders given by the coefficients in the relation. We shall use this characterisation in Section \S\ref{sec:trirep} to show a few Lemmas concerning the exceptional cases listed in Theorem \ref{thm:mainthm2}.

\begin{rmk}
The two notions of rotation numbers and torsion numbers coincide, see \cite[Section \S3.4]{Tahar} and 
\cite[Section \S3.4 and Proposition 3.13]{CG}.
\end{rmk}

\smallskip

\noindent For genus-$g$ meromorphic differentials with $g\ge2$, the classifications of connected components is similar to that of holomorphic differentials. Let 
$\mathcal{H}_{g}(m_1,\dots,m_k;-p_1,\dots,-p_n)$ be a stratum in $\Omega\mathcal{M}_{g,n}$. In \cite{BC}, Boissy has showed that a stratum admits a hyperelliptic component if and only if it is one of the following
\begin{equation}\label{hypcomp}
    \mathcal{H}_{g}(2m,-2p),\,\, \mathcal{H}_{g}(m, m,-2p),\,\, \mathcal{H}_{g}(2m,-p, -p),\,\, \mathcal{H}_{g}(m, m, -p, -p),
\end{equation}

\noindent otherwise there is no hyperelliptic component. According to Boissy, we introduce the following

\begin{defn}\label{def:kindofstrata}
For a stratum $\mathcal{H}_g(\kappa;-\nu)$ we shall say that the set of zeros and poles is 
\begin{itemize}
    \item of \textit{hyperelliptic type} if $\kappa=\{2m\}$ or $\{m,m\}$ and $\nu=\{2p\}$ or $\{p,p\}$, where $m$ and $p$ are positive integers;
    \item of \textit{even type} if $\kappa=(2m_1,\dots,2m_k)$ and $\nu=(2p_1,\dots,2p_n)$ or $\nu=(1,1)$.
\end{itemize}
We shall say that a translation surface is of even type if it belongs to a stratum of even type, namely the set of zeros and poles is of even type. Similarly, a translation surface is hyperelliptic if it belongs to the hyperelliptic component of the stratum.
\end{defn}

\noindent When necessary, we shall adopt the same terminology. We now state the following result due to Boissy concerning connected components of the strata of genus-$g$ meromorphic differentials with $g\ge2$.

\begin{thmnn}[Boissy]
Let $\mathcal{H}_g(\kappa,-\nu)=\mathcal{H}_{g}(m_1,\dots,m_k;-p_1,\dots,-p_n)$ be a non-empty stratum of genus-$g$ meromorphic differentials. We have the following.
\begin{itemize}
    \item If $p_1+\cdots+p_n$ is odd and greater than two then the stratum is connected.
    \smallskip
    \item If $\nu=\{2\}$ or $\nu=\{1,1\}$ \textit{and} $g=2$ then we distinguish two cases
    \begin{itemize}
        \item if the set of poles and zeros is of hyperelliptic type, then there are two connected components, one hyperelliptic, the other not; in this case, these two components are also distinguished by the parity of the spin structure,
        \smallskip
        \item otherwise the stratum is connected
    \end{itemize}
    \smallskip
    \item If $\nu$ is \textit{not} $\{2\},\,\{1,1\}$ (\textit{i.e.} $p_1+\cdots+p_n>2$) or $g\ge3$, then we distinguish two cases as follows
    \begin{itemize}
        \item if the set of poles and zeros is of hyperelliptic type, there is exactly one hyperelliptic connected component, and one or two non-hyperelliptic components that are described below. Otherwise, there is no hyperelliptic component.
        \smallskip
        \item if the set of poles and zeros is of even type, then the stratum contains exactly two non-hyperelliptic connected components that are distinguished by the parity of the spin structure. Otherwise the stratum contains exactly one non-hyperelliptic component.
    \end{itemize}
\end{itemize}
\end{thmnn}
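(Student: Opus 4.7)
The plan is to adapt the strategy used by Kontsevich–Zorich in the holomorphic case to the meromorphic setting. The proof has two parts: \emph{(a)} the invariants (hyperellipticity and spin parity, where applicable) are locally constant on the stratum, giving an upper bound on the number of connected components; \emph{(b)} any two translation surfaces with the same invariants lie in the same path-component. Part \emph{(a)} is essentially immediate. Hyperellipticity is an open and closed condition because the existence of an involution $\tau$ with $\tau^{*}\omega=-\omega$ persists under deformation, being a linear condition on the discrete set of candidate involutions; and the spin parity defined by \eqref{eq:spinparity} is invariant under continuous deformations by the results of Atiyah \cite{AM} and Mumford \cite{MD} already cited.

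For part \emph{(b)} the plan is to induct on the complexity of the stratum, using a surgery that breaks a zero of order $m_1+m_2$ into two zeros of orders $m_1$ and $m_2$ joined by a short saddle connection (and its inverse, collision of close zeros). This reduces the classification to the \textbf{minimal strata} $\mathcal H_g(2g-2+\sum p_j;\,-p_1,\dots,-p_n)$, and one checks that breaking/collision is compatible with both hyperellipticity (when preserved combinatorially) and spin parity. The minimal strata are then analysed by a direct polygonal construction: a representative can be cut along a chosen horizontal separatrix issuing from the unique zero, producing a collection of marked polygons whose identifications encode the stratum. Two such representatives with the same invariants differ by a finite sequence of elementary moves (a Rauzy-type induction on the polygons), yielding a path in the stratum.

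The main obstacle will be showing \emph{(b)} in the \emph{odd-pole-sum} case, \textit{i.e.} justifying why $p_1+\cdots+p_n$ odd (and $>2$) forces connectedness without any parity obstruction. The key point is that the presence of a pole of odd order provides a local surgery that flips spin parity while remaining in the stratum: one cuts along a saddle connection adjacent to an odd-order pole and re-glues after an appropriate rotation of the associated infinite half-plane, which, because the residue is not forced to match, changes the index of a transverse loop by an odd amount and hence flips \eqref{eq:spinparity}. A similar argument breaks any would-be hyperelliptic obstruction when the zero/pole pattern is not of hyperelliptic type as in Definition \ref{def:kindofstrata}. The delicate case $g=2$, $\nu\in\{\{2\},\{1,1\}\}$ must be singled out because the stratum is low-dimensional: here the hyperelliptic and even-spin components coincide, mirroring the classical coincidence in $\mathcal H_2(2)$, and one verifies this by an explicit construction of a hyperelliptic representative and computation of its spin parity.

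Finally, the result for $g=1$ is proven separately: in that case there is no spin structure, but the rotation number from Definition \ref{rotnum} (equivalently, the torsion number) replaces parity. Its invariance under continuous deformation follows from its $\gcd$ definition, and connectedness within a fixed rotation number is again obtained by reduction to a minimal stratum and a polygonal normal-form argument, with the caveat of the exceptional stratum $\mathcal H_1(m,-m)$ where the divisor $d$ itself is excluded.
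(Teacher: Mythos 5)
A point of context first: the paper does not prove this statement at all --- it is quoted as background from Boissy \cite{BC}, so there is no in-paper proof to compare against; I can only judge your sketch on its own merits. You correctly identify the Kontsevich--Zorich/Boissy template (deformation-invariance of candidate invariants, reduction to minimal strata by breaking/colliding zeros, normal forms for minimal strata), but two steps as written would fail. The first is your part \emph{(a)} for hyperellipticity: the existence of an involution $\tau$ with $\tau^{*}\omega=-\omega$ does \emph{not} persist under arbitrary deformations inside a stratum. The hyperelliptic locus is always closed but is in general of positive codimension; it is open --- hence a union of connected components --- only for the four zero/pole patterns listed in \eqref{hypcomp}, and seeing this requires a dimension count in period coordinates (identifying the hyperelliptic locus with a stratum of differentials on the quotient sphere and checking that the dimensions agree). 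Without that argument you cannot explain why hyperelliptic components occur exactly for strata of hyperelliptic type, which is half the content of the theorem.

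The second gap is the case $p_1+\cdots+p_n$ odd. If the sum of pole orders is odd then some $p_i$ is odd, the stratum is not of even type, and the parity \eqref{eq:spinparity} is simply not well-defined: pushing a curve across an odd-order singularity changes its index by an odd amount, so the Arf invariant depends on the chosen representatives. There is therefore no ``parity obstruction'' to be removed by a parity-flipping surgery; what must be proved is outright connectedness, i.e.\ that \emph{no} invariant separates the stratum. Moreover your proposed cut-and-reglue of a half-plane adjacent to an odd pole produces a new translation surface but is not, as stated, a path in the stratum; to use it you must exhibit a continuous family joining the two surfaces. That step, together with the asserted but unproven Rauzy-type normal form for minimal \emph{meromorphic} strata (where the polygons are unbounded and the combinatorics is genuinely different from the holomorphic case), is where the entire difficulty of Boissy's theorem lives; his actual argument reduces to the holomorphic classification via an analysis of the geometry near the poles rather than a direct polygonal normal form.
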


\noindent This concludes the classification of the strata of meromorphic differentials. 

\medskip

\section{Surgeries and isoperiodic deformations}\label{sec:surgeries} 

\noindent In this section we recall a few surgeries we shall use in the sequel. These all basically consist of cutting  a given translation surfaces along one, or possibly more, geodesic segment(s) and gluing them back along those segments in order to get new translation structures. Different gluings will provide different translation structures. 

\subsection{Breaking a zero}\label{sec:zerobreak} The surgery we are going to describe has been introduced by Eskin-Masur-Zorich in \cite[Section 8.1]{EMZ} and it literally "breaks up" a branch point in two, or possibly more, branch points of lower orders. Complex-analytically this can be thought as the analogue to the classical Schiffer variations for Riemann surfaces, see \cite{NS}. The importance of this surgery for us is due to its fundamental property of preserving the topology of the underlying surface even if the overall geometry is changed. In particular, the new translation surface we obtain once the surgery is performed has the same period character as the original one. 

\begin{rmk} In the context of branched projective structures, such a surgery is also known as \emph{movement of branched points} and it has been originally introduced by Tan in \cite[Chapter 6]{TA} for showing the existence of a complex one-dimensional continuous family of deformations of a given structure.\end{rmk}

\noindent Let us now explain this surgery in more detail. Let $(X,\omega)$ be a translation surface possibly with poles. Breaking a zero is a procedure that takes place at the $\varepsilon$-neighbourhood of some zero of order $m$ of the differential on which it looks like the pull-back of the form $dz$ via a branched covering $z\mapsto z^{m+1}$. The differential is then modified by a surgery inside this $\varepsilon$-neighbourhood. Once this surgery is performed we obtain a new translation structure with two zeros of order $m_1$ and $m_2$ such that $m_1+m_2 = m$. Furthermore, the translation structure remains unchanged outside the $\varepsilon$-neighbourhood of such a  zero of order $m$. The idea is to consider the $\varepsilon$-neighbourhood of a zero of order $m$ as $m+1$ copies of a disc $D$ of radius $\varepsilon$ whose diameters are identified in a specified way. We can see this family of discs as a collection of $m+1$ upper half-discs and $m+1$ lower half-discs. See figure \ref{fig:zerolocmodel}.

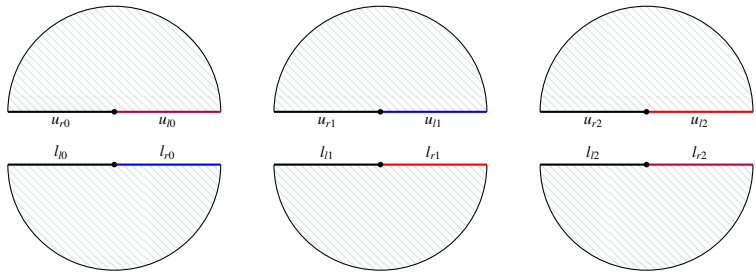
\begin{figure}[ht]
    \centering
    \begin{tikzpicture}[scale=0.7, every node/.style={scale=0.6}]
    \definecolor{pallido}{RGB}{221,227,227}
    \foreach \x [evaluate=\x as \coord using 4 + 5*\x] in {0, 1, 2} 
    {
    \draw [pattern=north west lines, pattern color=pallido] (\coord,0) arc [start angle = 0,end angle = 180,radius = 2];
    \draw [pattern=north west lines, pattern color=pallido] (\coord,-1) arc [start angle = 0,end angle = -180,radius = 2];
    }
    \foreach \x [evaluate=\x as \leftend using 5*\x] [evaluate=\x as \rightend using 2 + 5*\x] in {0,1,2} 
    {
    \draw [thick] (\leftend, 0) -- (\rightend, 0);
    \draw [thick] (\leftend, -1) -- (\rightend, -1);
    }
    \foreach \x [evaluate=\x as \leftlabel using 1 + 5*\x] [evaluate=\x as \rightlabel using 3 + 5*\x] in {0, 1, 2} 
    {
    \node [below] at (\leftlabel, 0) {$u_{r\x}$};
    \node [below] at (\rightlabel, 0) {$u_{l\x}$};
    \node [above] at (\leftlabel, -1) {$l_{l\x}$};
    \node [above] at (\rightlabel, -1) {$l_{r\x}$};
    }
    
\foreach \botindex / \topindex / \colr [evaluate=\botindex as \botleftend using 5*\botindex + 2] [evaluate=\botindex as \botrightend using 5*\botindex + 4] [evaluate=\topindex as \topleftend using 5*\topindex + 4] [evaluate=\topindex as \toprightend using 5*\topindex + 2] in {0/1/blue, 1/2/red, 2/0/purple} 
    {
    \draw [thick, \colr] (\topleftend, 0) -- (\toprightend, 0);
    \draw [thick, \colr] (\botleftend, -1) -- (\botrightend, -1);
    \fill (\toprightend, 0) circle (1.5pt);
    \fill (\botleftend, -1) circle (1.5pt);
    }
 
    \end{tikzpicture}
    \caption{An $\varepsilon$-neighbourhood of a zero of order $2$.}
    \label{fig:zerolocmodel}
\end{figure}

\noindent  We now break a zero of order $m$ into two zeros of order $m_1$ and $m_2$. To break a zero consists of identifying the diameters of the starting $m+1$ discs in a different way as follows. In order to do this we modify the labelling on the upper half disc indexed by $0$, the lower half disc indexed by $m_1$, and all upper and lower half discs with index more than $m_1$ accordingly. The modified labelling is shown below in Figure \ref{fig:splitlocmodel} for the case of splitting a zero of order $2$ into two zeros of order $1$.

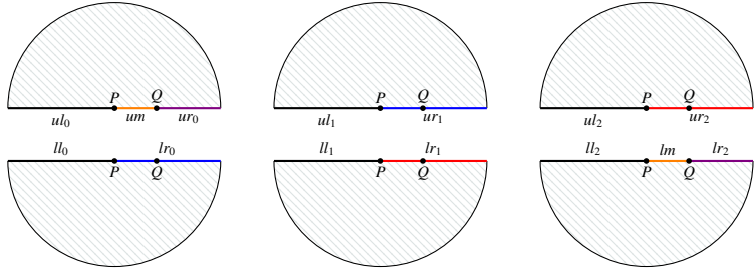
\begin{figure}[ht]
    \centering
    \begin{tikzpicture}[scale=0.7, every node/.style={scale=0.6}]
    \definecolor{pallido}{RGB}{221,227,227}
    \foreach \x [evaluate=\x as \coord using 4 + 5*\x] in {0, 1, 2} 
    {
    \draw [pattern=north west lines, pattern color=pallido] (\coord,0) arc [start angle = 0,end angle = 180,radius = 2];
    \draw [pattern=north west lines, pattern color=pallido] (\coord,-1) arc [start angle = 0,end angle = -180,radius = 2];
    }
    \foreach \x [evaluate=\x as \leftend using 5*\x] [evaluate=\x as \rightend using 2 + 5*\x] in {0, 1, 2} 
    {
    \draw [thick] (\leftend, 0) -- (\rightend, 0);
    \draw [thick] (\leftend, -1) -- (\rightend, -1);
    \node[above] at (\rightend, 0) {$P$};
    \node[below] at (\rightend, -1) {$P$};
    }
    \draw [thick, orange] (2,0) -- (2.8,0);
    \fill (2,0) circle (1.5pt);
    \draw [thick, orange] (12,-1) -- (12.8,-1);
    \fill (12,-1) circle (1.5pt);
    \node [below] at (2.4, 0) {$um$};
    \node [above] at (12.4, -1) {$lm$};
    \foreach \botindex / \topindex / \colr [evaluate=\botindex as \botleftend using 5*\botindex + 2] [evaluate=\botindex as \botrightend using 5*\botindex + 4] [evaluate=\topindex as \topleftend using 5*\topindex + 2] [evaluate=\topindex as \toprightend using 5*\topindex + 4] [evaluate=\topindex as \topsplpt using 5*\topindex + 2.8] [evaluate=\botindex as \botsplpt using 5*\botindex + 2.8] in {0/1/blue, 1/2/red} 
    {
    \draw [thick, \colr] (\topleftend, 0) -- (\toprightend, 0);
    \draw [thick, \colr] (\botleftend, -1) -- (\botrightend, -1);
    \fill (\topleftend, 0) circle (1.5pt);
    \fill (\botleftend, -1) circle (1.5pt);
    \fill (\topsplpt, 0) circle (1.5pt);
    \fill (\botsplpt, -1) circle (1.5pt);
    \node[above] at (\topsplpt, 0) {$Q$};
    \node[below] at (\botsplpt, -1) {$Q$}; 
    }
    \foreach \botindex / \topindex / \colr [evaluate=\botindex as \botleftend using 5*\botindex + 2.8] [evaluate=\botindex as \botrightend using 5*\botindex + 4] [evaluate=\topindex as \topleftend using 5*\topindex + 2.8] [evaluate=\topindex as \toprightend using 5*\topindex + 4] in {2/0/violet} 
    {
    \draw [thick, \colr] (\topleftend, 0) -- (\toprightend, 0);
    \draw [thick, \colr] (\botleftend, -1) -- (\botrightend, -1);
    \fill (\topleftend, 0) circle (1.5pt);
    \fill (\botleftend, -1) circle (1.5pt);
    \node[above] at (\topleftend, 0) {$Q$};
    \node[below] at (\botleftend, -1) {$Q$};
    }
    \foreach \x [evaluate=\x as \leftlabel using 1 + 5*\x] in {0, 1, 2} 
    {
    \node [below] at (\leftlabel, 0) {$ul_{\x}$};
    \node [above] at (\leftlabel, -1) {$ll_{\x}$};
    }
    \foreach \botindex / \topindex [evaluate=\botindex as \botlabel using 3+ 5*\botindex] [evaluate=\topindex as \toplabel using 3+ 5*\topindex] in {0/1, 1/2} 
    {
    \node [below] at (\toplabel, 0) {$ur_{\topindex}$};
    \node [above] at (\botlabel, -1) {$lr_{\botindex}$};
    }
    \foreach \botindex / \topindex [evaluate=\botindex as \botlabel using 3.4+ 5*\botindex] [evaluate=\topindex as \toplabel using 3.4+ 5*\topindex] in {2/0} 
    {
    \node [below] at (\toplabel, 0) {$ur_{\topindex}$};
    \node [above] at (\botlabel, -1) {$lr_{\botindex}$};
    }
    \end{tikzpicture}
    \caption{New labelling for breaking up a zero of order $2$ in two zeros of order $1$.}
    \label{fig:splitlocmodel}
\end{figure}

\noindent We now identify $ul_i$ with $ll_i$ and $lr_i$ with $ur_{i+1}$ as before with the added identification of $um$ with $lm$. This identification gives two singular points $A$ and $B$, where $A$ is a zero of the differential of order $k_1$ and $B$ is a zero of order $k_2$. We also get a geodesic line segment joining $A$ and $B$. Given $c \in \mathbb{C}\setminus\{0\}$ with length less than $2\varepsilon$, we can perform the surgery in such a way that the line segment joining $A$ and $B$ is $c$. It is also clear that such a deformation of the translation structure is only local. This procedure can be repeated multiple times to obtain zeros of orders $m_1, \dots, m_k$ from a single zero of order $m_1 + \cdots + m_k$. We shall frequently rely on this procedure of breaking a zero in the remaining part of this paper. The following Lemmas are easy to establish.

\begin{lem}
Let $(X,\omega)$ be a genus one differential with rotation number $r$. Let $(Y,\xi)$ be a genus one meromorphic differential in the stratum $\mathcal{H}_1(m_1,\dots,m_k;-p_1,\dots,-p_n)$ which is obtained from $(X,\omega)$ by breaking a zero. If $r$ divides $\gcd(m_1,\dots,m_k,p_1,\dots,p_n)$ then $(Y,\xi)$ has rotation number $r$.
\end{lem}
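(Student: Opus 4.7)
\medskip

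\noindent\textbf{Proof proposal.} The strategy is to exploit the \emph{local} nature of the surgery and then run a short divisibility argument on the data entering the definition of the rotation number. First, recall from the discussion in Section \S\ref{sec:zerobreak} that breaking a zero is performed inside an $\varepsilon$-neighborhood $U$ of a single zero of order $m$ of $\omega$, and that $(X,\omega)$ and $(Y,\xi)$ are canonically isometric as translation surfaces on the complement of $U$. In particular, the underlying smooth surface is unchanged, the horizontal foliations agree on $X\setminus U = Y\setminus U$, and the period character is preserved.

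Next, fix a symplectic basis $\{\alpha,\beta\}$ of $\textnormal{H}_1(\overline{X},\Z)$. Since the complement of finitely many points in a torus still carries representatives of every homology class, we may choose smooth simple closed curves $\gamma_\alpha,\gamma_\beta$ realizing $\alpha,\beta$ that lie entirely in $X\setminus U$ and avoid all zeros and poles of $\omega$ (hence also all zeros and poles of $\xi$, which outside $U$ coincide with those of $\omega$). The index of a curve depends only on the horizontal foliation along it, so under the isometry between the complements of $U$ we have
\begin{equation*}
    \textnormal{Ind}_{(X,\omega)}(\gamma_\alpha) \,=\, \textnormal{Ind}_{(Y,\xi)}(\gamma_\alpha) \,=:\, I_\alpha, \qquad \textnormal{Ind}_{(X,\omega)}(\gamma_\beta) \,=\, \textnormal{Ind}_{(Y,\xi)}(\gamma_\beta) \,=:\, I_\beta.
\end{equation*}
Thus both rotation numbers are computed using the same pair of indices $(I_\alpha, I_\beta)$, and only the lists of zero/pole orders differ.

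Let $\{m_1',\dots,m_{k'}';\,-p_1,\dots,-p_n\}$ be the signature of $(X,\omega)$ and $\{m_1,\dots,m_k;\,-p_1,\dots,-p_n\}$ the signature of $(Y,\xi)$. The pole orders are identical, and the breaking surgery replaces one zero of order $m_j'$ by several zeros whose orders sum to $m_j'$; hence each original $m_j'$ is a sum of a subset of $\{m_1,\dots,m_k\}$. Denote
\begin{equation*}
    r\,=\,\gcd\bigl(I_\alpha,I_\beta,m_1',\dots,m_{k'}',p_1,\dots,p_n\bigr),\qquad r'\,=\,\gcd\bigl(I_\alpha,I_\beta,m_1,\dots,m_k,p_1,\dots,p_n\bigr).
\end{equation*}
By hypothesis $r\mid \gcd(m_1,\dots,m_k,p_1,\dots,p_n)$, and $r\mid I_\alpha$, $r\mid I_\beta$ by definition, so $r\mid r'$. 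Conversely $r'$ divides each $m_i$ and hence divides each sum $m_j'=\sum m_i$, so together with $r'\mid I_\alpha,I_\beta,p_j$ we conclude $r'\mid r$. Therefore $r=r'$, which is the claim.

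The argument is essentially mechanical; the only point that requires care is the choice of the representatives $\gamma_\alpha,\gamma_\beta$ outside $U$ and outside the singular set, so that the indices entering the two gcd computations coincide tautologically. The divisibility hypothesis on $r$ is exactly what is needed to rule out the \emph{a priori} possibility that the new gcd could strictly exceed $r$ when the broken zero has pieces not divisible by $r$.
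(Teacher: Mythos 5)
Your proof is correct and is essentially the routine argument the paper omits (the lemma is stated there without proof as ``easy to establish''): locality of the zero-breaking surgery lets you compute both rotation numbers with the same pair of indices, and the two-way divisibility then follows. One small slip in your closing remark: since $r'$ divides every $m_i$ and hence every merged order $m_j'$, the new gcd can never \emph{exceed} $r$; the hypothesis $r\mid\gcd(m_1,\dots,m_k,p_1,\dots,p_n)$ is needed to rule out that it is strictly \emph{smaller} (e.g.\ breaking a zero of order $4$ into zeros of orders $1$ and $3$ when $r=2$). This does not affect the body of your argument, which correctly proves both divisibilities.
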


\begin{lem}
Breaking a zero does not alter the spin structure of a genus-$g$ differential for $g\ge2$.
\end{lem}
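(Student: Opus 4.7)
The plan is to exploit the local nature of the zero-breaking surgery described in Section \S\ref{sec:zerobreak}. Let $(X,\omega)$ be a meromorphic differential lying in a stratum of even type, so that its spin parity $\varphi(\omega)$ is defined via formula \eqref{eq:spinparity}. I assume the surgery breaks a zero of order $2m$ into two zeros of orders $2m_1$ and $2m_2$ with $m_1+m_2=m$, so that the resulting differential $(X',\omega')$ still has only zeros of even order and $\varphi(\omega')$ is likewise defined. By construction the surgery is supported inside an $\varepsilon$-neighborhood $U$ of the original zero, and the translation surfaces $(X,\omega)$ and $(X',\omega')$ agree on the complement $\overline{X}\setminus U=\overline{X}'\setminus U$.

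First, I would fix a symplectic basis $\{\alpha_1,\beta_1,\dots,\alpha_g,\beta_g\}$ of $\textnormal{H}_1(\overline{X},\mathbb{Z}_2)$ and choose representatives, still denoted $\alpha_i,\beta_i$, lying entirely in $\overline{X}\setminus U$. This is always possible because a closed orientable surface of genus $g$ with a small open disc removed deformation retracts onto a wedge of $2g$ circles realising any prescribed symplectic basis. Since the underlying topological closed surface is unchanged by the surgery (both pieces glued to $\overline{X}\setminus U$ are topological discs), the inclusion $\overline{X}\setminus U\hookrightarrow\overline{X}'$ induces the canonical isomorphism $\textnormal{H}_1(\overline{X},\mathbb{Z}_2)\cong\textnormal{H}_1(\overline{X}',\mathbb{Z}_2)$ carrying the chosen representatives to a symplectic basis of $\textnormal{H}_1(\overline{X}',\mathbb{Z}_2)$.

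Next, I would appeal to the definition of the index via equation \eqref{comphorfol}: the index of a smooth closed curve avoiding the singularities is the degree of the map comparing its unit tangent field with the unit vector field tangent to the horizontal foliation along the curve. Since the horizontal foliation of $\omega$ coincides with that of $\omega'$ on $\overline{X}\setminus U$, and each basis representative lies entirely in this common locus, it follows at once that $\textnormal{Ind}(\alpha_i)=\textnormal{Ind}'(\alpha_i)$ and $\textnormal{Ind}(\beta_i)=\textnormal{Ind}'(\beta_i)$ for every $i$. Substituting these equalities into formula \eqref{eq:spinparity} gives $\varphi(\omega)=\varphi(\omega')\pmod 2$.

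I do not foresee a genuine obstacle here: the statement is essentially the combination of two facts already in place, namely that the surgery is purely local in the translation-surface structure, and that the spin parity is built from homology classes admitting representatives away from the surgery region. The only point requiring a moment of care is the canonical identification between $\textnormal{H}_1(\overline{X},\mathbb{Z}_2)$ and $\textnormal{H}_1(\overline{X}',\mathbb{Z}_2)$, but this is immediate from the observation that both interiors being glued to the common exterior $\overline{X}\setminus U$ are simply connected discs.
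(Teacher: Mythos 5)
Your argument is correct, and it is the intended one: the paper groups this lemma among those ``easy to establish'' and supplies no proof. Since breaking a zero is supported in a topological disc $U$, a symplectic basis of $\textnormal{H}_1(\overline{X},\mathbb{Z}_2)$ can be represented by curves in $\overline{X}\setminus U$ and carried over by the canonical identification of $\mathbb{Z}_2$-homologies, and the horizontal foliation is unchanged on $\overline{X}\setminus U$, so the indices entering formula \eqref{eq:spinparity} are unchanged and hence so is the parity.
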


\begin{lem}\label{lem:breakhyp}
Let $(X,\omega)\in\mathcal{H}_g(2m;-\nu)$ be a hyperelliptic translation surface, where $\nu=\{2m-2g+2\}$, or $\{m-g+1,\,m-g+1\}$, and let $(Y,\xi)$ be the structure in $\mathcal{H}_g(m,m;-\nu)$ obtained from $(X,\omega)$ by breaking a zero. Then $(Y,\xi)$ is hyperelliptic.
\end{lem}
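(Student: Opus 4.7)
The strategy is to perform the zero-breaking surgery of Section~\S\ref{sec:zerobreak} in a manner that is equivariant under the hyperelliptic involution, so that the involution persists on the new translation surface and remains hyperelliptic.

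Denote by $\tau\colon\overline{X}\longrightarrow\overline{X}$ the hyperelliptic involution of $(X,\omega)$, satisfying $\tau^*\omega=-\omega$. Since $\tau$ preserves the zero set of $\omega$ and there is a single zero, the point $P$ of order $2m$ is necessarily a fixed point of $\tau$. Choosing the flat local coordinate $w=\int_P^{\,\cdot}\omega$, the $\varepsilon$-neighbourhood of $P$ is realised as a $(2m+1)$-fold branched cover of the disc $\{|w|<\varepsilon\}$, and the condition $\tau^*\omega=-\omega$ forces $\tau$ to act on this local model via $w\mapsto -w$. I would then perform the surgery with the splitting parameter placed symmetrically, the two new zeros $A$ and $B$ being at $w=-c/2$ and $w=c/2$, so that the geodesic slit $[A,B]$ is centred at the origin. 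Under $w\mapsto -w$ the slit is mapped to itself with its endpoints swapped, and the whole modified identification pattern of Figure~\ref{fig:splitlocmodel} becomes $\tau$-invariant. Since nothing changes outside the $\varepsilon$-ball, the involution $\tau$ extends to a biholomorphic involution $\tau'\colon\overline{Y}\longrightarrow\overline{Y}$ which swaps $A$ with $B$ and still satisfies $(\tau')^*\xi=-\xi$.

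It remains to verify that $\tau'$ is hyperelliptic, equivalently by Riemann--Hurwitz that its fixed-point set has cardinality exactly $2g+2$. Outside the surgery region $\tau'$ agrees with $\tau$ and contributes $2g+1$ fixed points; the missing fixed point is $P$ itself, which has been replaced by the swapped orbit $\{A,B\}$. The sought-after additional fixed point inside the surgery region is the midpoint of the slit $[A,B]$, that is, the unique preimage of $w=0$ in the modified cover fixed by the lifted involution. The main obstacle is to rule out the existence of further fixed points among the $2m+1$ preimages of $w=0$. I would address this either by an explicit analysis of the monodromy data of the modified branched cover, where the symmetric choice of $c$ conjugates the two ramification cycles to each other via an involution with a single fixed sheet, or more efficiently via a deformation argument: letting $c$ shrink continuously toward zero produces a one-parameter family of holomorphic involutions $\tau_c$ whose fixed points are all transverse (locally modelled on $z\mapsto -z$), so that the fixed-point count is constant along the family and equal to the count $2g+2$ of $\tau_0=\tau$. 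Either way, $\overline{Y}/\tau'\cong\cp$, and hence $(Y,\xi)$ is hyperelliptic by Definition~\ref{hypdef}.
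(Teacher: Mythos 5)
Your argument is correct and is essentially the argument the paper has in mind (the paper states this lemma without proof, declaring it ``easy to establish''): break the zero symmetrically with respect to the local model $w\mapsto -w$ of the involution at $P$, so that $\tau$ survives as an involution $\tau'$ swapping $A$ and $B$, and then count fixed points — the $2g+1$ old ones away from $P$ plus the midpoint of the saddle connection give $2g+2$, which forces $\overline{Y}/\tau'\cong\cp$ by Riemann--Hurwitz. One small point of completeness: the lemma as literally stated asserts hyperellipticity for \emph{any} breaking of the zero, whereas you only construct the symmetric one. This is harmless both because the paper only ever invokes the lemma for a breaking it is free to choose, and because the parameter space of breakings of a single zero into two zeros of equal order is connected while the hyperelliptic locus in $\mathcal{H}_g(m,m;-\nu)$ is a union of connected components, so one hyperelliptic breaking implies all breakings are hyperelliptic; it would be worth adding a sentence to that effect.
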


\smallskip

\subsection{Bubbling a handle with positive volume}\label{ssec:bubhandle} We now describe a second surgery introduced by Kontsevich and Zorich in \cite{KZ}. In their paper this surgery is defined for holomorphic differentials but, as already observed by Boissy in \cite{BC}, the same surgery can be defined for meromorphic differentials because this is a local surgery. Topologically, \textit{bubbling a handle with positive volume} consists of  adding a handle, say $\Sigma$ to a given surface. Let us see how this can be done metrically. Let $(X,\omega)\in\mathcal{H}_g(m_1,\dots,m_k;-p_1,\dots,-p_n)$ be a translation surface, possibly with finite order poles. Let $l\subset (X,\omega)$ be a geodesic segment with distinct endpoints. 

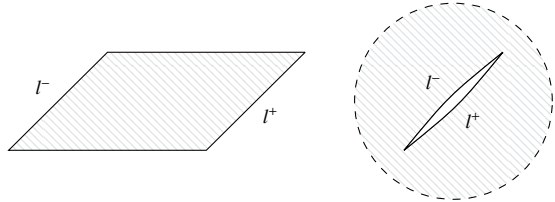
\begin{figure}[!ht] \label{fig:bubbhand}
\centering
\begin{tikzpicture}[scale=0.65, every node/.style={scale=0.75}]
\definecolor{pallido}{RGB}{221,227,227}
\draw[black, pattern=north west lines, pattern color=pallido] (-3,1) -- (-5, -1) -- (-9,-1) -- (-7, 1) -- (-3,1);
\node[above left] at (-8,0) {$l^-$};
\node[below right] at (-4,0) {$l^+$};
\draw[dashed, black, pattern=north west lines, pattern color=pallido] (0,0) circle (2);
\draw (-1,-1) .. controls (0.1, -0.1) .. (1,1);
\draw (-1,-1) .. controls (-0.1, 0.1) .. (1,1);
\node[above left] at (-0.1, 0.1) {$l^-$};
\node[below right] at (0.1, -0.1) {$l^+$}; 
\fill [white, thin, draw=black] (-1,-1) .. controls (0.1, -0.1) .. (1,1) .. controls (-0.1, 0.1) .. (-1,-1);
\end{tikzpicture}
\caption{Bubbling a handle with positive volume.}
\end{figure}

\noindent We slit the translation surface $(X,\omega)$ along $l$ and we label the side that has the surface on its left as $l^+$ and the other side as $l^-$. We then paste the extremal points of the geodesic segment and the resulting surface has two geodesic boundary components having (by construction) the same length. Let $\mathcal{P}\subset \mathbb C$ be a parallelogram such that the two opposite sides are both parallel to $l$ (via the developing map) -- the handle $\Sigma$ is obtained by gluing the opposite sides of $\mathcal{P}$. We can paste such a parallelogram to the slit $(X,\omega)$ and the resulting topological surface is homeomorphic to $S_{g+1,n}$. Metrically, we have a new translation surface $(Y,\xi)$. We can distinguish three mutually disjoint possibilities: 
\begin{itemize}
    \item[1.] Both of the extremal points of $l\subset (X,\omega)$ are regular, then $(Y,\xi)\in\mathcal{H}_g(2,m_1,\dots,m_k;-p_1,\dots,-p_n)$,
    \item[2.] One of the extremal points of $l\subset (X,\omega)$ is a zero of $\omega$ of order $m_i$, where $1\le i\le k$, then $(Y,\xi)$ belongs to the stratum $\mathcal{H}_g(m_1,\dots,m_i+2,\dots,m_k;-p_1,\dots,-p_n)$,
    \item[3.] Both of the extremal points are zeros of $\omega$ of orders $m_i,\,m_j$ respectively, then $(Y,\xi)$ belongs to the stratum $\mathcal{H}_g(m_1,\dots,\widehat{m}_i,\dots,\widehat{m}_j,\dots,m_i+m_j+2,\dots,m_k;-p_1,\dots,-p_n)$.
\end{itemize}

\noindent The following holds.

\begin{lem}\label{lem:spininv}
Let $(X,\omega)$ be a translation surface obtained from $(Y,\xi)\in\mathcal{H}_{g}\big(2m_1,\dots,2m_k;-2p_1,\dots,-2p_n\big)$ by bubbling a handle along a slit. Let $2\pi(l+1)$ be the angle around one of the extremal points of the slit; where $l=2m_i\ge2$ for some $i=1,\dots,k$ or $l=0$. Then, the spin structures determined by $\omega$ and $\xi$ are related as follows
\begin{equation}
    \varphi(\omega)-\varphi(\xi)=l \,\,\,(\textnormal{ mod }2\, ).
\end{equation}
In particular, bubbling a handle does not alter the spin parity.
\end{lem}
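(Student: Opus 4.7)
The plan is to compute $\varphi(\omega)-\varphi(\xi)$ directly from formula \eqref{eq:spinparity} by extending a suitable symplectic basis of $H_1(Y,\mathbb{Z}_2)$ to one of $H_1(X,\mathbb{Z}_2)$ and tracking how the indices of the generators change under the surgery. Since bubbling a handle is a local operation performed inside an arbitrarily small neighbourhood $U$ of the slit, I would first choose a symplectic basis $\{\alpha_i,\beta_i\}_{i=1}^{g}$ of $H_1(Y,\mathbb{Z}_2)$ whose representatives lie entirely in $Y\setminus U$; such curves persist unchanged in $X$, the horizontal foliation along them agrees in the two surfaces, and hence $\textnormal{Ind}_X(\alpha_i)=\textnormal{Ind}_Y(\alpha_i)$ and $\textnormal{Ind}_X(\beta_i)=\textnormal{Ind}_Y(\beta_i)$ for $i=1,\dots,g$. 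I would then extend to a basis of $H_1(X,\mathbb{Z}_2)$ by adjoining a symplectic pair $(\alpha_{g+1},\beta_{g+1})$ associated to the bubbled handle: $\alpha_{g+1}$ is the core closed geodesic of the cylindrical handle (a simple closed curve parallel to the slit, contained in the interior of the handle), while $\beta_{g+1}$ is a simple closed curve crossing the handle once along a straight generator from a point on $l^+$ to the corresponding point on $l^-$ and closing up in $Y$ via a short path winding once around one extremal point $A$ of the slit, whose cone angle is $2\pi(l+1)$ by hypothesis.

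The curve $\alpha_{g+1}$ is a closed straight geodesic, so both its tangent direction and the horizontal direction are constant along it, giving $\textnormal{Ind}(\alpha_{g+1})=0$. For $\beta_{g+1}$, the straight pieces (the generator inside the handle and the two segments along $l^\pm$ in $Y$) contribute nothing to the index, so the entire contribution comes from the circular arc around $A$. In the standard local chart $\omega=z^l\,dz$ near $A$, the horizontal unit vector field reads $u(z)=e^{-il\theta}$ for $z=re^{i\theta}$; since the slit obstructs any short path from $l^-$ to $l^+$, the arc is forced to sweep the entire cone angle $2\pi(l+1)$. A direct calculation then shows that along this arc the total variation of $\arg u-\arg\dot\gamma$ equals $\pm 2\pi(l+1)$, while the four corner contributions at the junctions with the straight pieces cancel in pairs, so that $\textnormal{Ind}(\beta_{g+1})\equiv l+1\pmod 2$.

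Substituting into \eqref{eq:spinparity}, the only new contribution to the Arf invariant comes from the added pair, and
\begin{equation*}
\varphi(\omega)-\varphi(\xi) \;\equiv\; \bigl(\textnormal{Ind}(\alpha_{g+1})+1\bigr)\bigl(\textnormal{Ind}(\beta_{g+1})+1\bigr) \;\equiv\; 1\cdot(l+2) \;\equiv\; l\pmod 2,
\end{equation*}
which is the claimed identity. Since by hypothesis $l\in\{0\}\cup\{2m_i:1\le i\le k\}$ is even, the spin parity is preserved. The main subtle point is the local index computation around $A$: one must verify that the short arc closing $\beta_{g+1}$ in $Y$ is topologically forced to sweep the full cone angle $2\pi(l+1)$ (rather than only a portion of it), and check that the four corners along $\beta_{g+1}$ contribute in pairs of opposite sign, so that the bare arc contribution $l+1$ is exactly $\textnormal{Ind}(\beta_{g+1})$ modulo $2$.
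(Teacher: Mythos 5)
Your computation is correct and is essentially the argument the paper relies on: the paper's proof simply invokes \cite[Lemma 11]{KZ} (via Remark \ref{rmk:spininv}, which explains that no zero is broken here, so the endpoint angle $2\pi(l+1)$ has $l$ even), and the underlying content of that citation is exactly your index computation — choose a symplectic basis supported away from the surgery, adjoin the new pair with $\textnormal{Ind}(\alpha_{g+1})=0$ and $\textnormal{Ind}(\beta_{g+1})\equiv l+1\pmod 2$, and read off the change of the Arf invariant from \eqref{eq:spinparity}. This is also the same scheme the authors carry out explicitly for the non-positive-volume case in Lemma \ref{lem:spininv2} and Figure \ref{fig:parityneghandle}, so your write-up is just a self-contained version of the cited proof; the subtle point you flag (the loop around $A$ sweeping the full cone angle, with corner contributions cancelling) is handled correctly.
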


\begin{rmk}\label{rmk:spininv}
It is worth noticing that this Lemma differs from \cite[Lemma 11]{KZ} because Definition of bubbling is different in principle. More precisely, in the present paper a bubbling is performed along a geodesic segment that joins two points, possibly regular. In \cite{KZ}, however, the bubbling of a handle is performed along a saddle connection that joins two singular points obtained after breaking a zero. This preliminary operation is the reason of a possible alteration of the spin parity because, by breaking a zero, the resulting structure may no longer be of even type. More precisely, if we break a singular point on a structure of even type, the resulting singular points both have even order or odd order. In the former case bubbling a handle does not alter the parity but in the latter case it does. \cite[Lemma 11]{KZ} takes into account this possibility. Here we do not break any zero for bubbling a handle and hence the spin parity remains unaltered.
\end{rmk}

\begin{proof}[Proof of Lemma \ref{lem:spininv}]
By keeping in mind Remark \ref{rmk:spininv}, the result follows as in \cite[Lemma 11]{KZ}.
\end{proof}

\subsection{Bubbling a handle with non-positive volume.}\label{ssec:crophandle}

We have seen above how to glue a handle with positive volume on a given translation surface $(X,\omega)$. Here we briefly describe a way to glue a handle with non-positive volume and non-trivial periods; we shall describe a way to add handles with trivial periods in subsection \S\ref{ssec:gluingtrihand}. Topologically, this surgery deletes the interior of a parallelogram with distinct vertices on $(X,\omega)$ whose sides have lengths equal to the absolute periods of the handle we want to glue. This includes the case in which the parallelogram is degenerate, \textit{i.e.} with empty interior, and the surgery reduces to slit along a segment. We shall need the following Lemmas.

\begin{lem}\label{lem:eveninv}
If $(X,\omega)$ is a translation surface of even type and $(Y,\xi)$ is a translation surface obtained by bubbling a handle with non-positive volume, the $(Y,\xi)$ is of even type.
\end{lem}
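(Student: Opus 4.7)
The strategy is a direct local cone-angle computation at the boundary of the removed parallelogram, showing that the new zero created by the surgery has even order while all other zeros and all poles are preserved. Following \S\ref{ssec:crophandle}, bubbling a handle with non-positive volume on $(X,\omega)$ removes the interior of an embedded parallelogram $P\subset X$ with four distinct vertices $v_1,v_2,v_3,v_4$ and glues the two pairs of opposite sides by translations. Since $(X,\omega)$ is of even type, each $v_i$ is either a regular point of $\omega$ or a zero of even order $2l_i$ (with the convention $l_i=0$ at a regular point); moreover no $v_i$ can be a pole, because $P$ is embedded in $X$, which does not contain the poles.

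The key computation is as follows. The cone angle of $\omega$ at $v_i$ equals $2\pi(2l_i+1)$, and $P$ subtends an interior angle $\alpha_i$ at $v_i$. After deletion of $\mathrm{int}(P)$, the cone angle at $v_i$ on the remaining surface is $2\pi(2l_i+1)-\alpha_i$. The translation-gluing of opposite sides of $P$ identifies the four vertices to a single point $v\in Y$, whose total cone angle on $(Y,\xi)$ is
\begin{equation*}
\sum_{i=1}^{4}\bigl(2\pi(2l_i+1)-\alpha_i\bigr) \;=\; 2\pi\Bigl(4+2\sum_{i=1}^{4}l_i\Bigr)-2\pi \;=\; 2\pi\Bigl(3+2\sum_{i=1}^{4}l_i\Bigr),
\end{equation*}
having used that the interior angles of a parallelogram sum to $2\pi$. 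Hence $v$ is a zero of $\xi$ of even order $2\bigl(1+\sum_i l_i\bigr)$.

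The remaining zeros of $\xi$ coincide with zeros of $\omega$ outside $\{v_1,\dots,v_4\}$ and therefore retain their even orders. The poles of $\omega$ are untouched by the surgery, so the polar part of the signature of $\xi$ coincides with that of $\omega$ and is either $(-2p_1,\dots,-2p_n)$ or $(-1,-1)$. Thus $(Y,\xi)$ is of even type. The degenerate case in which $P$ collapses to a geodesic segment is handled by exactly the same local count, merging any coincident corners and updating the interior-angle contributions accordingly; the cone angle at the resulting new singularity is again an odd multiple of $2\pi$, so the parity conclusion persists. The only real obstacle is the angle bookkeeping around the four corners of $P$; once this is set up, the even-type property follows immediately from the fact that $3+2\sum_i l_i$ is odd.
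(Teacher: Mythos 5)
Your proof is correct and follows essentially the same route as the paper's: a local cone-angle count at the identified vertex of the removed parallelogram, using that the interior angles sum to $2\pi$ and that zeros and poles away from $\mathcal{P}$ are unaffected. The paper simply phrases the cone angle directly in the form $(4n+2)\pi$ without writing out the sum; your explicit computation of $2\pi\bigl(3+2\sum_i l_i\bigr)$ is the same observation, and your remark on the degenerate slit case addresses a point the paper leaves implicit.
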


\begin{proof}
Let $(X,\omega)$ be a translation surface of even type and let $\mathcal{P}\subset(X,\omega)$ be an embedded parallelogram. Since $(X,\omega)$ is a structure of even type, all vertices of $\mathcal{P}$ are either regular or singular of even order. Notice that a point of even order is of the form $(4m+2)\pi$, with $m=0$ for a regular point. Remove the interior of $\mathcal{P}$, identify the opposite edges, and let $(Y,\xi)$ be the resulting structure. The vertices of $\mathcal{P}$ are identified to a singular point of $\xi$ whose angle is $(4n+2)\pi$ for some $n$. Since the other zeros and all poles are untouched by this surgery, it follows that $(Y,\xi)$ is a structure of even type.
\end{proof}

\begin{lem}\label{lem:spininv2}
If $(X,\omega)$ is a translation surface of even type and $(Y,\xi)$ is a translation surface obtained by bubbling a handle with non-positive volume, then 
\begin{equation}
    \varphi(\omega)-\varphi(\xi)=0\,\,\,(\textnormal{ mod }2\,).
\end{equation}
In particular, bubbling a handle with non-positive volume does not alter the spin parity.
\end{lem}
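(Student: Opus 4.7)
The plan is to extend a symplectic basis of $\textnormal{H}_1(X,\mathbb Z_2)$, with smooth embedded representatives supported in $X\setminus\mathcal{P}$, to a symplectic basis of $\textnormal{H}_1(Y,\mathbb Z_2)$ by adjoining generators $\alpha_{g+1},\beta_{g+1}$ for the bubbled handle. Since the first $g$ pairs survive the surgery unchanged—same curves, same indices—the Arf-invariant formula \eqref{eq:spinparity} reduces the computation of $\varphi(\xi)-\varphi(\omega)$ to the single term
\[
(\textnormal{Ind}(\alpha_{g+1})+1)(\textnormal{Ind}(\beta_{g+1})+1)\pmod{2},
\]
and the task becomes to exhibit embedded representatives of the two new handle generators that make this product vanish.

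Concretely, I would represent $\alpha_{g+1}$ by a parallelogram-shaped embedded loop in $(Y,\xi)$: concatenate four straight geodesic segments, two parallel to $v$ and two parallel to $w$, skirting $\mathcal{P}$ from outside, with one of the four sides passing transversely through the identified edge corresponding to $\beta_{g+1}$ exactly once. A brief intersection-pairing check shows this loop lies in the correct homology class. The straight segments contribute no tangent turning, the transverse crossing of the identified edge is a smooth point of $Y$ and also contributes nothing, and the four corners contribute exterior angles that sum to $\pm 2\pi$, a property of every Euclidean parallelogram independent of its shape. Hence $\textnormal{Ind}(\alpha_{g+1})=\pm 1$ is odd; the same construction with the roles of $v$ and $w$ swapped produces an embedded representative of $\beta_{g+1}$ also with odd index. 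Both factors in the product above are therefore even, giving $\varphi(\xi)\equiv\varphi(\omega)\pmod{2}$. The degenerate case when $\mathcal{P}$ collapses to a slit is handled by the same construction with loops wrapping around the segment and its identified copies.

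The hard part will be justifying that this particular choice of embedded representatives genuinely computes the spin parity. By Lemma~\ref{lem:eveninv} the new singular point $P$ has cone angle equal to an odd multiple of $2\pi$, so a small loop around $P$ has odd index by flat Gauss--Bonnet; consequently two homologous smooth representatives of $[\alpha_{g+1}]$ differing by such a small loop can a priori disagree modulo~$2$ in their index, and one cannot simply read $\varphi(\xi)$ off from an arbitrary representative. The parallelogram-shaped loop must be singled out as the legitimate representative—by virtue of being embedded, meeting $\beta_{g+1}$ exactly once, and being disjoint from all other basis curves—mirroring the selection of representatives in the proof of Lemma~\ref{lem:spininv} for the positive-volume surgery and ultimately appealing to the invariance of the Arf invariant on such privileged bases.
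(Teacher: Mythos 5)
Your overall strategy — extending a symplectic basis supported away from $\mathcal{P}$ and reducing the parity comparison to the single Arf term $(\textnormal{Ind}(\alpha_{g+1})+1)(\textnormal{Ind}(\beta_{g+1})+1)$ — is exactly the paper's approach. But your index computation contains a genuine gap that would invalidate the proof.

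You claim $\textnormal{Ind}(\alpha_{g+1})=\pm 1$ by asserting that the four corners of your parallelogram-shaped loop contribute exterior angles summing to $\pm 2\pi$, ``a property of every Euclidean parallelogram.'' This is a Euclidean argument, and it fails when the vertices of $\mathcal{P}$ lie at singular points of $\omega$ — which the surgery explicitly permits and which is the content of Lemma~\ref{lem:eveninv} (``all vertices of $\mathcal{P}$ are either regular or singular of even order''). When a vertex of $\mathcal{P}$ is a zero of order $2m_i > 0$, the cone angle there is $(2m_i+1)\cdot 2\pi$, and a loop skirting $\mathcal{P}$ on the outside must traverse a cone sector of total angle $(2m_i+1)\cdot 2\pi$ minus the interior angle of $\mathcal{P}$, which exceeds $\pi$; a single geodesic segment cannot do this, so your four-geodesic-segment construction does not close up. The representative actually used in the paper (a smooth curve with arcs, Figure~\ref{fig:parityneghandle}) winds around two of these cone points and picks up $\textnormal{Ind}(\alpha_{g+1})=2m_1+2m_2+1$. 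This is odd \emph{only because the orders $2m_i$ are even} — that is where the even-type hypothesis actually enters the calculation. Your argument never uses it in the index computation, which is the tell-tale sign something is off: if the $m_i$ were replaced by half-integers (odd orders), your proof would still output $\pm 1$, but the index would be even and the lemma would be false. Your third paragraph gestures at Lemma~\ref{lem:eveninv}, but only to raise a (tangential) worry about choice of representative; it never feeds the evenness of the cone orders into the index count, which is the step the proof lives or dies on.
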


\begin{proof}
Let $(X,\omega)$ be a translation surface and let $\mathcal{P}$ be an embedded parallelogram. Choose a collection of $2g$ oriented simple curves $\{\alpha_i,\,\beta_i\}_{1\le i\le g}$ on $(X,\omega)$ representing a symplectic basis for $\shomolzn$. Up to deforming the paths inside their isotopy classes we can make them stay away from some neighborhood, say $U$, of $\mathcal{P}$. In other words, we can assume that bubbling a handle with non-positive volume can be performed inside a neighborhood $U$. In particular, the surgery does not affect the initial collection of paths. We now complete the former collection of curves by adding two simple curves, say $\alpha_{g+1},\,\beta_{g+1}$ obtained after bubbling. These curves can be taken so that they both lie inside the neighborhood $U$. Since $(X,\omega)$ is a structure of even type, the vertices of $\mathcal{P}$ all have even orders. As a consequence, $\textnormal{Ind}(\alpha_{g+1})$ and $\textnormal{Ind}(\beta_{g+1})$ are odd positive integers, see Figure \ref{fig:parityneghandle}.

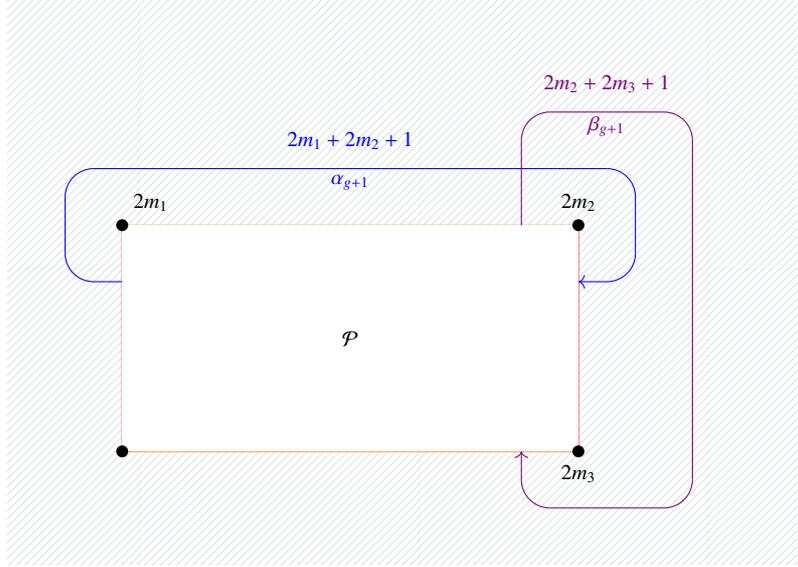
\begin{figure}[!ht] 
\centering
\begin{tikzpicture}[scale=1.5, every node/.style={scale=0.8}]
\definecolor{pallido}{RGB}{221,227,227}

    \pattern [pattern=north east lines, pattern color=pallido]
    (-3,3)--(4,3)--(4,-2)--(-3,-2)--(-3,3);
    
    \draw[thin, orange] (-2,1)--(2,1);
    \draw[thin, red] (-2,-1)--(-2,1);
    \draw[thin, orange] (-2,-1)--(2,-1);
    \draw[thin, red] (2,-1)--(2,1);
    
    \fill[white]
    (-2,1)--(2,1)--(2,-1)--(-2,-1)--(-2,1);
    
    \fill (2,1) circle (1.5pt);
    \fill (-2,1) circle (1.5pt);
    \fill (-2,-1) circle (1.5pt);
    \fill (2,-1) circle (1.5pt);
    
    \draw[thin, blue] (-2, 0.5)--(-2.25, 0.5);
    \draw [thin, blue] (-2.25,0.5) arc [start angle =270, end angle=180 , radius = 0.25];
    \draw[thin, blue] (-2.5,0.75)--(-2.5,1.25);
    \draw [thin, blue] (-2.5,1.25) arc [start angle =180, end angle=90 , radius = 0.25];
    \draw[thin, blue] (-2.25, 1.5)--(2.25, 1.5);
    \draw [thin, blue] (2.25,1.5) arc [start angle =90, end angle=0 , radius = 0.25];
    \draw[thin, blue] (2.5,0.75)--(2.5,1.25);
    \draw [thin, blue] (2.5,0.75) arc [start angle =0, end angle=-90 , radius = 0.25];
    \draw[thin, blue, ->] (2.25,0.5)--(2,0.5);
    
    \draw[thin, violet] (1.5,1)--(1.5,1.75);
    \draw[thin, violet] (1.5, 1.75) arc [start angle =180, end angle=90 , radius = 0.25];
    \draw[thin, violet] (1.75,2)--(2.75,2);
    \draw[thin, violet] (2.75, 2) arc [start angle =90, end angle=0 , radius = 0.25];
    \draw[thin, violet] (3,1.75)--(3,-1.25);
    \draw[thin, violet] (3, -1.25) arc [start angle =0, end angle=-90 , radius = 0.25];
    \draw[thin, violet] (2.75,-1.5)--(1.75,-1.5);
    \draw[thin, violet] (1.75, -1.5) arc [start angle =270, end angle=180 , radius = 0.25];
    \draw[thin, violet, <-] (1.5,-1)--(1.5,-1.25);
    
    \node at (0,0) {$\mathcal{P}$};
    \node at (2.25,2.25) {\textcolor{violet}{$2m_2+2m_3+1$}};
    \node at (0,1.75) {\textcolor{blue}{$2m_1+2m_2+1$}};
    \node at (2.25,1.875) {\textcolor{violet}{$\beta_{g+1}$}};
    \node at (0,1.375) {\textcolor{blue}{$\alpha_{g+1}$}};
    \node at (-1.75, 1.2) {$2m_1$};
    \node at (2, -1.2) {$2m_3$};
    \node at (2, 1.2) {$2m_2$};
    
\end{tikzpicture}
\caption{An embedded parallelogram $\mathcal{P}$ on a translation surface of even type. The vertices of $\mathcal{P}$ are points of orders $2m_i$, for $i=1,\dots,4$ and $m_i=0$ for regular points. A unit vector field along $\alpha_{g+1}$ winds $2m_1+2m_2+1$ times and a unit vector field along $\beta_{g+1}$ winds $2m_2+2m_3+1$ times. According to Definition \ref{index}, the indices of $\alpha_{g+1},\,\beta_{g+1}$ are equal to their winding numbers.}
\label{fig:parityneghandle}
\end{figure}

\noindent A direct computation shows that
\begin{align*}
    \varphi(\xi)&=\sum_{i=1}^{g+1} \big(\,\text{Ind}(\alpha_i)+1\big)\big(\,\text{Ind}(\beta_i)+1\big) \,\,(\text{mod}\,2)\\
    & = \sum_{i=1}^{g} \big(\,\text{Ind}(\alpha_i)+1\big)  \big(\,\text{Ind}(\beta_i)+1\big)+
    \big(\textnormal{Ind}(\alpha_{g+1})+1\big)\big( \textnormal{Ind}(\beta_{g+1}) +1\big)\,\,(\text{mod}\,2)\\
    &= \varphi(\omega) + \big(\textnormal{Ind}(\alpha_{g+1})+1\big)\big( \textnormal{Ind}(\beta_{g+1}) +1\big)\,\,(\text{mod}\,2)\\
    & =\varphi(\omega)\,\,(\text{mod}\,2)
\end{align*}
as desired.
\end{proof}

\smallskip

\subsection{Gluing surfaces along rays}\label{gluings} In what follows we shall need to glue translation surfaces along infinite rays. Recall, in fact, that our strategy to prove Theorem \ref{mainthm} is based on constructing translation surfaces with prescribed data (periods and geometric invariants) by gluing surfaces of lower-complexity. We begin by introducing the following

\begin{defn}[Gluing translation surfaces]\label{gluingsurfaces}
Let $(X_1,\,\omega_1)$ and $(X_2,\,\omega_2)$ be two translation surfaces, each with at least one pole. Let $r_i \subset (X_i,\,\omega_i)$, for $i=1,2$, be an embedded straight-line ray that starts from a singularity for $\omega_i$ or a regular point and ends in a pole. Assume that both $r_1$ and $r_2$ develop onto \textit{parallel} infinite rays on $\C$. Then we can define a translation surface $(Y,\,\xi)$ as follows: slit each ray $r_i$ and denote the resulting sides by $r_i^+$ and $r_i^-$; then identify $r_1^+$ with $r_2^-$ and $r_1^-$ and $r_2^+$ by a translation. If the surface $X_i$ is homeomorphic to $S_{g_i,\,k_i}$ for $i=1,2$, then the resulting surface $Y$ is homeomorphic to $S_{g_1+g_2,\, k_1+k_2-1}$ if the starting point of $r_i$ is not a pole for $\omega_i$, for $i=1,2$. In the case $r_i$ joins two poles, for $i=1,2$, then $Y$ is homeomorphic to $S_{g_1+g_2,\, k_1+k_2-2}$. Note that, in the former case, the starting points of the rays are identified to a branch point of $(Y,\,\xi)$ whose order is the sum of the corresponding orders of $(X_1,\,\omega_1)$ and $(X_2,\,\omega_2)$ respectively, and the other endpoints at infinity are identified to a higher order pole. 
\end{defn}

\noindent As a special case we have the following 

\begin{defn}[Bubbling a plane]\label{bubplane}
Let $(X,\,\omega)$ be a translation surface with poles and let $r\subset (X,\,\omega)$ be an infinite ray starting from a regular point or a zero of $\omega$. Let $\overline{r}\subset \mathbb C$ be the developing image of $r$. By \textit{bubbling a plane} along $r$ we mean the gluing of $(X,\,\omega)$ and $(\mathbb C,\,dz)$ along $r$ and $\overline{r}$ as described in Definition \ref{gluingsurfaces}. By repeating this surgery $m$ times is equivalent to glue $(X,\,\omega)$ to $(\mathbb C,\,z^{m-1}dz)$ along proper rays as described in Definition \ref{gluingsurfaces}.
\end{defn}

\noindent We can note that in the constructions above at most one new branch point is introduced arising from the extremal points of the rays after identification. Furthermore, bubbling a plane does not change the topology of the underlying surface; in particular the resulting structure after bubbling has the same periods as the former one. Another special case worth of interest for us is the following.

\begin{defn}[Gluing a cylinder along a bi-infinite ray]\label{gluecyl} 
Let $(X,\omega)$ be a translation surface with at least two poles. Let $r_1\subset (X,\omega)$ be an embedded bi-infinite geodesic ray joining two poles and let $v\in\C$ be the direction of $r$ once developed via the developing map. Let $C=\mathbb E^2/\langle z\mapsto z+w\rangle$ be an infinite cylinder with holonomy $w\in\C$. If $v\neq w$, then there exists an infinite geodesic line $r_2\subset C$ with direction $v$ after developing. We can glue $C$ to $(X,\omega)$ as follows: slit $(X,\omega)$ along $r_1$ and denote the resulting sides by $r_1^+$ and $r_1^-$. In a similar fashion, slit $C$ along $r_2$ and denote the resulting sides by $r_2^+$ and $r_2^-$. Let $(Y,\,\xi)$ be the translation surface obtained by gluing back $r_1^+$ with $r_2^-$ and $r_1^-$ with $r_2^+$. We shall say that $(Y,\,\xi)$ is obtained by \textit{gluing a cylinder to} $(X,\omega)$ \textit{along a bi-infinite ray}.
\end{defn}

\begin{rmk}
Notice that, according to Definition \ref{gluecyl}, the bi-infinite geodesic ray is allowed to pass through a branch point. Being the magnitude of the branch point greater than $2\pi$, unlike the situation at regular points, a geodesic segment with an endpoint at a branch point extends in infinitely many directions. In this case, we shall impose that the ray leaves the branch point with angle exactly $\pi$ on the left or on the right. Moreover, the residues of the resulting poles are given by the original residues $\pm$ the residues of the simple poles of the cylinder. 
\end{rmk}

\noindent The following results hold.

\begin{lem}[Invariance of the rotation number]\label{invrotnum}
Let $(X, \omega)$ be a genus one differential with at least two poles and rotation number $k$. Suppose there is a bi-infinite geodesic ray, say $r$, joining two poles. Then gluing a cylinder along $r$ as in Definition \ref{gluecyl} does not alter the rotation number.
\end{lem}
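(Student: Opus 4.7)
The plan is to show that the surgery of gluing a cylinder along the bi-infinite ray $r$ is entirely local, and hence preserves each of the three types of data that enter the definition of the rotation number: the orders of zeros, the orders of poles, and the indices of a symplectic basis of curves. The core observation is that the surgery modifies the translation structure only inside a small tubular neighborhood $N$ of $r$, while leaving everything outside $N$ unchanged.

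First, I would verify that the surgery preserves the underlying stratum $\mathcal{H}_1(m_1,\dots,m_k;-p_1,\dots,-p_n)$. The topology is handled by the formula recorded in Definition \ref{gluingsurfaces}: with $g_1+g_2=1$ and $k_1+k_2-2=n$, the surface $\overline{Y}$ is again a torus with $n$ marked points. The zeros of $\omega$ lying outside $r$ are clearly unaffected. For a zero of order $m$ lying on $r$, the convention that the ray passes through the branch point with angle exactly $\pi$ on one side, combined with the fact that the opposite point on the strip is regular (so contributes angle $\pi$ on each side of $r_2$), implies that after the gluing the two identified points in $Y$ have cone angles $\pi+\pi=2\pi$ (a regular point) and $(2m+1)\pi+\pi=2\pi(m+1)$ (a branch point of order $m$): the zero order is preserved. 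For each of the two pole endpoints of $r$, the surgery combines a pole of order $p_i$ of $\omega$ with a simple pole of $C$; since a simple pole only contributes to the residue term of a Laurent expansion, the resulting pole of $\xi$ still has order $p_i$, albeit with a shifted residue as noted in the remark preceding the lemma.

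Next, I would choose a symplectic basis $\{\alpha,\beta\}$ of $\textnormal{H}_1(\overline{X},\mathbb{Z})$ and pick smooth representatives $\gamma_\alpha,\gamma_\beta$ disjoint from $N$. This is always possible since $r$ is a one-dimensional set, so a standard transversality argument allows one to isotope any pair of homological representatives off any given embedded arc. Because $(X,\omega)$ and $(Y,\xi)$ agree on the complement $X\setminus N$, these curves also define curves in $Y$, and the natural homeomorphism $\overline{Y}\cong\overline{X}$ induced by the surgery (chosen to be the identity outside $N$) shows that they still represent a symplectic basis of $\textnormal{H}_1(\overline{Y},\mathbb{Z})$. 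Since the index of a curve depends only on the horizontal foliation along that curve, and the foliations of $\omega$ and $\xi$ coincide outside $N$, both $\textnormal{Ind}(\gamma_\alpha)$ and $\textnormal{Ind}(\gamma_\beta)$ take the same value whether computed in $(X,\omega)$ or in $(Y,\xi)$.

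Combining the two steps, every quantity appearing in the gcd defining $\textnormal{rot}(Y,\xi)$ matches the corresponding quantity defining $\textnormal{rot}(X,\omega)$, and the two rotation numbers agree. The main technical obstacle I anticipate is the verification that the pole orders (and the zero orders for branch points lying on $r$) are genuinely preserved; this requires a careful local analysis of the flat metric near the singular points involved in the identification, along the lines of the remark immediately preceding the lemma. The remainder of the argument is essentially a bookkeeping exercise once locality of the surgery has been established.
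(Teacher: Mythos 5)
The paper does not actually prove this lemma --- it states that the proofs of Lemmas \ref{invrotnum} and \ref{lem:invaspin} ``are easy to establish and left to the reader'' --- so there is no written argument to compare against; judged on its own, your locality argument is correct and is clearly the intended one. All three ingredients check out: the complement of a properly embedded arc joining two punctures in the torus is a once-holed torus, so it carries a full symplectic pair of curves on which the two horizontal foliations literally coincide; the angle count $\pi+\pi=2\pi$ and $(2m+1)\pi+\pi=2\pi(m+1)$ shows a branch point on $r$ keeps its order; and the poles keep their orders while only their residues shift. The one spot I would tighten is the Laurent-expansion heuristic for the pole orders: it is cleaner to note that the general ray-gluing of Definition \ref{gluingsurfaces} merges poles of orders $p$ and $q$ into one of order $p+q-1$ (forced by Gauss--Bonnet, Remark \ref{gbcond}, since the bi-infinite slit creates no new finite branch point), so merging with the cylinder's simple pole gives $p_i+1-1=p_i$. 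With that substitution the proof is complete.
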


\begin{lem}[Invariance of the spin parity]\label{lem:invaspin}
Let $(X,\omega)$ be a translation surface with poles of even type and let $\varphi(\omega)$ be the parity of $\omega$. Let $(Y,\xi)$ be the translation surface obtained by bubbling $(\C,\,z^{2m-1}dz)$, for $m\ge1$, along a ray as in Definition \ref{gluingsurfaces}. Then $\varphi(\omega)=\varphi(\xi)$.
\end{lem}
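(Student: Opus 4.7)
My plan is to adapt the argument of Lemma \ref{lem:spininv2} to the present, simpler setting in which no handle is attached. The strategy is to apply formula \eqref{eq:spinparity} using a symplectic basis whose representatives can be pushed off the region affected by the surgery; then the indices appearing in the Arf formula are literally the same before and after, and the spin parities must agree.

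First I would observe that bubbling $(\C,\,z^{2m-1}dz)$ along a ray does not alter the topology of the underlying punctured surface. Indeed $(\C,\,z^{2m-1}dz)$ has genus zero and a unique pole (at infinity), so by Definition \ref{gluingsurfaces}, and since the ray on $(X,\omega)$ starts at a non-pole, the resulting structure $(Y,\xi)$ has genus $g$ and $n+1-1=n$ punctures. Consequently $\overline{X}$ and $\overline{Y}$ are diffeomorphic, and I may fix once and for all a symplectic basis $\{\alpha_i,\beta_i\}_{i=1}^g$ of $\textnormal{H}_1(\overline{X},\,\mathbb Z_2)$ that will be used to compute the spin parity in both structures. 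Next I would verify that $(Y,\xi)$ is of even type, so that $\varphi(\xi)$ is actually defined: a direct parity count using the fact that the newly created branch point inherits its order from the sum of the two starting-point orders and that the merged pole inherits its order from the sum of the two pole-end orders forces the natural choice in which the ray on $(\C,\,z^{2m-1}dz)$ emanates from its zero of (odd) order $2m-1$ and is attached to an even-order pole of $(X,\omega)$. In that case the new branch point has an even order of the form $2l+2m$ (with $2l\ge0$ the order of the starting point on $(X,\omega)$) and the merged pole has an even order of the form $2p+2m$ (with $2p$ the order of the ambient pole); all other singularities of $\omega$ are untouched.

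The key step is then the invariance of the basis indices. The surgery modifies the translation structure only inside a neighborhood of the chosen ray $r\subset(X,\omega)$: outside this neighborhood, $\omega$ and $\xi$ coincide. Since $r$ is a properly embedded arc from an interior point of $(X,\omega)$ to one of its punctures, the complement $(X,\omega)\setminus r$ is homotopy equivalent to $(X,\omega)$ itself; in particular every class in $\textnormal{H}_1(\overline{X},\,\mathbb Z_2)$ admits a representative disjoint both from $r$ and from the new branch point of $\xi$. Choosing such representatives for the fixed basis $\{\alpha_i,\beta_i\}$, the unit horizontal vector field along each of them is identical in $(X,\omega)$ and $(Y,\xi)$, so the comparison map $f_{(\gamma,\vartheta)}$ of \eqref{comphorfol} has the same degree in both structures. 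Hence $\textnormal{Ind}_\xi(\alpha_i)=\textnormal{Ind}_\omega(\alpha_i)$ and $\textnormal{Ind}_\xi(\beta_i)=\textnormal{Ind}_\omega(\beta_i)$ for every $i$, and substituting into \eqref{eq:spinparity} gives $\varphi(\xi)\equiv\varphi(\omega)\pmod 2$.

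The only place where anything could go wrong is the homotopical deformation of the basis off the surgery locus; but this reduces to the standard fact that the complement of a properly embedded arc in a punctured surface deformation-retracts onto a surface of the same homotopy type, so no serious obstruction appears.
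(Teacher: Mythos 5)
Your argument is correct; the paper actually leaves the proof of Lemma \ref{lem:invaspin} to the reader, and your locality argument (push a symplectic basis off the slit ray, note that the horizontal foliation and hence the indices are unchanged there, and apply \eqref{eq:spinparity}) is exactly the mechanism used in the paper's proof of the analogous Lemma \ref{lem:spininv2}. Your preliminary parity count confirming that $(Y,\xi)$ is again of even type is also the right (and necessary) sanity check.
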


\noindent The proofs of these Lemmas are easy to establish and left to the reader.

\smallskip

\section{Finding good systems of generators: actions on the representation space}\label{sec:mcga} 

\noindent For a given representation $\chi\colon\shomolzn\longrightarrow \C$, our proof of Theorem \ref{mainthm}, which we will develop from Section \S\ref{genusonemero} to Section \S\ref{sec:trirep}, relies on a direct construction of genus-$g$ differentials with prescribed invariants such as the rotation number for genus one differentials and the spin parity or the hyperellipticity for higher genus differentials. In order to perform our constructions, we need to consider a judicious \textit{system of handle generators} which is defined as follows.

\begin{defn}[Handle, handle-generators]\label{handle} On a surface $S_{g,n}$ of some positive genus $g>0$, a \textit{handle} is an embedded subsurface $\Sigma$ that is homeomorphic to $S_{1,1}$, and a \textit{handle-generator} is a simple closed curve that is one of the generators of $\text{H}_1(\Sigma, \mathbb{Z})\cong\shomolzoo$. A \textit{pair of handle-generators} for a handle will refer to a pair of simple closed curves $\{\alpha, \beta\}$ that generate $\text{H}_1(\Sigma, \mathbb{Z})$; in particular, $\alpha$ and $\beta$ intersect once. \end{defn} 

\begin{defn}[System of handle generators]\label{def:syshandle}
On a surface $S_{g,n}$ of some positive genus $g>0$, we consider a collection of pairwise disjoint $g$ handles $\Sigma_1,\dots,\Sigma_g$. A \textit{system of handle generators} is a collection of $g$ pairs of handle generators $\{\alpha_i,\,\beta_i\}_{1\le i\le g}$ such that $\{\alpha_i,\beta_i\}$ is a pair of handle generators for $\Sigma_i$.
\end{defn}

\noindent We can immediately notice that every system of handle generators yields a splitting, namely a simple closed separating curve $\gamma$ equal to the product of commutators $[\alpha_i,\beta_i]$. In fact, $S_{g,n}$ splits along $\gamma$ as the connected sum of a closed genus $g$ surface $S_g$ and a punctured sphere $S_{0,n}$. Conversely, once a splitting is defined, any representation $\chi\colon\shomolzn\to\C$ gives rise to a representation $\chi_g$ and a representation $\chi_n$ as defined in Section \S\ref{agv}. Recall that $\chi_n$ is always well-defined as it does not depend on the splitting whereas the representation $\chi_g$ does. In particular, $\chi_g$ is uniquely determined if and only if $\chi$ is of trivial-ends type, see Remark \ref{rmk:chigwelldef}.

\subsection{Mapping class group action}\label{ssec:mcgact}

\noindent In order to realize a representation $\chi$ as the period character of some translation surface with poles, in what follows it will be convenient to replace, if necessary, a given system of handle generators with a more suitable one. This replacement can be done by precomposing $\chi$ with an automorphism induced by a mapping class transformation, \textit{i.e.} an element of the mapping class group $\modul$. Thus, to prove our main Theorem \ref{mainthm} for a given $\chi$, it is sufficient to construct a genus $g$ meromorphic differential on $S_{g,n}$ for which the induced representation is $\chi \circ \phi$ for some $\phi \in \modul$. It is worth mentioning that this replacement is legitimized by the following
\begin{equation}
    \chi \in \textnormal{Per}\left( \mathcal{H}_g(m_1,\dots,m_k;-p_1,\dots,-p_n)\right) \,\,\Longleftrightarrow \,\, \chi\circ\phi \in \textnormal{Per}\left( \mathcal{H}_g(m_1,\dots,m_k;-p_1,\dots,-p_n)\right)
\end{equation} for any $\phi\in\modul$, and $\textnormal{Per}$ is the period mapping defined in \eqref{permap}. In fact, if $\phi$ is induced by a homeomorphism, say $f$, then the pullback of the translation structure by $f$ defines a new translation surface with poles in the same stratum and with period character $\chi\circ\phi$, see also \cite[Lemma 3.1]{fils} for the holomorphic case.

\smallskip

\subsection{Non-trivial systems of generators exist} We aim to list a few lemmas about the mapping class group action for finding systems of handle generators such that any element has a non-trivial period. Most of them has been already proved in \cite[Section \S 11]{CFG} and here we report a sketch of the proof for the reader's convenience. The following lemmas show that the image of every handle generator under $\chi$ can be assumed to be non-zero whenever $\chi$ is a non-trivial representation. 

\begin{lem}\label{lem:allhandholnonzero}
Let $\chi \in\text{\emph{Hom}}\left(\shomolzn, \,\mathbb{C}\right)$ be a representation and let $\{\alpha_i,\,\beta_i\}_{1\le i\le g}$ be a system of handle generators. Suppose that the corresponding $\chi_g$, determined by the induced splitting, is not trivial. Then, there exists $\phi \in \text{\emph{Mod}}(S_{g,n})$ such that $\chi \circ \phi(\alpha_i)$ and $\chi \circ \phi(\beta_i)$ are non-zero for all $1 \leq i \leq g$.
\end{lem}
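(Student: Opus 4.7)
The plan is to reduce the statement to symplectic linear algebra on the rank-$2g$ sublattice $V\subset\shomolzn$ generated by $\{\alpha_i,\beta_i\}_{1\le i\le g}$, and then invoke the classical surjectivity of the symplectic representation $\modul\to\spz$ on absolute homology. Since $\chi_g=\chi\circ\imath_g$ is non-trivial by hypothesis, the restriction $\chi|_V$ is a non-zero linear functional, so it is enough to exhibit an element $A\in\mathrm{Sp}(V)$ sending the symplectic basis $\{\alpha_i,\beta_i\}$ to a new symplectic basis on which $\chi$ is nowhere zero; any lift of $A$ to a mapping class $\phi$ will then satisfy the conclusion.

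I would build $A$ in three elementary stages. First, after reordering handles and possibly applying the symplectic involution $(\alpha_1,\beta_1)\mapsto(\beta_1,-\alpha_1)$, one may arrange $\chi(\alpha_1)\neq 0$. Second, for each index $i\ge 2$ with $\chi(\alpha_i)=0$, apply the symplectic transvection
\[ \alpha_i\longmapsto\alpha_i+\alpha_1,\qquad \beta_1\longmapsto\beta_1-\beta_i, \]
fixing every other basis element; a direct pairing check confirms the symplectic form is preserved, and the modified $\alpha_i$ has $\chi$-value $\chi(\alpha_1)\neq 0$, while $\chi(\alpha_1)$ itself is left untouched. After iterating over such $i$, every $\chi(\alpha_i)$ is non-zero. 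Third, for each index $i$ still satisfying $\chi(\beta_i)=0$, apply the Dehn twist about $\alpha_i$, whose homological action is $\beta_i\mapsto\beta_i+\alpha_i$ with all other generators fixed; this gives $\chi(\beta_i)=\chi(\alpha_i)\neq 0$ without disturbing the non-vanishing already secured on the $\alpha_i$'s or on the other $\beta_j$'s.

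Composing these symplectic moves yields the desired $A\in\spz$, and lifting via the surjectivity of the symplectic representation produces the required $\phi\in\modul$. The one geometric point to verify is that $\{\phi(\alpha_i),\phi(\beta_i)\}$ is still a system of handle generators in the sense of Definition~\ref{def:syshandle}; this is automatic, since $\phi$ is induced by a self-homeomorphism of $S_{g,n}$ and therefore carries a collection of pairwise disjoint embedded handles to a collection of pairwise disjoint embedded handles. I do not anticipate any serious obstacle. The only bookkeeping care is in the second stage, where the corrections for different indices $i\ge 2$ all accumulate perturbations on $\beta_1$; since $\beta_1$ is never needed to have $\chi(\beta_1)\neq 0$ until the third stage (which can be performed after the second is complete), there is no interference. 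Conceptually the proof is just an iterated application of symplectic transvections to push each basis element off the hyperplane $\ker\chi$, packaged together with the standard fact that every element of $\spz$ lifts to $\modul$.
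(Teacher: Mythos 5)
Your proof is correct and is in essence the paper's own argument: both fix one generator with non-zero period (your $\alpha_1$) and then iteratively replace each remaining generator lying in $\ker\chi$ by combining it with a generator of non-zero period, leaving the other handles untouched. The only real difference is packaging — the paper writes the replacements directly as explicit mapping classes acting on handle generators, whereas you phrase the moves in $\spz$ and then lift. One caution about your lifting step: for a punctured surface the claim that \emph{any} lift of $A\in\spz$ to $\modul$ works is not true, since two lifts may differ by a point-pushing map, which acts trivially on the symplectic quotient but translates $\phi_*(\alpha_i)$ by peripheral classes; when $\chi$ is of non-trivial-ends type these classes have non-zero $\chi$-value and could push $\chi(\phi_*(\alpha_i))$ back to zero. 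You need a lift supported in the genus-$g$ subsurface $\Sigma$ determined by the splitting, so that $\phi_*$ restricted to $\imath_g\big(\textnormal{H}_1(\Sigma,\Z)\big)$ is literally $A$; since all of your elementary moves are products of Dehn twists about curves contained in $\Sigma$, such a lift exists and the argument goes through.
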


\begin{proof}
Let $\chi$ be a non-trivial representation and let $\{\alpha_i,\,\beta_i\}_{1\le i\le g}$ be a system of handle generators. We can assume that $\chi(\alpha_1) \neq 0$. If $\chi(\beta_1)=0$, then replace $\beta_1$ with $\alpha_1\beta_1$ and observe that $\chi(\alpha_1\beta_1)\neq0$. Suppose there is an index $i$ such that $\chi(\alpha_i) = \chi(\beta_i) = 0$. We consider a mapping class $\phi$ such that 
\begin{equation}
   \phi(\alpha_1)=\alpha_1\alpha_i, \,\,\, \phi(\beta_1)=\beta_1, \,\,\, \phi(\alpha_i)=\alpha_i, \,\,\, \phi(\beta_i)=\beta_1\beta_i^{-1}, 
\end{equation}
and $\phi$ is the identity on the other handle generators. Finally, we replace the generator $\alpha_i$ with $\alpha_i\beta_1\beta_i^{-1}$. Observe that $\{\alpha_i\,\beta_1\beta_i^{-1},\, \beta_1\beta_i^{-1}\}$ is a pair of handle generators and $\chi(\alpha_i\,\beta_1\beta_i^{-1})=\chi(\beta_1\beta_i^{-1})\neq0$ by construction. By iterating this process finitely many times we get the desired result.
\end{proof}

\noindent Furthermore, for a representation $\chi$ we can also assume that the $\chi_g$-part induced by a splitting is non-trivial whenever the $\chi_n$-part is non-trivial. 

\begin{lem}\label{lem:handholnonzero}
Let $\chi \in\text{\emph{Hom}}\left(\shomolzn, \,\mathbb{C}\right)$ be a representation of non-trivial-ends type. Then, there exists a mapping class $\phi \in \text{\emph{Mod}}(S_{g,n})$ such that $(\chi \circ \phi)_g$ is non-trivial.
\end{lem}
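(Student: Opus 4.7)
My plan is to exhibit a Dehn twist that absorbs a peripheral loop with non-trivial period into a handle generator. Fix an arbitrary system of handle generators $\{\alpha_i,\beta_i\}_{1\le i\le g}$ on $S_{g,n}$ with induced separating curve $\gamma$ and splitting $S_{g,n}=S_{g,1}\cup_{\gamma} S_{0,n+1}$. If the associated $\chi_g$ is already non-trivial, take $\phi$ to be the identity. Otherwise assume $\chi(\alpha_i)=\chi(\beta_i)=0$ for every $i$. Since $\chi$ is of non-trivial-ends type, the restriction $\chi_n$ is non-zero, so there exists a simple closed peripheral loop $\delta\subset S_{0,n+1}$ around some single puncture with $\chi(\delta)\neq 0$.

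The heart of the argument is the construction of a suitable auxiliary simple closed curve $c$. Choose an embedded arc $\eta$ from a point of $\beta_1$ to a point of $\delta$ whose interior avoids all handle generators and $\delta$ itself. The existence of $\eta$ follows after cutting $S_{g,n}$ open along $\alpha_1,\beta_1,\dots,\alpha_g,\beta_g$: the result is a planar surface (a disk with $n$ holes) on whose interior both $\beta_1$ and $\delta$ appear on the boundary of a common connected region. Performing the band sum of $\beta_1$ with $\delta$ along a thin tubular neighborhood of $\eta$ yields a simple closed curve $c$ whose homology class in $\shomolzn$ is $[\beta_1]+[\delta]$. Up to isotopy, $c$ is disjoint from $\beta_1$ and from each $\alpha_j,\beta_j$ with $j\ge 2$, while meeting $\alpha_1$ transversely in a single point, inherited from $i(\alpha_1,\beta_1)=1$ and the fact that $\eta$ and $\delta$ avoid $\alpha_1$.

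Set $\phi=T_c\in\modul$, the Dehn twist about $c$. The Picard--Lefschetz formula applied to $\phi$ gives $\phi_*[\alpha_1]=[\alpha_1]\pm[c]$ because the algebraic intersection of $\alpha_1$ with $c$ equals $\pm 1$, while $\phi_*$ fixes the homology classes of $\beta_1$ and of $\alpha_j,\beta_j$ for every $j\ge 2$. Therefore
\[
(\chi\circ\phi)(\alpha_1)=\chi(\alpha_1)\pm\chi(c)=0\pm\bigl(\chi(\beta_1)+\chi(\delta)\bigr)=\pm\chi(\delta)\neq 0,
\]
which shows that $(\chi\circ\phi)_g$ is non-trivial (it already fails to vanish on $\alpha_1$), completing the argument.

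The delicate step is the band-sum construction: one must simultaneously ensure that $c$ is simple and that it realises the prescribed geometric intersection pattern with the entire system of handle generators. This is a routine application of the change-of-coordinates principle on surfaces, best verified by working on the cut-open planar model described above, where the required arc $\eta$ is constructed explicitly and the boundary identifications manifestly preserve simplicity of the resulting curve.
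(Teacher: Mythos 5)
Your proposal is correct. It implements the same underlying idea as the paper — replace one handle generator by a curve whose homology class differs from it by the class of a peripheral loop with non-zero period, so that the new generator picks up the residue — but realizes the mapping class differently. The paper invokes a point-pushing transformation (a push of a puncture around a loop crossing $\alpha_j$, cited to Farb--Margalit \S4.2.1) and states the effect $\alpha_j\alpha_j'=\gamma_i$ in homology without further construction; you instead build an explicit simple closed curve $c$ as a band sum of $\beta_1$ with the peripheral loop $\delta$ and apply the Dehn twist $T_c$, computing its action on homology via Picard--Lefschetz. The key geometric facts you rely on are right: the band sum $c$ cobounds a pair of pants with $\beta_1$ and $\delta$, hence is disjoint from both and from the remaining handle generators, and it meets $\alpha_1$ in exactly one point inherited from $i(\alpha_1,\beta_1)=1$, so $T_{c*}[\alpha_1]=[\alpha_1]\pm[c]$ while all other generators are fixed. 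Your version is somewhat longer but more self-contained, since it verifies the intersection pattern on the cut-open planar model rather than appealing to the standard properties of point pushes; either route yields $(\chi\circ\phi)(\alpha_1)=\pm\chi(\delta)\neq0$ and hence the non-triviality of $(\chi\circ\phi)_g$.
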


\begin{proof}
Let $\chi$ be a non trivial-ends type representation and let $\{\alpha_i,\,\beta_i\}_{1\le i\le g}$ be a system of handle generators. Assume $\chi(\alpha_i) = \chi(\beta_i) = 0$ for all $1 \leq i \leq g$. Let $\gamma_i$ be a small loop around a puncture such that $\chi(\gamma_i) \neq 0$. Then we can find some handle generator $\alpha_j$ such that we have two curves $\alpha_j$ and $\alpha'_j$ satisfying $\alpha_j\alpha'_j = \gamma_i$ in $\shomolzn$. This implies that $\chi(\alpha'_j) \neq 0$. Thus, we can take $\phi$ to be an element of $\text{Mod}(S_{g,n})$, commonly known as a \textit{push transformation}, see \cite[Section \S4.2.1]{FM}, which takes the generator $\alpha_j$ to $\alpha'_j$ and leaves the other handle generators unchanged.
\end{proof}

\subsection{$\glplus$-action}\label{ssec:glact} We now consider an action on the representation space given by post-composition with elements of $\glplus$. This action can be combined with the mapping class group action, see Section \S\ref{ssec:mcgact} above, and it is easy to check that these actions commute. In the sequel it will be sometimes useful to consider this action in order to realize representations as the period of meromorphic differentials in a given connected component of a stratum. This is in fact legitimated by the fact
\begin{equation}
    \chi \in \textnormal{Per}\left( \mathcal{H}_g(m_1,\dots,m_k;-p_1,\dots,-p_n)\right) \,\,\Longleftrightarrow \,\, A\,\chi\in \textnormal{Per}\left( \mathcal{H}_g(m_1,\dots,m_k;-p_1,\dots,-p_n)\right)
\end{equation} for any $A\in\glplus$, and $\textnormal{Per}$ is the period mapping defined in \eqref{permap}. 

\begin{rmk}
Since $\glplus$ acts continuously on every stratum, it follows that the connected components, if any, are preserved under the action.  
\end{rmk}

\smallskip

\subsection{System of handle generators with positive volume}

\noindent Notice that a mapping class $\phi\in\modul$ does not need to preserve any splitting in general. Let us now consider again the notion of volume for meromorphic differentials already introduced in Section \S\ref{agv}. For a representation $\chi\colon\shomolzn\longrightarrow \C$, the volume of $\chi$, see Definition \ref{algvoldef2}, is well-defined if and only if it is of trivial-ends type. In other words, for any representation $\chi$ of trivial-ends type and for any mapping class $\phi\in\modul$ the equation 
\begin{equation}
    \textnormal{vol}(\chi_g)=\textnormal{vol}\left( (\chi\circ\phi)_g\right)
\end{equation}
where $\chi_g$ and $(\chi\circ\phi)_g$ are the representations induced by the systems of handle generators $\{\alpha_i,\,\beta_i\}_{1\le i\le g}$ and $\{\phi(\alpha_i),\,\phi(\beta_i)\}_{1\le i\le g}$ respectively. For a representation $\chi$ of non-trivial-ends type, the volume is no longer well-defined, and hence it is no longer an invariant, because the representation $\chi_g$ does depend on the splitting and therefore also $\text{vol}(\chi_g)$ depends on it. We shall take advantage of this caveat to prove the following result which will be exploited in the sequel. We begin with introducing the following

\begin{defn}\label{def:realcoll}
A representation $\chi$ is called \textit{real-collinear} if the image $\text{Im}(\chi)$ is contained in the $\mathbb R$-span of some $c\in\C^*$. Equivalently, $\chi$ is real-collinear if, up to replacing $\chi$ with $A\,\chi$ where $A\in\glplus$ if necessary, then $\textnormal{Im}(\chi)\subset \mathbb {R}$.
\end{defn}

\begin{lem}\label{lem:posvol}
Let $\chi \in\text{\emph{Hom}}\left(\shomolzn, \mathbb{C}\right)$ be a representation of non-trivial-ends type. If $\chi$ is not real-collinear then there exists a system of handle generators $\{\alpha_i,\,\beta_i\}_{1\le i\le g}$ such that $\textnormal{vol}(\chi_g)>0$.
\end{lem}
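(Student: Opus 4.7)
\noindent The plan is to start from a well-behaved initial system of handle generators and then adjust the splitting via push transformations until the volume becomes strictly positive. First, I invoke Lemma \ref{lem:handholnonzero} to obtain a splitting for which the induced $\chi_g$ is non-trivial, and then Lemma \ref{lem:allhandholnonzero} to refine the system of handle generators $\{\alpha_i,\beta_i\}_{1\le i\le g}$ so that $a_i:=\chi(\alpha_i)$ and $b_i:=\chi(\beta_i)$ are both non-zero for every $i$. Setting $V:=\sum_i\Im(\bar a_i b_i)$, if $V>0$ I am done, so I would assume $V\le 0$ and modify the splitting.

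My modification tool is the push transformation: a push of a puncture $p$ along a curve homologous to $N\beta_j$ realises an element $\phi\in\modul$ whose action on $\shomolzn$ sends $\alpha_j$ to $\alpha_j+N\delta_p$ (where $\delta_p$ is the small loop around $p$) and fixes every other handle generator. By composing finitely many such transformations, for any $c\in\textnormal{Im}(\chi_n)$ and any $N\in\Z$ I can produce a new system in which $a_j$ has been replaced by $a_j+Nc$, leaving the remaining periods unchanged. Under this replacement the volume becomes $V+N\Im(\bar c\,b_j)$, and a symmetric push applied to $\beta_j$ changes $V$ by $N\Im(\bar a_j\,c)$.

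It therefore suffices to exhibit $c\in\textnormal{Im}(\chi_n)$ and an index $j$ with $\Im(\bar c\,b_j)\ne 0$ or $\Im(\bar a_j\,c)\ne 0$. Suppose toward a contradiction that no such pair exists. Since $\chi$ is of non-trivial-ends type, $\textnormal{Im}(\chi_n)$ contains a non-zero element $c^*$. The condition $\Im(\bar c^*\,b_j)=0$ together with $b_j\ne 0$ forces $\R c^*=\R b_j$ for every $j$, and symmetrically $\R c^*=\R a_j$. Applying the same reasoning to every element of $\textnormal{Im}(\chi_n)$ yields $\textnormal{Im}(\chi_n)\subset\R c^*$. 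Consequently every generator period of $\shomolzn$ lies in the single real line $\R c^*$, so $\textnormal{Im}(\chi)\subset\R c^*$, contradicting the hypothesis that $\chi$ is not real-collinear.

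Given such $c$ and $j$, I choose $N\in\Z$ with appropriate sign and sufficient magnitude to obtain $V+N\Im(\bar c\,b_j)>0$, and invoke Lemma \ref{lem:allhandholnonzero} one more time to restore the non-vanishing of all handle-generator periods if the shift accidentally zeroes out $a_j$. The main subtlety I expect in executing this plan is pinning down the action of the push transformations on $\shomolzn$ -- precisely, that one can shift a single handle-generator class by a prescribed integer multiple of a puncture loop while leaving the others homologically fixed -- so that the formal arithmetic translates into a genuine change of splitting realised by an element of $\modul$.
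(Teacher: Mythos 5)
Your proposal is correct and takes essentially the same approach as the paper's proof. Both arguments start from an arbitrary system of handle generators, observe that a single handle-generator period $a_j$ can be shifted by an integer multiple of a period in $\textnormal{Im}(\chi_n)$ via a mapping class that changes the splitting (the paper realizes this with the explicit word $\alpha' = \alpha\,\beta^{-n}\,\gamma\,(\beta\,\gamma)^n$, which acts on periods exactly as your composition of point-pushes does), and then note that the resulting volume is an affine function of the integer parameter, so it can be made positive provided the coefficient $\Im\bigl(\,\overline{c}\,b_j\,\bigr)$ is nonzero. One genuine, if small, improvement in your write-up: the paper fixes a single puncture loop $\gamma$ with $\chi(\gamma)\neq 0$ and simply asserts that non-real-collinearity furnishes a handle generator $\beta$ with $\chi(\beta)$ not collinear with $\chi(\gamma)$; this can fail for a fixed $\gamma$ if all handle-generator periods happen to lie on $\R\chi(\gamma)$ while some other peripheral period does not, a case the paper leaves implicit. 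Your contradiction argument — that if $\Im(\overline{c}\,b_j)=\Im(\overline{a_j}\,c)=0$ for all $c\in\textnormal{Im}(\chi_n)$ and all $j$, then every generator period lies in a single real line, making $\chi$ real-collinear — pins this down cleanly and also records the freedom to shift either $a_j$ or $b_j$, giving a slightly more robust formulation of the same idea.
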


\begin{proof}
Let $\chi$ be a representation of non-trivial-ends type and let $\{\alpha_i,\,\beta_i\}_{1\le i\le g}$ be a system of handle generators. From Section \S\ref{agv}, recall that the curve $\gamma=[\alpha_1,\beta_1]\cdots[\alpha_g,\beta_g]$ bounds a sub-surface $\Sigma$ homeomorphic to $S_{g,1}$, the embedding $i\colon\Sigma\hookrightarrow S_{g,n}$ yields an injection $\imath_g\colon\text{H}_1(\Sigma,\,\mathbb{Z})\cong\shomolz\to\shomolzn$, and the representation $\chi_g$ is defined as $\chi\circ\imath_g$. In the case $\textnormal{vol}(\chi_g)>0$ there is nothing to prove and we are done, so let us suppose that $\textnormal{vol}(\chi_g)\le0$. Being $\chi$ is of non-trivial-ends type, there is a puncture, say $p$ with non-zero residue and let $\gamma$ be a simple closed curve around $p$. Clearly, $\chi(\gamma)=\chi_n(\gamma)\neq0$. Since $\chi$ is not real-collinear, we choose a pair of handle generators $\{\alpha,\beta\}$ in the given system such that $\chi(\beta)$ is not collinear with $\chi(\gamma)$. Replace it with the pair of generators $\{\alpha',\,\beta'\}$ where $\beta'=\beta$ and $\alpha'=\alpha\,\beta^{-n}\,\gamma\,(\beta\,\gamma)^n$ for $n\in\mathbb Z$. This is a simple closed non-separating curve and we can easily compute that
\begin{equation}
    \chi\left(\alpha\,\beta^{-n}\,\gamma\,(\beta\,\gamma)^n\,\right)=\chi(\alpha)+(n+1)\chi(\gamma).
\end{equation}
\noindent Up to relabelling all handles, we can assume $\{\alpha,\,\beta\}=\{\alpha_1,\,\beta_1\}$. The new system of handle generators will be $\{\alpha',\,\beta',\alpha_2,\beta_2,\dots,\alpha_g,\,\beta_g\}$. Such a system determines a different splitting, namely a different separating curve $\gamma'$ that bounds a sub-surface $\Sigma'$ homeomorphic to $S_{g,1}$. The embedding $i'\colon\Sigma'\hookrightarrow S_{g,n}$ yields a representation $\chi'_g=\chi\circ \imath'_g$, where $\imath'_g$ is the injection in homology induced by $i'$. The volume $\textnormal{vol}(\chi'_g)$ can be explicitly computed as follow
\begin{align}
    \textnormal{vol}(\chi'_g) & =  \Im\left(\,\overline{\chi(\alpha')}\,\chi(\beta')\,\right)+\sum_{i=2}^g \Im \left(\,\overline{\chi(\alpha_i)}\,\chi(\beta_i)\,\right)\\
     & = \Im\left(\,\overline{\chi(\alpha_1)+(n+1)\chi(\gamma)}\,\chi(\beta_1)\,\right)+\sum_{i=2}^g \Im \left(\,\overline{\chi(\alpha_i)}\,\chi(\beta_i)\,\right)\\
     &= (n+1)\Im\left(\,\overline{\chi(\gamma)}\,\chi(\beta_1)\,\right)+ \sum_{i=1}^g \Im \left(\,\overline{\chi(\alpha_i)}\,\chi(\beta_i)\,\right)\\
     &= (n+1)\Im\left(\,\overline{\chi(\gamma)}\,\chi(\beta_1)\,\right)+\textnormal{vol}(\chi_g).
\end{align}
Since $\textnormal{vol}(\chi_g)$ is constant and since $\chi(\gamma)$ and $\chi(\beta_1)$ are not collinear, by choosing $n\in\mathbb Z$ judiciously, we can make $\textnormal{vol}(\chi'_g)>0$ and hence obtain the desired result. \qedhere
\end{proof}

\noindent By combining Lemma \ref{lem:posvol} with Kapovich's results in \cite{KM2}, we can infer the following result on which we shall rely in the sequel.

\begin{cor}\label{cor:posvol}
Let $\chi \in\text{\emph{Hom}}\big(\shomolzn, \mathbb{C}\big)$ be a representation of non-trivial-ends type. If $\chi$ is not real-collinear then there exists a system of handle generators $\mathcal{G}=\{\alpha_i,\,\beta_i\}_{1\le i\le g}$ such that $\Im\big(\,\overline{\chi(\alpha_i)}\,\chi(\beta_i)\,\big)>0$ for any $i=1,\dots,g$.
\end{cor}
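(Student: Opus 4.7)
The plan is in two steps: first, use Lemma~\ref{lem:posvol} to replace the initial system by one with positive total volume, and then appeal to Kapovich's analysis of Haupt's theorem to refine this basis so that every handle contributes positively on its own.

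First, since $\chi$ is of non-trivial-ends type and not real-collinear, I would apply Lemma~\ref{lem:posvol} to obtain a system of handle generators $\{\alpha_i,\beta_i\}_{1\le i\le g}$ with
\[
\textnormal{vol}(\chi_g)\;=\;\sum_{i=1}^{g}\Im\bigl(\overline{\chi(\alpha_i)}\chi(\beta_i)\bigr)\;>\;0.
\]
Observe that $\chi_g$ itself is then not real-collinear, since the volume of a real-collinear character vanishes identically. Via the canonical isomorphism $\textnormal{H}_1(S_{g,1},\mathbb Z)\cong\textnormal{H}_1(S_{g},\mathbb Z)$, one regards $\chi_g$ as a character on the closed surface $S_g$ satisfying Haupt's conditions, so by \cite{KM2} it would be realized as the period character of some holomorphic abelian differential $(Y,\eta)$ on a closed genus-$g$ Riemann surface~$Y$.

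Next, on $(Y,\eta)$ I would exhibit a symplectic basis $\{a_i,b_i\}_{1\le i\le g}$ of $\textnormal{H}_1(Y,\mathbb Z)$ for which each handle volume $\Im\bigl(\overline{\eta(a_i)}\eta(b_i)\bigr)$ is strictly positive. A natural way is to construct $(Y,\eta)$ inductively by starting with a positive-volume polygonal genus-one model and successively bubbling a handle along an embedded positive-area parallelogram, as in~\S\ref{ssec:bubhandle}; the sides of each bubbled parallelogram then furnish a handle-generator pair with strictly positive imaginary contribution. Comparing this basis with $\{\imath_g(\alpha_i),\imath_g(\beta_i)\}$ yields a symplectic automorphism $\Phi\in\spz$ with $\Im\bigl(\overline{\chi_g(\Phi\alpha_i)}\chi_g(\Phi\beta_i)\bigr)>0$ for every $i$. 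By the Dehn--Nielsen theorem $\Phi$ would be induced by some $\varphi\in\textnormal{Mod}(S_g)$, which can be arranged to extend by the identity across the separating curve onto the $S_{0,n+1}$ side, yielding a mapping class $\phi\in\modul$ inducing $\Phi$ on $\imath_g\bigl(\textnormal{H}_1(S_{g,1},\mathbb Z)\bigr)$. The system $\{\phi(\alpha_i),\phi(\beta_i)\}_{1\le i\le g}$ would then satisfy the required individual positivity.

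The hardest step will be the middle one: exhibiting a symplectic basis on $(Y,\eta)$ with handle-by-handle positive contribution. This is not a formal consequence of Haupt's existence theorem alone; rather, one needs either the constructive side of Kapovich's proof, or the bubbling construction described above. An elementary alternative route would be to apply successive symplectic transvections along isotropic vectors coupling two handles: the non-real-collinearity of $\chi_g$ guarantees that each such transvection can shift arbitrary amounts of volume between the two chosen handles while preserving the total sum, so starting from a handle with positive contribution one can progressively raise the other contributions above zero, mirroring the standard argument in Kapovich's proof.
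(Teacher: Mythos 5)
Your first step coincides with the paper's: apply Lemma~\ref{lem:posvol} to get a system with $\textnormal{vol}(\chi_g)>0$. The gap is in your main route for the second step. From positive volume you cannot conclude that $\chi_g$ "satisfies Haupt's conditions": Haupt's theorem has a \emph{second} obstruction, namely when $\textnormal{Im}(\chi_g)$ is a lattice $\Lambda\subset\C$ one also needs $\textnormal{vol}(\chi_g)\ge 2\,\textnormal{Area}(\C/\Lambda)$ (see Section~\S\ref{agv}). A discrete rank-two $\chi_g$ of small positive volume is therefore \emph{not} the period character of any holomorphic differential on a closed genus-$g$ surface, so the object $(Y,\eta)$ on which your bubbling construction and the comparison of symplectic bases is supposed to live need not exist. (Note also that non-real-collinearity of $\chi$ does not exclude this case: $\chi$ could be discrete of rank two.) The detour through a geometric realization is in any case unnecessary, and the Dehn--Nielsen/extension step, while salvageable, adds friction without adding content.

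Your "elementary alternative route" at the end is the correct argument, and it is essentially what the paper means by combining Lemma~\ref{lem:posvol} with Kapovich's results: the statement is purely about the $\spz$-orbit of $\chi_g$ in $\textnormal{Hom}(\Z^{2g},\C)$, where symplectic transvections redistribute volume among handles while preserving the total, and non-real-collinearity guarantees one can always find a direction in which to shift. In fact the cleanest fix is to iterate the proof of Lemma~\ref{lem:posvol} handle by handle: since $\chi$ is of non-trivial-ends type there is a peripheral loop $\gamma$ with $w=\chi(\gamma)\neq 0$, and for each index $i$ with $\Im\bigl(\overline{\chi(\alpha_i)}\chi(\beta_i)\bigr)\le 0$ one first arranges (using non-real-collinearity and, if necessary, swapping $\alpha_i$ with $\beta_i$ or composing with another generator) that $\chi(\beta_i)$ is not collinear with $w$, and then replaces $\alpha_i$ by $\alpha_i\,\beta_i^{-n}\,\gamma\,(\beta_i\gamma)^n$, which adds $(n+1)\,\Im\bigl(\overline{w}\,\chi(\beta_i)\bigr)$ to the $i$-th handle volume without touching the others; choosing $n$ appropriately makes each term positive. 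If you state the argument this way, the corollary follows without ever invoking Haupt's realizability.
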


\smallskip

\subsection{Discrete and dense representations} In the sequel we shall distinguish three kinds of representations according to the following

\begin{defn}\label{defn:kindofreps}
A representation $\chi\colon\shomolzn\longrightarrow \C$ is said to be 
\begin{itemize}
    \item \textit{discrete} if the image of $\chi$ is a discrete subgroup of $\C$. Furthermore, we say that $\chi$ is discrete of rank \textit{one} if, up to replacing $\chi$ with $A\,\chi$ where $A\in\glplus$, then $\text{Im}(\chi)=\Z$. We say that $\chi$ is discrete of rank \textit{two} if, up to replacing $\chi$ with $A\,\chi$ where $A\in\glplus$, then $\text{Im}(\chi)=\Z\oplus i\,\Z$.
    \smallskip
    \item \textit{semi-discrete} if, up to replacing $\chi$ with $A\,\chi$ where $A\in\glplus$, then $\text{Im}(\chi)=U\oplus i\,\Z$, where $U$ is dense in $\mathbb R$.
    \smallskip
    \item \textit{dense} if the image of $\chi$ is dense in $\C$.
\end{itemize}
\end{defn}

\noindent We have the following Lemmas.

\begin{lem}\label{lem:disnormalform}
Let $\chi$ be a discrete representation of non-trivial-ends type. After replacing $\chi$ with the representation $A\,\chi$ where $A\in\glplus$ if necessary, then there exists a system of handle generators $\{\alpha_i,\,\beta_i\}_{1\le i\le g}$ such that

\begin{itemize}
    \item[1.] if $\chi$ is discrete of rank one then each handle generator has period one, \textit{i.e.} $\chi(\alpha_i)=\chi(\beta_i)=1$ for all $i=1,\dots,g$;
    \smallskip
    \item[2.] if $\chi$ is discrete of rank two, then the handle generators satisfy the following conditions:
    \begin{itemize}
    \item $\chi(\alpha_g)\in\Z_+$ and $\chi(\beta_g)=i$,
    \smallskip
    \item $0<\chi(\alpha_j)<\chi(\alpha_g)$ and $\chi(\alpha_j)=\chi(\beta_j)\in\Z_+$ for all $j=1,\dots,g-1$.
\end{itemize}
\end{itemize}
\end{lem}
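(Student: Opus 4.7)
The plan is to combine three tools: (i) the $\glplus$-action on the target to normalise $\textnormal{Im}(\chi)$; (ii) the mapping class group action supplied by Lemmas~\ref{lem:allhandholnonzero} and \ref{lem:handholnonzero}, which ensures that $\chi_g$ is non-trivial and every handle generator has non-zero period; and (iii) push transformations around a puncture with non-zero residue $r$, each of which adds an integer multiple of $r$ to a single handle generator period while leaving all the others unchanged. The non-trivial-ends hypothesis is precisely what supplies the residue used in (iii).

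After applying some $A\in\glplus$, assume $\textnormal{Im}(\chi)=\Z$ in the rank one case and $\textnormal{Im}(\chi)=\ziz$ in the rank two case, and apply the preceding lemmas to obtain a system of handle generators in which $\chi_g$ is non-trivial and no handle generator period vanishes. The task then reduces to realising a prescribed normal form on the tuple $\bigl(\chi(\alpha_i),\chi(\beta_i)\bigr)_{1\le i\le g}$, using elementary Dehn twists along $\alpha_i$ and $\beta_i$ (which generate the $\mathrm{SL}(2,\Z)$-action on the $i$-th pair), Dehn twists along curves crossing several handles (which mix handles symplectically), and push transformations adding integer multiples of a residue to a single entry.

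For the rank one case the target is $(1,1,\dots,1)$. Since $\textnormal{Im}(\chi)=\Z$, the greatest common divisor of all residues together with all handle periods equals $1$, so some B\'ezout combination of them equals $1$. A Euclidean-algorithm argument, carried out one handle at a time, combines push transformations with the elementary Dehn twists on that handle to reduce both $\chi(\alpha_i)$ and $\chi(\beta_i)$ to $1$; the standard transitivity of $\mathrm{SL}(2,\Z)$ on primitive vectors of $\Z^2$ supplies the final step for each handle. The previously normalised handles are unaffected because Dehn twists along $\alpha_i,\beta_i$ and pushes around a puncture act only on the $i$-th pair.

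For the rank two case the target is $\chi(\beta_g)=i$, $\chi(\alpha_g)=n$ for some positive integer $n$, and $\chi(\alpha_j)=\chi(\beta_j)\in\Z_+$ with $0<\chi(\alpha_j)<n$ for each $j<g$. The normalisation proceeds in stages. First, using the $\mathrm{SL}(2,\Z)$-action on the $g$-th handle together with, when available, push transformations involving a residue of non-zero imaginary part, arrange $\chi(\beta_g)=i$; this is possible because the imaginary parts of all handle periods and residues must together generate $\Z$, as $\textnormal{Im}(\chi)=\ziz$. Next, since the Dehn twist along $\beta_g$ fixes $\chi(\beta_g)=i$ and shifts $\chi(\alpha_g)$ by multiples of $i$, eliminate the imaginary part of $\chi(\alpha_g)$, and then use push transformations involving a real residue to make $\chi(\alpha_g)$ into a positive integer $n$ that may be chosen arbitrarily large. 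Finally, on each remaining handle $j<g$, analogous operations eliminate the imaginary parts of $\chi(\alpha_j),\chi(\beta_j)$, equalise them, and reduce modulo $n$ to land in $(0,n)$. The main technical obstacle is the first stage when no residue has non-zero imaginary part: one must then produce $\chi(\beta_g)=i$ using symplectic manipulations across handles, exploiting that some handle generator must carry non-trivial imaginary part; a symmetric issue arises if no residue has non-zero real part, resolved in the same manner. Choosing $n$ sufficiently large at the end of the second stage decouples the remaining handle-by-handle adjustments from the already normalised $g$-th handle.
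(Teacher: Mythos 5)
Your route is genuinely different from the paper's. The paper disposes of the rank-one case by citing \cite[Lemma 12.2]{CFG}, and in the rank-two case it first invokes Lemma~\ref{lem:posvol} to choose a system of handle generators with $\textnormal{vol}(\chi_g)>0$, then quotes Kapovich's normal form \cite{KM2} for positive-volume characters onto $\ziz$ (giving $\chi(\alpha_g)\in\Z_+$, $\chi(\beta_g)=i$, $\chi(\beta_j)=0$, $0<\chi(\alpha_j)<\chi(\alpha_g)$), and finishes with the single substitution $\beta_j\mapsto\alpha_j\beta_j$. You instead attempt to re-derive the whole normal form from scratch by explicit Dehn twists, cross-handle transvections and pushes. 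That is a legitimate strategy in principle, but it means you are shouldering the burden of both Lemma~\ref{lem:posvol} and Kapovich's transitivity theorem, and your sketch does not discharge either.

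The concrete gap is the volume invariant in the rank-two case. Every move you use that preserves the splitting — elementary Dehn twists on a handle, and the symplectic mixing of handles — fixes $\textnormal{vol}(\chi_g)=\sum_i\Im\bigl(\overline{\chi(\alpha_i)}\,\chi(\beta_i)\bigr)$, whereas your target configuration has $\textnormal{vol}=n>0$. So the pushes must carry all of the volume adjustment, and your description of them is too optimistic: you repeatedly assume the existence of ``a real residue'' or ``a residue of non-zero imaginary part,'' neither of which is guaranteed (the residues could all be, say, integer multiples of $1+i$, or all purely imaginary), and the fallback ``resolved in the same manner'' / ``symplectic manipulations across handles'' is an assertion, not an argument — symplectic manipulations alone cannot change the volume. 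This is precisely the difficulty that Lemma~\ref{lem:posvol} resolves with the composite curve $\alpha\,\beta^{-n}\gamma\,(\beta\gamma)^n$, which works for any non-real-collinear $\chi$ of non-trivial-ends type; your proof needs either that lemma or an equivalent substitute before the staged normalization of the $g$-th handle can even begin. Two smaller loose ends: in the final stage, reducing $\chi(\alpha_j)=\chi(\beta_j)$ modulo $n$ can produce $0$, violating the required strict positivity; and the claimed transitivity needed to finish each handle (the analogue of Kapovich's theorem for volume-$n$ surjections onto $\ziz$) is used but never established.
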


\smallskip

\begin{proof}
Up to normalizing the representation with some $A\in\glplus$, we can assume $\text{Im}(\chi)\subset\Z\oplus i\,\Z$. The first case follows from \cite[Lemma 12.2]{CFG}. Let us consider the second case. By Lemma \ref{lem:posvol} there exists a system of handle generators $\{\alpha_i,\,\beta_i\}_{1\le i\le g}$ such that $\textnormal{vol}(\chi_g)>0$. Let us now consider the representation $\chi_g$ on its own right. Recall that we can regard this representation as $\chi_g\colon\shomolz\longrightarrow \C$. From \cite{KM2}, there exists a system of handle generators such that 
    \begin{itemize}
    \item $\chi(\alpha_g)\in\Z_+$ and $\chi(\beta_g)=i$,
    \smallskip
    \item $0<\chi(\alpha_j)<\chi(\alpha_g)$ and $\chi(\beta_j)=0$ for all $j=1,\dots,g-1$.
\end{itemize}
By replacing $\{\alpha_j,\,\beta_j\}$ with $\{\alpha_j,\,\alpha_j\,\beta_j\}$ for $j=1,\dots,g-1$ we get the desired result.
\end{proof}

\smallskip

\begin{lem}\label{lem:smallperiods}
Let $\chi\in\textnormal{Hom}\big(\shomolzn, \mathbb{C}\big)$ be a real-collinear representation of non-trivial-ends type. Let $\{\alpha_i,\,\beta_i\}_{1\le i\le g}$ be a system of handle generators. If $\chi$ is not discrete of rank one, there exists a mapping class $\phi\in\modul$ such that $\chi\circ\phi(\alpha_i)$ and $\chi\circ\phi(\beta_i)$ are both positive and arbitrarily small.
\end{lem}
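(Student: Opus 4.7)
The plan is to normalize $\chi$, identify the algebraic structure of its image, and then realize arbitrarily small positive periods by combining point-pushings with symplectic moves on the handle generators. First I would post-compose $\chi$ with a suitable element of $\glplus$, as permitted by the action described in Section~\ref{ssec:glact}, so as to assume $\textnormal{Im}(\chi)\subset\mathbb{R}$. The image is finitely generated (as $\shomolzn$ is), non-trivial (by the non-trivial-ends assumption), and by hypothesis not discrete of rank one; the only remaining possibility for a finitely generated subgroup of $\mathbb{R}$ is that $\textnormal{Im}(\chi)$ is dense in $\mathbb{R}$. In particular, for every $\varepsilon>0$ the set $\textnormal{Im}(\chi)\cap(0,\varepsilon)$ is non-empty.

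Next I would fix a small loop $\gamma$ around a puncture with $\chi(\gamma)\ne 0$, which exists by the non-trivial-ends hypothesis, and (after an initial application of Lemma~\ref{lem:allhandholnonzero} so that every $\chi(\alpha_i),\chi(\beta_i)$ is non-zero) adjust each pair of handle generators using two families of mapping class group elements. The first family consists of \emph{point-pushing} transformations along peripheral loops, which at the level of $H_1$ produce substitutions of the form $\alpha_i\mapsto \alpha_i+m\mu$ with $\mu$ a peripheral class and $m\in\mathbb{Z}$. The second family consists of symplectic moves in the closed part of $S_{g,n}$ (compositions of Dehn twists along simple closed curves in the genus-$g$ subsurface) that substitute handle generators by other integer combinations preserving the intersection pairing, for instance $\alpha_1\mapsto\alpha_1+n\,\alpha_2$ balanced by $\beta_2\mapsto\beta_2-n\,\beta_1$.

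With these two families of transformations available, the new periods $\chi\circ\phi(\alpha_i)$ and $\chi\circ\phi(\beta_i)$ range over integer linear combinations of the original $\chi(\alpha_j),\chi(\beta_j)$ and of the peripheral periods, subject only to symplectic compatibility. Given $\varepsilon>0$, density of $\textnormal{Im}(\chi)$ provides, for each $i$, pairs $(a_i,b_i)\in (0,\varepsilon)^2$ expressible as such integer combinations. I would then treat the handles inductively: modify the first pair to have periods in $(0,\varepsilon)$, and use the preserved flexibility of the remaining handles (together with peripheral contributions, which are untouched in the homology of other handles) to continue. The main obstacle will be the simultaneous realizability: checking that the whole sequence of symplectic substitutions and point-pushings assembles into a single $\phi\in\modul$ for which each $(\phi(\alpha_i),\phi(\beta_i))$ is still a valid pair of handle generators. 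This amounts to verifying that the desired automorphism of $\shomolzn$ belongs to the image of the natural representation $\modul\to\Aut(\shomolzn)$, which for punctured surfaces is surjective onto those automorphisms preserving the intersection form on the closed part and fixing the peripheral classes up to permutation with signs.
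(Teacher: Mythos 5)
Your opening reduction is fine: after normalizing by $\glplus$ so that $\textnormal{Im}(\chi)\subset\mathbb R$, a finitely generated, non-trivial, non-discrete subgroup of $\mathbb R$ is indeed dense, so small positive elements of $\textnormal{Im}(\chi)$ exist. The gap is in the passage from ``small elements of the image exist'' to ``there is a single mapping class $\phi$ making \emph{every} $\chi\circ\phi(\alpha_i)$ and $\chi\circ\phi(\beta_i)$ small and positive.'' You name this as ``the main obstacle'' and then dispose of it by asserting that $\modul\to\Aut\big(\shomolzn\big)$ surjects onto all automorphisms preserving the intersection form on the closed part and permuting peripheral classes with signs. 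That assertion is not proved, is not obviously true for $n\ge2$, and in any case does not finish the argument: even granting it, you would still need to exhibit a symplectic automorphism (fixing the peripheral classes) whose effect on all $2g$ handle generators simultaneously lands their periods in $(0,\varepsilon)$, and being an integer combination of periods lying in $(0,\varepsilon)$ is not the same as being the $\chi$-value of a class that can serve as a handle generator in a common system. So the heart of the lemma is left unverified.

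The paper avoids the global realizability question entirely by working handle by handle. The mechanism is: arrange (using a point-push by a peripheral loop, after normalizing so that one puncture has rational residue and, when $\textnormal{Im}(\chi_n)\not\subset\mathbb Q$, another has irrational residue) that each pair $\big(\chi(\alpha_i),\chi(\beta_i)\big)$ consists of two positive reals that are linearly independent over $\mathbb Q$; if they happen to be $\mathbb Q$-dependent, replace $\beta_i$ by $\beta_i\gamma_1$ where $\gamma_1$ has rational period, which restores independence without leaving the handle-generator setting. Then the Euclidean algorithm applied to the pair --- each subtraction step being realized by a Dehn twist of one handle generator about the other, which manifestly preserves the property of being a pair of handle generators for the \emph{same} handle --- drives both periods into $(0,\varepsilon)$. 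Since these operations are local to each handle, they assemble trivially into one mapping class. The case $\textnormal{Im}(\chi_n)\subset\mathbb Q$ is delegated to \cite[Lemma 11.5]{CFG}. If you want to salvage your approach, you should replace the appeal to surjectivity onto an automorphism group by this local Euclidean-algorithm argument, which is the missing idea.
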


\begin{proof}
Let $\chi$ be a representation of non-trivial-ends type such that $\textnormal{Im}(\chi)\subset \mathbb R$. Recall that any system of handle generators $\{\alpha_i,\,\beta_i\}_{1\le i\le g}$ induces a splitting and hence a representation $\chi_n$. Notice that, since $\chi$ is not discrete of rank one, at least one generator of $\shomolzn$ has an irrational period. If $\textnormal{Im}(\chi_n)\subset \mathbb Q$, then the result follows from \cite[Lemma 11.5]{CFG}; notice that this is always the case when $n=2$ (up to rescaling). Let us now assume $\textnormal{Im}(\chi_n)\not\subset \mathbb Q$ and $n\ge3$. Up to rescaling, we can assume one puncture, say $P_1$, has a rational residue, say $r_1$. Necessarily, there is a puncture $P_2$ with irrational residue $r_2$. We can assume that at least one handle generator has an  irrational period, otherwise we can change the system of handle generators above by replacing $\alpha_1$ with $\alpha_1\gamma_2$, where $\gamma_2$ is a loop around the puncture $P_2$. Notice that such a replacement can be performed with a mapping class transformation. Therefore we can assume that $\alpha_1$ has an irrational period. There exists $\phi_i\in\modul$ such that $\chi\circ\phi_i(\alpha_i)$ is also irrational. This mapping class can be explicitly written as
\begin{equation}
    \phi_i(\alpha_i)=\alpha_1\,\alpha_i, \quad \phi_i(\beta_1)=\beta_1\,\beta_i^{-1}, \quad 
    \phi_i(\delta)=\delta \quad \text{ for }\,\, \delta\notin\{\alpha_i,\beta_1\}.
\end{equation}
By composing all these $\phi_i$'s, the resulting mapping class $\phi$ provides a system of handle generators such that $\chi(\alpha_i)$ is irrational for any $i=1,\dots,g$. We now proceed as follows. If $\chi(\alpha_i)$ and $\chi(\beta_i)$ are linearly independent over $\Q$, we can  assume that both of them are positive after Dehn twists and we apply the Euclidean algorithm for making them arbitrarily small. In the case $\chi(\alpha_i)$ and $\chi(\beta_i)$ are linearly dependent over $\Q$ (hence $\chi(\beta_i)$ is also irrational), we replace $\beta_i$ with $\beta_i\gamma_1$. Since $\chi(\gamma_1)\in\mathbb Q$, then $\chi(\alpha_i)$ and $\chi(\beta_i\,\gamma_1)$ are linearly independent over $\Q$. Again, we can assume that both of them are positive after Dehn twists and we apply the Euclidean algorithm for making them arbitrarily small.
\end{proof}

\section{Meromorphic differentials of genus one}\label{genusonemero}

\noindent In this section we want to realize a given non-trivial representation $\chi\colon\shomolzon\longrightarrow \C$ as the period of some translation surface with poles in a prescribed stratum and with prescribed rotation number, see Definition \ref{rotnum}. More precisely we shall prove Theorem \ref{mainthm} for meromorphic differentials of genus one, namely we prove the following:

\begin{prop}\label{prop:genusone}
Let $\chi$ be a non-trivial representation and suppose it arises as the period character of some meromorphic differential in a stratum $\mathcal{H}_1(m_1,\dots,m_k;-p_1,\dots,-p_n)$\,. Then $\chi$ can be realized as the period character of some translation surfaces with poles in each connected component of the same stratum.
\end{prop}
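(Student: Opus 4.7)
\medskip

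\noindent \textbf{Proof proposal.} The plan is to reduce to the case of a single zero via the breaking-a-zero surgery, and then construct, for every divisor $r$ of $\gcd(m_1,\dots,m_k,p_1,\dots,p_n)$ labelling a connected component of the stratum (excluding $r=m$ in the degenerate signature $(m;-m)$), a genus-one translation surface with poles whose period character is $\chi$ and whose rotation number is exactly $r$. Recall from the Boissy classification recalled in Section \S\ref{mscc} that the components of $\mathcal{H}_1(m_1,\dots,m_k;-p_1,\dots,-p_n)$ are in bijection with such divisors, so this suffices.

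\medskip

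\noindent \emph{Reduction step.} Assume first that we have constructed $(X_0,\omega_0)\in\mathcal H_1(M;\,-p_1,\dots,-p_n)$ with $M=\sum p_j$ (equivalently $M=\sum m_i$, by the Gauss--Bonnet relation \eqref{gbeq}), with period character $\chi$ and rotation number $r$. Since by hypothesis $r$ divides each $m_i$, the first Lemma in Section \S\ref{sec:zerobreak} lets us successively break the single zero of order $M$ into zeros of orders $m_1,\dots,m_k$ while preserving both the period character and the rotation number $r$. Hence the whole problem reduces to the one-zero stratum.

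\medskip

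\noindent \emph{Realization in the one-zero stratum.} I would partition the argument according to the nature of $\chi$, using the normal forms provided by Lemmas \ref{lem:allhandholnonzero}, \ref{lem:posvol}, \ref{lem:disnormalform}, and \ref{lem:smallperiods}. The building blocks are the standard plane $(\C,dz)$ and its cousins $(\C,\,z^{s-1}dz)$, the latter carrying a single pole of order $s+1$ at infinity and a zero of order $s-1$ at the origin, which can be grafted onto a base torus via the bubbling-a-plane surgery of Definition \ref{bubplane}. The starting point is a base torus $(T,dz)$ with handle periods $\chi(\alpha_1),\chi(\beta_1)$ (for the dense, semi-discrete and discrete rank-two cases this is a genuine torus $\C/\Lambda$; in the rank-one and real-collinear cases one first applies a $\glplus$-rotation, chooses small handle periods via Lemma~\ref{lem:smallperiods}, and works with a thin parallelogram). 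Onto this base one grafts planes with appropriate orders so as to create the desired poles of orders $p_1,\dots,p_n$ together with the single zero of order $M$ at the junction.

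\medskip

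\noindent \emph{Controlling the rotation number.} The crux is that, although the bubbling surgery does not affect the period character, the \emph{pattern} in which the rays emanate from the branch point controls the indices of the handle generators and thus the rotation number. To force rotation number equal to a chosen divisor $r\mid\gcd(M,p_1,\dots,p_n)$, I will arrange the slit-rays cyclically around the branch point of total angle $2\pi(M+1)$ in sectors of angular width $2\pi(M+1)/r$, and choose the handle generators $\alpha_1,\beta_1$ so that their tangent vector fields, when compared with the horizontal foliation along them, wind a number of times congruent to $0\pmod r$ but not $0\pmod{r'}$ for any $r'>r$ dividing $\gcd(M,p_1,\dots,p_n)$. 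Concretely, this amounts to the polygonal constructions sketched in the introduction and summarized in Table~\ref{tab:indexes} of Appendix~\ref{appfdgo}: different cyclic gluings of copies of $(\C,dz)$ around the zero produce surfaces in the same stratum but in different connected components.

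\medskip

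\noindent \emph{Anticipated obstacle.} The main difficulty is the interplay between \eqref{gbeq} and the arithmetic conditions forced by the rotation number, especially when $\chi$ is discrete (the case where Haupt-type obstructions surface in the genus-one meromorphic setting of \cite[Theorem D]{CFG}) or when the signature is the exceptional $(m;-m)$ where the rotation number $r=m$ is not realizable. The fact that in the discrete rank-one case each handle period can be normalized to $1$ (Lemma~\ref{lem:disnormalform}) but the only allowed rotation numbers must still divide every $p_j$ means the construction must be tailored carefully: a naive gluing often over-achieves the divisibility of the indices, producing a strictly larger rotation number $r'>r$. Ruling out this over-achievement, which is exactly what is needed to distinguish different components, requires the case-by-case angular bookkeeping that justifies the length of the corresponding section and motivates the flowchart of Appendix~\ref{appfdgo}.
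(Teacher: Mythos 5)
Your reduction step is correct and matches the paper: break a single zero of order $M=\sum p_j$ into zeros of orders $m_1,\dots,m_k$, which preserves the period character and, by the first Lemma of Section~\S\ref{sec:zerobreak}, the rotation number as long as $r\mid\gcd(m_1,\dots,m_k,p_1,\dots,p_n)$; this is exactly Corollary~\ref{multzeros}.

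The single-zero construction you sketch, however, has a real gap. You propose to start with a base torus $(T,dz)$ and ``graft planes'' via the bubbling-a-plane surgery of Definition~\ref{bubplane}. But bubbling a plane does not change the topology of the surface and in particular does not create a new puncture: it only raises the order of a pole that is already there. You cannot bubble onto a closed torus (it has no pole), and once you have one pole you will only ever have one pole after arbitrarily many bubblings. The strata under consideration typically have $n\ge 2$ poles, so this building procedure never reaches them. What the paper actually does in Section~\S\ref{genusoneordertwo} is glue a parallelogram $\mathcal{P}$ (or the closure of its exterior in $\cp$) to $n-1$ or $n$ slit copies of $(\C,dz)$ along pairs of parallel segments; this gluing-along-slits, unlike bubbling, genuinely attaches new punctured sheets and hence creates the extra poles. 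The rotation number is then governed by the cyclic pattern of the identifications (Table~\ref{tab:indexes}), not by dividing the cone angle at the zero into equal sectors.

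A second and larger gap is the non-trivial-ends case, namely poles with non-zero residue. All the surgeries you invoke --- bubbling $(\C,z^{s-1}dz)$, or gluing planes along rays --- only ever produce poles with zero residue. Section~\S\ref{resnotzero} of the paper is entirely devoted to the non-zero-residue case, which requires defining an auxiliary trivial-ends representation $\rho$ (Definition~\ref{defsupprep}), realizing $\rho$ with the prescribed rotation number, and then reinstating the residues by slitting along a bi-infinite geodesic ray and gluing in a cylinder $C_w=\mathbb{E}^2/\langle z\mapsto z+w\rangle$ (Definition~\ref{gluecyl} and Lemmas~\ref{exray},~\ref{invrotnum}) or, for $n\ge 3$, gluing to a genus-zero piece realizing $\chi_n$. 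This is by far the most delicate part of the proof (it further splits into the rational and non-rational subcases of Paragraphs~\S\ref{chinonrat},~\S\ref{chirat},~\S\ref{optr}), and your outline does not touch it. Without it the proposal covers only the subfamily of strata where $\chi$ is of trivial-ends type.
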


\begin{rmk}\label{conscases}
Recall that for a stratum $\mathcal{H}_1(m_1,\dots,m_k;-p_1,\dots,-p_n)$, the possible values of the rotation number are given by the positive divisors of $\gcd\big(m_1,\dots,m_k;p_1,\dots,p_n \big)$, see Section \S\ref{mscc} and \cite[Chapter 4]{BC}. We can notice that each stratum of meromorphic differentials with at least one simple zero or pole is connected as the the greatest common divisor of the orders cannot be larger than one. Recall that the problem of realizing period characters in a given stratum (regardless of which connected component) has already been considered in \cite{CFG}. 
\end{rmk} 

\noindent In order to state our results, we introduce the following

\smallskip

\textit{Convention and terminology.} Recall that slitting a surface along an oriented geodesic segment $s$ is a topological surgery for which the interior of $s$ is replaced with two copies of itself. On the resulting surface, these two segments, form a piecewise geodesic boundary with two corner points, corresponding to the extremal points of $s$, each of which of angle $2\pi$. We shall denote by $s^+$ the piece of boundary which bounds the surface on its right with respect to the orientation induced by $s$. In a similar fashion, we denote by $s^-$ the piece of boundary which bounds the surface on its left with respect to the orientation induced by $s$. In the following constructions we shall need to slit and glue surfaces along geodesic segments. In order for this operation to be done we shall need to glue along segments which are parallel after developing, see Section \S\ref{gluings}. For any $c\in\C^*$, by \textit{slitting $\C$ along $c$} we shall mean a cut along any geodesic segment of length $|c|$ and slope equal to $\arg(c)$. As we shall need to consider several slit copies of $(\mathbb{C},\, dz)$, along different geodesic segments, we make use of an index $i$ to specify on which copy we  perform the slit. Finally, given a pair of handle generators $\alpha,\,\beta$ and a representation $\chi$, we shall use the Latin letter $a$ for any segment parallel and isometric to $\chi(\alpha)$ and the Latin letter $b$ for any segment parallel and isometric to $\chi(\beta)$. 

\medskip

\noindent The proof of Theorem \ref{mainthm} for  meromorphic differentials of genus one is divided as follows. In the next three sections \S\ref{genusoneordertwo}-\S\ref{resnotzero} we prove the main Theorem for strata of genus one meromorphic differentials with exactly one zero of maximal order. In the final section \S\ref{morezeros} we derive the most general case as a straightforward corollary of the other sections. The only missing case in the present section is the trivial representation that will be handled in Section \S\ref{sec:trirep}. Since this a case-by-case proof with several sub-cases, Appendix \S\ref{appfdgo} contains a flow diagram of the proof for the reader's convenience, see Table \ref{fig:flowdiagram}.

\subsection{Strata with poles of order two}\label{genusoneordertwo} In this section we shall prove the following lemma for the case where all poles have order exactly two and zero residue. We propose a proof in which the structures are explicitly constructed with prescribed periods and rotation number. As we shall see, all the other cases follow as simple variations of the constructions in this special case.

\begin{lem}\label{onepuncttorusp2}
Let $\chi\colon\shomolzon\longrightarrow \C$ be a non-trivial representation of trivial-ends type. If $\chi$ can be realized in the stratum $\mathcal{H}_1(2n;-2,\dots,-2)$ then it appears as the period character of a translation surface with poles in each connected component. 
\end{lem}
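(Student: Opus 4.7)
The plan is to prove Lemma \ref{onepuncttorusp2} by producing explicit cut-and-paste constructions realising $\chi$ in each connected component of $\mathcal{H}_1(2n;-2^n)$, assembled from $n$ copies of the standard differential $(\C,dz)$. By the classification recalled in Section \S\ref{mscc}, the rotation number of any $(X,\omega)$ in this stratum divides $\gcd(2n,2)=2$ and so lies in $\{1,2\}$. When $n=1$ the stratum $\mathcal{H}_1(2;-2)$ is connected with the single value $\textnormal{rot}=1$ (the divisors of $2$ other than $2$ itself), and when $n\geq 2$ there are two components, one for each value.

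After applying Lemma \ref{lem:allhandholnonzero} (precomposing $\chi$ with a suitable mapping class if necessary), I may fix a pair of handle generators $\{\alpha,\beta\}$ with $a:=\chi(\alpha)\neq 0$ and $b:=\chi(\beta)\neq 0$. The trivial-ends hypothesis forces every pole to have zero residue, so each of the $n$ poles will be modelled on the pole at infinity of $(\C,dz)$. The idea is to take $n$ slit copies of $(\C,dz)$, identify the slit boundaries in a cyclic pattern that realises the prescribed handle periods $a$ and $b$, and arrange for all $2n$ slit endpoints to coalesce into one cone point of total angle $2\pi(2n+1)$, producing the required zero of order $2n$; the $n$ points at infinity remain and yield the $n$ poles of order two.

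The distinction between the two components will be encoded in the parity of the indices of the handle generators. Every zero and pole in the stratum has even order, so isotoping a curve past a singularity changes its index by an even integer, and consequently the parity of $\textnormal{Ind}(\gamma)$ is a well-defined invariant of the homology class $[\gamma]$. By Definition \ref{rotnum} it follows that $\textnormal{rot}(X,\omega)=2$ precisely when both $\textnormal{Ind}(\alpha)$ and $\textnormal{Ind}(\beta)$ are even, and $\textnormal{rot}(X,\omega)=1$ as soon as at least one of them is odd. For the $\textnormal{rot}=2$ component (available only when $n\geq 2$) I will use a ``straight'' cyclic identification in which both $\alpha$ and $\beta$ admit geodesic representatives disjoint from the slit support, each of index zero. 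For the $\textnormal{rot}=1$ component (always available, and the only component when $n=1$) I will modify the identification by transposing one pair of slit sides; this preserves the stratum, connectivity, genus and the period character, but forces the geodesic representative of (say) $\alpha$ to sweep through an odd multiple of $2\pi$ of cone angle at the special point, so that $\textnormal{Ind}(\alpha)$ changes by an odd integer and becomes odd.

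The main technical hurdle will be the combinatorial verification at the special point: for both the straight and the twisted gluings I must check that the $2n$ slit endpoints really do coalesce into one cone point of angle $2\pi(2n+1)$ (rather than fragmenting into several smaller zeros) and that the transposition does not perturb the handle-generator periods. Both checks reduce to a direct inspection of the developing map near the cone point together with the cyclic combinatorics of the slit identifications, using the even-order parity argument above to read off the rotation number from the construction.
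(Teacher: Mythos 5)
Your reduction of the rotation number to the parities of $\textnormal{Ind}(\alpha)$ and $\textnormal{Ind}(\beta)$ is correct and clean: since every singularity in $\mathcal{H}_1(2n;-2^n)$ has even order, the index mod $2$ is a homology invariant, and $\textnormal{rot}=\gcd(\textnormal{Ind}(\alpha),\textnormal{Ind}(\beta),2)$ equals $2$ exactly when both indices are even. That part would serve as a perfectly good bookkeeping device. The problem is with the construction itself.

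A surface assembled from $n$ copies of $(\C,dz)$, each slit along a single segment, with the $2n$ resulting boundary arcs identified in a cyclic pattern, is \emph{not} a genus-one surface: taking each slit plane as a face, each identified pair of slit sides as an edge, one computes $\chi = V - E + F = V$, and $V\geq 1$, so the result has genus $0$ (with $V=2$), never genus $1$. The cone-angle budget fails for the same reason: each of the $2n$ slit endpoints carries angle exactly $2\pi$, so even if they all coalesced you would obtain total angle $2n\cdot 2\pi$, i.e.\ a zero of order $2n-1$, one full turn short of the required $2\pi(2n+1)$. The missing ingredient is precisely a compact piece with sides $\chi(\alpha)$ and $\chi(\beta)$ --- the parallelogram $\mathcal{P}$ (or, when the volume of $\chi$ is non-positive, the closure of its exterior in $\cp$) that the paper glues in alongside the slit planes. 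That piece simultaneously supplies the extra handle topology and the odd multiple of $2\pi$ of cone angle ($2\pi$ from the four interior corners, or $6\pi$ from the four exterior corners). Introducing it also forces the case distinction your proposal skips entirely: whether $\mathcal{P}$ can be taken embedded with the correct orientation depends on the sign of $\Im\big(\overline{\chi(\alpha)}\,\chi(\beta)\big)$, which is why the paper runs separate constructions for positive and non-positive volume (and, for rotation number $2$ with non-positive volume and $n$ even, a further ad hoc construction from two triangles in $\cp$). Your idea of toggling the component by transposing one pair of slit identifications is essentially what the paper does once the correct pieces are in place, but as written the construction it is meant to modify does not produce a surface in the stratum at all.
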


\noindent Let $\chi\colon\shomolzon\longrightarrow \C$ be a non-trivial representation and assume $\chi$ can be realized as the period character of a genus one meromorphic differential with poles in the stratum $\mathcal{H}_1(2n;-2,\dots,-2)$. Recall that any such a stratum has two connected components according to the rotation number with the only exception being the stratum $\mathcal{H}_1(2,-2)$ which is connected and hence already handled in \cite{CFG}. Therefore we assume without loss of generality that $n\ge2$.

\subsubsection{Realizing representations with rotation number one}\label{rot1} We begin with realizing $\chi$ as the period character of some genus one meromorphic differential with rotation number \textit{one}. We shall distinguish two cases according to the volume of the representation $\chi$.

\medskip

\paragraph{\textit{Positive volume}}\label{pk1} Let $\alpha,\beta$ be a pair of handle generators and let $\mathcal{P}\subset \mathbb{C}$ be the parallelogram defined by the chain
\begin{equation}
    P_0\mapsto P_0+\chi(\alpha)\mapsto P_0+\chi(\alpha)+\chi(\beta)=Q_0\mapsto P_0+\chi(\beta)\mapsto P_0,
\end{equation}

\noindent where $P_0\in\C$ is any point. According to our convention above, let us denote by $a_0^+$ (respectively $a_0^-$) the edge of $\mathcal{P}$ parallel to $\chi(\alpha)$ that bounds the parallelogram on its right (respectively left). In the same fashion, let us denote by $b_0^+$ (respectively $b_0^-$) the edge of $\mathcal{P}$ parallel to $\chi(\beta)$ that bounds the parallelogram on its right (resp. left). Recall that $(\mathbb{C},\, dz)$ is a genus zero translation surface with trivial period character, no zeros and one pole of order $2$ at the infinity with zero residue. We slit $(\mathbb{C},\,dz)$ along a segment $a$ and we denote the resulting segments as $a_n^+$ and $a_n^-$. Let $P_n$ and $Q_n=P_n+\chi(\alpha)$ be the extremal points of $a_n$. We next consider other $n-1$ copies of $(\mathbb{C},\, dz)$ and we slit each of them along a segment $b$. For any $i=1,\dots,n-1$, let $P_i$ and $Q_i=P_i+\chi(\beta)$ be the extremal points of $b_i$ and denote as $b_i^+$ and $b_i^-$ the resulting segments after slitting. We are now ready to glue these $n$ copies of $(\C,dz)$ and the parallelogram $\mathcal{P}$ all together. The desired structure is then obtained by identifying the segment $a_0^+$ with $a_n^-$, the segment $a_0^-$ with $a_n^+$, the segment $b_i^+$ with $b_{i+1}^-$ where $i=1,\dots,n-2$ and, finally, $b_0^-$ with $b_{n-1}^+$ and  $b_0^+$ with $b_1^-$, see Figure \ref{rotpk1}. 

\begin{figure}[ht]
    \centering
    \begin{tikzpicture}[scale=1.2, every node/.style={scale=0.8}]
    \definecolor{pallido}{RGB}{221,227,227}
    
    \pattern [pattern=north west lines, pattern color=pallido]
    (-4.5,-0.25)--(-0.5,-0.25)--(-0.5,2.25)--(-4.5,2.25)--(-4.55,-0.25);
    \draw [thick, violet, ->] (-3.25, 1) arc [start angle = 0, end angle = 345,radius = 0.25];
    \draw [thick, orange] (-3.5,1) to (-1.5,1);
    \draw [thick, orange, ->] (-3.5,1) to (-2.5,1);
    \draw [thick, black, dotted] (-1.5,1) to (-0.5,1);
    \fill (-3.5,1) circle (1.5pt);
    \fill (-1.5,1) circle (1.5pt);
    \node at (-2.5, 0.75) {$a_n^+$};
    \node at (-2.5, 1.25) {$a_n^-$};
    
    \pattern [pattern=north west lines, pattern color=pallido]
    (0,0)--(2,0)--(2,2)--(0,2)--(0,0);
    \draw [thick, orange] (0,0)--(2,0);
    \draw [thick, red] (2,0)--(2,2);
    \draw [thick, orange] (2,2)--(0,2);
    \draw [thick, red] (0,2)--(0,0);
    \draw [thick, orange, ->] (0,0) to (1,0);
    \draw [thick, red, ->>] (2,0) to (2,1);
    \draw [thick, orange, ->] (0,2) to (1,2);
    \draw [thick, red, ->>] (0,0) to (0,1);
    \fill (0,0) circle (1.5pt);
    \fill (0,2) circle (1.5pt);
    \fill (2,2) circle (1.5pt);
    \fill (2,0) circle (1.5pt);
    \draw [thick, violet, ->] (0.25,0) to (0.25, 2);
    \draw [thick, blue, ->] (0, 0.25) to (2, 0.25);
    \node at (1, -.25) {$a_0^-$};
    \node at (1, 2.25) {$a_0^+$};
    \node at (-0.25, 1) {$b_0^+$};
    \node at (2.25, 1) {$b_0^-$};
 
    \foreach \x [evaluate=\x as \coord using 2.5 + 3*\x] in {0, 1} 
    {
    \pattern [pattern=north west lines, pattern color=pallido]
    (\coord, -0.75)--(\coord+2.5, -0.75)--(\coord+2.5, 2.75)--(\coord, 2.75)--(\coord,-0.75);
    \draw [thick, blue, ->] (\coord+1.25, 0.25) arc [start angle = 90, end angle = -260,radius = 0.25];
    \draw [thick, red] (\coord+1.25, 0) to (\coord+1.25, 2);
    \draw [thick, red, ->>] (\coord+1.25,0) to (\coord+1.25,1);
    \fill (\coord+1.25,0) circle (1.5pt);
    \fill (\coord+1.25,2) circle (1.5pt);
    \node at (\coord+0.25, -0.5) {$\mathbb{C}$};   
    \draw [thick, black, dotted] (\coord+1.25, 2)--(\coord+1.25, 2.75);
    }
    
    \node at (3.5, 1) {$b_1^-$};
    \node at (4, 1) {$b_1^+$};
    \node at (6.375, 1) {$b_{n-1}^-$};
    \node at (7.125, 1) {$b_{n-1}^+$};
    
    \node at (5.25,1) {$\cdots$};
    \node at (1.5, 1.5) {$\mathcal{P}$};
    \node at (-4.25,0) {$\mathbb{C}$};
    \node at (1,.375) {$\alpha$};
    \node at (0.375,1) {$\beta$};
   
    \end{tikzpicture}
    \caption{Realizing a genus one translation surface with poles of order $2$, positive volume, and rotation number equal to $1$. In this case all poles are assumed to have zero residue.}
    \label{rotpk1}
\end{figure}
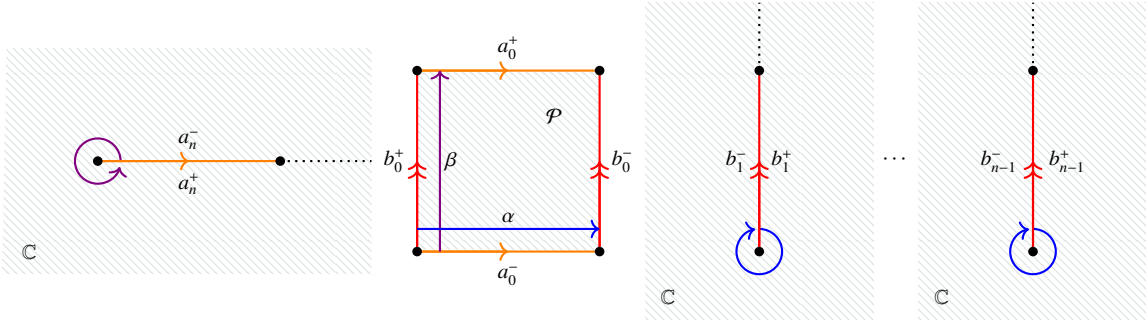

\noindent We will show that the resulting genus one meromorphic differential, say $(X,\omega)$, has rotation number one. We define $\alpha$ in the following way. For any $i=1,\dots,n-1$ we consider a small metric circle of radius $\varepsilon$ centered at $P_i$. After the cut and paste process described above, these $n-1$ circles define a smooth path with starting point on $b_0^-$ and ending point on $b_0^+$. These two points on $\mathcal{P}$ differ by $\chi(\alpha)$ and hence we can join them with a geodesic segment parallel to $a$. The resulting curve is a simple close curve on $(X,\omega)$, see Figure \ref{rotpk1}. It is easy to see that it has index $\text{Ind}(\alpha)=n-1$. In a similar way we can define $\beta$. We consider a small metric circle of radius $\varepsilon$ centered at $P_n$. After the cut and paste process, the extremal points of such an arc are on $a_0^\pm$ and differ by $\chi(\beta)$ on $\mathcal{P}$. We join them with a geodesic segment and the resulting curve is simple and closed in $(X,\omega)$. By construction, $\beta$ has index one. Therefore, according to the formula \eqref{rotnum} we have that $\gcd(\,n-1,\,1,\,2n,\,2\,)=1$ and hence the structure has rotation number one.

\medskip

\paragraph{\textit{Non-positive volume}}\label{nk1} We now assume that $\chi$ has non-positive volume. This case is similar to the case of positive volume. As above, let $\alpha,\beta$ be a set of handle generators and let $\mathcal{P}\subset \mathbb{C}$ be the closure of the \textit{exterior} of the parallelogram 
\begin{equation}
    P_0\mapsto P_0+\chi(\alpha)\mapsto P_0+\chi(\alpha)+\chi(\beta)=Q_0\mapsto P_0+\chi(\beta)\mapsto P_0,
\end{equation}
\noindent where $P_0\in\C$ is any point. Notice that $\mathcal{P}$ itself is a topological quadrilateral on $\cp$. We denote the sides of $\mathcal{P}$ as $a^{\pm}$ and $b_0^{\pm}$ according to our convention. We next consider $n-1$ copies of $(\mathbb{C},\, dz)$ and we slit each of them along a segment $b$. For any $i=1,\dots,n-1$, we denote as $b_i^+$ and $b_i^-$ the resulting segments after slitting and let $P_i$ and $Q_i=P_i+\chi(\beta)$ be the extremal points of $b_i$. The desired structure is then obtained by identifying the segment $a^+$ with $a^-$, the segment $b_{i}^-$ with $b_{i+1}^+$ for $i=1,\dots,n-2$, the segment $b_0^-$ with $b_{1}^+$ and the segment $b_0^+$ with $b_{n-1}^-$, see Figure \ref{rotnk1}. In this case it is still easy to observe that the final structure $(X,\omega)$ has rotation number one. We define $\alpha$ similarly as before. For $\beta$, we can choose any segment in $\mathcal{P}$ parallel to $b_0^+$ with extremal points $R$ and $R+\chi(\beta)$. We join $R$ (respectively $R+\chi(\beta)$) with any point in $a^-$ (respectively $a^+$) by means of an embedded arc. The resulting curve close up to a simple closed curve on $(X,\omega)$ and has index one. Therefore the structure $(X,\omega)$ has rotation number one.

\begin{figure}[ht]
    \centering
    \begin{tikzpicture}[scale=1.2, every node/.style={scale=0.8}]
    \definecolor{pallido}{RGB}{221,227,227}
   
    \pattern [pattern=north west lines, pattern color=pallido]
    (-1.5,-1)--(3,-1)--(3,3)--(-1.5,3)--(-1.5,-1);
    \fill [white] (0,0)--(2,0)--(2,2)--(0,2)--(0,0);
    \draw [thick, dotted, black] (2,0) -- (3,-1);
    \draw [thick, orange] (0,0)--(2,0);
    \draw [thick, red] (2,0)--(2,2);
    \draw [thick, orange] (2,2)--(0,2);
    \draw [thick, red] (0,2)--(0,0);
    \draw [thick, orange, ->] (0,0) to (1,0);
    \draw [thick, red, ->>] (2,2) to (2,1);
    \draw [thick, orange, ->] (0,2) to (1,2);
    \draw [thick, red, ->>] (0,2) to (0,1);
    \fill (0,0) circle (1.5pt);
    \fill (0,2) circle (1.5pt);
    \fill (2,2) circle (1.5pt);
    \fill (2,0) circle (1.5pt);
    \node at (1,2.25) {$a^-$};
    \node at (1,-0.25) {$a^+$};
    \node at (2.25,1) {$b_0^-$};
    \node at (-0.25,1) {$b_0^+$};
    
    \draw [thick, violet, ->] (0.25,2.0625) arc (50:310:0.75 and 1.375);
    
    \draw [thick, blue, <-] (2.0625,1.75) arc (-45:225:1.5 and 0.5);
 
    \foreach \x [evaluate=\x as \coord using 4 + 3.5*\x] in {0, 1} 
    {
    \pattern [pattern=north west lines, pattern color=pallido]
    (\coord, -1)--(\coord+2.5, -1)--(\coord+2.5, 3)--(\coord, 3)--(\coord,-1);
    \draw [thick, blue, ->] (\coord+1.25, 1.75) arc [start angle = 270, end angle = -80,radius = 0.25];
    \draw [thick, red] (\coord+1.25, 0) to (\coord+1.25, 2);
    \draw [thick, red, <<-] (\coord+1.25,1) to (\coord+1.25,2);
    \fill (\coord+1.25,0) circle (1.5pt);
    \fill (\coord+1.25,2) circle (1.5pt);
    \node at (\coord+0.25, -0.75) {$\mathbb{C}$};   
    \draw [thick, black, dotted] (\coord+1.25, 0)--(\coord+1.25, -1);
    }
    
    \node at (7,1) {$\cdots$};
    \node at (4.875, 1) {$b_1^+$};
    \node at (5.625, 1) {$b_1^-$};
    \node at (8.375, 1) {$b_{n-1}^+$};
    \node at (9.125, 1) {$b_{n-1}^-$};
    \node at (-1,-0.75) {$\mathcal{P}\subset\cp$};
    \node at (1,2.75) {$\alpha$};
    \node at (-0.75,1) {$\beta$};
    \end{tikzpicture}
    \caption{Realizing a genus one translation surface with poles of order $2$, non-positive volume, and rotation number equal to $1$. In this case all poles are assumed to have zero residue.}
    \label{rotnk1}
\end{figure}
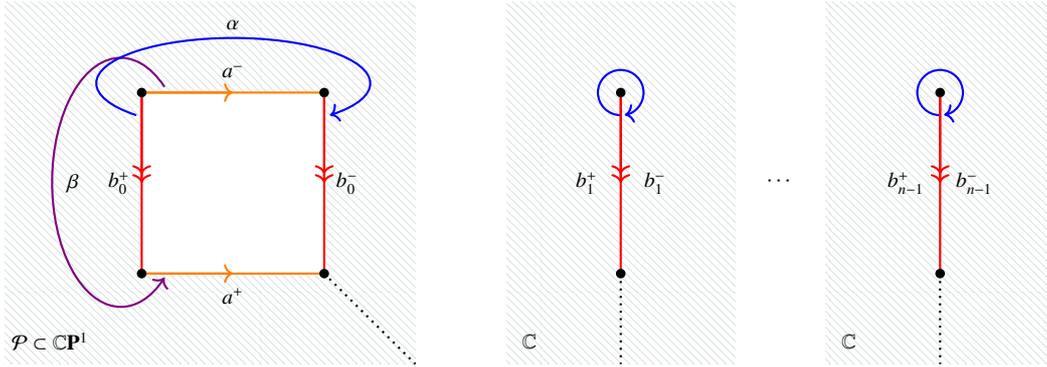

\noindent 

\smallskip

\subsubsection{Realizing representations with rotation number two}\label{rot2} We now realize a non-trivial representation $\chi$ as the period character of some genus one meromorphic differential with rotation number \textit{two}. Recall that poles are supposed to be of order $2$ with zero residue. We distinguish four cases according to the volume of $\chi$ and the parity of $n$ (the number of punctures). All constructions are quite similar to those realized in Section \S\ref{rot1} and in fact they differ mainly in the way we glue copies of $(\C, dz)$.

\medskip

\paragraph{\textit{Positive volume and even number of punctures.}}\label{pvep} Let $\alpha,\beta$ be a set of handle generators and let $\mathcal{P}\subset \mathbb{C}$ be the parallelogram defined by the chain
\begin{equation}
    P_0\mapsto P_0+\chi(\alpha)\mapsto P_0+\chi(\alpha)+\chi(\beta)=Q_0\mapsto P_0+\chi(\beta)\mapsto P_0,
\end{equation}
\noindent where $P_0\in\C$ is any point. Let $a^{\pm}$ denote the edges of $\mathcal{P}$, parallel to $\chi(\alpha)$ that bounds the parallelogram on its right (respectively left). Similarly, let $b_0^+$ (respectively $b_0^-$) denote the edge of $\mathcal{P}$ parallel to $\chi(\beta)$ that bounds the parallelogram on its right (resp. left). We next consider $n$ copies of $(\mathbb{C},\, dz)$ and we slit each of them along a segment $b$. For any $i=1,\dots,n$, we denote as $b_i^+$ and $b_i^-$ the resulting segments after slitting and let $P_i$ and $Q_i=P_i+\chi(\beta)$ be the extremal points of $b_i$. We then glue these $n$ copies of $(\C,dz)$ and the parallelogram $\mathcal{P}$ all together. The desired structure is then obtained by identifying the segment $a^+$ with $a^-$, the segment $b_i^+$ with $b_{i+1}^-$ for $i=1,\dots,n-1$ and the segment $b_0^-$ with $b_{n-1}^+$ and the segment $b_0^+$ with $b_1^-$, see Figure \ref{rotpek2}.

\begin{figure}[ht]
    \centering
    \begin{tikzpicture}[scale=1.25, every node/.style={scale=0.8}]
    \definecolor{pallido}{RGB}{221,227,227}
    
    \pattern [pattern=north west lines, pattern color=pallido]
    (0,0)--(2,0)--(2,2)--(0,2)--(0,0);
    \draw [thick, orange] (0,0)--(2,0);
    \draw [thick, red] (2,0)--(2,2);
    \draw [thick, orange] (2,2)--(0,2);
    \draw [thick, red] (0,2)--(0,0);
    \draw [thick, orange, ->] (0,0) to (1,0);
    \draw [thick, red, ->>] (2,0) to (2,1);
    \draw [thick, orange, ->] (0,2) to (1,2);
    \draw [thick, red, ->>] (0,0) to (0,1);
    \fill (0,0) circle (1.5pt);
    \fill (0,2) circle (1.5pt);
    \fill (2,2) circle (1.5pt);
    \fill (2,0) circle (1.5pt);
    \draw [thick, violet, ->] (0.25,0) to (0.25, 2);
    \draw [thick, blue, ->] (0, 0.25) to (2, 0.25);
    \node at (1,-0.25) {$a^-$};
    \node at (1,2.25) {$a^+$};
    \node at (2.25,1) {$b_0^-$};
    \node at (-0.25,1) {$b_0^+$};
 
    \foreach \x [evaluate=\x as \coord using 3 + 3*\x] in {0, 1} 
    {
    \pattern [pattern=north west lines, pattern color=pallido]
    (\coord, -0.75)--(\coord+2.5, -0.75)--(\coord+2.5, 2.75)--(\coord, 2.75)--(\coord,-0.75);
    \draw [thick, blue, ->] (\coord+1.25, 0.25) arc [start angle = 90, end angle = -260,radius = 0.25];
    \draw [thick, red] (\coord+1.25, 0) to (\coord+1.25, 2);
    \draw [thick, red, ->>] (\coord+1.25,0) to (\coord+1.25,1);
    \fill (\coord+1.25,0) circle (1.5pt);
    \fill (\coord+1.25,2) circle (1.5pt);
    \node at (\coord+0.25, -0.5) {$\mathbb{C}$};  
    \draw [thick, black, dotted] (\coord+1.25, 2)--(\coord+1.25, 2.75);
    }
    
    \node at (5.75,1) {$\cdots$};
    \node at (1.5, 1.5) {$\mathcal{P}$};
    \node at (1,.375) {$\alpha$};
    \node at (0.375,1) {$\beta$};
    \node at (4, 1) {$b_1^-$};
    \node at (4.5, 1) {$b_1^+$};
    \node at (6.875, 1) {$b_{n-1}^-$};
    \node at (7.625, 1) {$b_{n-1}^+$};
   
    \end{tikzpicture}
    \caption{Realizing a genus one translation surface with poles of order $2$, positive volume, and rotation number equal to $2$. In this case there is an \textit{even} number of punctures corresponding to poles with zero residue.}
    \label{rotpek2}
\end{figure}
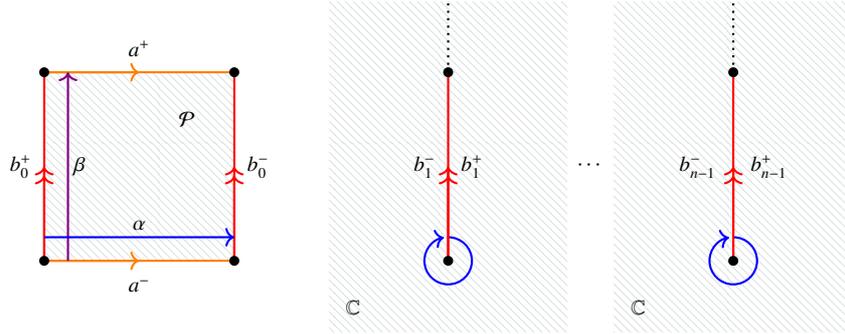

\noindent We define the curves $\alpha,\,\beta$ exactly as we have done above for the case discussed in paragraph \S\ref{pk1}. It is easy to check that  $\text{Ind}(\alpha)=n$ by construction and $\text{Ind}(\beta)=0$ because $\beta$ is geodesic. Since in this paragraph $n$ is assumed to be even we have that $\gcd(n,0,2n,2)=2$ and hence $(X,\omega)$ has rotation number $2$.

\medskip

\paragraph{\textit{Positive volume and odd number of punctures.}}\label{pvop} Let $\alpha,\beta$ be a set of handle generators. Following \cite[Section 3.3.2]{BC}, we shall consider here two half-planes defined as follows. We first consider the broken line defined by the ray corresponding to $\mathbb{R}^-$, two edges corresponding to the chain of vectors $\chi(\alpha)$ and $\chi(\beta)$ in this order with starting point at the origin and, finally, a horizontal ray $r_1$ from the right end of $\chi(\beta)$ and parallel to $\mathbb R$. Define $H_1$ as the half-plane bounded on the left by this broken line. Similarly, define $H_2$ as the half-plane bounded on the right by the broken line defined by the ray corresponding to $\mathbb{R}^-$, two edges corresponding to the chain of vectors $\chi(\beta)$ and $\chi(\alpha)$ in this order with starting point at the origin and, finally, a horizontal ray $r_2$ from the right end of $\chi(\alpha)$ and parallel to $\mathbb R$, see Figure \ref{rotpok2}. We denote by $a_0^-,\,b_0^-$ the edges of the boundary of $H_1$ corresponding to $\chi(\alpha)$ and $\chi(\beta)$ respectively. Similarly, we denote by $a_0^+,\,b_0^+$ the edges of the boundary of $H_2$ corresponding to $\chi(\alpha)$ and $\chi(\beta)$ respectively.

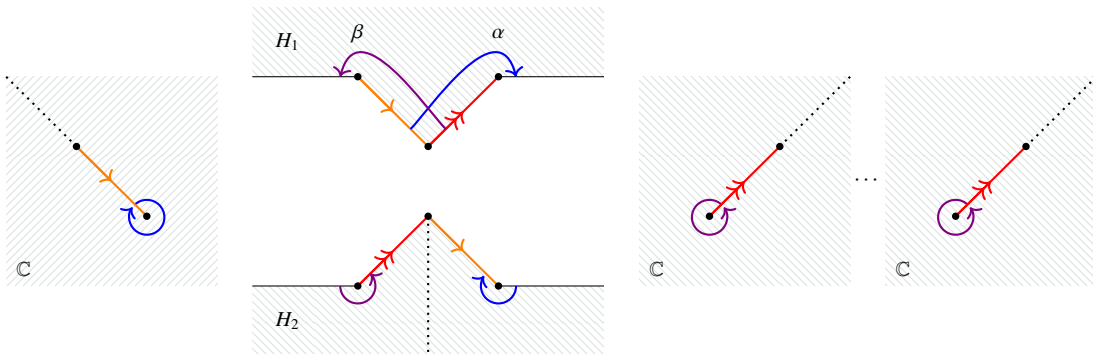
\begin{figure}[ht]
    \centering
    \begin{tikzpicture}[scale=0.925, every node/.style={scale=0.8}]
    \definecolor{pallido}{RGB}{221,227,227}
    
    \pattern [pattern=north west lines, pattern color=pallido]
    (0.5,1.5)--(2,1.5)--(3,0.5)--(4,1.5)--(5.5,1.5)--(5.5,2.5)--(0.5,2.5)--(0.5,1.5);
    \draw [thin, black] (0.5,1.5)--(2,1.5);
    \draw [thick, orange] (2,1.5)--(3,0.5);
    \draw [thick, orange, ->] (2,1.5)--(2.5,1);
    \draw [thick, red] (3,0.5)--(4,1.5);
    \draw [thick, red, ->>] (3,0.5)--(3.5,1);
    \draw [thin, black] (4,1.5)--(5.5,1.5);
    \fill (2,1.5) circle (1.5pt);
    \fill (3,0.5) circle (1.5pt);
    \fill (4,1.5) circle (1.5pt);
    \draw [thick, blue, ->] (2.75, 0.75) .. controls (3, 1) and (4,2.5) .. (4.25, 1.5);
    \draw [thick, violet, ->] (3.25, 0.75) .. controls (3, 1) and (2,2.5) .. (1.75, 1.5);
    \node at (1,2) {$H_1$};
    
    \pattern [pattern=north west lines, pattern color=pallido]
    (0.5,-1.5)--(2,-1.5)--(3,-0.5)--(4,-1.5)--(5.5,-1.5)--(5.5,-2.5)--(0.5,-2.5)--(0.5,-1.5);
    \draw [thin, black] (0.5,-1.5)--(2,-1.5);
    \draw [thick, red] (2,-1.5)--(3,-0.5);
    \draw [thick, red, ->>] (2,-1.5)--(2.5,-1);
    \draw [thick, orange] (3,-0.5)--(4,-1.5);
    \draw [thick, orange,->] (3, -0.5)--(3.5,-1);
    \draw [thin, black] (4,-1.5)--(5.5,-1.5);
    \fill (2,-1.5) circle (1.5pt);
    \fill (3,-0.5) circle (1.5pt);
    \fill (4,-1.5) circle (1.5pt);
    \draw [thick, black, dotted] (3,-0.5)--(3, -2.5);
    \draw [thick, blue, ->] (4.25, -1.5) arc [start angle = 0, end angle = -215, radius = 0.25];
    \draw [thick, violet, ->] (1.75, -1.5) arc [start angle = 180, end angle = 405, radius = 0.25];
    \node at (1,-2) {$H_2$};
    
    \pattern [pattern=north east lines, pattern color=pallido]
    (-3,-1.5)--(0,-1.5)--(0,1.5)--(-3,1.5)--(-3,-1.5);
    \draw [thick, blue, ->] (-1.17677669529, -0.33833883) arc [start angle = 135, end angle = -215, radius = 0.25];
    \draw [thick, orange] (-2,0.5)--(-1,-0.5);
    \draw [thick, orange, ->] (-2,0.5)--(-1.5,0);
    \fill (-2,0.5) circle (1.5pt);
    \fill (-1,-0.5) circle (1.5pt);
    \draw [thick, black, dotted] (-2, 0.5)--(-3, 1.5);
    \node at (-2.75, -1.25) {$\mathbb{C}$};  
    
    \foreach \x [evaluate=\x as \coord using 6 + 3.5*\x] in {0, 1} 
    {
    \pattern [pattern=north west lines, pattern color=pallido]
    (\coord, -1.5)--(\coord+3, -1.5)--(\coord+3, 1.5)--(\coord, 1.5)--(\coord,-1.5);
    \draw [thick, violet, ->] (\coord+1.17677669529, -0.33833883) arc [start angle = 45, end angle = 395, radius = 0.25];
    \draw [thick, red] (\coord+1, -0.5) to (\coord+2, 0.5);
    \draw [thick, red, ->>] (\coord+1, -0.5) to (\coord+1.5,0);
    \fill (\coord+1,-0.5) circle (1.5pt);
    \fill (\coord+2,0.5) circle (1.5pt);
    \node at (\coord+0.25, -1.25) {$\mathbb{C}$};  
    \draw [thick, black, dotted] (\coord+2, 0.5)--(\coord+3, 1.5);
    }
    
    \node at (9.25,0) {$\cdots$};
    \node at (4,2.125) {$\alpha$};
    \node at (2,2.125) {$\beta$};

    \end{tikzpicture}
    \caption{Realizing a genus one translation surface with poles of order $2$, positive volume, and rotation number equal to $2$. In this case there is an \textit{odd} number of punctures corresponding to poles with zero residue.}
    \label{rotpok2}
\end{figure}

\smallskip 

\noindent We consider a copy of $(\C,\,dz)$ and we slit it along a segment $a$ and we denote the resulting segments $a_n^{\pm}$. Next we consider other $n-2$ copies of $(\C,\,dz)$ and we slit each of them along a segment $b$ and we denote the resulting segments $b_i^{\pm}$, for $i=2,\dots,n-1$. The sign $\pm$ are according to our convention. We now glue the edges in the usual way as above and we also glue the ray $r_1$ with $r_2$. Notice that this identification does not affect the set of periods since these two rays differ by a translation $\chi(\alpha)+\chi(\beta)$. The final surface is a genus one meromorphic differential. It remains to show that it has rotation number $2$. We can choose a pair of handle generators as shown in Figure \ref{rotpok2}. We can notice that the curve $\alpha$ has index $2$ by construction whereas the curve $\beta$ has index $n-1$. Observe that $n-1$ is even because $n$ is assumed to be odd. Therefore we can conclude that  $\gcd(2,\,n-1,\,2n,\,-2)=2$ as desired.

\medskip

\paragraph{\textit{Non-positive volume and odd number of punctures.}}\label{npvonp} This case is very similar to the case described in paragraph \S\ref{nk1}. Notice that $n\ge3$ in the present case. Given a set of handle generators $\alpha,\beta$ we define $\mathcal{P}\subset \mathbb{C}$ to be the closure of the \textit{exterior} of the parallelogram defined by the chain
\begin{equation}
    P_0\mapsto P_0+\chi(\alpha)\mapsto P_0+\chi(\alpha)+\chi(\beta)=Q_0\mapsto P_0+\chi(\beta)\mapsto P_0,
\end{equation}
\noindent where $P_0\in\C$ is any point and we label the edges $a_0^\pm$ and $b_0^\pm$ as usual. Consider $n-1$ copies of $(\mathbb C,\, dz)$ and slit $n-2$ of these copies along a segment $b$ and the remaining one along a segment $a$, see Figure \ref{rotnok2}. This is the only difference with respect to the case in paragraph \S\ref{nk1}. Then we proceed as usual in order to get the desired genus one meromorphic differential.

\begin{figure}[ht]
    \centering
    \begin{tikzpicture}[scale=1, every node/.style={scale=0.8}]
    \definecolor{pallido}{RGB}{221,227,227}
   
    \pattern [pattern=north west lines, pattern color=pallido]
    (-6,-0.25)--(-2,-0.25)--(-2,2.25)--(-6,2.25)--(-6,-0.25);
    \draw [thick, violet, ->] (-4.75, 1) arc [start angle = 0, end angle =345 , radius = 0.25];
    \draw [thick, orange] (-5,1)--(-3,1);
    \draw [thick, orange, ->] (-5,1)--(-4,1);
    \fill (-5,1) circle (1.5pt);
    \fill (-3,1) circle (1.5pt);
    \node at (-4, 0.75) {$a_{n-1}^+$};
    \node at (-4, 1.25) {$a_{n-1}^-$};

    \pattern [pattern=north west lines, pattern color=pallido]
    (-1.5,-1)--(3,-1)--(3,3)--(-1.5,3)--(-1.5,-1);
    \fill [white] (0,0)--(2,0)--(2,2)--(0,2)--(0,0);
    \draw [thick, orange] (0,0)--(2,0);
    \draw [thick, red] (2,0)--(2,2);
    \draw [thick, orange] (2,2)--(0,2);
    \draw [thick, red] (0,2)--(0,0);
    \draw [thick, orange, ->] (0,0) to (1,0);
    \draw [thick, red, ->>] (2,2) to (2,1);
    \draw [thick, orange, ->] (0,2) to (1,2);
    \draw [thick, red, ->>] (0,2) to (0,1);
    \fill (0,0) circle (1.5pt);
    \fill (0,2) circle (1.5pt);
    \fill (2,2) circle (1.5pt);
    \fill (2,0) circle (1.5pt);
    \node at (1, -0.25) {$a_0^+$};
    \node at (1, +2.25) {$a_0^-$};
    \node at (-0.25, 1) {$b_0^+$};
    \node at (+2.25, 1) {$b_0^-$};
    
    \draw [thick, violet, ->] (0.25,2.0625) arc (50:310:0.75 and 1.375);
    
    \draw [thick, blue, <-] (2.0625,1.75) arc (-45:225:1.5 and 0.5);
 
    \foreach \x [evaluate=\x as \coord using 3.5 + 3*\x] in {0, 1} 
    {
    \pattern [pattern=north west lines, pattern color=pallido]
    (\coord, -1)--(\coord+2.5, -1)--(\coord+2.5, 3)--(\coord, 3)--(\coord,-1);
    \draw [thick, blue, ->] (\coord+1.25, 1.75) arc [start angle = 270, end angle = -80,radius = 0.25];
    \draw [thick, red] (\coord+1.25, 0) to (\coord+1.25, 2);
    \draw [thick, red, <<-] (\coord+1.25,1) to (\coord+1.25,2);
    \fill (\coord+1.25,0) circle (1.5pt);
    \fill (\coord+1.25,2) circle (1.5pt);
    \node at (\coord+0.25, -0.5) {$\mathbb{C}$};   
    }
    
    \node at (4.375, 1) {$b_1^+$};
    \node at (5.125, 1) {$b_1^-$};
    \node at (7.375, 1) {$b_{n-2}^+$};
    \node at ( 8.25, 1) {$b_{n-2}^-$};
    
    \node at (6.25,1) {$\cdots$};
    \node at (1, 1) {$\mathcal{P}$};
    \node at (-1, -0.5) {$\mathbb{C}$};
    \node at (1, 2.75) {$\alpha$};
    \node at (-1.125,1) {$\beta$};
    \end{tikzpicture}
    \caption{Realizing a genus one translation surface with poles of order $2$, negative volume, and rotation number equal to $2$. In this case there is an \textit{odd} number of punctures corresponding to poles with zero residue.}
    \label{rotnok2}
\end{figure}
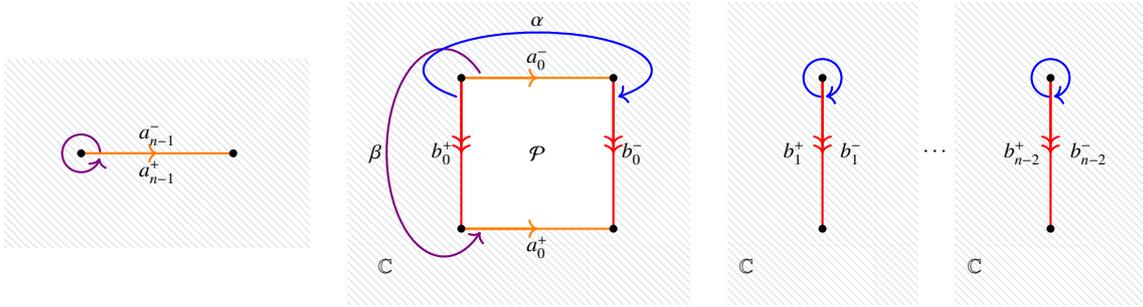

\noindent We just need to check that this structure has rotation number equal to two. We choose $\alpha,\,\beta$ as in Figure \ref{rotnok2}. The curve $\alpha$ has index $n-1$ which is even because $n$ is supposed to be odd. Since the curve $\beta$ has index $2$ by construction, it follows that $\gcd(n-1,\,2,\,2n,\,2)=2$ as desired.

\medskip

\paragraph{\textit{Non-positive volume and even number of punctures.}}\label{nvep} We finally consider the case in which $\chi$ has non-positive volume and the number of puncture is even. For this case we need a slightly different construction. Let $P$ be any point on $(\C,\,dz)$, define $Q=P+\chi(\alpha)-\chi(\beta)$, and let $c$ be the geodesic segment joining the points $P$ and $Q$. Let $T_1$ be the triangle with vertices $P$, $P+\chi(\alpha)$ and $Q$. Notice that, by construction, the sides of $T_1$ are $a,b,c$. We define $\mathcal{T}_1$ as the closure in $\cp$ of the exterior of $T_1$. Notice that $\mathcal{T}_1$ is a triangle in $\cp$. We next consider another copy of $(\C,\,dz)$ and define $T_2$ as the triangle with vertices $P$, $P-\chi(\beta)$ and $Q$. In this case the sides of $T_2$ are still $a,b,c$. Let $\mathcal{T}_2$ be the triangle in $\cp$ given by the closure of the exterior of $T_2$. We now glue the triangles $\mathcal{T}_1$ and $\mathcal{T}_2$ along the edge $c$. The resulting space is a topological parallelogram $\mathcal{P}=\mathcal{T}_1\cup\mathcal{T}_2$ with two edges parallel to $a$ and two edges parallel to $b$ by construction. Such a parallelogram $\mathcal{P}$ has in its interior two special points corresponding to the points at infinity of the two copies of $\cp$. We set $a_0^+$ (resp. $a_0^-$) the edge of $\mathcal{P}$ parallel to $a$ and that bounds the quadrilateral on its right (resp. left). In the same fashion $b_0^+$ (resp. $b_0^-$) is the edge of $\mathcal{P}$ parallel to $b$ and that bounds the quadrilateral on its right (resp. left). See Figure \ref{quadnek2}. 

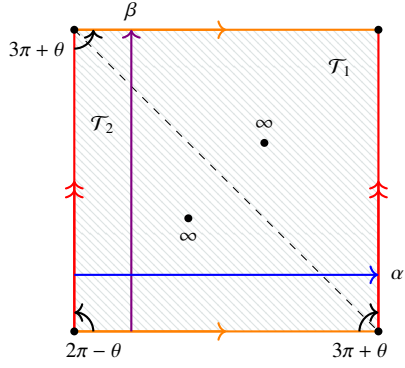
\begin{figure}[ht]
    \centering
    \begin{tikzpicture}[scale=1, every node/.style={scale=0.8}]
    \definecolor{pallido}{RGB}{221,227,227}
    \definecolor{pallido2}{RGB}{221,227,240}
    
    \pattern [pattern=north west lines, pattern color=pallido]
    (0,0)--(4,0)--(4,4)--(0,4)--(0,0);
    \draw[thick, orange] (0,0)--(4,0);
    \draw[thick, orange] (0,4)--(4,4);
    \draw[thick, red] (0,0)--(0,4);
    \draw[thick, red] (4,0)--(4,4);
    \draw[thick, orange, ->] (0,0)--(2,0);
    \draw[thick, orange, ->] (0,4)--(2,4);
    \draw[thick, red, ->>] (0,0)--(0,2);
    \draw[thick, red, ->>] (4,0)--(4,2);
    
    \fill (0,0) circle (1.5pt);
    \fill (0,4) circle (1.5pt);
    \fill (4,4) circle (1.5pt);
    \fill (4,0) circle (1.5pt);
    \draw [thin, dashed] (0,4)--(4,0);
    
    \fill (1.5,1.5) circle (1.5pt);
    \node at (1.5,1.25) {$\infty$};
    \fill (2.5,2.5) circle (1.5pt);
    \node at (2.5,2.75) {$\infty$};
    
    \draw[thick, violet, ->] (0.75,0)--(0.75,4);
    \draw[thick, blue, ->] (0,0.75)--(4,0.75);    
    
    \draw [thick, ->] (0.25, 0) arc [start angle = 0, end angle = 90,radius = 0.25];
    \draw [thick, ->] (0, 3.75) arc [start angle = -90, end angle = 0,radius = 0.25];
    \draw [thick, ->] (3.75, 0) arc [start angle = 180, end angle = 90,radius = 0.25];
    \node at (0.25,-0.25) {$2\pi-\theta$};
    \node at (-0.5,3.75) {$3\pi+\theta$};
    \node at (3.75,-0.25) {$3\pi+\theta$};
    
    \node at (0.75,4.25) {$\beta$};
    \node at (4.25, 0.75) {$\alpha$};
    \node at (3.5,3.5) {$\mathcal{T}_1$};
    \node at (0.375,2.75) {$\mathcal{T}_2$};
    
    \end{tikzpicture}
    \caption{The topological quadrilateral $\mathcal{Q}$ obtained by gluing the triangles $\mathcal{T}_1$ and $\mathcal{T}_2$ along the edge $c$. There are two opposite inner angles of magnitude $2\pi-\theta$ and the other two have magnitude $3\pi+\theta$, where $0<\theta<\pi$. The picture shows also how to choose the curves $\alpha$ and $\beta$. The curve $\alpha$ might be prolonged in the case the number of punctures is higher than $2$, see Figure \ref{rotnek2} below (there $\mathcal{Q}$ is drawn in a slightly different way). }
    \label{quadnek2}
\end{figure}

\noindent We consider $n-2$ copies of $(\C,\, dz)$ each of which slits along a segment $b$, leaving from $P_i$ to $Q_i=P_i+\chi(\beta)$. We denote the sides as $b_i^{\pm}$ according to our convention for $i=1,\dots,n-2$. We finally glue all these structures together as follows. The edges $a_0^+$ and $a_0^-$ are  identified together. Then we glue $b_i^-$ with $b_{i+1}^+$ for $i=1,\dots,n-3$, the edge $b_{n-2}^-$ with $b_0^+$ and, finally, $b_0^-$ with $b_1^+$. The resulting structure is a genus one meromorphic differential. See Figure \ref{rotnek2}.

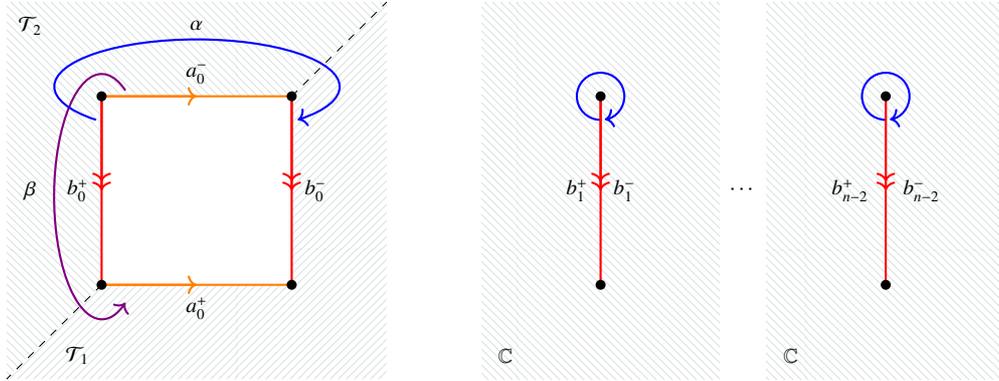
\begin{figure}[!ht]
    \centering
    \begin{tikzpicture}[scale=1.25, every node/.style={scale=0.8}]
    \definecolor{pallido}{RGB}{221,227,227}
    \definecolor{pallido2}{RGB}{221,227,240}
   
    \pattern [pattern=north west lines, pattern color=pallido]
    (-1,-1)--(0,0)--(0,2)--(2,2)--(3,3)--(-1,3)--(-1,-1);
    \pattern [pattern=north east lines, pattern color=pallido]
    (-1,-1)--(0,0)--(2,0)--(2,2)--(3,3)--(3,-1)--(-1,-1);

    \fill [white] (0,0)--(2,0)--(2,2)--(0,2)--(0,0);
    \draw [thick, orange] (0,0)--(2,0);
    \draw [thick, red] (2,0)--(2,2);
    \draw [thick, orange] (2,2)--(0,2);
    \draw [thick, red] (0,2)--(0,0);
    \draw [thick, orange, ->] (0,0) to (1,0);
    \draw [thick, red, ->>] (2,2) to (2,1);
    \draw [thick, orange, ->] (0,2) to (1,2);
    \draw [thick, red, ->>] (0,2) to (0,1);
    \fill (0,0) circle (1.5pt);
    \fill (0,2) circle (1.5pt);
    \fill (2,2) circle (1.5pt);
    \fill (2,0) circle (1.5pt);
    \draw [thin, dashed] (0,0)--(-1,-1);
    \draw [thin, dashed] (2,2)--(3,3);
    \node at (1, -0.25) {$a_0^+$};
    \node at (1, +2.25) {$a_0^-$};
    \node at (-0.25, 1) {$b_0^+$};
    \node at (+2.25, 1) {$b_0^-$};
    
    \draw [thick, violet, ->] (0.25,2.0625) arc (60:300:0.5 and 1.3);
    
    \draw [thick, blue, <-] (2.0625,1.75) arc (-45:225:1.5 and 0.5);
 
    \foreach \x [evaluate=\x as \coord using 4 + 3*\x] in {0, 1} 
    {
    \pattern [pattern=north west lines, pattern color=pallido]
    (\coord, -1)--(\coord+2.5, -1)--(\coord+2.5, 3)--(\coord, 3)--(\coord,-1);
    \draw [thick, blue, ->] (\coord+1.25, 1.75) arc [start angle = 270, end angle = -80,radius = 0.25];
    \draw [thick, red] (\coord+1.25, 0) to (\coord+1.25, 2);
    \draw [thick, red, <<-] (\coord+1.25,1) to (\coord+1.25,2);
    \fill (\coord+1.25,0) circle (1.5pt);
    \fill (\coord+1.25,2) circle (1.5pt);
    \node at (\coord+0.25, -0.75) {$\mathbb{C}$};   
    }
    
    \node at (5, 1) {$b_1^+$};
    \node at (5.5, 1) {$b_1^-$};
    \node at (7.875, 1) {$b_{n-2}^+$};
    \node at (8.625, 1) {$b_{n-2}^-$};

    \node at (6.75,1) {$\cdots$};
    \node at (-0.25,-0.75) {$\mathcal{T}_1$};
    \node at (-0.75,2.75) {$\mathcal{T}_2$};
    \node at (1,2.75) {$\alpha$};
    \node at (-0.75,1) {$\beta$};
    \end{tikzpicture}
    \caption{Realizing a genus one translation surface with poles of order $2$, negative volume, and rotation number equal to $2$. In this case there is an \textit{even} number of punctures corresponding to poles with zero residue.}
    \label{rotnek2}
\end{figure}

\noindent It remains to show that such a genus one meromorphic differential has rotation number $2$. We define the curve $\alpha$ as already done in the previous paragraphs, \textit{e.g.} \S\ref{pk1}. For any $i=1,\dots,n-2$ we consider a small metric circle of radius $\varepsilon$ around $P_i$. After the cut and paste process, these $n-2$ curves define a path leaving from a point, say $R\in b_0^+$ to $R+\chi(\alpha)\in b_0^-$. We then join these latter points with a smooth path in $\mathcal{Q}$ in order to get the desired curve $\alpha$. Such a curve has index $n$ because it turns once around any point $P_i$ and then turns by an angle $4\pi$ inside $\mathcal{Q}$ (more precisely the tangent vector to the path turns by an angle $4\pi$). The curve $\beta$ is defined by joining a point $R'$ to $R'+\chi(\beta)$. Such a curve has index two because it also turns by an angle $4\pi$ inside $\mathcal{Q}$. These curves are drawn in Figures \ref{quadnek2} and \ref{rotnek2}. Therefore, it follows that $\gcd(n,\,2,\,2n,\,2)=2$ because $n$ is assumed to be even. This concludes the proof of Lemma \ref{onepuncttorusp2}. \qed\qedhere

\subsection{Strata with poles of order higher than two}\label{genusoneorderhigherthantwo} We next consider strata $\mathcal{H}_1(np;-p,\dots,-p)$ of genus one meromorphic differentials where poles are allowed to have orders greater than two. In this section we still assume that all residues are equal to zero. 

\begin{lem}\label{onepuncttorushigherp}
Let $\chi\colon\shomolzon\longrightarrow \C$ be a non-trivial representation of trivial-ends type. If $\chi$ can be realized in the stratum $\mathcal{H}_1(np;-p,\dots,-p)$ then it appears as the period character of a translation surface with poles in each of its connected components. 
\end{lem}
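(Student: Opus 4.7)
The plan is to mirror the case-by-case analysis of Lemma \ref{onepuncttorusp2}, replacing each copy of $(\mathbb{C},\,dz)$ in the polygonal building blocks by a copy of $(\mathbb{C},\,z^{p-2}\,dz)$. The latter is a genus-zero translation surface with a single pole of order $p$ at infinity and zero residue, together with an unavoidable zero of order $p-2$ at the origin. To preserve the requirement of a single zero of maximal order $np$ in the final surface, each such planar piece is slit along a geodesic segment having one endpoint at the origin and direction parallel to $\chi(\alpha)$ or $\chi(\beta)$, so that after performing the identifications of $a_i^{\pm},\,b_i^{\pm}$ of Section \S\ref{genusoneordertwo}, all these origins are merged with the surviving corners of the parallelogram (or of the two half-planes, in the odd-puncture sub-cases) into a single cone point. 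A direct angle count consistent with \eqref{gbeq} shows that this cone point has total angle $(np+1)\cdot 2\pi$ and that the period character of the resulting translation surface equals $\chi$.

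The connected components of $\mathcal{H}_1(np;\,-p^n)$ are indexed by the positive divisors $d$ of $p$ via the rotation number, and we realise each of them as follows. For $d=1$ the direct replacement above already works: a small loop around an inserted origin winds around a zero of order $p-2$ and therefore contributes $p-1$ to the index of the corresponding handle generator; since $\gcd(p-1,\,p)=1$, the $\gcd$ in Definition \ref{rotnum} is forced to be one. For a divisor $d\mid p$ with $1<d<p$ we proceed inductively on the pole order: by Lemma \ref{onepuncttorusp2} and the inductive hypothesis, $\chi$ can be realised in the smaller stratum $\mathcal{H}_1(dn;\,-d^n)$ with rotation number $d$; we then bubble one copy of $(\mathbb{C},\,z^{p-d-1}\,dz)$ at each of the $n$ poles along a ray issuing from the single existing zero, via Definitions \ref{bubplane} and \ref{gluingsurfaces}. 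Each such bubbling preserves the period character and adds $p-d$ to both the order of the zero and of the chosen pole, and the handle generators $\alpha,\beta$ can be isotoped away from the bubbling rays so that their indices are unchanged. Hence the resulting surface lies in $\mathcal{H}_1(np;\,-p^n)$ and its rotation number is $\gcd\bigl(\textnormal{Ind}(\alpha),\,\textnormal{Ind}(\beta),\,np,\,p\bigr)=d$.

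The remaining case, the top divisor $d=p$ (relevant only when $n\ge 2$, since for $n=k=1$ the divisor $p$ is excluded from the list of rotation numbers of $\mathcal{H}_1(p;-p)$), is not accessible by the inductive descent and is handled by a direct polygonal construction in the spirit of Section \S\ref{rot2}: the handle generators $\alpha$ and $\beta$ are built as concatenations of arcs passing through the inserted origins arranged in a $p$-fold symmetric pattern, so that both $\textnormal{Ind}(\alpha)$ and $\textnormal{Ind}(\beta)$ become divisible by $p$. The main difficulty of the proof is precisely this simultaneous control of the pole orders and of the rotation number, as the naive replacement of $(\mathbb{C},\,dz)$ by $(\mathbb{C},\,z^{p-2}\,dz)$ by itself only produces rotation number one; the bookkeeping of the several sub-cases (positive versus non-positive volume of $\chi$, parity of the number $n$ of punctures) parallels that of Lemma \ref{onepuncttorusp2} and is summarised by Table \ref{tab:indexes} of Appendix \ref{appfdgo}.
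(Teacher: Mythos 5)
Your proposal departs substantially from the paper's argument, and there are concrete gaps in the reduction to a smaller stratum.

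The paper's proof in Section~\S\ref{genusoneorderhigherthantwo} does \emph{not} induct on the pole order. It works directly in $\mathcal{H}_1(np;-p^n)$ by decorating each copy of $(\C,dz)$ with prescribed zeros at the two endpoints $P_i$ and $Q_i$ of the slit: for a parameter $k$, one bubbles $k-1$ (resp.\ $p-k-1$) copies of $(\C,dz)$ along rays leaving $P_i$ (resp.\ $Q_i$), and this gives exactly $\textnormal{Ind}(\alpha)=k$ (or $tk$) and $\textnormal{Ind}(\beta)=k$ (or $0$) depending on the sub-case. The rotation number is then \emph{exactly} $\gcd(k,p)$, and letting $k$ run over $\{1,\dots,p-1\}$ or $\{2,\dots,p\}$ realizes every connected component in one pass, including the top divisor $d=p$ when $n\ge 2$ and with the divisor $p$ automatically excluded when $n=1$.

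Your inductive scheme has two specific problems.

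First, it breaks down entirely for $n=1$. You propose to realize rotation number $d$ with $1<d<p$ by first producing a structure in $\mathcal{H}_1(d;-d)$ with rotation number $d$. But by Boissy's theorem (quoted in Section~\S\ref{mscc}), for the signature $(m;-m)$ the connected components are indexed by the proper divisors of $m$, so rotation number $d$ is \emph{not} attainable in $\mathcal{H}_1(d;-d)$. Since the top divisor of $d$ is excluded precisely when $k=n=1$, the inductive hypothesis gives you no structure to bubble from. Take $p=4$, $d=2$: you would need $\mathcal{H}_1(2;-2)$ with rotation number $2$, which does not exist since that stratum is connected.

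Second, even when $n\ge 2$, the last sentence of your inductive step is unjustified. Knowing that the structure in $\mathcal{H}_1(dn;-d^n)$ has rotation number $d$ only tells you $d\mid\textnormal{Ind}(\alpha)$ and $d\mid\textnormal{Ind}(\beta)$; it does not determine $\gcd\big(\textnormal{Ind}(\alpha),\textnormal{Ind}(\beta)\big)$, which could be any multiple of $d$. After bubbling with the indices preserved, the new rotation number is $\gcd\big(\textnormal{Ind}(\alpha),\textnormal{Ind}(\beta),p\big)$, which is a multiple of $d$ dividing $p$ but \emph{a priori} possibly strictly larger: if $\textnormal{Ind}(\alpha)=\textnormal{Ind}(\beta)=2d$ and $2d\mid p$ you land in the wrong component. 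The paper avoids this by constructing the base structure with indices whose gcd with $p$ is exactly what is wanted, rather than appealing to a bare existence statement. You would need to either carry the precise indices through the induction (which essentially forces you to redo the paper's direct construction), or bubble in a way that deliberately adjusts the indices (crossing rays as in Corollary~\ref{genpol}), neither of which you spell out.

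Finally, the $d=p$ case is left as ``in the spirit of Section~\S\ref{rot2}''; this is the genuinely new feature of the higher-order-pole setting (the index of $\alpha$ must be made divisible by $p$, not just by $2$), and the hexagonal half-plane construction of paragraph~\S\ref{pvop} does not transport literally. A proof needs this construction written out, as the paper does in paragraph~\S\ref{pvmp} with the genus-zero differentials $(X,\omega)$, $(Y,\xi)$, $(Z,\eta)$.

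Your first step — the direct replacement giving rotation number~$1$ — is a plausible and slightly more uniform alternative to the paper's $k=1$ case, since $\gcd(p-1,p)=1$ makes the index computation forgiving; but the merged cone-angle count is asserted, not carried out, and the endpoint of a slit landing on a branch point of order $p-2$ admits $p-1$ choices of prong which affect the index, so that ``a direct angle count shows'' needs to become an actual angle count.
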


\noindent Before moving to the proof of this Lemma we observe that the case $p=2$ is handled by Lemma \ref{onepuncttorusp2}.  Therefore we can assume $p\ge3$ in the proof of Lemma \ref{onepuncttorushigherp}.

\smallskip

\noindent  Let $\chi\colon\shomolzon\longrightarrow \C$ be a non-trivial representation and assume that $\chi$ can be realized as the period character of a genus one meromorphic differential with poles in a stratum $\mathcal{H}_1(np;-p,\dots,-p)$. The greatest common divisor of the orders of singularities is equal to $p$. Therefore we want to realize $\chi$ as the period of a genus one meromorphic differential with prescribed rotation number $r$ where $r$ divides $p$. The cases $n=1,2$ are special and they work differently from the generic case $n\ge3$. Moreover, in all cases, we shall need to  distinguish two sub-cases according to the sign of the volume of $\chi_g$ induced by $\chi$. Recall that, since $\chi$ is assumed to be of trivial-ends type, the volume of $\chi$ is well-defined and it does not depend on any splitting, see Definition \ref{algvoldef2}. Therefore, we divide the proof in four subsections according to the cases above. As we shall see, all these cases appear as variations of the preceding constructions.

\smallskip 

\subsubsection{Positive volume and at most two punctures}\label{pv2p} Let $\chi$ be a representation with positive volume. Let $\alpha,\,\beta$ be a pair of handle generators and let $\mathcal{P}$ be the parallelogram defined by
\begin{equation}
    P_0\mapsto P_0+\chi(\alpha)\mapsto P_0+\chi(\alpha)+\chi(\beta)=Q_0\mapsto P_0+\chi(\beta)\mapsto P_0,
\end{equation}
\noindent where $P_0\in\C$ is any point. We introduce the following genus zero meromorphic differential, say $(X_1,\omega_1)$. Consider a copy of $(\C,\,dz)$ and let $b_1$ be the geodesic segment $P_1$ to $Q_1=P_1+\chi(\beta)$. Let $k\in\{1,\dots,p-1\}$ be a positive integer and consider $k-1$ distinct rays starting from $P_1$ and intersecting $b_1$ only at $P_1$. Next we consider $p-k-1$ distinct rays starting from $Q_1$ intersecting $b_1$ only at $Q_1$. Finally, along every ray we bubble a copy of $(\C,\,dz)$. The resulting structure is a genus zero differential with two zeros of orders $k-1$ and $p-k-1$ and a pole of order $p$. 

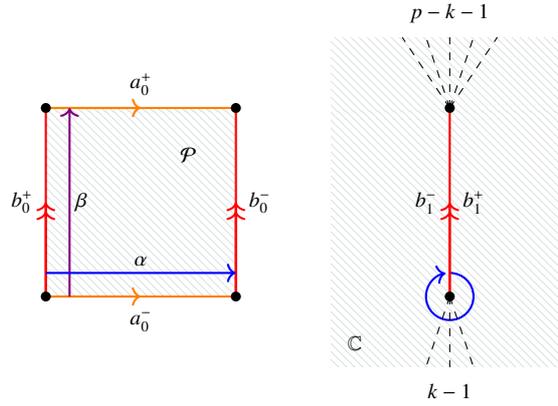
\begin{figure}[ht]
    \centering
    \begin{tikzpicture}[scale=1.25, every node/.style={scale=0.8}]
    \definecolor{pallido}{RGB}{221,227,227}
    
    \pattern [pattern=north west lines, pattern color=pallido]
    (0,0)--(2,0)--(2,2)--(0,2)--(0,0);
    \draw [thick, orange] (0,0)--(2,0);
    \draw [thick, red] (2,0)--(2,2);
    \draw [thick, orange] (2,2)--(0,2);
    \draw [thick, red] (0,2)--(0,0);
    \draw [thick, orange, ->] (0,0) to (1,0);
    \draw [thick, red, ->>] (2,0) to (2,1);
    \draw [thick, orange, ->] (0,2) to (1,2);
    \draw [thick, red, ->>] (0,0) to (0,1);
    \fill (0,0) circle (1.5pt);
    \fill (0,2) circle (1.5pt);
    \fill (2,2) circle (1.5pt);
    \fill (2,0) circle (1.5pt);
    \draw [thick, violet, ->] (0.25,0) to (0.25, 2);
    \draw [thick, blue, ->] (0, 0.25) to (2, 0.25);
    \node at (2.25,1) {$b_0^-$};
    \node at (-0.25,1) {$b_0^+$};
    \node at (1,-0.25) {$a_0^-$};
    \node at (1,2.25) {$a_0^+$};
 
    \foreach \x [evaluate=\x as \coord using 3 + 3*\x] in {0} 
    {
    \pattern [pattern=north west lines, pattern color=pallido]
    (\coord, -0.75)--(\coord+2.5, -0.75)--(\coord+2.5, 2.75)--(\coord, 2.75)--(\coord,-0.75);
    \draw [thick, blue, ->] (\coord+1.25, 0.25) arc [start angle = 90, end angle = -260,radius = 0.25];
    \draw [thick, red] (\coord+1.25, 0) to (\coord+1.25, 2);
    \draw [thick, red, ->>] (\coord+1.25,0) to (\coord+1.25,1);
    \fill (\coord+1.25,0) circle (1.5pt);
    \fill (\coord+1.25,2) circle (1.5pt);
    \node at (\coord+0.25, -0.5) {$\mathbb{C}$};  
    \draw [thin, black, dashed] (\coord+1.25, 2)--(\coord+1, 2.75);
    \draw [thin, black, dashed] (\coord+1.25, 2)--(\coord+1.25, 2.75);
    \draw [thin, black, dashed] (\coord+1.25, 2)--(\coord+1.5, 2.75);
    }
    
    \foreach \x [evaluate=\x as \coord using 3 + 3*\x] in {0} 
    {
    \draw [thin, black, dashed] (\coord+1.25, 0)--(\coord+1, -0.75);
    \draw [thin, black, dashed] (\coord+1.25, 0)--(\coord+1.25, -0.75);
    \draw [thin, black, dashed] (\coord+1.25, 0)--(\coord+1.5, -0.75);
    \draw [thin, black, dashed] (\coord+1.25, 2)--(\coord+0.75, 2.75);
    \draw [thin, black, dashed] (\coord+1.25, 2)--(\coord+1.75, 2.75);
    }
    
    \node at (1.5, 1.5) {$\mathcal{P}$};
    \node at (1,.375) {$\alpha$};
    \node at (0.375,1) {$\beta$};
    \node at (4.25, -1) {$k-1$};
    \node at (4.25, 3) {$p-k-1$};
    \node at (4,1) {$b_1^-$};
    \node at (4.5,1) {$b_1^+$};
   
    \end{tikzpicture}
    \caption{Realizing a genus one translation surface with one pole of order $p$, positive volume, and rotation number equal to $\gcd(k,p)$. In this case the pole has (necessarily) zero residue.}
    \label{rotpkhtris}
\end{figure}

\noindent We glue these structures as follows. We slit $(X_1,\,\omega_1)$ along $b_1$ and denote the resulting edges by $b_1^\pm$, according to our convention. Identify $b_j^+$ with $b_{j+1}^-$, where $j$ is taken in $\{0,1\}$. Finally identify $a_0^+$ with $a_0^-$. The final structure, say $(Y,\xi)$, is a genus one meromorphic differential with a single zero of order $p$ and one pole of order $p$. It remains to show that such a structure has rotation number $\gcd(k,p)$. Consider a small metric circle of radius $\varepsilon$ centered at $P_1$. After the cut and paste process, these circles form an oriented smooth path with starting point $R\in b_0^-$ and ending point $R'\in b_0^+$. The points $R$ and $R'$ differ by $\chi(\alpha)$. Define $\alpha$ as the smooth oriented curve obtained by joining these points. The unit tangent vector along $\alpha$ turns by an angle of $2k\pi$. We can define $\beta$ as any waist curve in the cylinder obtained by gluing the parallelogram $\mathcal{P}$. Such a curve can be chosen to be geodesic and hence its index is zero. Therefore, $(Y,\xi)$ has rotation number equal to $\gcd(k,0,p,p)=\gcd(k,p)$ as desired. Notice that $\alpha$ cannot have index equal to $p$ and hence the rotation number cannot be $p$ as expected.

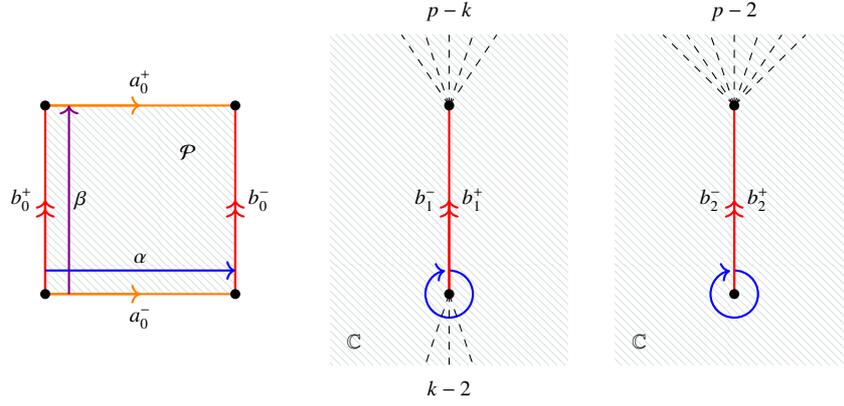
\begin{figure}[ht]
    \centering
    \begin{tikzpicture}[scale=1.25, every node/.style={scale=0.8}]
    \definecolor{pallido}{RGB}{221,227,227}
    
    \pattern [pattern=north west lines, pattern color=pallido]
    (0,0)--(2,0)--(2,2)--(0,2)--(0,0);
    \draw [thick, orange] (0,0)--(2,0);
    \draw [thick, red] (2,0)--(2,2);
    \draw [thick, orange] (2,2)--(0,2);
    \draw [thick, red] (0,2)--(0,0);
    \draw [thick, orange, ->] (0,0) to (1,0);
    \draw [thick, red, ->>] (2,0) to (2,1);
    \draw [thick, orange, ->] (0,2) to (1,2);
    \draw [thick, red, ->>] (0,0) to (0,1);
    \fill (0,0) circle (1.5pt);
    \fill (0,2) circle (1.5pt);
    \fill (2,2) circle (1.5pt);
    \fill (2,0) circle (1.5pt);
    \draw [thick, violet, ->] (0.25,0) to (0.25, 2);
    \draw [thick, blue, ->] (0, 0.25) to (2, 0.25);
    \node at (2.25,1) {$b_0^-$};
    \node at (-0.25,1) {$b_0^+$};
    \node at (1,-0.25) {$a_0^-$};
    \node at (1,2.25) {$a_0^+$};
 
    \foreach \x [evaluate=\x as \coord using 3 + 3*\x] in {0, 1} 
    {
    \pattern [pattern=north west lines, pattern color=pallido]
    (\coord, -0.75)--(\coord+2.5, -0.75)--(\coord+2.5, 2.75)--(\coord, 2.75)--(\coord,-0.75);
    \draw [thick, blue, ->] (\coord+1.25, 0.25) arc [start angle = 90, end angle = -260,radius = 0.25];
    \draw [thick, red] (\coord+1.25, 0) to (\coord+1.25, 2);
    \draw [thick, red, ->>] (\coord+1.25,0) to (\coord+1.25,1);
    \fill (\coord+1.25,0) circle (1.5pt);
    \fill (\coord+1.25,2) circle (1.5pt);
    \node at (\coord+0.25, -0.5) {$\mathbb{C}$};  
    \draw [thin, black, dashed] (\coord+1.25, 2)--(\coord+1, 2.75);
    \draw [thin, black, dashed] (\coord+1.25, 2)--(\coord+1.25, 2.75);
    \draw [thin, black, dashed] (\coord+1.25, 2)--(\coord+1.5, 2.75);
    }
    
    \foreach \x [evaluate=\x as \coord using 3 + 3*\x] in {0} 
    {
    \draw [thin, black, dashed] (\coord+1.25, 0)--(\coord+1, -0.75);
    \draw [thin, black, dashed] (\coord+1.25, 0)--(\coord+1.25, -0.75);
    \draw [thin, black, dashed] (\coord+1.25, 0)--(\coord+1.5, -0.75);
    \draw [thin, black, dashed] (\coord+1.25, 2)--(\coord+0.75, 2.75);
    \draw [thin, black, dashed] (\coord+1.25, 2)--(\coord+1.75, 2.75);
    }
    
    \foreach \x [evaluate=\x as \coord using 3 + 3*\x] in {1} 
    {
    \draw [thin, black, dashed] (\coord+1.25, 2)--(\coord+0.75, 2.75);
    \draw [thin, black, dashed] (\coord+1.25, 2)--(\coord+1.75, 2.75);
    \draw [thin, black, dashed] (\coord+1.25, 2)--(\coord+0.5, 2.75);
    \draw [thin, black, dashed] (\coord+1.25, 2)--(\coord+2, 2.75);
    }

    \node at (1.5, 1.5) {$\mathcal{P}$};
    \node at (1,.375) {$\alpha$};
    \node at (0.375,1) {$\beta$};
    \node at (4.25, -1) {$k-2$};
    \node at (7.25,3) {$p-2$};
    \node at (4.25, 3) {$p-k$};
    \node at (4,1) {$b_1^-$};
    \node at (4.5,1) {$b_1^+$};
    \node at (7,1) {$b_2^-$};
    \node at (7.5,1) {$b_2^+$};
   
    \end{tikzpicture}
    \caption{Realizing a genus one translation surface with two poles of order $p$, positive volume, and rotation number equal to $\gcd(k,p)$. In this case the number of punctures is exactly two and both poles have zero residue.}
    \label{rotpkhbis}
\end{figure}

\noindent The case with two punctures follows after a small modification of the previous construction. We consider a second copy of $(\C,\,dz)$ and let $b_2$ be the geodesic joining the points $P_2$ and $Q_2=P_2+\chi(\beta)$. In this case we consider $p-2$ distinct rays leaving from $Q_2$ and intersecting $b_2$ only at $Q_2$. We then bubble a copy $(\C,\,dz)$ along every ray. Equivalently, we consider only one ray and we bubble along it a copy of $(\C,\, z^{p-2}dz)$. The resulting structure is a genus zero meromorphic differential, say $(X_2,\omega_2)$ with a single zero of order $p-2$ and a pole of order $p$. We glue these structures as follows. We slit both $(X_i,\,\omega_i)$ along $b_i$ and denote the resulting edges by $b_i^\pm$, according to our convention. Identify $b_j^+$ with $b_{j+1}^-$, where $j$ is taken in $\{0,1,2\}$. The final structure, say $(Y,\xi)$, is a genus one meromorphic differential with a single zero of order $2p$ and two poles of order $p$. The same argument above shows that $(Y,\xi)$ has rotation number $\gcd(k,p)$ as desired, where $k\in \{2,\ldots,p\}$. See Figure \ref{rotpkhbis}.

\subsubsection{Positive volume and more than two punctures}\label{pvmp} Assume that the representation $\chi$ has positive volume and at least three punctures. This case is nothing but a variation of the case discussed in the  paragraph \S\ref{pvop}. In order to get the desired structure with prescribed rotation number $\gcd(k,p)$, we shall consider $n-1$ copies of $(\mathbb C,\, dz)$ in this case and we properly modify them to obtain a genus zero meromorphic differential with a single pole of order $p$. We begin with these structures.

\smallskip

\noindent First of all we fix a positive integer $k\in\{2,\dots,p\}$. Notice that, if $k-1\ge n-2$ then there exist integers $d\ge1$ and $0\le l < n-2$ such that $\,k-1=d(n-2)+l\,$. In the case $k-1< n-2$ we can always find a positive integer $t$ such that $\,tk-1=n-2+l\,$ and $0\le l< n-2$ ( here $d=1$ ). Let us consider $(\C,\,dz)$ and a pair of points $P,Q$ such that $Q=P+\chi(\beta)$. We bubble $d$ copies of $(\C,\,dz)$ along rays leaving from $P$ and $p-d-2$ copies of $(\C,\,dz)$ along rays leaving from $Q$. All rays leaving from $P$ (resp. from $Q$) are taken in such a way they do not contain $Q$ (resp. $P$): none of them contains the geodesic segment joining $P$ and $Q$. After bubbling, the resulting structure, say $(X,\omega)$, is a genus zero meromorphic differential with two zeros of orders $d$ and $p-d-2$ and a single pole of order $p$. We consider $l$ copies $(X_1,\omega_1),\dots,(X_l,\omega_l)$ of this structure. Consider again $(\C,\,dz)$ and the pair of points $P,Q$ such that $Q=P+\chi(\beta)$. Define $(Y,\xi)$ as the genus zero differential obtained by bubbling $d-1$ copies of $(\C,\,dz)$ along rays leaving from $P$ and $p-d-1$ copies of $(\C,\,dz)$ along rays leaving from $Q$. Even in this case the rays are taken so that none of them contains the geodesic segment joining $P$ and $Q$. We consider $n-l-2$ copies $(Y_1, \xi_1),\dots,(Y_{n-l-2},\xi_{n-l-2})$ of this structure. Finally consider another copy of $(\C,\,dz)$ and a segment $a$ with extremal points $P_{n-1}$ and $Q_{n-1}=P_{n-1}+\chi(\alpha)$. We bubble $k-2$ copies of $(\C,\,dz)$ along rays leaving from $P_{n-1}$ that do not contain $Q_{n-1}$ and we bubble $p-k$ copies of $(\C,\,dz)$ along rays leaving from $Q_{n-1}$ that does not contain $P_{n-1}$. This genus zero meromorphic differential, say $(Z,\,\eta)$, has two zeros of order $k-2$ and $p-k$ and a single pole of order $p$.

\begin{figure}[ht]
    \centering
    \begin{tikzpicture}[scale=1.2, every node/.style={scale=0.8}]
    \definecolor{pallido}{RGB}{221,227,227}
    
    \pattern [pattern=north west lines, pattern color=pallido]
    (1,1.5)--(2,1.5)--(2,0.5)--(4,1.5)--(5,1.5)--(5,2.5)--(1,2.5)--(1,1.5);
    \draw [thin, black] (1,1.5)--(2,1.5);
    \draw [thick, red] (2,1.5)--(2,0.5);
    \draw [thick, red, ->>] (2,0.5)--(2,1);
    \draw [thick, orange] (2,0.5)--(4,1.5);
    \draw [thick, orange, ->] (2,0.5)--(3,1);
    \draw [thin, black] (4,1.5)--(5,1.5);
    \fill (2,1.5) circle (1.5pt);
    \fill (2,0.5) circle (1.5pt);
    \fill (4,1.5) circle (1.5pt);
    \draw [thick, blue, ->] (2, 0.75) .. controls (3, 1.5) and (4,2.5) .. (4.25, 1.5);
    \draw [thick, violet, ->] (2.5, 0.75) .. controls (2.5, 1) and (2,2.5) .. (1.75, 1.5);
    \node at (1.25,2.25) {$H_1$};
    \node at (4, 2.25) {$\alpha$};
    \node at (2, 2.25) {$\beta$};
    
    \pattern [pattern=north west lines, pattern color=pallido]
    (1,0)--(2,0)--(4,1)--(4,0)--(5,0)--(5,-1)--(1,-1)--(1,0);
    \draw [thin, black] (1,0)--(2,0);
    \draw [thick, orange] (2,0)--(4,1);
    \draw [thick, orange, ->] (2,0)--(3,0.5);
    \draw [thick, red] (4,1)--(4,0);
    \draw [thick, red,->>] (4, 0)--(4,0.5);
    \draw [thin, black] (4,0)--(5,0);
    \fill (2,0) circle (1.5pt);
    \fill (4,1) circle (1.5pt);
    \fill (4,0) circle (1.5pt);
    \draw [thin, black, dashed] (4,1)--(2.5, -1);
    \draw [thin, black, dashed] (4,1)--(2, -1);
    \draw [thin, black, dashed] (4,1)--(1.5, -1);
    \draw [thick, blue, ->] (4.25, 0) arc [start angle = 0, end angle = -255, radius = 0.25];
    \draw [thick, violet, ->] (1.75, 0) arc [start angle = 180, end angle = 375, radius = 0.25];
    \node at (2,-1.25) {$p-2$};
    \node at (4.75,-0.75) {$H_2$};
    
    \pattern [pattern=north west lines, pattern color=pallido]
    (1,-1.5)--(5,-1.5)--(5,-3.5)--(1,-3.5)--(1,-1.5);
    \draw [thick, orange] (2,-3)--(4,-2);
    \fill (2,-3) circle (1.5pt);
    \fill (4,-2) circle (1.5pt);
    \draw [thick, violet, ->] (1.75, -3) arc [start angle = 180, end angle = 375, radius = 0.25];
    \draw [thick, violet, ] (1.75, -3) arc [start angle = 180, end angle = 25, radius = 0.25];
    \draw [thin, black, dashed] (4,-2)--(5, -2);
    \draw [thin, black, dashed] (4,-2)--(5, -1.75);
    \draw [thin, black, dashed] (4,-2)--(5, -2.25);
    \draw [thin, black, dashed] (2,-3)--(1, -2.5);
    \draw [thin, black, dashed] (2,-3)--(1, -3.5);
    \draw [thin, black, dashed] (2,-3)--(1, -3);
    \draw [thin, black, dashed] (2,-3)--(1, -2.75);
    \draw [thin, black, dashed] (2,-3)--(1, -3.25);

    \node at (5.5, -2) {$p-k$};
    \node at (1.5, -2.2) {$k-2$};
    \node at (4.5,-3) {$\mathbb{C}$};

    \foreach \x [evaluate=\x as \coord using 6 + 3*\x] in {0} 
    {
    \pattern [pattern=north west lines, pattern color=pallido]
    (\coord, -1)--(\coord+2.5, -1)--(\coord+2.5, 2.5)--(\coord, 2.5)--(\coord,-1);
    \draw [thick, blue, ->] (\coord+1.25, 0) arc [start angle = 90, end angle = -260,radius = 0.25];
    \draw [thick, red] (\coord+1.25, -0.25) to (\coord+1.25, 1.75);
    \draw [thick, red, ->>] (\coord+1.25,-0.25) to (\coord+1.25,0.75);
    \fill (\coord+1.25,-0.25) circle (1.5pt);
    \fill (\coord+1.25,1.75) circle (1.5pt);
    \node at (\coord+0.25, -0.75) {$\mathbb{C}$};  
    \draw [thin, black, dashed] (\coord+1.25, 1.75)--(\coord+1, 2.5);
    \draw [thin, black, dashed] (\coord+1.25, 1.75)--(\coord+1.5, 2.5);
    \draw [thin, black, dashed] (\coord+1.25, -0.25)--(\coord+0.5, -1);
    \draw [thin, black, dashed] (\coord+1.25, -0.25)--(\coord+1, -1);
    \draw [thin, black, dashed] (\coord+1.25, -0.25)--(\coord+1.5, -1);
    \draw [thin, black, dashed] (\coord+1.25, -0.25)--(\coord+2, -1);
    \node at (\coord+1.25, 2.75) {$p-d-2$};
    \node at (\coord+1.25, -1.25) {$d$};
    \node at (\coord-0.5,0.75) {$\cdots$};
    \node at (\coord+3,0.75) {$\cdots$};
    }
    
    \foreach \x [evaluate=\x as \coord using 10 + 3*\x] in {0} 
    {
    \pattern [pattern=north west lines, pattern color=pallido]
    (\coord, -1)--(\coord+2.5, -1)--(\coord+2.5, 2.5)--(\coord, 2.5)--(\coord,-1);
    \draw [thick, blue, ->] (\coord+1.25, 0) arc [start angle = 90, end angle = -260,radius = 0.25];
    \draw [thick, red] (\coord+1.25, -0.25) to (\coord+1.25, 1.75);
    \draw [thick, red, ->>] (\coord+1.25,-0.25) to (\coord+1.25,0.75);
    \fill (\coord+1.25,-0.25) circle (1.5pt);
    \fill (\coord+1.25,1.75) circle (1.5pt);
    \node at (\coord+0.25, -0.75) {$\mathbb{C}$};  
    \draw [thin, black, dashed] (\coord+1.25, 1.75)--(\coord+0.75, 2.5);
    \draw [thin, black, dashed] (\coord+1.25, 1.75)--(\coord+1.25, 2.5);
    \draw [thin, black, dashed] (\coord+1.25, 1.75)--(\coord+1.75, 2.5);
    \draw [thin, black, dashed] (\coord+1.25, -0.25)--(\coord+0.75, -1);
    \draw [thin, black, dashed] (\coord+1.25, -0.25)--(\coord+1.25, -1);
    \draw [thin, black, dashed] (\coord+1.25, -0.25)--(\coord+1.75, -1);
    \node at (\coord+1.25, 2.75) {$p-d-1$};
    \node at (\coord+1.25, -1.25) {$d-1$};
    \node at (\coord-0.5,0.75) {$\cdots$};
    \node at (\coord+3,0.75) {$\cdots$};
    }
    
    \node at (7, 0.75) {$b_{i}^-$};
    \node at (7.5, 0.75) {$b_{i}^+$};
    \node at (11, 0.75) {$b_{j}^-$};
    \node at (11.5, 0.75) {$b_{j}^+$};
    
    \end{tikzpicture}
    \caption{Realizing a genus one translation surface with poles of order $p$, positive volume, and rotation number equal to $\gcd(k,p)$. In this case the number of punctures is supposed to be at least three and all the poles have zero residue.}
    \label{rotpkh}
\end{figure}
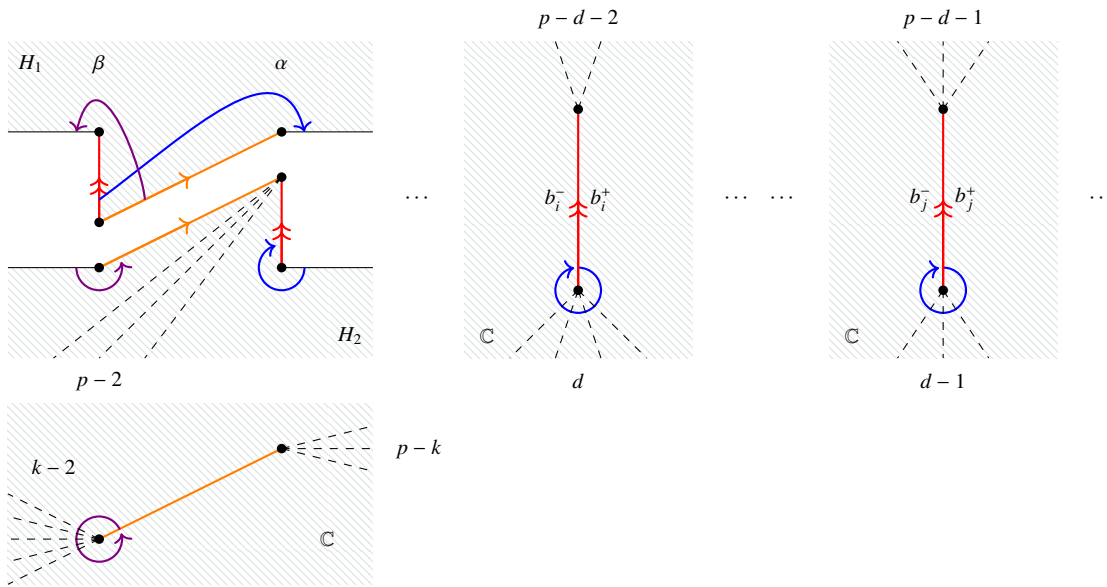

\noindent Let $\alpha,\,\beta$ be a pair of handle generators and let $a,\,b\in\C$ the corresponding images via $\chi$. Let $H_1$ and $H_2$ be the half-planes as defined in \S\ref{pvop}. We denote by $a_0^-,\,b_0^-$ the edges of the boundary of $H_1$ corresponding to $\chi(\alpha)$ and $\chi(\beta)$ respectively. Similarly, we denote by $a_0^+,\,b_0^+$ the edges of the boundary of $H_2$ corresponding to $\chi(\alpha)$ and $\chi(\beta)$ respectively. Let $Q$ be the endpoint of $\chi(\alpha)$ (and hence the starting point of $-\chi(\beta)$) on the boundary of $H_2$. Notice that there are $p-2$ rays, say $r_1,\dots,r_{p-2}$, leaving from $Q$ and pointing at the infinity.

\smallskip

\noindent It remains to glue all the structures defined so far together. First of all we slit $(X_i,\omega_i)$ for $i=1,\dots,l$ along the saddle connection $b$ joining the two zeros of $\omega_i$. Denote the resulting edges by $b_i^{\pm}$. In the same fashion, we slit $(Y_j,\xi_j)$, for $j=1,\dots,n-l-2$, along the saddle connection joining the zeros of $\xi_j$. We finally slit $(Z,\eta)$ along $a$ and we label the edges as $a_n^{\pm}$. After slitting, we identify $a_0^+$ with $a_n^-$ and $a_0^-$ with $a_n^+$. Similarly, we identify $b_i^+$ with $b_{i+1}^-$, where $i=1,\dots, l-1$, and $b_j^+$ with $b_{j+1}^-$, where $j=1,\dots,n-l-3$. The remaining identifications are: the edge $b_0^+$ with $b_1^-$, the edge $b_l^+$ with $b_1^-$, the edge $b_{n-l-2}^-$ with $b_0^+$. The resulting structure is a genus one meromorphic differential. By bubbling $p-2$ copies of $(\C,\,dz)$, each one along a ray leaving from $Q$, we get the desired structure with rotation number $\gcd(k,p)$. In fact, we can define $\alpha$ and $\beta$ as in Figure \ref{rotpkh}. The curve $\alpha$ has index 
\begin{equation}
    \text{Ind}(\alpha)=
    \begin{cases}
    k \quad \text{ if } k-1\ge n-2, \text{ or}\\
    t k \quad \text{ if } k-1<n-2
    \end{cases}
\end{equation}
\noindent by construction. It is straightforward to check that the curve $\beta$ has index $k$. Therefore the rotation number is equal to $\gcd(tk,k,np,p)=\gcd(k,p)$ as desired, where $t=1$ if $k-1\ge n-2$.

\subsubsection{Non-positive volume and one puncture}\label{npvop} This is an easy case to deal with. Let $\alpha,\,\beta$ be a pair of handle generators and let $a,b$ be their images via $\chi$. Let $\mathcal{P}$ be the exterior of the parallelogram (possibly degenerate) defined by the chain
\begin{equation}
    P\mapsto P+a\mapsto P+a+b=Q\mapsto P+b\mapsto P,
\end{equation}
with sides $a^\pm,\,b^\pm$ according to our convention. Let $r_P$ be a geodesic ray joining $P$ and the pole and glue along $k-2$ copies of the genus zero differential $(\C,\,dz)$ as in Definition \ref{gluingsurfaces}. Similarly, let $r_Q$ be a geodesic ray joining $Q$ and the pole and glue along $p-k$ copies of the genus zero differential $(\C,\,dz)$. We finally glue $a^+$ with $a^-$ and $b^+$ with $b^-$. The resulting surface is a genus one differential in $\mathcal{H}_1(p,-p)$. It remains to show that it has rotation number $\gcd(k,p)$. This can be easily verified by choosing $\alpha$ and $\beta$ as in \S\ref{npvonp} above. See Figure \ref{rotnhop}.

\begin{figure}[ht]
    \centering
    \begin{tikzpicture}[scale=1.2, every node/.style={scale=0.8}]
    \definecolor{pallido}{RGB}{221,227,227}

    \pattern [pattern=north west lines, pattern color=pallido]
    (-1.5,-1)--(3,-1)--(3,3)--(-1.5,3)--(-1.5,-1);
    \fill [white] (0,0)--(2,0)--(2,2)--(0,2)--(0,0);
    \draw [thin, dashed, black] (2,0) -- (3,-1);
    \draw [thin, dashed, black] (2,0) -- (3,-0.5);
    \draw [thin, dashed, black] (2,0) -- (2.5,-1);
    
    \draw [thin, dashed, black] (0,2) -- (-1.5,3);
    \draw [thin, dashed, black] (0,2) -- (-1.5,2.75);
    \draw [thin, dashed, black] (0,2) -- (-1.5,2.5);
    \draw [thin, dashed, black] (0,2) -- (-1.5,2.25);
    \draw [thick, orange] (0,0)--(2,0);
    \draw [thick, red] (2,0)--(2,2);
    \draw [thick, orange] (2,2)--(0,2);
    \draw [thick, red] (0,2)--(0,0);
    \draw [thick, orange, ->] (0,0) to (1,0);
    \draw [thick, red, ->>] (2,2) to (2,1);
    \draw [thick, orange, ->] (0,2) to (1,2);
    \draw [thick, red, ->>] (0,2) to (0,1);
    \fill (0,0) circle (1.5pt);
    \fill (0,2) circle (1.5pt);
    \fill (2,2) circle (1.5pt);
    \fill (2,0) circle (1.5pt);
    
    \draw [thick, violet, ->] (0.25,2.0625) arc (50:310:0.75 and 1.375);
    
    \draw [thick, blue, <-] (2.0625,1.75) arc (-45:225:1.5 and 0.5);

    \node at (-0.875,-0.75) {$\mathcal{P}\subset\cp$};
    \node at (1,2.375) {$\alpha$};
    \node at (-0.75,1) {$\beta$};
    \node at (3.5, -0.75) {$p-k-1$};
    \node at (-2, 2.5) {$k-1$};
    
    \end{tikzpicture}
    \caption{Realizing a genus one translation surface with one single pole of order $p$, non-positive volume, and rotation number equal to $\gcd(k,p)$.}
    \label{rotnhop}
\end{figure}
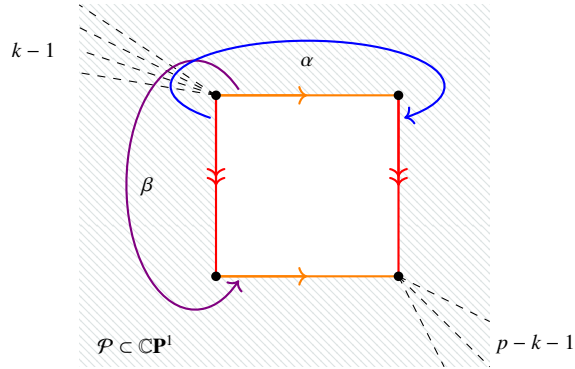

\subsubsection{Non-positive volume and two punctures}\label{npv2p} In this case we consider the quadrilateral $\mathcal{Q}$ already defined in paragraph \S\ref{nvep}. Recall that $\mathcal{Q}$ is obtained by gluing two triangles, $\mathcal{T}_1$ with vertices $P,\,P+\chi(\alpha),\,Q$ and $\mathcal{T}_2$ with vertices $P,\,P-\chi(\beta),\,Q$, where $Q=P+\chi(\alpha)-\chi(\beta)$. There are two pairs of opposite sides one of which differ by a translation $\chi(\alpha)$ and the other by a translation $\chi(\beta)$. See paragraph \S\ref{nvep} for more details about this construction.

\begin{figure}[ht]
    \centering
    \begin{tikzpicture}[scale=1, every node/.style={scale=0.8}]
    \definecolor{pallido}{RGB}{221,227,227}
    \definecolor{pallido2}{RGB}{221,227,240}
    
    \pattern [pattern=north west lines, pattern color=pallido]
    (0,0)--(4,0)--(4,4)--(0,4)--(0,0);
    \draw[thick, orange] (0,0)--(4,0);
    \draw[thick, orange] (0,4)--(4,4);
    \draw[thick, red] (0,0)--(0,4);
    \draw[thick, red] (4,0)--(4,4);
    \draw[thick, orange, ->] (0,0)--(2,0);
    \draw[thick, orange, ->] (0,4)--(2,4);
    \draw[thick, red, ->>] (0,0)--(0,2);
    \draw[thick, red, ->>] (4,0)--(4,2);
    
    \fill (0,0) circle (1.5pt);
    \fill (0,4) circle (1.5pt);
    \fill (4,4) circle (1.5pt);
    \fill (4,0) circle (1.5pt);
    \draw [thin, dashed] (0,4)--(4,0);
    \draw [thick, dotted, bend right] (0,4)--(1.5,1.5);
    \draw [thick, dotted, bend left] (0,0)--(1.5,1.5);
    \draw [thick, dotted] (4,4)--(2.5,2.5);
    
    \fill (1.5,1.5) circle (1.5pt);
    \node at (1.5,1.25) {$\infty$};
    \fill (2.5,2.5) circle (1.5pt);
    \node at (2.5,2.75) {$\infty$};
    
    \draw[thick, violet, ->] (0.5,0)--(0.5,4);
    \draw[thick, blue, ->] (0,0.5)--(4,0.5);    
    
    \node at (0.5,4.25) {$\beta$};
    \node at (4.25, 0.5) {$\alpha$};
    \node at (-0.25,-0.25) {$P-\chi(\beta)$};
    \node at (4.25, -0.25) {$Q$};
    \node at (4.25,4.25) {$P+\chi(\alpha)$};
    \node at (-0.25, 4.25) {$P$};
    
    \pattern [pattern=north east lines, pattern color=pallido]
    (6.5,-0.5)--(8,1)--(8,3)--(10,3)--(11.5,4.5)--(6.5,4.5)--(6.5,-0.5);
    \pattern [pattern=north west lines, pattern color=pallido2]
    (6.5,-0.5)--(11.5,-0.5)--(11.5,4.5)--(10,3)--(10,1)--(8,1)--(6.5,-0.5);

    \fill [white] (8,1)--(10,1)--(10,3)--(8,3)--(8,1);
    \draw [thin, dashed, black] (8,3) -- (8.25, 4.5);
    \draw [thin, dashed, black] (8,3) -- (8.75, 4.5);
    \draw [thin, dashed, black] (8,3) -- (7.25, 4.5);
    \draw [thin, dashed, black] (8,3) -- (7.75, 4.5);
    
    \draw [thin, dashed, black] (8,1) -- (6.5, 1);
    \draw [thin, dashed, black] (8,1) -- (6.5, 1.25);
    \draw [thin, dashed, black] (8,1) -- (6.5, 0.75);
    
    \draw [thin, dashed, black] (10,1) -- (11.5, 1.5);
    \draw [thin, dashed, black] (10,1) -- (11.5, 1.25);
    \draw [thin, dashed, black] (10,1) -- (11.5, 1);
    \draw [thin, dashed, black] (10,1) -- (11.5, 0.75);
    \draw [thin, dashed, black] (10,1) -- (11.5, 0.5);
    
    \draw [thick, orange] (8,1)--(10,1);
    \draw [thick, red] (8,1)--(8,3);
    \draw [thick, orange] (8,3)--(10,3);
    \draw [thick, red] (10,1)--(10,3);
    \draw [thick, orange, ->] (8,1) to (9,1);
    \draw [thick, red, ->>] (8,3) to (8,2);
    \draw [thick, orange, ->] (8,3) to (9,3);
    \draw [thick, red, ->>] (10,3) to (10,2);
    \fill (8,1) circle (1.5pt);
    \fill (8,3) circle (1.5pt);
    \fill (10,1) circle (1.5pt);
    \fill (10,3) circle (1.5pt);
    \draw [thin, dashed] (8,1)--(6.5,-0.55);
    \draw [thin, dashed] (10,3)--(11.5, 4.5);

    \node at (8, 4.75) {$k-2$};
    \node at (12, 1) {$p-2$};
    \node at (6, 1) {$p-k$};
    \node at (11,0) {$\mathcal{T}_1$};
    \node at (7, 3.75) {$\mathcal{T}_2$};
    \node at (7.375, 3) {$P-\chi(\beta)$};
    \node at (10.375, 3) {$Q$};
    \node at (9.75, 0.75) {$P+\chi(\alpha)$};
    \node at (8, 0.75) {$P$};
    
    \end{tikzpicture}
    \caption{The quadrilateral $\mathcal{Q}$ from two different perspectives. On the left-hand-side we can see the curves $\alpha$ and $\beta$. Any dotted line represents a ray from a vertex of $\mathcal{Q}$ to a point at infinity. These rays are better represented on the right-hand-side. }
    \label{quadnekmore}
\end{figure}
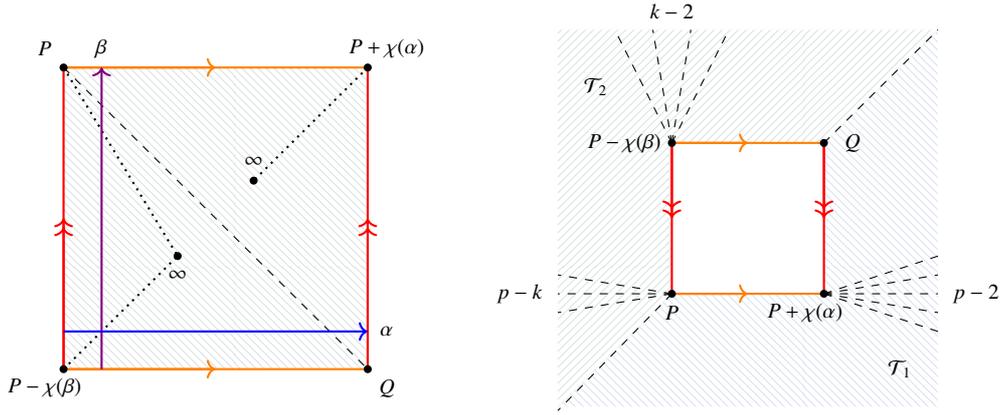

\noindent Given a positive integer $k\in\{2,\dots,p\}$, we consider $k-2$ rays leaving from $P-\chi(\beta)$ and $p-k$ rays leaving from $P$ such that they all point toward the pole contained in $\mathcal{T}_2$. Next we consider other $p-2$ rays all leaving from $P+\chi(\alpha)$ and pointing toward the pole contained in $\mathcal{T}_1$. By gluing the opposite sides of $\mathcal{Q}$ by using the translations $\chi(\alpha)$ and $\chi(\beta)$ we obtain a genus one meromorphic differential, say $(X,\omega)$ with a single zero of order $2p$ and two poles of order $p$. It remains to show that $(X,\omega)$ has rotation number $\gcd(k,p)$. We can choose the curves $\alpha$ and $\beta$ as in \S\ref{nvep}, see Figure \ref{quadnekmore}. By construction, $\alpha$ has index $k$ and $\beta$ has index $p$. Therefore $\gcd(k,p,2p,p)=\gcd(k,p)$ as desired.

\subsubsection{Non-positive volume and more than two punctures}\label{npvmp} This is the last case to handle in this section and it is similar to the case discussed in \S\ref{pvmp}. Recall that, for a positive integer $k\in\{2,\dots,p\}$ we have introduced two integers $d,l\in\Z^+$ and used them to define the genus zero differentials $(X,\omega)$, $(Y,\xi)$ and $(Z, \eta)$, see Section \S\ref{pvmp} for these constructions. In this case we still consider $l$ copies, say $(X_1,\omega_1),\dots,(X_l,\omega_l)$ of $(X,\omega)$. Recall that $l$ can be zero. We then consider $n-l-2$ copies of $(Y,\xi)$ that we denote as $(Y_1,\xi_1),\dots,(Y_{n-l-2},\xi_{n-l-2})$ and a unique copy of $(Z,\eta)$. 

\smallskip

\noindent Let $\alpha,\,\beta$ be a pair of handle generators and let $\mathcal{P}$ be the closure of the exterior parallelogram in $\C$ defined by the chain
\begin{equation}
    P_0\mapsto P_0+\chi(\alpha)\mapsto P_0+\chi(\alpha)+\chi(\beta)=Q_0\mapsto P_0+\chi(\beta)\mapsto P_0,
\end{equation}
\noindent where $P_0\in\C$ is any point. Notice that we can find $p-2$ rays, say $r_1,\dots,r_{p-2}$, leaving from $Q$ pointing at the infinity, see Figure \ref{rotnh}. As in Section \S\ref{pvmp} we glue all these structures together in the usual way. The resulting structure is a genus one meromorphic differential. By bubbling a copy of $(\C,\,dz)$ along $r_i$, for any $i=1,\dots,p-2$, we obtain a genus one meromorphic differential with one zero of order $np$ and $n$ poles of order $p$.

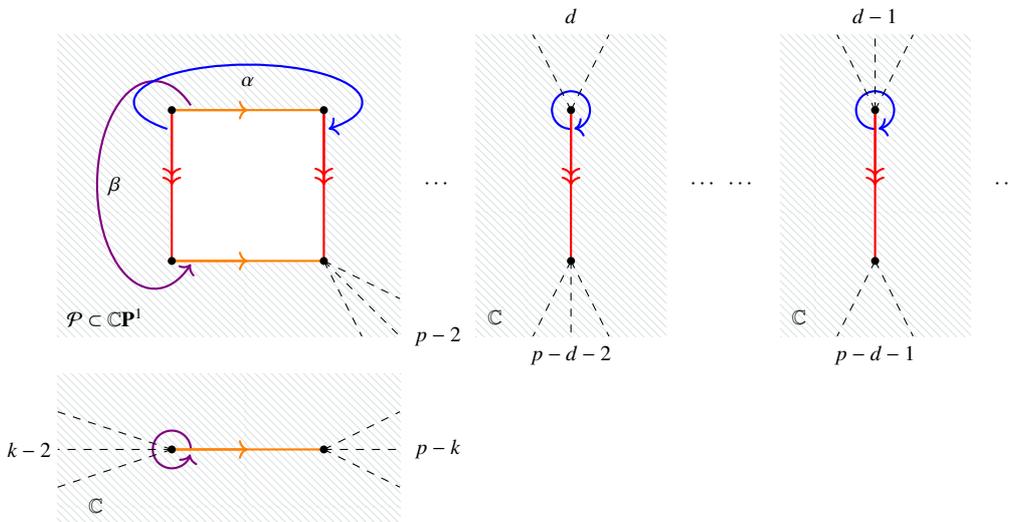
\begin{figure}[ht]
    \centering
    \begin{tikzpicture}[scale=1, every node/.style={scale=0.8}]
    \definecolor{pallido}{RGB}{221,227,227}
   
    \pattern [pattern=north west lines, pattern color=pallido]
    (-1.5,-1.5)--(3,-1.5)--(3,-3.5)--(-1.5,-3.5)--(-1.5,-1.5);
    \draw [thick, violet, ->] (0.25, -2.5) arc [start angle = 0, end angle =345 , radius = 0.25];
    \draw [thick, orange] (0,-2.5)--(2,-2.5);
    \draw [thick, orange, ->] (0,-2.5)--(1,-2.5);
    \draw [thin, black, dashed] (0,-2.5)--(-1.5,-2);
    \draw [thin, black, dashed] (0,-2.5)--(-1.5,-2.5);
    \draw [thin, black, dashed] (0,-2.5)--(-1.5,-3);
    \draw [thin, black, dashed] (2,-2.5)--(3,-2);
    \draw [thin, black, dashed] (2,-2.5)--(3,-2.5);
    \draw [thin, black, dashed] (2,-2.5)--(3,-3);
    \fill (0,-2.5) circle (1.5pt);
    \fill (2,-2.5) circle (1.5pt);
   
    \pattern [pattern=north west lines, pattern color=pallido]
    (-1.5,-1)--(3,-1)--(3,3)--(-1.5,3)--(-1.5,-1);
    \fill [white] (0,0)--(2,0)--(2,2)--(0,2)--(0,0);
    \draw [thin, dashed, black] (2,0) -- (3,-1);
    \draw [thin, dashed, black] (2,0) -- (3,-0.5);
    \draw [thin, dashed, black] (2,0) -- (2.5,-1);
    \draw [thick, orange] (0,0)--(2,0);
    \draw [thick, red] (2,0)--(2,2);
    \draw [thick, orange] (2,2)--(0,2);
    \draw [thick, red] (0,2)--(0,0);
    \draw [thick, orange, ->] (0,0) to (1,0);
    \draw [thick, red, ->>] (2,2) to (2,1);
    \draw [thick, orange, ->] (0,2) to (1,2);
    \draw [thick, red, ->>] (0,2) to (0,1);
    \fill (0,0) circle (1.5pt);
    \fill (0,2) circle (1.5pt);
    \fill (2,2) circle (1.5pt);
    \fill (2,0) circle (1.5pt);
    
    \draw [thick, violet, ->] (0.25,2.0625) arc (50:310:0.75 and 1.375);
    
    \draw [thick, blue, <-] (2.0625,1.75) arc (-45:225:1.5 and 0.5);
 
    \foreach \x [evaluate=\x as \coord using 4 + 2*\x] in {0} 
    {
    \pattern [pattern=north west lines, pattern color=pallido]
    (\coord, -1)--(\coord+2.5, -1)--(\coord+2.5, 3)--(\coord, 3)--(\coord,-1);
    \draw [thick, blue, ->] (\coord+1.25, 1.75) arc [start angle = 270, end angle = -80,radius = 0.25];
    \draw [thick, red] (\coord+1.25, 0) to (\coord+1.25, 2);
    \draw [thick, red, <<-] (\coord+1.25,1) to (\coord+1.25,2);
    \fill (\coord+1.25,0) circle (1.5pt);
    \fill (\coord+1.25,2) circle (1.5pt);
    \draw [thin, black, dashed] (\coord+1.25, 2)--(\coord+0.75, 3);
    \draw [thin, black, dashed] (\coord+1.25, 2)--(\coord+1.75, 3);
    \draw [thin, black, dashed] (\coord+1.25, 0)--(\coord+0.75, -1);
    \draw [thin, black, dashed] (\coord+1.25, 0)--(\coord+1.75, -1);
    \draw [thin, black, dashed] (\coord+1.25, 0)--(\coord+1.25, -1);
    \node at (\coord-0.5, 1) {$\cdots$};
    \node at (\coord+3, 1) {$\cdots$};
    \node at (\coord+0.25, -0.75) {$\mathbb{C}$};   
    \node at (\coord+1.25, 3.25) {$d$};
    \node at (\coord+1.25, -1.25) {$p-d-2$};
    }
    
    \foreach \x [evaluate=\x as \coord using 4 + 2*\x] in {2} 
    {
    \pattern [pattern=north west lines, pattern color=pallido]
    (\coord, -1)--(\coord+2.5, -1)--(\coord+2.5, 3)--(\coord, 3)--(\coord,-1);
    \draw [thick, blue, ->] (\coord+1.25, 1.75) arc [start angle = 270, end angle = -80,radius = 0.25];
    \draw [thick, red] (\coord+1.25, 0) to (\coord+1.25, 2);
    \draw [thick, red, <<-] (\coord+1.25,1) to (\coord+1.25,2);
    \fill (\coord+1.25,0) circle (1.5pt);
    \fill (\coord+1.25,2) circle (1.5pt);
    \node at (\coord+0.25, -0.75) {$\mathbb{C}$}; 
    \node at (\coord-0.5, 1) {$\cdots$};
    \node at (\coord+3, 1) {$\cdots$};
    \draw [thin, black, dashed] (\coord+1.25, 2)--(\coord+0.75, 3);
    \draw [thin, black, dashed] (\coord+1.25, 2)--(\coord+1.25, 3);
    \draw [thin, black, dashed] (\coord+1.25, 2)--(\coord+1.75, 3);
    \draw [thin, black, dashed] (\coord+1.25, 0)--(\coord+0.75, -1);
    \draw [thin, black, dashed] (\coord+1.25, 0)--(\coord+1.75, -1);
    \node at (\coord+1.25, 3.25) {$d-1$};
    \node at (\coord+1.25, -1.25) {$p-d-1$};
    }
    
    \node at (-0.875,-0.75) {$\mathcal{P}\subset\cp$};
    \node at (-1,-3.25) {$\mathbb{C}$};
    \node at (1,2.375) {$\alpha$};
    \node at (-0.75,1) {$\beta$};
    \node at (-1.875, -2.5) {$k-2$};
    \node at (3.5, -2.5) {$p-k$};
    \node at (3.5, -1) {$p-2$};

    \end{tikzpicture}
    \caption{Realizing a genus one translation surface with poles of order $p$, negative volume, and rotation number equal to $\gcd(k,p)$. In this case the number of punctures is supposed to be at least three and all the poles have zero residue.}
    \label{rotnh}
\end{figure}

\noindent It just remains to show that this latter structure has the desired rotation number $\gcd(k,p)$. If we choose the curves $\alpha$ and $\beta$ as shown in Figure \ref{rotnh}, we can observe that
\begin{equation}
    \text{Ind}(\alpha)=
    \begin{cases}
    k \quad \text{ if } k-1\ge n-2, \text{ or}\\
    tk \quad \text{ if } k-1<n-2,
    \end{cases}
\end{equation} 
\noindent and $\text{Ind}(\beta)=k$ by construction. Therefore $\gcd(tk,k,np,p)=\gcd(k,p)$ as desired. This  concludes the proof of Lemma \ref{onepuncttorushigherp}. \qed\qedhere

\smallskip

\begin{rmk}
It is also possible to realize a genus one meromorphic differential with poles of order $p$, a single zero of maximal order and rotation number one by bubbling sufficiently many copies of $(\C,\,dz)$ along rays on a genus one differential with poles of order two obtained as in Section \S\ref{rot1}. Suppose $(X,\omega)$ is a such a structure. We can find $n$ rays, all leaving from the unique zero of $\omega$ and such that each one joins a pole of $\omega$. Different rays join different poles. Then we can bubble $p-2$ copies of $(\C,\, dz)$ along every ray. The resulting structure turns out to be a genus one differential with $n$ poles each of order $p$. If these rays are properly chosen, it is also possible to preserve the rotation number. In Figures \ref{rotpk1} and \ref{rotnk1} the dotted lines are possible candidates. Bubbling along them one gets a genus one meromorphic differential with poles of order $p$ and, since the indices of the curves $\alpha$ and $\beta$ are not affected by any bubbling, the rotation number does not change. In a similar fashion in certain cases it is possible to realize a genus one meromorphic differential with poles of order $p$ and rotation number two. In fact, it is possible to modify the structures obtained in Figures \ref{rotpek2}, \ref{rotpok2} and \ref{rotnok2} by bubbling $p-2$ copies of $(\C,\, dz)$ along each dotted line. The indices of $\alpha$ and $\beta$ are not affected in these cases and therefore the resulting structures have rotation number two. 
\end{rmk}

\subsection{Strata with poles with non-zero residue}\label{resnotzero} We consider the case of characters $\chi$ of non-trivial-ends type. Notice that this automatically implies $n\ge2$. Here we shall consider $S_{1,n}$ as the surface obtained by gluing the surfaces $S_{1,2}$ and $S_{0,n}$ along a ray joining two punctures as in Definition \ref{gluingsurfaces} with the only difference that for the moment no geometry is involved, see Figure \ref{subsurface}.

\begin{figure}[!ht]
    \centering
    \begin{tikzpicture}[scale=1, every node/.style={scale=0.875}]
    \definecolor{pallido}{RGB}{221,227,227}

    \fill[pattern=north west lines, pattern color=pallido] (-5.5,0) ellipse (3cm and 2cm); 
    \fill[white] (-6.5,0) to [out=330, in=210] (-4.5,0) to [out=150, in=30] (-6.5,0);
    \draw [thick, blue, dashed] (-8.5,0) to [out=30, in=150] (-6.5,0);
    \draw [thick, red] (-5.5,0) ellipse (1.5cm and 0.75cm);
    \draw [thick] (-5.5,0) ellipse (3cm and 2cm);
    \draw [thick] (-6.7,0.1) to [out=330, in=210] (-4.3,0.1);
    \draw [thick] (-6.5,0) to [out=30, in=150] (-4.5,0);
    \draw [thick, blue] (-8.5,0) to [out=330, in=210] (-6.5,0);

    \node at (-3.5,-0.75) {$\times$};
    \draw [thick, violet, ->] (-3.125, -0.75) arc [start angle = 0, end angle = 360,radius = 0.375];
    \node at (-3.5,0.75) {$\times$};
    \draw [thick, violet, <-] (-3.125, 0.75) arc [start angle = 0, end angle = 360,radius = 0.375];
    \node at (-8.75,0) {\textcolor{blue}{$\alpha$}};
    \node at (-5.5,-1) {\textcolor{red}{$\beta$}};
    \node at (-3.125, -0.375) {\textcolor{violet}{$\delta_2$}};
    \node at (-3.125,   0.375) {\textcolor{violet}{$\delta_1$}};
    \draw [thick] (0.5,0) ellipse (2cm and 2cm);
    \fill[pattern=north west lines, pattern color=pallido] (0.5,0) ellipse (2cm and 2cm);
    \node at (0.5,-1.5) {$\times$};
    \draw [thick, violet, <-] (0.875, -1.5) arc [start angle = 0, end angle = 360,radius = 0.375];
    \node at (0.5,1.5) {$\times$};
    \draw [thick, violet, <-] (0.875, 1.5) arc [start angle = 0, end angle = 360,radius = 0.375];
    \node at (2,0) {$\times$};
    \draw [thick, violet, <-] (2.375, 0) arc [start angle = 0, end angle = 360,radius = 0.375];
    \node at (-1,0) {$\times$};
    \draw [thick, violet, <-] (-0.625, 0) arc [start angle = 0, end angle = 360,radius = 0.375];
    \node at (1.25, -1.5) {\textcolor{violet}{$\gamma_3$}};
    \node at (1.25,  1.5) {\textcolor{violet}{$\gamma_1$}};
    \node at (1.375, 0) {\textcolor{violet}{$\gamma_2$}};
    \node at (-0.325, 0) {\textcolor{violet}{$\gamma_n$}};
    \draw[thick, gray] (2,0) to (0.5,1.5);
    \draw[thick, gray] (-3.5, -0.75) to (-3.5, 0.75);
    \node at (-0.25, -0.75) {$\ddots$};
    \end{tikzpicture}
    \caption{The surfaces $S_{1,2}$ and $S_{0,n}$. By slitting and pasting along the infinite rays coloured in gray, the resulting surface is homeomorphic to $S_{1,n}$.}
    \label{subsurface}
\end{figure}
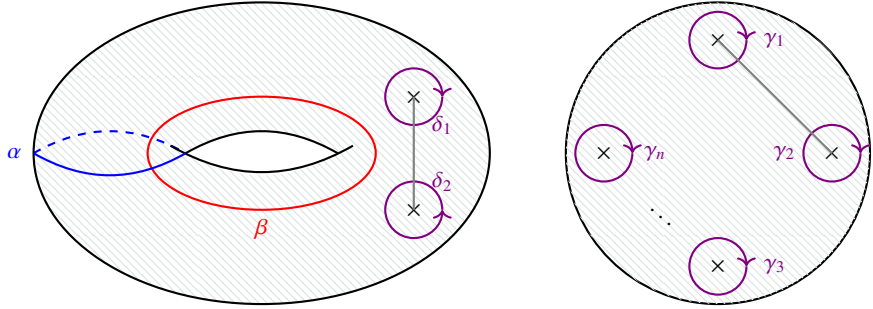

\smallskip

\noindent The idea behind the proof of this case is as follow. Starting from the representation $\chi$ we shall introduce a new representation $\rho\colon\text{H}_1(S_{1,\,2},\,\Z)\longrightarrow \C$ which, in principle, does not need to be the restriction of $\chi$ to any subsurface of $S_{1,\,n}$ homeomorphic to $S_{1,2}$. For any $p\ge2$ and for any $k\in\Z$ dividing $p$ we can realize $\rho$ in the stratum $\mathcal{H}_1(2p;-p,-p)$ as the holonomy of a genus one differential with rotation number $k$ as done in Sections \S\ref{genusoneordertwo} and \S\ref{genusoneorderhigherthantwo}. Then we properly modify this structure in order to get the desired one. To this latter, in the case $n\ge3$, we shall finally glue a genus zero meromorphic differential in order to obtain the desired structure on $S_{1,\,n}$ with period character $\chi$ and prescribed rotation number. 

\smallskip

\noindent To define an auxiliary representation $\rho$, we provide a more general definition which we need to use later on.

\begin{defn}[Auxiliary representation]\label{defsupprep}
 Let $\chi\colon\shomolzn\longrightarrow \C$ be a representation of non-trivial-ends type and $\imath\colon S_{h,\,m}\hookrightarrow S_{g,n}$ be an embedding with $1\le h\le g$ and $1\le m \le n$. Let $\{\alpha_i,\beta_i\}_{1\le i\le h}$ be a system of handle generators for $ \text{H}_1(S_{h,\,m},\,\Z)$. We define an \emph{auxiliary representation} $\rho\colon \text{H}_1(S_{h,\,m},\,\Z)\longrightarrow \C$ as follows:
\begin{equation}\label{supprep}
    \rho(\alpha_i)=\chi\circ\imath(\alpha_i), \quad \rho(\beta_i)=\chi\circ\imath(\beta_i), \quad \rho(\delta_1)=\cdots=\rho(\delta_m)=0
\end{equation} where $\delta_1,\dots,\delta_m$ are simple closed curves each of which  encloses a puncture on $S_{h,\,m}$ and  oriented in such a way that $[\alpha_1,\beta_1]\cdots[\alpha_h,\beta_h]=\delta_1\cdots\delta_m$. If $h=g$ and $m=n$, then the embedding $\imath$ can be taken as the identity map. 
\end{defn}

\noindent Before proceeding, once again, we recall for the reader's convenience that no puncture is assumed to be a simple pole, see Remark \ref{conscases}. Moreover, in this section we assume that all poles have the same order $p\ge2$. We shall distinguish three cases according to the following subsections.

\smallskip

\subsubsection{Two poles with non-zero residue}\label{2pntr} We suppose $n=2$ and hence $\chi\colon\text{H}_1(S_{1,2},\,\Z)\longrightarrow \C$ is a representation  such that 
\begin{equation}
    \chi(\delta_1)=\chi(\delta_2^{-1})=w\in\C^*,
\end{equation} 

\noindent where $\delta_1,\,\delta_2$ are non-homotopic simple closed curves both enclosing a single puncture of $S_{1,2}$. We introduce an auxiliary representation $\rho\colon\text{H}_1(S_{1,2},\,\Z)\longrightarrow \C$, as in Definition \ref{defsupprep}, defined as 
\begin{equation}\label{eq:auxreponetwo}
    \rho(\alpha)=\chi(\alpha),\,\qquad\,\rho(\beta)=\chi(\beta),\,\qquad\, \rho(\delta_1)=\rho(\delta_2)=0.
\end{equation}

\noindent Let $(X,\omega)$ be the translation surface with period character $\rho$ and rotation number $k$ realized as in Section \S\ref{genusonemero}; more precisely as in \S\ref{genusoneordertwo} if $p=2$ or as in  \S\ref{pv2p} and \S\ref{npv2p} if $p>2$. The following holds.

\begin{lem}\label{exray}
There is a bi-infinite geodesic ray joining the poles such that its direction after developing, say $v\in\C$, is different from $w$.
\end{lem}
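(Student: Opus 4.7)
\emph{Proof proposal.} The plan is to show that embedded bi-infinite geodesic rays on $(X,\omega)$ joining $P_1$ and $P_2$ realize an \emph{uncountable} (in fact open) set of developed directions in $\mathbb S^1$. Since $w\in\C^*$ gives a single slope, we can then choose a member of this family whose developed direction $v$ differs from $w$.

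The key local ingredient is the geometry at each pole. Since $\rho$ is the auxiliary representation defined by \eqref{eq:auxreponetwo}, it is of trivial-ends type, so $P_1$ and $P_2$ each have order $p\ge 2$ and zero residue. Integrating $\omega$ locally at $P_i$ thus yields a $(p-1)$-fold branched cover from a punctured neighborhood of $P_i$ onto the complement of a large disk in $\C$. In particular, for every direction $\theta\in\mathbb S^1$ there are $p-1$ geodesic rays inside the polar domain of $P_i$ limiting to $P_i$ whose developed slope is $\theta$.

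The second step is global: using the explicit description of $(X,\omega)$ given in \S\ref{pv2p} (positive volume) or \S\ref{npv2p} (non-positive volume), one exhibits an embedded geodesic arc leaving the polar domain of $P_1$, crossing the parallelogram $\mathcal{P}$ (respectively the quadrilateral $\mathcal{Q}$) as a straight segment through the appropriate pairs of identified slits, and entering the polar domain of $P_2$. Extending both ends by geodesic rays inside the two polar domains produces an embedded bi-infinite geodesic from $P_1$ to $P_2$. Perturbing the entry point and tilting the segment by a small angle gives a continuous one-parameter family of such geodesics, whose developed directions sweep an open subset $U\subset\mathbb S^1$.

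The only subtlety is to ensure that the perturbed geodesic remains embedded and does not run into a zero of $\omega$; however, the directions producing a saddle connection form a countable subset of $\mathbb S^1$, and the direction of $w$ is a single point. Removing these from $U$ still leaves a dense open set of admissible directions, and any such direction $v$ satisfies $v\neq w$, yielding the desired ray. I expect the main (minor) technical point to be the explicit verification that the geodesic arc through $\mathcal{P}$ or $\mathcal{Q}$ can indeed be matched to polar rays at both ends for an open set of slopes; this is essentially a finite combinatorial check on the slit identifications used in \S\ref{pv2p} and \S\ref{npv2p}.
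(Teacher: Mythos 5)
Your proposal identifies the right overall idea (find an explicit ray in the polygonal model and observe that its direction can be varied), but the heart of the lemma is precisely the step you defer as ``a finite combinatorial check on the slit identifications'': exhibiting, in each explicit model of $(X,\omega)$, a concatenation of a polar ray at one pole, a segment through the bounded pieces, and a polar ray at the other pole that develops to a single straight line. The paper's entire proof consists of carrying out this check case by case (positive/non-positive volume; $p=2$ with $k=1$; $p>2$ or $p=2$ with $k=2$), with explicit angles $\theta$ and $\pi-\theta$ (or $\pi-(\delta+\theta)$) at the two junctions. Your local observation that every direction admits geodesic rays converging to each pole does not by itself produce a bi-infinite geodesic joining the two \emph{distinct} poles: a geodesic launched outward from the polar domain of $P_1$ may, after crossing the slits, return to $P_1$, and ruling this out is exactly the global combinatorial analysis you skip. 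The genericity step (discarding the countably many saddle-connection directions and the slope of $w$) only helps once at least one admissible direction, stable under perturbation, has been exhibited. You also restrict attention to the models of \S\ref{pv2p} and \S\ref{npv2p}, whereas the lemma is equally invoked for the $p=2$, rotation-number-one structures of \S\ref{pk1} and \S\ref{nk1}, whose decomposition (an extra copy of $\C$ slit along a segment parallel to $\chi(\alpha)$) is different and is treated separately in the paper's Cases 1 and 2.

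There is a second, more structural divergence: you require the geodesic to \emph{avoid} the zero of $\omega$, while the paper deliberately routes the ray \emph{through} the unique zero, leaving an angle of exactly $\pi$ on a prescribed side. The literal statement does not demand this, and for the cylinder surgery of \S\ref{2pntr} a zero-avoiding ray would suffice; but the lemma is later invoked in \S\ref{chinonrat} precisely with the additional property that the ray passes through the zero once with angle $\pi$ on a chosen side, which is essential for the gluing of Definition \ref{gluingsurfaces} to merge the zeros of the two pieces into a single zero of the correct order rather than creating new branch points along the slit. So even if your argument were completed, it would establish a version of the lemma too weak for all of its uses downstream; the explicit construction through the branch point is not an optional refinement but part of what the proof must deliver.
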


\noindent Suppose Lemma \ref{exray} holds; then we can find a bi-infinite geodesic ray $r\subset (X,\omega)$ joining the two poles and such that its direction after developing is $v\neq w$. Let $C_w=\mathbb{E}^2/\langle z\mapsto z+w\rangle$ and let $\pi_w\colon\mathbb{E}^2\to C_w$ be the covering projection. Since $v\neq w$, we can glue $C_w$ along the bi-infinite ray $r$ as in Definition \ref{gluecyl}. In fact, any straight line with direction $v$ in $\mathbb E^2$ projects to a bi-infinite geodesic line which wraps around $C_w$. Let $\overline{r}\subset C_w$ be a bi-infinite geodesic ray with direction $v$ after developing. Slit $(X,\omega)$ along $r$ and let $r^\pm$ be the resulting edges. Similarly, slit $C_w$ along $\overline{r}$ and call the resulting edges $\overline{r}^\pm$. Notice that $C_w\setminus \overline{r}$ is an infinite open strip whose closure is obtained by adding the edges $\overline{r}^\pm$. We can glue this latter closed strip to the slit  $(X,\omega)$ by identifying $r^+$ with $\overline{r}^-$ and $r^-$ with $\overline{r}^+$. The resulting translation surface $(Y,\xi)$ is still a genus one differential. The differential $\xi$ has a single zero of order $2p$ and two poles of order $p$ with residue $\pm w$ by construction. Therefore $(Y,\xi)$ has period character $\chi$ and rotation number $k$ as a consequence of Lemma \ref{invrotnum}. It   remains to show Lemma \ref{exray}.

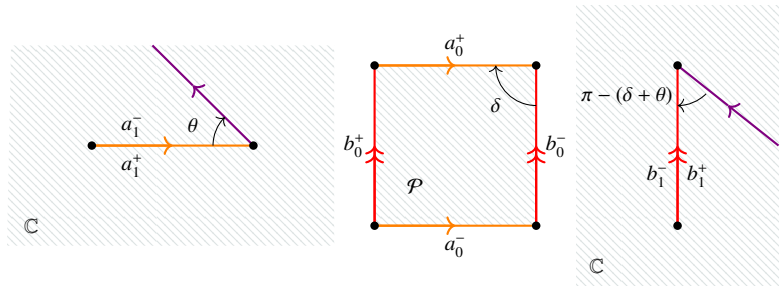
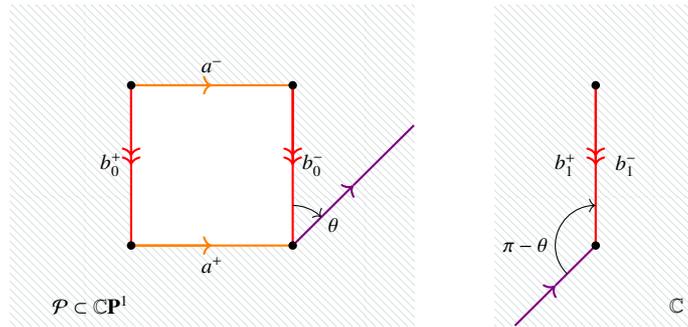
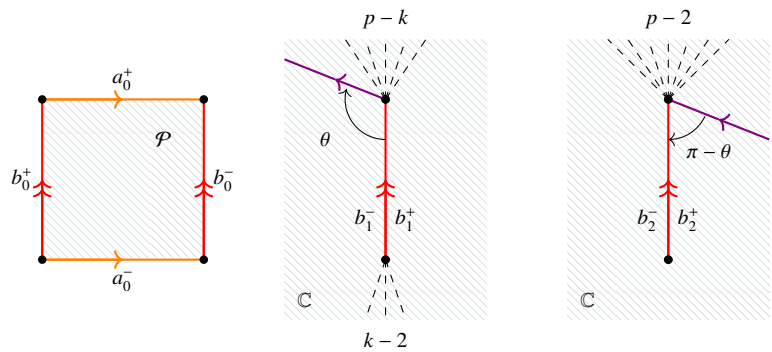
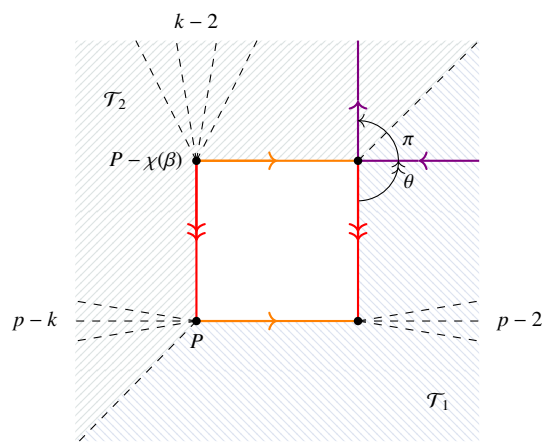
\begin{figure}
    \centering
    \subfloat[][Case 1 of Lemma \ref{exray}. \label{exray1}]
    {
    \begin{tikzpicture}[scale=1.0625, every node/.style={scale=0.8}]
    \definecolor{pallido}{RGB}{221,227,227}
    
    \pattern [pattern=north west lines, pattern color=pallido]
    (-4.5,-0.25)--(-0.5,-0.25)--(-0.5,2.25)--(-4.5,2.25)--(-4.55,-0.25);
    \draw [thick, orange] (-3.5,1) to (-1.5,1);
    \draw [thick, orange, ->] (-3.5,1) to (-2.5,1);
    \draw [thick, violet] (-2.25, 1.75) to (-2.75, 2.25);
    \draw [thick, violet, ->] (-1.5, 1) to (-2.25, 1.75);
    \draw [thin, black, ->] (-2, 1) arc [start angle = 180, end angle = 135,radius = 0.5];
    \fill (-3.5,1) circle (1.5pt);
    \fill (-1.5,1) circle (1.5pt);
    \node at (-3, 0.75) {$a_1^+$};
    \node at (-3, 1.25) {$a_1^-$};
    \node at (-2.25, 1.25) {$\theta$};
    
    \pattern [pattern=north west lines, pattern color=pallido]
    (0,0)--(2,0)--(2,2)--(0,2)--(0,0);
    \draw [thick, orange] (0,0)--(2,0);
    \draw [thick, red] (2,0)--(2,2);
    \draw [thick, orange] (2,2)--(0,2);
    \draw [thick, red] (0,2)--(0,0);
    \draw [thick, orange, ->] (0,0) to (1,0);
    \draw [thick, red, ->>] (2,0) to (2,1);
    \draw [thick, orange, ->] (0,2) to (1,2);
    \draw [thick, red, ->>] (0,0) to (0,1);
    \fill (0,0) circle (1.5pt);
    \fill (0,2) circle (1.5pt);
    \fill (2,2) circle (1.5pt);
    \fill (2,0) circle (1.5pt);
    \draw [thin, black, ->] (2, 1.5) arc [start angle = 270, end angle = 180,radius = 0.5];
    \node at (1, -.25) {$a_0^-$};
    \node at (1, 2.25) {$a_0^+$};
    \node at (-0.25, 1) {$b_0^+$};
    \node at (2.25, 1) {$b_0^-$};
 
    \foreach \x [evaluate=\x as \coord using 2.5 + 3*\x] in {0} 
    {
    \pattern [pattern=north west lines, pattern color=pallido]
    (\coord, -0.75)--(\coord+2.5, -0.75)--(\coord+2.5, 2.75)--(\coord, 2.75)--(\coord,-0.75);
    \draw [thick, red] (\coord+1.25, 0) to (\coord+1.25, 2);
    \draw [thick, red, ->>] (\coord+1.25,0) to (\coord+1.25,1);
    \draw[thick, violet] (\coord+1.25, 2) to (\coord+1.875, 1.5);
    \draw[thick, violet, <-] (\coord+1.875, 1.5) to (\coord+2.5,1);
    \draw [thin, black, <-] (\coord+1.25, 1.5) arc [start angle = 270, end angle = 315,radius = 0.5];
    \node at (\coord+0.625, 1.625) {$\pi-(\delta+\theta)$};
    \fill (\coord+1.25,0) circle (1.5pt);
    \fill (\coord+1.25,2) circle (1.5pt);
    \node at (\coord+0.25, -0.5) {$\mathbb{C}$};  
    }
    
    \node at (1.5, 1.5) {$\delta$};
    \node at (3.5, 0.625) {$b_1^-$};
    \node at (4, 0.625) {$b_1^+$};
    \node at (0.5, 0.5) {$\mathcal{P}$};
    \node at (-4.25,0) {$\mathbb{C}$};
    \end{tikzpicture}
    }\\
    \subfloat[][Case 2 of Lemma \ref{exray}. \label{exray2}]
    {
    \begin{tikzpicture}[scale=1.0625, every node/.style={scale=0.8}]
    \definecolor{pallido}{RGB}{221,227,227}
   
    \pattern [pattern=north west lines, pattern color=pallido]
    (-1.5,-1)--(3.5,-1)--(3.5,3)--(-1.5,3)--(-1.5,-1);
    \fill [white] (0,0)--(2,0)--(2,2)--(0,2)--(0,0);
    \draw [thick, orange] (0,0)--(2,0);
    \draw [thick, red] (2,0)--(2,2);
    \draw [thick, orange] (2,2)--(0,2);
    \draw [thick, red] (0,2)--(0,0);
    \draw [thick, orange, ->] (0,0) to (1,0);
    \draw [thick, red, ->>] (2,2) to (2,1);
    \draw [thick, orange, ->] (0,2) to (1,2);
    \draw [thick, red, ->>] (0,2) to (0,1);
    \draw [thick, violet, ->] (2, 0) to (2.75, 0.75);
    \draw [thick, violet] (2.75, 0.75) to (3.5, 1.5);
    \fill (0,0) circle (1.5pt);
    \fill (0,2) circle (1.5pt);
    \fill (2,2) circle (1.5pt);
    \fill (2,0) circle (1.5pt);
    \node at (1,2.25) {$a^-$};
    \node at (1,-0.25) {$a^+$};
    \node at (2.25,1) {$b_0^-$};
    \node at (-0.25,1) {$b_0^+$};
    \draw [thin, black, ->] (2, 0.5) arc [start angle = 90, end angle = 45,radius = 0.5];
    \node at (2.5, 0.25) {$\theta$};

    \foreach \x [evaluate=\x as \coord using 4.5 + 3.5*\x] in {0} 
    {
    \pattern [pattern=north west lines, pattern color=pallido]
    (\coord, -1)--(\coord+2.5, -1)--(\coord+2.5, 3)--(\coord, 3)--(\coord,-1);
    \draw [thick, red] (\coord+1.25, 0) to (\coord+1.25, 2);
    \draw [thick, red, <<-] (\coord+1.25,1) to (\coord+1.25,2);
    \draw [thick, violet] (\coord+1.25, 0) to (\coord+0.75, -0.5);
    \draw [thick, violet, <-]     (\coord+0.75, -0.5) to (\coord+0.25, -1);
    \draw [thin, black, <-] (\coord+1.25, 0.5) arc [start angle = 90, end angle = 225,radius = 0.5];
    \node at (\coord+0.375, 0) {$\pi-\theta$};
    \fill (\coord+1.25,0) circle (1.5pt);
    \fill (\coord+1.25,2) circle (1.5pt);
    \node at (\coord+2.25, -0.75) {$\mathbb{C}$};   

    }
    
    \node at (5.375, 1) {$b_1^+$};
    \node at (6.125, 1) {$b_1^-$};
    \node at (-0.5,-0.75) {$\mathcal{P}\subset\cp$};
    \end{tikzpicture}
    }\\
    \subfloat[][Case 3 of Lemma \ref{exray}. \label{exray3}]
    {
    \begin{tikzpicture}[scale=1.0625, every node/.style={scale=0.8}]
    \definecolor{pallido}{RGB}{221,227,227}
    
    \pattern [pattern=north west lines, pattern color=pallido]
    (0,0)--(2,0)--(2,2)--(0,2)--(0,0);
    \draw [thick, orange] (0,0)--(2,0);
    \draw [thick, red] (2,0)--(2,2);
    \draw [thick, orange] (2,2)--(0,2);
    \draw [thick, red] (0,2)--(0,0);
    \draw [thick, orange, ->] (0,0) to (1,0);
    \draw [thick, red, ->>] (2,0) to (2,1);
    \draw [thick, orange, ->] (0,2) to (1,2);
    \draw [thick, red, ->>] (0,0) to (0,1);
    \fill (0,0) circle (1.5pt);
    \fill (0,2) circle (1.5pt);
    \fill (2,2) circle (1.5pt);
    \fill (2,0) circle (1.5pt);
    \node at (2.25,1) {$b_0^-$};
    \node at (-0.25,1) {$b_0^+$};
    \node at (1,-0.25) {$a_0^-$};
    \node at (1,2.25) {$a_0^+$};
 
    \foreach \x [evaluate=\x as \coord using 3 + 3.5*\x] in {0, 1} 
    {
    \pattern [pattern=north west lines, pattern color=pallido]
    (\coord, -0.75)--(\coord+2.5, -0.75)--(\coord+2.5, 2.75)--(\coord, 2.75)--(\coord,-0.75);
    \draw [thick, red] (\coord+1.25, 0) to (\coord+1.25, 2);
    \draw [thick, red, ->>] (\coord+1.25,0) to (\coord+1.25,1);
    \fill (\coord+1.25,0) circle (1.5pt);
    \fill (\coord+1.25,2) circle (1.5pt);
    \node at (\coord+0.25, -0.5) {$\mathbb{C}$};  
    \draw [thin, black, dashed] (\coord+1.25, 2)--(\coord+1, 2.75);
    \draw [thin, black, dashed] (\coord+1.25, 2)--(\coord+1.25, 2.75);
    \draw [thin, black, dashed] (\coord+1.25, 2)--(\coord+1.5, 2.75);
    }
    
    \foreach \x [evaluate=\x as \coord using 3 + 3.5*\x] in {0} 
    {
    \draw [thin, black, dashed] (\coord+1.25, 0)--(\coord+1, -0.75);
    \draw [thin, black, dashed] (\coord+1.25, 0)--(\coord+1.25, -0.75);
    \draw [thin, black, dashed] (\coord+1.25, 0)--(\coord+1.5, -0.75);
    \draw [thin, black, dashed] (\coord+1.25, 2)--(\coord+0.75, 2.75);
    \draw [thin, black, dashed] (\coord+1.25, 2)--(\coord+1.75, 2.75);
    \draw[thick, violet, ->] (\coord+1.25, 2) to (\coord+0.625, 2.25);
    \draw[thick, violet]     (\coord+0.625, 2.25) to (\coord, 2.5);
    \draw [thin, black, ->] (\coord+1.25, 1.5) arc [start angle = 270, end angle = 165,radius = 0.5];
    \node at (\coord+0.5, 1.5) {$\theta$};
    \fill (\coord+1.25,0) circle (1.5pt);
    \fill (\coord+1.25,2) circle (1.5pt);
    }
    
    \foreach \x [evaluate=\x as \coord using 3 + 3.5*\x] in {1} 
    {
    \draw [thin, black, dashed] (\coord+1.25, 2)--(\coord+0.75, 2.75);
    \draw [thin, black, dashed] (\coord+1.25, 2)--(\coord+1.75, 2.75);
    \draw [thin, black, dashed] (\coord+1.25, 2)--(\coord+0.5, 2.75);
    \draw [thin, black, dashed] (\coord+1.25, 2)--(\coord+2, 2.75);
    \draw[thick, violet] (\coord+1.25, 2) to (\coord+1.875, 1.75);
    \draw[thick, violet, <-] (\coord+1.875, 1.75) to (\coord+2.5, 1.5);
    \draw [thin, black, <-] (\coord+1.25, 1.5) arc [start angle = 270, end angle = 335,radius = 0.5];
    \node at (\coord+1.75, 1.35) {$\pi-\theta$};
    \fill (\coord+1.25,0) circle (1.5pt);
    \fill (\coord+1.25,2) circle (1.5pt);
    }

    \node at (1.5, 1.5) {$\mathcal{P}$};
    \node at (4.25, -1) {$k-2$};
    \node at (7.75,3) {$p-2$};
    \node at (4.25, 3) {$p-k$};
    \node at (4,0.5) {$b_1^-$};
    \node at (4.5,0.5) {$b_1^+$};
    \node at (7.5,0.5) {$b_2^-$};
    \node at (8,0.5) {$b_2^+$};
    \end{tikzpicture}
    }\\
    \subfloat[][Case 4 of Lemma \ref{exray}. \label{exray4}]
    {
    \begin{tikzpicture}[scale=1.0625, every node/.style={scale=0.8}]
    \definecolor{pallido}{RGB}{221,227,227}
    \definecolor{pallido2}{RGB}{221,227,240}
    
    \pattern [pattern=north east lines, pattern color=pallido]
    (6.5,-0.5)--(8,1)--(8,3)--(10,3)--(11.5,4.5)--(6.5,4.5)--(6.5,-0.5);
    \pattern [pattern=north west lines, pattern color=pallido2]
    (6.5,-0.5)--(11.5,-0.5)--(11.5,4.5)--(10,3)--(10,1)--(8,1)--(6.5,-0.5);
    
    \draw[thick, violet] (10,3) to (10.75, 3);
    \draw[thick, violet, <-] (10.75,3) to (11.5, 3);
    \draw[thick, violet, ->] (10,3) to (10, 3.75);
    \draw[thick, violet] (10,3.75) to (10, 4.5);
    \draw [thin, black, ->] (10.5, 3) arc [start angle = 0, end angle = 90,radius = 0.5];
    \draw [thin, black, ->>] (10, 2.5) arc [start angle = 270, end angle = 360,radius = 0.5];
    \node at (10.625, 3.25) {$\pi$};
    \node at (10.625, 2.75) {$\theta$};

    \fill [white] (8,1)--(10,1)--(10,3)--(8,3)--(8,1);
    \draw [thin, dashed, black] (8,3) -- (8.25, 4.5);
    \draw [thin, dashed, black] (8,3) -- (8.75, 4.5);
    \draw [thin, dashed, black] (8,3) -- (7.25, 4.5);
    \draw [thin, dashed, black] (8,3) -- (7.75, 4.5);
    
    \draw [thin, dashed, black] (8,1) -- (6.5, 1);
    \draw [thin, dashed, black] (8,1) -- (6.5, 1.25);
    \draw [thin, dashed, black] (8,1) -- (6.5, 0.75);
    
    \draw [thin, dashed, black] (10,1) -- (11.5, 1.25);
    \draw [thin, dashed, black] (10,1) -- (11.5, 1);
    \draw [thin, dashed, black] (10,1) -- (11.5, 0.75);
    
    \draw [thick, orange] (8,1)--(10,1);
    \draw [thick, red] (8,1)--(8,3);
    \draw [thick, orange] (8,3)--(10,3);
    \draw [thick, red] (10,1)--(10,3);
    \draw [thick, orange, ->] (8,1) to (9,1);
    \draw [thick, red, ->>] (8,3) to (8,2);
    \draw [thick, orange, ->] (8,3) to (9,3);
    \draw [thick, red, ->>] (10,3) to (10,2);
    \fill (8,1) circle (1.5pt);
    \fill (8,3) circle (1.5pt);
    \fill (10,1) circle (1.5pt);
    \fill (10,3) circle (1.5pt);
    \draw [thin, dashed] (8,1)--(6.5,-0.55);
    \draw [thin, dashed] (10,3)--(11.5, 4.5);

    \node at (8, 4.75) {$k-2$};
    \node at (12, 1) {$p-2$};
    \node at (6, 1) {$p-k$};
    \node at (11,0) {$\mathcal{T}_1$};
    \node at (7, 3.75) {$\mathcal{T}_2$};
    \node at (7.375, 3) {$P-\chi(\beta)$};
    \node at (8, 0.75) {$P$};
    \end{tikzpicture}
    }\\
    \caption{This picture shows how to find a bi-infinite ray as claimed in Lemma \ref{exray}. Once the pieces are all glued together the purple rays determine a bi-infinite geodesic ray joining the poles of the resulting translation surface. Notice the cases $p=2$ and $k=2$ are subsumed in \ref{exray3} and \ref{exray4} according to the sign of the volume.}
    \label{fig:rays}
\end{figure}

\begin{proof}[Proof of Lemma \ref{exray}]
Let $\chi\colon\text{H}_1(S_{1,2},\,\Z)\longrightarrow \C$ be a representation, let $\{\alpha,\,\beta\}$ be a pair of handle generators, and let $a=\chi(\alpha)$ and $b=\chi(\beta)$ be their images. We provide a direct proof case by case.  For the following cases we rely on the constructions developed in Section \S\ref{rot1}.

\SetLabelAlign{center}{\null\hfill\text{#1}\hfill\null}
\begin{itemize}[leftmargin=2em, labelwidth=1em, align=center, itemsep=\parskip]

    \item \textit{Case 1: positive volume with $p=2$ and $k=1$.} In this case a genus one differential with period $\chi$ is obtained by gluing together the quadrilateral $\mathcal{P}$ with edges $a_0^\pm$ and $b_0^\pm$ (according to our convention) and two genus zero differentials, say $(X_i,\omega_i)$ for $i=1,2$. Notice that $\mathcal{P}$ cannot be degenerate because the volume of $\chi$ is positive. From \S\ref{pk1}, recall that $(X_1,\omega_1)$ is slit along a geodesic segment joining $P_1$ with $Q_1=P_1+a$ and recall that $(X_2,\omega_2)$ is slit along a geodesic segment joining $P_2$ with $Q_2=P_2+b$. Let $\delta$ be the oriented angle between $b_0^-$ and $a_0^+$. Let $r_1$ be a geodesic ray on $(X_1,\omega_1)$ leaving from $Q_1$ with angle $0<\theta<\pi-\delta$ with respect to $a^-$. Finally, let $r_2$ be the geodesic ray leaving from $Q_2$ with angle $\pi-(\delta+\theta)$ with respect to $b^+$. Once $\mathcal{P}$ is glued with $(X_1,\omega_1)$ and $(X_2,\omega_2)$ as described in \S\ref{pk1}, the rays $r_1$ and $r_2$ form a bi-infinite geodesic ray say $r$, on the final surface passing through the branch point, obtained after identification, and such that its developed image is a geodesic straight line in $\mathbb{E}^2$. The construction has been done in such a way that $r$ leaves the branch point with angle $\pi$ on the left. Notice that a similar construction can be done so that $r$ leaves the branch point with angle $\pi$ on the right. See Figure \ref{exray1}.\\
    
    \item \textit{Case 2: non-positive volume with $p=2$ and $k=1$.} In this case a genus one differential with period $\chi$ is obtained by removing the interior of the quadrilateral $\mathcal{P}$ with edges $a_0^\pm$ and $b_0^\pm$ from a copy $(\C,\, dz)$ and then glue another copy of $(\C,\, dz)$ slit along a geodesic segment joining $P$ with $Q=P+b$, see \S\ref{nk1}. Here $\mathcal{P}$ could be degenerate. Let $r_1$ be a geodesic ray on  $\C\setminus \text{int}\big(\mathcal{P}\big)$ leaving from $P+a+b$ with angle $0<\theta<\pi$ with respect to $b_0^-$. Then define $r_2$ as the geodesic ray leaving from $Q$ with angle $\pi-\theta$ with respect to $b_1^+$. After gluing as described in \ref{nk1}, the rays $r_1$ and $r_2$ form a bi-infinite geodesic ray on the final surface, passing through the branch point obtained after identification and such that its developed image is a geodesic line in $\mathbb{E}^2$. The construction has been done in such a way that $r$ leaves the branch point with angle $\pi$ on the left. A similar construction can be done so that $r$ leaves the branch point with angle $\pi$ on the right. See Figure \ref{exray2}.
\end{itemize}

\noindent For the remaining cases we rely on the constructions made in \S\ref{pv2p} and \S\ref{npv2p}. Recall that both of these constructions extend those made in  \S\ref{pvep} and \S\ref{nvep} for the case $p=2$ and rotation number two. Therefore we can handle all these cases together.

\SetLabelAlign{center}{\null\hfill\text{#1}\hfill\null}
\begin{itemize}[leftmargin=2em, labelwidth=1em, align=center, itemsep=\parskip]
    \item \textit{Case 3: positive volume and $p>2$ or $p=2$ with $k=2$.} In the case $p=2$ and $k=2$ or $p>2$, a genus one differential with period $\chi$ is obtained by gluing together the quadrilateral $\mathcal{P}$ with edges $a_0^\pm$ and $b_0^\pm$ (according to our convention) and two genus zero differentials, say $(X_i,\omega_i)$ for $i=1,2$, both slit along a geodesic segment joining $P_i$ with $Q_i=P_i+b$. Recall that if $p=2$, then $(X_i,\omega_i)$ is a copy of $(\C,\,dz)$ for $i=1,2$, see \S\ref{pvep}. In the case $p>2$, then $(X_1,\omega_1)$ is a genus zero differential with two zeros of orders $k-2$ and $p-k$ at $P_1$ and $Q_1$ respectively and one pole of order $p$; whereas $(X_2,\omega_2)$ is a genus zero differential with one zero of order $p-2$ at $Q_2$ and one pole of order $p$. In both cases, we shall define $r_1$ as the geodesic ray on $(X_1,\omega_1)$ leaving from $Q_1$ with angle $0<\theta<\pi$ with respect to $b_1^-$. Then we define $r_2$ as the geodesic ray leaving from $Q_2$ with angle $\pi-\theta$ with respect to $b_2^+$. Once $\mathcal{P}$ is glued with $(X_1,\omega_1)$ and $(X_2,\omega_2)$ as described in \S\ref{pvep}, the rays $r_1$ and $r_2$ form a bi-infinite geodesic ray on the final surface, passing through the branch point obtained after identification and such that its developed image is a geodesic straight line in $\mathbb{E}^2$. The construction has been done in such a way that $r$ leaves the branch point with angle $\pi$ on the left. A similar construction can be done so that $r$ leaves the branch point with angle $\pi$ on the right. See Figure \ref{exray3}.\\
    
    \item \textit{Case 4: non-positive volume and $p>2$ or $p=2$ with $k=2$.} In this last case we realize a genus one differential exactly as we have done in \S\ref{nvep} if $p=2$ or as in \S\ref{npv2p} if $p>2$. By adopting the notation used there, we can observe that the angle at $Q$ is $\frac{3\pi}{2}$ in both constructions. There is enough room for finding a bi-infinite ray passing through the branch point such that one of the two angles is $\pi$. Moreover we can choose this bi-infinite ray such that the angle with respect to $b^-$ is $0<\theta<\pi$. The construction has been done in such a way that $r$ leaves the branch point with angle $\pi$ on the right. A similar construction can be done so that $r$ leaves the branch point with angle $\pi$ on the left. See Figure \ref{exray4}.
\end{itemize}
We finally observe that in all constructions the direction of the bi-infinite ray $r$ depends on the angle $\theta$ which can be chosen with certain flexibility. In particular, $\theta$ can be chosen in such a way that $r$ has direction $v$ after developing with $v\neq w$ as desired. This concludes the proof of Lemma \ref{exray}.
\end{proof}

\subsubsection{All poles with non-zero residue}\label{apntr} Assume $n\ge3$ and now we deal with a representation of non-trivial-ends type  $\chi\colon\shomolzon\longrightarrow \C$. Let $\gamma_i\subseteq S_{1,n}$ denote a simple closed curve enclosing the $i$-th puncture and define $w_i=\chi(\gamma_i)$. Since no puncture has zero residue, it follows that $w_i\in\C^*$ for any $i=1,\dots,n$. Recall that $w_1+\cdots+w_n=0$ as a consequence of the Residue Theorem. Before proceeding with this case, we introduce some notations and generalities.
\smallskip

\noindent Let us split $S_{1,n}$ as in Figure \ref{subsurface} and focus on the surface $S_{0,n} \cong \mathbb{S}^2\setminus \{P_1,\dots,P_{n} \}$ of such a splitting. Let $\gamma_i$ be the simple closed curve  enclosing the puncture $P_{i}$ on $S_{0,n}$ for $i=1,\dots,n$. For the reader's convenience we recall that the $\chi_n$-part of $\chi$ is a representation defined as follows:
\begin{equation}
\chi_n\colon\shomolznps\longrightarrow\C,\,\,\,\,\,\,\chi_{n}(\gamma_i)=w_i \text{ for } i=1,\dots,n.
\end{equation}

\noindent Notice that, since $w_1+\cdots+w_n=0$, the representation $\chi_{n}$ is well-defined. We now need to extend our earlier Definition \ref{def:realcoll} as follows: 

\begin{defn}\label{ratchar}
A real-collinear representation $\chi\colon\shomolzn\longrightarrow\C$ is called \textit{rational} if the image $\text{Im}(\chi)$ is contained in the $\mathbb Q$-span of some $c\in\C^*$. A real-collinear representation $\chi$ is not rational otherwise.
\end{defn}

\begin{defn}[Reordering property]\label{sortprop}
Let $w_1,\dots,w_n\in\mathbb R^*$ be non-zero real numbers with zero sum, namely they satisfy $w_1+\cdots+w_n=0$. We shall say that $\{w_1,\dots,w_n\}$ satisfy the \textit{reordering property} if there is a permutation $\sigma\in\mathfrak{S}_n$ such that the reals $w_{\sigma(i)}>0$ for any $1\le \sigma(i)\le h$, where $h$ is an integer smaller than $n$, the reals $w_{\sigma(i)}<0$ for any $h+1\le \sigma(i) \le n$, and the following condition
\begin{equation}\label{orderingcond}
        \sum_{\sigma(i)=1}^s w_{\sigma(i)} =-\sum_{\sigma(j)=t+1}^n w_{\sigma(j)}
    \end{equation} holds only for $s=t=h$. Notice that this definition can naturally extend to sets of real-collinear complex numbers with zero sum.
\end{defn}

\noindent This definition is slightly different from those used in \cite{CFG}. As we shall see, the reordering property plays an important role in this section. The following Lemma holds and the proof can be found in \cite[Appendix B]{CFG}.

\begin{lem} \label{lem:irrmult}
Let $W=\{w_1, \ldots, w_{n}\}\subset\mathbb R^*$ be a set of non-zero real numbers with zero sum. Suppose there exists a pair of numbers in $W$ with irrational ratio. Then $W$ satisfies the reordering property.
\end{lem}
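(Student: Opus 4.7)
I would partition $W$ into the set $P = \{p_1, \ldots, p_h\}$ of positives and the set $N = \{q_1, \ldots, q_k\}$ of absolute values of the negatives, so that $h + k = n$ and $\sum_i p_i = \sum_j q_j$. Once the permutation $\sigma$ places the positives in positions $1, \ldots, h$ and the negatives in positions $h+1, \ldots, n$, condition \eqref{orderingcond} in the nontrivial regime $1 \leq s \leq h - 1$, $h + 1 \leq t \leq n - 1$ becomes the coincidence of an interior initial partial sum of positives with an interior tail partial sum of the $q_j$'s. The task thus reduces to finding orderings of $P$ and of $N$ for which the $h - 1$ interior initial sums of positives and the $k - 1$ interior tail sums of negatives are pairwise distinct.

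The first step is a reduction: the irrational-ratio hypothesis forces some \emph{internal} pair---either both in $P$ or both in $N$---to have irrational ratio. Otherwise $P \subseteq \mathbb{Q}_{>0}\cdot p$ and $N \subseteq \mathbb{Q}_{>0}\cdot q$ for some $p \in P$, $q \in N$; the equality $\sum_i p_i = \sum_j q_j > 0$ would then equate a positive rational multiple of $p$ with a positive rational multiple of $q$, forcing $p/q \in \mathbb{Q}_{>0}$ and hence every ratio in $W$ to be rational, contradicting the hypothesis. Up to swapping the roles of $P$ and $N$, I may therefore fix $p_a, p_b \in P$ with $p_a/p_b \notin \mathbb{Q}$.

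For the construction, set $V = \mathrm{span}_{\mathbb{Q}}(W) \subseteq \mathbb{R}$; then $\{p_a, p_b\}$ is $\mathbb{Q}$-independent and extends to a $\mathbb{Q}$-basis of $V$, so any potential coincidence $\sum_{I} p_i = \sum_{J} q_j$ with $I \subsetneq \{1,\dots,h\}$, $J \subsetneq \{1,\dots,k\}$ nonempty becomes a $\mathbb{Q}$-linear relation constrained by its projection onto the $p_a$-coordinate. My plan is to fix an arbitrary ordering of $N$, compute the finite forbidden set $\mathcal{C}$ of at most $k - 1$ interior tail sums, and then order $P$ greedily, at each step picking the next positive so that the updated partial sum avoids $\mathcal{C}$; whenever the greedy choice is blocked, swap $p_a$ (or $p_b$) with another remaining positive, using the $\mathbb{Q}$-independence of $p_a, p_b$ to show that the resulting shift of the current partial sum has nonzero $p_a$-coordinate and cannot simultaneously match any element of $\mathcal{C}$. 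The main obstacle, which I expect to be the most delicate part of the argument, is guaranteeing that these local swaps do not reintroduce conflicts at earlier positions; I would address this via a finite counting argument---each bad pair $(I, J)$ with $\sum_I p_i = \sum_J q_j$ obstructs only the fraction $1/(\binom{h}{|I|}\binom{k}{|J|})$ of orderings realizing $I$ and $J$ as initial/final segments, and the $\mathbb{Q}$-independence of $p_a, p_b$ together with the structure of such $\mathbb{Q}$-linear relations bounds the number of coexisting obstructions, so a pigeonhole/exhaustion argument produces an ordering free of all of them.
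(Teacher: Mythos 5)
Your opening reduction is correct and genuinely useful: if every ratio internal to $P$ and every ratio internal to $N$ were rational, then $P\subseteq\mathbb{Q}_{>0}\cdot p$, $N\subseteq\mathbb{Q}_{>0}\cdot q$, and $\sum_i p_i=\sum_j q_j$ would force every ratio in $W$ to be rational. The gap is in everything after that. Beyond the fact that the construction is only announced (you yourself defer "the most delicate part" to an unspecified counting argument), the architecture you commit to --- fix an \emph{arbitrary} ordering of $N$ once and for all, then only permute $P$ --- provably cannot work. Take $P=\{1,\ 2+\sqrt2,\ 5\}$ and let the negatives have absolute values $\{3+\sqrt2,\ 3-\sqrt2,\ 1+\sqrt2,\ 1\}$; both sides sum to $8+\sqrt2$, and $P$ contains the irrational-ratio pair $(1,\,2+\sqrt2)$ exactly as your reduction provides. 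If the negatives are ordered so that their absolute values read $(3+\sqrt2,\ 3-\sqrt2,\ 1+\sqrt2,\ 1)$, the proper tail sums are $1$, $2+\sqrt2$ and $5$, i.e.\ your forbidden set $\mathcal C$ is equal to $P$ itself, and \emph{every} ordering of $P$ produces a coincidence already at $s=1$. So the orderings of $P$ and $N$ must be chosen jointly; no amount of swapping $p_a$ or $p_b$ inside $P$ can repair a bad choice for $N$, and your argument never makes that choice.

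The fallback you sketch --- a union bound in which a bad pair $(I,J)$ obstructs the fraction $1/(\binom{h}{|I|}\binom{k}{|J|})$ of orderings --- also fails as stated: for $W=\{1,1,\alpha,-1,-1,-\alpha\}$ with $\alpha$ irrational the total obstructed fraction works out to $10/9>1$, yet the reordering property does hold there (order both triples as $(\alpha,1,1)$). The entire content of the lemma is to convert the existence of a single irrational ratio into the simultaneous avoidance of all subset coincidences $\sum_I p_i=\sum_J q_j$, and neither the greedy-with-swaps plan nor the counting heuristic supplies that step. (Note that the paper does not reprove this lemma; it refers to \cite[Appendix B]{CFG}.)
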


\noindent From \cite{CFG}, we also need to invoke the following proposition about  holonomy representations of genus zero differentials with prescribed singularities.

\begin{prop}\label{hgc:mainprop}
Let $\chi_n\colon \shomolznps \to \mathbb{C}$ be a non-trivial representation. Let $p_1, p_2, \ldots, p_n\in\Z^+$ be positive integers satisfying the following properties:
\begin{itemize}
    \item Either
    \begin{itemize}
        \item $\chi_n$ is not real-collinear, \textit{i.e.} $\textnormal{Im}(\chi_{n})$ is not contained in the $\mathbb R$-span of some $c\in \C^*$,
        \item $\chi_n$ is real-collinear but not rational, \textit{i.e.} $\textnormal{Im}(\chi_{n})$ is contained in the $\mathbb R$-span but not in the $\mathbb Q$-span of some $c\in \C^*$,
        \item at least one of $p_1, p_2, \ldots ,p_n$ is different from 1, and
    \end{itemize}
    \item $p_i \geq 2$ whenever $\chi(\gamma_i) =0$. 
\end{itemize}
Then $\chi$ appears as the holonomy of a translation structure on $S_{0,n}$ determined by a meromorphic differential on $\cp$ with a single zero of order $m=p_1+\cdots+p_n-2$ and a pole of order $p_i$ at the puncture enclosed by the curve $\gamma_i$, for each $1\leq i\leq n$. 
\end{prop}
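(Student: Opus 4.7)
The plan is to construct the desired meromorphic differential on $\cp$ directly, via an inductive gluing procedure that adds one pole at a time. Throughout, I will use the gluing-a-cylinder-along-a-bi-infinite-ray surgery from Definition~\ref{gluecyl} as the main building block, since this is precisely the surgery that introduces one new simple-pole worth of residue (together with zero of the zero order) while preserving the existing holonomy.

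\emph{Base cases.} First I handle $n=1$ (if this is covered): there is a single puncture with $p_1\ge3$ and forced residue $0$, and the model $(\cp,\,z^{p_1-2}dz)$ realizes it trivially. For $n=2$, the residues satisfy $w_1+w_2=0$; if $w_1=0$ then $p_1,p_2\ge2$ by hypothesis and the structure $(\cp,\,z^{m-1}dz)$ with appropriate reparametrization works, while if $w_1\neq 0$ a bi-infinite cylinder of circumference $|w_1|$ with prescribed modulus gives the model, and we further bubble $(\C,\,z^{p_i-2}dz)$ along rays leaving the resulting zero to raise the orders of the poles to $p_1,p_2$ as desired.

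\emph{Inductive step: the non-collinear case.} Suppose $\chi_n$ is not real-collinear and that the statement holds for representations with fewer punctures. Let $\chi_n^\circ$ be the representation on $S_{0,n-1}$ obtained by dropping a puncture whose residue $w_j$ is non-zero and adding $w_j$ to another puncture's residue (chosen so that $\chi_n^\circ$ is still non-trivial and satisfies the hypotheses). By induction, realize $\chi_n^\circ$ as a genus-zero translation surface $(Y,\eta)$ with one zero. Since $\chi_n$ is not real-collinear, there exists a direction $v\in\C$ that is not $\R$-parallel to $w_j$, and an embedded bi-infinite geodesic with developed direction $v$ joining two poles on $(Y,\eta)$ (choose the two whose residues we have merged). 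Gluing a cylinder $\mathbb E^2/\langle z\mapsto z+w_j\rangle$ along this ray in the sense of Definition~\ref{gluecyl} adds the desired new pole with residue $w_j$, and by adjusting the gluing data and performing additional bubblings $(\C,\,z^{p_i-2}dz)$ we match all orders $p_i$. The zero-order count follows from~\eqref{gbeq}.

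\emph{The real-collinear cases.} When $\chi_n$ is real-collinear but not rational, Lemma~\ref{lem:irrmult} applies: after normalization we may assume the residues are real, and they satisfy the reordering property of Definition~\ref{sortprop}. I would exploit this by arranging, along the real horizontal line, upper-half-plane pieces $H_1,\dots,H_h$ carrying the positive residues and lower-half-plane pieces carrying the negative ones, glued along horizontal rays in the order prescribed by the reordering permutation $\sigma$; the reordering condition \eqref{orderingcond} holding \emph{only} at $s=t=h$ guarantees that all the conical angles accumulate at a single point, producing exactly one zero of order $m$ rather than several smaller zeros. When $\chi_n$ is rational collinear but some $p_i\ge2$, the reordering property may fail, but the pole $P_i$ of order $p_i\ge2$ offers a whole family of non-horizontal directions emanating from it; we can carry out the same half-plane construction for the other residues and then attach the $P_i$-pole by bubbling $(\C,\,z^{p_i-2}dz)$ along a judiciously chosen slanted ray that absorbs the remaining holonomy while merging any auxiliary zeros into the central one.

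\emph{Main obstacle.} The hardest part is the rational collinear case where the reordering property genuinely fails: here one must use the freedom provided by the pole of order $\ge2$ very carefully to avoid creating extra zeros, and to ensure the constructed surface has residues that sum correctly around each pole. A second subtlety is the hypothesis $p_i\ge2$ whenever $\chi_n(\gamma_i)=0$; this condition is sharp, because a simple pole with zero residue would correspond to a removable singularity and cannot appear, so the construction must respect this constraint at every step. In the end, after the inductive gluing, one verifies via~\eqref{gbeq} that the single remaining zero has order exactly $m=p_1+\cdots+p_n-2$, completing the realization.
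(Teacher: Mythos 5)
Your overall strategy---induction on the number of punctures, adding one pole at a time---is not the route taken for this statement (the paper imports it from \cite{CFG}; the relevant constructions, recalled in \S\ref{chinonrat} and \S\ref{chirat}, are direct one-shot ones: a convex polygon with half-infinite strips glued to its sides when $\chi_n$ is not real-collinear, and a single infinite strip with half-infinite vertical slits in the real-collinear non-rational case). More importantly, your inductive mechanism has a concrete flaw: the surgery of Definition~\ref{gluecyl} does \emph{not} add a new pole. Gluing an infinite cylinder along a bi-infinite ray joining two poles produces, by Definition~\ref{gluingsurfaces}, a surface with $k_1+k_2-2=k_1$ punctures; the surgery only shifts the residues of the two pre-existing poles by $\pm w_j$ (see the remark following Definition~\ref{gluecyl}). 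So your passage from $S_{0,n-1}$ to $S_{0,n}$ does not go through as written; you would need a different gluing (e.g.\ along a ray running from a regular point of the cylinder to one of its poles, which does increase the puncture count by one), and then redo the residue and zero-order bookkeeping. In addition, the assertion that the reduced representation $\chi_n^\circ$ ``satisfies the hypotheses'' requires proof: merging two residues can turn a non-real-collinear representation into a real-collinear, even rational, one, so the induction hypothesis is not automatically available.

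The second gap is that the genuinely hard case---$\chi_n$ rational and real-collinear with the reordering property failing---is only named, not proved. This is exactly where the hypothesis that some $p_i\ge 2$ enters in an essential way: one must split the residues into two zero-sum sub-collections and distribute the corresponding strips over two separate copies of $(\C,\,dz)$ that communicate only through the higher-order pole (this is the construction recalled in \S\ref{chirat} and Figure~\ref{fig:chiratposvol}). Saying that the pole of order at least $2$ ``offers a whole family of non-horizontal directions'' which ``absorbs the remaining holonomy'' is a statement of intent rather than a construction; without carrying it out, the argument is incomplete precisely in the case that the extra hypothesis is designed to handle.
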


\noindent We shall need to consider two sub-cases depending on whether $\chi_n$ is rational. We begin with the following case:
\medskip

\paragraph{\textit{The representation $\chi_{n}$ is not rational}}\label{chinonrat} Let $\alpha,\beta$ be a pair of handle generators for $\text{H}_1(S_{1,2},\,\mathbb{Z})$. By means of an auxiliary representation $\rho$ defined as in \eqref{eq:auxreponetwo}, see also Definition \ref{defsupprep}, we can realize a genus one differential $(X_1,\omega_1)\in\mathcal{H}_1(2p;-p,-p)$ with rotation number $k$ and period character defined as
\begin{equation}
    \rho(\alpha)=\chi(\alpha),\quad \rho(\beta)=\chi(\beta), \quad \rho(\delta_1)=\rho(\delta_2^{-1})=0. 
\end{equation}

\noindent We apply Proposition \ref{hgc:mainprop} to our representation $\chi_n$ and realize this latter as the period character of a genus zero differential $(X_2,\omega_2)\in\mathcal{H}_0\big((n-2)p;-1,-1,-p,\dots,-p\big)$. 

\smallskip

\noindent In order to realize the desired surface, we shall glue them along a bi-infinite geodesic ray. Recall that, according to Lemma \ref{exray}, we can always find a bi-infinite geodesic ray on $(X_1,\omega_1)$ with prescribed direction (hence slope) once developed such that:
\begin{itemize}
    \item[\textit{(i)}] it passes through the zero of $\omega_1$ only once with angle $\pi$ on its \textit{right}, and
    \item[\textit{(ii)}] it joins the poles of $\omega_1$.
\end{itemize}

\noindent Let $r\subset (X_1,\omega_1)$ be a ray as above with direction $v$ once developed. What remains to do is to find a proper bi-infinite ray on $(X_2,\omega_2)$ with the same direction and passing through the unique zero of $\omega_2$ with angle $\pi$ on its \textit{left}. Here $\text{Im}(\chi_n)$ is assumed to be not contained in the $\mathbb{Q}$-span of any $c\in\C^*$ and hence, according to \cite[Proof of Proposition 10.1]{CFG} there are two possible situations:

\SetLabelAlign{center}{\null\hfill\text{#1}\hfill\null}
\begin{itemize}[leftmargin=2.5em, labelwidth=2em, align=center, itemsep=\parskip]
    \item [1.] The first one arises if $\text{Im}(\chi_n)$ is not contained in the $\mathbb{R}$-span of any $c\in\C^*$. Whenever this is the case, we can reorder the punctures in such a way that the points $\{\arg(w_i)\}\subset \mathbb{S}^1$, with $w_i=\chi(\gamma_i)$, are cyclically ordered. With respect to this ordering, the vectors $w_i$ yield a convex polygon $\mathcal{P}$ with $n$ sides. Denote the edges of $\mathcal{P}$ as $e_i^-$, for $i=1,\dots,n$. To any side we can glue a half-infinite strip $\mathcal{S}_i$ bounded by the edge $e_i^+$ and two infinite parallel rays $r_i^{\pm}$ oriented so that $r_i^+=r_i^-+w_i$. The quotient space $\mathcal{P}\cup \bigcup \mathcal{S}_i$ obtained by identifying the vertices of $\mathcal{P}$ and the rays $r_i^{\pm}$, for $i=1,\dots,n$, turns out to be an $n$-punctured sphere with a translation structure with all simple poles and a single zero of maximal order. In this case it is an easy matter to see that for almost any slope $\theta\in\mathbb{S}^1$ (hence almost any directions), there is a bi-infinite geodesic ray with slope $\theta$. Therefore, we can find a bi-infinite geodesic ray, say $\overline{r}$, with direction $v$ and such that it passes through the zero of $\omega_2$ by leaving an angle of magnitude $\pi$ on its left. Such a ray joins two adjacent poles say $P_j$ and $P_{j+1}$. We define $(X_2,\omega_2)$ as the translation structure obtained by bubbling $p$ copies of the standard differential $(\C,\,dz)$ to all poles with the only exceptions being $P_j$ and $P_{j+1}$. Finally, glue $(X_1,\omega_1)$ and $(X_2,\omega_2)$ as in Definition \ref{gluingsurfaces} by slitting them along the rays $r$ and $\overline r$ respectively. The resulting surface is homeomorphic to $S_{g,n}$ and carries a translation surface with poles $(Y,\xi)\in\mathcal{H}_1\big(np;-p,\dots,-p \big)$ with rotation number $k$ by construction.
    \smallskip
    
    \item [2.] The second possibility arises in the case $\chi_n$ is real-collinear but not rational. In order to recall the construction, we assume for a moment that $\arg(w_i)\in\{0,\pi\}$, \textit{i.e.} $w_i\in\mathbb R$, because the  general case of $\arg(w_i)\in\{\delta,\delta+\pi\}$ follows by rotating the following construction by $\delta$. Up to relabelling the punctures, there is a positive integer, say $h$ less than $n$, such that $\{w_1,\dots,w_h\}$ are all positive real numbers and $\{w_{h+1},\dots,w_n\}$ are all negative real numbers. Since $\text{Im}(\chi_n)$ is not contained in the $\mathbb{Q}$-span of any $c\in\C^*$ it is possible to find a pair of reals in $\{w_1,\dots,w_n\}$ with irrational ratio; hence Lemma \ref{lem:irrmult} applies and the reals $w_1,\dots,w_n$ satisfy the reordering property. Let $\zeta_1=0$ and let us define $\zeta_{l+1}=\zeta_{l}+w_l$. Consider the infinite strip $\{ z \in \mathbb{C} \,\, \vert \,\, 0 < \Re(z) < \zeta_{h+1} \}$. In this infinite strip, we make half-infinite vertical slits pointing upwards at the points $\zeta_2, \ldots, \zeta_{h}$ and half-infinite vertical slits pointing downwards at the points $\zeta_{h+2}, \ldots, \zeta_{n}$, see Figure \ref{fig:qspan}. By gluing the rays $r_i^+$ and $r_i^-$ for $i=1,\dots,n$ we obtain a translation surface $(Z, \eta)$ with all simple poles and a single zero of maximal order.
    
    \begin{figure}[!ht]
     \centering
     \begin{tikzpicture}[scale=1, every node/.style={scale=0.75}]
        \definecolor{pallido}{RGB}{221,227,227}
         \fill[pattern=north west lines, pattern color=pallido] (0,3) -- (0,-3) -- (3,-3) -- (3, 3);
         \fill[pattern=north west lines, pattern color=pallido] (5,3) -- (5,-3) -- (8.5,-3) -- (8.5, 3);
         
         \draw[thin, blue] (8.5,0) to (7,-1.5);
         \draw[thin, blue, ->] (7,-1.5) to (7.75,-0.75);
         \draw[thin, blue] (8.5,-1.5) to (7,-3);
         \draw[thin, blue, ->] (7,-3) to (7.75,-2.25);
         \draw[thin, blue] (6,0) to (8.5,2.5);
         \draw[thin, blue, ->] (6,0) to (7.25, 1.25);
         \draw[thin, blue] (6,2.5) to (6.5,3);
         \draw [thin, black, <-] (8.5,0.375) arc [start angle = 90, end angle = 225,radius = 0.375];
         \draw [thin, black, ->] (6,0.375) arc [start angle = 90, end angle = 45,radius = 0.375];

         \fill[white] ($(1,0) + (92:3)$)--(1,0)-- ++(88:3);
         \fill[white] ($(2,0) + (-92:3)$)--(2,0)-- ++(-88:3);
         \fill[white] ($(6,0) + (92:3)$)--(6,0)-- ++(88:3);
         \fill[white] ($(7,0) + (-92:3)$)--(7,0)-- ++(-88:3);
         \draw[decoration={markings, mark=at position 0.6 with {\arrow{latex}}}, postaction={decorate}] (0,0) -- (0,3);
         \draw[decoration={markings, mark=at position 0.6 with {\arrow{latex}}}, postaction={decorate}] (0,0) -- (0,-3);
         \draw[decoration={markings, mark=at position 0.6 with {\arrow{latex}}}, postaction={decorate}] (1,0) -- ++(92:3);
         \draw[decoration={markings, mark=at position 0.6 with {\arrow{latex}}}, postaction={decorate}] (1,0) -- ++(88:3);
         \draw[decoration={markings, mark=at position 0.6 with {\arrow{latex}}}, postaction={decorate}] (2,0) -- ++(-88:3);
         \draw[decoration={markings, mark=at position 0.6 with {\arrow{latex}}}, postaction={decorate}] (2,0) -- ++(-92:3);
         \node at (4,0) {$\ldots$};
         \draw[decoration={markings, mark=at position 0.6 with {\arrow{latex}}}, postaction={decorate}] (6,0) -- ++(92:3);
         \draw[decoration={markings, mark=at position 0.6 with {\arrow{latex}}}, postaction={decorate}] (6,0) -- ++(88:3);
         \draw[decoration={markings, mark=at position 0.6 with {\arrow{latex}}}, postaction={decorate}] (7,0) -- ++(-88:3);
         \draw[decoration={markings, mark=at position 0.6 with {\arrow{latex}}}, postaction={decorate}] (7,0) -- ++(-92:3);
         \draw[decoration={markings, mark=at position 0.6 with {\arrow{latex}}}, postaction={decorate}] (8.5,0) -- (8.5,-3);
        \draw[decoration={markings, mark=at position 0.6 with {\arrow{latex}}}, postaction={decorate}] (8.5,0) -- (8.5,3);
         \fill (0,0) circle (1pt);
         \fill (1,0) circle (1pt);
         \fill (2,0) circle (1pt);
         \fill (6,0) circle (1pt);
         \fill (7,0) circle (1pt);
         \fill (8.5,0) circle (1pt);
         \node[right] at (0, 1.5) {$r_1^+$};
         \node[left] at ($(1,0) + (92:1.5)$) {$r_1^-$};
         \node[right] at ($(1,0) + (88:1.5)$) {$r_2^+$};
         \node[left] at ($(6,0) + (92:1.5)$) {$r_{h-1}^-$};
         \node[right] at ($(6,0) + (88:1.5)$) {$r_{h}^+$};
        \node[left] at (8.5, 1.5) {$r_{h}^-$};
        \node[right] at (0, -1.5) {$r_n^-$};
         \node[left] at ($(2,0) + (-92:1.5)$) {$r_n^+$};
         \node[right] at ($(2,0) + (-88:1.5)$) {$r_{n-1}^-$};
         \node[left] at ($(7,0) + (-92:1.5)$) {$r_{h+2}^+$};
         \node[right] at ($(7,0) + (-88:1.5)$) {$r_{h+1}^-$};
        \node[left] at (8.5, -1.5) {$r_{h+1}^+$};
        \node[left] at (0,0) {$\zeta_1$};
        \node[below] at (1,0) {$\zeta_2$};
        \node[below] at (6,0) {$\zeta_{h}$};
        \node[right] at (8.5,0) {$\zeta_{h+1}$};
        \node[above] at (7,0) {$\zeta_{h+2}$};
        \node[above] at (2,0) {$\zeta_n$};
        \node[left] at (8.25,0.375) {$\pi-\theta$};
        \node[above] at (6.25,0.5) {$\theta$};
     \end{tikzpicture}
     \caption{The picture shows the construction in the case $\text{Im}(\chi_n)$ is contained in the $\mathbb{R}$-span of some $c \in \mathbb{C}^*$ but not in the $\mathbb Q$-span. The blue line represents a ray with slope $\frac{\pi}{2}-\theta$ along which we can glue a genus one differential.} \label{fig:qspan}
    \end{figure}
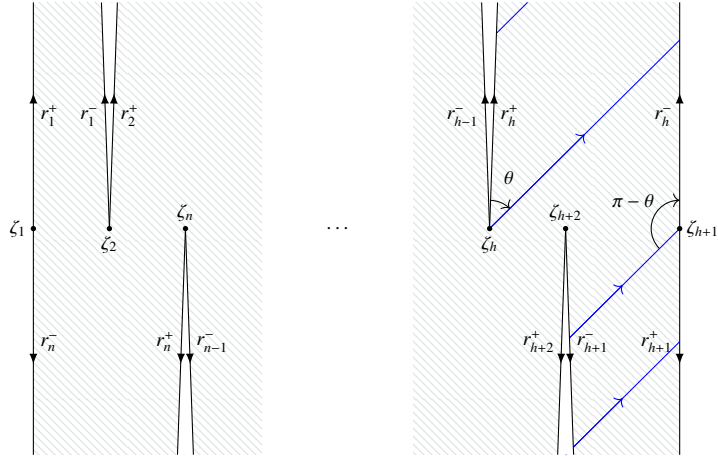

    \noindent In this case, for any $\theta\neq0$ we can find a bi-infinite geodesic ray, say $\overline r$ joining two punctures and passing through the branch point by leaving an angle $\pi$ on its left. In fact, we can consider a ray leaving from $\zeta_h$ with angle $\theta$ with respect to $r_h^+$. Such a ray points towards the puncture $P_h$. Then consider a ray leaving from $\zeta_{h+1}$ with angle $\pi-\theta$ with respect to $r_h^-$ pointing towards the puncture $P_{h+1}$. These rays determine the desired bi-infinite geodesic ray, say $\overline r$, on $(Z,\eta)$. In fact, by construction it joins two punctures and leaves an angle of magnitude $\pi$ on its left at the unique branch point of $(Z,\eta)$. We define $(X_2,\omega_2)$ as the structure obtained by bubbling $p$ copies of the differential $(\C,\,dz)$ to all poles with the only exceptions being $P_h$ and $P_{h+1}$. As above, we glue $(X_1,\omega_1)$ and $(X_2,\omega_2)$ as in Definition \ref{gluingsurfaces} by slitting them along $r$ and $\overline r$ respectively. The final surface is homeomorphic to $S_{g,n}$ and carries a translation structure with poles in the stratum $\mathcal{H}_1\big(np;-p,\dots,-p \big)$ with rotation number $k$ by construction.
\end{itemize}

\noindent For more details about these constructions the reader can consult the proofs of \cite[Propositions 6.1 and 10.1]{CFG}.

\smallskip

\paragraph{\textit{The representation $\chi_{n}$ is rational}}\label{chirat} It remains to deal with the case of $\chi_n$ being rational. The strategy developed in paragraph \S\ref{chinonrat} does not always apply because for a rational representation (Definition \ref{ratchar}) the reordering property (Definition \ref{sortprop}) fails in general. Whenever a rational representation $\chi_n$ satisfies the reordering property, then the construction developed for a non-rational real-collinear representation as in paragraph \S\ref{chinonrat} applies \textit{mutatis mutandis}. Therefore, in what follows, we shall restrict ourselves to rational representations for which the reordering property fails. In this very special situation, we shall adopt a strategy which is a blend of our construction so far with \cite[Proof of Proposition 10.1 - Case 3]{CFG}. In particular, we do not rely on the splitting introduced at the beginning of  \S\ref{resnotzero}.

\smallskip

\noindent Let $\chi\colon\shomolzon\longrightarrow \C$ be a non-trivial representation with rational $\chi_n$-part. Let $\Sigma\subset S_{1,n}$ be any handle, let $\alpha,\beta$ be a pair of handle generators for $\text{H}_1(\Sigma,\,\mathbb{Z})\subset \shomolzon$,  and define $a,b$ as their respective images via $\chi$. The restriction of $\chi$ to $\Sigma$ is a representation of trivial-end type, say $\chi_1$, and hence the notion of volume for $\chi_1$ is well-defined. According to the sign of $\text{vol}(\chi_1)$, we shall distinguish two constructions. We shall treat the positive volume case in detail; the non-positive volume one will follow after a simple modification of the first case. Let $\mathcal{P}$ be the parallelogram defined by the chain 
\begin{equation}\label{parchirat}
    P_0\mapsto P_0+\chi(\alpha)\mapsto P_0+\chi(\alpha)+\chi(\beta)\mapsto P_0+\chi(\beta)\mapsto P_0,
\end{equation}
where $P_0\in\C$ is any point. As usual, we label the sides of $\mathcal{P}$ with $a^{\pm}_0$ and $b_0^{\pm}$ according to our convention.

\smallskip 

\noindent We now consider the $\chi_n$-part of $\chi$. The representation $\chi_n\colon\shomolznps\longrightarrow \C$ is a non-trivial representation and we denote $\chi_n(\gamma_i)=w_i\in\C^*$ for any $i$, where $\gamma_i$ is a simple closed curve around the $i$-th puncture. 
We assume $W=\{w_1,\dots,w_n\}\subset \C^*$ is a set of real-collinear complex numbers. Up to permutation of the labels of the punctures, we can assume that all numbers in $\{w_1,\dots,w_h\}$ have the same argument $\delta\in ]-\pi,\,\pi]$ and all numbers in $\{w_{h+1},\dots,w_n\}$ have the same argument $\pi+\delta$ for some $1\le h\le n-1$. Assume that $W$ does \textit{not} satisfy the reordering property. According to Definition \ref{sortprop} there are two positive integers $1\le s < h$ and $h+1\le t < n$ such that 
\begin{equation}\label{orderingcond2}
        \sum_{i=1}^s w_i =-\sum_{j=1}^{n-t} w_{t+j}=-\sum_{j=t+1}^n w_{j}.
\end{equation}
\noindent The indices $s,t$ yield the following two partitions: $W_1^\delta\cup W_2^\delta=\{w_1,\dots, w_s\}\cup\{w_{s+1},\dots,w_h\}$ of $\{w_1,\dots,w_h\}$ and the partition $W_2^{\pi+\delta}\cup W_1^{\pi+\delta}=\{w_{h+1},\dots,w_{t}\}\cup\{w_{t+1},\dots,w_n\}$ of $\{w_{h+1},\dots,w_n\}$. We can observe that the collections $W_1=W_1^\delta\cup W_1^{\pi+\delta}$ and $W_2=W_2^\delta\cup W_2^{\pi+\delta}$ both have zero sum by construction. Generally, the indices $s,t$ are not uniquely determined and hence there can be different partitions. On the other hand, the following construction does not depend on the choice of the partition wherever there are more than one. Finally, we can assume without loss of generality that $\delta$ has magnitude such that $-\arg(b)<\delta\le \arg(b)$. Let us proceed with our construction. 

\smallskip 

\noindent Let $(\C,\,dz)$ be a copy of the standard genus zero meromorphic differential and slit it along a geodesic segment $b_1$ with extremal points $P_1$ and $Q_1=P_1+\chi(\beta)$. Consider the collection $W_1$ above. According to our notation, we set $\zeta_1=Q_1$ and thus define the point $\zeta_{l+1}=\zeta_l+w_{l}$, where $w_{l}\in W_1$ ($l=1,\dots, s,t+1,\dots,n$). Define $e=\overline{\zeta_1\,\zeta_{s+1}}$ and notice that $e\cap b_1=\zeta_1$ because $\delta\neq-\arg(b)$. We next slit the structure $(\C,\,dz)$ along the edge $e$ and label the resulting sides as $e^{\pm}$, where the sign is taken according to our convention. We partition the edge $e^+$ as follow: We define $e_l^+=\overline{\zeta_l\,\zeta_{l+1}}$ for $l=1,\dots, s$. Notice that these segments are pairwise adjacent or disjoint. Moreover, by construction, $e^+=e_1^+\cup\cdots\cup e_s^+$. In the same fashion, we partition the edge $e^-$ as follows: We define $e_{l}^-=\overline{\zeta_{l}\,\zeta_{l+1}}$ for $l=t+1,\dots, n$. By construction, $e^-=e_{t+1}^-\cup\cdots\cup e_{n}^-$. We eventually slit $(\C,\,dz)$ along the ray $\overline r_1$ starting from $\zeta_1$, orthogonal to $e$ with oriented angle $\frac{\pi}{2}$ with respect to $e$. Finally, we introduce $n+s-t$ half-strips as follows: For any $l=1,\dots,s$, we define $\mathcal{S}_l$ as an infinite half-strip bounded by the geodesic segment $e_l^-$ and by two infinite rays $r_l^{\pm}$ pointing in the direction $\delta+\frac{\pi}{2}$; and for any $l=t+1,\dots,n$, the strip $\mathcal{S}_l$ is bounded by the geodesic segment $e_l^+$ and by two infinite rays $r_l^{\pm}$ pointing in the direction $\delta-\frac{\pi}{2}$. 

\smallskip

\noindent We next consider another copy of $(\C,\,dz)$; we slit it along a geodesic segment $b_2$ with extremal points $P_2$ and $Q_2=P_2+\chi(\beta)$. Here we consider the collection $W_2=\{w_{s+1},\dots, w_t\}$ above and then we similarly proceed as above: We set $\zeta_{s+1}=Q_2$ and thus define the point $\zeta_{l+1}=\zeta_l+w_{l}$, where $w_{l}\in W_2$. Define $\overline e=\overline{\zeta_{s+1}\,\zeta_{h+1}}$ and notice that $\overline e\cap b_2=\zeta_{s+1}$ because $\delta\neq-\arg(b)$. We then slit the structure $(\C,\,dz)$ along the edge $\overline e$ and label the resulting sides as $\overline e^{\pm}$. As above the edge $\overline e^+$ is partitioned by segments $e_l^+=\overline{\zeta_l\,\zeta_{l+1}}$ for $l=s+1,\dots, h$. By construction $\overline e^+=e_{s+1}^+\cup\cdots\cup e_h^+$. In the same fashion, the edge $e^-$ is partitioned by segments $e_{l}^-=\overline{\zeta_{l}\,\zeta_{l+1}}$ for $l=h+1,\dots, t$. It follows by construction that $\overline e^-=e_{h+1}^-\cup\cdots\cup e_{t}^-$. We eventually slit $(\C,\,dz)$ along $\overline r_{s+1}$, the ray starting from $\zeta_{s+1}$, orthogonal to $\overline e$ with oriented angle $\frac{\pi}{2}$ with respect to $\overline e$. We introduce $t-s$ half-strips as follows: For any $l=s+1,\dots,h$, the infinite half-strip $\mathcal{S}_l$ is bounded by the geodesic segment $e_l^-$ and by two infinite rays $r_l^{\pm}$ pointing toward the direction $\delta+\frac{\pi}{2}$; and for any $l=h+1,\dots,t$, let $\mathcal{S}_l$ be an infinite half-strip bounded by the geodesic segment $e_l^+$ and by two infinite rays $r_l^{\pm}$ pointing in the direction $\delta-\frac{\pi}{2}$. 

\smallskip 

\noindent We can finally glue all the pieces together and then obtain the desired structure. We begin by some usual identifications, namely we identify the pair of edges $b_j^-$ and $b_{j+1}^+$, for $j=0,1,2$ and $b_3^+=b_0^+$, and then the pair of edges $a^+$ and $a^-$. We next proceed with gluing the strips $\mathcal{S}_l$, for $l=1,\dots,n$, in the following way. For any $l\notin\{1,s+1$\}, we paste the strip $\mathcal{S}_l$ by identifying $e_l^+$ with $e_l^-$ and then the rays $r_l^+$ and $r_l^-$ together. In the case $l\in\{1,s+1\}$ we paste the strip $\mathcal{S}_l$ in a different way, by first identifying the edges $e_l^-$ with $e_l^-$ and then identifying $r_l^+$ with $\overline r_l^-$ and $r_l^-$ with $\overline r_l^+$, see Figure \ref{fig:chiratposvol}. 

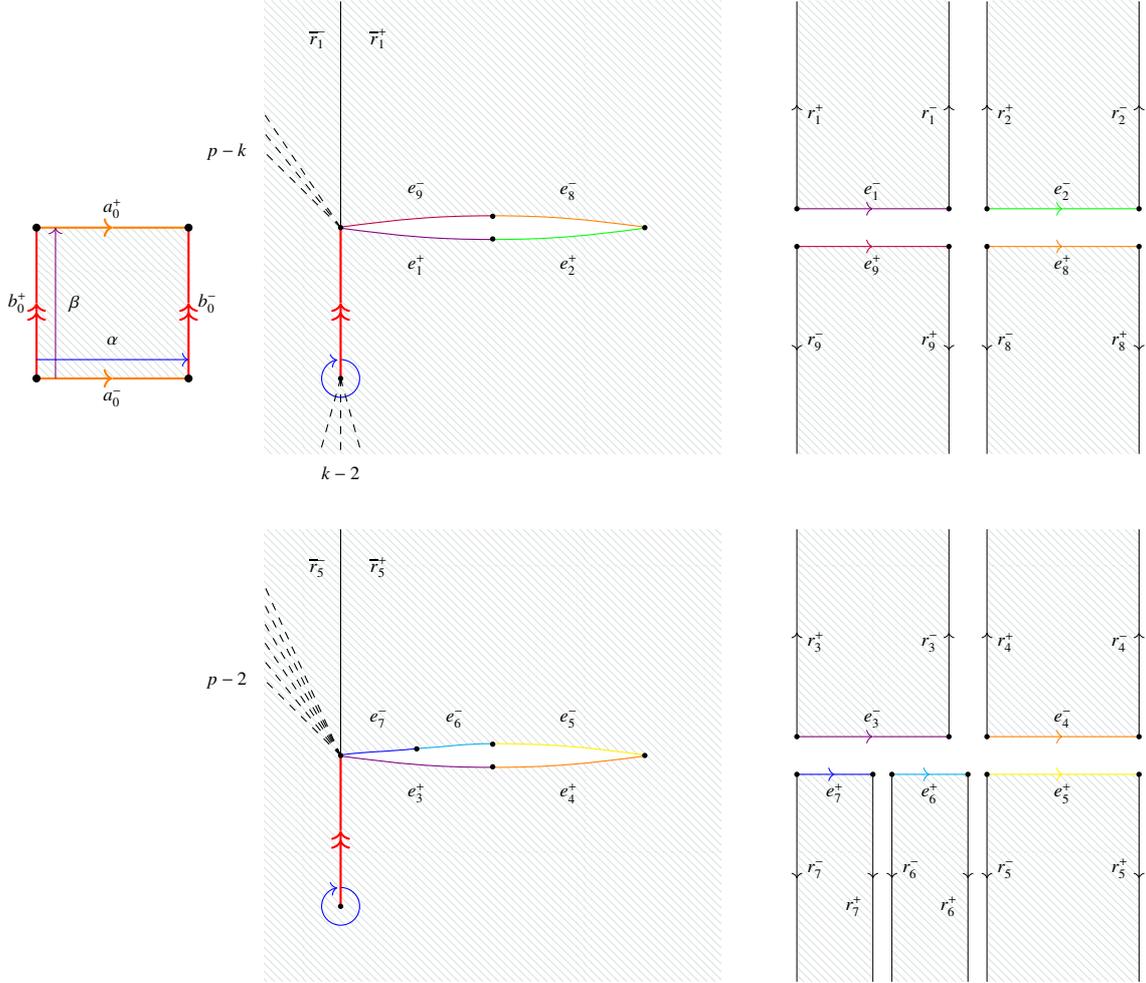
\begin{figure}[!ht] 
\centering
\begin{tikzpicture}[scale=1, every node/.style={scale=0.7}]
\definecolor{pallido}{RGB}{221,227,227}

    \pattern [pattern=north west lines, pattern color=pallido]
    (-2,2) -- (0,2) -- (0,4) -- (-2,4) -- (-2,2);
    \draw [thick, orange] (-1,2)--(0,2);
    \draw [thick, red] (0,3)--(0,4);
    \draw [thick, orange] (-1,4)--(0,4);
    \draw [thick, red] (-2,3)--(-2,4);
    \draw [thick, orange, ->] (-2,2) to (-1,2);
    \draw [thick, red, ->>] (0,2) to (0,3);
    \draw [thick, orange, ->] (-2,4) to (-1,4);
    \draw [thick, red, ->>] (-2,2) to (-2,3);
    \draw [thin, violet, ->] (-1.75,2) to (-1.75,4);
    \draw [thin, blue, ->] (-2,2.25) to (0,2.25);
    \fill (-2,2) circle (1.5pt);
    \fill (0,2) circle (1.5pt);
    \fill (0,4) circle (1.5pt);
    \fill (-2,4) circle (1.5pt);

    \node at (-1, 2.5) {$\alpha$};
    \node at (-1.5, 3) {$\beta$};
    \node at (-1, 1.75) {$a_0^-$};
    \node at (-1, 4.25) {$a_0^+$};
    \node at (-2.25, 3) {$b_0^+$};
    \node at (0.25, 3) {$b_0^-$};

    \foreach \y [evaluate=\y as \coord using  \y] in {-6, 1} 
    {
    \pattern [dashed, pattern=north west lines, pattern color=pallido] (1,\coord) -- (7, \coord) -- (7, \coord+6) -- (1, \coord+6)-- (1,\coord);
    \draw [thin, blue, ->] (2, \coord+1.25) arc [start angle = 90, end angle = -260,radius = 0.25];
    \draw[thin, black] (2, \coord+3) to (2, \coord+6);
    \draw[thick, red] (2, \coord+3) to (2, \coord+2);
    \draw[thick, red, <<-] (2, \coord+2) to (2, \coord+1);
    \draw [thin, black, dashed] (2, \coord+3)--(1, \coord+4.5);
    \draw [thin, black, dashed] (2, \coord+3)--(1, \coord+4.25);
    \draw [thin, black, dashed] (2, \coord+3)--(1, \coord+4);
    }

    \foreach \y [evaluate=\y as \coord using  \y] in {1} 
    {

    \draw[thick, purple] (2, \coord+3) to [out=7.5, in=180] (4, \coord+3.15);
    \draw[thick, orange] (4, \coord+3.15) to [out=0, in=172.5] (6, \coord+3);
    \draw[thick, green] (6, \coord+3) to [out=187.5, in=0] (4, \coord+2.85);
    \draw[thick, violet] (2, \coord+3) to [out=352.5, in=180] (4, \coord+2.85);
    \fill[white] (2, \coord+3) to [out=7.5, in=180] (4, \coord+3.15) to  [out=0, in=172.5] (6, \coord+3) to [out=187.5, in=0] (4, \coord+2.85) to  [out=180, in=352.5] (2, \coord+3);
    
    \node at (3, \coord+3.5) {$e_9^-$};
    \node at (5, \coord+3.5) {$e_8^-$};
    \node at (3, \coord+2.5) {$e_1^+$};
    \node at (5, \coord+2.5) {$e_2^+$};
    
    \draw [thin, black, dashed] (2, \coord+1)--(1.725, \coord);
    \draw [thin, black, dashed] (2, \coord+1)--(2, \coord);
    \draw [thin, black, dashed] (2, \coord+1)--(2.275, \coord);
    
    \fill (4,\coord+3.15) circle (1pt);
    \fill (4,\coord+2.85) circle (1pt);
    \fill (6,\coord+3) circle (1pt);
    \fill (2,\coord+3) circle (1pt);
    \fill (2,\coord+1) circle (1pt);

    \pattern [dashed, pattern=north west lines, pattern color=pallido] (8,\coord+3.25) -- (10, \coord+3.25) -- (10, \coord+6) -- (8, \coord+6)-- (8,\coord+3.25);
    \pattern [dashed, pattern=north west lines, pattern color=pallido] (10.5,\coord+3.25) -- (12.5, \coord+3.25) -- (12.5, \coord+6) -- (10.5, \coord+6)-- (10.5,\coord+3.25);
  
    \draw[thin, violet, ->] (8, \coord+3.25) -- (9, \coord+3.25);
    \draw[thin, violet] (9, \coord+3.25) -- (10, \coord+3.25);
    \draw[thin, green, ->] (10.5, \coord+3.25) -- (11.5, \coord+3.25);
    \draw[thin, green] (11.5, \coord+3.25) -- (12.5, \coord+3.25);
    \draw[black, ultra thin, ->] (8, \coord+3.25) -- (8, \coord+4.625);
    \draw[black, ultra thin] (8, \coord+4.625) -- (8, \coord+6);
    \draw[black, ultra thin, ->] (10, \coord+3.25) -- (10, \coord+4.625);
    \draw[black, ultra thin] (10, \coord+4.625) -- (10, \coord+6);
    \draw[black, ultra thin, ->] (10.5, \coord+3.25) -- (10.5, \coord+4.625);
    \draw[black, ultra thin] (10.5, \coord+4.625) -- (10.5, \coord+6);
    \draw[black, ultra thin, ->] (12.5, \coord+3.25) -- (12.5, \coord+4.625);
    \draw[black, ultra thin] (12.5, \coord+4.625) -- (12.5, \coord+6);

    \fill (8,\coord+3.25) circle (1pt);
    \fill (10,\coord+3.25) circle (1pt);
    \fill (10.5,\coord+3.25) circle (1pt);
    \fill (12.5,\coord+3.25) circle (1pt);
    
    \pattern [dashed, pattern=north west lines, pattern color=pallido] (8,\coord+2.75) -- (10, \coord+2.75) -- (10, \coord) -- (8, \coord)-- (8,\coord+2.75);
    \pattern [dashed, pattern=north west lines, pattern color=pallido] (10.5,\coord+2.75) -- (12.5, \coord+2.75) -- (12.5, \coord) -- (10.5, \coord)-- (10.5,\coord+2.7);
  
    \draw[thin, purple, ->] (8, \coord+2.75) -- (9, \coord+2.75);
    \draw[thin, purple] (9, \coord+2.75) -- (10, \coord+2.75);
    \draw[thin, orange, ->] (10.5, \coord+2.75) -- (11.5, \coord+2.75);
    \draw[thin, orange] (11.5, \coord+2.75) -- (12.5, \coord+2.75);
    \draw[black, ultra thin, ->] (8, \coord+2.75) -- (8, \coord+1.375);
    \draw[black, ultra thin] (8, \coord+1.375) -- (8, \coord);
    \draw[black, ultra thin, ->] (10, \coord+2.75) -- (10, \coord+1.375);
    \draw[black, ultra thin] (10, \coord+1.375) -- (10, \coord);
    \draw[black, ultra thin, ->] (10.5, \coord+2.75) -- (10.5, \coord+1.375);
    \draw[black, ultra thin] (10.5, \coord+1.375) -- (10.5, \coord); 
    \draw[black, ultra thin, ->] (12.5, \coord+2.75) -- (12.5, \coord+1.375);
    \draw[black, ultra thin] (12.5, \coord+1.375) -- (12.5, \coord);
    
    \node at (1.7, \coord+5.5) {$\overline r_1^-$};
    \node at (2.5, \coord+5.5) {$\overline r_1^+$};
    
    \node at (8.25, \coord+4.5) {$r_1^+$};
    \node at (9.75, \coord+4.5) {$r_1^-$};
    \node at (10.75, \coord+4.5) {$r_2^+$};
    \node at (12.25, \coord+4.5) {$r_2^-$};
    
    \node at (9, \coord+3.5) {$e_1^-$};
    \node at (9, \coord+2.5) {$e_9^+$};
    \node at (11.5, \coord+3.5) {$e_2^-$};
    \node at (11.5, \coord+2.5) {$e_8^+$};
    
    \node at (8.25, \coord+1.5) {$r_9^-$};
    \node at (9.75, \coord+1.5) {$r_9^+$};
    \node at (10.75, \coord+1.5) {$r_8^-$};
    \node at (12.25, \coord+1.5) {$r_8^+$};
    
    \node at (2, \coord-0.25) {$k-2$};
    \node at (0.5, \coord+4) {$p-k$};
    
    \fill (8,\coord+2.75) circle (1pt);
    \fill (10,\coord+2.75) circle (1pt);
    \fill (10.5,\coord+2.75) circle (1pt);
    \fill (12.5,\coord+2.75) circle (1pt);
    
    }
    
    \foreach \y [evaluate=\y as \coord using  \y] in {-6} 
    {
    
    \draw [thin, black, dashed] (2, \coord+3)--(1, \coord+4.75);
    \draw [thin, black, dashed] (2, \coord+3)--(1, \coord+5);
    \draw [thin, black, dashed] (2, \coord+3)--(1, \coord+5.25);
    
    \draw[thick, blue] (2, \coord+3) to [out=7.5, in=186.5] (3, \coord+3.085);
    \draw[thick, cyan] (3, \coord+3.085) to [out=3.5 , in=180] (4, \coord+3.15);
    \draw[thick, yellow] (4, \coord+3.15) to [out=0, in=172.5] (6, \coord+3);
    \draw[thick, orange] (6, \coord+3) to [out=187.5, in=0] (4, \coord+2.85);
    \draw[thick, violet] (2, \coord+3) to [out=352.5, in=180] (4, \coord+2.85);
    \fill[white] (2, \coord+3) to [out=7.5, in=186.5] (3, \coord+3.085) to [out=3.5 , in=180] (4, \coord+3.15) to [out=0, in=172.5] (6, \coord+3) (6, \coord+3) to [out=187.5, in=0] (4, \coord+2.85) to  [out=180, in=352.5] (2, \coord+3);
    
    \node at (3, \coord+2.5) {$e_3^+$};
    \node at (5, \coord+2.5) {$e_4^+$};
    \node at (2.5, \coord+3.5) {$e_7^-$};
    \node at (3.5, \coord+3.5) {$e_6^-$};
    \node at (5, \coord+3.5) {$e_5^-$};
    
    \pattern [dashed, pattern=north west lines, pattern color=pallido] (8,\coord+3.25) -- (10, \coord+3.25) -- (10, \coord+6) -- (8, \coord+6)-- (8,\coord+3.25);
    \pattern [dashed, pattern=north west lines, pattern color=pallido] (10.5,\coord+3.25) -- (12.5, \coord+3.25) -- (12.5, \coord+6) -- (10.5, \coord+6)-- (10.5,\coord+3.25);
  
    \draw[thin, violet, ->] (8, \coord+3.25) -- (9, \coord+3.25);
    \draw[thin, violet] (9, \coord+3.25) -- (10, \coord+3.25);
    \draw[thin, orange, ->] (10.5, \coord+3.25) -- (11.5, \coord+3.25);
    \draw[thin, orange] (11.5, \coord+3.25) -- (12.5, \coord+3.25);
    \draw[black, ultra thin, ->] (8, \coord+3.25) -- (8, \coord+4.625);
    \draw[black, ultra thin] (8, \coord+4.625) -- (8, \coord+6);
    \draw[black, ultra thin, ->] (10, \coord+3.25) -- (10, \coord+4.625);
    \draw[black, ultra thin] (10, \coord+4.625) -- (10, \coord+6);
    \draw[black, ultra thin, ->] (10.5, \coord+3.25) -- (10.5, \coord+4.625);
    \draw[black, ultra thin] (10.5, \coord+4.625) -- (10.5, \coord+6);
    \draw[black, ultra thin, ->] (12.5, \coord+3.25) -- (12.5, \coord+4.625);
    \draw[black, ultra thin] (12.5, \coord+4.625) -- (12.5, \coord+6);
    
    \fill (8,\coord+3.25) circle (1pt);
    \fill (10,\coord+3.25) circle (1pt);
    \fill (10.5,\coord+3.25) circle (1pt);
    \fill (12.5,\coord+3.25) circle (1pt);
    
    \pattern [dashed, pattern=north west lines, pattern color=pallido] (8,\coord+2.75) -- (9, \coord+2.75) -- (9, \coord) -- (8, \coord)-- (8,\coord+2.75);
    \pattern [dashed, pattern=north west lines, pattern color=pallido] (9.25,\coord+2.75) -- (10.25, \coord+2.75) -- (10.25, \coord) -- (9.25, \coord)-- (9.25,\coord+2.75);
    \pattern [dashed, pattern=north west lines, pattern color=pallido] (10.5,\coord+2.75) -- (12.5, \coord+2.75) -- (12.5, \coord) -- (10.5, \coord)-- (10.5,\coord+2.7);
  
    \draw[thin, blue, ->] (8, \coord+2.75) -- (8.5, \coord+2.75);
    \draw[thin, blue] (8.5, \coord+2.75) -- (9, \coord+2.75);
    \draw[thin, cyan, ->] (9.25, \coord+2.75) -- (9.75, \coord+2.75);
    \draw[thin, cyan] (9.75, \coord+2.75) -- (10.25, \coord+2.75);
    \draw[thin, yellow, ->] (10.5, \coord+2.75) -- (11.5, \coord+2.75);
    \draw[thin, yellow] (11.5, \coord+2.75) -- (12.5, \coord+2.75);
    \draw[black, ultra thin, ->] (8, \coord+2.75) -- (8, \coord+1.375);
    \draw[black, ultra thin] (8, \coord+1.375) -- (8, \coord);
    \draw[black, ultra thin, ->] (9, \coord+2.75) -- (9, \coord+1.375);
    \draw[black, ultra thin] (9, \coord+1.375) -- (9, \coord);
    \draw[black, ultra thin, ->] (9.25, \coord+2.75) -- (9.25, \coord+1.375);
    \draw[black, ultra thin] (9.25, \coord+1.375) -- (9.25, \coord);
    \draw[black, ultra thin, ->] (10.25, \coord+2.75) -- (10.25, \coord+1.375);
    \draw[black, ultra thin] (10.25, \coord+1.375) -- (10.25, \coord);
    \draw[black, ultra thin, ->] (10.5, \coord+2.75) -- (10.5, \coord+1.375);
    \draw[black, ultra thin] (10.5, \coord+1.375) -- (10.5, \coord);
    \draw[black, ultra thin, ->] (12.5, \coord+2.75) -- (12.5, \coord+1.375);
    \draw[black, ultra thin] (12.5, \coord+1.375) -- (12.5, \coord);

    \node at (1.7, \coord+5.5) {$\overline r_5^-$};
    \node at (2.5, \coord+5.5) {$\overline r_5^+$};
    
    \node at (8.25, \coord+4.5) {$r_3^+$};
    \node at (9.75, \coord+4.5) {$r_3^-$};
    \node at (10.75, \coord+4.5) {$r_4^+$};
    \node at (12.25, \coord+4.5) {$r_4^-$};
    
    \node at (9, \coord+3.5) {$e_3^-$};
    \node at (11.5, \coord+3.5) {$e_4^-$};
    \node at (11.5, \coord+2.5) {$e_5^+$};
    \node at (9.75, \coord+2.5) {$e_6^+$};
    \node at (8.5, \coord+2.5) {$e_7^+$};

    \node at (8.25, \coord+1.5) {$r_7^-$};
    \node at (8.75, \coord+1) {$r_7^+$};
    \node at (9.5, \coord+1.5) {$r_6^-$};
    \node at (10, \coord+1) {$r_6^+$};
    \node at (10.75, \coord+1.5) {$r_5^-$};
    \node at (12.25, \coord+1.5) {$r_5^+$};

    \node at (0.5, \coord+4) {$p-2$};
    \fill (4,\coord+3.15) circle (1pt);
    \fill (3,\coord+3.088) circle (1pt);
    \fill (4,\coord+2.85) circle (1pt);
    \fill (6,\coord+3) circle (1pt);
    \fill (2,\coord+3) circle (1pt);
    \fill (2,\coord+1) circle (1pt);

    \fill (8,\coord+2.75) circle (1pt);
    \fill (9,\coord+2.75) circle (1pt);
    \fill (9.25,\coord+2.75) circle (1pt);
    \fill (10.25,\coord+2.75) circle (1pt);
    \fill (10.5,\coord+2.75) circle (1pt);
    \fill (12.5,\coord+2.75) circle (1pt);

    }

\end{tikzpicture}
\caption{An example to illustrate how to realize a representation $\chi$ of non-trivial-ends type with rational $\chi_n$-part as the holonomy of a translation surface with poles in $\mathcal{H}_1(np; -p,\dots,-p)$ with prescribed rotation number $k$. In this picture $n=9$ with $h=4$, $s=2$, and $t=7$.}
\label{fig:chiratposvol}
\end{figure}

\noindent The resulting surface is a genus one differential $(Y,\xi)$ with one single zero, two double poles and $n-2$ simple poles. It remains to bubble sufficiently many copies of $(\C,\,dz)$ in order to get the desired pole orders and rotation number. Once identified together, the rays $r_l^+$ and $r_l^-$ determine an infinite ray $\tilde r_l\subset(Y,\xi)$ joining the unique zero of $\xi$ and a simple pole for any $l\notin\{1,s+1\}$. We bubble along any such a ray a copy of the genus zero differential $(\C,\, z^{p-2}dz)$. Equivalently, we bubble $p-1$ copies of $(\C,\,dz)$. All simple poles are turned to higher order poles of order $p$ with non-zero residue. By construction, we can always find an infinite ray leaving from $Q_2$ and pointing towards the infinity. We bubble along such a ray a copy of $(\C,\, z^{p-2}dz)$. Finally, we can find an infinite ray leaving from $P_1$ and another leaving from $Q_1$ which are disjoint and both point toward the infinity. We bubble a copy of $(\C,\, z^{k-2}dz)$ along the first ray and a copy of $(\C,\, z^{p-k}dz)$ along the second ray. The final surface is a genus one differential $(X,\omega)\in\mathcal{H}_1(np; -p,\dots,-p)$ where each pole has non-zero residue. By choosing $\alpha$ and $\beta$ as in \S\ref{pv2p} one can show that $(X,\omega)$ has rotation number equal to $\gcd(k,p)$ as desired.

\smallskip

\noindent So far we have focused on the case that the representation $\chi_1$ has positive volume. The non-positive volume case works in the same fashion with the only exception being that the interior of the parallelogram $\mathcal{P}$ defined by the above chain \eqref{parchirat} is cut out from the first copy of $(\C,\,dz)$ considered above. The rest of the construction works \textit{mutatis mutandis} for the case of non-positive volume, see Figure \ref{fig:chiratnegvol}. Alternatively, since $\chi$ is of non-trivial-ends type, Lemma \ref{lem:posvol} applies and one can find another pair of handle generators such that the respective volume is positive.

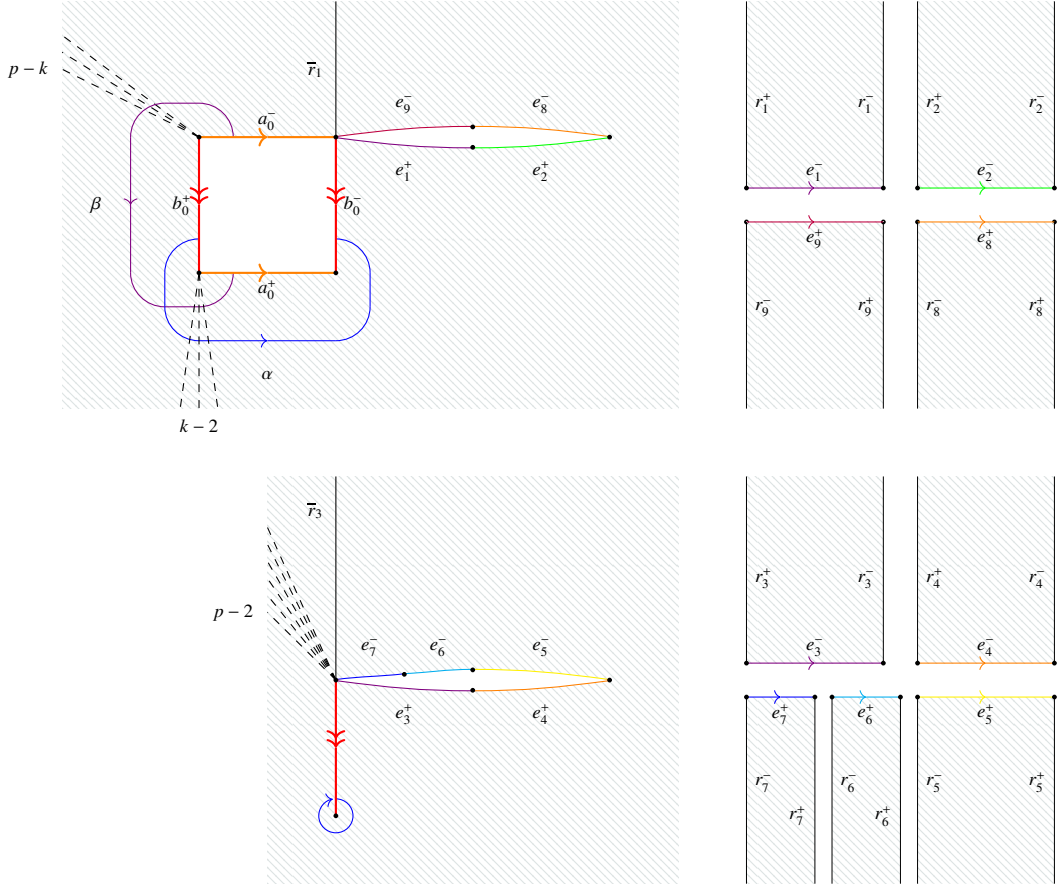
\begin{figure}[!ht] 
\centering
\begin{tikzpicture}[scale=0.9, every node/.style={scale=0.7}]
\definecolor{pallido}{RGB}{221,227,227}

    \foreach \y [evaluate=\y as \coord using  \y] in {-6, 1} 
    {
    \pattern [dashed, pattern=north west lines, pattern color=pallido] (1,\coord-1) -- (7, \coord-1) -- (7, \coord+5) -- (1, \coord+5)-- (1,\coord-1);
    }

    \foreach \y [evaluate=\y as \coord using  \y] in {1} 
    {
    
    \pattern [dashed, pattern=north west lines, pattern color=pallido] (-2,\coord-1) -- (1, \coord-1) -- (1, \coord+5) -- (-2, \coord+5)-- (-2,\coord-1);
    
    \draw[thin, black] (2, \coord+3) to (2, \coord+5);
    \draw [thin, black, dashed] (0, \coord+3)--(-2, \coord+4.5);
    \draw [thin, black, dashed] (0, \coord+3)--(-2, \coord+4.25);
    \draw [thin, black, dashed] (0, \coord+3)--(-2, \coord+4);
    
    \draw [thin, violet] (0.5,\coord+3) arc [start angle = 0, end angle =90 , radius = 0.5];
    \draw [thin, violet] (0, \coord+3.5) -- (-0.5, \coord+3.5);
    \draw [thin, violet] (-0.5, \coord+3.5) arc [start angle = 90, end angle =180 , radius = 0.5];
    \draw [thin, violet, ->] (-1,\coord+3)--(-1,\coord+2);
    \draw [thin, violet] (-1,\coord+2)--(-1,\coord+1);
    \draw [thin, violet] (-1,\coord+1) arc [start angle = 180, end angle=270 , radius = 0.5];
    \draw [thin, violet] (-0.5,\coord+0.5)--(0,\coord+0.5);
    \draw [thin, violet] (0,\coord+0.5) arc [start angle = 270, end angle = 360 , radius = 0.5];
    
    \draw [thin, blue] (0,\coord+1.5) arc [start angle = 90, end angle =180 , radius = 0.5];
    \draw [thin, blue] (-0.5, \coord+1) -- (-0.5, \coord+0.5);
    \draw [thin, blue] (-0.5, \coord+0.5) arc [start angle = 180, end angle =270 , radius = 0.5];
    \draw [thin, blue, ->] (0,\coord)--(1,\coord);
    \draw [thin, blue] (1,\coord)--(2,\coord);
    \draw [thin, blue] (2,\coord) arc [start angle = 270, end angle=360 , radius = 0.5];
    \draw [thin, blue] (2.5,\coord+0.5)--(2.5,\coord+1);
    \draw [thin, blue] (2.5,\coord+1) arc [start angle = 0, end angle = 90 , radius = 0.5];

    \draw[thick, purple] (2, \coord+3) to [out=7.5, in=180] (4, \coord+3.15);
    \draw[thick, orange] (4, \coord+3.15) to [out=0, in=172.5] (6, \coord+3);
    \draw[thick, green] (6, \coord+3) to [out=187.5, in=0] (4, \coord+2.85);
    \draw[thick, violet] (2, \coord+3) to [out=352.5, in=180] (4, \coord+2.85);
    \fill[white] (2, \coord+3) to [out=7.5, in=180] (4, \coord+3.15) to  [out=0, in=172.5] (6, \coord+3) to [out=187.5, in=0] (4, \coord+2.85) to  [out=180, in=352.5] (2, \coord+3) to  (2, \coord+3);
    \fill[white] (2, \coord+1) -- (2, \coord+3) -- (0, \coord+3) -- (0, \coord+1) -- (2, \coord+1);
    
    \draw[thick, orange] (2, \coord+3) to (1, \coord+3);
    \draw[thick, orange, <-] (1, \coord+3) to (0, \coord+3);
    \draw[thick, orange] (2, \coord+1) to (1, \coord+1);
    \draw[thick, orange, <-] (1, \coord+1) to (0, \coord+1);
    
    \draw[thick, red, ->>] (2, \coord+3) to (2, \coord+2);
    \draw[thick, red] (2, \coord+2) to (2, \coord+1);
    \draw[thick, red, ->>] (0, \coord+3) to (0, \coord+2);
    \draw[thick, red] (0, \coord+2) to (0, \coord+1);
    
    \draw [thin, black, dashed] (0, \coord+1)--(-0.275, \coord-1);
    \draw [thin, black, dashed] (0, \coord+1)--(0, \coord-1);
    \draw [thin, black, dashed] (0, \coord+1)--(0.275, \coord-1);
    
    \fill (4,\coord+3.15) circle (1pt);
    \fill (4,\coord+2.85) circle (1pt);
    \fill (6,\coord+3) circle (1pt);
    \fill (2,\coord+3) circle (1pt);
    \fill (2,\coord+1) circle (1pt);
    \fill (0,\coord+3) circle (1pt);
    \fill (0,\coord+1) circle (1pt);

    \node at (3, \coord+3.5) {$e_9^-$};
    \node at (5, \coord+3.5) {$e_8^-$};
    \node at (3, \coord+2.5) {$e_1^+$};
    \node at (5, \coord+2.5) {$e_2^+$};

    \pattern [dashed, pattern=north west lines, pattern color=pallido] (8,\coord+2.25) -- (10, \coord+2.25) -- (10, \coord+5) -- (8, \coord+5)-- (8,\coord+2.25);
    \pattern [dashed, pattern=north west lines, pattern color=pallido] (10.5,\coord+2.25) -- (12.5, \coord+2.25) -- (12.5, \coord+5) -- (10.5, \coord+5)-- (10.5,\coord+2.25);
  
    \draw[thin, violet, ->] (8, \coord+2.25) -- (9, \coord+2.25);
    \draw[thin, violet]     (9, \coord+2.25) -- (10, \coord+2.25);
    \draw[thin, green, ->] (10.5, \coord+2.25) -- (11.5, \coord+2.25);
    \draw[thin, green] (11.5, \coord+2.25) -- (12.5, \coord+2.25);
    \draw[black, ultra thin] (8, \coord+2.25) -- (8, \coord+5);
    \draw[black, ultra thin] (10, \coord+2.25) -- (10, \coord+5);
    \draw[black, ultra thin] (10.5, \coord+2.25) -- (10.5, \coord+5);
    \draw[black, ultra thin] (12.5, \coord+2.25) -- (12.5, \coord+5);
    
    \fill (8,\coord+1.75) circle (1pt);
    \fill (10,\coord+1.75) circle (1pt);
    \fill (10.5,\coord+1.75) circle (1pt);
    \fill (12.5,\coord+1.75) circle (1pt);
    
    \pattern [dashed, pattern=north west lines, pattern color=pallido] (8,\coord+1.75) -- (10, \coord+1.75) -- (10, \coord-1) -- (8, \coord-1)-- (8,\coord+1.75);
    \pattern [dashed, pattern=north west lines, pattern color=pallido] (10.5,\coord+1.75) -- (12.5, \coord+1.75) -- (12.5, \coord-1) -- (10.5, \coord-1)-- (10.5,\coord+1.7);
  
    \draw[thin, purple, ->] (8, \coord+1.75) -- (9, \coord+1.75);
    \draw[thin, purple] (9, \coord+1.75) -- (10, \coord+1.75);
    \draw[thin, orange, ->] (10.5, \coord+1.75) -- (11.5, \coord+1.75);
    \draw[thin, orange] (11.5, \coord+1.75) -- (12.5, \coord+1.75);
    \draw[black, ultra thin] (8, \coord+1.75) -- (8, \coord-1);
    \draw[black, ultra thin] (10, \coord+1.75) -- (10, \coord-1);
    \draw[black, ultra thin] (10.5, \coord+1.75) -- (10.5, \coord-1);
    \draw[black, ultra thin] (12.5, \coord+1.75) -- (12.5, \coord-1);
    
    \node at (1.7, \coord+4) {$\overline r_1$};
    
    \node at (8.25, \coord+3.5) {$r_1^+$};
    \node at (9.75, \coord+3.5) {$r_1^-$};
    \node at (10.75, \coord+3.5) {$r_2^+$};
    \node at (12.25, \coord+3.5) {$r_2^-$};
    
    \node at (9, \coord+2.5) {$e_1^-$};
    \node at (9, \coord+1.5) {$e_9^+$};
    \node at (11.5, \coord+2.5) {$e_2^-$};
    \node at (11.5, \coord+1.5) {$e_8^+$};
    
    \node at (8.25, \coord+0.5) {$r_9^-$};
    \node at (9.75, \coord+0.5) {$r_9^+$};
    \node at (10.75, \coord+0.5) {$r_8^-$};
    \node at (12.25, \coord+0.5) {$r_8^+$};
    
    \node at (0, \coord-1.25) {$k-2$};
    \node at (-2.5, \coord+4) {$p-k$};
    
    \node at (-1.5, \coord+2) {$\beta$};
    \node at (1, \coord-0.5) {$\alpha$}; 
    \node at (1, 1.75) {$a_0^+$}; 
    \node at (1, 4.25) {$a_0^-$}; 
    \node at (-0.25, 3) {$b_0^+$}; 
    \node at (2.25, 3) {$b_0^-$};
    
    \fill (8,\coord+2.25) circle (1pt);
    \fill (10,\coord+2.25) circle (1pt);
    \fill (10.5,\coord+2.25) circle (1pt);
    \fill (12.5,\coord+2.25) circle (1pt);
    
    }
    
    \foreach \y [evaluate=\y as \coord using  \y] in {-6} 
    {
    
    \draw[thin, black] (2, \coord+2) to (2, \coord+5);
    \draw[thin, blue, ->] (2, \coord+0.25) arc [start angle = 90, end angle = -260,radius = 0.25];
    
    \draw [thin, black, dashed] (2, \coord+2)--(1, \coord+3.5);
    \draw [thin, black, dashed] (2, \coord+2)--(1, \coord+3.25);
    \draw [thin, black, dashed] (2, \coord+2)--(1, \coord+3);
    \draw [thin, black, dashed] (2, \coord+2)--(1, \coord+3.75);
    \draw [thin, black, dashed] (2, \coord+2)--(1, \coord+4);
    \draw [thin, black, dashed] (2, \coord+2)--(1, \coord+4.25);
    
    \draw[thick, blue] (2, \coord+2) to [out=7.5, in=186.5] (3, \coord+2.085);
    \draw[thick, cyan] (3, \coord+2.085) to [out=3.5 , in=180] (4, \coord+2.15);
    \draw[thick, yellow] (4, \coord+2.15) to [out=0, in=172.5] (6, \coord+2);
    \draw[thick, orange] (6, \coord+2) to [out=187.5, in=0] (4, \coord+1.85);
    \draw[thick, violet] (2, \coord+2) to [out=352.5, in=180] (4, \coord+1.85);
    \fill[white] (2, \coord+2) to [out=7.5, in=186.5] (3, \coord+2.085) to [out=3.5 , in=180] (4, \coord+2.15) to [out=0, in=172.5] (6, \coord+2) (6, \coord+2) to [out=187.5, in=0] (4, \coord+1.85) to  [out=180, in=352.5] (2, \coord+2) to  (2, \coord+2);

    \pattern [dashed, pattern=north west lines, pattern color=pallido] (8,\coord+2.25) -- (10, \coord+2.25) -- (10, \coord+5) -- (8, \coord+5)-- (8,\coord+2.25);
    \pattern [dashed, pattern=north west lines, pattern color=pallido] (10.5,\coord+2.25) -- (12.5, \coord+2.25) -- (12.5, \coord+5) -- (10.5, \coord+5)-- (10.5,\coord+2.25);
  
    \draw[thin, violet, ->] (8, \coord+2.25) -- (9, \coord+2.25);
    \draw[thin, violet] (9, \coord+2.25) -- (10, \coord+2.25);
    \draw[thin, orange, ->] (10.5, \coord+2.25) -- (11.5, \coord+2.25);
    \draw[thin, orange] (11.5, \coord+2.25) -- (12.5, \coord+2.25);
    \draw[black, ultra thin] (8, \coord+2.25) -- (8, \coord+5);
    \draw[black, ultra thin] (10, \coord+2.25) -- (10, \coord+5);
    \draw[black, ultra thin] (10.5, \coord+2.25) -- (10.5, \coord+5);
    \draw[black, ultra thin] (12.5, \coord+2.25) -- (12.5, \coord+5);
    \draw[thick, red, ->>] (2, \coord+2) to (2, \coord+1);
    \draw[thick, red] (2, \coord+1) to (2, \coord+0);
    
    \fill (8,\coord+2.25) circle (1pt);
    \fill (10,\coord+2.25) circle (1pt);
    \fill (10.5,\coord+2.25) circle (1pt);
    \fill (12.5,\coord+2.25) circle (1pt);
    
    \node at (3, \coord+1.5) {$e_3^+$};
    \node at (5, \coord+1.5) {$e_4^+$};
    \node at (2.5, \coord+2.5) {$e_7^-$};
    \node at (3.5, \coord+2.5) {$e_6^-$};
    \node at (5, \coord+2.5) {$e_5^-$};
    
    \pattern [dashed, pattern=north west lines, pattern color=pallido] (8,\coord+1.75) -- (9, \coord+1.75) -- (9, \coord-1) -- (8, \coord-1)-- (8,\coord+1.75);
    \pattern [dashed, pattern=north west lines, pattern color=pallido] (9.25,\coord+1.75) -- (10.25, \coord+1.75) -- (10.25, \coord-1) -- (9.25, \coord-1)-- (9.25,\coord+1.75);
    \pattern [dashed, pattern=north west lines, pattern color=pallido] (10.5,\coord+1.75) -- (12.5, \coord+1.75) -- (12.5, \coord-1) -- (10.5, \coord-1)-- (10.5,\coord+1.7);
  
    \draw[thin, blue, ->] (8, \coord+1.75) -- (8.5, \coord+1.75);
    \draw[thin, blue] (8.5, \coord+1.75) -- (9, \coord+1.75);
    \draw[thin, cyan, ->] (9.25, \coord+1.75) -- (9.75, \coord+1.75);
    \draw[thin, cyan] (9.75, \coord+1.75) -- (10.25, \coord+1.75);
    \draw[thin, yellow, ->] (10.5, \coord+1.75) -- (11.5, \coord+1.75);
    \draw[thin, yellow] (11.5, \coord+1.75) -- (12.5, \coord+1.75);
    \draw[black, ultra thin] (8, \coord+1.75) -- (8, \coord-1);
    \draw[black, ultra thin] (9, \coord+1.75) -- (9, \coord-1);
    \draw[black, ultra thin] (9.25, \coord+1.75) -- (9.25, \coord-1);
    \draw[black, ultra thin] (10.25, \coord+1.75) -- (10.25, \coord-1);
    \draw[black, ultra thin] (10.5, \coord+1.75) -- (10.5, \coord-1);
    \draw[black, ultra thin] (12.5, \coord+1.75) -- (12.5, \coord-1);
   
    \node at (1.7, \coord+4.5) {$\overline r_3$};
    
    \node at (8.25, \coord+3.5) {$r_3^+$};
    \node at (9.75, \coord+3.5) {$r_3^-$};
    \node at (10.75, \coord+3.5) {$r_4^+$};
    \node at (12.25, \coord+3.5) {$r_4^-$};
    
    \node at (9, \coord+2.5) {$e_3^-$};
    \node at (11.5, \coord+2.5) {$e_4^-$};
    \node at (11.5, \coord+1.5) {$e_5^+$};
    \node at (9.75, \coord+1.5) {$e_6^+$};
    \node at (8.5, \coord+1.5) {$e_7^+$};

    \node at (8.25, \coord+0.5) {$r_7^-$};
    \node at (8.75, \coord) {$r_7^+$};
    \node at (9.5, \coord+0.5) {$r_6^-$};
    \node at (10, \coord) {$r_6^+$};
    \node at (10.75, \coord+0.5) {$r_5^-$};
    \node at (12.25, \coord+0.5) {$r_5^+$};

    \node at (0.5, \coord+3) {$p-2$};
    \fill (4,\coord+2.15) circle (1pt);
    \fill (3,\coord+2.085) circle (1pt);
    \fill (4,\coord+1.85) circle (1pt);
    \fill (6,\coord+2) circle (1pt);
    \fill (2,\coord+2) circle (1pt);
    \fill (2,\coord+0) circle (1pt);

    \fill (8,\coord+1.75) circle (1pt);
    \fill (9,\coord+1.75) circle (1pt);
    \fill (9.25,\coord+1.75) circle (1pt);
    \fill (10.25,\coord+1.75) circle (1pt);
    \fill (10.5,\coord+1.75) circle (1pt);
    \fill (12.5,\coord+1.75) circle (1pt);

    }
\end{tikzpicture}
\caption{An example to illustrate how to realize a representation $\chi$ of non-trivial-ends type with negative $\chi_1$-part and with rational $\chi_n$-part as the holonomy of a translation surface with poles in $\mathcal{H}_1(np; -p,\dots,-p)$ with prescribed rotation number $k$. In this picture $n=9$ with $h=4$, $s=2$, and $t=7$.}
\label{fig:chiratnegvol}
\end{figure}

\begin{rmk}\label{excaserat}
There is an exceptional case not covered by the construction above, which is a representation of non-trivial-ends type $\chi\colon\shomolzon\longrightarrow \C$ as the holonomy of some genus one differential in the connected component of the stratum $\mathcal{H}_1(2n, -2,\dots,-2)$ of translation surfaces with rotation number one. Recall that here we assume that $\chi$ has rational $\chi_n$-part and $\text{Im}(\chi_n)$ does not satisfy the reordering property. Nevertheless, a slight modification of the previous construction permits to realize $\chi$ even in this special case. In short, the modification consists in pasting all the strips $\mathcal{S}_l$ in one copy of $(\C,\,dz)$. This can be simply done as follows. We define $e=\overline{\zeta_1\,\zeta_{h+1}}$ and let $e^{\pm}$ be the edges we obtain by slitting $(\C,\,dz)$ along $e$. Then we partition $e^+=e_1^+\cup\cdots\cup e_h^+$ and $e^-=e_{h+1}^-\cup\cdots\cup e_n^-$. The rest of the construction is essentially the same.
\end{rmk}

\subsubsection{At least one pole has zero residue}\label{optr} Let $\chi\colon\shomolzon\longrightarrow \C$ be a representation of non-trivial-ends type. As above, let $\gamma_i$ denote a simple closed curve enclosing the $i$-th puncture. In this subsection we assume that $\chi(\gamma_i)\neq0$ for $i=1,\dots,m<n$ and $\chi(\gamma_i)=0$ for $i=m+1,\dots,n$. The representation $\chi$ naturally yields a new representation $\overline \chi\colon\text{H}_1(S_{1,m},\,\Z)\longrightarrow \C$ of non-trivial-ends type obtained by "filling" the punctures with labelling $i=m+1,\dots,n$. 

\smallskip

\noindent Let $\chi_m\colon\text{H}_1(S_{0,m},\,\mathbb Z)\longrightarrow \C$ be the $\chi_m$-part of the representation $\overline \chi$. We distinguish two cases as follows. If the representation $\chi_m$ is not real-collinear, see Definition \ref{ratchar}, or it is real-collinear and satisfies the reordering property, see Definition \ref{sortprop}, we need to introduce an auxiliary representation as in Definition \ref{defsupprep}:
\begin{equation}
    \rho(\alpha)=\chi(\alpha), \quad \rho(\beta)=\chi(\beta), \quad \rho(\delta_1)=\cdots=\rho(\delta_{n-m+1})=0,
\end{equation}

\noindent where $\{\alpha,\beta\}$ is a pair of handle generators of $\text{H}_1(S_{1,\,n-m+1},\,\mathbb Z)\subset \text{H}_1(S_{1,\,n},\,\mathbb Z)$. According to the constructions developed in Sections \S\ref{genusoneordertwo} and \S\ref{genusoneorderhigherthantwo}, we can realize $\rho$ as the holonomy of some translation structure $(X_1,\omega_1)$ in the stratum $\mathcal{H}_1\big( (n-m+1)p; -p,\dots,-p \big)$ with $n-m+1$ poles of order $p$ with zero residue and prescribed rotation number $k$. Notice that, from our constructions it is always possible to find an infinite ray $r$ with fixed direction, say $v$, starting from the zero of $\omega_1$ and pointing toward a pole. For instance, such a ray can be taken as any ray leaving from any point labelled with "$Q$" in our constructions and pointing toward a direction $v$. Next we can realize $\chi_m$ as the holonomy of some translation surface $(X_2,\omega_2)\in\mathcal{H}_0\big(m-2;-1,\dots,-1\big)$. Up to changing the direction of $v$ a little if needed, we can find an infinite ray, say $\overline r$, leaving from the zero of $\omega_2$ and pointing toward one of the poles with direction $v$. We glue $(X_1,\omega_1)$ and $(X_2,\omega_2)$ by slitting and glue the rays $r$ and $\overline r$ as described in Definition \ref{gluingsurfaces}. Notice that the resulting translation structure is homeomorphic to $S_{1,n}$ and it has one zero, $n-m$ poles of order $p$ with zero residue, one pole of order $p$ with non-zero residue, and $m-1$ simple poles. We can finally find $m-1$ rays joining the zero of the resulting translation structure and the simple poles. We bubble along each one of these rays a copy of $(\C,\,z^{p-2}dz)$. This final translation structure lies in $\mathcal{H}_0\left( np; -p,\dots,-p \right)$. By construction it has period character $\chi$ and rotation number $k$ as desired.

\begin{figure}[!ht] 
\centering
\begin{tikzpicture}[scale=0.9, every node/.style={scale=0.7}]
\definecolor{pallido}{RGB}{221,227,227}

    \pattern [pattern=north west lines, pattern color=pallido]
    (-2,2) -- (0,2) -- (0,4) -- (-2,4) -- (-2,2);
    \draw [thick, orange] (-1,2)--(0,2);
    \draw [thick, red] (0,3)--(0,4);
    \draw [thick, orange] (-1,4)--(0,4);
    \draw [thick, red] (-2,3)--(-2,4);
    \draw [thick, orange, ->] (-2,2) to (-1,2);
    \draw [thick, red, ->>] (0,2) to (0,3);
    \draw [thick, orange, ->] (-2,4) to (-1,4);
    \draw [thick, red, ->>] (-2,2) to (-2,3);
    \draw [thin, violet, ->] (-1.75,2) to (-1.75,4);
    \draw [thin, blue, ->] (-2,2.25) to (0,2.25);
    \fill (-2,2) circle (1.5pt);
    \fill (0,2) circle (1.5pt);
    \fill (0,4) circle (1.5pt);
    \fill (-2,4) circle (1.5pt);

    \node at (-1, 2.5) {$\alpha$};
    \node at (-1.5, 3) {$\beta$};
    \node at (-1, 1.75) {$a_0^-$};
    \node at (-1, 4.25) {$a_0^+$};
    \node at (-2.25, 3) {$b_0^+$};
    \node at (0.25, 3) {$b_0^-$};

    \foreach \y [evaluate=\y as \coord using  \y] in {-11, 1} 
    {
    \pattern [dashed, pattern=north west lines, pattern color=pallido] (1,\coord) -- (7, \coord) -- (7, \coord+6) -- (1, \coord+6)-- (1,\coord);
    \draw[thin, blue, ->] (2, \coord+1.25) arc [start angle = 90, end angle = -260,radius = 0.25];
    \draw[thin, black] (2, \coord+3) to (2, \coord+6);
    \draw[thick, red] (2, \coord+3) to (2, \coord+2);
    \draw[thick, red, <<-] (2, \coord+2) to (2, \coord+1);
    \draw [thin, black, dashed] (2, \coord+3)--(1, \coord+4.5);
    \draw [thin, black, dashed] (2, \coord+3)--(1, \coord+4.25);
    \draw [thin, black, dashed] (2, \coord+3)--(1, \coord+4);
    }

    \foreach \y [evaluate=\y as \coord using  \y] in {1} 
    {

    \draw[thin, black] (2, \coord+3) to [out=10, in=170] (6, \coord+3);
    \draw[thick, green] (4, \coord+2.75) to [out=0, in=195] (6, \coord+3);
    \draw[thick, violet] (2, \coord+3) to [out=345, in=180] (4, \coord+2.75);
    \fill[white] (2, \coord+3) to [out=10, in=170] (6, \coord+3) to [out=195, in=0] (4, \coord+2.75) to [out=180, in=345] (2, \coord+3);

    \draw [thin, black, dashed] (2, \coord+1)--(1.725, \coord);
    \draw [thin, black, dashed] (2, \coord+1)--(2, \coord);
    \draw [thin, black, dashed] (2, \coord+1)--(2.275, \coord);
    
    \fill (4,\coord+2.75) circle (0.75pt);
    \fill (6,\coord+3) circle (0.75pt);
    \fill (2,\coord+3) circle (0.75pt);
    \fill (2,\coord+1) circle (0.75pt);

    \pattern [dashed, pattern=north west lines, pattern color=pallido] (8,\coord+3.25) -- (10, \coord+3.25) -- (10, \coord+6) -- (8, \coord+6)-- (8,\coord+3.25);
    \pattern [dashed, pattern=north west lines, pattern color=pallido] (10.5,\coord+3.25) -- (12.5, \coord+3.25) -- (12.5, \coord+6) -- (10.5, \coord+6)-- (10.5,\coord+3.25);
  
    \draw[thin, violet, ->] (8, \coord+3.25) -- (9, \coord+3.25);
    \draw[thin, violet] (9, \coord+3.25) -- (10, \coord+3.25);
    \draw[thin, green, ->] (10.5, \coord+3.25) -- (11.5, \coord+3.25);
    \draw[thin, green] (11.5, \coord+3.25) -- (12.5, \coord+3.25);
    \draw[black, ultra thin] (8, \coord+3.25) -- (8, \coord+6);
    \draw[black, ultra thin] (10, \coord+3.25) -- (10, \coord+6);
    \draw[black, ultra thin] (10.5, \coord+3.25) -- (10.5, \coord+6);
    \draw[black, ultra thin] (12.5, \coord+3.25) -- (12.5, \coord+6);

    \fill (8,\coord+3.25) circle (1pt);
    \fill (10,\coord+3.25) circle (1pt);
    \fill (10.5,\coord+3.25) circle (1pt);
    \fill (12.5,\coord+3.25) circle (1pt);
    
    \node at (1.7, \coord+5.5) {$\overline r_1$};
    
    \node at (8.25, \coord+4.5) {$r_1$};
    \node at (9.75, \coord+4.5) {$r_1^-$};
    \node at (10.75, \coord+4.5) {$r_2^+$};
    \node at (12.25, \coord+4.5) {$r_2^-$};
    \node at (9, \coord+3.5) {$e_1^-$};
    \node at (11.5, \coord+3.5) {$e_2^-$};
    \node at (2, \coord-0.25) {$k-2$};
    \node at (0.5, \coord+4) {$p-k$};
    
    \node at (4, \coord+3.5) {$l_{1}^-$};
    \node at (3, \coord+2.5) {$e_1^+$};
    \node at (5, \coord+2.5) {$e_2^+$};
    
    }
    
    \foreach \y [evaluate=\y as \coord using  \y] in {-4} 
    {
    
    \pattern [dashed, pattern=north west lines, pattern color=pallido] (-3,\coord+4) -- (1.5, \coord+4) -- (1.5, \coord+0) -- (-3, \coord+0)-- (-3,\coord+4);
    \node at (2 ,\coord+2) {$\cdots$};
    \pattern [dashed, pattern=north west lines, pattern color=pallido] (2.5,\coord+4) -- (7, \coord+4) -- (7, \coord+0) -- (2.5, \coord+0)-- (2.5,\coord+4);
    
    \node at (-2.5, \coord+0.5) {$\mathbb C$};
    \node at (3, \coord+0.5) {$\mathbb C$};
    
    \draw[thin, black] (-2.75, \coord+2) to [out=10, in=170] (1.25, \coord+2);
    \draw[thin, black] (-2.75, \coord+2) to [out=350, in=190] (1.25, \coord+2);
    \fill[white] (-2.75, \coord+2) to [out=10, in=170] (1.25, \coord+2) to [out=190, in=350] (-2.75, \coord+2);
    
    \draw [thin, black, dashed] (-2.75, \coord+2)--(-2.75, \coord+4);
    \draw [thin, black, dashed] (-2.75, \coord+2)--(-2.5, \coord+4);
    \draw [thin, black, dashed] (-2.75, \coord+2)--(-2.25, \coord+4);
    
    \node at (-2.5, \coord+4.25) {$p-2$};
    
    \draw[thin, black] (2.75, \coord+2) to [out=350, in=190] (6.75, \coord+2);
    \draw[thick, orange] (6.75, \coord+2) to [out=170, in=0] (4.75, \coord+2.2);
    \draw[thick, purple] (4.75, \coord+2.2) to [out=180, in=10] (2.75, \coord+2);
    \fill[white] (2.75, \coord+2) to [out=350, in=190] (6.75, \coord+2) to [out=170, in=0] (4.75, \coord+2.2) to [out=180, in=10] (2.75, \coord+2);
    
    \draw [thin, black, dashed] (2.75, \coord+2)--(2.75, \coord+4);
    \draw [thin, black, dashed] (2.75, \coord+2)--(3, \coord+4);
    \draw [thin, black, dashed] (2.75, \coord+2)--(3.25, \coord+4);
    
    \node at (3, \coord+4.25) {$p-2$};
    
    \fill (-2.75, \coord+2) circle (1pt);
    \fill (1.25, \coord+2) circle (1pt);
    \fill (2.75, \coord+2) circle (1pt);
    \fill (4.75, \coord+2.2) circle (1pt);
    \fill (6.75, \coord+2) circle (1pt);
    
    \pattern [dashed, pattern=north west lines, pattern color=pallido] (8,\coord+2) -- (10, \coord+2) -- (10, \coord) -- (8, \coord)-- (8,\coord+2);
    \pattern [dashed, pattern=north west lines, pattern color=pallido] (10.5,\coord+2) -- (12.5, \coord+2) -- (12.5, \coord) -- (10.5, \coord)-- (10.5,\coord+2);
  
    \draw[thin, purple, ->] (8, \coord+2) -- (9, \coord+2);
    \draw[thin, purple] (9, \coord+2) -- (10, \coord+2);
    \draw[thin, orange, ->] (10.5, \coord+2) -- (11.5, \coord+2);
    \draw[thin, orange] (11.5, \coord+2) -- (12.5, \coord+2);
    \draw[black, ultra thin] (8, \coord+2) -- (8, \coord);
    \draw[black, ultra thin] (10, \coord+2) -- (10, \coord);
    \draw[black, ultra thin] (10.5, \coord+2) -- (10.5, \coord);
    \draw[black, ultra thin] (12.5, \coord+2) -- (12.5, \coord);
    
    \fill (8,\coord+2) circle (1pt);
    \fill (10,\coord+2) circle (1pt);
    \fill (10.5,\coord+2) circle (1pt);
    \fill (12.5,\coord+2) circle (1pt);
    
    \node at (9, \coord+1.75) {$e_9^+$};
    \node at (11.5, \coord+1.75) {$e_8^+$};
        
    \node at (8.25, \coord+0.75) {$r_9^-$};
    \node at (9.75, \coord+0.75) {$r_9^+$};
    \node at (10.75, \coord+0.75) {$r_8^-$};
    \node at (12.25, \coord+0.75) {$r_8^+$};
    
    \node at (-0.75, \coord+1.5) {$l_1^+$};
    \node at (-0.75, \coord+2.5) {$l_2^-$};
    
    \node at (4.75, \coord+1.5) {$l_{n-m}^+$};
    \node at (3.75, \coord+2.5) {$e_9^-$};
    \node at (5.75, \coord+2.5) {$e_8^-$};
    
    }
    
    \foreach \y [evaluate=\y as \coord using  \y] in {-11} 
    {
    
    \draw [thin, black, dashed] (2, \coord+3)--(1, \coord+4.75);
    \draw [thin, black, dashed] (2, \coord+3)--(1, \coord+5);
    \draw [thin, black, dashed] (2, \coord+3)--(1, \coord+5.25);
    
    \draw[thick, blue] (2, \coord+3) to  (3, \coord+3.15);
    \draw[thick, cyan] (3, \coord+3.15) to [out=8.5, in=180] (4, \coord+3.25);
    \draw[thick, yellow] (4, \coord+3.25) to [out=0, in=170] (6, \coord+3);
    \draw[thick, orange] (6, \coord+3) to [out=190, in=0] (4, \coord+2.75);
    \draw[thick, violet] (2, \coord+3) to [out=350, in=180] (4, \coord+2.75);
    \fill[white] (2, \coord+3) to  (3, \coord+3.15) to [out=8.5, in=180] (4, \coord+3.25) to [out=0, in=170] (6, \coord+3) to [out=190, in=0] (4, \coord+2.75) to [out=180, in=350] (2, \coord+3) ;
    
    \node at (3, \coord+2.5) {$e_3^+$};
    \node at (5, \coord+2.5) {$e_4^+$};
    \node at (5, \coord+3.5) {$e_5^-$};
    \node at (3.5, \coord+3.5) {$e_6^-$};
    \node at (2.5, \coord+3.5) {$e_7^-$};

    \pattern [dashed, pattern=north west lines, pattern color=pallido] (8,\coord+3.25) -- (10, \coord+3.25) -- (10, \coord+6) -- (8, \coord+6)-- (8,\coord+3.25);
    \pattern [dashed, pattern=north west lines, pattern color=pallido] (10.5,\coord+3.25) -- (12.5, \coord+3.25) -- (12.5, \coord+6) -- (10.5, \coord+6)-- (10.5,\coord+3.25);
  
    \draw[thin, violet, ->] (8, \coord+3.25) -- (9, \coord+3.25);
    \draw[thin, violet] (9, \coord+3.25) -- (10, \coord+3.25);
    \draw[thin, orange, ->] (10.5, \coord+3.25) -- (11.5, \coord+3.25);
    \draw[thin, orange] (11.5, \coord+3.25) -- (12.5, \coord+3.25);
    \draw[black, ultra thin] (8, \coord+3.25) -- (8, \coord+6);
    \draw[black, ultra thin] (10, \coord+3.25) -- (10, \coord+6);
    \draw[black, ultra thin] (10.5, \coord+3.25) -- (10.5, \coord+6);
    \draw[black, ultra thin] (12.5, \coord+3.25) -- (12.5, \coord+6);
    
    \fill (8,\coord+3.25) circle (1pt);
    \fill (10,\coord+3.25) circle (1pt);
    \fill (10.5,\coord+3.25) circle (1pt);
    \fill (12.5,\coord+3.25) circle (1pt);
    
    \pattern [dashed, pattern=north west lines, pattern color=pallido] (8,\coord+2.75) -- (9, \coord+2.75) -- (9, \coord) -- (8, \coord)-- (8,\coord+2.75);
    \pattern [dashed, pattern=north west lines, pattern color=pallido] (9.25,\coord+2.75) -- (10.25, \coord+2.75) -- (10.25, \coord) -- (9.25, \coord)-- (9.25,\coord+2.75);
    \pattern [dashed, pattern=north west lines, pattern color=pallido] (10.5,\coord+2.75) -- (12.5, \coord+2.75) -- (12.5, \coord) -- (10.5, \coord)-- (10.5,\coord+2.7);
  
    \draw[thin, blue, ->] (8, \coord+2.75) -- (8.5, \coord+2.75);
    \draw[thin, blue] (8.5, \coord+2.75) -- (9, \coord+2.75);
    \draw[thin, cyan, ->] (9.25, \coord+2.75) -- (9.75, \coord+2.75);
    \draw[thin, cyan] (9.75, \coord+2.75) -- (10.25, \coord+2.75);
    \draw[thin, yellow, ->] (10.5, \coord+2.75) -- (11.5, \coord+2.75);
    \draw[thin, yellow] (11.5, \coord+2.75) -- (12.5, \coord+2.75);
    \draw[black, ultra thin] (8, \coord+2.75) -- (8, \coord);
    \draw[black, ultra thin] (9, \coord+2.75) -- (9, \coord);
    \draw[black, ultra thin] (9.25, \coord+2.75) -- (9.25, \coord);
    \draw[black, ultra thin] (10.25, \coord+2.75) -- (10.25, \coord);
    \draw[black, ultra thin] (10.5, \coord+2.75) -- (10.5, \coord);
    \draw[black, ultra thin] (12.5, \coord+2.75) -- (12.5, \coord);
   
    \node at (1.7, \coord+5.5) {$\overline r_3$};
    
    \node at (8.25, \coord+4.5) {$r_3^+$};
    \node at (9.75, \coord+4.5) {$r_3^-$};
    \node at (10.75, \coord+4.5) {$r_4^+$};
    \node at (12.25, \coord+4.5) {$r_4^-$};
    
    \node at (9, \coord+3.5) {$e_3^-$};
    \node at (11.5, \coord+3.5) {$e_4^-$};
    \node at (11.5, \coord+2.5) {$e_5^+$};
    \node at (9.75, \coord+2.5) {$e_6^+$};
    \node at (8.5, \coord+2.5) {$e_7^+$};

    \node at (8.25, \coord+1.5) {$r_7^-$};
    \node at (8.75, \coord+1) {$r_7^+$};
    \node at (9.5, \coord+1.5) {$r_6^-$};
    \node at (10, \coord+1) {$r_6^+$};
    \node at (10.75, \coord+1.5) {$r_5^-$};
    \node at (12.25, \coord+1.5) {$r_5^+$};

    \node at (0.5, \coord+4) {$p-2$};
    \fill (4,\coord+3.25) circle (1pt);
    \fill (3,\coord+3.15) circle (1pt);
    \fill (4,\coord+2.75) circle (1pt);
    \fill (6,\coord+3) circle (1pt);
    \fill (2,\coord+3) circle (1pt);
    \fill (2,\coord+1) circle (1pt);

    \fill (8,\coord+2.75) circle (1pt);
    \fill (9,\coord+2.75) circle (1pt);
    \fill (9.25,\coord+2.75) circle (1pt);
    \fill (10.25,\coord+2.75) circle (1pt);
    \fill (10.5,\coord+2.75) circle (1pt);
    \fill (12.5,\coord+2.75) circle (1pt);

    }
\end{tikzpicture}
\caption{How to realize a representation $\chi$ as the period character of some translation structure in $\mathcal{H}_1(np;-p,\dots,-p)$ when the $\chi_n$-part is rational and some (not all) punctures have zero residue.}
\label{fig:chiratpunctwithzerores}
\end{figure}
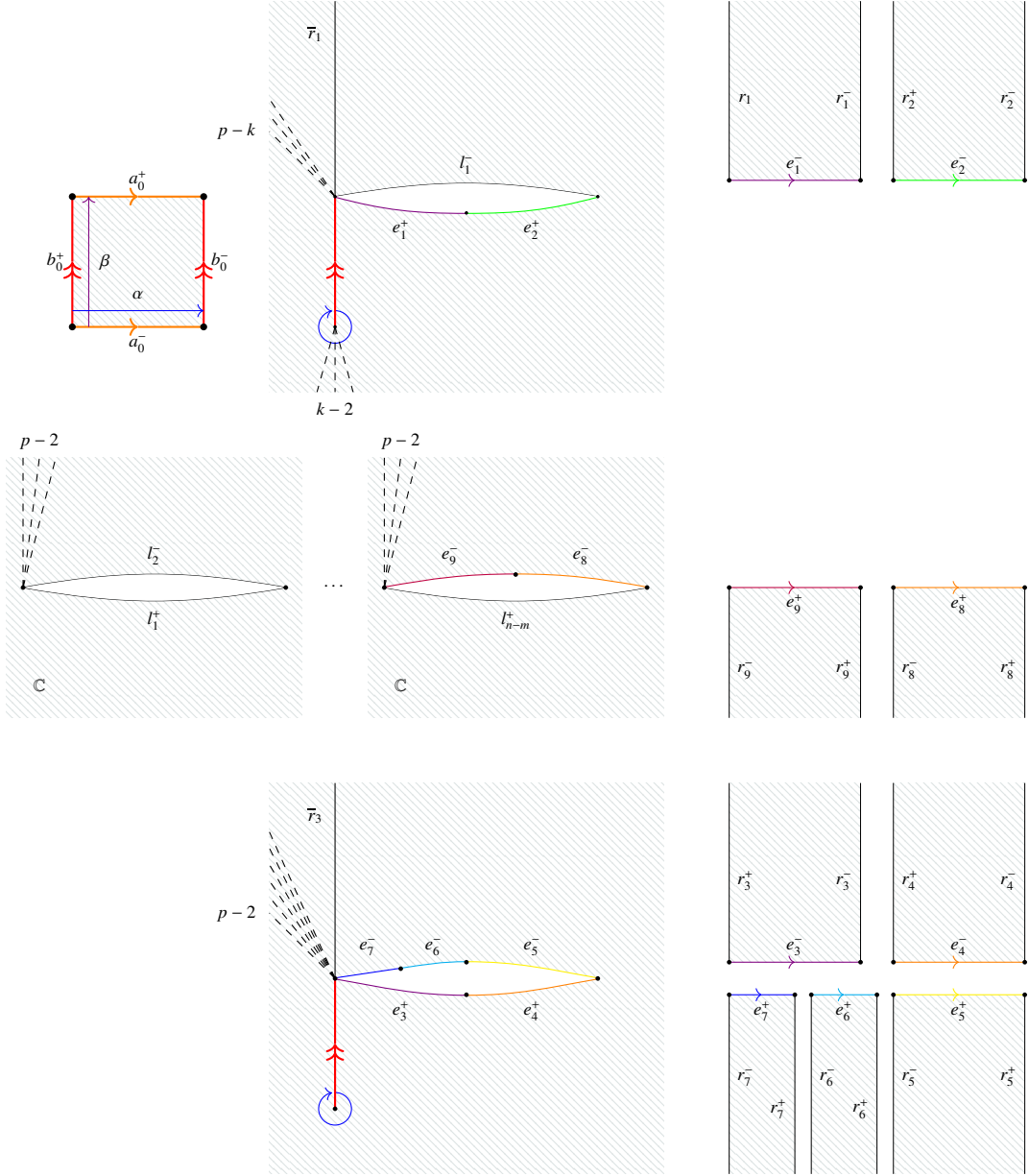

\noindent In the case that $\chi_m$ is rational and it does not satisfy the reordering property, we need to modify a little our construction in paragraph \S\ref{chirat} as shown in Figure \ref{fig:chiratpunctwithzerores}. We adopt the same notation as above with the only difference being that $n$ is now replaced by $m$. Let $w_i=\chi(\gamma_i)$ for $i=1,\dots,m$ and consider the collection of $W=\{w_1,\dots,w_m\}\subset \C^*$. Since $\chi_m$ does not satisfy the reordering property, there are two integers $s<t$ such that $W=W_1\cup W_2=\{w_1,\dots,w_s, w_{t+1},\dots,w_m\}\cup\{ w_{s+1},\dots,w_t\}$. We introduce the same collections of (possibly unbounded) polygons that comprise the following pieces. A parallelogram $\mathcal{P}$ determined by the chain \eqref{parchirat}. A copy of $(\C,\,dz)$ slit along the edges $b=\chi(\beta)$ and $e$ and along an infinite ray $\overline r_1$. A second copy of $(\C,\,dz)$ slit along the edges $b=\chi(\beta)$ and $\overline e$ and along an infinite ray $\overline r_{s+1}$ and, finally, $m$ strips $\mathcal{S}_l$ for $l=1,\dots,m$. As in paragraph \S\ref{chirat}, we slit $e$ and we partition the edge $e^+$ into $s$ sub-segments each of length $|w_l|$ for $l=1,\dots,s$. Unlike above, here we denote $e^-=l_1^-$. We next introduce $n-m$ new pieces, each of which is a copy of $(\C,\,dz)$. We slit them along $e$, \textit{i.e.} a segment congruent and with the same direction as $e$ and we denote the resulting edges $l_i^+$ and $l_{i+1}^-$ for $i=1,\dots,n-m$. Finally, we partition $l_{n-m+1}^-$ into $n-t$ segments, say $e_l^-$, each of which of length $|w_l|$ for $l=t+1,\dots,n$. Now we paste all the pieces as done above by identifying the edges with the same label (and opposite sign) and eventually bubbling copies of $(\C,\,dz)$ in order to have all poles of order $p$. The resulting translation surface lies in $\mathcal{H}_1(np;-p,\dots,-p)$. In particular, bubbling copies of $(\C,\,dz)$ properly, we can realize a structure so that its rotation number is $k$ as desired. The case of non-positive volume works \textit{mutatis mutandis}. 

\begin{rmk}
Since our construction relies on the discussion in paragraph \S\ref{chirat}, similarly in this case there is an exceptional case not covered by the construction above, which is to realize a representation of non-trivial-ends type $\chi\colon\shomolzon\longrightarrow \C$ as the holonomy of some genus one differential in the connected component of the stratum $\mathcal{H}_1(2n, -2,\dots,-2)$ of translation surfaces with rotation number one. However, this issue can be solved by using the same construction mentioned in Remark \ref{excaserat} by adding $n-m$ copies of $(\C,\,z^{p-2}dz)$ each of which is slit along $e$.
\end{rmk}

\subsection{General cases}\label{morezeros} We finally consider strata of genus one differentials with poles of different orders and possibly multiple zeros. The following statements are now corollaries of the Lemmas proved in the previous Sections \S\ref{genusoneordertwo} - \S\ref{resnotzero}.

\begin{cor}\label{genpol}
Let $\chi\colon\shomolzon\longrightarrow \C$ be any non-trivial representation. If $\chi$ can be realized in the stratum $\mathcal{H}_1(m;-p_1,\dots,-p_n)$ then it appears as the period character of a translation surface with poles in each connected component. 
\end{cor}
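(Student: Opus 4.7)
The plan is to deduce this corollary from the uniform-pole-order case treated by the preceding lemmas, using a bubbling argument. Set $d = \gcd(m, p_1, \ldots, p_n)$, so by the classification in \S\ref{mscc} the connected components of $\mathcal{H}_1(m; -p_1, \ldots, -p_n)$ are indexed by the divisors of $d$ (with the caveat of Remark \ref{conscases} for signature $(m;-m)$). If $d = 1$ the stratum is connected and the result follows from \cite[Theorem B]{CFG}, so assume $d \ge 2$ and fix a target rotation number $k$ dividing $d$. Writing $p_i = q_i d$ one has $\gcd(q_1, \ldots, q_n) = 1$ and $m = d \sum_i q_i$.

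The first step is to realize $\chi$ in the auxiliary uniform-pole-order stratum $\mathcal{H}_1(nd; -d, \ldots, -d)$ with rotation number $k$. This is available because the residues $\chi(\gamma_i)$ at each puncture are intrinsic to $\chi$ and do not depend on the pole orders, and because $d \ge 2$ forces every pole to have order at least two in the uniform stratum, ruling out the integral-character obstruction of \cite[Theorem D]{CFG}. The realization is then produced by Lemmas \ref{onepuncttorusp2} and \ref{onepuncttorushigherp} if $\chi$ is of trivial-ends type, and by the constructions in \S\ref{resnotzero} otherwise. I denote the resulting translation surface by $(X_0, \omega_0)$; it carries a single zero of order $nd$, $n$ poles of order $d$, and a distinguished pair of handle generators $\alpha, \beta$ witnessing $k = \gcd\bigl(\textnormal{Ind}(\alpha), \textnormal{Ind}(\beta), d\bigr)$.

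The second step is to adjust the pole orders by bubbling planes. For each $i \in \{1, \ldots, n\}$ I would select an embedded geodesic ray $r_i$ on $(X_0, \omega_0)$ starting from the unique zero and pointing toward the $i$-th pole, chosen so that distinct $r_i$ are pairwise disjoint away from the zero and avoid the representatives of $\alpha$ and $\beta$ (always possible, since the handle generators lie in a compact region while each pole is a puncture with infinite neighborhood). Along each $r_i$ I would glue a copy of $(\mathbb{C}, z^{p_i - d - 1}\,dz)$ as in Definition \ref{bubplane}, equivalently bubbling $p_i - d$ copies of $(\mathbb{C}, dz)$. This surgery increases the order of the $i$-th pole and of the unique zero both by $p_i - d$, while preserving the topology and the periods of every absolute homology cycle. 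The resulting structure $(X, \omega)$ therefore lies in $\mathcal{H}_1(m; -p_1, \ldots, -p_n)$ and realizes the period character $\chi$.

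It remains to verify the rotation number. Since the rays $r_i$ were chosen disjoint from $\alpha$ and $\beta$, the indices $\textnormal{Ind}(\alpha)$ and $\textnormal{Ind}(\beta)$ computed on $(X, \omega)$ agree with those on $(X_0, \omega_0)$. Using the identity $\gcd(m, p_1, \ldots, p_n) = d\,\gcd\bigl(\sum_i q_i, q_1, \ldots, q_n\bigr) = d$, one obtains
\begin{equation*}
\textnormal{rot}(X, \omega) = \gcd\bigl(\textnormal{Ind}(\alpha), \textnormal{Ind}(\beta), m, p_1, \ldots, p_n\bigr) = \gcd\bigl(\textnormal{Ind}(\alpha), \textnormal{Ind}(\beta), d\bigr) = k,
\end{equation*}
as desired. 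The only place where genuine work remains is the case-by-case check that the auxiliary realization in $\mathcal{H}_1(nd; -d, \ldots, -d)$ fits into the schematic of \S\ref{genusoneordertwo}-\S\ref{resnotzero}; once that is in hand the bubbling reduction is essentially formal.
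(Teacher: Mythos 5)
Your argument is correct and coincides with the paper's own proof: both reduce to the uniform stratum $\mathcal{H}_1(nd;-d,\dots,-d)$ with $d=\gcd(m,p_1,\dots,p_n)=\gcd(p_1,\dots,p_n)$, realize the prescribed rotation number there via the preceding lemmas, and then bubble $p_i-d$ copies of $(\mathbb{C},dz)$ along rays from the zero to each pole chosen so that the indices of the handle generators are unchanged. The paper simply makes the choice of rays explicit by pointing to the dotted rays leaving the points $Q_i$ in its figures, which is the same device you invoke.
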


\begin{proof}
We begin with noticing that $m=p_1+\cdots+p_n$ and hence the following condition holds 
\begin{equation}
    \gcd(m,p_1,\dots,p_n)=\gcd(p_1,\dots,p_n)=p.
\end{equation}
\noindent For any $k$ dividing $p$, we can realize the representation $\chi$ as the holonomy of some translation structure $(X,\omega)$ in the stratum $\mathcal{H}_1(np; -p,\dots,-p)$ with rotation number $k$ as done in the previous sections. According to our constructions, we can always find $n$ rays $r_i$ joining the zero of $\omega$ with the puncture $P_i$. By bubbling $p_i-p$ copies of $(\C,\,dz)$ along the ray $r_i$ we can realize a translation surface $(Y,\xi)\in\mathcal{H}_1(m;-p_1,\dots,-p_n)$. In particular, we can choose these rays in such a way that the rotation number remains unaffected after any bubbling -- for instance, by using the notation in Sections \S\ref{genusoneordertwo} - \S\ref{genusoneorderhigherthantwo}, choose the ray $r_i$ as any ray leaving the point $Q_i$, see the dotted lines in Figures \ref{rotpk1}, \ref{rotnk1}, \ref{rotpek2}, and \ref{rotpok2}. This completes the proof.
\end{proof}

\begin{cor}\label{multzeros}
Let $\chi\colon\shomolzon\longrightarrow \C$ be any non-trivial representation. If $\chi$ can be realized in the stratum $\mathcal{H}_1(m_1,\dots,m_k;-p_1,\dots,-p_n)$ then it appears as the period character of a translation surface with poles in each connected component. 
\end{cor}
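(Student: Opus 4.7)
The plan is to derive Corollary \ref{multzeros} directly from Corollary \ref{genpol} via the surgery of breaking a zero introduced in Section \S\ref{sec:zerobreak}. The Gauss--Bonnet relation gives $m \defeq m_1+\cdots+m_k = p_1+\cdots+p_n$, so any positive divisor $r$ of $\gcd(m_1,\dots,m_k,p_1,\dots,p_n)$ automatically divides $\gcd(m,p_1,\dots,p_n)$. Hence, for any such $r$, Corollary \ref{genpol} produces a translation surface $(X,\omega)\in\mathcal{H}_1(m;-p_1,\dots,-p_n)$ with period character $\chi$ and rotation number $r$.

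Next, I would apply the breaking-a-zero surgery successively to the unique zero of $\omega$ of order $m$, splitting it into $k$ zeros of orders $m_1,\dots,m_k$. Since the surgery is purely local (taking place inside an arbitrarily small $\varepsilon$-neighbourhood of the branch point), the resulting structure $(Y,\xi)$ has the same developing map outside this neighbourhood, so its period character is still $\chi$, and by construction it lies in $\mathcal{H}_1(m_1,\dots,m_k;-p_1,\dots,-p_n)$.

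The final ingredient is the invariance of the rotation number under breaking a zero, which is exactly the first Lemma at the end of Section \S\ref{sec:zerobreak}: provided $r\mid\gcd(m_1,\dots,m_k,p_1,\dots,p_n)$, which was precisely how we chose $r$, the structure $(Y,\xi)$ inherits the rotation number $r$ from $(X,\omega)$. Letting $r$ range over all positive divisors of $\gcd(m_1,\dots,m_k,p_1,\dots,p_n)$ then realizes $\chi$ in every connected component of the target stratum.

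There is no substantial obstacle, since all the heavy lifting has been done by Corollary \ref{genpol} and by the local nature of the breaking surgery. The only minor caveat is that the intermediate stratum $\mathcal{H}_1(m;-p_1,\dots,-p_n)$ may be of the exceptional form $\mathcal{H}_1(m;-m)$ (when $n=1$), in which Boissy's classification forbids the rotation number $r=m$; but this is harmless, because for $k\ge 2$ the divisibility $r\mid\gcd(m_1,\dots,m_k)$ forces $r\le \min_i m_i < m$, so the required component of the intermediate stratum always exists. When $k=1$, the statement reduces to Corollary \ref{genpol} with no surgery required.
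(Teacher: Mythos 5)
Your proposal is correct and follows essentially the same route as the paper: realize $\chi$ with the prescribed rotation number in the single-zero stratum $\mathcal{H}_1(m;-p_1,\dots,-p_n)$ via Corollary \ref{genpol}, then break the zero, invoking the local nature of the surgery and the invariance of the rotation number. Your extra caveat about the exceptional stratum $\mathcal{H}_1(m;-m)$ is a sound observation that the paper leaves implicit, and your resolution of it (for $k\ge2$ one has $r\le\min_i m_i<m$) is correct.
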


\begin{proof}
It is sufficient to observe that if $d=\gcd(m_1,\dots,m_k,p_1,\dots,p_n)$ then $d$ divides $m=m_1+\cdots+m_k$. Therefore we can realize $\chi$ as the period character of a translation surface with a single zero of order $m$ in the  stratum $\mathcal{H}_1(m;-p_1,\dots,-p_n)$. Then we can break the single zero as described in Section \S\ref{sec:zerobreak} to get the desired structure.
\end{proof}

\section{Higher genus meromorphic differentials with hyperelliptic structure}\label{sec:hgchyp}

\noindent We begin to prove Theorem \ref{mainthm} for surfaces of genus at least two and we shall complete the proof in Sections \S\ref{sec:hgcpar} and \S\ref{sec:trirep}. Given a non-trivial representation $\chi\colon\shomolzn\longrightarrow \C$, in the present section we shall determine whether $\chi$ appears as the period character of a hyperelliptic translation surface with poles, \textit{i.e.} a translation surface admitting a special symmetry of order two already introduced in Section \S\ref{sssec:hypinv}, see Definition \ref{hypdef}. More precisely, our aim is to prove the following

\begin{prop}\label{prop:hgdiffhyp}
Let $\chi$ be a non-trivial representation and suppose it arises as the period character of some meromorphic genus $g$ differential in a stratum admitting a hyperelliptic component. Then $\chi$ can be realized as the period character of some hyperelliptic translation surfaces with poles in the same stratum.
\end{prop}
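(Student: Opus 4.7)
The plan is to reduce to the case of a single zero of maximal order and then perform a $\pi$-symmetric construction. By Lemma \ref{lem:breakhyp}, breaking a zero is an isoperiodic surgery that preserves the hyperelliptic structure, so a hyperelliptic realization of $\chi$ in $\mathcal{H}_g(2m;-2p)$ (resp.\ $\mathcal{H}_g(2m;-p,-p)$) immediately produces one in $\mathcal{H}_g(m,m;-2p)$ (resp.\ $\mathcal{H}_g(m,m;-p,-p)$). Among the four classes of strata admitting a hyperelliptic component, it therefore suffices to handle $\mathcal{H}_g(2m;-2p)$ and $\mathcal{H}_g(2m;-p,-p)$.

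The guiding principle for the construction is that $\tau^{*}\omega=-\omega$ forces the hyperelliptic involution to act as $-\mathrm{id}$ on the absolute part of $H_1(\overline{X},\mathbb{Z})$. I would therefore build $(X,\omega)$ from a region of $\mathbb{C}$ invariant under a $\pi$-rotation $z\mapsto 2c-z$ about a chosen center $c$, so that this rotation descends to $\tau$ on the quotient. Using the mapping class group action of Section \S\ref{sec:mcga}, first choose a convenient system of handle generators $\{\alpha_i,\beta_i\}_{1\le i\le g}$ with non-vanishing periods $a_i=\chi(\alpha_i)$, $b_i=\chi(\beta_i)$, and, when helpful, with positive volume on each handle (via Corollary \ref{cor:posvol}). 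The plan is to assemble a $4g$-gon centered at $c$ whose edges are the vectors $\pm a_i,\pm b_i$ arranged so that the $\pi$-rotation permutes the edges in opposite pairs; identifying opposite edges by translation then yields a genus-$g$ translation structure with a single zero at the identified vertices, and the rotation descends to $\tau$.

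For the stratum $\mathcal{H}_g(2m;-2p)$ the pole is introduced as a $\tau$-fixed point: bubble a copy of $(\mathbb{C},z^{2p-1}dz)$ along a straight ray leaving the center $c$ together with its $\pi$-image, producing a single pole of order $2p$ whose residue is automatically zero by the residue theorem. For $\mathcal{H}_g(2m;-p,-p)$ the two poles are placed at $\tau$-conjugate positions outside the polygon, by gluing two copies of $(\mathbb{C},z^{p-1}dz)$ along a $\tau$-symmetric pair of rays; the prescribed opposite residues $\pm w$ of $\chi$ on the loops around the punctures are then installed by gluing a cylinder $\mathbb{E}^{2}/\langle z\mapsto z+w\rangle$ along a $\tau$-invariant bi-infinite geodesic, as in Section \S\ref{resnotzero} (relying on the analogue of Lemma \ref{exray} for our symmetric setting). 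Hyperellipticity is maintained because every surgery is executed simultaneously on two $\tau$-related pieces.

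The main obstacle will be ensuring the construction is carried out uniformly for all types of $\chi$: when $\chi$ is discrete, real-collinear, or has integer periods, the naive $\pi$-symmetric polygon may degenerate, and the building blocks have to be replaced by symmetric assemblies of half-planes, strips, and genus-one hyperelliptic pieces (every meromorphic genus-one differential is tautologically hyperelliptic via the elliptic involution $z\mapsto -z$). In each such sub-case the analysis will closely parallel the case-by-case treatment of Section \S\ref{genusonemero}, but the $\pi$-symmetry must be preserved at every step; the technical crux is verifying that all the degenerate constructions can be arranged $\tau$-equivariantly while still landing in the prescribed stratum.
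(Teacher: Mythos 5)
Your guiding idea — build the developed image with an order-two rotational symmetry so that the rotation descends to $\tau$, reduce to a single zero of maximal order, and then break that zero using Lemma~\ref{lem:breakhyp} — is exactly the paper's strategy. However, two of your steps would fail as stated.

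First, the reduction ``it suffices to handle $\mathcal{H}_g(2m;-2p)$ and $\mathcal{H}_g(2m;-p,-p)$'' is false in the simple-pole case $p=1$. By Proposition~\ref{prop:rankonechar}, an integral representation with residues $\pm w$ is realizable in a stratum with zero orders $(m_1,\dots,m_k)$ if and only if $|w|>\max\{m_i\}$; hence for $g<|w|\le 2g$ the representation $\chi$ satisfies the hypothesis of the proposition for $\mathcal{H}_g(g,g;-1,-1)$ but is \emph{not} realizable in $\mathcal{H}_g(2g;-1,-1)$ at all, so there is no single-zero surface to break. The paper must treat this case by an ad hoc $\tau$-symmetric half-strip construction that produces the two zeros of order $g$ directly (paragraph~\S\ref{par:hard}); your proposal omits it, and it cannot be recovered from the single-zero case. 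Second, your concrete building block is problematic: a compact centrally symmetric $4g$-gon with edge vectors $\pm a_i,\pm b_i$ need not exist (the vectors need not be in convex position for arbitrary periods — this is precisely what Lemma~\ref{lem:polygon} and the normalization $\arg(a_i),\arg(b_i)\in[-\frac{\pi}{2},\frac{\pi}{2})$ are for), and even when it does, it yields a \emph{closed} genus-$g$ surface, on which the surgery ``bubble $(\mathbb{C},z^{2p-1}dz)$ along a ray'' is unavailable (Definition~\ref{bubplane} requires an infinite embedded ray ending at a pole). Replacing it by a cross-gluing along a finite slit through $c$ creates two extra simple zeros at the slit endpoints, so you do not land in $\mathcal{H}_g(2m;-2p)$ with a single zero. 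The paper avoids both issues by taking the two $\tau$-related pieces to be \emph{broken half-planes} whose common boundary rays run off to infinity: the pole of order $2$ is built in at infinity from the start, the chains \eqref{eq:chainpolzer}--\eqref{eq:chainpolzerr} are automatically embedded once all periods point rightwards, and higher pole order is obtained by bubbling planes along genuinely infinite rays inside the half-planes.
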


\noindent According to Boissy, see \cite[Proposition 5.3]{BC} and Section \S\ref{mscc} above, a stratum admits a connected component of hyperelliptic translation surfaces if and only if it is one of the following
\begin{equation}
    \mathcal{H}_{g}(2m;-2p),\,\,\, \mathcal{H}_{g}(m, m;-2p),\,\,\, \mathcal{H}_{g}(2m;-p, -p),\,\,\, \mathcal{H}_{g}(m, m; -p, -p),
\end{equation}

\noindent for some $1\le p\le m$. Therefore, in what follows we assume that $\chi$ can be realized as the period character of some translation surface in one of those strata.  We shall consider two cases according to whether the representation $\chi$ is or is not of trivial-ends type. In Sections \S\ref{ssec:hypzeres}, \S\ref{ssec:hypnotzeres} and \S\ref{ssec:hypnotzeressimp} we prove Proposition \ref{prop:hgdiffhyp} for strata of meromorphic differentials with exactly one zero of maximal order and we derive the general case by breaking zeros, see Section \S\ref{sec:zerobreak}, as similarly done previously for genus one differentials. Finally, in Section \S\ref{ssec:nothypgenus2} we shall derive Theorem \ref{mainthm} for strata of genus two meromorphic differentials listed above.

\subsection{Trivial representation}\label{ssec:trirephyp} We study whether the trivial representation can appear as the period character of some hyperelliptic translation surfaces. In \cite[Theorem B]{CFG} the authors provided necessary and sufficient conditions for realizing the trivial representation in a given stratum $\mathcal{H}_g(m_1,\dots,m_k;-p_1,\dots,-p_n)$. One of these conditions relates the order of zeros with the order of poles as follows
\begin{equation}\label{eq:triconstr}
    m_j\le \sum_{i=1}^n p_i-n-1.
\end{equation}
In what follow, especially in Section \S\ref{sec:trirep}, we shall make a strong use of this constraint and we shall refer to it as \textit{Hurwitz type inequality}. For $\mathcal{H}_g(2m;-2p)$, this formula implies that $2m\le 2p-2$ which never holds for positive genus surfaces; in fact Remark \ref{gbcond} implies $g\le0$. For the same reason, the trivial representation cannot be realized in  strata of hyperelliptic type as $\mathcal{H}_g(2m;-p,-p)$. However, a few exceptions appear for strata $\mathcal{H}_g(m,m;-2p)$ and $\mathcal{H}_g(m,m;-p,-p)$. We shall consider them carefully in Section \S\ref{sec:trirep} below. The following Lemma holds.

\begin{lem}
The trivial representation cannot be realized as the period character of any hyperelliptic translation surface with poles and a single zero.
\end{lem}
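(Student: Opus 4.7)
The plan is to observe that the lemma is an immediate consequence of the Hurwitz type inequality \eqref{eq:triconstr} from \cite[Theorem B]{CFG}, combined with Boissy's classification \eqref{hypcomp} of strata admitting a hyperelliptic component and the Gauss--Bonnet identity \eqref{gbeq}. In fact the paragraph preceding the lemma has already carried out the key numerical check, and what remains is simply to formalize it.

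First I would list the hyperelliptic strata with a single zero. By \eqref{hypcomp}, these are exactly $\mathcal{H}_g(2m;-2p)$ and $\mathcal{H}_g(2m;-p,-p)$, with $1\le p\le m$ and $g\ge 1$. Any hyperelliptic translation surface with poles and a single zero sits in one of these two strata, so it suffices to show that the whole stratum contains no differential with trivial period character.

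Next I would apply the Hurwitz type inequality \eqref{eq:triconstr} in each of the two cases. In the first case, the constraint $2m\le 2p-1-1=2p-2$ must hold; combined with the Gauss--Bonnet identity $2m-2p=2g-2$ from Remark \ref{gbcond}, this forces $2g-2\le -2$, i.e. $g\le 0$, contradicting $g\ge 1$. In the second case, \eqref{eq:triconstr} reads $2m\le 2p-3$ while $2m-2p=2g-2$ gives $2m=2p+2g-2$; substituting yields $2g-2\le -3$, again incompatible with $g\ge 1$. In both cases the obstruction rules out the entire stratum, hence in particular its hyperelliptic component.

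There is no real obstacle here: the argument is arithmetic once one has the Hurwitz type inequality at hand. The only subtlety worth emphasizing is that the hypothesis of a \emph{single} zero is essential; as anticipated in the discussion before the lemma, the strata $\mathcal H_g(m,m;-2p)$ and $\mathcal H_g(m,m;-p,-p)$ do admit the trivial representation in certain exceptional cases (which will be handled in Section \S\ref{sec:trirep}), and there the inequality \eqref{eq:triconstr} with $m_j=m$ rather than $m_j=2m$ no longer leads to a contradiction. Thus the proof reduces to recording the two one-line computations above.
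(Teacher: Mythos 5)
Your proposal is correct and is essentially identical to the paper's own argument: the paragraph immediately preceding the lemma carries out exactly this combination of the Hurwitz type inequality \eqref{eq:triconstr} with the Gauss--Bonnet relation \eqref{gbeq} for the two single-zero hyperelliptic strata $\mathcal{H}_g(2m;-2p)$ and $\mathcal{H}_g(2m;-p,-p)$, deducing $g\le 0$ in both cases. Your remark about why the two-zero strata escape the obstruction also matches the paper's discussion.
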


\smallskip

\subsection{Poles with zero residue}\label{ssec:hypzeres} In the present subsection we shall consider representations of trivial-end type, see Definition \ref{def:triend}, and we distinguish two cases depending on whether $n=1$ or $n=2$.

\subsubsection{One higher order pole}\label{sssec:hyponehop} According to Section \S\ref{ssec:trirephyp}, a representation $\chi\colon\shomolzo\longrightarrow \C$ can be realized in a stratum $\mathcal{H}_g(2m;-2p)$, for any $p\ge1$, as long as $\chi$ is non-trivial. Therefore, our aim here is to realize any such a representation as the period character of some hyperelliptic translation surface with poles. Let $\{\alpha_1,\beta_1,\dots,\alpha_g,\beta_g\}$ be a system of handle generators for $\shomolzo$, see Definition \ref{def:syshandle}. Given a non-trivial representation $\chi\colon\shomolzo\longrightarrow \C$, we define
\begin{equation}
    \chi(\alpha_i)=a_i \,\,\text{ and }\,\, \chi(\beta_i)=b_i.
\end{equation}

\noindent Since $\chi$ is non-trivial, Lemma \ref{lem:allhandholnonzero} applies, and we can assume that both $a_i$ and $b_i$ are non-zero for all $i=1,\dots,g$. Up to replacing $\{\alpha_i,\beta_i\}$ with either $\{\alpha_i^{-1},\beta_i^{-1}\}$, $\{\beta_i,\alpha_i^{-1}\}$, or $\{\beta_i^{-1},\alpha_i\}$ if necessary -- notice that all these changes can be performed with a mapping class $\phi\in\modul$ -- we can also assume that both $\arg(a_i),\arg(b_i)\in \left[-\frac\pi2,\,\frac\pi2\,\right[$, so all vectors point rightwards. Let $P_0\in\C$ be any point and consider the chain, say $\mathcal{C}_1$, of segments defined as:

\begin{equation}\label{eq:chainpolzer}
    P_0\mapsto P_0+a_1=P_1\mapsto P_0+a_1+b_1=P_2\mapsto P_0+a_1+b_1+a_2=P_3\mapsto\cdots\mapsto P_0+\sum_{i=1}^g (a_i+b_i)=P_{2g}.
\end{equation}

\noindent Let $r_1$ be a half-ray leaving from $P_0$ and parallel to $\mathbb R^-=\{x\in\mathbb R\,|\, x<0\}$, hence pointing leftwards. Similarly, we define $r_2$ as the half-ray leaving from $P_{2g}$ and parallel to $\mathbb R^+$, hence pointing rightwards. We define $H_1$ as the (broken) half-plane bounded by the half-rays $r_1$ and $r_2$ and the chain $\mathcal C_1$ on their left, \textit{i.e.} $H_1$ is bounded by $r_1^-$, $r_2^-$ and $\mathcal{C}_1^-$, see Figure \ref{fig:hyponepole}, where the sign is used according to the usual convention. 

\smallskip

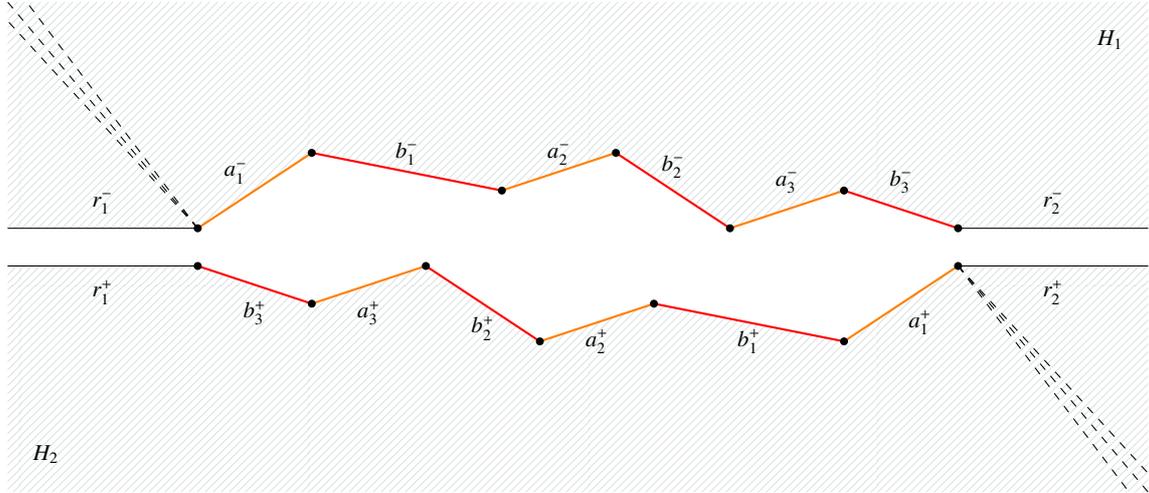
\begin{figure}[!ht] 
\centering
\begin{tikzpicture}[scale=1, every node/.style={scale=0.85}]
\definecolor{pallido}{RGB}{221,227,227}

    \pattern [pattern=north east lines, pattern color=pallido]
    (-7.5, 0)--(-5,0)--(-3.5,1)--(-1,0.5)--(0.5,1)--(2,0)--(3.5,0.5)--(5,0)--(7.5,0)--(7.5,3)--(-7.5, 3)--(-7.5, 0);
    \draw [thin, black](-7.5, 0) -- (-5,0);
    \draw [thick, orange] (-5,0)--(-3.5,1);
    \draw [thick, red] (-3.5,1)--(-1,0.5);
    \draw [thick, orange] (-1,0.5)--(0.5,1);
    \draw [thick, red] (0.5,1)--(2,0);
    \draw [thick, orange] (2,0)--(3.5,0.5);
    \draw [thick, red] (3.5,0.5)--(5,0);
    \draw [thin, black](5, 0) -- (7.5,0);
    \fill (-5,0) circle (1.5pt);
    \fill (-3.5,1) circle (1.5pt);
    \fill (-1,0.5) circle (1.5pt);
    \fill (0.5,1) circle (1.5pt);
    \fill (2,0) circle (1.5pt);
    \fill (3.5,0.5) circle (1.5pt);
    \fill (5,0) circle (1.5pt);

    \pattern [pattern=north east lines, pattern color=pallido]
    (-7.5, -0.5)--(-5,-0.5)--(-3.5,-1)--(-2,-0.5)--(-0.5,-1.5)--(1,-1)--(3.5,-1.5)--(5,-0.5)--(7.5, -0.5)--(7.5, -3.5)--(-7.5, -3.5)--(-7.5, -2);
    \draw [thin, black](-7.5, -0.5) -- (-5, -0.5);
    \draw [thick, red] (-5,-0.5)--(-3.5,-1);
    \draw [thick, orange] (-3.5,-1)--(-2,-0.5);
    \draw [thick, red] (-2,-0.5)--(-0.5,-1.5);
    \draw [thick, orange] (-0.5,-1.5)--(1,-1);
    \draw [thick, red] (1,-1)--(3.5,-1.5);
    \draw [thick, orange] (3.5,-1.5)--(5,-0.5);
    \draw [thin, black](5, -0.5) -- (7.5, -0.5);
    \fill (-5,-0.5) circle (1.5pt);
    \fill (-3.5,-1) circle (1.5pt);
    \fill (-2,-0.5) circle (1.5pt);
    \fill (-0.5,-1.5) circle (1.5pt);
    \fill (1,-1) circle (1.5pt);
    \fill (3.5,-1.5) circle (1.5pt);
    \fill (5,-0.5) circle (1.5pt);
    
    \draw [dashed, black] (-5,0)--(-7.5,2.75);
    \draw [dashed, black] (-5,0)--(-7.5,3);
    \draw [dashed, black] (-5,0)--(-7.25,3);
    
    \draw [dashed, black] (5,-0.5)--(7.5,-3.25);
    \draw [dashed, black] (5,-0.5)--(7.5,-3.5);
    \draw [dashed, black] (5,-0.5)--(7.25,-3.5);
    
    \node at (7, 2.5) {$H_1$};
    \node at (-7, -3) {$H_2$};
    
    \node at (-4.5, 0.75) {$a_1^-$};
    \node at (4.5, -1.25) {$a_1^+$};
    \node at (-2.25, 1) {$b_1^-$};
    \node at (2.25, -1.5) {$b_1^+$};
    \node at (-0.25, 1) {$a_2^-$};
    \node at (0.25, -1.5) {$a_2^+$};
    
    \node at (1.25, 0.825) {$b_2^-$};
    \node at (-1.25, -1.325) {$b_2^+$};
    \node at (2.75, 0.625) {$a_3^-$};
    \node at (-2.75, -1.125) {$a_3^+$};
    \node at (4.25, 0.625) {$b_3^-$};
    \node at (-4.25, -1.125) {$b_3^+$};
    
    \node at (-6.25, 0.35) {$r_1^-$};
    \node at (6.25, 0.35) {$r_2^-$};
    \node at (6.25, -0.85) {$r_2^+$};
    \node at (-6.25, -0.85) {$r_1^+$};
    
\end{tikzpicture}
\caption{Realization of a hyperelliptic genus $g$ meromorphic differential with a single zero of order $2m=2g+2p-2$, a single pole of order $2p$, and prescribed periods. The figure depicts the case $g=3$.}
\label{fig:hyponepole}
\end{figure}

\noindent Let us now define a second chain, say $\mathcal{C}_2$ as follows: 
\begin{equation}\label{eq:chainpolzerr}
    P_0\mapsto P_0+b_g=P_1'\mapsto P_0+b_g+a_g=P_2'\mapsto P_0+b_g+a_g+b_{g-1}=P_3'\mapsto\cdots\mapsto P_0+\sum_{i=1}^g (a_i+b_i)=P_{2g}.
\end{equation}
Notice that it differs from $\mathcal{C}_1$ by a rotation of order two about the midpoint of the segment $\overline{P_0\,P_{2g}}$. Moreover, the half-rays $r_1$ and $r_2$ are swapped by this rotation. Define $H_2$ as the (broken) half-plane bounded by the rays $r_1$ and $r_2$ and the chain $\mathcal{C}_2$ on their right, \textit{i.e.} $H_2$ is bounded by the rays $r_1^+$ and $r_2^+$ and $\mathcal{C}_2^+$. 

\smallskip

\noindent Next, we identify the rays $r_1^+$ and $r_1^-$ together. In the same fashion, we also identify the rays $r_2^+$ and $r_2^-$ together. The resulting is a $4g$-gon bounding a topological immersed disk on the Riemann sphere $\cp$ punctured at the infinity. We finally identify the edges with the same label, \textit{i.e.} we identify $a_i^+$ with $a_i^-$ and $b_i^+$ with $b_i^-$ for all $i=1,\dots,g$. The resulting space is a topological genus $g$ surface with one puncture equipped with a translation structure with one branch point of order $2g$ and one pole of order $2$ and period character $\chi$. By construction, such a structure admits a non-trivial automorphism of order two, namely an involution induced by the rotation of order two that exchanges $C_1$ with $C_2$. Therefore the resulting structure is hyperelliptic as desired, and lies in the stratum $\mathcal{H}_g(2g;-2)$. 

\smallskip 

\noindent In order to get a structure in the stratum $\mathcal{H}_g(2m;-2p)$ we need to modify the broken half-planes as follows. Consider on $H_1$ a ray leaving from $P_0$ with angle $0<\theta<\frac{\pi}{2}$ with respect to $r_1^-$ and bubble $p-1$ copies of $(\C,\,dz)$, see Definition \ref{bubplane}. Similarly, in $H_2$ consider a ray leaving from $P_{2g}$ with angle $\theta$ with respect to $r_2^+$ and then bubble $p-1$ copies of $(\C,\,dz)$. Then proceed as above. The resulting space is again a genus $g$ surface equipped with a translation structure with period character $\chi$ in the stratum $\mathcal{H}_g(2m;-2p)$ by construction, where $m=g+p-1$.

\smallskip

\noindent By breaking the zero of the structure obtained as above as two zeros both of order $m$, Lemma \ref{lem:breakhyp} implies that the resulting translation surface lies in the hyperelliptic component of $\mathcal{H}_g(m,m;-2p)$ and it has period character $\chi$ as desired. 

\subsubsection{Two higher order poles}\label{sssec:hyptwohop} Let $\chi\colon\textnormal{H}_1(S_{g,\,2},\,\Z)\longrightarrow \C$ be a representation of trivial-ends type. We aim to realize $\chi$ as the period character of some hyperelliptic translation surface in the stratum $\mathcal{H}_g(2m;-p,-p)$, where $p>1$. Again, according to Section \S\ref{ssec:trirephyp}, we can assume $\chi$ to be non-trivial because the trivial representation cannot be realized in any strata of the form $\mathcal{H}_g(2m;-p,-p)$ and $\mathcal{H}_g(m,m;-p,-p)$.

\smallskip

\noindent This case follows from a modification of the construction developed in Section \S\ref{sssec:hyponehop}. We shall adopt the same notation as above. Given a non-trivial representation $\chi$ as above, we define
\begin{equation}
    \chi(\alpha_i)=a_i \,\,\text{ and }\,\, \chi(\beta_i)=b_i.
\end{equation}
Once again, since $\chi$ is non-trivial, Lemma \ref{lem:allhandholnonzero} applies, and hence we can assume that both $a_i$ and $b_i$ are non-zero and $\arg(a_i),\arg(b_i)\in \left[-\frac\pi2,\,\frac\pi2\,\right[$ for all $i=1,\dots,g$. Given any point $P_0\in\C$, we define the broken half-plane $H_1$ exactly as in Section \S\ref{sssec:hyponehop}, \textit{i.e.} a half-plane bounded by two half-rays $r_1$ and $r_2$ and a chain of segments, say $\mathcal{C}_1$, defined as in \eqref{eq:chainpolzer}, joining $P_0$ with $P_{2g}=P_0+\sum (a_i+b_i)$. Notice that $r_1\cup\mathcal{C}_1\cup r_2$ bounds $H_1$ on its left.

\smallskip

\noindent In the same fashion, we define the broken half-plane $H_2$ exactly as in Section \S\ref{sssec:hyponehop}, \textit{i.e.} a half-plane bounded by two half-rays $r_1$ and $r_2$ and a chain of segments, say $\mathcal{C}_2$, defined as in \eqref{eq:chainpolzerr}, joining the points $P_0$ and $P_{2g}$. Notice that, in this case, $r_1\cup\mathcal{C}_2\cup r_2$ bounds $H_2$ on its right.

\smallskip

\noindent Next we single out two broken half-planes, say $H_3$ and $H_4$, defined as follows. Consider first the unique segment, say $e$, joining $P_0$ and $P_{2g}$. Recall that $\arg(a_i),\arg(b_i)\in \left[-\frac\pi2,\,\frac\pi2\,\right[$, so $\Re(P_{2g})\ge\Re(P_0)$ where $\Re(\cdot)$ denotes the real part. We extend $e$ with the half-ray $s_1$ leaving from $P_0$ and parallel to $\mathbb R^-$ and the half-ray $s_2$ leaving from $P_{2g}$ and parallel to $\mathbb R^+$. The chain $s_1\cup e\cup s_2$ is embedded in $\C$; in fact it bounds an embedded triangle with one vertex at the infinity and splits the complex plane in two half-planes. Let $H_3$ be the broken half-plane on the right of $s_1\cup e\cup s_2$ and let $H_4$ be the broken half-plane on the left. By design the following identification hold $s_1=r_1$ and $s_2=r_2$.

\smallskip

\noindent We finally glue these broken half-planes as follows, see Figure \ref{fig:hyptwopole}. The edges $e^+\subset H_3$ and $e^-\subset H_4$ are identified. Then, for $j=1,2$, we identify $r_j^-\subset H_1$ with $s_j^+\subset H_3$. Similarly, for $j=1,2$, we identify the rays $r_j^+\subset H_2$ and $s_j^-\subset H_4$. Finally, we identify $a_i^+$ with $a_i^-$ and $b_i^+$ with $b_i^-$ for all $i=1,\dots,g$. The resulting object is a translation surface with a single zero of order $2g+2$ and two poles both of order $2$ and zero residue. The resulting structure admits a hyperelliptic involution in the sense of Definition \ref{hypdef} that swaps $H_1\cup H_3$ with $H_2\cup H_4$.

\smallskip

\noindent In order to get a structure in the stratum $\mathcal{H}_g(2m;-p,-p)$ we can modify the broken half-planes as in Section \S\ref{sssec:hyponehop}. On $H_1$ we consider a ray leaving from $P_0$ with angle $0<\theta<\frac{\pi}{2}$ with respect to $r_1^-$ and bubble along it $p-1$ copies of $(\C,\,dz)$. Similarly, on $H_2$ we consider a ray leaving from $P_{2g}$ with angle $\theta$ with respect to $r_2^+$ and bubble $p-1$ copies of $(\C,\,dz)$. Then proceed as above. The resulting object is a genus $g$ surface equipped with a translation structure with period character $\chi$ in the stratum $\mathcal{H}_g(2m;-p,-p)$ by construction, where $m=g+p-1$.

\smallskip

\begin{figure}[!ht] 
\centering
\begin{tikzpicture}[scale=1, every node/.style={scale=0.85}]
\definecolor{pallido}{RGB}{221,227,227}

    \pattern [pattern=north east lines, pattern color=pallido]
    (-7.5, 0)--(-5,0)--(-3.5,1)--(-1,0.5)--(0.5,1)--(2,0)--(3.5,0.5)--(5,0)--(7.5,0)--(7.5,3)--(-7.5, 3)--(-7.5, 0);
    \draw [thin, black](-7.5, 0) -- (-5,0);
    \draw [thick, orange] (-5,0)--(-3.5,1);
    \draw [thick, red] (-3.5,1)--(-1,0.5);
    \draw [thick, orange] (-1,0.5)--(0.5,1);
    \draw [thick, red] (0.5,1)--(2,0);
    \draw [thick, orange] (2,0)--(3.5,0.5);
    \draw [thick, red] (3.5,0.5)--(5,0);
    \draw [thin, black](5, 0) -- (7.5,0);
    \fill (-5,0) circle (1.5pt);
    \fill (-3.5,1) circle (1.5pt);
    \fill (-1,0.5) circle (1.5pt);
    \fill (0.5,1) circle (1.5pt);
    \fill (2,0) circle (1.5pt);
    \fill (3.5,0.5) circle (1.5pt);
    \fill (5,0) circle (1.5pt);

    \pattern [pattern=north east lines, pattern color=pallido]
    (-7.5, -0.5)--(-5,-0.5)--(5, -0.5)--(7.5, -0.5)--(7.5, -2)--(-7.5, -2)--(-7.5, -0.5);
    \draw [thin, black](-7.5, -0.5) -- (-5, -0.5);
    \draw [thin, blue] (-5,-0.5)--(5,-0.5);
    \draw [thin, black](5, -0.5) -- (7.5, -0.5);
    \fill (-5,-0.5) circle (1.5pt);
    \fill (5, -0.5) circle (1.5pt);
    
    \pattern [pattern=north east lines, pattern color=pallido]
    (-7.5, -4)--(-5, -4)--(5, -4)--(7.5, -4)--(7.5, -2.5)--(-7.5, -2.5)--(-7.5, -4);
    \draw [thin, black] (-7.5, -4) -- (-5, -4);
    \draw [thin, blue] (-5, -4)--(5, -4);
    \draw [thin, black] (5, -4) -- (7.5, -4);

    \fill (-5,-4) circle (1.5pt);
    \fill (5, -4) circle (1.5pt);

    \pattern [pattern=north east lines, pattern color=pallido]
    (-7.5, -4.5)--(-5,-4.5)--(-3.5,-5)--(-2,-4.5)--(-0.5,-5.5)--(1,-5)--(3.5,-5.5)--(5,-4.5)--(7.5, -4.5)--(7.5, -7.5)--(-7.5, -7.5)--(-7.5, -6);
    \draw [thin, black](-7.5, -4.5) -- (-5, -4.5);
    \draw [thick, red] (-5,-4.5)--(-3.5,-5);
    \draw [thick, orange] (-3.5,-5)--(-2,-4.5);
    \draw [thick, red] (-2,-4.5)--(-0.5,-5.5);
    \draw [thick, orange] (-0.5,-5.5)--(1,-5);
    \draw [thick, red] (1,-5)--(3.5,-5.5);
    \draw [thick, orange] (3.5,-5.5)--(5,-4.5);
    \draw [thin, black](5, -4.5) -- (7.5, -4.5);
    \fill (-5,-4.5) circle (1.5pt);
    \fill (-3.5,-5) circle (1.5pt);
    \fill (-2,-4.5) circle (1.5pt);
    \fill (-0.5,-5.5) circle (1.5pt);
    \fill (1,-5) circle (1.5pt);
    \fill (3.5,-5.5) circle (1.5pt);
    \fill (5,-4.5) circle (1.5pt);
    
    \draw [dashed, black] (-5,0)--(-7.5,2.75);
    \draw [dashed, black] (-5,0)--(-7.5,3);
    \draw [dashed, black] (-5,0)--(-7.25,3);
    
    \draw [dashed, black] (5,-4.5)--(7.5, -7.25);
    \draw [dashed, black] (5,-4.5)--(7.5,  -7.5);
    \draw [dashed, black] (5,-4.5)--(7.25, -7.5);
    
    \node at (7, 2.5) {$H_1$};
    \node at (-7, -1.5) {$H_3$};
    \node at (7, -3) {$H_4$};
    \node at (-7, -7) {$H_2$};
    
    \node at (-4.5, 0.75) {$a_1^-$};
    \node at (4.5, -5.25) {$a_1^+$};
    \node at (-2.25, 1) {$b_1^-$};
    \node at (2.25, -5.5) {$b_1^+$};
    \node at (-0.25, 1) {$a_2^-$};
    \node at (0.25, -5.5) {$a_2^+$};
    
    \node at (1.25, 0.825) {$b_2^-$};
    \node at (-1.25, -5.325) {$b_2^+$};
    \node at (2.75, 0.625) {$a_3^-$};
    \node at (-2.75, -5.125) {$a_3^+$};
    \node at (4.25, 0.625) {$b_3^-$};
    \node at (-4.25, -5.125) {$b_3^+$};
    
    \node at (-6.25, 0.35) {$r_1^-$};
    \node at (6.25, 0.35) {$r_2^-$};
    \node at (6.25, -0.75) {$s_2^+$};
    \node at (-6.25, -0.75) {$s_1^+$};
    
    \node at (0, -0.75) {$e^+$};
    \node at (0, -3.65) {$e^-$};
    
    \node at (-6.25, -3.65) {$s_1^-$};
    \node at (6.25, -3.65) {$s_2^-$};
    \node at (6.25, -4.75) {$r_2^+$};
    \node at (-6.25, -4.75) {$r_1^+$};
    
\end{tikzpicture}
\caption{Realization of a hyperelliptic genus $g$ meromorphic differential with a single zero of order $2m=2g+2p-2$, two poles each of order $p\ge 2$, and prescribed periods. The figure depicts the case $g=3$.}
\label{fig:hyptwopole}
\end{figure}

\noindent Again, by breaking the zero of the structure obtained as above as two zeros both of order $m$, Lemma \ref{lem:breakhyp} implies that the resulting translation surface lies in the hyperelliptic component of $\mathcal{H}_g(m,m;-p,-p)$ and it has period character $\chi$ as desired. 

\smallskip

\subsection{Higher order poles with non-zero residue}\label{ssec:hypnotzeres} Suppose $p\ge2$ and let $\chi\colon\textnormal{H}_1(S_{g,\,2},\,\Z)\longrightarrow \C$ be a representation of \textit{non}-trivial-ends type. We aim to realize $\chi$ as the period character of some hyperelliptic translation surface in the stratum $\mathcal{H}_g(2m;-p,-p)$. In this case we realize a hyperelliptic translation surface with poles by extending the construction developed in \S\ref{sssec:hyptwohop}. We have already seen above that there always exists a system of handle generators such that $a_i,b_i\in\C^*$ and $\arg(a_i),\,\arg(b_i)\in\left[-\frac\pi2,\,\frac\pi2\,\right[$. Let $\gamma$ be a simple loop around a puncture, say $P$, since $\chi$ is of non-trivial-ends type we have that $\chi(\gamma)=w\in\C^*$. Notice that we can assume $\arg(w)\in \left[-\frac\pi2,\,\frac\pi2\,\right[$ if necessary. In fact, if $Q$ is the second puncture and $\delta$ is a simple loop around it then $\chi(\delta)=-w$. We shall distinguish two cases depending on whether $w$ is parallel to $v=\sum_i\,(a_i+b_i)$. Notice that, according to our choices, $\arg(v)\in\left[-\frac\pi2,\,\frac\pi2\,\right[$. We begin with the general case handled in the following subsection.

\subsubsection{Generic case: $v$ and $w$ are not parallel} Given a representation $\chi$ of non-trivial-ends type, we can define four, possibly broken, half-planes $H_1,H_2,H_3,H_4$ as done in Section \S\ref{sssec:hyponehop}. Since we need to change a little the notation we briefly summarize it as follows, see Figure \ref{fig:hyptwopolenonzerores}. Let $P_0\in\C$ be any point and set as above the point $P_{2g}=P_0+v$. Let $\theta=\arg(v)$ and let $R_\theta$ be the rotation of angle $\theta$ about the origin on $\C$. In this subsection we shall agree that all rays leaving from $P_0$ are parallel to $R_\theta\cdot\mathbb R^-$ and, similarly, all rays leaving from $P_{2g}$ are parallel to $R_\theta\cdot\mathbb R^+$. Alternatively, we can replace the representation $\chi$ with $A\cdot\chi$ so that $\arg(v)=0$, where $A\in\glplus$. The regions are:

\begin{itemize}
    \item[1.] $H_1$ is a broken half-plane bounded by chain of edges defined as in \eqref{eq:chainpolzer} joining $P_0$ and $P_{2g}$ and two rays, say $r_1^-$ and $r_2^-$, respectively leaving from $P_0$ and $P_{2g}$;
    \smallskip
    \item[2.] $H_2$ is a half-plane bounded by a geodesic edge $s_0^+$ joining $P_0$ and $P_{2g}$ and two rays, say $r_3^+$ and $r_2^+$, respectively leaving from $P_0$ and $P_{2g}$;
    \smallskip
    \item[3.] $H_3$ is a half-plane bounded by a geodesic edge $r_0^-$ joining $P_0$ and $P_{2g}$ and two rays, say $r_5^-$ and $r_4^-$, respectively leaving from $P_0$ and $P_{2g}$; finally
    \smallskip
    \item[4.] $H_4$ is a broken half-plane bounded by a chain of edges as in \eqref{eq:chainpolzerr} joining $P_0$ and $P_{2g}$ and two rays, say $r_5^+$ and $r_6^+$, respectively leaving from $P_0$ and $P_{2g}$.
\end{itemize}

\begin{figure}[!ht] 
\centering
\begin{tikzpicture}[scale=1, every node/.style={scale=0.85}]
\definecolor{pallido}{RGB}{221,227,227}

    \pattern [pattern=north east lines, pattern color=pallido]
    (-7.5, 0)--(-5,0)--(-3.5,1)--(-1,0.5)--(0.5,1)--(2,0)--(3.5,0.5)--(5,0)--(7.5,0)--(7.5,3)--(-7.5, 3)--(-7.5, 0);
    \draw [thin, black](-7.5, 0) -- (-5,0);
    
    \draw [thick, orange] (-5,0)--(-3.5,1);
    \draw [thick, red] (-3.5,1)--(-1,0.5);
    
    \draw [thick, orange] (-1,0.5)--(0.5,1);
    \draw [thick, red] (0.5,1)--(2,0);
    
    \draw [thick, orange] (2,0)--(3.5,0.5);
    \draw [thick, red] (3.5,0.5)--(5,0);
    
    \draw [thin, black](5, 0) -- (7.5,0);
    \fill (-5,0) circle (1.5pt);
    \fill (-3.5,1) circle (1.5pt);
    \fill (-1,0.5) circle (1.5pt);
    \fill (0.5,1) circle (1.5pt);
    \fill (2,0) circle (1.5pt);
    \fill (3.5,0.5) circle (1.5pt);
    \fill (5,0) circle (1.5pt);

    \pattern [pattern=north east lines, pattern color=pallido]
    (-7.5, -0.5)--(-5,-0.5)--(5, -0.5)--(7.5, -0.5)--(7.5, -2)--(-7.5, -2)--(-7.5, -0.5);
    \draw [thin, black](-7.5, -0.5) -- (-5, -0.5);
    \draw [thin, blue] (-5,-0.5)--(5,-0.5);
    \draw [thin, black](5, -0.5) -- (7.5, -0.5);
    \fill (-5,-0.5) circle (1.5pt);
    \fill (5, -0.5) circle (1.5pt);
    
    \pattern [pattern=north east lines, pattern color=pallido]
    (-7.5,-2.5)--(7.5,-2.5)--(7.5,-4.5)--(-7.5,-4.5)--(-7.5,-2.5);
    \draw [thin, black] (-7.5, -2.5) -- (-5, -2.5);
    \draw [thin, purple] (-5, -2.5)--(5, -2.5);
    \draw [thin, black] (5, -2.5) -- (7.5, -2.5);
    \draw [thin, black] (-7.5, -4.5) -- (-5, -4.5);
    \draw [thin, blue] (-5, -4.5)--(5, -4.5);
    \draw [thin, black] (5, -4.5) -- (7.5, -4.5);
    \draw [thin, black, ->] (4,-4.375)--(4,-2.625);
    \fill (-5,-2.5) circle (1.5pt);
    \fill (5, -2.5) circle (1.5pt);
    \fill (-5,-4.5) circle (1.5pt);
    \fill (5, -4.5) circle (1.5pt);

    \pattern [pattern=north east lines, pattern color=pallido]
    (-7.5, -6.5)--(-5, -6.5)--(5, -6.5)--(7.5, -6.5)--(7.5, -5)--(-7.5, -5)--(-7.5, -6.5);
    \draw [thin, black] (-7.5, -6.5) -- (-5, -6.5);
    \draw [thin, purple] (-5, -6.5)--(5, -6.5);
    \draw [thin, black] (5, -6.5) -- (7.5, -6.5);
    \fill (-5,-6.5) circle (1.5pt);
    \fill (5, -6.5) circle (1.5pt);

    \pattern [pattern=north east lines, pattern color=pallido]
    (-7.5, -7)--(-5,-7)--(-3.5,-7.5)--(-2,-7)--(-0.5,-8)--(1,-7.5)--(3.5,-8)--(5,-7)--(7.5, -7)--(7.5, -10)--(-7.5, -10)--(-7.5, -8.5);
    \draw [thin, black](-7.5, -7) -- (-5, -7);
    \draw [thin, red] (-5,-7)--(-3.5,-7.5);
    \draw [thin, orange] (-3.5,-7.5)--(-2,-7);
    \draw [thin, red] (-2,-7)--(-0.5,-8);
    \draw [thin, orange] (-0.5,-8)--(1,-7.5);
    \draw [thin, red] (1,-7.5)--(3.5,-8);
    \draw [thin, orange] (3.5,-8)--(5,-7);
    \draw [thin, black](5, -7) -- (7.5, -7);
    \fill (-5,-7) circle (1.5pt);
    \fill (-3.5,-7.5) circle (1.5pt);
    \fill (-2,-7) circle (1.5pt);
    \fill (-0.5,-8) circle (1.5pt);
    \fill (1,-7.5) circle (1.5pt);
    \fill (3.5,-8) circle (1.5pt);
    \fill (5,-7) circle (1.5pt);
    
    \draw [dashed, black] (-5,0)--(-7.5,2.75);
    \draw [dashed, black] (-5,0)--(-7.5,3);
    \draw [dashed, black] (-5,0)--(-7.25,3);
    
    \draw [dashed, black] (5,-7)--(7.5, -9.75);
    \draw [dashed, black] (5,-7)--(7.5,  -10);
    \draw [dashed, black] (5,-7)--(7.25, -10);
    
    \node at (7, 2.5) {$H_1$};
    \node at (-7, -1.5) {$H_2$};
    \node at (-3.25, -3.5) {$C$};
    \node at (3.75, -3.5) {$w$};
    \node at (7, -5.5) {$H_3$};
    \node at (-7, -9.5) {$H_4$};
    
    \node at (-4.5, 0.75) {$a_1^-$};
    \node at (4.5, -7.75) {$a_1^+$};
    \node at (-2.25, 1) {$b_1^-$};
    \node at (2.25, -8) {$b_1^+$};
    \node at (-0.25, 1) {$a_2^-$};
    \node at (0.25, -8) {$a_2^+$};
    
    \node at (1.25, 0.825) {$b_2^-$};
    \node at (-1.25, -7.825) {$b_2^+$};
    \node at (2.75, 0.625) {$a_3^-$};
    \node at (-2.75, -7.625) {$a_3^+$};
    \node at (4.25, 0.625) {$b_3^-$};
    \node at (-4.25, -7.625) {$b_3^+$};
    
    \node at (-6.25, 0.35) {$r_1^-$};
    \node at (6.25, 0.35) {$r_2^-$};
    \node at (6.25, -0.75) {$r_2^+$};
    \node at (-6.25, -2.75) {$r_1^+$};
    
    \node at (0, -2.75) {$r_0^+$};
    \node at (0, -6.15) {$r_0^-$};
    \node at (0, -0.75) {$s_0^+$};
    \node at (0, -4.25) {$s_0^-$};
    
    \node at (-6.25, -4.25) {$r_3^-$};
    \node at (6.25, -6.15) {$r_4^-$};
    \node at (6.25, -2.75) {$r_4^+$};
    \node at (-6.25, -0.75) {$r_3^+$};
    
    \node at (-6.25, -6.15) {$r_5^-$};
    \node at (-6.25, -7.25) {$r_5^+$};
    \node at (6.25, -4.25) {$r_6^-$};
    \node at (6.25, -7.25) {$r_6^+$};
    
\end{tikzpicture}
\caption{Realization of a hyperelliptic genus $g$ meromorphic differential with two zeros each of order $m=g+p-1$, two poles each of order $p\ge 2$ with non-zero residue, and prescribed periods. The figure depicts the case $g=3$.}
\label{fig:hyptwopolenonzerores}
\end{figure}
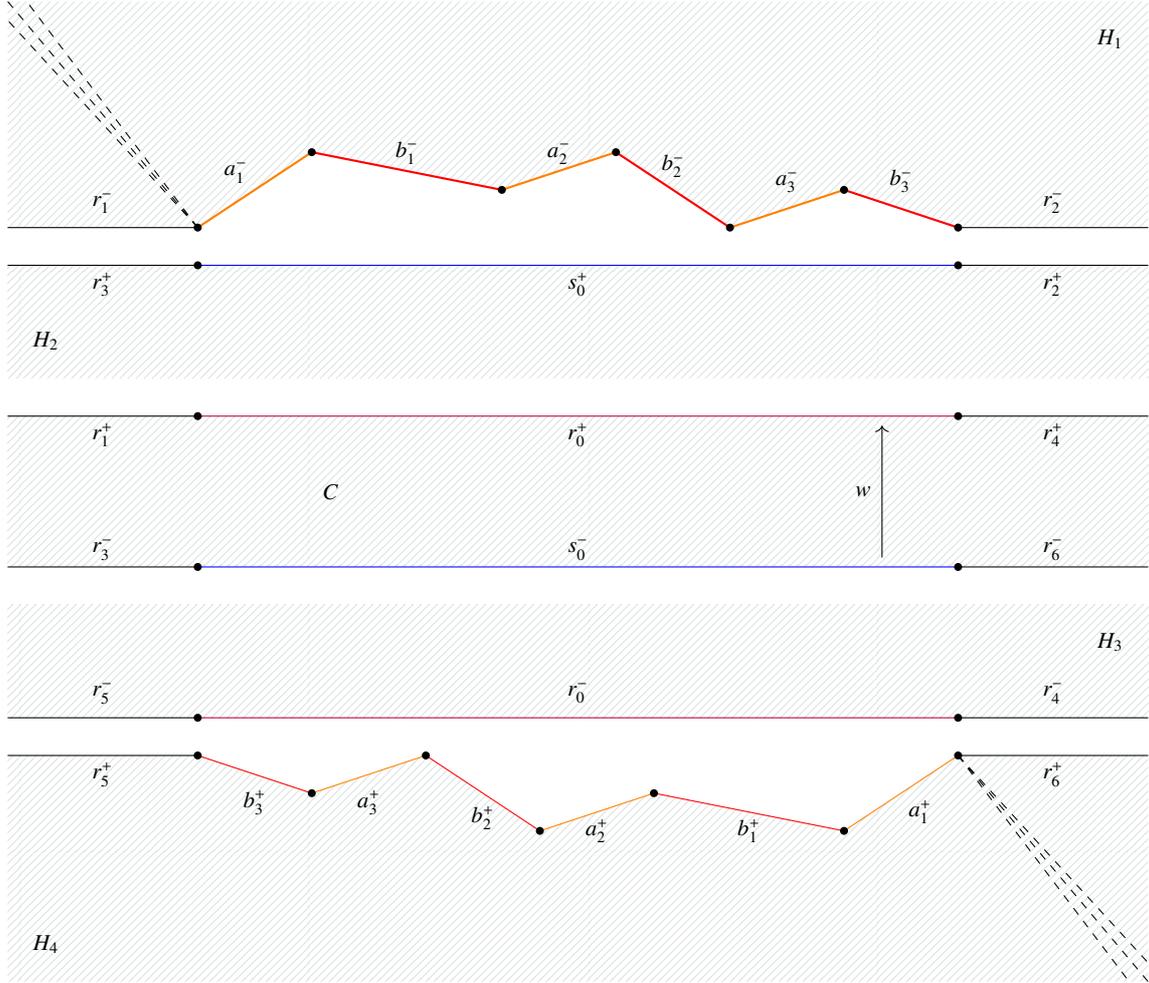

\noindent We next realize a cylinder $C$ with period $w$ as follows. We join the points $P_0$ and $P_{2g}$ by an edge, say $s_0^-$. Extend $s_0^-$ with two rays, say $r_3^-$ and $r_6^-$ leaving from $P_0$ and $P_{2g}$ respectively. Notice that $s_0^-$ has slope $\theta$ and therefore $r_3^-\,\cup s_0^-\,\cup r_6^-$ is a straight line, say $l_1$ by design. Define $r_1^+=r_3^-\,+\,w$ and $r_4^+=r_6^-\,+\,w$. Since $w$ is not parallel to $v$, the chain $r_1^+\,\cup\,r_0^+\,\cup\,r_4^+$ is a parallel straight line, say $l_2$. We define $C$ as the strip bounded by the lines $l_1$ and $l_2$. By construction, the cylinder obtained by identifying them has period $w$. 

\smallskip

\noindent All the necessary pieces are now introduced and defined. We glue all rays with the same label together, \textit{i.e.} we glue $r_i^+$ with $r_i^-$ for $i=1,\dots,6$. Then glue the edges $r_0^+$ and $r_0^-$ as well as the edges $s_0^+$ and $s_0^-$ together. Finally, glue the edges $a_i^+$ and $b_i^+$ with $a_i^-$ and $b_i^-$ respectively. The resulting object is a translation surface with two poles of order $2$ and a single zero of order $2g+2$. By construction, this translation surface admits a hyperelliptic involution that fixes the cylinder $C$ and maps the half-planes $H_1$, $H_2$ to $H_4$, $H_3$ respectively. The structure just defined lies in the hyperelliptic component of the stratum $\mathcal{H}_g(2g+2;-2,-2)$ and by breaking up the zero into two zeros each of order $m=g+1$ we get a structure in $\mathcal{H}_g(g+1,g+1;-2,-2)$. We can eventually bubble $p-1$ copies of $(\C,\,dz)$ along a ray leaving from $P_0\in H_1$ and bubble $p-1$ copies along a ray leaving from $P_{2g}\in H_4$ to get a structure in the stratum $\mathcal{H}_g(2m;-p,-p)$. Again, we apply Lemma \ref{lem:breakhyp} above in order to obtain a hyperelliptic translation structure in $\mathcal{H}_g(m,m;-p,-p)$ with period character $\chi$ as desired.

\subsubsection{Exceptional case: $v$ and $w$ are parallel} 
In the above construction, the cylinder $C$ has non-empty interior because $v$ is not parallel to $w$ and thence the lines $r_1^+\,\cup\,r_0^+\,\cup\,r_4^+$ and $r_3^-\,\cup s_0^-\,\cup r_6^-$ are parallel but disjoint. In the case $v$ is parallel to $w$, the cylinder $C$ degenerates to a straight line. However this case can be handled as follows. Let $\rho\colon\textnormal{H}_1(S_{g,\,2},\,\Z)\longrightarrow \C$ be an auxiliary representation, see Definition \ref{supprep}, defined as
\begin{equation}
    \rho(\alpha_i)=\chi(\alpha_i)\,\quad\,\rho(\beta_i)=\chi(\beta_i)\,\,\text{ and }\,\, \rho(\gamma_1)=\rho(\gamma_2)=0,
\end{equation} where $\gamma_1,\gamma_2$ are simple loops each around one of the punctures. We can realize $\rho$ as the period character of some translation surface, say $(X,\omega)$, with poles in the hyperelliptic component of $\mathcal{H}_g(2m;-p,-p)$. We claim that, on $(X,\omega)$ there always exists a bi-infinite ray, say $l_0$, joining the poles and orthogonal to the saddle connection $e$, arose from the identification of $e^+$ with $e^-$, on its mid-point. The slope of $e$, after developing, is equal to $\arg(v)=\arg(w)\in\left[-\frac\pi2,\,\frac\pi2\,\right[$. Let $l_1^+\subset \C$ be a straight line with slope $\arg(w)+\frac\pi2$ and let $l_1^-=l_1^+\,+\,w$. Define $C$ as the strip between the lines $l_1^+$ and $l_1^-$. Notice that the lines $l_0,l_1^+,l_1^-$ all have the same slope by design. Slit $(X,\omega)$ along $l_0$ and call $l_0^+$, $l_0^-$ the resulting edges. Then glue $l_0^-$ with $l_1^+$ and $l_0^+$ with $l_1^-$, see also Definition \ref{gluecyl}. The resulting structure, say $(Y,\,\xi)$, is a translation surface with poles in the stratum $\mathcal{H}_g(2m;-p,-p)$ having period character $\chi$. By construction, the straight line $l_0$ is invariant under the hyperelliptic involution of $(X,\omega)$. As a consequence, $(Y,\xi)$ also admits a hyperelliptic involution that keeps the strip $C$ (after gluing) invariant. 

\smallskip

\noindent Finally we apply again Lemma \ref{lem:breakhyp}.  By breaking the unique zero of $(Y,\xi)$ we can realize $\chi$ as the period character of some structure in the hyperelliptic component of $\mathcal{H}_g(m,m;-p,-p)$ as desired.

\subsection{Simple poles}\label{ssec:hypnotzeressimp} Throughout this section we always assume $p=1$ and we aim to realize a representation $\chi\colon\textnormal{H}_1(S_{g,\,2},\,\Z)\longrightarrow \C$ as the period character of some translation surface in the hyperelliptic component of the following strata
\begin{equation}
    \mathcal{H}_g(2g;-1,-1)\,\,\text{ and }\,\,  \mathcal{H}_g(g,g;-1,-1).
\end{equation}

\noindent We shall distinguish two cases depending on whether $\chi$ is discrete, see Definition \ref{defn:kindofreps}. For a given system of handle generators $\{\alpha_1,\beta_1,\dots,\alpha_g,\beta_g\}$, we still assume that $a_i,\,b_i\in\C^*$ are the images of $\alpha_i\,,\,\beta_i$ via $\chi$ respectively and that $\arg(a_i),\,\arg(b_i)\in\left[-\frac\pi2,\,\frac\pi2\,\right[$. We begin with the following case:

\subsubsection{The representation $\chi$ is not discrete}\label{sssec:hypnotzeressp} Recall that any system of handle generators yields a splitting and hence a well-defined representation $\chi_g\colon\shomolz\longrightarrow\C$. Since $\chi$ is a representation of non-trivial-ends type, Lemma \ref{cor:posvol} applies and we can assume $\textnormal{vol}(\chi_g)>0$. We distinguish two sub-cases depending on whether $\chi$ is or is not real-collinear, see Definition \ref{def:realcoll}.

\smallskip

\paragraph{$\chi$ \textit{is not real-collinear}}\label{par:hypnotcol} Assume here that $\chi$ is not a real-collinear representation. Let $\gamma$ be a simple loop around a puncture and let $w=\chi(\gamma)$. Define $v=\sum_i(a_i+b_i)$. In principle, $v$ and $w$ can be parallel. Since $\chi$ is not real-collinear, there exists a handle generator, say $\beta_g$, such that $b_g$ and $w$ are not parallel. We replace the handle generators $\{\alpha_g,\,\beta_g\}$ with $\{\alpha_g\,\beta_g,\,\beta_g\}$ and hence we replace $v$ with $v+b_g$. It is easy to check that this latter is no longer parallel to $w$. Hence we assume that $v$ and $w$ not parallel.

\smallskip 

\noindent Let $P_0\in\C$ be any point and let $P_{2g}=P_0+v$. Let $r$ be the unique straight line passing through these points. Up to rotating the whole construction with an appropriate rotation in $\sorr<\glplus$, let us assume for simplicity that $r$ is horizontal, \textit{i.e.} parallel to the real line; in this case $\arg(v)=0$ and $w\in\C\setminus \mathbb R$. Define a chain, say $\mathcal C_1$, as in \eqref{eq:chainpolzer} starting from $P_0$ and necessarily ending at $P_{2g}$. We now need the following 

\begin{lem}\label{lem:polygon}
With the same notation above, there exists a system of handle generators $\{\alpha_1,\beta_1,\dots,\alpha_g,\beta_g\}$ such that the chain $\mathcal{C}_1$ entirely lies on the right of the line $r$.
\end{lem}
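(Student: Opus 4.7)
After applying an element of $\glplus$ so that $v\in\R_{>0}$, the line $r$ becomes horizontal and lying on the right of $r$ is equivalent to requiring every partial sum $\sum_{j\le k}c_j$ (with $c_{2i-1}=a_i$, $c_{2i}=b_i$) to satisfy $\Im(\cdot)\le 0$. Since $\Im(v)=0$ gives $\sum_j\Im(c_j)=0$, the classical sorting lemma---a sequence of real numbers summing to zero and arranged in nondecreasing order has all partial sums $\le 0$---suggests arranging the $c_j$'s in order of increasing imaginary part.

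We propose the following change of system, realized by two operations admissible for systems of handle generators: (i) permute the handles to reorder the $g$ pairs, which is induced by an element of $\modul$, and (ii) within each handle relabel so that the generator with smaller imaginary part is called $\alpha_i$ and the other $\beta_i$; this relabeling is admissible since Definition \ref{handle} treats the pair $\{\alpha,\beta\}$ as unordered. Concretely, reorder the handles so that the pair sums $d_i=a_i+b_i$ appear in nondecreasing order of $\Im(d_i)$, and within each pair place first whichever of $a_i,b_i$ has smaller imaginary part. Setting $D_k=\sum_{i\le k}\Im(d_i)$, the sorting lemma gives $D_k\le 0$ for every $k$. Moreover, the intermediate vertex of the $k$-th pair has imaginary part at most $D_{k-1}+\min(\Im(a_k),\Im(b_k))\le D_{k-1}+\tfrac{1}{2}\Im(d_k)=\tfrac{1}{2}(D_{k-1}+D_k)\le 0$, while its endpoint has imaginary part $D_k\le 0$. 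Hence every vertex of $\mathcal{C}_1$ lies weakly below $r$, which is the desired conclusion.

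\textbf{Main obstacle.} The key combinatorial input is the averaging inequality $\min(x,y)\le\tfrac{1}{2}(x+y)$ combined with the sorting lemma: together they ensure that once the pairs are sorted by $\Im(d_i)$ and the ``lower half'' of each pair is placed first, the intermediate vertices are controlled as well as the endpoints. No Dehn twist or further MCG move beyond handle permutations and within-pair relabeling is needed, which keeps the argument self-contained. The non-real-collinearity hypothesis is not used here; it enters only in the preparatory discussion to ensure $v$ and $w$ are non-parallel for the subsequent cylinder construction.
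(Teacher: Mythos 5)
The combinatorial core of your argument is correct and genuinely different from the paper's: the partial-sum sorting lemma together with $\min(x,y)\le\tfrac12(x+y)$ does control every vertex of $\mathcal C_1$ once the pairs are sorted by $\Im(a_i+b_i)$ and the ``lower'' generator of each pair is traversed first. The paper instead reorders the handles and replaces $\{\alpha_g,\beta_g\}$ by $\{\alpha_g\beta_g^n,\beta_g\}$ for $n\gg0$, so that $v_n=v+nb_g$ rotates toward the extremal direction among the $b_i$'s.

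The gap is your operation (ii). The plain relabeling $\{\alpha_i,\beta_i\}\mapsto\{\beta_i,\alpha_i\}$ reverses the algebraic intersection number of the pair, hence acts anti-symplectically on the homology of that handle; it is not induced by any $\phi\in\modul$, whose action on homology lands in $\spz$. This is exactly why the paper's admissible within-pair replacements are $\{\alpha_i^{-1},\beta_i^{-1}\}$, $\{\beta_i,\alpha_i^{-1}\}$, $\{\beta_i^{-1},\alpha_i\}$: the two genuine swaps come with an inversion, which negates one period and so destroys the standing normalization $\arg(a_i),\arg(b_i)\in[-\pi/2,\pi/2[$ needed for the chain to be embedded. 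Reading Definition \ref{handle} as an unordered pair does not rescue the move: the realization scheme of Section \S\ref{ssec:mcgact} only identifies the period character of the constructed surface with $\chi\circ\phi$ for $\phi\in\modul$, and the chain $\mathcal C_1$ (and the ensuing gluing) encodes an \emph{ordered} symplectic basis. Concretely, your swap flips the sign of $\Im(\overline{a_i}\,b_i)$, which Corollary \ref{cor:posvol} has normalized to be positive on every handle at the start of this subsection, so the surface you would build realizes $\chi\circ\sigma$ for an anti-symplectic $\sigma$, which need not lie in the $\modul$-orbit of $\chi$. The swap is moreover essential to your estimate: without it the intermediate vertex sits at height $D_{k-1}+\Im(a_k)$, which is uncontrolled (already for $g=1$, $a_1=1+i$, $b_1=1-i$ the midpoint lies above $r$). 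A correct proof must replace the swap by a genuinely symplectic move (e.g. twists of the form $a_k\mapsto a_k+q(a_k+b_k)$, or the paper's Dehn twist on the last handle) and re-verify the rightward-pointing condition afterwards; as written, the argument is incomplete.
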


\noindent Suppose the Lemma holds. Then we can assume that the chain $\mathcal{C}_1$ entirely lies on the right of $r$ with respect to the orientation induced by $v$. Let $r_1^-$ as the sub-ray of $r$ leaving from $P_0$ and not passing through $P_{2g}$. Similarly, define $r_2^-$ as the sub-ray of $r$ leaving from $P_{2g}$ and not passing through $P_{0}$. Let $P_0'=P_0+w$, set $P_{2g}'=P_0'+v=P_0+v+w$, and let $r'$ be the unique straight line passing through these points. Clearly, $r'$ is parallel to $r$. We define a chain $\mathcal{C}_2$ exactly as in \eqref{eq:chainpolzerr} starting from $P_0'$ and ending at $P_{2g}'$. By design, $\mathcal{C}_2$ lies on the left of $r'$ with respect to the orientation induced by $v$. Let $r_1^+$ be the sub-ray of $r'$ leaving from $P_0'$ and not passing through $P_{2g}'$ and, similarly, define $r_2^+$ as the sub-ray of $r$ leaving from $P_{2g}'$ and not passing through $P_{0}'$. Define $C\subset \C$ as the region bounded by $r_1^-\,\cup\mathcal{C}_1\,\cup r_2^-$ and $r_1^+\,\cup\mathcal{C}_2\,\cup r_2^+$, see Figure \ref{fig:hypsimplepolesnodis}. 

\begin{figure}[!ht] 
\centering
\begin{tikzpicture}[scale=1.05, every node/.style={scale=0.85}]
\definecolor{pallido}{RGB}{221,227,227}
    \pattern [pattern=north east lines, pattern color=pallido]
    (-7,3)--(-4,3)--(-2,2)--(0,2.5)--(3,2)--(4,3)--(7,3)--(7,5)--(4,5)--(2,6)--(0,5.5)--(-3,6)--(-4,5)--(-7,5)--(-7,3);
    \draw[thin, black] (-7,3)--(-4,3);
    \draw[thick, orange] (-4,3)--(-2,2);
    \draw[thick, red] (-2,2)--(0,2.5);
    \draw[thick, orange] (0,2.5)--(3,2);
    \draw[thin, gray, dashed] (-4,3)--(4,3);
    \draw[thick, red] (3,2)--(4,3);
    \draw[thick, black] (4,3)--(7,3);
    
    \draw[thin, black, ->] (6,3.2)--(6,4.8);
    \draw[thin, gray, ->] (-0.5,3.25)--(0.5, 3.25);
    
    \fill (-4,3) circle (1.5pt);
    \fill (-2,2) circle (1.5pt);
    \fill (0,2.5) circle (1.5pt);
    \fill (3,2) circle (1.5pt);
    \fill (4,3) circle (1.5pt);
    
    \node at (0, 3.5) {$v$};
    \node at (6.25, 4) {$w$};
    
    \draw[thin, black] (-7,5)--(-4,5);
    \draw[thick, orange] (2,6)--(4,5);
    \draw[thick, red] (0,5.5)--(2,6);
    \draw[thick, orange] (-3,6)--(0,5.5);
    \draw[thin, gray, dashed] (-4,5)--(4,5);
    \draw[thick, red] (-4,5)--(-3,6);
    \draw[thin, black] (4,5)--(7,5);
  
    \fill (4,5) circle (1.5pt);
    \fill (2,6) circle (1.5pt);
    \fill (0,5.5) circle (1.5pt);
    \fill (-3,6) circle (1.5pt);
    \fill (-4,5) circle (1.5pt);
    
    \node at (-5.5,2.75) {$r_1^-$};
    \node at (-3,2.25) {$a_1^-$};
    \node at (-1,2) {$b_1^-$};
    \node at (1.5,2) {$a_2^-$};
    \node at (3.75,2.25) {$b_2^-$};
    \node at (5.5,2.75) {$r_2^-$};
    
    \node at (-5.5,5.25) {$r_1^+$};
    \node at (3,5.75) {$a_1^+$};
    \node at (1,6) {$b_1^+$};
    \node at (-1.5,6) {$a_2^+$};
    \node at (-3.75,5.75) {$b_2^+$};
    \node at (5.5,5.25) {$r_2^+$};

\end{tikzpicture}
\caption{Realization of a hyperelliptic translation surface of genus two with a single zero and two simple poles having non-discrete and non real-collinear period character $\chi$.}
\label{fig:hypsimplepolesnodis}
\end{figure}
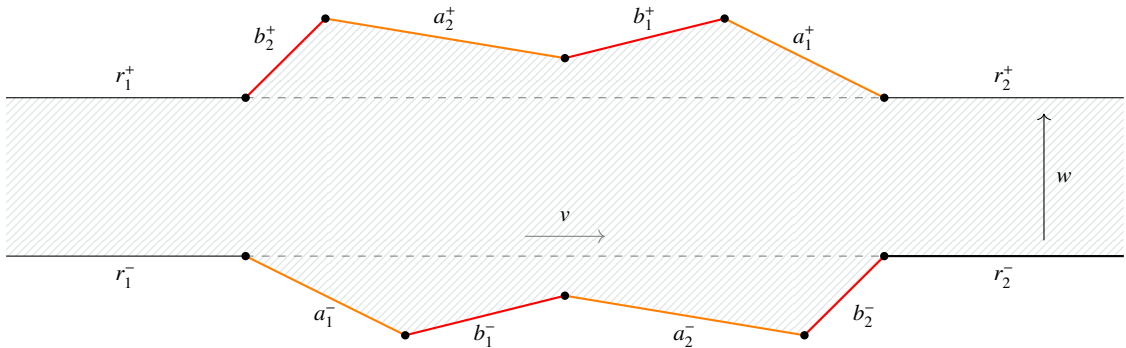

\noindent Finally, glue the half-rays $r_i^-$ and $r_i^+$ together and the edges $a_i^-$, $b_i^-$ with $a_i^+$, $b_i^+$ respectively, for $i=1,\dots,g$. The resulting object is a topological surface homeomorphic to $S_{g,2}$ equipped with a translation structure with poles having period character $\chi$. Notice that $C$ is invariant with respect to a rotation of order $2$ about a point $O\in\C$. Such a point can be explicitly determined; in fact it turns out to be the intersection point of the diagonals of the parallelogram with vertices $P_0,\, P_{2g},\, P_{2g}',\, P_0'$. As a consequence, the resulting structure above admits a hyperelliptic involution; in particular $\chi$ can be realized in the hyperelliptic component of $\mathcal{H}_g(2g;-1,-1)$. By breaking the zero into two zeros each of order $g$ we obtain a hyperelliptic structure in the stratum $\mathcal{H}_g(g,g;-1,-1)$ as a consequence of Lemma \ref{lem:breakhyp}. We are left to prove Lemma \ref{lem:polygon}.

\begin{proof}[Proof of Lemma \ref{lem:polygon}]
If $\mathcal{C}_1$ already lies on the right of $r$ there is nothing to prove and we are done. The geometric interpretation here is that the extended chain of edges $\cext\defeq\mathcal{C}_1\,\cup\,P_{2g}\mapsto P_{2g}-v=P_0$ is a closed piecewise geodesic curve in $\C$ that bounds a compact embedded polygon on $\C$ on its \textit{left}, according to the orientation. If $\mathcal{C}_1$ does not entirely lie on the right of $r$, there is no such a polygon bounded on the left of $\cext$  -- there might be, however, a polygon bounded on the right of $\cext$ meaning that the chain $\mathcal{C}_1$ entirely lies on the left of $r$. We will show that, by changing the last pair of handle generators, the chain $\cext$ bounds a compact embedded polygon on its left. First of all, we reorder the handles so that $\arg(b_i)\le\arg(b_g)$ for any $i=1,\dots,g-1$. Consider the handle generators $\{\alpha_g,\,\beta_g\}$. For any $n\in\Z^+$, the curves $\{\alpha_g\,\beta_g^n,\,\beta_g\}$ form another pair of handle generators and it is easy to check that the following conditions hold: 
\begin{equation}
    \chi(\alpha_g\,\beta_g^n)=a_g\,+\,nb_g \,\,\,\text{ and }\,\,\,\arg(a_g\,+\,nb_g)\in\left[-\frac\pi2,\,\frac\pi2\right[.
\end{equation}
From the second condition we can deduce that the new chain $\mathcal{C}_1(n)\subset\C$ is embedded, \textit{i.e.} no self-intersections, because all vectors $a_i,\,b_i$ point rightwards. Define $v_n=\sum_i(a_i\,+\,b_i)\,+\,n\,b_g$. By design the following equalities are straightforward: $v_0=v$ and $v_n=v+nb_g$. Observe that for $n\rightarrow+\infty$ then $\arg(v_n)\longrightarrow\arg(b_g)$. As a consequence, by taking $n$ big enough the inequality $\arg(b_i)\le\arg(v_n)$ holds for any $i=1,\dots,g-1$ and the chain 
\begin{equation}
    P_0\mapsto\cdots\mapsto P_0+\sum_{i=1}^g (a_i+b_i)+nb_g=P_{2g} \mapsto P_{2g}-v_n\mapsto P_0
\end{equation}
bounds a polygon on its left as desired.
\end{proof}

\paragraph{$\chi$ \textit{is real-collinear}}\label{par:hypcol} We now assume $\chi$ to be real-collinear. Up to replacing $\chi$ with $A\,\chi$, for some appropriate $A\in\glplus$, we can assume $\text{Im}(\chi)\subset \mathbb R$. Recall that we are under the initial assumption $\arg(a_i),\,\arg(b_i)\in\left[-\frac\pi2,\,\frac\pi2\,\right[$, hence $a_i,\,b_i>0$ in this special case. Let $\gamma$ be a simple loop around a puncture and let $w=\chi(\gamma)$. Let us assume $w>0$ for simplicity. Then Lemma \ref{lem:smallperiods} applies, and hence there exists a system of handle generators such that $v\defeq \,\sum_i(a_i\,+\,b_i)< w$ and we set $2\delta=w-v$. Let $l_1\subset\C$ be a straight line with slope $\frac\pi2$, let $l_2=l_1+w$, and define $\mathcal{S}$ as the vertical strip bounded by $l_1^+$ and $l_2^-$. Let $P_0\in\mathcal S\subset \C$ be any point at distance $\delta$ from $l_1$ and let $P_{2g}=P_0+v$. Notice that, $P_{2g}\in\mathcal{S}$ at distance $\delta$ from $l_2$. 

\begin{figure}[!ht] 
\centering
\begin{tikzpicture}[scale=1, every node/.style={scale=0.85}]
\definecolor{pallido}{RGB}{221,227,227}

    \pattern [pattern=north east lines, pattern color=pallido]
    (-7,3)--(-7,-2)--(-1,-2)--(-1,3);
    
    \draw[thin, black, ->] (-7,-2)--(-7,0.5);
    \draw[thin, black] (-7,0.5)--(-7,3);
    \draw[thin, black] (-7,-2)--(-6,-2);
    \draw[thick, orange] (-6,-2)--(-5,-2);
    \draw[thick, red] (-5,-2)--(-4,-2);
    \draw[thick, orange] (-4,-2)--(-3,-2);
    \draw[thick, red] (-3,-2)--(-2,-2);
    \draw[thin, black] (-2,-2)--(-1,-2);
    \draw[thin, black,->] (-1,-2)--(-1,0.5);
    \draw[thin, black] (-1,3)--(-1,0.5);
    
    \fill (-7,-2) circle (1.5pt);
    \fill (-6,-2) circle (1.5pt);
    \fill (-5,-2) circle (1.5pt);
    \fill (-4,-2) circle (1.5pt);
    \fill (-3,-2) circle (1.5pt);
    \fill (-2,-2) circle (1.5pt);
    \fill (-1,-2) circle (1.5pt);
    
    \pattern [pattern=north east lines, pattern color=pallido]
    (7,3)--(7,-2)--(1,-2)--(1,3);
    
    \draw[thin, black, ->] (7,-2)--(7,0.5);
    \draw[thin, black] (7,0.5)--(7,3);
    \draw[thin, black] (7,3)--(6,3);
    \draw[thick, orange] (6,3)--(5,3);
    \draw[thick, red] (5,3)--(4,3);
    \draw[thick, orange] (4,3)--(3,3);
    \draw[thick, red] (3,3)--(2,3);
    \draw[thin, black] (2,3)--(1,3);
    \draw[thin, black] (1,0.5)--(1,3);
    \draw[thin, black, ->] (1,-2)--(1,0.5);
    
    \fill (7,3) circle (1.5pt);
    \fill (6,3) circle (1.5pt);
    \fill (5,3) circle (1.5pt);
    \fill (4,3) circle (1.5pt);
    \fill (3,3) circle (1.5pt);
    \fill (2,3) circle (1.5pt);
    \fill (1,3) circle (1.5pt);
    
    \draw[thin, black, ->] (-6.75, 2)--(-1.25, 2); 
    \draw[thin, black, ->] (1.25, -1)--(6.75, -1); 
    
    \node at (-4, 2.25) {$w$};
    \node at (4, -1.25) {$w$};
    
    \node at (-4,0.5) {$\mathcal S_1$};
    \node at ( 4,0.5) {$\mathcal S_2$};
    
    \node at (-7.25, 0.5) {$l_1^+$};
    \node at (-0.75, 0.5) {$l_2^-$};
    \node at (0.75, 0.5) {$l_1^+$};
    \node at (7.25, 0.5) {$l_2^-$};
    
    \node at (-6.5,-2.25) {$r_1^-$};
    \node at (-5.5,-2.25) {$a_1^-$};
    \node at (-4.5,-2.25) {$b_1^-$};
    \node at (-3.5,-2.25) {$a_2^-$};
    \node at (-2.5,-2.25) {$b_2^-$};
    \node at (-1.5,-2.25) {$r_2^-$};
    
    \node at (1.5,3.25) {$r_1^+$};
    \node at (2.5,3.25) {$b_2^+$};
    \node at (3.5,3.25) {$a_2^+$};
    \node at (4.5,3.25) {$b_1^+$};
    \node at (5.5,3.25) {$a_1^+$};
    \node at (6.5,3.25) {$r_2^+$};
    
\end{tikzpicture}
\caption{The half-strips $\mathcal S_1$ and $\mathcal S_2$ represent the top and bottom sides of the strip $\mathcal{S}$ cut along a horizontal segment of length $w$ containing $e$. The edges $e_i^{-}$'s are in the ascending order from the left to the right and the $e_i^+$'s are in the descending order from the left to the right.}
\label{fig:hypsimplepolesnodisrealcoll}
\end{figure}
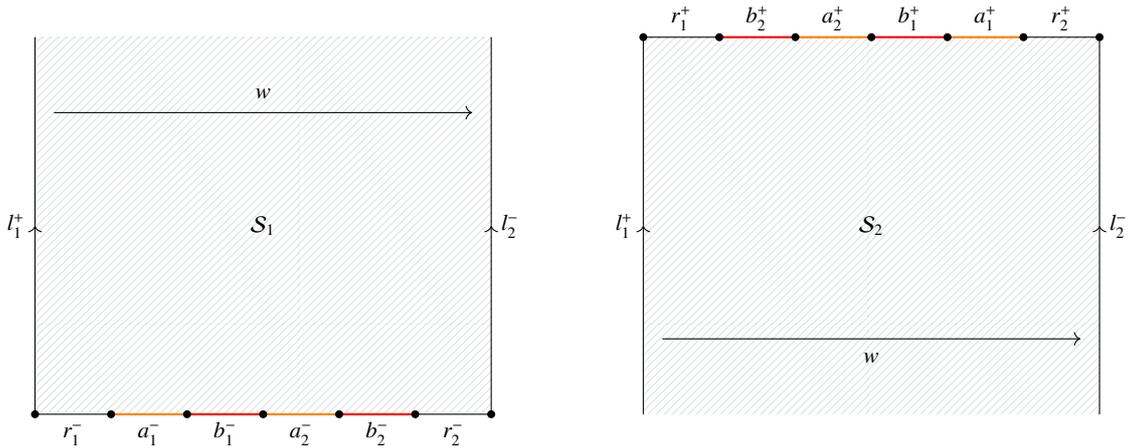

\noindent Let $e$ be the edge joining $P_0$ and $P_{2g}$. Slit $\mathcal S$ along $e$ and define $e^{\pm}$ the resulting edges according to our convention. We partition $e^-$ into $2g$ sub-edges $\{e_i^-\}_{1\le i\le 2g}$ of lengths $a_1,b_1,\dots,a_g,b_g$ respectively. In the same fashion, we partition the edge $e^+$ into $2g$ sub-edges $\{e_i^+\}_{1\le i\le 2g}$ of lengths $b_g,a_g,\dots,b_1,a_1$ respectively, see Figure \ref{fig:hypsimplepolesnodisrealcoll}. By gluing the lines $l_1^-$ and $l_2^+$ together, we get an infinite cylinder with period $w$ slit along the edge $e$. We next glue the edges $e_i^-$ with $e_{2g-i+1}^+$ together for any $i=1,\dots,2g$. The resulting object is homeomorphic to $S_{g,2}$ and it is equipped with a translation structure $(X,\omega)$ with period $\chi$. We can observe that $\mathcal{S}\in\C$ is invariant under a rotation of order $2$ about the mid-point of the segment $\overline{P_0\,P_{2g}}$. As a consequence, $(X,\omega)$ admits a hyperelliptic involution meaning that $\chi$ can be realized in the hyperelliptic component of $\mathcal{H}_g(2g;-1,-1)$. By breaking the zero into two zeros each of order $g$ we obtain a translation surface with period $\chi$ in the hyperelliptic component of the stratum $\mathcal{H}_g(g,g;-1,-1)$ as a consequence of  Lemma \ref{lem:breakhyp}.

\subsubsection{The representation $\chi$ is discrete of rank two}\label{sssec:disranktwo}
 
Now assume that $\chi$ is a discrete representation of rank two, see Definition \ref{defn:kindofreps}. The idea of this case is mostly subsumed in paragraph \S\ref{par:hypnotcol}. Up to $\glplus$, we assume without loss of generality that $\text{Im}(\chi)=\mathbb Z\oplus i\,\mathbb Z$. Lemma \ref{lem:disnormalform} above applies in this case and it guarantees the existence of a system of handle generators such that
\begin{itemize}
    \item $\chi(\alpha_g)\in\Z$ and $\chi(\beta_g)=i$,
    \smallskip
    \item $0<\chi(\alpha_j)<\chi(\alpha_g)$ and $\chi(\alpha_g)=\chi(\beta_g)$ for all $j=1,\dots,g-1$.
\end{itemize}

\noindent It is an easy matter to check that Lemma \ref{lem:polygon} holds in this case and hence we can proceed as done in \S\ref{par:hypnotcol} in order to get a translation surface with poles in the hyperelliptic component of the stratum $\mathcal{H}_g(2g;-1,-1)$ with period character $\chi$, see Figure \ref{fig:hypsimplepolesdisranktwo}. As above, by breaking the single zero into two zeros of order $g$, we can also realize $\chi$ as the period character of some hyperelliptic translation surface in $\mathcal{H}_g(g,g;-1,-1)$.

\begin{rmk}
By defining $v=\sum_i(a_i\,+\,b_i)\in\Z$ and $w\in\Z$ as above, in principle it can happen that $v$ and $w$ have the same argument. As done at the beginning of paragraph \S\ref{par:hypnotcol}, in this case it will be sufficient to replace the pair $\{\alpha_g,\,\beta_g\}$ with $\{\alpha_g\,\beta_g,\,\beta_g\}$ and proceed as above.
\end{rmk}

\begin{figure}[!ht] 
\centering
\begin{tikzpicture}[scale=1, every node/.style={scale=0.85}]
\definecolor{pallido}{RGB}{221,227,227}

    \pattern [pattern=north east lines, pattern color=pallido]
    (-7,2)--(-4,2)--(4,2)--(4,3)--(7,3)--(7,5)--(-4,5)--(-4,4)--(-7,4)--(-7,2);
    \draw[thin, black] (-7,2)--(-4,2);
    \draw[thick, orange] (-4,2)--(-2,2);
    \draw[thick, red] (-2,2)--(0,2);
    \draw[thick, orange] (0,2)--(4,2);
    \draw[thick, red] (4,2)--(4,3);
    \draw[thick, black] (4,3)--(7,3);
         
    \draw[thin, black] (7,5)--(4,5);
    \draw[thick, red] (-4,4)--(-4,5);
    \draw[thick, orange] (-4,5)--(0,5);
    \draw[thick, red] (0,5)--(2,5);
    \draw[thick, orange] (2,5)--(4,5);
    \draw[thin, black] (-4,4)--(-7,4);
    
    \draw[thin, black, ->] (6,3.2)--(6,4.8);
    \draw[thin, black, ->] (-6,2.2)--(-6,3.8);
    
    \fill (-4,2) circle (1.5pt);
    \fill (-2,2) circle (1.5pt);
    \fill (0,2) circle (1.5pt);
    \fill (4,2) circle (1.5pt);
    \fill (4,3) circle (1.5pt);
    
    \node at (6.25, 4) {$w$};
    \node at (-6.25, 3) {$w$};
    
    \fill (4,5) circle (1.5pt);
    \fill (2,5) circle (1.5pt);
    \fill (0,5) circle (1.5pt);
    \fill (-4,5) circle (1.5pt);
    \fill (-4,4) circle (1.5pt);
    
    \node at (-3,1.75) {$a_1^-$};
    \node at (-1,1.75) {$b_1^-$};
    \node at (2,1.75) {$a_2^-$};
    \node at (4.25,2.5) {$b_2^-$};
    
    \node at (3,5.25) {$a_1^+$};
    \node at (1,5.25) {$b_1^+$};
    \node at (-2,5.25) {$a_2^+$};
    \node at (-4.25,4.5) {$b_2^+$};
    
\end{tikzpicture}
\caption{Realization of a translation surface of genus two with poles admitting a hyperelliptic involution and having discrete period character of rank two.}
\label{fig:hypsimplepolesdisranktwo}
\end{figure}
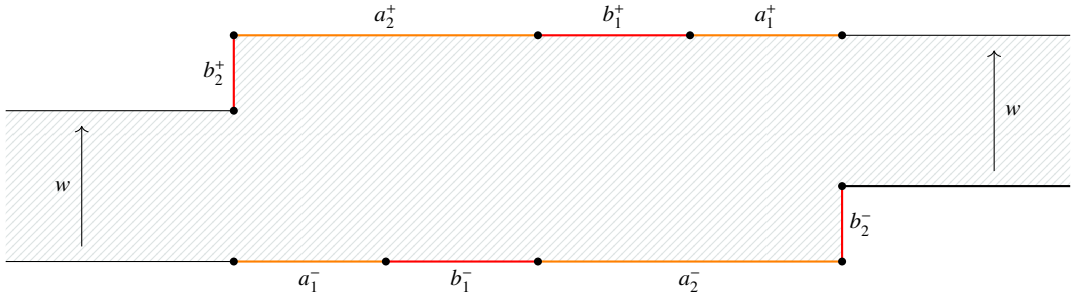

\subsubsection{The representation $\chi$ is discrete of rank one}\label{sssec:disrankone} We finally consider the case of discrete representation of rank one, see Definition \ref{defn:kindofreps}. For any such a representation  $\chi\colon\textnormal{H}_1(S_{g,\,2},\,\Z)\longrightarrow \C$ of rank one, we can replace $\chi$ with $A\,\chi$, for some appropriate $A\in\glplus$, and assume that  $\chi\colon\textnormal{H}_1(S_{g,\,2},\,\Z)\longrightarrow \Z$ is surjective. Let $\gamma$ be a simple loop around a puncture and let $w=\chi(\gamma)\in\Z$ be its period, \textit{i.e.} the residue. \cite[Theorem D]{CFG} provides necessary and sufficient conditions for such representations to appear as the holonomy of some translation surfaces with simple poles at the punctures. For the reader's convenience, we review here a simplified version of that result.

\begin{prop}[{\cite[Theorem D]{CFG}} for $n=2$]\label{prop:rankonechar}
Let $\chi\colon\textnormal{H}_1(S_{g,\,2},\,\Z) \longrightarrow \mathbb{Z}$ be a discrete representation of non-trivial-ends type. Then $\chi$ appears as the period character of some translation surface with simple poles at the punctures and zeros of prescribed orders $(m_1,m_2,\ldots, m_k)$ that satisfy the degree condition $m_1+\cdots+m_k = 2g$ if and only if 
    \begin{equation}\label{gencomp}
    |w|> \max\{m_1,m_2,\ldots, m_k\}
\end{equation}
where $w,\,-w\in\Z$ are the residues of the simple poles.
\end{prop}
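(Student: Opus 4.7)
The plan is to obtain this as the $n = 2$ specialization of \cite[Theorem D]{CFG}: after normalizing via $\glplus$ so that $\textnormal{Im}(\chi) = \mathbb{Z}$, the Residue Theorem forces the two residues to be $\pm w$ for a single positive integer $w$, and I would verify that the combinatorial inequalities appearing in the general statement collapse to the single condition $|w| > \max\{m_i\}$.

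For necessity, suppose $(X,\omega)$ realizes $\chi$ in the stratum, and choose the horizontal direction so that all periods are real. Each simple pole is the end of a half-infinite horizontal cylinder of circumference $|w|$, whose boundary closes up into a horizontal curve $\gamma \subset (X,\omega)$ of length $|w|$ composed of integer-length saddle connections. A zero of order $m_i$ emits $m_i + 1$ outgoing horizontal separatrices; tracking them until they first land on $\gamma$ produces at least $m_i$ disjoint integer-length arcs on $\gamma$, which forces $|w| \geq m_i + 1 > m_i$.

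For sufficiency I would construct $(X,\omega)$ explicitly, in the spirit of Sections \S\ref{sssec:hyponehop}--\S\ref{sssec:disrankone}. Starting from a bi-infinite horizontal cylinder $C$ of circumference $|w|$ (realizing $\cp$ with two simple poles of residues $\pm w$), I bubble $g$ handles via the surgery of Section \S\ref{ssec:bubhandle}, with handle generators chosen by Lemma \ref{lem:disnormalform}(1) so that each slit has unit length. Distributing the $2g$ slit endpoints among $k$ distinguished points of $C$ with multiplicities $m_1, \ldots, m_k$ produces a zero of order $m_i$ at the $i$-th point, and the hypothesis $|w| > \max\{m_i\}$ is exactly what allows $m_i$ adjacent unit-length slit-ends to fit around the circumference-$|w|$ waist circle without wrapping or overlapping. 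The resulting surface lies in the prescribed stratum and has period character $\chi$ by construction.

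The main obstacle I anticipate is the combinatorial bookkeeping in the sufficiency construction, namely coordinating the slit placements so that endpoint-collisions yield exactly the prescribed zero orders while the handle identifications remain consistent, together with the sharpness half of necessity, i.e. showing that equality $|w| = \max\{m_i\}$ genuinely obstructs realization. This latter point relies on a careful accounting of horizontal prongs and saddle connections along $\gamma$, and is where the integrality of $\textnormal{Im}(\chi)$ plays its decisive role.
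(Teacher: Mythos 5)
The paper does not prove this proposition at all: it is imported verbatim as the $n=2$ specialization of \cite[Theorem D]{CFG}, so your opening paragraph -- normalize by $\glplus$, use the Residue Theorem to reduce the two residues to $\pm w$, and check that the general inequalities collapse to $|w|>\max\{m_i\}$ -- is exactly the paper's route and would by itself suffice.

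Your supplementary self-contained argument, however, has genuine gaps in both directions. For necessity, the claim that the $m_i+1$ horizontal separatrices of a zero ``land on $\gamma$'' in $m_i$ disjoint integer-length arcs is unjustified: a separatrix may be a saddle connection terminating at another zero and never reach the waist curve. The clean version of your idea is algebraic: since $\textnormal{Im}(\chi)\subset\Z$, the function $F=\exp\bigl(2\pi i\int\omega\bigr)$ is single-valued and extends to a rational map $\overline{X}\to\cp$ of degree $|w|$ (each simple pole is the entire fibre over $0$ or $\infty$, with multiplicity $|w|$), while a zero of $\omega$ of order $m_i$ is a critical point of $F$ of local multiplicity $m_i+1\le\deg F$, whence $|w|>m_i$. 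For sufficiency, your bookkeeping of ``slit endpoints'' misdescribes the surgery of Section \S\ref{ssec:bubhandle}: bubbling a handle \emph{identifies} the two extremal points of its slit into a single point and deposits an order-$2$ contribution there, so distributing $2g$ endpoints with multiplicities $m_1,\dots,m_k$ can only yield even zero orders at prescribed locations. Repairing this by first building a large zero and then breaking it (Section \S\ref{sec:zerobreak}) is unavailable precisely in the critical range $g<|w|\le 2g$, where the necessity bound forbids a zero of order exceeding $|w|-1$; that range is exactly where the paper's own ad hoc construction is required -- the two half-strips of width $w$ partitioned into $2g+2$ unit-type segments glued crosswise in \S\ref{par:hard}, and its multi-zero extension at the end of Section \S\ref{sec:hgcpar} -- and your sketch does not recover it.
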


\smallskip

\noindent According to Proposition \ref{prop:rankonechar} above, we need to distinguish two sub-cases according to the value of $w\in\Z$. In the case $w>2g$, then $\chi$ appears as the  period character of some translation surface with poles in both strata $\mathcal{H}_g(2g;-1,-1)$ and $\mathcal{H}_g(g,g;-1,-1)$. In paragraph \S\ref{par:easy} below, we shall prove that $\chi$ can be realized in the hyperelliptic component of these strata. In paragraph \S\ref{par:hard}, we shall consider the case $g<w\le 2g$, then $\chi$ can be realized only in the stratum $\mathcal{H}_g(g,g;-1,-1)$. Notice that this case requires an ad-hoc construction because we can no longer realize a structure with a single zero and then break it into two zeros of order $g$. Finally, if $0<w\le g$ then $\chi$ cannot be realized in both strata and hence it cannot appear as the period character of some hyperelliptic translation surface with simple poles.

\medskip

\paragraph{\textit{Case: $w>2g$}}\label{par:easy} This case is completely subsumed in paragraph \S\ref{par:hypcol}. In fact, Lemma \ref{lem:disnormalform} applies and hence there is a system of handle generators $\{\alpha_1,\beta_1,\dots,\alpha_g,\beta_g\}$ such that $\chi(\alpha_i)=\chi(\beta_i)=1$ for all $i=1,\dots,g$. Let $v=\sum_i(a_i\,+\,b_i)$ as usual. Since $w>2g$, then $w>v=2g$, and hence we can proceed exactly as in \S\ref{par:hypcol}. Therefore $\chi$ appears as the period character of some hyperelliptic translation surface with simple poles in $\mathcal{H}_g(2g;-1,-1)$. By breaking the zero, we get a translation surface with simple poles in the hyperelliptic component of $\mathcal{H}_g(g,g;-1,-1)$ as desired.

\medskip

\paragraph{\textit{Case: $g<w\le 2g$}}\label{par:hard} We finally deal with this last case. Let $\chi\colon\textnormal{H}_1(S_{g,\,2},\,\Z) \longrightarrow \mathbb{Z}$ be a representation and assume that, for a simple loop $\gamma$ around a puncture, we have that $g\,<\,w=\chi(\gamma)\le 2g$. Since $w\in\Z$, we can write it as $w=g+1+l$, where $l$ is a non-negative integer. We now define two half-infinite strips, say $\mathcal S_1,\,\mathcal S_2\subset \C$ of width $w$ and pointing in opposite directions. More precisely, let $P_0\in\C$ be any point and let $P_{2g+2}=P_0+w$. Let $s$ be the segment joining them and define $r_1,\,r_2=r_1+w$ as the straight lines orthogonal to $P_0$ and $P_{2g+2}$ respectively. Define $\mathcal S_1$ as the strip bounded by $r_1^+\,\cup\,s^-\,\cup\,r_2^-$ and, in a similar fashion, define $\mathcal S_2$ as the strip bounded by $r_1^+\,\cup\,s^+\,\cup\,r_2^-$. Consider these half-strips separately. We partition $s^-$ as the union of segments $s_1^-,\,\dots,\,s_{2g+2}^-$, from left to right, where each $s_i$ is of length $\frac12$, except that the last one is of width $\frac12 + l$, so the total length amounts to $\frac{2g+2}{2} + l = w$. Similarly, we partition $s^+$ as the union of segments $s_{2g+2}^+,\dots,s_{1}^+$, from the left to the right, where each $s_i$ is of length $\frac12$, except that the first one is of width $\frac12 + l$. Again, the total length amounts to $\frac{2g+2}{2} + l = w$. Glue the half-strips $\mathcal{S}_1,\,\mathcal{S}_2$ by identifying the edges with the same label and the rays $r_1^+$ with $r_2^-$ together. The resulting structure is a translation surface with two simple poles and two zeros each of order $g$. These two zeros appear in an alternating way as the endpoints of the $s_i$. Notice that any absolute period is an integer as it consists of an even number of the $s_i$. Finally, the structure is hyperelliptic because the half-strips are symmetric with respect to a rotation of order two by design.

\subsection{Non-hyperelliptic translation surfaces in genus two}\label{ssec:nothypgenus2} For certain strata of genus two meromorphic differentials we are now ready to complete the proof of Theorem \ref{mainthm}. In fact, by performing appropriate modifications to our construction developed in Sections \S\ref{ssec:hypzeres}

and \S\ref{ssec:hypnotzeressimp} we can prove the following propositions.

\begin{prop}\label{prop:genustwohypcaseone}
Let $\chi\colon\textnormal{H}_1(S_{2,\,1},\,\Z) \longrightarrow \C$ be a non-trivial representation.
Assume $\chi$ appears as the period character of some translation surface with poles in a  stratum $\mathcal{H}_2(4;-2)$ or $\mathcal{H}_2(2,2;-2)$, possibly both. Then $\chi$ can be realized in the non-hyperelliptic component of the same stratum. 
\end{prop}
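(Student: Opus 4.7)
The plan is to realize $\chi$ as the period character of a non-hyperelliptic differential in $\mathcal{H}_2(4;-2)$, and deduce the case of $\mathcal{H}_2(2,2;-2)$ by breaking the order-$4$ zero into two zeros of order $2$. This reduction is valid because breaking a zero preserves the spin parity (via the invariance under continuous deformations recalled in Section \S\ref{sssec:spinpar}), while by Lemma \ref{lem:breakhyp} it also preserves hyperellipticity; since, according to Boissy's theorem for $g=2$ and $\nu=\{2\}$ of hyperelliptic type (see Section \S\ref{mscc}), the two components of each of the strata $\mathcal{H}_2(4;-2)$ and $\mathcal{H}_2(2,2;-2)$ are distinguished simultaneously by the hyperelliptic involution and by the spin parity, a non-hyperelliptic surface in $\mathcal{H}_2(4;-2)$ must break to a non-hyperelliptic surface in $\mathcal{H}_2(2,2;-2)$.

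For $\mathcal{H}_2(4;-2)$, non-triviality of $\chi$ together with Lemma \ref{lem:allhandholnonzero} allows one to pick a system of handle generators $\{\alpha_1,\beta_1,\alpha_2,\beta_2\}$ with all periods $a_i = \chi(\alpha_i)$ and $b_i = \chi(\beta_i)$ non-zero. I would apply Proposition \ref{prop:genusone} to the auxiliary representation $\rho$ of Definition \ref{defsupprep} with $\rho(\alpha_1) = a_1$, $\rho(\beta_1) = b_1$, thereby obtaining a translation surface $(Y,\eta) \in \mathcal{H}_1(2;-2)$ with period character $\rho$. Then bubble a handle on $(Y,\eta)$ along a geodesic loop based at its unique zero, using a parallelogram whose sides develop onto $a_2$ and $b_2$, as in Section \S\ref{ssec:bubhandle}. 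The resulting surface $(X,\omega)$ belongs to $\mathcal{H}_2(4;-2)$ because both slit endpoints sit at an existing order-$2$ zero, which gets raised to order $4$, and its period character agrees with $\chi$ by design.

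The key step, and the main obstacle, is verifying that $(X,\omega)$ is non-hyperelliptic. Lemma \ref{lem:spininv} shows that bubbling does not alter the spin parity when the slit endpoint has even angle (here $l = 2$), so $\varphi(\omega)$ is governed by $\varphi(\eta)$ together with the contribution of the new handle generators to the Arf invariant \eqref{eq:spinparity}. On the other hand, the symmetric configuration in the hyperelliptic construction of \S\ref{sssec:hyponehop} produces a surface with a definite spin parity, computable directly from the indices of the handle generators inside the broken half-planes $H_1$ and $H_2$. The challenge is then to exhibit enough flexibility in the bubbling surgery to achieve the parity opposite to that of the hyperelliptic model. I would do this by allowing the base loop of the slit to additionally wind around $\alpha_1$ or $\beta_1$ before returning to the zero: such a modification leaves the period character of $(X,\omega)$ unchanged, yet shifts the indices of the new handle pair $\alpha_2', \beta_2'$ by a controlled amount modulo $2$ and hence toggles their contribution to \eqref{eq:spinparity}, producing the non-hyperelliptic parity in at least one of the two choices.
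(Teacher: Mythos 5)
Your overall architecture (realize $\chi$ non-hyperelliptically in $\mathcal{H}_2(4;-2)$, then break the zero and use that breaking preserves spin parity while the two components are distinguished by parity) is sound, but the realization step contains a fatal gap. Every translation surface in $\mathcal{H}_1(2;-2)$ has even spin parity: that stratum is connected with rotation number $1$, so for any pair of handle generators at least one of $\textnormal{Ind}(\alpha_1),\textnormal{Ind}(\beta_1)$ is odd, forcing $(\textnormal{Ind}(\alpha_1)+1)(\textnormal{Ind}(\beta_1)+1)\equiv 0 \pmod 2$. By Lemma \ref{lem:spininv} (and \ref{lem:spininv2}), bubbling a handle whose slit endpoints lie at regular points or at the even-order zero — which exhausts all points of $(Y,\eta)\in\mathcal{H}_1(2;-2)$ — never alters the parity, so \emph{every} surface your surgery can produce has even parity and therefore lies in the hyperelliptic component (Remark \ref{rmk:specialgenus2strata}). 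Your proposed fix of winding the slit around $\alpha_1$ or $\beta_1$ cannot rescue this: it replaces the new handle generators by homologous combinations with the old ones, i.e.\ it amounts to a change of symplectic basis applied to another parity-preserving bubbling, and the Arf invariant \eqref{arfinv} is independent of the basis. This is precisely why the paper treats the single pole of order $2$ as an exceptional case in Section \S\ref{sssec:excasespin}: the inductive bubbling foundation breaks down there, and Propositions \ref{prop:genustwohypcaseone}--\ref{prop:genustwohypcasetwo} are needed as genuinely new base cases, so your argument is in effect assuming what it must prove.

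The paper's proof instead works directly with the explicit half-plane model of Section \S\ref{sssec:hyponehop}: keep $H_1$ as before, but replace the chain \eqref{eq:chainpolzerr} bounding $H_2$ by the reordered chain \eqref{eq:chainpolzernothyp}, which destroys the order-two rotational symmetry; a direct index computation then shows the resulting surface has odd parity, hence is non-hyperelliptic. If you want to keep a surgery-theoretic route, the correct mechanism is the one in Remark \ref{rmk:spininv}: first break the order-$2$ zero of $(Y,\eta)$ into two zeros of \emph{odd} order $1$, then bubble along the resulting saddle connection; the angle at the slit endpoint is then $2\pi(l+1)$ with $l$ odd, and the parity flips. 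Winding the slit does not achieve this.
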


\begin{prop}\label{prop:genustwohypcasetwo}
Let $\chi\colon\textnormal{H}_1(S_{2,\,2},\,\Z) \longrightarrow \C$ be a non-trivial representation.
Assume $\chi$ appears as the period character of some translation surface with poles in a  stratum $\mathcal{H}_2(4;-1,-1)$ or $\mathcal{H}_2(2,2;-1,-1)$, possibly both. Then $\chi$ can be realized in the non-hyperelliptic component of the same stratum. 
\end{prop}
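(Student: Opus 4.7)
By Boissy's classification (see Section \S\ref{mscc}), the two connected components of each of the strata $\mathcal{H}_2(4;-1,-1)$ and $\mathcal{H}_2(2,2;-1,-1)$ are distinguished by the parity of the spin structure, defined according to Remark \ref{twosimplepoles} by gluing the two simple poles along isometric waist curves to obtain a closed translation surface in $\mathcal{H}_3(4)$ or $\mathcal{H}_3(2,2)$ respectively. Since Proposition \ref{prop:hgdiffhyp} realizes $\chi$ in the hyperelliptic component, it suffices to exhibit an alternative realization in the same stratum whose spin parity is the opposite of the hyperelliptic one; by definition such a realization must lie in the non-hyperelliptic component.

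The plan is to modify the symmetric polygonal constructions of Section \S\ref{ssec:hypnotzeressimp} by breaking their rotational symmetry while preserving both the stratum and the period character. Recall that in those constructions a hyperelliptic realization was obtained by gluing two chains $\mathcal{C}_1 = (a_1,b_1,a_2,b_2)$ and its reverse $\mathcal{C}_2 = (b_2,a_2,b_1,a_1)$ joining two common endpoints $P_0$ and $P_{2g} = P_0 + v$ with $v = \sum_i (a_i+b_i)$, the rotational symmetry of the resulting polygon being the reason for the hyperelliptic involution. We shall instead take $\mathcal{C}_2' = (a_2,b_2,a_1,b_1)$ (or any other permutation of the edges having the same total displacement $v$) in place of $\mathcal{C}_2$, and perform the same edge identifications $a_i^+\sim a_i^-$, $b_i^+\sim b_i^-$ together with the ray and cylinder identifications of \S\ref{sssec:hypnotzeressp}--\S\ref{sssec:disrankone}, depending on whether $\chi$ is not real-collinear, is real-collinear, is discrete of rank two, or is discrete of rank one. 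Since the total displacement along $\mathcal{C}_2'$ is unchanged and the edge-labelling is the same, the resulting topological surface is still homeomorphic to $S_{2,2}$, its period character is $\chi$, and its branch-point data place it in $\mathcal{H}_2(4;-1,-1)$. For the stratum $\mathcal{H}_2(2,2;-1,-1)$ we then apply the breaking of a zero surgery of Section \S\ref{sec:zerobreak} to the realization in $\mathcal{H}_2(4;-1,-1)$; this preserves both $\chi$ and the spin parity.

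Finally we verify that the modified construction indeed produces a surface of opposite spin parity compared to the hyperelliptic one. Using Remark \ref{twosimplepoles} one glues the two simple poles along isometric geodesic waist curves to obtain a closed translation surface in $\mathcal{H}_3(4)$ (or $\mathcal{H}_3(2,2)$ after breaking), choose a symplectic basis of $\textnormal{H}_1$ compatible with the handles realized by $(\alpha_i,\beta_i)$ together with a new pair of generators arising from the glued cylinder, and compute the indices $\text{Ind}(\alpha_i),\text{Ind}(\beta_i)$ directly from the polygonal arrangement via Definition \ref{index}. Substituting into the parity formula \eqref{eq:spinparity} one checks that reordering $\mathcal{C}_2$ alters exactly one of the summands $(\text{Ind}(\alpha_i)+1)(\text{Ind}(\beta_i)+1)$ modulo $2$, thereby flipping the Arf invariant. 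The principal obstacle of the proof is precisely this explicit index computation on the asymmetric polygon, together with verifying that the chosen reordering of $\mathcal{C}_2$ is consistent with all sub-cases (real-collinear, discrete of rank one or two, etc.) treated separately in Section \S\ref{ssec:hypnotzeressimp}; once these calculations are carried out, the non-hyperellipticity of the realization is an immediate consequence of the change of parity.
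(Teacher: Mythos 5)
Your proposal is correct and follows essentially the same route as the paper: the paper's proof also breaks the order-two rotational symmetry by re-sorting the edges of the second boundary chain (keeping the total displacement, the identifications, and hence the stratum and the periods unchanged), and then certifies non-hyperellipticity by computing that the spin parity — which, for these genus-two strata, distinguishes the two components alongside hyperellipticity — becomes odd. The only caveat is that the parity flip depends on the specific permutation chosen (not an arbitrary one), but since you defer to the explicit index computation via \eqref{eq:spinparity}, exactly as the paper does, this is the same argument.
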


\begin{proof}[Sketch of the proofs of Propositions \ref{prop:genustwohypcaseone} and \ref{prop:genustwohypcasetwo}.]

Let $\chi\colon\textnormal{H}_1(S_{2,\,1},\,\Z) \longrightarrow \C$ be a representation that appears as the period character of some translation surface in a stratum $\mathcal{H}_2(4;-2)$ or $\mathcal{H}_2(2,2;-2)$. In Section \S\ref{ssec:hypzeres} we have realized $\chi$ as the period character of some hyperelliptic translation surface by gluing broken half-planes designed in such a way that the resulting structure turned out to be hyperelliptic. The gist of the idea was to realize these broken half-planes so that they were invariant under a rotation of order $2$ about a certain point in $\C$. However, it is also possible to design broken half-planes so that they are no longer symmetric with respect to a rotation of order two. For instance, the broken half-plane $H_1$ in Section \S\ref{ssec:hypzeres} can be defined in the same way and then define $H_2$ as the broken half-plane bounded on its right by the chain \begin{equation}\label{eq:chainpolzernothyp}
    P_0\mapsto P_0+b_1=P_1\mapsto P_0+b_1+a_1=P_2\mapsto P_0+b_1+a_1+b_2=P_3\mapsto\cdots\mapsto P_0+\sum_{i=1}^g (a_i+b_i)=P_{2g}
\end{equation} along with the half-rays $r_1,\,r_2$ (we adopt the same notation). Notice that the chain above differs from \eqref{eq:chainpolzerr} because the edges $a_i,\,b_i$ are now sorted in a different way. Glue the half-planes as usual. The resulting translation surface still lies in $\mathcal{H}_2(4;-2)$ or $\mathcal{H}_2(2,2;-2)$ but it is no longer hyperelliptic. Recall that for these genus two strata under considerations, the connected components are distinguished by the hyperellipticity as well as by the spin parity, see Section \S\ref{mscc}. A direct computation shows that all hyperelliptic structures realized in \S\ref{ssec:hypzeres}
and \S\ref{ssec:hypnotzeressimp} have even spin parity whereas all structures realized by defining the broken half-plane $H_2$ as above have odd spin parity. In Section \S\ref{ssec:hypnotzeressimp} we have adopted the same strategy for representations $\chi\colon\textnormal{H}_1(S_{2,\,2},\,\Z) \longrightarrow \C$ and hence the same discussion holds for them, see Figure \ref{fig:simplepolesdisranktwo}.
\end{proof}

\begin{figure}[!ht] 
\centering
\begin{tikzpicture}[scale=1.05, every node/.style={scale=0.85}]
\definecolor{pallido}{RGB}{221,227,227}

    \pattern [pattern=north east lines, pattern color=pallido]
    (-7,2)--(-4,2)--(0,2)--(0,3)--(4,3)--(7,3)--(7,5)--(-4,5)--(-4,4)--(-7,4)--(-7,2);
    
    \draw[thin, black] (-7,2)--(-4,2);
    \draw[thick, orange] (-4,2)--(0,2);
    \draw[thick, red] (2,3)--(4,3);
    \draw[thick, orange] (0,3)--(2,3);
    \draw[thick, red] (0,2)--(0,3);
    \draw[thick, black] (4,3)--(7,3);
         
    \draw[thin, black] (7,5)--(4,5);
    \draw[thick, red] (-4,4)--(-4,5);
    \draw[thick, orange] (-4,5)--(0,5);
    \draw[thick, red] (0,5)--(2,5);
    \draw[thick, orange] (2,5)--(4,5);
    \draw[thin, black] (-4,4)--(-7,4);
    
    \draw[thin, black, ->] (6,3.2)--(6,4.8);
    \draw[thin, black, ->] (-6,2.2)--(-6,3.8);
    
    \fill (-4,2) circle (1.5pt);
    \fill (0,2) circle (1.5pt);
    \fill (0,3) circle (1.5pt);
    \fill (2,3) circle (1.5pt);
    \fill (4,3) circle (1.5pt);
    
    \node at (6.25, 4) {$w$};
    \node at (-6.25, 3) {$w$};
    
    \fill (4,5) circle (1.5pt);
    \fill (2,5) circle (1.5pt);
    \fill (0,5) circle (1.5pt);
    \fill (-4,5) circle (1.5pt);
    \fill (-4,4) circle (1.5pt);
    
    \node at (-2,1.75) {$a_2^-$};
    \node at (3,2.75) {$b_1^-$};
    \node at (1,2.75) {$a_1^-$};
    \node at (0.25,2.5) {$b_2^-$};
    
    \node at (3,5.25) {$a_1^+$};
    \node at (1,5.25) {$b_1^+$};
    \node at (-2,5.25) {$a_2^+$};
    \node at (-4.25,4.5) {$b_2^+$};
    
\end{tikzpicture}
\caption{Realization of a genus two translation surface with poles and discrete period character of rank two. In this case the bottom "zig-zag" line is obtained by sorting the edges in a different way with respect the order used in Figure \ref{fig:hypsimplepolesdisranktwo}. Notice that in this case the shadow area is no longer invariant under a rotation of order $2$ of $\C$. As a consequence, the structure obtained by gluing the edges according to the labels is no longer hyperelliptic. 
}
\label{fig:simplepolesdisranktwo}
\end{figure}
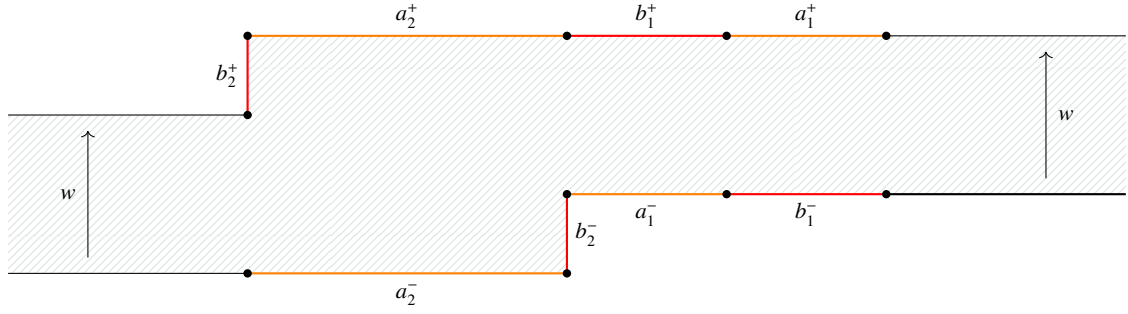

\begin{cor}\label{cor:genustwohyp}
For a partition $\kappa$ of $4$ and a partition $\nu$ of $2$, define $\mathcal{H}_2(\kappa;-\nu)$ as the stratum of meromorphic genus two differentials with zeros and poles of orders prescribed according to $\kappa$ and $-\nu$ respectively. Then Theorem \ref{mainthm} holds for any stratum $\mathcal{H}_2(\kappa;-\nu)$ thus defined.
\end{cor}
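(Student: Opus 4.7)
The plan is to enumerate all pairs $(\kappa,\nu)$ and invoke either Boissy's classification to argue connectedness, or the earlier propositions for the non-connected strata. The partitions of $4$ are $\kappa\in\{(4),(3,1),(2,2),(2,1,1),(1,1,1,1)\}$ and the partitions of $2$ are $\nu\in\{(2),(1,1)\}$, giving ten strata in total. Following Boissy's theorem in Section \S\ref{mscc}, since $g=2$ and $\nu\in\{\{2\},\{1,1\}\}$, a stratum $\mathcal H_2(\kappa;-\nu)$ is disconnected precisely when the set of zeros and poles is of hyperelliptic type; otherwise it is connected. In our list, the strata of hyperelliptic type are exactly $\mathcal H_2(4;-2)$, $\mathcal H_2(2,2;-2)$, $\mathcal H_2(4;-1,-1)$, and $\mathcal H_2(2,2;-1,-1)$, each decomposing into two connected components (one hyperelliptic, one non-hyperelliptic, which moreover are distinguished by spin parity).

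For the six connected strata $\mathcal H_2(3,1;-2)$, $\mathcal H_2(2,1,1;-2)$, $\mathcal H_2(1,1,1,1;-2)$, $\mathcal H_2(3,1;-1,-1)$, $\mathcal H_2(2,1,1;-1,-1)$, $\mathcal H_2(1,1,1,1;-1,-1)$, there is nothing to show beyond the realizability of $\chi$ in the stratum itself, which is the standing hypothesis of Theorem \ref{mainthm} and is the content of \cite[Theorem C]{CFG}. For the four disconnected strata, we proceed componentwise. The hyperelliptic components of $\mathcal H_2(4;-2)$ and $\mathcal H_2(2,2;-2)$ are handled by Proposition \ref{prop:hgdiffhyp} (with $g=2$ and $p=1$), while those of $\mathcal H_2(4;-1,-1)$ and $\mathcal H_2(2,2;-1,-1)$ are also covered by Proposition \ref{prop:hgdiffhyp} via the constructions in Sections \S\ref{ssec:hypnotzeres} and \S\ref{ssec:hypnotzeressimp} (distinguishing the discrete, semi-discrete, and dense cases of $\chi$ as needed). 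The non-hyperelliptic components of $\mathcal H_2(4;-2)$ and $\mathcal H_2(2,2;-2)$ are realized via Proposition \ref{prop:genustwohypcaseone}, and the non-hyperelliptic components of $\mathcal H_2(4;-1,-1)$ and $\mathcal H_2(2,2;-1,-1)$ via Proposition \ref{prop:genustwohypcasetwo}.

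The only subtle point is that in each disconnected case the hyperelliptic and non-hyperelliptic components carry opposite spin parities (even and odd respectively, as computed in the sketch of Propositions \ref{prop:genustwohypcaseone}--\ref{prop:genustwohypcasetwo}), so the distinction between the two geometric constructions -- gluing two broken half-planes with the chain \eqref{eq:chainpolzerr} in $H_2$ versus the permuted chain \eqref{eq:chainpolzernothyp} -- indeed separates the two components in the moduli space. Assembling the ten cases yields the statement.
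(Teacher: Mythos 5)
Your proposal is correct and follows essentially the same route as the paper: identify via Boissy's classification that only the four hyperelliptic-type strata $\mathcal H_2(4;-2)$, $\mathcal H_2(2,2;-2)$, $\mathcal H_2(4;-1,-1)$, $\mathcal H_2(2,2;-1,-1)$ are disconnected, dispose of the connected strata by the standing realizability hypothesis (the paper cites \cite[Theorems C and D]{CFG} here, the latter covering discrete rank-one characters with simple poles, but this is immaterial given the hypothesis), and then invoke Proposition \ref{prop:hgdiffhyp} for the hyperelliptic components and Propositions \ref{prop:genustwohypcaseone}--\ref{prop:genustwohypcasetwo} for the non-hyperelliptic ones. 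No gaps.
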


\begin{proof}
We begin with the following observation. Let $\kappa$ be a partition of $4$ and let $\nu$ be a partition of $2$. According to Boissy, see \cite[Theorem 1.2]{BC}, a stratum $\mathcal{H}_2(\kappa;-\nu)$ of genus two meromorphic differentials admits at most two connected components. More precisely, the stratum is connected whenever $\kappa\neq\{4\},\,\{2,\,2\}$. For $\kappa=\{4\}$ or $\kappa=\{2,\,2\}$ it admits two connected components one of which is hyperelliptic and the other is not. Suppose a representation $\chi$ can be realized in a certain stratum $\mathcal{H}_2(\kappa; -\nu)$. If the stratum is connected then the claim follows from \cite[Theorems C and D]{CFG}. We next assume that the stratum is not connected. This is one of the strata $\mathcal{H}_2(4;-2)$, $\mathcal{H}_2(2,2;-2)$, $\mathcal{H}_2(4;-1,-1)$, and $\mathcal{H}_2(2,2;-1,-1)$.
Now Proposition \ref{prop:hgdiffhyp} says that $\chi$ can be realized in the hyperelliptic component of each stratum. On the other hand, Propositions \ref{prop:genustwohypcaseone} and \ref{prop:genustwohypcasetwo} say that $\chi$ can be realized in the non-hyperelliptic component of each stratum. The claim thus  follows.
\end{proof}

\begin{rmk}\label{rmk:specialgenus2strata}
According to Boissy, \cite{BC}, the connected components of strata in Corollary \ref{cor:genustwohyp} are also distinguished by the spin parity, see \S\ref{sssec:spinpar}. In fact, by a direct computation, one can see that hyperelliptic translation surfaces have even parity and non-hyperelliptic ones have odd parity. See Figures \ref{fig:simplepolesdisranktwowithindex} and \ref{fig:simplepolesdisranktwowithindexeven} for an example related to Figures \ref{fig:hypsimplepolesdisranktwo} and  \ref{fig:simplepolesdisranktwo}.
\end{rmk}

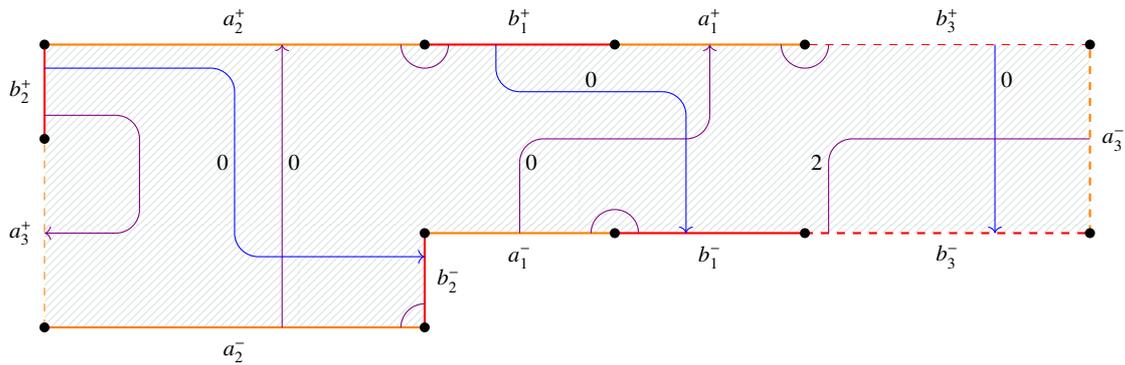
\begin{figure}[!ht] 
\centering
\begin{tikzpicture}[scale=1.25, every node/.style={scale=0.85}]
\definecolor{pallido}{RGB}{221,227,227}

    \pattern [pattern=north east lines, pattern color=pallido]
    (-4,2)--(0,2)--(0,3)--(4,3)--(7,3)--(7,5)--(-4,5)--(-4,4);
    
    \draw[thin, orange, dashed] (-4,2)--(-4,4);
    \draw[thick, orange] (-4,2)--(0,2);
    \draw[thick, red] (2,3)--(4,3);
    \draw[thick, orange] (0,3)--(2,3);
    \draw[thick, red] (0,2)--(0,3);
    \draw[thick, dashed, red] (4,3)--(7,3);
         
    \draw[thin, dashed, red] (7,5)--(4,5);
    \draw[thick, red] (-4,4)--(-4,5);
    \draw[thick, orange] (-4,5)--(0,5);
    \draw[thick, red] (0,5)--(2,5);
    \draw[thick, orange] (2,5)--(4,5);
    \draw[thick, orange, dashed] (7,3)--(7,5);
    
    \draw[thin, violet, ->] (-1.5,2)--(-1.5,5);
    \draw[thin, violet] (1,3)--(1,3.75);
    \draw [thin, violet] (1,3.75) arc [start angle =180, end angle=90 , radius = 0.25];
    \draw[thin, violet] (1.25,4)--(2.75,4);
    \draw [thin, violet] (2.75,4) arc [start angle =270, end angle=360 , radius = 0.25];
    \draw[thin, violet, ->] (3,4.25)--(3,5);
    \draw[thin, blue, ->] (6,5)--(6,3);
    
    \draw[thin, blue, <-] (2.75,3)--(2.75,4.25);
    \draw [thin, blue] (2.75,4.25) arc [start angle =0, end angle=90 , radius = 0.25];
    \draw[thin, blue] (2.5,4.5)--(1,4.5);
    \draw [thin, blue] (1,4.5) arc [start angle =270, end angle=180 , radius = 0.25];
    \draw[thin, blue, -] (0.75,4.75)--(0.75,5);
    
    \draw[thin, blue] (-4,4.75)--(-2.25,4.75);
    \draw [thin, blue] (-2.25,4.75) arc [start angle =90, end angle=0 , radius = 0.25];
    \draw[thin, blue] (-2,4.5)--(-2,3);
    \draw [thin, blue] (-2,3) arc [start angle =180, end angle=270 , radius = 0.25];
    \draw[thin, blue, ->] (-1.75,2.75)--(0,2.75);
    
    \draw[thin, violet] (7,4)--(4.5,4);
    \draw[thin, violet] (4.5,4) arc [start angle =90, end angle=180 , radius = 0.25];
    \draw[thin, violet] (4.25,3.75)--(4.25,3);
    \draw[thin, violet] (4.25,5) arc [start angle =0, end angle=-180 , radius = 0.25];
    \draw[thin, violet] (2.25,3) arc [start angle =0, end angle=180 , radius = 0.25];
    \draw[thin, violet] (0.25,5) arc [start angle =0, end angle=-180 , radius = 0.25];
    \draw[thin, violet] (0,2.25) arc [start angle =90, end angle=180 , radius = 0.25];
    \draw[thin, violet] (-4,4.25)--(-3.25,4.25);
    \draw[thin, violet] (-3.25,4.25) arc [start angle =90, end angle=0 , radius = 0.25];
    \draw[thin, violet] (-3,4)--(-3,3.25);
    \draw[thin, violet] (-3,3.25) arc [start angle =0, end angle=-90 , radius = 0.25];
    \draw[thin, violet, <-] (-4,3)--(-3.25,3);
    
    \fill (-4,2) circle (1.5pt);
    \fill (0,2) circle (1.5pt);
    \fill (0,3) circle (1.5pt);
    \fill (2,3) circle (1.5pt);
    \fill (4,3) circle (1.5pt);
    
    \fill (7,3) circle (1.5pt);
    \fill (7,5) circle (1.5pt);
    
    \node at (6.125, 4.625) {$0$};
    \node at (1.75, 4.625) {$0$};
    \node at (4.125, 3.75) {$2$};
    \node at (1.125, 3.75) {$0$};
    \node at (-1.375, 3.75) {$0$};
    \node at (-2.125, 3.75) {$0$};
    
    \fill (4,5) circle (1.5pt);
    \fill (2,5) circle (1.5pt);
    \fill (0,5) circle (1.5pt);
    \fill (-4,5) circle (1.5pt);
    \fill (-4,4) circle (1.5pt);
    
    \node at (-2,1.75) {$a_2^-$};
    \node at (3,2.75) {$b_1^-$};
    \node at (1,2.75) {$a_1^-$};
    \node at (0.25,2.5) {$b_2^-$};
    \node at (7.25,4) {$a_3^-$};
    \node at (5.5,2.75) {$b_3^-$};

    \node at (3,5.25) {$a_1^+$};
    \node at (1,5.25) {$b_1^+$};
    \node at (-2,5.25) {$a_2^+$};
    \node at (-4.25,4.5) {$b_2^+$};
    \node at (-4.25,3) {$a_3^+$};
    \node at (5.5,5.25) {$b_3^+$};
    
\end{tikzpicture}
\caption{Computation of the spin parity for the translation surface $(X,\omega)$ in Figure \ref{fig:simplepolesdisranktwo}. According to Remark \ref{twosimplepoles}, the structure depicted here can be obtained from $(X,\omega)$ in Figure \ref{fig:simplepolesdisranktwo} by truncating the cylindrical ends along waist geodesic curves. The dashed edges correspond to those obtained after truncation. The colored lines represent a symplectic base and the separate labels denote the indices of the respective curves. According to formula \eqref{eq:spinparity}, it is easy to check that $\varphi(\omega)=1\,\,(\textnormal{mod}\,2)$, hence the structure is not hyperelliptic.
}
\label{fig:simplepolesdisranktwowithindex}
\end{figure}

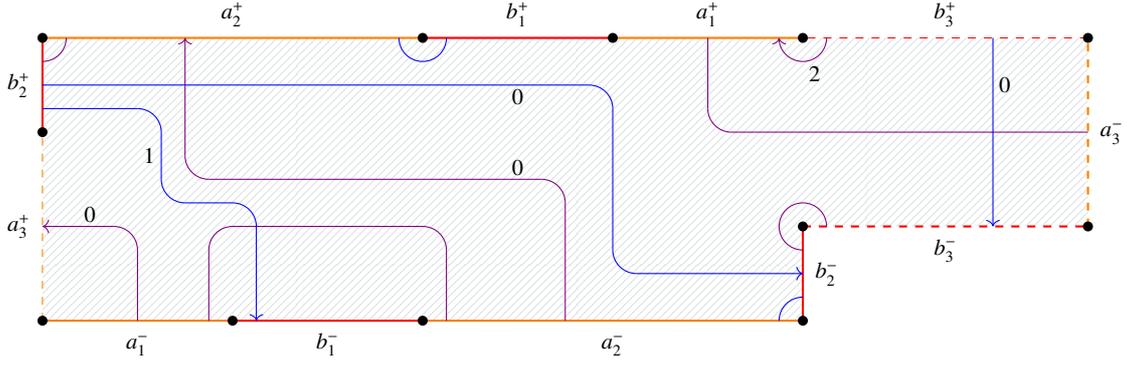
\begin{figure}[!ht] 
\centering
\begin{tikzpicture}[scale=1.25, every node/.style={scale=0.85}]
\definecolor{pallido}{RGB}{221,227,227}

    \pattern [pattern=north east lines, pattern color=pallido]
    (-4,2)--(0,2)--(4,2)--(4,3)--(7,3)--(7,5)--(-4,5)--(-4,4);
    
    \draw[thin, orange, dashed] (-4,2)--(-4,4);
    \draw[thick, orange] (0,2)--(4,2);
    \draw[thick, orange] (-4,2)--(-2,2);
    \draw[thick, red] (-2,2)--(0,2);
    \draw[thick, red] (4,2)--(4,3);
    \draw[thick, dashed, red] (4,3)--(7,3);
         
    \draw[thin, dashed, red] (7,5)--(4,5);
    \draw[thick, red] (-4,4)--(-4,5);
    \draw[thick, orange] (-4,5)--(0,5);
    \draw[thick, red] (0,5)--(2,5);
    \draw[thick, orange] (2,5)--(4,5);
    \draw[thick, orange, dashed] (7,3)--(7,5);
    
    \draw[thin, violet, ->] (-2.5,3.75)--(-2.5,5);
    \draw[thin, violet] (-2.5,3.75) arc [start angle =180, end angle=270 , radius = 0.25];
    \draw[thin, violet] (-2.25,3.5)--(1.25,3.5);
    \draw[thin, violet] (1.25,3.5) arc [start angle =90, end angle=0 , radius = 0.25];
    \draw[thin, violet] (1.5,3.25)--(1.5,2);
    
    \draw[thin, violet] (7,4)--(3.25,4); 
    \draw[thin, violet] (3.25,4) arc [start angle =270, end angle=180 , radius = 0.25];
    \draw[thin, violet] (3,4.25)--(3,5);
    \draw[thin, violet] (-3,2)--(-3,2.75);
    \draw[thin, violet] (-3,2.75) arc [start angle =0, end angle=90 , radius = 0.25];
    \draw[thin, violet, <-] (-4,3)--(-3.25,3); 
    
    \draw[thin, violet] (-2.25,2)--(-2.25,2.75);
    \draw[thin, violet] (-2.25,2.75) arc [start angle =180, end angle=90 , radius = 0.25];
    \draw[thin, violet] (-2,3)--(0,3);
    \draw[thin, violet] (0.25,2.75) arc [start angle =0, end angle=90 , radius = 0.25];
    \draw[thin, violet] (0.25,2.75)--(0.25,2);
    \draw[thin, violet] (-3.75,5) arc [start angle =0, end angle=-90 , radius = 0.25];
    \draw[thin, violet] (4,2.75) arc [start angle =270, end angle=0 , radius = 0.25];
    \draw[thin, violet, ->] (4.25,5) arc [start angle =0, end angle=-180 , radius = 0.25];
    
    \draw[thin, blue] (0.25, 5) arc [start angle =0, end angle=-180 , radius = 0.25];
    \draw[thin, blue] (4, 2.25) arc [start angle =90, end angle=180 , radius = 0.25];
    \draw[thin, blue] (-4,4.25)--(-3,4.25);
    \draw[thin, blue] (-3, 4.25) arc [start angle =90, end angle=0 , radius = 0.25];
    \draw[thin, blue] (-2.75, 4)--(-2.75,3.5);
    \draw[thin, blue] (-2.75,3.5) arc [start angle =180, end angle=270 , radius = 0.25];
    \draw[thin, blue] (-2.5, 3.25)--(-2, 3.25);
    \draw[thin, blue] (-2, 3.25) arc [start angle =90, end angle=0 , radius = 0.25];
    \draw[thin, blue, ->] (-1.75, 3)--(-1.75,2);
    
    \draw[thin, blue, ->] (6,5)--(6,3);
    
    \draw[thin, blue] (-4,4.5)--(1.75,4.5);
    \draw [thin, blue] (1.75,4.5) arc [start angle =90, end angle=0 , radius = 0.25];
    \draw[thin, blue] (2,4.25)--(2,2.75);
    \draw [thin, blue] (2,2.75) arc [start angle =180, end angle=270 , radius = 0.25];
    \draw[thin, blue, ->] (2.25,2.5)--(4,2.5);

    \fill (-4,2) circle (1.5pt);
    \fill (0,2) circle (1.5pt);
    \fill (-2,2) circle (1.5pt);
    \fill (4,2) circle (1.5pt);
    \fill (4,3) circle (1.5pt);
    
    \fill (7,3) circle (1.5pt);
    \fill (7,5) circle (1.5pt);
    
    \node at (6.125, 4.5) {$0$};
    \node at (1, 4.375) {$0$};
    \node at (4.125, 4.625) {$2$};
    \node at (1, 3.625) {$0$};
    \node at (-2.875, 3.75) {$1$};
    \node at (-3.5, 3.125) {$0$};
    
    \fill (4,5) circle (1.5pt);
    \fill (2,5) circle (1.5pt);
    \fill (0,5) circle (1.5pt);
    \fill (-4,5) circle (1.5pt);
    \fill (-4,4) circle (1.5pt);
    
    \node at (2,1.75) {$a_2^-$};
    \node at (-1,1.75) {$b_1^-$};
    \node at (-3,1.75) {$a_1^-$};
    \node at (4.25,2.5) {$b_2^-$};
    \node at (7.25,4) {$a_3^-$};
    \node at (5.5,2.75) {$b_3^-$};

    \node at (3,5.25) {$a_1^+$};
    \node at (1,5.25) {$b_1^+$};
    \node at (-2,5.25) {$a_2^+$};
    \node at (-4.25,4.5) {$b_2^+$};
    \node at (-4.25,3) {$a_3^+$};
    \node at (5.5,5.25) {$b_3^+$};
    
\end{tikzpicture}
\caption{Computation of the spin parity for the translation surface $(X,\omega)$ in Figure \ref{fig:hypsimplepolesdisranktwo}. According to Remark \ref{twosimplepoles}, the structure depicted here can be obtained from $(X,\omega)$ in Figure \ref{fig:hypsimplepolesdisranktwo} by truncating the cylindrical ends along waist geodesic curves. By using formula \eqref{eq:spinparity}, it is easy to check that $\varphi(\omega)=0\,\,(\textnormal{mod}\,2)$, hence the structure is hyperelliptic.
}
\label{fig:simplepolesdisranktwowithindexeven}
\end{figure}

\smallskip

\section{Higher genus meromorphic differentials with prescribed parity}\label{sec:hgcpar}

\noindent We aim to determine whether a representation $\chi\colon\shomolzn\longrightarrow \C$ can be realized as the period character of some translation surface with prescribed spin parity, see \S\ref{sssec:spinpar}. In this section we assume the representation $\chi$ to be non-trivial and we shall handle the trivial representation in Section \S\ref{sec:trirep}. Recall that a genus $g$ meromorphic differential $\omega$ on a Riemann surface $X$ determines a well-defined spin structure if and only if the set of zeros and poles is of even type, see Definition \ref{def:kindofstrata}. Our aim is to prove the following

\begin{prop}\label{prop:hgspin}
Let $\chi$ be a non-trivial representation and suppose it arises as the period character of some meromorphic genus $g$ differential in a stratum admitting two connected components distinguished by the spin parity. Then $\chi$ can be realized in both components of the same stratum as the period character of some translation surfaces with poles.
\end{prop}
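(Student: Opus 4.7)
The plan is to argue by induction on the genus $g$. The base case $g=2$ is already in hand from Section \S\ref{sec:hgchyp}: Corollary \ref{cor:genustwohyp} together with Remark \ref{rmk:specialgenus2strata} shows that in every disconnected genus-two even-type stratum, each connected component (distinguished by the spin parity) can be realized for any representation $\chi$ that is realizable in the stratum. For the inductive step, assume the result for genus $g-1\ge 2$, and let $\chi\colon\shomolzn\longrightarrow\C$ be a non-trivial representation realizable in an even-type stratum. Since breaking a zero does not alter the spin structure, I may reduce to strata of the form $\mathcal{H}_g(2m;-\nu)$ with $\nu=(2p_1,\dots,2p_n)$ or $\nu=(1,1)$.

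Next I select a good system of handle generators. Applying Lemma \ref{lem:handholnonzero} if necessary to ensure the induced $\chi_g$ is non-trivial, and then Lemma \ref{lem:allhandholnonzero}, I produce $\{\alpha_i,\beta_i\}_{1\le i\le g}$ with $\chi(\alpha_i),\chi(\beta_i)\neq 0$ for every $i$. Let $\chi'$ denote the representation induced on the genus-$(g-1)$ subsurface obtained by discarding the last handle $\{\alpha_g,\beta_g\}$. By construction $\chi'$ is non-trivial, and (with the aid of Lemmas \ref{lem:posvol}--\ref{lem:smallperiods} in the real-collinear or discrete regimes, and Lemma \ref{lem:disnormalform} in the discrete case) one checks that $\chi'$ is realizable in the even-type stratum $\mathcal{H}_{g-1}(2m-2;-\nu)$. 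By the inductive hypothesis, for any desired parity $\varepsilon\in\{0,1\}$ I realize $\chi'$ as the period character of some translation surface $(X',\omega')\in\mathcal{H}_{g-1}(2m-2;-\nu)$ with $\varphi(\omega')=\varepsilon$.

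I then attach the remaining handle by a single bubbling operation performed at the unique zero of $\omega'$ (of order $2m-2$), with the two periods of the new handle equal to $\chi(\alpha_g)$ and $\chi(\beta_g)$. If $\operatorname{Im}\bigl(\overline{\chi(\alpha_g)}\,\chi(\beta_g)\bigr)>0$ I invoke the positive-volume bubbling of Section \S\ref{ssec:bubhandle}; otherwise I use the non-positive-volume bubbling of Section \S\ref{ssec:crophandle}, which by Lemma \ref{lem:eveninv} keeps the result of even type. In both scenarios the resulting translation surface lies in $\mathcal{H}_g(2m;-\nu)$ and realizes $\chi$. Because the angle at the bubbling endpoint is $2\pi(l+1)$ with $l=2m-2$ even, Lemmas \ref{lem:spininv} and \ref{lem:spininv2} imply that the spin parity is unchanged, so the resulting structure has parity $\varepsilon$ as prescribed.

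The main obstacle I anticipate is the verification, in every sub-case of the classification of non-trivial representations (generic, real-collinear, discrete of rank one or two, trivial- vs.\ non-trivial-ends type), that a system of handle generators can be chosen so the truncated representation $\chi'$ falls into the hypotheses of the inductive statement and so that the bubbling parallelogram can be embedded in $(X',\omega')$ with the required periods. In the discrete case this is particularly delicate, because the realizability conditions of \cite{CFG} impose strict inequalities between the periods and the zero order; here one precomposes $\chi$ with a suitable $\phi\in\modul$ (whose existence is guaranteed by the normal-form Lemmas of Section \S\ref{sec:mcga}) to bring $\chi'$ into realizable form before invoking the inductive hypothesis. Once this bookkeeping is arranged, the bubbling step is purely local and parity-preserving, and the induction closes.
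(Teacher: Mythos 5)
Your overall architecture differs from the paper's: the paper does not induct on the genus of the target stratum by truncating $\chi$ and re-invoking realizability at genus $g-1$; instead it realizes an \emph{auxiliary} genus-one representation $\rho$ (one handle plus the polar part) in $\mathcal{H}_1(2n;-2,\dots,-2)$ with prescribed rotation number, and then bubbles all $g-1$ remaining handles in one pass inside a single embedded half-plane, reading off the parity from the indices of the first handle (Table \ref{tab:indexes}). That design deliberately avoids the two places where your induction breaks down, and those are genuine gaps, not bookkeeping.

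First, your base case is not established. Corollary \ref{cor:genustwohyp} and Remark \ref{rmk:specialgenus2strata} cover only the genus-two strata $\mathcal{H}_2(\kappa;-\nu)$ with $\nu$ a partition of $2$, i.e.\ $\nu=\{2\}$ or $\{1,1\}$. A stratum such as $\mathcal{H}_2(6;-2,-2)$ (or $\mathcal{H}_2(2,2,2;-2,-2)$, etc.) is of even type, has two components distinguished by the spin parity, and is not treated anywhere in Section \S\ref{sec:hgchyp}; so the induction cannot start for general even-type $\nu$. The paper sidesteps this by anchoring the induction at genus one (where components are distinguished by rotation number) whenever $n\ge 2$, and only falls back on the genus-two strata of Section \S\ref{sec:hgchyp} for the two exceptional families $-\nu=\{2\}$ and $\{1,1\}$, where $\mathcal{H}_1(2;-2)$ and $\mathcal{H}_1(2;-1,-1)$ are connected.

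Second, your opening reduction ``break a zero, so assume a single zero of maximal order'' is false for discrete rank-one representations with two simple poles. By Proposition \ref{prop:rankonechar}, $\chi$ is realizable in $\mathcal{H}_g(2m_1,\dots,2m_k;-1,-1)$ iff $|w|>\max\{2m_i\}$, whereas the single-zero stratum $\mathcal{H}_g(2g;-1,-1)$ requires $|w|>2g=\sum 2m_i$. When $\max\{2m_i\}<|w|\le 2g$ the target stratum is non-empty for $\chi$ but the single-zero stratum is not, so there is nothing to break a zero from. This is exactly the case the paper must handle with a separate construction (the half-strip surgery in the final paragraph of Section \S\ref{sec:hgcpar}, distributing the zeros among handles of zero volume), and it cannot be absorbed into ``precompose with a mapping class to bring $\chi'$ into realizable form'': the obstruction is an inequality between the residue and the zero orders, which is invariant under $\modul$. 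Relatedly, in the real-collinear non-discrete case the embeddability of the zero-volume parallelograms is secured only after invoking Lemma \ref{lem:smallperiods} to make \emph{all} handle periods simultaneously small relative to the residue; doing this one handle at a time, as your induction requires, does not obviously leave room for the later handles. Until these points are addressed, the inductive step does not close.
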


\smallskip

\subsection{Inductive process}\label{ssec:indproc} The strategy we shall adopt in the present section is based on an inductive foundation on the genus $g$ of surfaces. We begin with an explanation of our strategy which we shall develop in Section \S\ref{ssec:intoproc}. In the explanation below we shall mainly consider strata of differentials with a single zero of maximal order because the general case follows by breaking a zero.

\subsubsection{Higher order poles}\label{sssec:hopbub} Let $\chi\colon\shomolzn\longrightarrow \C$ be a non-trivial representation, possibly of non-trivial-ends type, and let $\mathcal{H}_g(2m+2g-2;-2p_1,\dots,-2p_n)$ be a stratum of genus $g$ differentials of even type, where $n\ge2$ and $m=p_1+\cdots+p_n$. According to \cite[Theorem C]{CFG}, $\chi$ can be realized in such a stratum. Let $\{\alpha_1,\beta_1,\dots,\alpha_g,\beta_g\}$ be a system of handle generators, see Definition \ref{def:syshandle}, and define an auxiliary representation $\rho\colon\textnormal{H}_1(S_{1,n},\,\Z)\longrightarrow \C$ as follow
\begin{equation}\label{eq:auxsecondtype}
    \rho(\alpha)=\chi(\alpha_1),\quad\rho(\beta)=\chi(\beta_1),\quad \rho(\delta_i)=\chi(\delta_i) 
\end{equation} where $\{\alpha,\beta\}$ is a pair of handle generators for $\shomolzon$ and $\delta_i$ is a peripheral loop around the $i$-th puncture. Notice that this auxiliary representation differs from that defined in Definition \ref{supprep}. Since $\chi$ is a non-trivial representation, Lemmas \ref{lem:allhandholnonzero} and \ref{lem:handholnonzero} apply and we can assume $\chi(\alpha_1),\,\chi(\beta_1)\in\C^*$. As a consequence, the auxiliary representation $\rho$ is also non-trivial. Next, we realize $\rho$ as the period character of some translation surface with poles in the stratum $\mathcal{H}_1(2n;-2,\dots,-2)$ as in Section \S\ref{genusoneordertwo}. This structure serves as the base case for an inductive foundation. The inductive process consists in showing that at each step we always obtain a translation surface with poles with enough room to bubble a handle so that the resulting structure has the desired parity. Notice that, once the polar part $\nu=(2,\dots,2)$ is fixed, the genus determines the order of the zero uniquely; this is Gauss-Bonnet condition, see Remark \ref{gbcond}. Therefore bubbling a handle yields a sequence of mapping between strata as follows 
\begin{equation}\label{eq:chainofstrata}
    \mathcal{H}_1(2n;-\nu)\longmapsto  \mathcal{H}_2(2n+2;-\nu) \longmapsto 
    \cdots\longmapsto \mathcal{H}_g(2n+2g-2;-\nu)\longmapsto \mathcal{H}_{g+1}(2n+2g;-\nu)\longmapsto\cdots.
\end{equation}

\noindent According to Lemmas \ref{lem:spininv} and \ref{lem:spininv2}, at each step  bubbling a handle does not alter the spin parity. Therefore, as we shall see in Section \S\ref{ssec:intoproc}, the spin parity is completely determined by the rotation number of the initial genus one differential. The stratum $\mathcal{H}_1(2n;-2,\dots,-2)$ has exactly two connected components: One of these comprises genus one differentials with rotation number $k=1$ and the other one  comprises differentials with rotation number $k=2$. In the former case, each bubbling gets the access to the connected component of translation surfaces with even spin parity. In the latter case, each bubbling gets the access to the connected component of translation surfaces with odd spin parity.

\medskip

\noindent Once a structure in a stratum $\mathcal{H}_g(2n+2g-2;-2,\dots,-2)$ is realized with prescribed parity and period character $\chi$, then we can get the access to all other strata of genus $g$ differentials by bubbling copies of the differential $(\C,\,z\,dz)$ along suitable rays joining the single zero with the punctures. Lemma \ref{lem:invaspin} ensures that the parity remains unaltered. Finally, by breaking a zero, see \S\ref{sec:zerobreak}, we get the desired result for all possible strata $\mathcal{H}_{g}(\kappa;-\nu)$, where $\kappa=(2m_1,\dots,2m_k)\in2\,\Z_+^k$ and $\kappa, \nu$ satisfy the Gauss-Bonnet condition \eqref{gbeq}.

\subsubsection{Two exceptional cases}\label{sssec:excasespin} The strategy above does not apply for strata of differentials with a single pole of order $2$. The problem is due to the connectedness of the stratum $\mathcal{H}_1(2;-2)$, see \cite[Theorem 1.1]{BC} or Section \S\ref{mscc}. We bypass this issue by using genus two differentials as the base case for the induction. More precisely, given a representation $\chi\colon\textnormal{H}_1(S_{g,1},\,\Z)\longrightarrow \C$ and a system of handle generators $\{\alpha_1,\beta_1,\dots,\alpha_g,\beta_g\}$ we define an auxiliary representation $\rho\colon\textnormal{H}_1(S_{2,1},\,\Z)\longrightarrow \C$ as 
\begin{equation}\label{eq:auxsecondtype2}
    \rho(\alpha_1)=\chi(\alpha_1),\quad\rho(\beta_1)=\chi(\beta_1),\quad \rho(\alpha_2)=\chi(\alpha_2),\quad\rho(\beta_2)=\chi(\beta_2).
\end{equation}

\noindent Next, we realize $\rho$ as the period character of some translation surface in $\mathcal{H}_2(4;-2)$. We shall use this structure as the base case for the induction and then we can rely on the same kind of process described in Section \S\ref{sssec:hopbub}. Bubbling a handle yields a sequence of mapping similar to \eqref{eq:chainofstrata}, \textit{i.e.}

\begin{equation}\label{eq:chainofstrata2}
    \mathcal{H}_2(4;-2) \longmapsto \mathcal{H}_3(6;-2)\longmapsto
    \cdots\longmapsto \mathcal{H}_g(2g;-2)\longmapsto \mathcal{H}_{g+1}(2g+2;-2)\longmapsto\cdots.
\end{equation}

\noindent Recall that bubbling does not alter the spin parity of the structure (whenever it is defined). Since $\mathcal{H}_2(4;-2)$ has two connected components distinguished by the spin parity, by bubbling a genus two differential with even parity we get a translation surface with even parity in each stratum $\mathcal{H}_g(2g;-2)$. Similarly, by bubbling a genus two differential with odd parity we get a structure with odd parity in each stratum $\mathcal{H}_g(2g;-2)$. We next induct on the order of the pole. More precisely, by bubbling $p-1$ copies of $(\C,\,z\,dz)$ along an infinite ray joining the single zero and the pole we can access to the stratum $\mathcal{H}_g(2g+2p-2;-2p)$. Notice that, again, Lemma \ref{lem:invaspin} ensures that the spin parity only depends on the parity of the initial structure. By breaking a zero, we get the access to all strata of the form $\mathcal{H}_g(\kappa;-2p)$, where $\kappa=(2m_1,\dots,2m_k)\in2\,\Z_+^k$.

\medskip

\noindent In the light of Remark \ref{twosimplepoles}, for two-punctured surfaces there is an additional case to take into account. This is the second exceptional case. Let $\chi\colon\textnormal{H}_1(S_{g,\,2},\,\Z)\longrightarrow \C$ be a non-trivial representation of non-trivial-ends type and let $\mathcal{H}_g(2g;-1,-1)$ be a stratum of genus $g$ meromorphic differentials with two simple poles. A non-rational representation, see Definition \ref{ratchar}, can be realized in that stratum, see \cite[Theorem C]{CFG}. However, if $\chi$ is rational, then necessary and sufficient conditions for the realization are given by \cite[Theorem D]{CFG}, see also Proposition \ref{prop:rankonechar} above. Once again, we rely on an inductive foundation and, since the stratum $\mathcal{H}_1(2;-1,-1)$ is connected, genus two differentials will serve as the base case for the induction. In fact, the stratum $\mathcal{H}_2(4;-1,-1)$ has two connected components distinguished by the spin parity. We shall realize an auxiliary representation in this stratum with prescribed parity and then bubbling will provide the access to connected components of all the other strata $\mathcal{H}_g(2g,-1,-1)$ according to the sequence of mapping 

\begin{equation}\label{eq:chainofstrata3}
    \mathcal{H}_2(4;-1,-1) \longmapsto \mathcal{H}_3(6;-1,-1)\longmapsto
    \cdots\longmapsto \mathcal{H}_g(2g;-1,-1)\longmapsto \mathcal{H}_{g+1}(2g+2;-1,-1)\longmapsto\cdots.
\end{equation}

\noindent Once again, by breaking a zero we get the access to all strata of the form $\mathcal{H}_g(\kappa;-1,-1)$, with $\kappa\in2\,\Z_+^k$.

\smallskip

\noindent In both exceptional cases, the basic cases for the inductive process have already been realized in Section \S\ref{sec:hgchyp}, see Remark \ref{rmk:specialgenus2strata}. In the next section we move to develop our inductive process. We have already mentioned above that bubbling copies of $(\C,\,dz)$ along rays and breaking a zero into zeros of even order do not alter the spin parity. Since these operations provide the access to all other strata with poles of order greater than $2$ and multiple zeros, in what follow we can reduce to consider strata with poles of order $2$ and then prove Proposition \ref{prop:hgspin} for these strata. The generic case immediately follows.

\smallskip

\subsection{Into the process}\label{ssec:intoproc} We discuss the inductive foundation by distinguishing two cases according to Sections \S\ref{sssec:hopbub} and \S\ref{sssec:excasespin} above.

\subsubsection{Generic case}\label{sssec:gencase} Let $n\ge2$ and let $\chi\colon\shomolzn\longrightarrow \C$ be a non-trivial representation, possibly of non-trivial-ends type. Let $\mathcal{G}=\{\alpha_1,\beta_1,\dots,\alpha_g,\beta_g\}$ be a system of handle generators and let $\rho\colon\textnormal{H}_1(S_{1,n},\,\Z)\longrightarrow \C$ be an auxiliary representation defined as in \eqref{eq:auxsecondtype}. Assume without loss of generality that $\rho$ is a non-trivial representation. According to our constructions in Section \S\ref{genusonemero}, $\rho$ can be realized as the period character of some translation surface with poles, say $(X,\omega)$, in $\mathcal{H}_1(2n;-2,\dots,-2)$ with rotation number $k=1$ or $k=2$. 

\smallskip

\noindent More precisely, if $\text{vol}(\rho)>0$, then we realize $(X,\omega)$ as in \S\ref{pk1} in order to get a structure with rotation number $k=1$; otherwise we realize $(X,\omega)$ as in \S\ref{pvep} or \S\ref{pvop} to get a structure with rotation number $k=2$. In the case $\text{vol}(\rho)\le0$, then we realize $(X,\omega)$ as in \S\ref{nk1} in order to obtain a structure with rotation number $k=1$; otherwise we realize $(X,\omega)$ as in \S\ref{npvonp} or \S\ref{nvep} to get a structure with rotation number $k=2$, see also Table \ref{fig:flowdiagram}.

\smallskip

\noindent In order to consistent, let us adopt the notation of Section \S\ref{genusonemero}. We begin by noticing that in all cases mentioned above, the translation surface $(X,\omega)$ always contains an entire copy of $(\C,\,dz)$ or, at worst, a copy of $(\C,\,dz)$ with some compact set removed. 

\begin{rmk}\label{rmk:excasetwopunctures}
This latter case appears only if $n=2$ and we aim to realize $\rho\colon\textnormal{H}_1(S_{1,2},\,\Z)\longrightarrow \C$ as the period character of some structure $(X,\omega)$ with rotation number $2$. In fact, in this special case $(X,\omega)$ is obtained by gluing the closures of the exteriors of two isometric triangles each in a different copy of $(\C,\,dz)$, see \S\ref{nvep} for details.
\end{rmk}

\noindent Nevertheless, we always have enough room for bubbling handles with positive or non-positive volumes. We proceed in a recursive way as follows. Choose an initial point $O_1$ according to the following rule:

\begin{itemize}
    \item If $(X,\omega)$ contains an entire copy of $(\C,\,dz)$, then pick $O_1$ as any of the points $Q_i$ for $i=1,\dots,n-1$; otherwise
    \smallskip
    \item we are in the special case mentioned in Remark \ref{rmk:excasetwopunctures} above. The starting point $O_1$ can be taken as $P+\chi(\alpha)$, see Figure \ref{rotnek2}.
\end{itemize}

\noindent In both cases we can find a straight line $l_1$ passing through $O_1$ so that one of the two sides is an embedded half-plane, say $H_1$. Without loss of generality, we can orient $l_1$ so that $H_1$ lies on the left of $l_1$. We will show that all bubbling can be done within $H_1$.

\smallskip

\noindent We are now ready to implement the recursion we just alluded above, namely we will show that all bubbling can be done within $H_1$. For $j=2,\dots,g$, let $l_{j-1}$ be a straight line parallel and with the same orientation with respect to $l_1$ and passing through a point $O_{j-1}$ which will be recursively determined step by step. Finally, define $H_{j-1}$ as the half-plane on the left side of $l_{j-1}$. Next, consider the $j$-th pair of handle generators $\{\alpha_j,\,\beta_j\}\subset \mathcal{G}$. Then, depending on the value $\Im\left(\,\overline{\chi(\alpha_j)}\,\chi(\beta_j)\,\right)$, we proceed as follow:

\smallskip

\begin{itemize}
    \item If $\Im\left(\,\overline{\chi(\alpha_j)}\,\chi(\beta_j)\,\right)>0$, then we bubble a handle with positive volume. Up to replacing $\{\alpha_j,\,\beta_j\}$ with their inverses and renaming the curves if needed, we can assume that the edge, say $e_j$, joining $O_{j-1}$ with $O_{j-1}+\chi(\alpha_j)$ entirely lies in $H_{j-1}$. Let $\mathcal{P}_j\subset\C$ be the parallelogram bounded by the chain
    \begin{equation*}
        O_{j-1}\mapsto O_{j-1}+\chi(\alpha_j)\mapsto O_{j-1}+\chi(\alpha_j)+\chi(\beta_j)\mapsto O_{j-1}+\chi(\beta_j)\mapsto O_{j-1}.
    \end{equation*}
    According to our convention, let us denote by $a_j^+$ (respectively $a_j^-$) the edge of $\mathcal{P}_j$ parallel to $\chi(\alpha_j)$ that bounds the parallelogram on its right (respectively left). Similarly, we denote by $b_j^+$ (respectively $b_j^-$) the edge of $\mathcal{P}_j$ parallel to $\chi(\beta_j)$ that bounds the parallelogram on its right (resp. left). Next, we slit $H_{j-1}$ along $e_j$ and denote $e_j^{\pm}$ the resulting edges. Then identify the edge $b_j^+$ with $b_j^-$, the edge $a_j^+$ with $e_j^-$, and the edge $a_j^-$ with $e_j^+$. The resulting structure is a genus $j$ surface and the newborn handle has periods $\chi(\alpha_j)$ and $\chi(\beta_j)$. Finally define $O_j\defeq O_{j-1}+\chi(\alpha_j)$.\\
    
    \smallskip
    
    \item If $\Im\left(\,\overline{\chi(\alpha_j)}\,\chi(\beta_j)\,\right)=0$, then we bubble a handle with null volume. Up to replacing $\{\alpha_j,\,\beta_j\}$ with their inverses, we can assume that the edge, say $e_j$, joining $O_{j-1}$ with $O_{j-1}+\chi(\alpha_j)+\chi(\beta_j)$ entirely lies in $\overline H_{j-1}$. Notice that here we need to consider the closure of $H_{j-1}$ because $\chi(\alpha_j)$ and $\chi(\beta_j)$ can be parallel to $l_1$. Slit $e_j$ and denote $e_j^{\pm}$ the resulting sides. On $e_j^+$, define $a_j^+$ the sub-edge joining $O_{j-1}$ with $O_{j-1}+\chi(\alpha_j)$ and define $b_j^+$ the sub-edge joining $O_{j-1}+\chi(\alpha_j)$ with $O_{j-1}+\chi(\alpha_j)+\chi(\beta_j)$.
    On $e_j^-$, define $b_j^-$ the sub-edge joining $O_{j-1}$ with $O_{j-1}+\chi(\beta_j)$ and define $a_j^+$ the sub-edge joining $O_{j-1}+\chi(\beta_j)$ with $O_{j-1}+\chi(\alpha_j)+\chi(\beta_j)$. Identify the edge $a_j^+$ with $a_j^-$ and the edge $b_j^+$ with $b_j^-$. The resulting structure is a genus $j$ surface and the newborn handle has periods $\chi(\alpha_j)$ and $\chi(\beta_j)$. Finally define $O_j\defeq O_{j-1}+\chi(\alpha_j)$. In the case $O_j$ lies in $l_{j-1}$, then $l_{j-1}=l_j$ and $H_{j-1}=H_j$.\\
    \smallskip
    \item If $\Im\left(\,\overline{\chi(\alpha_j)}\,\chi(\beta_j)\,\right)<0$, then we bubble a handle of negative volume. Up to replacing $\{\alpha_j,\,\beta_j\}$ with their inverses, we can assume that the edge, say $e_j$, joining $O_{j-1}$ with $O_{j-1}+\chi(\alpha_j)+\chi(\beta_j)$ entirely lies in $H_{j-1}$. Let $\mathcal{Q}_j\subset H_{j-1}$ be the quadrilateral bounded by the chain
    \begin{equation*}
        O_{j-1}\mapsto O_{j-1}+\chi(\alpha_j)\mapsto O_{j-1}+\chi(\alpha_j)+\chi(\beta_j)\mapsto O_{j-1}+\chi(\beta_j)\mapsto O_{j-1}.
    \end{equation*}
    We can assume without loss of generality that $\mathcal{Q}_j\,\cap\,l_{j-1}=\{\,O_{j-1}\,\}$. In fact, if one of the edges of $\mathcal{Q}_j$ would lie on $l_{j-1}$, then we can replace $\{\alpha_j,\,\beta_j\}$ with a new set of handle generators obtained by applying suitable Dehn twists so that the resulting quadrilateral enjoys the desired property. Remove the interior of $\mathcal{Q}_j$ and denote by $a_j^+$, respectively $a_j^-$, the edge of $\mathcal{Q}_j$ parallel to $\chi(\alpha_j)$. Denote by $b_j^+$, respectively $b_j^-$, similarly as before. Identify the edge $a_j^+$ with $a_j^-$ and the edge $b_j^+$ with $b_j^-$. The resulting structure is a genus $j$ surface and the newborn handle has periods $\chi(\alpha_j)$ and $\chi(\beta_j)$ as desired. Finally define $O_j\defeq O_{j-1}+\chi(\alpha_j)$.\\
\end{itemize}

\noindent It remains to show that the translation surface, say $(Y,\,\xi)$, obtained after $g$ steps has the desired parity. This can be seen with a direct computation as follow:
\begin{align}
    \varphi(\xi) & = \sum_{j=1}^g \left(\textnormal{Ind}(\alpha_j)+1 \right)\left(\textnormal{Ind}(\beta_j)+1 \right) \,\,\,(\text{mod}\,2)\\
                 & = \left(\textnormal{Ind}(\alpha_1)+1 \right)\left(\textnormal{Ind}(\beta_1)+1 \right) \,+ \, \sum_{j=2}^g \left(\textnormal{Ind}(\alpha_j)+1 \right)\left(\textnormal{Ind}(\beta_j)+1 \right) \,\,\,(\text{mod}\,2)\\
                 & = \left(\textnormal{Ind}(\alpha_1)+1 \right)\left(\textnormal{Ind}(\beta_1)+1 \right)  \,\,\,(\text{mod}\,2)\,\,=
                 \begin{cases}
                 0,\,\,\,\text{ if }k=1\\
                 1,\,\,\,\text{ if }k=2.\\
                 \end{cases}
\end{align}

\noindent The third equality holds because $\textnormal{Ind}(\alpha_1),\,\textnormal{Ind}(\beta_1)$ are described according to Table \ref{tab:indexes} below and the quantity $\left(\textnormal{Ind}(\alpha_j)+1 \right)\left(\textnormal{Ind}(\beta_j)+1 \right)$ is even for any $j=2,\dots,g$ as  bubbling a handle does not alter the spin parity, see Lemmas \ref{lem:spininv} and \ref{lem:spininv2}. Therefore, the spin parity of the final structure is completely determined by the rotation number of the starting genus one differential.

\begin{table}[!ht]
    \centering
    \begin{tabular}{lcccc}
    \midrule
     & $\textnormal{Ind}(\alpha)$ & $\textnormal{Ind}(\beta)$  & $\left(\textnormal{Ind}(\alpha)+1 \right)\left(\textnormal{Ind}(\beta)+1 \right)$ & $\,\,(\text{mod }2)$\\ \midrule
    $k=1$ &  &  &  \\ \midrule
    \textbf{Positive volume} & $n-1$ & $1$ & $2n$ & 0 \\ \midrule
    \textbf{Non-positive volume} & $n$ & $1$ & $2(n+1)$ & 0 \\ \midrule
    $k=2$ &  &  &  \\ \midrule
    \textbf{Positive volume}\\ $n$ even & $n$ & $0$ & $n+1$ & 1 \\ \midrule
    \textbf{Positive volume}\\ $n$ odd & $2$ & $n-1$ & $3n$ & 1 \\ \midrule
    \textbf{Non-positive volume}\\ $n$ odd & $n-1$ & $2$ & $3n$ & 1 \\ \midrule
    \textbf{Non-positive volume}\\ $n$ even & $n$ & $2$ & $3(n+1)$ & 1 \\ \midrule
    \end{tabular}
    \caption{Indices of pairs of handle generators $\{\alpha,\,\beta\}$ for genus one differentials constructed as in Section \S\ref{genusoneordertwo}.}
    \label{tab:indexes}
\end{table}


\subsubsection{Exceptional case: Single pole of order $2$} Let $n=1$ and let $\chi\colon\shomolzo\longrightarrow \C$ be a non-trivial representation. Fix a set of handle generators $\mathcal{G}=\{\alpha_1,\beta_1,\dots,\alpha_g,\beta_g\}$, define an auxiliary representation as in \eqref{eq:auxsecondtype2}, and finally realize $\rho$ as the period character of some translation surface $(X,\omega)$ as in Section \S\ref{sssec:hyponehop}. In order to be consistent and facilitate reading, we adopt the same notation. 

\begin{rmk}
Since $X$ has genus two, the extremal points of the broken chain \eqref{eq:chainpolzer} are $P_0$ and $P_{4}$.
\end{rmk}

\noindent Let $O_1=P_{4}$ be an initial point and let $l_1$ be a straight line passing through $O_1$, orthogonal to $r_2$ and oriented so that the broken chains \eqref{eq:chainpolzer} and \eqref{eq:chainpolzerr} lie on the left. In this case we set $E_1$ as the half-plane on the right of $l_1$. We can now implement the same recursion as in \S\ref{sssec:gencase} and hence, after $g-2$ steps, we get a genus $g$ differential $\xi$ on a Riemann surface $Y$. A straightforward computation shows that the parity of $\xi$ is determined by the parity of $\omega$:
\begin{align}
    \varphi(\xi) & = \sum_{j=1}^g \left(\textnormal{Ind}(\alpha_j)+1 \right)\left(\textnormal{Ind}(\beta_j)+1 \right) \,\,\,(\text{mod}\,2)\\
                 & = \sum_{j=1}^2\left(\textnormal{Ind}(\alpha_j)+1 \right)\left(\textnormal{Ind}(\beta_j)+1 \right) \,+ \, \sum_{j=3}^g \left(\textnormal{Ind}(\alpha_j)+1 \right)\left(\textnormal{Ind}(\beta_j)+1 \right) \,\,\,(\text{mod}\,2)\\
                 & = \sum_{j=1}^2\left(\textnormal{Ind}(\alpha_j)+1 \right)\left(\textnormal{Ind}(\beta_j)+1 \right) \,\,\,(\text{mod}\,2)\,\,= \varphi(\omega),
\end{align}
\smallskip

\noindent where the third equality follows as a consequence of Lemmas \ref{lem:spininv} and \ref{lem:spininv2}. Since $(X,\omega)$ can be realized with both even or odd parity, Proposition \ref{prop:hgspin} follows in this case.

\subsubsection{Exceptional case: Two simple poles} Let $n=2$ and let $\chi\colon\textnormal{H}_1(S_{g,\,2},\,\Z)\longrightarrow \C$ be a representation of non-trivial-ends type. Let $w\in\C^*$ such that $\textnormal{Im}(\chi_2)=\langle\,w\,\rangle$, where $\chi_2$ is the representation encoding the polar part of $\chi$, see Section \S\ref{agv}. Let $\mathcal{G}=\{\alpha_1,\beta_1,\dots,\alpha_g,\beta_g\}$ be a system of handle generators and define an auxiliary representation $\rho\colon\textnormal{H}_1(S_{2,\,2},\,\Z)\longrightarrow \C$ as 
\begin{equation}\label{eq:auxsecondtype3}
    \rho(\alpha_1)=\chi(\alpha_1),\quad\rho(\beta_1)=\chi(\beta_1),\quad \rho(\alpha_2)=\chi(\alpha_2),\quad\rho(\beta_2)=\chi(\beta_2),\quad \rho(\delta_1)=\rho(\delta_2^{-1})=w\in\C^*.
\end{equation}

\noindent We already know from \S\ref{ssec:hypnotzeressimp} how to realize $\rho$ as the period character of some translation surfaces with poles, say $(X,\omega)\in\mathcal{H}_2(4;-1,-1)$, and prescribed parity. For simplicity, we shall adopt the same notation as therein and we proceed with a discussion case by case as follows.

\medskip

\paragraph{\textit{$\chi$ is not real-collinear.}} Consider first non real-collinear representations. By Corollary \ref{cor:posvol} we can assume that any pair $\{\alpha_j,\,\beta_j\}\subset\mathcal{G}$ has positive volume, \textit{i.e.} $\Im\left(\,\overline{\chi(\alpha_j)}\,\chi(\beta_j)\,\right)>0$. Without loss of generality, we can also assume that the auxiliary representation $\rho$ is also not real-collinear. Notice that $\rho$ can be discrete of rank two even if the overall representation $\chi$ is not (it cannot be discrete of rank one because all pairs of handle generators have positive volume). If $\rho$ is discrete of rank two, then Lemma \ref{lem:disnormalform} applies and we renormalize the handle generators $\{\alpha_1,\beta_1,\alpha_2,\beta_2\}$ so that 
\begin{equation*}
    \rho(\alpha_1)=\rho(\beta_1)\in\Z_+ \quad \text{ and } \quad \rho(\alpha_2)\in\Z_+,\,\,\rho(\beta_2)=i.
\end{equation*}

\noindent Under these conditions, we realize $(X,\omega)$ as in \S\ref{par:hypnotcol} or \S\ref{sssec:disranktwo} depending on whether $\rho$ is discrete or not. Let $O_1=P_{4}$ be the initial point. For any $j=3,\dots,g$ consider the pair of handle generators $\{\alpha_j,\,\beta_j\}$. Notice that least one between $\chi(\alpha_j)$ and $\chi(\beta_j)$ is not parallel to $w$, otherwise $\Im\left(\,\overline{\chi(\alpha_j)}\,\chi(\beta_j)\,\right)=0$. 

\begin{rmk}
By replacing $\{\alpha_j,\,\beta_j\}$ with either $\{\alpha_j^{-1},\,\beta_j^{-1}\}$,
$\{\,\beta_j,\,\alpha^{-1}_j\}$ or $\{\,\beta_j^{-1},\,\alpha_j\}$, and then renaming the new pair of handle generators as $\{\alpha_j,\,\beta_j\}$ (with a little abuse of notation), we can assume that $\chi(\alpha_j)$ is not parallel to $w$ and points rightwards, \textit{i.e.} $\arg\left(\, \chi(\alpha_j)\,\right)\in \left[ -\frac\pi2,\,\frac\pi2\right[$. Notice that this changing does not alter the volume of the handle.
\end{rmk}

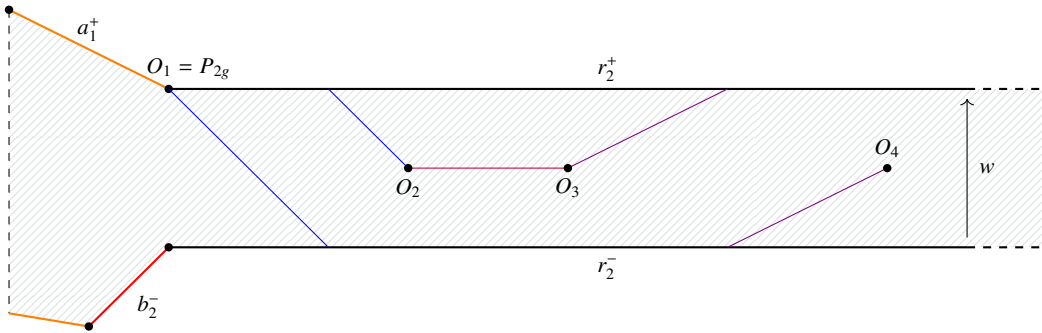
\begin{figure}[!ht] 
\centering
\begin{tikzpicture}[scale=1.05, every node/.style={scale=0.85}]
\definecolor{pallido}{RGB}{221,227,227}
    \pattern [pattern=north east lines, pattern color=pallido]
    (3,2)--(4,3)--(15,3)--(15,5)--(4,5)--(2,6)--(2,2.166666666);
    \draw[thin, black, dashed] (2,6)--(2,2.166666666);
    \draw[thick, orange] (2,2.1666666666)--(3,2);
    \draw[thick, red] (3,2)--(4,3);
    \draw[thick, black] (4,3)--(14,3);
    \draw[thick, black, dashed] (14,3)--(15,3);
    \draw[thick, black] (4,5)--(14,5);
    \draw[thick, black, dashed] (14,5)--(15,5);
    \draw[thick, orange] (2,6)--(4,5);
        
    \draw[thin, black, ->] (14, 3.125)--(14, 4.875);    
        
    \fill (3,2) circle (1.5pt);
    \fill (4,3) circle (1.5pt);

    \fill (4,5) circle (1.5pt);
    \fill (2,6) circle (1.5pt);

    \draw[thin, blue] (4,5)--(6,3);
    \draw[thin, blue] (6,5)--(7,4);
    \draw[thin, purple] (7,4)--(9,4);
    \draw[thin, violet] (9,4)--(11,5);
    \draw[thin, violet] (11,3)--(13,4);
    
    \fill (7,4) circle (1.5pt);
    \fill (9,4) circle (1.5pt);
    \fill (13,4) circle (1.5pt);
    
    \node at (14.25, 4) {$w$};
    
    \node at (4.25, 5.25) {$O_1=P_{2g}$};
    \node at (7, 3.75) {$O_2$};
    \node at (9, 3.75) {$O_3$};
    \node at (13, 4.25) {$O_4$};
    
    \node at (3.75,2.25) {$b_2^-$};
    \node at (9.5,2.75) {$r_2^-$};

    \node at (3,5.75) {$a_1^+$};
    \node at (9.5,5.25) {$r_2^+$};

\end{tikzpicture}
\caption{Bubbling handles of positive volume on a genus two differential constructed as in  \S\ref{par:hypnotcol}. All bubbling are performed inside a cylinder. Each coloured edge corresponds to a slit along which we bubble a handle with positive volume.}
\label{fig:bubbhandinacyl}
\end{figure}

\noindent For $j=3,\dots,g$, let $O_{j-1}\defeq O_{j-2}+\chi(\alpha_j)$ and let $e_{j-2}$ be the edge joining $O_{j-2}$ with $O_{j-1}$, see Figure \ref{fig:bubbhandinacyl}. Notice that $e_j$ is a geodesic segment that "wraps" around the cylindrical ends on $(X,\omega)$ without overlapping itself because $\chi(\alpha_j)$ is not parallel to $w$ by design. Let $\mathcal{P}_j\subset\C$ be the parallelogram bounded by the chain
\begin{equation*}
    O_{j-2}\mapsto O_{j-2}+\chi(\alpha_j)\mapsto O_{j-2}+\chi(\alpha_j)+\chi(\beta_j)\mapsto O_{j-2}+\chi(\beta_j)\mapsto O_{j-2}.
\end{equation*}
\noindent As usual, according to our convention, we denote by $a_j^+$, respectively $a_j^-$, the edge of $\mathcal{P}_j$ parallel to $\chi(\alpha_j)$ that bounds the parallelogram on its right, respectively left. Similarly, we denote by $b_j^+$, respectively $b_j^-$, the edge of $\mathcal{P}_j$ parallel to $\chi(\beta_j)$ that bounds the parallelogram on its right, respectively left. Next, we slit $(X,\omega)$ along $e_{j-2}$ and denote by $e_{j-2}^{\pm}$ the resulting sides. Identify the edge $e_{j-2}^-$ with $a_j^+$ and $e_{j-2}^+$ with $a_j^-$. Finally, identify the edge $b_j^+$ with $b_j^-$. After repeating this process $g-2$ times, we obtain a translation surface $(Y,\xi)$ with period character $\chi$ and spin parity $\varphi(\xi)$ equal to $\varphi(\omega)$ because bubbling handles with positive volume, as in Section \S\ref{ssec:bubhandle}, does not alter the spin parity. Since $(X,\omega)$ can be realized with prescribed parity, both even and odd parity are achievable, and the desired conclusion follows in this case.

\medskip

\paragraph{\textit{$\chi$ is real-collinear but not discrete.}}\label{par:realcolnotdis} We now assume $\chi$ to be real-collinear but not discrete. Up to replacing $\chi$ with $A\,\chi$, for some $A\in\glplus$, we can assume that $\textnormal{Im}(\,\chi\,)\subset \R$. Lemma \ref{lem:smallperiods} applies and hence we can find a system of handle generators $\mathcal{G}=\{\alpha_1,\beta_1,\dots,\alpha_g,\beta_g\}$ such that $a_j=\chi(\alpha_j)$ and $b_j=\chi(\beta_j)$ satisfy the inequality
\begin{equation}\label{eq:length}
    v=\sum_{j=1}^g a_j\,+\,b_j < w.
\end{equation}

\noindent By introducing an auxiliary representation $\rho$ as in \eqref{eq:auxsecondtype3} we first realize a translation surface $(X,\omega)$ in the stratum $\mathcal{H}_2(4;-1,-1)$ with period character $\rho$ and prescribed parity as in  \S\ref{par:hypcol}. By construction, there is a closed saddle connection, say $s$, of length 
\begin{equation*}
    w-\sum_{j=1}^2 a_j\,+\,b_j.
\end{equation*}

\noindent Since $v<w$ we have enough room on $s$ for bubbling $g-2$ handles with zero volume.  After $g-2$ steps we obtain a translation surface, say $(Y,\xi)$, with period character $\chi$ and spin parity $\varphi(\xi)$ equal to $\varphi(\omega)$. Since the initial structure $(X,\omega)$ can be realized with either even or odd parity, the desired conclusion follows in this case.

\medskip

\paragraph{\textit{$\chi$ is discrete of rank one.}} We are left to consider discrete representations of rank one that require a  deeper discussion. We can assume $\chi\colon\textnormal{H}_1(S_{g,2},\,\Z)\longrightarrow \Z$ without loss of generality. Recall that realizing such a representation in a certain stratum $\mathcal{H}_g(2m_1,\dots,2m_k;-1,-1)$ depends only on whether the condition
\begin{equation}\label{eq:ratcond}
 \max\{ 2m_1,\dots, 2m_k\} <\,w= \chi(\gamma)\in\Z
\end{equation}

\noindent holds, where $\gamma$ is a simple loop around a puncture. 

\medskip

\noindent In the case $w>2g$, the auxiliary representation $\rho$ defined as in \eqref{eq:auxsecondtype3} can be realized as in \S\ref{par:easy} as the holonomy of some translation surface with poles and with prescribed parity. On the other hand, this latter construction can be done as in \S\ref{par:hypcol}. Therefore, we can proceed exactly as in \S\ref{par:realcolnotdis}. 

\medskip

\noindent Let us assume $w\le 2g$ and let $\mathcal{H}_g(2m_1,\dots,2m_k;-1,-1)$ be a stratum such that the condition \eqref{eq:ratcond} holds. It is easy to observe that $m_1+\cdots+m_k=g$ for this stratum; therefore $m_i\le g$ and the equality holds if and only if $k=1$. Without loss of generality, assume that
\begin{equation}\label{eq:zerosordering}
    2m_1\ge 2m_2\ge \cdots \ge 2m_k
\end{equation}

\noindent holds. Consider the auxiliary representation $\rho\colon\textnormal{H}_1(S_{m_1,\,2},\,\Z)\longrightarrow \Z$ defined as follows
\begin{equation}
    \rho(\alpha_i)=\rho(\beta_i)=1, \,\,\,\text{ for } i=1,\dots,m_1, \,\,\,\text{ and }\,\,\rho(\delta_1)=\rho(\delta_2^{-1})=w.
\end{equation}

\noindent Since $w>2m_1$, then $\rho$ can be realized as the period character of some translation surface, say $(X_1,\omega_1)$, in the stratum $\mathcal{H}_{m_1}(2m_1;-1,-1)$ with prescribed parity, see paragraph \S\ref{par:hard}. If $k=1$ we are done; otherwise we proceed as follows.  

\smallskip

\noindent From paragraph \S\ref{par:hard}, we recall that $(X_1,\omega_1)$ is obtained by gluing two infinite half-strips $\mathcal S_1, \mathcal{S}_2\subset \C$. Let us focus on $\mathcal S_1$. This region is bounded by a segment $s^-$ of length $w$ and two half-rays $r_1^+$ and $r_2^-$. The sign as usual denote on which side the region is bounded according to their orientation. Since $w>2m_2$, the interior of $\mathcal S_1$ contains a segment, say $s_2$, of length $2m_2$ and parallel to $s^-$. On the other hand, the interior of $\mathcal S_1$ is embedded in $(X_1,\omega_1)$ and hence there is an isometric copy of $s_2$ inside $(X_1,\omega_1)$. With a little abuse of notation, this latter is also denoted by $s_2$. Next, we divide $s_2\subset (X_1,\omega_1)$ into $2m_2$ sub-segments, say $s_{2,\,1},s_{2,\,2},\dots,s_{2,\,2m_2-1},s_{2,\,2m_2}$, each of length $1$. Slit all of them and re-glue as follows: $s_{2,\,2i-1}^-$ is identified with $s_{2,\,2i}^+$ and $s_{2,\,2i-1}^+$ is identified with $s_{2,\,2i}^-$. The resulting space, say $(X_2,\omega_2)$, is a surface of genus $m_1+m_2$. As a consequence of Lemma \ref{lem:spininv2}, $(X_1,\omega_1)$ and $(X_2,\omega_2)$ have the same parity. Again, if $k=2$ we are done, otherwise there is always a segment, say $s_3$ of length $2m_3$, that lies in the interior of $\mathcal S_1$.  If this is the case, we proceed as we have just done. After $k$ steps, we obtain a surface $(X_k,\omega_k)=(Y,\xi)$ of genus $m_1+\cdots+m_k=g$ with period character $\chi$. Moreover, since bubbling a handle with zero volume does not alter the spin parity, see Lemma \ref{lem:spininv2}, the resulting structure has the same parity as $(X_1,\omega_1)$. This latter, according to Section \S\ref{sec:hgchyp} can be realized with even or odd parity, see also Remark \ref{rmk:specialgenus2strata}, hence Proposition \ref{prop:hgspin} also holds for discrete representations of rank one. Notice that the same argument would have been valid if we have chosen $\mathcal S_2$ in place of $\mathcal S_1$. Since there are no other cases to consider, this concludes the proof of Proposition \ref{prop:hgspin} and indeed the proof of Theorem \ref{mainthm} which is specific for non-trivial representations.

\medskip

\section{Meromorphic exact differentials}\label{sec:trirep}

\noindent We finally consider the trivial representation and we aim to prove Theorem \ref{thm:mainthm2}. On a compact Riemann surface $\overline{X}$, any non-constant rational function $f\colon\overline{X}\longrightarrow \cp$ yields a finite degree branched covering and the meromorphic differential $\omega=d\!f$ has trivial absolute periods, \textit{i.e.} $\omega$ determines a trivial period character. Let us denote by $X=\overline{X}\setminus\{\textnormal{ poles of } \omega\,\}$. Then the couple $(X,\omega)$ is a translation surface with poles in the sense of Definition \ref{tswp}. Conversely, if $(X,\omega)$ is a translation surface with poles on $S_{g,n}$ and with zero absolute periods, then the developing map, see Section \S\ref{sec:tswp}, boils down to a holomorphic mapping $X\longrightarrow \C$ that extends to a rational function $f\colon\overline{X}\longrightarrow \cp$ and $\omega=d\!f$.

\begin{defn}\label{def:exactdiff}
A meromorphic differential $\omega$ on a compact Riemann surface is called an  \textit{exact differential} if all absolute periods of $\omega$ are equal to zero. On a Riemann surface $X$ of finite type $(g,n)$ we say that a holomorphic differential $\omega$ with finite poles at the punctures is \textit{exact} if all absolute periods are zero and all poles have zero residue.
\end{defn}

\subsection{Bubbling handles with trivial periods}\label{ssec:gluingtrihand} We describe here how to glue handles with trivial periods. More precisely, we provide a surgery to add a handle with trivial periods on a genus zero differential in order to obtain a genus one differential with prescribed rotation number, see \S\ref{sssec:tripresrot}. We next provide an alternative construction, see \S\ref{sssec:trialternhand}, which will be useful later on. 
\smallskip

\noindent Let $(X,\omega)$ be any translation structure on a surface $S_{g,n}$ and let $\textnormal{dev}\colon\widetilde{S}_{g,\,n}\longrightarrow \C$ be its developing map. We introduce the following terminology.

\begin{defn}[$m$-pods and twins]\label{def:twins}
On a translation surface $(X,\omega)$, let $P$ be any branch point of order $m$. An embedded $m$-pod at $P$ is a collection of $m+1$ embedded paths $c_i\colon [\,0,\,1\,]\longrightarrow (X,\omega)$, for $i=0,\dots,m$, which meet exactly at $P$, each of which is injectively developed, and all of which overlap once developed, \textit{i.e.} there is a determination of the developing map around $c_0\,\cup\,\cdots\,\cup\,c_m$ which injectively develop $c_0,\,c_1,\dots,c_m$ to the same arc $\widehat{c}\subset \C$. For any $2\le k\le m$, the paths $c_{i_1},\dots,c_{i_k}$ are call \textit{twin paths}. For any pair $c_i,\,c_j$ we may notice that the angle at $P$ between them is a multiple of $2\pi$.
\end{defn}

\noindent The following technical lemma is straightforward and the proof is left to the reader.

\begin{lem}\label{lem:techlemtwins}
On a translation surface $(X,\omega)$, let $P$ be any branch point of order $m$ and let $B_{4\varepsilon}(P)$ be an open metric ball centered at $P$. Break $P$ into two zeros, say $P_1$ and $P_2$ of orders $m_1$ and $m_2$ respectively, such that $P_1$ and $P_2$ are joined by a saddle connection, say $s$, of length $\epsilon$. Then there are $m_1$ paths, say $c_1,\dots,c_{m_1}$, all leaving from $P_1$, such that $s$ and $c_i$ are twins for every $i=1,\dots,m_1$. Similarly, there are $m_2$ paths, say $c_{m_1+1},\dots,c_m$, all leaving from $P_2$, such that $c_{m_1+j}$ and $s$ are twins for every $j=1,\dots,m_2$.
\end{lem}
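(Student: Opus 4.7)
The approach is to work entirely inside the local model of the breaking-a-zero surgery recalled in Section \S\ref{sec:zerobreak}. After the surgery, the ball $B_{4\varepsilon}(P)$ becomes a neighborhood containing $P_1$, $P_2$ and the saddle connection $s$ of length $\varepsilon$; the intrinsic cone angle at $P_i$ is $2\pi(m_i+1)$, and $s$ develops injectively onto a fixed Euclidean segment $\widehat c\subset\mathbb C$ of length $\varepsilon$, with some direction $v\in\mathbb C$ at $P_1$.

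At $P_1$ I would choose a metric disc $U_1$ centered at $P_1$ of intrinsic radius slightly larger than $\varepsilon$ and small enough to avoid $P_2$; the radius $4\varepsilon$ of the ambient ball $B_{4\varepsilon}(P)$ leaves ample room. The punctured cone $U_1\setminus\{P_1\}$ is, via the developing map, the cyclic $(m_1+1)$-fold branched cover of a Euclidean punctured disc, with deck group generated by the intrinsic rotation of $2\pi$ about $P_1$. Consequently each Euclidean unit direction at $P_1$ is realized by exactly $m_1+1$ distinct intrinsic directions; one of these is the initial direction of $s$, and the remaining $m_1$ are the launch directions of the sought paths. Shooting geodesic rays of length $\varepsilon$ in those directions produces $c_1,\ldots,c_{m_1}$; they lie on sheets of the cover different from the one carrying $s$, so they meet $s$ only at $P_1$, stay clear of $P_2$, and each one develops injectively onto $\widehat c$ together with $s$. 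The family at $P_2$ would be constructed symmetrically inside a cyclic $(m_2+1)$-fold cover $U_2$ of a Euclidean disc around $P_2$, using that $s$ arrives at $P_2$ with developed direction $-v/|v|$; this yields the $m_2$ remaining twins $c_{m_1+1},\ldots,c_m$.

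The twin property in the sense of Definition \ref{def:twins} is then automatic: the unions $s\cup c_1\cup\cdots\cup c_{m_1}$ and $s\cup c_{m_1+1}\cup\cdots\cup c_m$ sit inside $U_1$ and $U_2$ respectively, where the developing map is well-defined and coincides with the covering projection, so all twins in each family project to the common arc $\widehat c$. The only mildly technical step is identifying the cyclic-cover structure around the new zeros $P_1$ and $P_2$, but this is a direct consequence of the local description of the surgery in Section \S\ref{sec:zerobreak}; no genuine obstacle arises, consistent with the paper's remark that the statement is easy.
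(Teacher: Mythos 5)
The paper provides no proof of this lemma at all; it is dismissed as ``easy to establish'' and left to the reader, so there is nothing to compare against except the expected argument. Your approach --- invoke the local EMZ model of the broken zero, identify the cone structure around $P_1$ and $P_2$, and use the cyclic-cover description of a punctured cone to read off that each Euclidean direction lifts to exactly $m_i+1$ intrinsic directions at $P_i$ --- is exactly the argument the authors have in mind, and it is correct.

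There is one internal inconsistency you should fix. You ask to choose a metric disc $U_1$ centered at $P_1$ ``of intrinsic radius slightly larger than $\varepsilon$ and small enough to avoid $P_2$.'' Since $P_1$ and $P_2$ are joined by a saddle connection of length $\varepsilon$, which is by definition a geodesic realizing the distance, we have $d(P_1,P_2)=\varepsilon$, so any disc of radius strictly greater than $\varepsilon$ around $P_1$ necessarily contains $P_2$; no such $U_1$ exists. The fix is easy: use the punctured cone of some radius $r<\varepsilon$ around $P_1$ only to identify the $m_1$ launch directions at $P_1$ (the intrinsic angles $2\pi,4\pi,\dots,2\pi m_1$ away from $s$), then extend each ray by geodesic continuation to length $\varepsilon$ inside $B_{4\varepsilon}(P)$. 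No ray returns to $P_1$ (geodesic rays leaving a cone point never come back to it), none of them meets $P_2$ in its interior since $d(P_1,P_2)=\varepsilon$, and none of them terminates at $P_2$ because in the EMZ local model the saddle connection $s$ is the unique length-$\varepsilon$ geodesic from $P_1$ to $P_2$ with developed direction $v$ --- a point that is immediate from the explicit half-disc gluing in Figure~\ref{fig:splitlocmodel} and is exactly the sort of inspection the paper is waving at when calling the lemma straightforward. It is worth making this last point explicit, because Definition~\ref{def:twins} requires twins to meet \emph{only} at $P_1$; if some $c_i$ terminated at $P_2$ it would intersect $s$ at both endpoints and fail to be a twin. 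With these two small repairs your argument is complete and matches the paper's intent.
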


\smallskip

\subsubsection{\textit{Handles with trivial periods}}\label{sssec:tripresrot} Let $(X,\omega)$ be a translation surface with poles, let $P_1,\,P_2\in(X,\omega)$ be two branch points of orders $m_1,\,m_2$, and let $s$ be the resulting saddle connection of length $\varepsilon$. Fix an orientation on $s$, say from $P_1$ to $P_2$. The saddle connection $s$ has $m_1$ twins leaving from $P_1$ and it determines on each of them an obvious outbound orientation. In the same fashion, $s$ has $m_2$ twin paths leaving from $P_2$ and it determines on each of them an obvious inbound orientation to $P_2$. Let $c_2$ be any twin of $s$ leaving from $P_2$ and let $\alpha$ be a simple closed curve around $c_2$. Since $\alpha$ winds around $P_2$ but not around the other branch point, it has index equal to $m_2+1$ because $P_2$ has order $m_2$.

\smallskip

\noindent Let $c_1$ be the twin of $s$ leaving from $P_1$ that forms an angle of $2\pi$ on its right. Define $c_2$ as the twin of $s$ leaving from $P_2$ that forms an angle of $2\pi(r+1)$ on its left. Next, slit both $c_1$ and $c_2$ and we denote the resulting sides $c_1^{\pm}$ and $c_2^{\pm}$ where the signs are taken according to our convention. Let $\beta$ be a smooth path starting from the mid-point of $c_1^-$ to the mid-point of $c_2^+$ that crosses $s$ and it does not contain any branch point in its interior (hence it misses both $P_1$ and $P_2$). Then, identify $c_1^+$ with $c_2^-$ and similarly identify $c_1^-$ with $c_2^+$.

\smallskip 

\noindent If the initial translation surface $(X,\omega)$ has genus $g$, the resulting surface $(Y,\xi)$ has genus $g+1$. The smooth path $\beta$ closes up to a simple closed  curve such that along with $\alpha$ they provide a basis of handle generators, see Definition \ref{def:syshandle} for the newborn handle. It can be checked that $\beta$ has index equal to $r$ because it turns of an angle $2\pi$ around $P_1$ counterclockwise and then it turns of an angle $2\pi(r+1)$ around $P_2$ clockwise. See Figure \ref{fig:addtrihandlepresrot}. 

\begin{figure}[!ht] 
\centering
\begin{tikzpicture}[scale=1, every node/.style={scale=0.85}]
\definecolor{pallido}{RGB}{221,227,227}

    \pattern [pattern=north east lines, pattern color=pallido] (-6,3)--(7,3)--(7,-3)--(-6,-3)--(-6,3);
    
    \draw[thin, black]  (2, 0) ++(0:3mm) arc (00:180:3mm);
    \draw[thin, black]  (-2, 0) ++(0:3mm) arc (0:-180:3mm);
    
    \draw[->, thick, red] (-2,0)--(-0.5,0);
    \draw[thick, red] (-0.5,0)--(2,0);
    
    \draw[thin, violet] (-3.5,0)--(-3.5, -1);
    \draw[thin, violet] (-3, -1.5) arc [start angle =270, end angle=180 , radius = 0.5];
    \draw[thin, violet] (-3, -1.5)--(-0.5, -1.5);
    \draw[thin, violet]  (-0.5, -1) ++(0:5mm) arc (0:-90:5mm);
    \draw[thin, violet] (0,-1)--(0,1);
    \draw[thin, violet]  (0.5, 1) ++(180:5mm) arc (180:90:5mm);
    \draw[thin, violet] (0.5,1.5)--(3,1.5);
    \draw[thin, violet] (3, 1) ++(90:5mm) arc (90:0:5mm);
    \draw[thin, violet] (3.5, 1)--(3.5, 0);
    
    \draw[thin, blue] (2,1)--(5,1);
    \draw[thin, blue]  (5, 0) ++(-90:10mm) arc (-90:90:10mm);
    \draw[thin, blue] (2,-1)--(5,-1);
    \draw[thin, blue] (2, 0) ++(90:10mm) arc (90:270:10mm);
    
    \fill [white] (-5,0) to [out=20, in=160] (-2,0) to [out=200, in=340] (-5,0);
    \fill [white] (2,0)  to [out=340, in=200] (5,0) to [out=160, in=20] (2,0);
    
    \draw[ultra thin, black] (-5,0) to [out=20, in=160] (-2,0);
    \draw[ultra thin, black] (-5,0) to [out=340, in=200] (-2,0);
    \draw[ultra thin, black] (2,0)  to [out=340, in=200] (5,0);
    \draw[ultra thin, black] (2,0)  to [out=20, in=160] (5,0);
    
    \draw (5,0) circle (2.5pt);
    \fill [white] (5,0) circle (2.5pt);
    \fill [black] (-5,0) circle (2.5pt);
    
    \fill [black] (2,0) circle (2.5pt);
    \fill [white] (-2,0) circle (2.5pt);
    \draw (-2,0) circle (2.5pt);
    
    \node at (-0.5, 0.25) {$s$};
    \node at (-4.25, 0.5) {$c_1^+$};
    \node at (-4.25, -0.5) {$c_1^-$};
    \node at (4.25, 0.5) {$c_2^+$};
    \node at (4.25, -0.5) {$c_2^-$};
    \node at (3.5, -0.75) {$\alpha$};
    \node at (3.5, -1.25) {$m_2+1$};
    \node at (-2, -1.25) {$\beta$};
    \node at (-2, -1.75) {$r$};
    \node at (2, 0.5) {$2\pi(r+1)$};
    \node at (-2, 0.5) {$P_1$};
    \node at (2, -0.5) {$P_2$};
    \node at (-2, -0.5) {$2\pi$};

\end{tikzpicture}
\caption{Adding a handle with trivial periods and handle generators with prescribed indices. The orange segment is a saddle connection joining two zeros of odd orders. The blue curve $\alpha$ has index $m$ whereas the violet curve $\beta$ has index $r$.}
\label{fig:addtrihandlepresrot}
\end{figure}
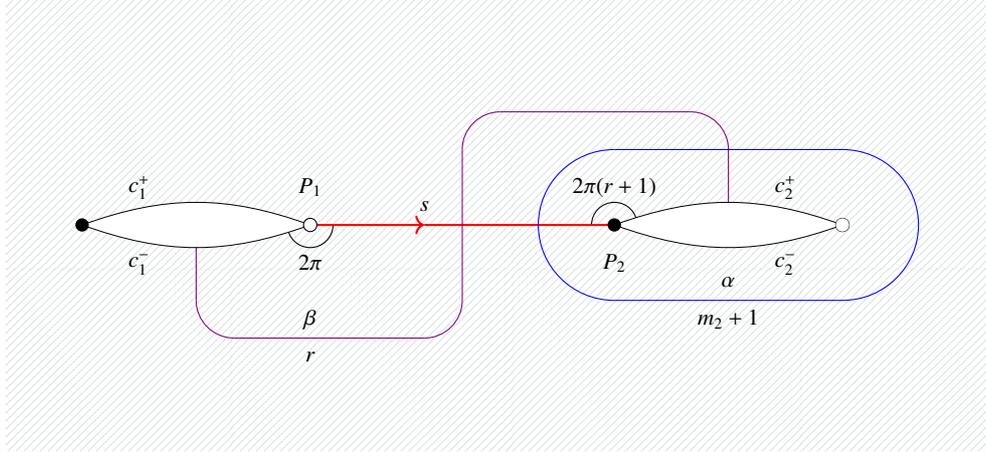

\noindent By denoting $P_1'$ the extremal point of $c_1$ other than $P_1$ and by $P_2'$ the extremal point of $c_2$ other than $P_2$, we can notice that they are both regular. Next identify $P_1$ with $P_2'$ and $P_2$ with $P_1'$. Once the identification is done, we get two branch points of angles $2\pi(m_1+2)$ and $2\pi(m_2+2)$. The following holds.

\begin{lem}\label{lem:addhandtrieven}
Let $(X,\,\omega)\in\mathcal{H}_0(m_1,\dots,m_{k-1}-1,\,m_k-1;\,-p_1,\dots,-p_n)$ be a genus zero differential with trivial periods. Let $P_{k-1}$ and $P_k$ be the zeros of orders $m_{k-1}-1$ and $m_k-1$ respectively and assume there is a saddle connection joining them. Let $(Y,\,\xi)$ be the translation surface obtained by bubbling a handle with trivial periods as described above. Then $(Y,\xi)$ is a genus one differential with rotation number equal to $\gcd(r,m_1,\dots,m_k,p_1,\dots,p_n)$.
\end{lem}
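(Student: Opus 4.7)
The plan is to verify three separate claims: that $(Y,\xi)$ lies in the asserted stratum $\mathcal{H}_1(m_1,\dots,m_k;-p_1,\dots,-p_n)$, that its period character is trivial, and that its rotation number equals $\gcd(r,m_1,\dots,m_k,p_1,\dots,p_n)$. The first two are largely bookkeeping; the substance is the third.

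For the stratum and genus, I would argue as follows. Slitting $(X,\omega)$ along the two disjoint arcs $c_1$ and $c_2$ and cross-identifying $c_1^{\pm}$ with $c_2^{\mp}$ is, topologically, the standard attachment of a handle, so $g$ increases from $0$ to $1$. The identifications fuse $P_{k-1}$ with the regular endpoint $P_2'$ and $P_k$ with the regular endpoint $P_1'$; the angles at these fused points are respectively $2\pi(m_{k-1}-1+1)+2\pi=2\pi(m_{k-1}+1)$ and $2\pi(m_k+1)$, producing zeros of orders $m_{k-1}$ and $m_k$. All other zeros and all poles are untouched. The Gauss--Bonnet identity \eqref{gbeq} is then automatically satisfied, confirming the containment $(Y,\xi)\in\mathcal{H}_1(m_1,\dots,m_k;-p_1,\dots,-p_n)$.

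For triviality of periods, the key observation is that by Definition \ref{def:twins}, the arcs $c_1$ and $c_2$ are twins of the saddle connection $s$: a determination of the developing map $\textnormal{dev}\colon\widetilde{X}\to\C$ sends $c_1$, $s$, and $c_2$ to the same arc $\widehat{c}\subset\C$. Hence the gluing $c_1^{\pm}\leftrightarrow c_2^{\mp}$ is by a translation with zero translation vector. Since $(X,\omega)$ already has trivial holonomy, the developing map descends to $(Y,\xi)$ and the absolute periods of $\xi$ remain all zero, so $\xi$ is exact.

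For the rotation number, by Definition \ref{rotnum} applied to the pair $\{\alpha,\beta\}$ constructed above---which forms a pair of handle generators since $\beta$ crosses $s$ transversely once and thus meets $\alpha$ exactly once---I need to show
\[
\textnormal{rot}(Y,\xi)=\gcd\bigl(\textnormal{Ind}(\alpha),\textnormal{Ind}(\beta),m_1,\dots,m_k,p_1,\dots,p_n\bigr).
\]
I would compute the two indices directly from the local model. For $\beta$: parametrising from the midpoint of $c_1^-$ to the midpoint of $c_2^+$, the unit tangent vector undergoes a counterclockwise turn of $2\pi$ around $P_{k-1}$ (the right angle at $P_{k-1}$ being $2\pi$ by construction) and then a clockwise turn of $2\pi(r+1)$ around $P_k$ (since $c_2$ was chosen so that the left angle at $P_k$ equals $2\pi(r+1)$), for a net turning of $-2\pi r$, giving $|\textnormal{Ind}(\beta)|=r$. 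For $\alpha$: in the surface $(Y,\xi)$ the small loop encircling $c_2$ winds once around the single zero of order $m_k$ that arises from the $P_k\sim P_1'$ identification, and the contribution to its index from this crossing is $m_k$ (by the change-of-index rule recalled after Definition \ref{index}), so $|\textnormal{Ind}(\alpha)|=m_k$. Substituting into the formula, $\gcd(m_k,r,m_1,\dots,m_k,p_1,\dots,p_n)=\gcd(r,m_1,\dots,m_k,p_1,\dots,p_n)$, as claimed.

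The main obstacle I expect is the index computation for $\alpha$: the loop $\alpha$ has the same underlying simple closed curve in $(X,\omega)$ and $(Y,\xi)$, but in $(Y,\xi)$ it is no longer null-homotopic, so the naive Gauss--Bonnet disk argument is unavailable. I would handle this by restricting attention to a bicollar tubular neighborhood of $c_2$ inside $(X,\omega)$ that is disjoint from $c_1$---so that the neighborhood is unaffected by the slit-and-glue surgery---and computing the winding there using the change-of-index rule, where the contribution of the now-merged branch point of order $m_k$ is the only nontrivial term.
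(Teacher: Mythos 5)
Your proposal is correct and follows essentially the same route as the paper, whose entire proof consists of asserting $\textnormal{Ind}(\alpha)=m_k$ and $\textnormal{Ind}(\beta)=r$ and invoking Definition \ref{rotnum}; your write-up simply fills in the genus/stratum bookkeeping, the triviality of periods, and the two index computations. One caution: the sentence claiming that $\alpha$ ``winds once around the single zero of order $m_k$'' and that ``the contribution to its index from this crossing is $m_k$'' is not the right justification (a small loop winding once around a zero of order $m_k$ has index $m_k+1$, and the change-of-index rule concerns homotoping a curve \emph{across} a singularity); the clean argument is the one you end with, namely that the index is a local invariant along $\alpha$, unaffected by the surgery, and in $(X,\omega)$ equals $1+(m_k-1)=m_k$ by Gauss--Bonnet applied to the disk bounded by $\alpha$ containing the zero $P_k$ of order $m_k-1$ and the regular endpoint of $c_2$.
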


\begin{proof}
The fact that $(Y,\xi)$ is a genus one differential directly follows from the construction. By adopting the notation above,  $\alpha$ has index equal to $m_k$ and $\beta$ has index $r$. Then the desired conclusion follows.
\end{proof}

\smallskip

\subsubsection{\textit{Alternative construction}}\label{sssec:trialternhand} We now introduce an alternative way for adding a handle with trivial periods. We shall use this version later on for the inductive foundation in Section \S\ref{ssec:meroexdiffspin}. Let $(X,\omega)\in\Omega\mathcal{M}_{g,n}$ be a translation surface with poles and let $P\in(X,\omega)$ be a branch point of even order $2m$, \textit{i.e.} the angle at $P$ is $(4m+2)\pi$. Let $B_{4\varepsilon}(P)$ be an open ball of radius $4\varepsilon$ at $P$. Consider a $3$-pod at $P$ made of three geodesic twin paths $c_1,\,c_2,\,c_3$ all leaving from $P$ with length $\varepsilon$ and such that the angle between $c_1$ and $c_2$ and the angle between $c_2$ and $c_3$ are both $2\pi$. Then the angle between $c_3$ and $c_1$ is $(4m-2)\pi$. Let $Q_1,\,Q_2,\,Q_3$ denote the extremal regular points of $c_1,\,c_2,\,c_3$ respectively other than $P$. For each $i=1,2,3$, assume that $c_i$ is oriented from $P$ to $Q_i$. By slitting all of these paths, the branch point $P$ splits into three points $P_1,\,P_2,\,P_3$ and we get a surface of genus $g$ with piecewise geodesic boundary. We can assume that  $P_1,\,Q_1,\,P_2,\,Q_2,\,P_3,\,Q_3$ are cyclically ordered as shown in Figure \ref{fig:addtrihandleoddrot}. Then the corner angles at all vertices but $P_3$ is $2\pi$ and the angle at $P_3$ is $(4m-2)\pi$.

\smallskip

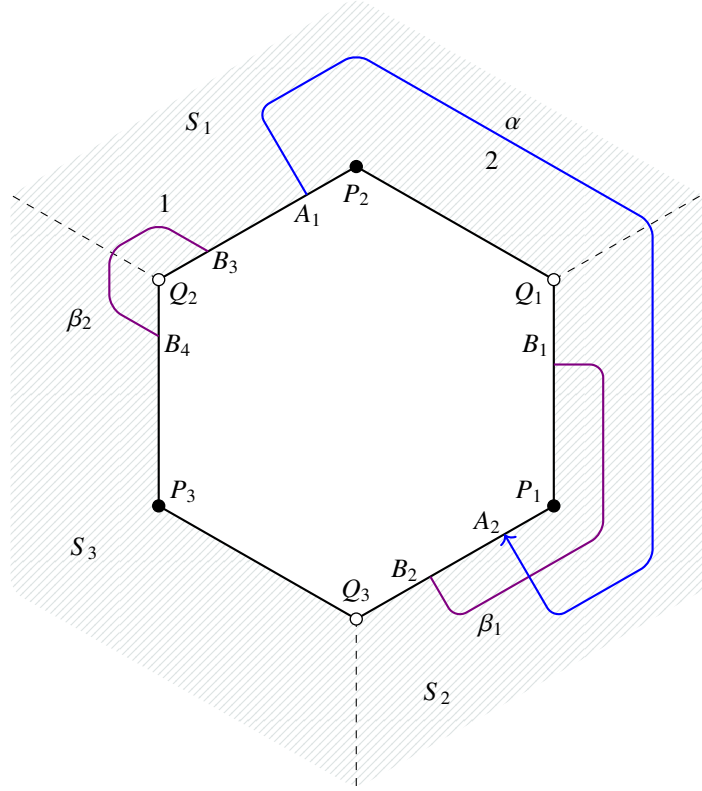
\begin{figure}[!ht]
    \centering
    \begin{tikzpicture}[scale=0.75, every node/.style={scale=1}]
    \definecolor{pallido}{RGB}{221,227,227}
    \pattern [pattern=north east lines, pattern color=pallido, rounded corners=5pt]  (30:7cm) -- (90:7cm) -- (150:7cm) -- (210:7cm) -- (270:7cm)--(330:7cm);
    
    \fill [white] (30:4cm) -- (90:4cm) -- (150:4cm) -- (210:4cm) -- (270:4cm)--(330:4cm);

    \newdimen\R
    \R=4cm
    \draw[thick] (330:\R) foreach \x in {30,90,...,330} { -- (\x:\R) };
    \foreach \x in {30,150,270} { 
        \draw[dashed] (\x:\R) -- (\x:{\R+3cm});
        \draw[line width=0.5pt,black,fill=white] (\x:\R) circle (3pt);
    }
    \foreach \x in {90,210,330} { 
        \draw[line width=0.5pt,black,fill=black] (\x:\R) circle (3pt);
    }

        \begin{scope}[shift=(150:\R)]
        \draw[thick, violet, rounded corners=5pt] 
            (30:1cm) -- (90:1cm) -- (150:1cm) -- (210:1cm) -- (270:1cm);
    \end{scope}
    
    \begin{scope}[shift=(330:\R)]
        \draw[thick, violet, rounded corners=5pt] 
            (210:2.5cm) -- ([shift=(330:1cm)]210:3cm) -- (330:1cm) -- ([shift=(330:1cm)]90:3cm) -- (90:2.5cm);
    \end{scope}

    \begin{scope}[shift=(30:\R)]
        \draw[->, thick, blue, rounded corners=5pt] 
            ([shift=(210:1cm)]150:4cm) -- ([shift=(150:2cm)]150:4cm) -- ([shift=(90:2cm)]150:4cm) -- (30:2cm) -- ([shift=(330:2cm)]270:4cm) -- ([shift=(270:2cm)]270:4cm) -- ([shift=(210:1cm)]270:4cm);
    \end{scope}
    
    \node at (60:4.75cm) {$2$};
    \node at (135:4.75cm) {$1$};
    
    \node at (120:5.5cm) {$S_1$};
    \node at (285:5.5cm) {$S_2$};
    \node at (210:5.5cm) {$S_3$};
    
    \node at (60:5.5cm) {$\alpha$};
    \node at (300:4.75cm) {$\beta_1$};
    \node at (165:5cm) {$\beta_2$};
    
    \node at (105:3.25cm) {$A_1$};
    \node at (315:3.25cm) {$A_2$};
    \node at (015:3.25cm) {$B_1$};
    \node at (285:3.25cm) {$B_2$};
    \node at (135:3.25cm) {$B_3$};
    \node at (165:3.25cm) {$B_4$};
    
    \node at (030:3.5cm) {$Q_1$};
    \node at (090:3.5cm) {$P_2$};
    \node at (150:3.5cm) {$Q_2$};
    \node at (210:3.5cm) {$P_3$};
    \node at (270:3.5cm) {$Q_3$};
    \node at (330:3.5cm) {$P_1$};
    
    \end{tikzpicture}
    \caption{Adding a handle with trivial periods. The blue curve $\alpha$ has index $2$ and violet curve $\beta$ has index $0$.}
    \label{fig:addtrihandleoddrot}
\end{figure}

\smallskip

\noindent Notice that, by prolonging each $c_i$ to a geodesic ray $r_i$ of length $4\epsilon$, the open ball $B_{4\varepsilon}(P)$ is divided into  three sectors, say $S_1,\,S_2,\,S_3$. We assume $S_i$ is the sector containing $P_i$ after slitting $c_1,\,c_2,\,c_3$. We next consider the following smooth arcs on the surface with boundary obtained after slitting. See Figure \ref{fig:addtrihandleoddrot}.

\begin{itemize}
    \item Let $\alpha$ be a smooth arc joining a point $A_1\in{P_2\,Q_2}$ at distance $\varepsilon/3$ from $P_2$ and a point $A_2\in{P_1\,Q_3}$ at distance $\varepsilon/3$ from $P_1$. We can take this arc such that it lies in the sectors $S_1$ and $S_2$ without crossing the sector $S_3$.
    \smallskip
    \item Let $\beta_1$ be a smooth arc joining a point $B_1\in P_1\,Q_1$ at distance $2\varepsilon/3$ from $P_1$ and a point $B_2\in P_1\,Q_3$ at distance $2\varepsilon/3$ from $P_1$. We can assume that $\beta_1$ lies entirely in the sector $S_1$. Notice that, by construction, $\alpha$ and $\beta_1$ cross. 
    \smallskip
    \item Finally, let $\beta_2$ be a smooth arc joining a point $B_3\in P_3\,Q_3$ at distance $2\varepsilon/3$ from $P_2$ and a point $B_4\in P_3\,Q_2$ at distance $2\varepsilon/3$ from $P_3$. We can take $\beta_2$ such that it does not cross the sector $S_1$ and is disjoint from $\alpha$.
\end{itemize}

\smallskip

\noindent We next glue the edges of the hexagonal boundary as follows. First, identify $P_1\,Q_1$ with $P_3\,Q_2$, then $P_2\,Q_1$ with $P_3\,Q_3$ and, finally, $P_2\,Q_2$ with $P_1\,Q_3$. The resulting surface, say $(Y,\xi)$ has genus $g+1$. By construction, the arc $\alpha$ closes up to a simple closed curve of index $2$. Moreover, $B_1$ identifies with $B_4$ and $B_2$ identifies with $B_3$. Therefore, $\beta_1\cup\beta_2$ yields a simple closed curve $\beta$ of index $2$. The pair of curves $\{\alpha,\,\beta\}$ determines a set of handle generators for the newborn handle. The points $P_1,\,P_2,\,P_3$ are identified according to our construction and they determine a branch point of angle $(4m+2)\pi$. Similarly, $Q_1,\,Q_2,\,Q_3$ are also identified and they determine a branch point of angle $6\pi$ because the extremal points are all assumed to be regular. The following holds.

\begin{lem}\label{lem:addhandtriodd}
Let $(X,\,\omega)\in\mathcal{H}_g(2m_1,\dots,2m_k;\,-2p_1,\dots,-2p_n)$ be a translation surface with poles and trivial periods. Let $(Y,\,\xi)$ be the translation surface obtained by adding a handle with trivial periods as described above. Then $(Y,\xi)$ also belongs to a stratum of even type, it has trivial periods, and its parity is given by $\varphi(\xi)=\varphi(\omega)+1$.
\end{lem}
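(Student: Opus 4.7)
The plan is to check the three claims—stratum type, exactness, and the parity shift—in order, exploiting throughout the fact that the surgery is supported in the ball $B_{4\varepsilon}(P)$.

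For the stratum type, I would simply sum the cone angles at the two new singular points produced by the gluing prescription. The three corners $P_1,P_2,P_3$ are all identified to a single point whose total angle equals the original angle $(4m+2)\pi$ at $P$ (the sum $2\pi+2\pi+(4m-2)\pi$ recorded in the construction), so this point is a zero of even order $2m$. The corners $Q_1,Q_2,Q_3$, each with the standard angle $2\pi$ inherited from a regular endpoint, are identified to a single zero of angle $6\pi$, hence of order $2$. Every other zero and pole of $\omega$ lies outside $B_{4\varepsilon}(P)$ and is unaffected, so $(Y,\xi)$ sits in a stratum of even type.

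For exactness, I would note that any cycle in $\overline Y$ can be isotoped either to miss $B_{4\varepsilon}(P)$ altogether or to be built from the two newborn handle generators $\alpha$ and $\beta=\beta_1\cup\beta_2$. Cycles of the first type develop exactly as in $(X,\omega)$ and so have zero period. For $\alpha$ and $\beta$, the three twin rays $c_1,c_2,c_3$ share a common development in $\C$ by Definition \ref{def:twins}, so the developing map restricted to the hexagonal piece factors through a single disk in $\C$; this forces each of $\alpha$ and $\beta$ to develop to closed curves and therefore to have trivial period. Hence $\xi$ is exact.

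Finally, for the parity, I would compute the Arf invariant of $\xi$ using formula \eqref{eq:spinparity} with respect to a carefully chosen symplectic basis of $\textnormal{H}_1(\overline Y,\mathbb Z_2)$. Because $B_{4\varepsilon}(P)$ is a topological disk in $\overline X$, a symplectic basis $\{\alpha_i,\beta_i\}_{1\le i\le g}$ of $\textnormal{H}_1(\overline X,\mathbb Z_2)$ can be isotoped into the complement of $B_{4\varepsilon}(P)$; these $2g$ curves survive verbatim in $(Y,\xi)$ with their indices unchanged, contributing exactly $\varphi(\omega)$ to the sum. Augmenting with $\{\alpha,\beta\}$ yields a symplectic basis of $\textnormal{H}_1(\overline Y,\mathbb Z_2)$, and since the construction records $\textnormal{Ind}(\alpha)=\textnormal{Ind}(\beta)=2$, this new handle contributes $(2+1)(2+1)=9\equiv 1\pmod 2$. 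Summing gives $\varphi(\xi)=\varphi(\omega)+1\pmod 2$. The main technical obstacle is the index verification for $\alpha$ and $\beta$, which requires carefully tracking the winding of the unit tangent vector as each arc threads through the sectors $S_1,S_2,S_3$ and then closes up across the three hexagonal identifications; once the claimed values are accepted, the parity shift follows immediately from the mod-$2$ arithmetic above.
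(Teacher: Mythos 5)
Your proposal is correct and follows essentially the same route as the paper: even type is read off from the cone angles at the two identified vertex classes, exactness is immediate from the twin-path construction, and the parity shift comes from the new handle contributing an odd term $\bigl(\textnormal{Ind}(\alpha)+1\bigr)\bigl(\textnormal{Ind}(\beta)+1\bigr)$ to the Arf sum while the old basis, pushed off the surgery ball, contributes $\varphi(\omega)$ unchanged. The only discrepancy is that you take $\textnormal{Ind}(\beta)=2$ (as stated in the construction text) whereas the paper's proof and Figure \ref{fig:addtrihandleoddrot} use $\textnormal{Ind}(\beta)=0$; since both values are even, $\textnormal{Ind}(\beta)+1$ is odd either way and the conclusion $\varphi(\xi)=\varphi(\omega)+1 \pmod 2$ is unaffected.
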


\begin{proof}
The fact that $(Y,\xi)$ belongs to a stratum of even type directly follows from the construction. By adopting the notation above, since $\alpha$ has index $2$ and $\beta$ has index $0$, then 
\[
\left(\textnormal{Ind}(\alpha)+1\right)\left(\textnormal{Ind}(\beta)+1\right)=3\cdot 1\equiv 1\,\,(\text{mod}\,2).
\]
Therefore $\varphi(\xi)=\varphi(\omega)+1$.
\end{proof}

\smallskip

\noindent We shall use the surgeries above to realize translation surfaces with poles and trivial periods in a given stratum with prescribed parity.

\subsection{Genus zero exact differentials}\label{ssec:genzero} 
We briefly recall the strategy adopted in \cite[Section \S8]{CFG} to realize the trivial representation as the period character of some exact differentials on $\cp$ with prescribed zeros and poles. The aim is to make the autonomous explanation of the next subsections as self-contained as possible.

\smallskip

\noindent Suppose we want to realize the trivial representation in a stratum $\mathcal{H}_0(m_1,\dots,m_k;-p_1,\dots,-p_n)$. We consider $n$ translation surfaces $(\C,\omega_i)$ for $1\leq i\leq n$, where the differential $\omega_i$ has a pole of order $p_i\ge2$ at the infinity and a zero of order $p_i-2$ at $0\in\C$. We consider some, possibly all, of these translation surfaces and we glue them, by slitting along a segment of finite length, to define a sequence of translation surfaces $(Y_j,\,\xi_j)$. Here, each $(Y_j,\,\xi_j)$ is a sphere with an exact meromorphic differential with poles of orders $p_1, \ldots, p_t$ for some $1\leq t \leq n$ and zeros of orders $m_1, \ldots, m_{s-1}$ and  $\widetilde{m}_s$ for some $1\leq s\leq k$ where $\widetilde{m}_s \geq m_s$ and $\widetilde{m}_s \leq m_s+ m_{s+1} + \cdots+m_k$. Since the degree of any meromorphic differential on the sphere is $-2$, we have
\begin{equation}\label{eq:87} 
    \sum_{l=1}^{s} m_l + (\,\widetilde{m}_s - m_s\,) = \sum_{i=1}^{t} p_i -2.
\end{equation}
\noindent The sequence $(Y_j,\,\xi_j)$ is constructed such that, as $j$ increases, $s$ and $t$ increase until $t=n$. The process runs until all the $(\C,\omega_i)$'s are exhausted. We finally break the last zero to get the desired differential. 

\begin{rmk}\label{rmk:wealwaysbreak}
The key observation here is for the translation surface $(Y_1,\,\xi_1)$ obtained after the first step with poles $p_1,\dots,p_t$ and two zeros $m_1, \widetilde{m}_2$, where $\widetilde{m}_2\ge m_2$. In the case $t=n$, then only the following cases arise:
\begin{itemize}
    \item $\widetilde{m}_2=m_2$ and hence the desired genus zero differential belongs to $\mathcal{H}_0(m_1,m_2;-p_1,\dots,-p_n)$; otherwise
    \smallskip
    \item $\widetilde{m}_2>m_2$ and hence the desired genus zero differential belongs to a certain stratum $\mathcal{H}_0(\kappa;-\nu)$ where $\kappa=(m_1,m_2,\dots,m_k)$ and $\nu=(p_1,\dots,p_n)$. In this case, the zero of order $\widetilde{m}_2$ is broken into $k-1$ zeros of orders $m_2,\dots,m_k$. 
\end{itemize}
\end{rmk}

\noindent This constriction can be performed in such a way that if $P_i$ and $P_j$ are two zeros such that $|\,i-j\,|=1$, then there exists at least one saddle connection joining $P_i$ and $P_j$, that is a geodesic segment with no zeros in its interior. Finally, we can arrange the zeros so that two different zeros have different developed images. As a consequence, if $s_i$ is the saddle connection joining $P_i$ with $P_{i+1}$, all twins of $s_i$ leaving from $P_i$ do not contain any zero other than $P_i$ and $P_{i+1}$ themselves. See \cite[Section \S8.3.4]{CFG} for more details. 

\subsection{Trivial periods and prescribed rotation number}\label{ssec:meroexdiffgenone}  The aim of this section is to prove the following proposition concerning genus one differentials.

\begin{prop}\label{prop:genusonetri}
Suppose the trivial representation $\chi\colon\shomolzon\longrightarrow \C$ can be realized in a stratum $\mathcal{H}_1(m_1,\dots,m_k;\,-p_1,\dots,-p_n)$. Then it can be realized in each connected component of the same stratum with the only exceptions being the strata $\mathcal{H}_1(2,2;\,-4)$, $\mathcal{H}_1(2,2,2;\,-2,-2,-2)$  and $\mathcal{H}_1(3,3;-3,-3)$.
\end{prop}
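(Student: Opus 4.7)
The plan is to split the argument into a positive direction (realization in each component of a non-exceptional stratum) and a negative direction (obstruction to realization in the primitive component of each of the three exceptional strata). The positive direction I would treat geometrically, via the bubbling surgery of Section~\S\ref{sssec:tripresrot}, while the negative direction I would treat algebraically, via the characterization of the rotation number as a torsion number of a divisor class on the underlying elliptic curve.

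For the positive direction, let $\mathcal{H}_1(m_1,\ldots,m_k;-p_1,\ldots,-p_n)$ be a stratum supporting the trivial representation, so that the Hurwitz-type inequality \eqref{eq:triconstr} holds. First I would construct a genus zero exact differential $(X_0,\omega_0)$ in the auxiliary stratum $\mathcal{H}_0(m_1,\ldots,m_{k-1}-1,m_k-1;-p_1,\ldots,-p_n)$, invoking \cite[Theorem B]{CFG} as reviewed in Section~\S\ref{ssec:genzero}; the construction can be arranged so that the last two zeros $P_{k-1},P_k$ of orders $m_{k-1}-1,m_k-1$ are joined by a saddle connection of small length $\varepsilon$. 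Then I would bubble a handle with trivial periods as in Section~\S\ref{sssec:tripresrot}: by Lemma~\ref{lem:addhandtrieven} this produces a genus one exact differential in the target stratum whose rotation number equals $\gcd(r,m_1,\ldots,m_k,p_1,\ldots,p_n)$, where $r$ is the twist parameter of the bubbling. Varying $r$ over representatives of the divisor classes of $\gcd(m_1,\ldots,m_k,p_1,\ldots,p_n)$ yields realizations in every connected component when the stratum is not exceptional; in the three exceptional cases, the choice $r=0$ still provides a realization in the non-primitive component, of rotation number equal to $\gcd(m_1,\ldots,m_k,p_1,\ldots,p_n)$.

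For the negative direction, I would use the identification between rotation number and torsion number recalled in Section~\S\ref{mscc}: an exact differential $\omega=df$ on an elliptic curve $E$ has rotation number equal to the largest $l$ dividing $\gcd(m_1,\ldots,m_k,p_1,\ldots,p_n)$ for which the divisor equivalence $\sum_j (m_j/l)Z_j\sim\sum_i (p_i/l)P_i$ holds in $\mathrm{Pic}(E)$. The rational function $f\colon E\to\cp$ is a branched cover whose ramification profile is rigid in each of the three exceptional strata, and for each critical value of $f$ the relation $\mathrm{div}(f-f(Z_j))\sim 0$ gives a linear equivalence on $E$. Combining these equivalences with $\mathrm{div}(\omega)\sim 0$ and working in $\mathrm{Pic}^0(E)\cong E$ forces the class of $\sum_j Z_j-\sum_i P_i$ (suitably rescaled) to lie simultaneously in $E[a]$ and $E[b]$ for two coprime integers $a,b$, and hence to vanish. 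Explicitly, for $\mathcal{H}_1(3,3;-3,-3)$ the map $f$ has degree $4$ with two triple critical points and two double poles, and the fiber relations $4Z_i\sim 2P_1+2P_2$ together with $3(Z_1+Z_2-P_1-P_2)\sim 0$ yield $Z_1+Z_2-P_1-P_2\in E[3]\cap E[4]=\{0\}$, forcing torsion number $3$; for $\mathcal{H}_1(2,2;-4)$ and $\mathcal{H}_1(2,2,2;-2,-2,-2)$ an analogous computation with a degree $3$ map $f$ forces torsion number $2$. In each of the three exceptional strata the primitive component is therefore empty of exact differentials.

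The main obstacle I anticipate is the algebraic computation in the negative direction: one must enumerate the fibers of $f$ over every critical value (using Riemann--Hurwitz to exclude hidden extra ramification), translate each fiber into a relation in $\mathrm{Pic}^0(E)\cong E$, and handle the $2$-torsion ambiguities arising when halving elements that appear on combining relations. A subordinate subtlety is the arithmetic verification that, outside the three exceptional strata, the admissible range of the twist parameter $r$ in the bubbling contains a representative of every divisor class of $\gcd(m_1,\ldots,m_k,p_1,\ldots,p_n)$, together with a confirmation that the bubbling preserves triviality of the absolute periods, which follows from the twin property of the paths $c_1,c_2$ ensuring that both slits develop to the same arc in $\C$.
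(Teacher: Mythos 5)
Your two-pronged architecture matches the paper's: a geometric bubbling construction for realizability and a divisor-class (torsion number) computation for the three exceptional strata. The negative direction is essentially the paper's own argument — for $\mathcal H_1(3,3;-3,-3)$ one combines $4Z_i\sim 2P_1+2P_2$ with $3Z_1+3Z_2\sim 3P_1+3P_2$ to get $Z_1+Z_2\sim P_1+P_2$, and similarly with triple covers for $\mathcal H_1(2,2;-4)$ and $\mathcal H_1(2^3;-2^3)$ — so that part is sound.

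The positive direction, however, has a genuine gap, and it sits precisely where you dismiss "a subordinate subtlety." The twist parameter $r$ in the bubbling of Section~\S\ref{sssec:tripresrot} is \emph{not} free: the twin $c_2$ forming angle $2\pi(r+1)$ at the zero $P_k$ of order $m_k-1$ only exists for $r+1\in\{1,\dots,m_k-1\}$, and moreover the chosen twins must not coincide with pre-existing saddle connections of the genus-zero configuration (this is the paper's condition that $A\not=A_i$, $B\not=B_i$ with $A\equiv B\ (\mathrm{mod}\ r)$). When every zero has order $2$ — e.g.\ $\mathcal H_1(2,2,2;-2,-4)$, $\mathcal H_1(2^4;-2^4)$, or the families $\mathcal H_1(n+1,n+1;-2^{n-1},-4)$ and $\mathcal H_1(n+2,n+2;-2^{n-2},-4,-4)$ — the zeros of the genus-zero base have order $1$, the only admissible twist is $r=0$, and your uniform construction can only reach the non-primitive component. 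These strata are \emph{not} exceptional, so your argument does not prove the proposition for them; the paper devotes a long sequence of ad hoc constructions (different slit patterns, merging and re-breaking zeros, induction on pole orders, and explicit index computations for chosen handle generators) exactly to cover this borderline range, which is delimited by the arithmetic inequality $(n-1)(r-2)\le 2$. Without an argument that every divisor class of $\gcd(m_1,\dots,m_k,p_1,\dots,p_n)$ is achieved by an \emph{admissible} twist — or a replacement construction when it is not — the positive direction is incomplete, and in particular your proposal cannot distinguish the three genuinely exceptional strata from the many non-exceptional strata for which the naive bubbling also fails.
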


\noindent The proof of this proposition is based on the construction done before in subsection \S\ref{sssec:tripresrot}. We recall for the reader's convenience that the connected components of $\mathcal{H}_1(m_1,\dots,m_k;\,-p_1,\dots,-p_n)$ are distinguished by the divisors of $\gcd(m_1,\dots,m_k,p_1,\dots,p_n)$. Clearly, if the greatest common divisor is one there is nothing to prove and the realization is subject to \cite[Theorem B]{CFG}. Throughout the present subsection, for a generic stratum let $p=p_1+\cdots+p_n\,=\,m_1+\cdots+m_k$ and let $r$ be a divisor of $\gcd(m_1,\dots,m_k;p_1,\dots,p_n)$. We shall need to distinguish several cases according to certain mutual relationships among $p,\,r,\,n$ and $k$.

\subsubsection{Strata with two zeros}\label{ssec:tricasetwozerostrata} It is not hard to show that the trivial representation $\chi:\shomolzon\longrightarrow \C$ cannot be realized in every stratum with a single zero of maximal order because the Hurwitz type inequality \eqref{eq:triconstr} never holds for these strata. Therefore strata with two zeros appear as the simplest minimal strata in which the trivial representation can be realized. Recall that our aim is to realize $r$ as the prescribed rotation number for a trivial representation. The first case we consider is the following

\medskip 

\paragraph{\textit{All poles have order $r$}}\label{par:allsameordertritwozer} Let us realize the trivial representation in the stratum $\mathcal{H}_1(m_1,m_2;-r^n)$. In this case $m_1+m_2=nr$ and $m_1,\,m_2$ are both divisible by $r$ -- otherwise the stratum would be connected and there is nothing to prove. Moreover, $m_1$ and $m_2$ are also subject to the Hurwitz type inequality \eqref{eq:triconstr}; hence
\begin{equation}\label{eq:lowerboundr}
n r=m_1+m_2\le 2n r-2n-2 \qquad \Longleftrightarrow \qquad r\ge3.
\end{equation}

\begin{rmk}
As a cross-check, the reader may observe with a direct computation that the trivial representation cannot be realized in every stratum $\mathcal{H}_1(m_1,m_2; -2^n)$. Hence the inequality $r\ge3$ above is sharp.
\end{rmk}

\smallskip

\noindent In this case we first realize the trivial representation in $\mathcal H_0(m_1-1,m_2-1;-r^n)$ by using the saddle connection configuration description in \cite[Section \S9.3]{EMZ}. More precisely, we choose $n$ pairs of positive integers, say $(a_i, b_i)$, such that $a_i + b_i = r$ for $i = 1,\dots,n$, and moreover, $a_1 +\cdots + a_n = m_1$ and $b_1 + \cdots+ b_n = m_2$. 

\begin{rmk}\label{rmk:excrit}
The existence of these pairs $(a_i,b_i)$ can be explained as follows. We first notice that, the Hurwitz type inequality \eqref{eq:triconstr} for $\mathcal H_1(m_1, m_2; -r^n)$ implies $n < m_1, m_2 < (r-1)n$. We want pairs of positive integers $a_i, b_i$ to satisfy the conditions above.  In particular, $0 < a_i < r$ ensures $b_i$ to be positive. The extreme case happens when $m_1 = n+1$, where we can take $a_1 = 2$ and $a_2 = \cdots = a_n = 1$ -- recall that $r$ is at least $3$, see \eqref{eq:lowerboundr} above.  If $m_1$ increases, we increase $a_2$, until $a_2$ reaches $r-1$, then we increase $a_3$, and so on.
\end{rmk}

\noindent For every $i=1,\dots,n$, consider a copy of $(\C,\, z^{r-2}dz)$ and fix any vector $c\in\mathbb C^*$. Let $(\cp,\omega_i)$ be the translation surface with poles obtained by breaking the zero of $(\C,\, z^{r-2}dz)$ into two zeros, say $P_i$ and $Q_i$, of orders $a_i-1$ and $b_i-1$ respectively so that the resulting saddle connection joining them, say $c_i$, is parallel to $c$ with the same length, \textit{i.e.} $|c|=|c_i|$. See Figure \ref{genuszerotri} below.

\begin{figure}[ht!]
    \centering
    \begin{tikzpicture}[scale=1, every node/.style={scale=0.8}]
    \definecolor{pallido}{RGB}{221,227,227}

    \foreach \x [evaluate=\x as \coord using  4.5*\x] in {0,1,2.25} 
    {
    \pattern [pattern=north west lines, pattern color=pallido]
    (\coord, -1.25)--(\coord+4, -1.25)--(\coord+4, 1.25)--(\coord, 1.25)--(\coord,-1.5);
    
    \draw[thin, red] (\coord+1,0)--(\coord+3,0);
    
    \fill [black] (\coord+1,0) circle (1.5pt);
    \fill [black] (\coord+3,0) circle (1.5pt);
    }
    
    \foreach \x [evaluate=\x as \coord using  4.5*\x] in {0} 
    {
    \draw[dashed, black, thin] (\coord+1,0)--(\coord,0);
    \draw[dashed, black, thin]
    (\coord+1,0)--(\coord, 0.5);
    \draw[dashed, black, thin]
    (\coord+1,0)--(\coord, -0.5);
    
    \node at (\coord, 0.75) {$r-2$};
    \node at (\coord+1, -0.375) {$a_1-1$};
    \node at (\coord+3, -0.375) {$b_1-1$};
    \node at (\coord+2, 0.375) {$c_1^+$};
    }
    
    \foreach \x [evaluate=\x as \coord using  4.5*\x] in {1} 
    {
    \draw[dashed, black, thin] (\coord+3,0)--(\coord+4,0);
    \draw[dashed, black, thin] (\coord+3,0)--(\coord+4,0.5);
    \draw[dashed, black, thin] (\coord+3,0)--(\coord+4,-0.5);
    
    \node at (\coord+4, 0.75) {$r-2$};
    \node at (\coord+1, -0.375) {$a_2-1$};
    \node at (\coord+3, -0.375) {$b_2-1$};
    
    }
    
    \foreach \x [evaluate=\x as \coord using  4.5*\x] in {2.25} 
    {
    \draw[dashed, black, thin] (\coord+1,0)--(\coord,0.5);
    \draw[dashed, black, thin] (\coord+1,0)--(\coord,-0.5);
    \draw[dashed, black, thin] (\coord+3,0)--(\coord+4,0.5);
    \draw[dashed, black, thin] (\coord+3,0)--(\coord+4,-0.5);
    
    \node at (\coord, 0.75) {$k$};
    \node at (\coord+4, 0.75) {$r-k$};
    \node at (\coord+1, -0.375) {$a_n-1$};
    \node at (\coord+3, -0.375) {$b_n-1$};
    \node at (\coord+2, -0.375) {$c_n^-$};
    }
    
    \node at (9.35, 0) {$\dots$};

    \end{tikzpicture}
    \caption{Realization of a genus zero differential in $\mathcal H_0(m_1-1,m_2-1;\,-r^n)$.}
    \label{genuszerotri}
\end{figure}
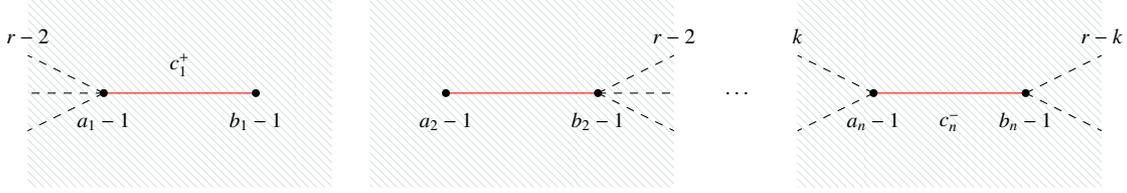

\noindent Slit every $(\cp,\omega_i)$ along $c_i$ and denote by $c_i^{\pm}$ the resulting edges according to our convention. Identify $c_i^-$ with $c_{i+1}^+$ for every $i=1,\dots,n-1$ and finally glue $c_n^-$ with $c_1^+$. The resulting space is now a pair $(\cp, \omega)$ where $\omega$ is a meromorphic differential with two zeros, say $P_o$ and $Q_o$, of orders $m_1-1$ and $m_2-1$ and $n$ poles of order $r$. Let $s_1$ be the saddle connection resulting from the identification of $c_1^+$ with $c_n^-$. We extend the notation by defining $s_{i+1}$ to be the saddle connection resulting from the identification of $c_i^-$ with $c_{i+1}^+$ for every $i=1,\dots,n-1$. In this space, we define the \textit{beginning saddle connection} as the saddle connection resulting from the identification of $c_1^+$ with $c_n^-$. This is purely a convention as in principle any other saddle connection arising from the identification of $c_i^+$ with $c_{i+1}^-$ could be equally entitled as the beginning saddle connection. We shall use this terminology later on. 

\smallskip

\noindent In order to bubble the desired handle, we shall need to slit two paths, say $s_o$ and $s_{n+1}$, at $P_1$ and $Q_n$ respectively with the same holonomy as the saddle connections, \textit{i.e.} we want to find $s_o$ and $s_{n+1}$ such that any pair $s_i,\,s_j$ in the collection $s_o,\,s_1,\dots,s_n,\,s_{n+1}$ are twins in $(\cp,\,\omega)$. We need to make sure that $s_o$ and $s_{n+1}$ exists and they do not coincide with any pre-existing saddle connection in $(\cp, \omega)$. For this purpose, we need to choose the pairs $(a_i,b_i)$ in a proper way. Let $A_i = a_1 + \cdots + a_i$ and $B_i = b_1 + \cdots + b_i$, for $i = 1,\dots, n$. The gist of idea is to find suitable pairs of positive integers $(a_i, b_i)$ as above such that there exist two positive integers, say $A$ and $B$, satisfying that $A \equiv B\, (\text{mod}\, r)$ and $A \neq A_i$ and $B \neq B_i$ for all $i=1,\dots,n$. Under these conditions, we pick $s_o$ that forms an angle $2\pi\, A$ away from the beginning saddle connection at $P_o$. This is equivalent to pick $s_o$ that forms an angle $2\pi\, A$ away from $c_1^+$ in $(\cp,\,\omega_1)$. Similarly, we pick $s_{n+1}$ that forms an angle $2\pi\, B$ away from the beginning saddle connection at $Q_o$, \textit{i.e.} that forms an angle $2\pi\, B$ away from $c_n^-$ in $(\cp,\,\omega_n)$. Then the two slits $s_o$ and $s_{n+1}$ do not coincide with any existing saddle connections on $(\cp,\,\omega)$ by the assumption on $A$ and $B$. More precisely, we consider first the extreme case $m_1 = n+1$, see Remark \ref{rmk:excrit}. Then we can choose the pairs $(a_i, b_i)$ so that $a_1 = 2$ and $a_2 = \cdots = a_n = 1$, $b_1 = r-2$ and $b_2 = \cdots = b_n = r-1$. Since $r - 1 \geq 2$, then $A = B = 1$ works. If $m_1$ increases, we increase $a_1$ until it reaches $r-1$, then we increase $a_2$, and so on.  For all of them we can use $A = B = 1$. Once we reach $a_1 = \cdots = a_{n-1} = r-1$, that means $b_1 = \cdots = b_{n-1} = 1$, $m_1$ might still increase, and then we need to increase $a_n$, \textit{i.e.}, decrease $b_n$, until it drops to $2$ -- in this case $m_2=n+1$ -- and $A = B = 1$ still works.  

\smallskip

\begin{figure}
    \centering
    \begin{tikzpicture}[scale=0.85, every node/.style={scale=0.8}]
    
    \definecolor{pallido}{RGB}{221,227,227}

    \foreach \x [evaluate=\x as \coord using  4*\x] in {0,2} 
    {
    \pattern [pattern=north west lines, pattern color=pallido]
    (\coord-3.75, -2)--(\coord-3.75, 2)--(\coord+3.5, 2)--(\coord+3.5, -2)--(\coord-3.75,-2);
    
    \draw[thin, red] (\coord-1,0)--(\coord+1,0);
    
    \draw[thin, orange] (-1,0)--(-3,0);
    \fill [black] (-3,0) circle (1.5pt);
    
    \fill [black] (\coord-1,0) circle (1.5pt);
    \fill [black] (\coord+1,0) circle (1.5pt);
    
    \draw[thin, blue, ->] (\coord, -1)--(\coord,+1);
    \draw[thin, blue] (\coord, 1) arc [start angle=0, end angle=90 , radius = 0.5];
    \draw[thin, blue] (\coord-0.5,1.5)--(\coord-2.75,1.5);
    \draw[thin, blue] (\coord-2.75, 1.5) arc [start angle=90, end angle=180 , radius = 0.5];
    \draw[thin, blue] (\coord-3.25, 1)--(\coord-3.25,-1);
    \draw[thin, blue] (\coord-3.25, -1) arc [start angle=180, end angle=270 , radius = 0.5];
    \draw[thin, blue] (\coord-2.75, -1.5)--(\coord-0.5, -1.5);
    \draw[thin, blue] (\coord-0.5, -1.5) arc [start angle=-90, end angle=0 , radius = 0.5];
    }

    \draw[thin, dashed, black] (-1,0)--(-3.75,1.5);
    \draw[thin, dashed, black] (-1,0)--(-3.75,-1.5);
    \draw[thin, dashed, black] (1,0)--(3.5,1.5);
    \draw[thin, dashed, black] (1,0)--(3.5,0.5);
    \draw[thin, dashed, black] (1,0)--(3.5,-0.5);
    \draw[thin, dashed, black] (1,0)--(3.5,-1.5);
    
    \draw[thin, dashed, black] (9,0)--(11.5,2);
    \draw[thin, dashed, black] (9,0)--(11.5,1);
    \draw[thin, dashed, black] (9,0)--(11.5,0);
    \draw[thin, dashed, black] (9,0)--(11.5,-1);
    \draw[thin, dashed, black] (9,0)--(11.5,-2);

    \foreach \x [evaluate=\x as \coord using  4*\x] in {0,2} 
    {
    \pattern [pattern=north west lines, pattern color=pallido]
    (\coord-3.75, -7)--(\coord-3.75, -3)--(\coord+3.5, -3)--(\coord+3.5, -7)--(\coord-3.75,-7);
    \draw[thin, red] (\coord-1,-5)--(\coord+1,-5);
    \fill [black] (\coord-1,-5) circle (1.5pt);
    \fill [black] (\coord+1,-5) circle (1.5pt);
    
    \draw[thin, blue, ->] (\coord, -6)--(\coord,-4);
    \draw[thin, blue] (\coord, -4) arc [start angle=0, end angle=90 , radius = 0.5];
    \draw[thin, blue] (\coord-0.5, -3.5)--(\coord-2.75,-3.5);
    \draw[thin, blue] (\coord-2.75, -3.5) arc [start angle=90, end angle=180 , radius = 0.5];
    \draw[thin, blue] (\coord-3.25, -4)--(\coord-3.25,-6);
    \draw[thin, blue] (\coord-3.25, -6) arc [start angle=180, end angle=270 , radius = 0.5];
    \draw[thin, blue] (\coord-2.75, -6.5)--(\coord-0.5, -6.5);
    \draw[thin, blue] (\coord-0.5, -6.5) arc [start angle=-90, end angle=0 , radius = 0.5];
    
    }
    
    \draw[thin, dashed, black] (1,-5)--(3.5,-3);
    \draw[thin, dashed, black] (1,-5)--(3.5,-4);
    \draw[thin, dashed, black] (1,-5)--(3.5,-5);
    \draw[thin, dashed, black] (1,-5)--(3.5,-6);
    \draw[thin, dashed, black] (1,-5)--(3.5,-7);

    \draw[thin, dashed, black] (9,-5)--(11.5,-3);
    \draw[thin, dashed, black] (9,-5)--(11.5,-4);
    \draw[thin, dashed, black] (9,-5)--(11.5,-5);
    \draw[thin, dashed, black] (9,-5)--(11.5,-6);
    \draw[thin, dashed, black] (9,-5)--(11.5,-7);
    
    \node at (0,-8) {$\vdots$};
    \node at (8,-8) {$\vdots$};
    
    \foreach \x [evaluate=\x as \coord using  4*\x] in {0,2} 
    {
    \pattern [pattern=north west lines, pattern color=pallido]
    (\coord-3.75, -13)--(\coord-3.75, -9)--(\coord+3.5, -9)--(\coord+3.5, -13)--(\coord-3.75,-13);
    \draw[thin, red] (\coord-1,-11)--(\coord+1,-11);
    
    \draw[thin, orange] (9,-11)--(10.625,-11.975);
    \fill [black] (10.625,-11.975) circle (1.5pt);
    
    \fill [black] (\coord-1,-11) circle (1.5pt);
    \fill [black] (\coord+1,-11) circle (1.5pt);
    
    \draw[thin, blue, ->] (\coord, -12)--(\coord,-10);
    \draw[thin, blue] (\coord, -10) arc [start angle=0, end angle=90 , radius = 0.5];
    \draw[thin, blue] (\coord-0.5,-9.5)--(\coord-2.75,-9.5);
    \draw[thin, blue] (\coord-2.75, -9.5) arc [start angle=90, end angle=180 , radius = 0.5];
    \draw[thin, blue] (\coord-3.25, -10)--(\coord-3.25,-12);
    \draw[thin, blue] (\coord-3.25, -12) arc [start angle=180, end angle=270 , radius = 0.5];
    \draw[thin, blue] (\coord-2.75, -12.5)--(\coord-0.5, -12.5);
    \draw[thin, blue] (\coord-0.5, -12.5) arc [start angle=-90, end angle=0 , radius = 0.5];
    
    }
    
    \draw[thin, dashed, black] (1,-11)--(3.5,-9);
    \draw[thin, dashed, black] (1,-11)--(3.5,-10);
    \draw[thin, dashed, black] (1,-11)--(3.5,-11);
    \draw[thin, dashed, black] (1,-11)--(3.5,-12);
    \draw[thin, dashed, black] (1,-11)--(3.5,-13);

    \draw[thin, dashed, black] (9,-11)--(11.5,-9);
    \draw[thin, dashed, black] (9,-11)--(11.5,-10);
    \draw[thin, dashed, black] (9,-11)--(11.5,-11);
    \draw[thin, dashed, black] (9,-11)--(11.5,-12);
    \draw[thin, dashed, black] (9,-11)--(11.5,-13);
    
    \draw[thick, violet, ->] (-1.5, 0) arc [start angle =180, end angle=0 , radius = 0.5];
    \draw[thick, violet, ->] (7.5, -11) arc [start angle =-180, end angle=-30 , radius = 1.5];
    
    \node at (-1.625,-1.625) {$\alpha$};
    \node at (-1,0.625) {$\beta$};
    
    \node at (3,0) {$r-2$};
    \node at (12,0) {$r-1$};
    \node at (3,-4.75) {$r-1$};
    \node at (12,-5) {$r-1$};
    \node at (3,-10.75) {$r-1$};
    \node at (12,-11) {$r-1$};
    
    \end{tikzpicture}
    \caption{Realization of the trivial representation in the stratum $\mathcal H_1(n+1, nr-n-1;\,-r^n)$. This figure depicts the extreme case in which $m_1=n+1$, that is $a_1=2$, $a_2=\cdots=a_n=1$ and $b_1=r-2$ and $b_2=\cdots=b_n=r-1$. The close curve $\alpha$ has index $n+1$ whereas the close $\beta$ has index $0$.}
    \label{fig:allpolesrtricase}
\end{figure}
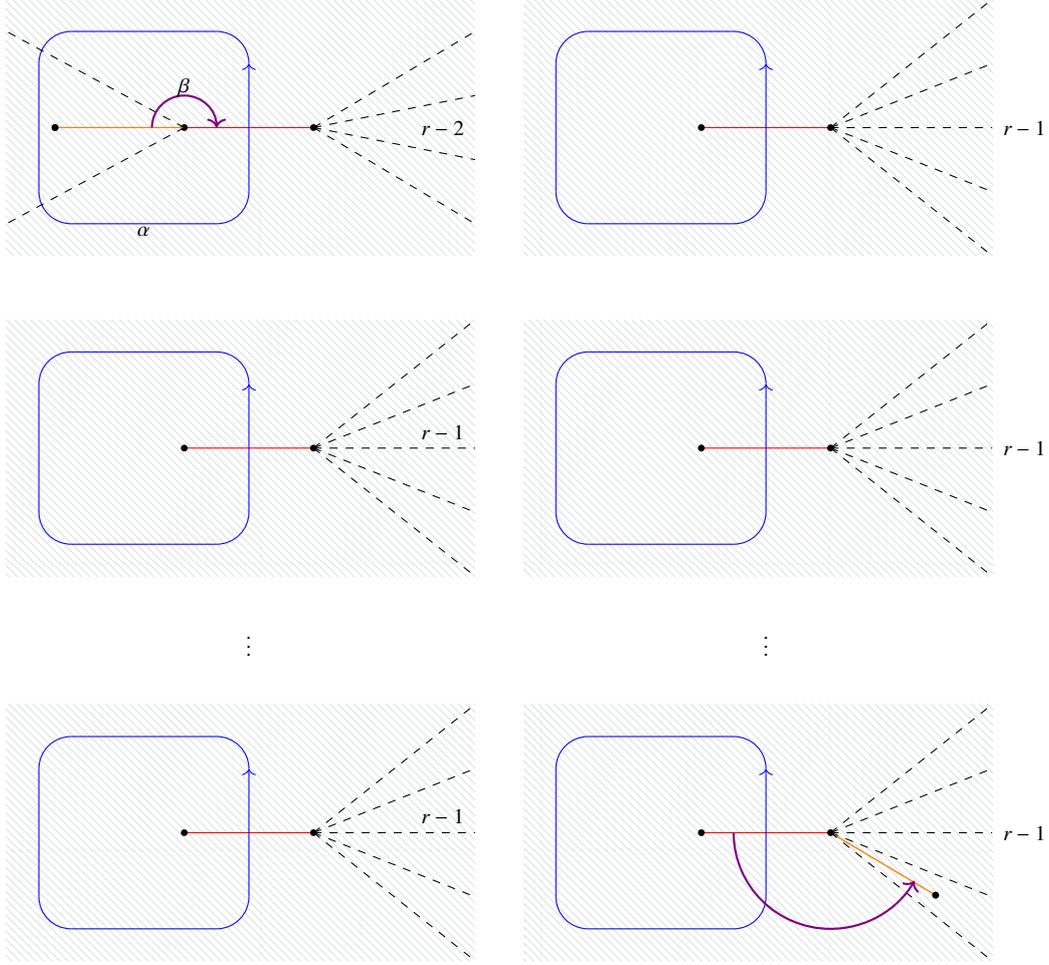

\smallskip

\noindent We then choose the slits $s_o$ and $s_{n+1}$ to be of angle $2\pi$ away from the beginning saddle connection because $A=B=1$ and hence they do not coincide with any saddle connection $s_1,\dots,s_n$. Let $P_{n+1}$ and $Q_{n+1}$ be the extremal points of $s_{n+1}$ and $s_o$ respectively. Slit both and denote the resulting sides as $s_o^{\pm}$ and $s_{n+1}^{\pm}$ according to our convention. If any saddle connection $s_i$ is oriented from $P_o$ to $Q_o$, then $s_o$ is oriented from $P_o$ to $Q_{n+1}$ and, similarly, $s_{n+1}$ is oriented from $P_{n+1}$ to $Q_o$. Identify $s_o^+$ with $s_{n+1}^-$ and identify $s_o^-$ with $s_{n+1}^+$. Notice that the points $P_o$ and $P_{n+1}$ as well as $Q_o$ and $Q_{n+1}$ are now identified. The resulting space is a genus one surface equipped with a meromorphic differential. By construction, such a structure lies in $\mathcal H_1(m_1,m_2;\,-r^n)$ and has trivial period character. See figure \ref{fig:allpolesrtricase}.

\medskip

\paragraph{\textit{Al least one pole has order bigger than $r$ -- generic case}}\label{par:genericgenusonetritwozeros} We now assume $p>nr$ which means at least one pole has order bigger than $r$, say $p_1$, without loss of generality. Assume for a moment that both zeros have order at least $p-n-r$. In this case we may observe that $2p - 2r - 2n \leq m_1 + m_2 = p$, that means 
\begin{equation}\label{eq:ineqexcases}
    (n+1)r \leq p \leq 2r + 2n\,\,\Longleftrightarrow\,\, (n-1)(r-2) \leq 2
\end{equation}

\noindent Therefore, in the case both zeros have order at least $p-n-r$, there are several isolated cases to consider which we define as special and we shall treat them separately, see Paragraphs \S\ref{par:exceptionalgenusonetritwozeros} and \S\ref{par:exceptionalgenusonetritwozeroslowerr}. 

\smallskip 

\noindent In the present paragraph, we assume $m_1<p-n-r$. Suppose we want to realize the trivial representation in the stratum $\mathcal H_1(m_1,m_2;\,-p_1,\dots,-p_n)$. Of course, $r$ divides $p_i$, hence we can write this latter as $p_i=r+h_i\,r$, for some $h_i\ge0$ for every $i=1,\dots,n$. Since $p>nr$, it follows that $h_i\ge1$ for some $i$. Suppose $h_1=1$ and $h_i=0$ for $i=2,\dots,n$. Then we can realize the trivial representation as the period character of some translation surface in the stratum $\mathcal H_1(m_1,m_2-r;\,-r^n)$ as done in the previous Paragraph \S\ref{par:allsameordertritwozer}. Notice that this is possible because Hurwitz type inequality \eqref{eq:triconstr} still holds due to the assumption that $m_1 <(p - r)-n$. 

\begin{rmk}
Notice that, since $m_1\le p-n-r-1$ and $m_1+m_2=nr+r > nr$, because we are assuming $h_1=1$,  it directly follows that 
\begin{equation}
    m_2> nr-m_1\ge nr-p+n+r+1=r+1.
\end{equation} Therefore $m_2-r\ge 1$.
\end{rmk}

\noindent Next, we bubble $r$ planes along a ray from the zero of order $m_2-r$ to any pole of order $r$ as described in Definition \ref{bubplane}. Notice from the previous construction that this is always possible. If a closed curve crosses the ray before bubbling, then its index alters by $r$ after, hence the rotation number remains unchanged. The resulting structure lies by design in the stratum $\mathcal H_1(m_1,m_2;\,-p_1,-r^{n-1})$ as desired. The most general case follows by applying a double induction. More precisely, we first induct on $h_1$ by keeping $h_2=\cdots=h_n=0$ and then we induct on the number of poles with order greater than $r$ that is on the number of poles whose orders $p_i$ satisfy $h_i\ge1$.

\begin{rmk}
Notice that, if $p_i=2$ for all $i=1,\dots,n$ then the trivial representation cannot be realized in the stratum $\mathcal H_1(m_1,m_2;\,-2^n)$ because $m_1$ or $m_2$ is at least $n > p - n - 1=n-1$, and we would have a contradiction with Hurwitz type inequality \eqref{eq:triconstr}. 
\end{rmk}

\smallskip

\paragraph{\textit{Al least one pole has order bigger than $r\ge3$  -- special cases}}\label{par:exceptionalgenusonetritwozeros} In the present paragraph we continue to assume that $p>nr$ and we suppose that both zeros have order at least $p-n-r$. In order to avoid a further proliferation of paragraphs and sub-paragraphs, we list all these special cases here and we handle them one by one them by using Lemmas. Let us consider again the inequality \eqref{eq:ineqexcases} above, that is $(n-1)(r-2) \leq 2$. 

\begin{rmk}
In the first place we notice that $r\ge5$ readily implies that $n=1$ and therefore $p\ge6$. In other words, under the conditions above, $r\ge5$ only for strata with a single pole. 
\end{rmk}

\noindent We consider first the case $n=1$, that is the case of strata $\mathcal H_1(m_1,m_2;\,-p)$ with $m_1+m_2=p$. The following holds for $r\ge3$.

\begin{lem}
Let $\mathcal H_1(m_1,m_2;\,-p)$ be a stratum of genus $1$-differentials. Suppose that $r\ge3$ is a divisor of $\gcd(m_1,m_2)$ such that $m_i\ge p-r-1$, for both $i=1,2$. Then the trivial representation can be realized in the connected component of the same stratum with rotation number $r$.
\end{lem}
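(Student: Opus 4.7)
The first paragraph of my plan is a purely arithmetic reduction. From $r\mid m_1$, $r\mid m_2$, $m_1+m_2=p$, and $m_i\ge p-r-1$ I get $m_j=p-m_i\le r+1$ for each $j\in\{1,2\}$. Since $r\ge 3$, the only positive multiple of $r$ lying in the range $[1,r+1]$ is $r$ itself, forcing $m_1=m_2=r$ and hence $p=2r$. Thus the lemma reduces to realizing the trivial representation in $\mathcal{H}_1(r,r;-2r)$ with rotation number $r$ for every $r\ge 3$.

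Next I will produce a genus zero exact differential in $\mathcal{H}_0(r-1,r-1;-2r)$ to serve as the base of the construction. Starting from $(\mathbb{C},z^{2r-2}dz)\in\mathcal{H}_0(2r-2;-2r)$, which has trivial periods, I apply the zero-breaking surgery of Section \S\ref{sec:zerobreak} to split the single zero of order $2r-2$ into two zeros $P_1,P_2$ of order $r-1$ each, joined by a saddle connection $s$ of (small) length $\varepsilon$. The resulting translation surface $(\mathbb{CP}^1,\omega_0)$ lies in $\mathcal{H}_0(r-1,r-1;-2r)$, has trivial periods, and the surgery has been performed inside a small $\varepsilon$-neighborhood $U$ of the original zero, so that outside $U$ the structure agrees with $(\mathbb{C},z^{2r-2}dz)$.

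I then plan to bubble a handle with trivial periods, using the construction of Section \S\ref{sssec:tripresrot} with its angle parameter tuned so that the generator $\beta$ has index $0$. Concretely I take $c_1$ to be the twin of $s$ leaving $P_1$ at angle $2\pi$ on its right and $c_2$ to be the twin of $s$ leaving $P_2$ at angle $2\pi$ on its left, both of length $\varepsilon$. After slitting along $c_1,c_2$ and identifying $c_1^+\leftrightarrow c_2^-$, $c_1^-\leftrightarrow c_2^+$ in the usual way, the resulting genus one surface lies in $\mathcal{H}_1(r,r;-2r)$ and has trivial periods. A direct inspection of the winding of the tangent along the standard handle generators gives $\textnormal{Ind}(\alpha)=m_2+1=r$ and $\textnormal{Ind}(\beta)=0$, so by Lemma \ref{lem:addhandtrieven} the rotation number is $\gcd(0,r,r,2r)=r$, which is exactly what is required.

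The main obstacle, and the one point where care is genuinely needed, is the distinctness of $c_1$ and $c_2$: were they chosen to be full twin saddle connections on the symmetric surface $(\mathbb{CP}^1,\omega_0)$, the natural identifications would collapse $c_1=c_2$ and the bubbling would degenerate. The way around this is to keep $c_1,c_2$ as short $\varepsilon$-segments inside $U$ and appeal to the explicit half-disc model underlying the break of Section \S\ref{sec:zerobreak}: the two twin directions selected lie in distinct sectors of the glued half-disc configuration, so the two short segments traverse disjoint portions of $U$ and end at distinct regular points $P_1',P_2'$. This verification, together with the index computation indicated above, will complete the proof.
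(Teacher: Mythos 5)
Your proof is correct and, in substance, matches what the paper does for this lemma. Two small remarks and one gap worth flagging.

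\smallskip

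\textbf{Arithmetic reduction.} Your observation that $r\mid m_i$, $m_1+m_2=p$, and $m_i\ge p-r-1$ for both $i$ together force $m_1=m_2=r$ and $p=2r$ is correct and a genuine simplification. The paper's proof first sets up the construction for a more general $\mathcal H_1(m_1,m_2;-p)$ (bubbling $m_i-1$ planes at two marked points of $(\C,dz)$, choosing the slit at $Q$ at angle $2\pi(r+1)$, which requires $\max(m_1,m_2)>r+1$) and only at the end observes that the hypotheses reduce to the single exceptional stratum $\mathcal H_1(r,r;-2r)$, handled by taking both slits at angle $2\pi$. Your reduction makes this explicit from the start, which is a cleaner exposition but the same endpoint.

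\smallskip

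\textbf{The genus-zero base.} You build the base surface in $\mathcal{H}_0(r-1,r-1;-2r)$ by breaking the zero of $(\C,z^{2r-2}dz)$, whereas the paper bubbles $r-1$ copies of $(\C,dz)$ along two opposite rays from two marked points of $(\C,dz)$. Since this stratum is one-dimensional (parameterized by the single relative period), the two constructions give the same structure for a matching saddle-connection period, so there is no mathematical difference; but the paper's version makes the next step visibly safe, which brings me to the gap.

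\smallskip

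\textbf{The gap: regularity of the twin endpoints.} You correctly identify the delicate point but the resolution offered is hand-wavy. There are two concerns: that $c_1\neq c_2$, and that the free endpoints $P_1',P_2'$ of $c_1,c_2$ are regular. The first is automatic (different singular endpoints $P_1,P_2$). The second is the real issue, and ``keep $c_1,c_2$ as short $\varepsilon$-segments'' does not address it, because twins of $s$ must have the same developed image and hence the \emph{same} length as $s$ --- you can only make $s$ itself short, not the twins relative to $s$. Two ways to close the gap: (i) switch to the paper's bubbled-plane model, where the twin at $P$ of angle $2\pi$ lies entirely in the first bubbled sheet and the twin at $Q$ in a separate sheet, so both endpoints are manifestly regular and the segments are manifestly disjoint; or (ii) in your model, note that $D(z)=\int_0^z\omega$ with $\omega=c\,z^{r-1}(z-1)^{r-1}dz$ satisfies $D(z)-D(1)=(z-1)^r h(z)$ with $\deg h=r-1$ and $h(1)\neq 0$, so the $r-1$ twins of $s$ at $0$ other than $s$ itself terminate at the roots of $h$, none of which is $0$, $1$, or $\infty$; symmetrically for the twins at $1$. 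With that in place, your index computation $\textnormal{Ind}(\alpha)=r$, $\textnormal{Ind}(\beta)=0$ and the appeal to Lemma \ref{lem:addhandtrieven} giving rotation number $\gcd(0,r,r,2r)=r$ is correct.
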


\begin{proof}
Observe that $p>r$ because $r$ divides $\gcd(m_1,m_2)$. We first realize the trivial representation as the period character of some  genus-zero differential in the stratum $\mathcal{H}_0(m_1-1,m_2-1;\,-p)$. Let $(\C,\,dz)$ be the complex plane seen as a translation surface. Pick two distinct points, say $P$ and $Q$, and let $s$ denote the edge joining them. Let $r_P$ be a half-ray leaving from $P$ and let $r_Q$ be a half-ray leaving from $Q$ such that $r_P\,\cap\,r_Q=\phi$; \textit{e.g.} pick $r_P$ and $r_Q$ so that $r_P\,\cup\,s\,\cup\,r_Q$ are aligned. Bubble $m_1-1$ copies of $(\C,\,dz)$ along the ray $r_P$ and then bubble $m_2-1$ copies of $(\C,\,dz)$ along the ray $r_Q$. Since $m_1+m_2-2=p$ resulting space is now a genus zero meromorphic differential in the stratum $\mathcal{H}_0(m_1-1,m_2-1;\,-p)$ as desired. Orient $s$ from $P$ to $Q$ and denote by $s^-$ and $s^+$ the left and right side of $s$ respectively, according to our convention. In order to bubble a handle with trivial periods as described in \S\ref{sssec:tripresrot} above, choose a segment $e_P$ from $P$ so that its angle with respect to $s^-$ is $2\pi$ and choose a segment $e_Q$ from $Q$ so that its angle with respect to $s^+$ is $2\pi(r+1)$. Slit the edges $e_P$ and $e_Q$ and denote the resulting sides $e_P^{\pm}$ and $e_Q^{\pm}$, according to our convention. Then glue $e_P^+$ with $e_Q^-$ and similarly glue $e_P^-$ with $e_Q^+$. The resulting space is now a genus one differential with trivial periods in the stratum $\mathcal H_1(m_1,m_2;\,-p)$ with rotation number $r$ as desired. In fact, consider an arc joining the midpoint of $e_P$ with the midpoint of $s$ that winds clockwise around $P$. Next, prolong this arc with another one joining the midpoint of $s$ with the midpoint of $e_Q$ that winds counterclockwise around $Q$. After the cut and paste described above, this arc closes up to a simple close curve $\alpha$ with index $r$ by construction. Then pick $\beta$ as any simple loop around $e_P$ (or $e_Q$). This procedure is doable as long as $\max(m_1,m_2)>r+1$. Since $m_1,\,m_2$ are both divisible by $r$, the only exception is given by $\mathcal H_1(r, r;\, -2r)$, but then we can choose both slits of turning angle $2\pi$ so that their difference is zero and the same construction works.
\end{proof}

\noindent We next suppose $n=2,3$. In this case the possible values for $r$, so that inequality \eqref{eq:ineqexcases} holds, are $1\le r\le 4$ if $n=2$ and $1\le r\le 3$ if $n=3$. We shall consider $r=1,2$ in \S\ref{par:exceptionalgenusonetritwozeroslowerr} and hence we just focus on the other cases listed below. Since the total polarity $p$ is bounded between $(n+1)r\le p\le 2(n+r)$, see inequality \eqref{eq:ineqexcases}, there are a few isolated cases to consider as follows:

\begin{itemize}
    \item[1.] $n=2$ and $r=3$. In this case we consider strata of the form $\mathcal H_1(3k_1, 3k_2;\,-3h_1,-3h_2)$ for some positive integers $k_1,k_2,h_1,h_2$. A direct check shows that $9\le p\le10$. Since $3h_1+3h_2=10$ has no integral solutions, we only need to consider $p=9$. This forces $k_1=h_1=1$ and $k_2=h_2=2$ -- up to relabelling. Notice that $h_1$ and $h_2$ cannot be zero because we consider strata with two poles. Therefore we only need to consider the stratum $\mathcal H_1(3,6;\,-3,-6)$. Since
    \begin{equation}
        m_1=3<4=9-2-3=p-n-r,
    \end{equation} the argument developed in Paragraph \S\ref{par:genericgenusonetritwozeros} applies and hence we are done in this case.
    \smallskip
    \item[2.] $n=2$ and $r=4$. This case is quite similar. We consider strata of the form $\mathcal H_1(4k_1, 4k_2;\,-4h_1,-4h_2)$ for some positive integers $k_1,k_2,h_1,h_2$. A direct check shows that $p=12$ and the Diophantine equation $4h_1+4h_2=12$ has $(h_1,\,h_2)=(h_2,\,h_1)=(1,2)$ as the only possible solutions with positive integers. Assume, without loss of generality that $(h_1,\,h_2)=(1,2)$ -- the other pair leads to the same stratum, namely $\mathcal H_1(4,8;\,-4,-8)$. Since 
    \begin{equation}
        m_1=4<6=12-2-4=p-n-r,
    \end{equation} we fall once again in the case considered in Paragraph \S\ref{par:genericgenusonetritwozeros} and hence, even in this case, we are done.
    \smallskip
    \item[3.] $n=3$ and $r=3$. We consider strata of the form $\mathcal H_1(3k_1,3k_2;\,-3h_1,-3h_2,-3h_3)$ for some positive integers $k_1,k_2,h_1,h_2,h_3$. A direct check shows that $p=12$ and hence $(1,1,2)$ is the only triple of positive integers that satisfies $3(h_1+h_2+h_3)=12$, up to permutation. There are two strata to consider in this case according to the following partitions of $4$ as sum of two positive integers:
    \smallskip
    \begin{itemize}
        \item[i.] $k_1=k_2=2$ that leads to $\mathcal H_1(6,6;\,-3,-3,-6)$, and
        \smallskip
        \item[ii.] $k_1=1$ and $k_2=3$ that leads to $\mathcal H_1(3,9;\,-3,-3,-6)$.
        \smallskip
    \end{itemize}
    In the case \textit{ii.} above it is easy to check that $m_2=9>8=p-n-1$ and hence the trivial representation cannot be realized in this stratum because the Hurwitz type inequality \eqref{eq:triconstr} does not hold. The case \textit{i.} above, however, needs a special treatment. Notice that we cannot apply the induction, as in \S\ref{par:genericgenusonetritwozeros}, to the stratum $\mathcal H_1(3,6;\,-3,-3,-3)$ because the trivial representation cannot be realized in this latter; in fact, the Hurwitz type inequality \eqref{eq:triconstr} does not hold.
\end{itemize}

\begin{lem} The trivial representation can be realized in the connected component of $\mathcal H_1(6,6;\,-3,-3,-6)$ with rotation number $r=3$.
\end{lem}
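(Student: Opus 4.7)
The strategy is to realize the trivial representation first on a genus-zero surface in $\mathcal{H}_0(5,5;-3,-3,-6)$ with a saddle-connection configuration suited to the handle-bubbling surgery of Section~\S\ref{sssec:tripresrot}, then promote it to the desired genus-one stratum by attaching a handle with trivial periods whose $\beta$-generator has index~$3$.

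I would adapt the cyclic construction of paragraph~\S\ref{par:allsameordertritwozer} by allowing one of the three building blocks to carry a pole of order $6$ instead of $3$. Fixing $c\in\mathbb{C}^{*}$, I would take two copies $(\mathbb{CP}^1,\omega_1)$ and $(\mathbb{CP}^1,\omega_2)$ obtained from $(\mathbb{C},z\,dz)$ by breaking its zero along a slit parallel to $c$ into a zero of order $1$ and a regular point, and a third copy $(\mathbb{CP}^1,\omega_3)$ obtained from $(\mathbb{C},z^4\,dz)$ by breaking its zero of order $4$ along a parallel slit of the same holonomy into two zeros of order $2$. Choosing the weights $(a_i,b_i)$ equal to $(2,1)$, $(1,2)$, $(3,3)$, so that $a_i+b_i$ equals the pole order of $\omega_i$ and $\sum_i a_i=\sum_i b_i=6$, the cyclic gluing $c_i^{-}\sim c_{i+1}^{+}$ produces $(\mathbb{CP}^1,\omega)\in\mathcal{H}_0(5,5;-3,-3,-6)$ with two zeros $P_o,Q_o$ of order~$5$ by the cone-angle count $\sum 2\pi a_i=12\pi=\sum 2\pi b_i$.

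Next I would perform the surgery of Section~\S\ref{sssec:tripresrot} with rotation parameter $r=3$, slitting a twin $s_o$ at $P_o$ at cone angle $2\pi$ from the chosen beginning saddle connection and a twin $s_{n+1}$ at $Q_o$ at cone angle $2\pi(r+1)=8\pi$. The feasibility reduces to checking that these twins avoid all pre-existing saddle connections and share a common holonomy: the cumulative angles $2\pi A_i$ at $P_o$ equal $4\pi,6\pi$, while $2\pi B_i$ at $Q_o$ equal $2\pi,6\pi$, so the indices $A=1$ and $B=4$ do not hit any $A_i$ or $B_i$; moreover $1\equiv 4\pmod{3}$, so $s_o$ and $s_{n+1}$ have identical holonomy. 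The resulting translation surface lies in $\mathcal{H}_1(6,6;-3,-3,-6)$ with trivial absolute periods, and Lemma~\ref{lem:addhandtrieven} computes the rotation number as $\gcd(3,6,6,3,3,6)=3$, as required.

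The main obstacle is the exceptional position of this stratum: both zeros saturate the bound $m_i=p-n-r=6$, so the reduction of paragraph~\S\ref{par:genericgenusonetritwozeros} does not apply, and subtracting $r=3$ planes from the order-$6$ pole leads to $\mathcal{H}_1(3,6;-3,-3,-3)$, which violates the Hurwitz type inequality~\eqref{eq:triconstr}. The resolution, which is the key observation here, is that the construction of~\S\ref{par:allsameordertritwozer} remains valid when a single pole is an arbitrary multiple of $r$, provided indices $A,B$ with $A\equiv B\pmod r$ can still be found outside the forbidden sets $\{A_i\},\{B_i\}$; this combinatorial check for $r=3$, $n=3$ and the weights above is immediate.
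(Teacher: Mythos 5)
Your proposal is correct and follows essentially the same route as the paper: the trivial representation is first realized in $\mathcal H_0(5,5;-3,-3,-6)$ via the cyclic saddle-connection configuration with weights $(2,1),(1,2),(3,3)$, and the handle with trivial periods is then bubbled along a pair of twins of the beginning saddle connection chosen so that the resulting $\beta$-curve has index divisible by $3$ (the paper takes both twins at angle $2\pi$, one on each side, while you take the standard angles $2\pi$ and $8\pi$ of Section \S\ref{sssec:tripresrot} — an immaterial difference). One small correction: the congruence $A\equiv B\pmod r$ is not what guarantees that $s_o$ and $s_{n+1}$ have the same holonomy (that is automatic, since all twins develop onto the same arc); it is the condition that forces $\textnormal{Ind}(\beta)\equiv 0\pmod r$ and hence the prescribed rotation number.
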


\begin{figure}[!ht]
    \centering
    \begin{tikzpicture}[scale=1, every node/.style={scale=0.8}]
    
    \definecolor{pallido}{RGB}{221,227,227}

    \foreach \x [evaluate=\x as \coord using  4*\x] in {0,2} 
    {
    \pattern [pattern=north west lines, pattern color=pallido]
    (\coord-3.5, -2)--(\coord-3.5, 2)--(\coord+3.5, 2)--(\coord+3.5, -2)--(\coord-3.5,-2);
    
    \draw[thin, red, ->] (\coord-1,0)--(\coord,0);
    \draw[thin, red] (\coord,0)--(\coord+1,0);
    \fill [black] (\coord-1,0) circle (1.5pt);
    \fill [black] (\coord+1,0) circle (1.5pt);
    }
    
    \draw[thin, black, dashed] (-1,0)--(-1,2);
    \draw[thin, black, dashed] (-1,0)--(-1,-2);
    
    \draw[thin, black, dashed] (9,0)--(9,2);
    \draw[thin, black, dashed] (9,0)--(9,-2);
    
    \pattern [pattern=north west lines, pattern color=pallido]
    (0.5, -3)--(7.5, -3)--(7.5, -7)--(0.5,-7)--(0.5,-3);
    
    \draw[red, thin, ->] (3,-5)--(4,-5);
    \draw[red, thin] (4,-5)--(5,-5);
    \fill [black] (3,-5) circle (1.5pt);
    \fill [black] (5,-5) circle (1.5pt);
    
    \draw[thin, black, dashed] (3,-5)--(3,-3);
    \draw[thin, black, dashed] (3,-5)--(0.5,-5);
    \draw[thin, black, dashed] (3,-5)--(3,-7);
    
    \draw[thin, black, dashed] (5,-5)--(5,-3);
    \draw[thin, black, dashed] (5,-5)--(7.5,-5);
    \draw[thin, black, dashed] (5,-5)--(5,-7);
    
    \draw[thin, orange, ->] (-1,0)--(-2,0);
    \draw[thin, orange] (-2,0)--(-3,0);
    \draw[thin, orange] (5,-5)--(5.75,-5.75);
    \draw[thin, orange, <-] (5.75,-5.75)--(6.5,-6.5);
    
        \fill [black] (-1,0) circle (1.5pt);
        \fill [black] (-3,0) circle (1.5pt);
        
        \fill [black] (5,-5) circle (1.5pt);
        \fill [black] (6.5,-6.5) circle (1.5pt);
        
    \node at (0,0.25) {$s^+$};
    \node at (4,-5.25) {$s^-$};
    
    \node at (-2,0.35) {$e_1^+$};
    \node at (-2,-0.35) {$e_1^-$};
    
    \node at (6,-5.5) {$e_2^+$};
    \node at (5.5,-6) {$e_2^-$};
    
    \node at (2.5,-1.5) {$(\cp, \omega_1)$};
    \node at (5.5,-1.5) {$(\cp, \omega_2)$};
    \node at (1.5,-6.5) {$(\cp, \omega_3)$};
    
    \end{tikzpicture}
    \caption{Realization of the trivial representation in the connected component of the stratum $\mathcal H_1(6,6;\,-3,-3,-6)$ with rotation number $r=3$. The simple closed curves $\alpha,\,\beta$ can be drawn exactly as in Figure \ref{fig:allpolesrtricase} above. It is easy to check that the resulting genus one-differential has rotation number $3$.}
    \label{fig:specialcase}
\end{figure}
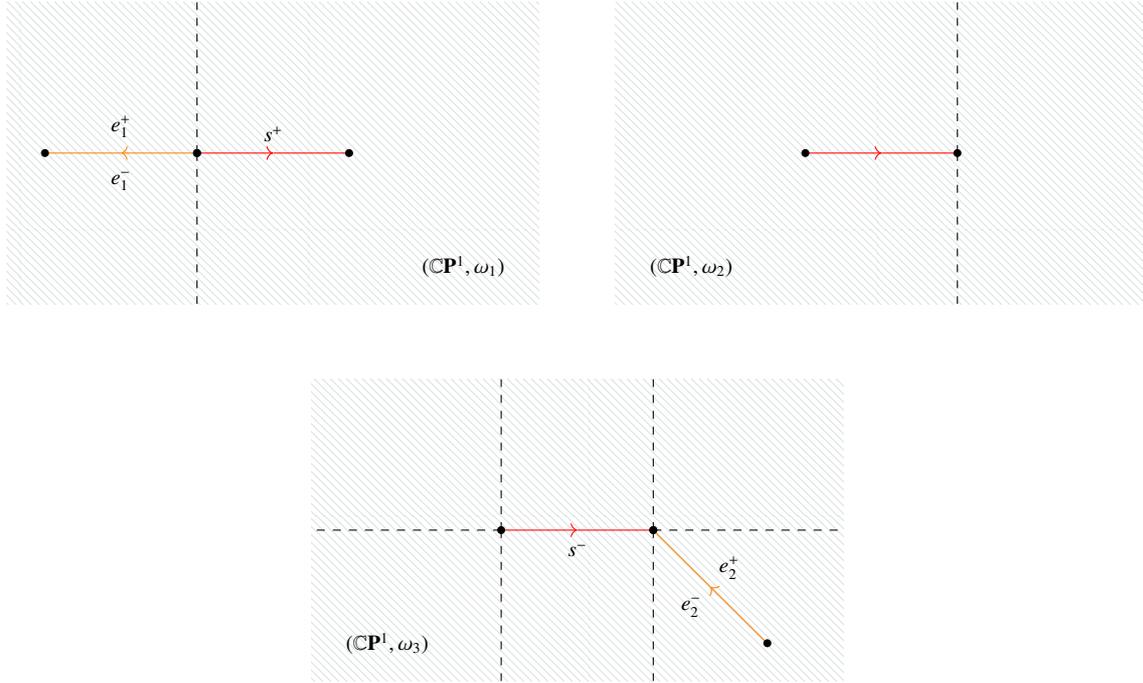

\begin{proof}[Sketch of the proof]
The proof is very similar to the argument developed in \S\ref{par:genericgenusonetritwozeros} and hence we provide it in brief. We first realize the trivial representation in $\mathcal H_0(5,5;\,-3,-3,-6)$ by using the saddle connection configuration description in \cite{EMZ} with $(a_1, b_1)=(2,1)$, $(a_2,b_2)=(1,2)$ and $(a_3,b_3)=(3,3)$. Notice that $a_1+a_2+a_3 =m_1+1=6$ and $b_1 + b_2 + b_3 = m_2+1=6$. 

\smallskip

\noindent For every $i=1,\dots,n$, consider a copy of $(\C,\, z^{r-2}dz)$ and fix any vector $c\in\mathbb C^*$. Let $(\cp,\omega_i)$ be the translation surface with poles obtained by breaking the zero of $(\C,\, z^{r-2}dz)$ into two zeros, say $P_i$ and $Q_i$, of orders $a_i-1$ and $b_i-1$ respectively so that the resulting saddle connection joining them, say $c_i$, is parallel to $c$ with the same length, \textit{i.e.} $|c|=|c_i|$.

\smallskip

\noindent Next, we slit every $(\cp,\omega_i)$ along $c_i$ and denote by $c_i^{\pm}$ the resulting edges according to our convention. Identify $c_i^-$ with $c_{i+1}^+$ for every $i=1,2$ and glue $c_3^-$ with $c_1^+$. The resulting space is now a pair $(\cp, \omega)$ where $\omega$ is a meromorphic differential with two zeros, say $P_o$ and $Q_o$, of orders $5$, two poles of order $3$ and one pole of order $6$. Let $s$ be the saddle connection resulting from the identification of $c_1^+$ with $c_n^-$. Orient $s$ from $P_o$ to $Q_o$.

\smallskip

\noindent In $(\cp,\,\omega)$ there is a segment, say $e_1$, leaving from $P_o$ such that $s$ and $e_1$ are twins, see Definition \ref{def:twins}, and the angle on the left is $2\pi$. Similarly, there is only segment, say $e_2$, leaving from $Q_o$ such that $s$ and $e_2$ are twins and the angle on the right is $2\pi$. Slit both $e_1$ and $e_2$ and denote the resulting sides $e_1^{\pm}$ and $e_2^{\pm}$ according to our convention. Glue $e_1^+$ with $e_2^-$ and glue $e_1^-$ with $e_2^+$. The resulting space is a genus one surface equipped with a translation structure with trivial periods. By construction it lies in the connected component of $\mathcal H_1(6,6;\,-3,-3,-6)$ with rotation number $r=3$. See Figure \ref{fig:specialcase}.
\end{proof}

\smallskip

\noindent We finally notice that for $n\ge4$ the inequality \eqref{eq:ineqexcases} holds if and only if $r\le2$. Therefore there are no other cases to consider for $r\ge3$.

\medskip

\paragraph{\textit{Al least one pole has order bigger than $r\le2$  -- special cases}}\label{par:exceptionalgenusonetritwozeroslowerr} We continue to assume that $p>nr$ and that both zeros have order at least $p-n-r$. It remains to consider the cases $r=1,2$. For both of them we no longer have any upper bound on the number of poles because \eqref{eq:ineqexcases} holds for every $n\ge1$.

\smallskip

\noindent If $r = 1$, the assumption implies that $m_1, m_2\geq(p-1)-n$, but the Hurwitz type inequality \eqref{eq:triconstr} then implies that $m_1 = m_2 = p- n -1$.  Hence $p = m_1 + m_2 = 2p - 2n - 2$, so $p = 2n+2$ and $m_1 = m_2 = n+1$.  Since each $p_i \geq 2$, the only cases are 
\smallskip
\begin{itemize}
    \item[1.] $\mathcal H_1(n+1, n+1; -2^{n-1}, -4)$. This stratum is connected whenever $n$ is even whereas it has exactly two connected components whenever $n$ is odd. Therefore, this latter case is worth of interest for us. See Lemmas \ref{lem:specstrat1} and \ref{lem:excaseone}.
    \smallskip
    \item[2.] $\mathcal H_1(n+1, n+1; -2^{n-2}, -3, -3)$. It is an easy matter to check that such a stratum is connected for every $n\ge3$ because $\gcd(2,3,n+1)=1$. On the other hand, for $n=2$ we get $\mathcal H_1(3,3;\,-3,-3)$ which has two connected components. See Lemma \ref{lem:excasetwo} for this special case which turns out to be an exceptional stratum.
\end{itemize}

\smallskip 

\noindent If $r=2$, the assumption implies that $m_1, m_2 \geq (p - 2) - n$.  Hence $p = m_1 + m_2 \geq 2p - 2n - 4$ and $p \leq 2n + 4$.  The Hurwitz inequality is that $m_1, m_2 \leq p - n - 1$.  Since each $p_i$ is even, the only cases are 
\smallskip
\begin{itemize}
    \item[3.] $\mathcal H_1(n+1, n+1; -2^{n-1},\, -4)$. If $n$ is even the stratum is connected and it has two connected if $n$ is odd. See Lemmas \ref{lem:specstrat1} and \ref{lem:excaseone}.
    \smallskip
    \item[4.] $\mathcal H_1(n+2, n+2; -2^{n-1}, -6)$. If $n$ is odd the stratum is connected and hence the interesting cases arise for $n$ even. See Lemma \ref{lem:specstrat2}. Finally,
    \smallskip
    \item[5.] the last family of strata is given by $\mathcal H_1(n+2, n+2; -2^{n-2}, -4, -4)$. For $n$ odd the stratum is connected and it has two connected components if $n$ is even. See Lemma \ref{lem:specstrat3}.
\end{itemize}

\medskip 

\noindent We have the following Lemmas.

\begin{lem}\label{lem:specstrat2}
Let $n=2k$ be an even positive integer. For every $k\ge1$, the trivial representation can be realized in both connected components of the stratum $\mathcal H_1(2k+2,2k+2;\,-2^{2k-1}, -6)$.
\end{lem}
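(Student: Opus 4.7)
The plan is to construct the desired genus-one differentials via the handle-bubbling surgery of \S\ref{sssec:tripresrot}, starting from a genus-zero exact differential. First, I would use the construction of \S\ref{ssec:genzero} to realize the trivial representation as the period character of some genus-zero translation surface with poles $(X,\omega)$ in the stratum $\mathcal H_0(2k+1, 2k+1; -2^{2k-1}, -6)$. Note that the Hurwitz type inequality \eqref{eq:triconstr} is satisfied since $2k+1 \le (4k+4) - 2k - 1 = 2k+3$, so this realization is legitimate by \cite[Theorem B]{CFG}. After iteratively gluing the $2k$ standard building blocks $(\C,\omega_i)$ and finally breaking the resulting maximal zero (as in Remark \ref{rmk:wealwaysbreak} and \S\ref{sec:zerobreak}), we may arrange that the two zeros $P_1$ and $P_2$ of $(X,\omega)$ are joined by a saddle connection $s$ whose twins at $P_1$ and $P_2$ avoid all other singularities of $\omega$.

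Next, fix an integer $r \in \{1, 2\}$. Since each $P_i$ has order $2k+1$, the total angle at $P_i$ is $(2k+2)\cdot 2\pi$, so there are $2k+1$ twins of $s$ leaving from $P_i$ other than $s$ itself, at angles $2\pi, 4\pi, \dots, (2k+1)\cdot 2\pi$ on each side of $s$. In particular, we may choose a twin $c_1$ of $s$ at $P_1$ forming angle $2\pi$ on its right and a twin $c_2$ of $s$ at $P_2$ forming angle $2\pi(r+1)$ on its left, which is possible because $r+1 \le 3 \le 2k+1$ whenever $k\ge 1$. We then apply the surgery of \S\ref{sssec:tripresrot}: slit $c_1$ and $c_2$, and identify $c_1^+$ with $c_2^-$ and $c_1^-$ with $c_2^+$. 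Since the surgery is purely local and the chosen twins meet no other singularities, the resulting surface $(Y,\xi)$ is a genus-one translation surface with trivial periods that lies in the stratum $\mathcal H_1(2k+2, 2k+2; -2^{2k-1}, -6)$.

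Finally, by Lemma \ref{lem:addhandtrieven}, the rotation number of $(Y,\xi)$ equals
\[
\gcd\bigl(r,\, 2k+2,\, 2k+2,\, \underbrace{2,\dots, 2}_{2k-1},\, 6\bigr).
\]
For $r = 1$ this gcd is $1$, and for $r = 2$ it is $2$ (since $2$ divides $2k+2$ and $6$). Hence both connected components of $\mathcal H_1(2k+2, 2k+2; -2^{2k-1}, -6)$ are realized. The main subtlety is to verify that the genus-zero construction of \S\ref{ssec:genzero} can be arranged so that the two zeros of order $2k+1$ are joined by a saddle connection whose twins at each endpoint avoid the remaining zeros and poles; this is however built into the framework of \S\ref{ssec:genzero}, which explicitly ensures saddle connections between consecutive zeros in the construction and allows us to place these zeros at distinct developed points.
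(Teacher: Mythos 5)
Your overall strategy (realize the trivial representation in $\mathcal H_0(2k+1,2k+1;\,-2^{2k-1},-6)$ and then bubble a handle with trivial periods via \S\ref{sssec:tripresrot}, reading off the rotation number from Lemma \ref{lem:addhandtrieven}) is the same toolkit the paper uses for other two-zero strata, and the numerical checks (Hurwitz inequality, degree count, the $\gcd$ computation) are correct. However, there is a genuine gap at the step you yourself flag and then dismiss. The surgery of \S\ref{sssec:tripresrot} requires the chosen twins $c_1$ (at angle $2\pi$ on the right of $s$ at $P_1$) and $c_2$ (at angle $2\pi(r+1)$ on the left of $s$ at $P_2$) to terminate at \emph{regular} points $P_1'$, $P_2'$; otherwise the identification of endpoints at the end of the surgery produces the wrong cone angles and the structure does not land in $\mathcal H_1(2k+2,2k+2;\,-2^{2k-1},-6)$. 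In a genus-zero exact differential with only two zeros, most twins of $s$ at $P_1$ are themselves saddle connections ending at $P_2$: with the block construction the zero $P_1$ of order $2k+1$ has $2k+1$ twins of $s$, of which $2k-1$ are the other saddle connections to $P_2$, leaving only two that end at regular points, and \emph{which} two depends on the ordering of the blocks $(a_i,b_i)$. Section \S\ref{ssec:genzero} guarantees only that the twins avoid zeros \emph{other than} $P_i$ and $P_{i+1}$; it does not guarantee that the twin at a prescribed angle misses $P_2$. This is exactly why the proof of the analogous statement in \S\ref{par:allsameordertritwozer} goes through the careful bookkeeping $A\neq A_i$, $B\neq B_i$, $A\equiv B\ (\mathrm{mod}\ r)$. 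For the stratum at hand the pairs are forced to be $(3,3),(1,1),\dots,(1,1)$, and one can check that suitable angles exist for both parities, but this verification is the actual content of the step and cannot be waved away.

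For comparison, the paper avoids this collision problem entirely by changing the order of operations: for rotation number $1$ it reduces to the inductive case of \S\ref{par:genericgenusonetritwozeros} (possible since $2k+2 < p-n-1 = 2k+3$), and for rotation number $2$ it first performs the handle surgery \emph{inside a single block} $(\C,\,z^4dz)$ broken into two zeros of order $2$ — where the twin structure is completely explicit and no other saddle connections interfere — producing an intermediate surface in $\mathcal H_1(3,3;\,-6)$, and only afterwards glues in the remaining $2k-1$ copies of $(\C,dz)$, observing that this shifts the indices of the handle generators by the odd number $2k-1$ and hence yields even indices and rotation number $2$. To repair your argument you should either adopt this ordering, or explicitly exhibit an ordering of the blocks for which the twins at the angles $2\pi$ and $2\pi(r+1)$ demanded by \S\ref{sssec:tripresrot} end at regular points for both $r=1$ and $r=2$.
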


\begin{proof}
In this case we only need to show that the trivial representation can be realized in the connected component of $\mathcal H_1(2k+2,2k+2;\,-2^{2k-1}, -6)$ with rotation number $2$. In fact, since 
\begin{equation}
    n+2 < 2(n-1)+6-n-1=n+3
\end{equation} for every $n\ge0$, it follows that $m_1=m_2< p-n-1$ we can proceed as in \S\ref{par:genericgenusonetritwozeros} and hence we are already done if $r=1$. In order to realize the trivial representation as the period character of some exact differential with rotation number $2$ we proceed as follows. We use the saddle connection configuration description as in \cite{EMZ}. We need to find pairs $(a_i,b_i)$ such that $a_1 + b_1 = 6$ and $a_i + b_i = 2$ for all $2\leq i\leq n$, that satisfy $a_1 + \cdots + a_n = b_1 + \cdots + b_n = n+1$. We choose these pairs to be $(3,3),\, (1,1),\, \dots,\, (1,1)$. More precisely, consider the differential $z^4\,dz$ on $\C$, break the sole zero into two zeros of the same order and denote by $c_1$ the resulting saddle connection. Denote the resulting translation surface as $(\C,\,\omega)$. Next, we consider $n-1$ copies of $(\C,\,dz)$ and we consider on each of them a marked segment $c_i$, for $2\leq i\leq n$ so that $c_1,\dots,c_n$ are all parallel with the same length. The desired handle with trivial periods can be realized in $(\C,\omega)$. Let $P$ and $Q$ denote the zeros of order two of $(\C,\,\omega)$ and let $c_1$ be the saddle connection joining them. According to our Lemma \ref{lem:techlemtwins}, there are two edges $e_1$ and $e_2$ leaving from $P$ such that $s,\,e_1,\,e_2$ are pairwise twins. Similarly, there are two edges $e_3,\,e_4$ leaving from $Q$ such that $c_1,\,e_3,\,e_4$ are pairwise twins. Assume without loss of generality that $c_1,e_1,e_2$ and $s,e_3,e_4$ are ordered counterclockwise around $P$ and $Q$ respectively. Slit the edges $e_1$ and $e_4$ and denote the resulting sides $e_1^{\pm}$ and $e_4^{\pm}$ according to our convention. Then glue the edges $e_1^+$ with $e_4^-$ and similarly glue $e_1^-$ with $e_4^+$. The resulting space is a genus one differential, say $(Y,\xi)\in\mathcal H_1(3,3;\,-6)$ with trivial periods and rotation number $1$, see Figure \ref{fig:interstruc336}.

\smallskip

\begin{figure}[!ht]
    \centering
    \begin{tikzpicture}[scale=1.2, every node/.style={scale=1}]
    
    \definecolor{pallido}{RGB}{221,227,227}
   
    \pattern [pattern=north west lines, pattern color=pallido]
    (-6,-3)--(-6,3)--(6,3)--(6,-3)--(-6,-3);
    
    \draw[thick, orange](-1.5,0)--(0,0);
    \draw[thick, orange](0,0)--(1.5,0);
    
    \draw[thin, dashed] (-1.5,0)--(-6,3);
    \draw[thin, dashed] (-1.5,0)--(-6,0);
    \draw[thin, dashed] (-1.5,0)--(-6,-3);
    
    \draw[thin, dashed] (1.5,0)--(6,3);
    \draw[thin, dashed] (1.5,0)--(6,0);
    \draw[thin, dashed] (1.5,0)--(6,-3);
    
    \draw[thin, red] (-1.5,0)--(-4.5,1);
    \draw[thin, red, dashed] (-1.5,0)--(-4.5,-1);
    
    \draw[thin, red] (1.5,0)--(4.5,1);
    \draw[thin, red, dashed] (1.5,0)--(4.5,-1);
    
    \fill [black] (-1.5,0) circle (1.5pt);
    \fill [black] (1.5,0) circle (1.5pt);
    
    \fill [black] (-4.5,1) circle (1.5pt);
    \fill [black] (4.5,-1) circle (1.5pt);
    
    \fill [black] (4.5,1) circle (1.5pt);
    \fill [black] (-4.5,-1) circle (1.5pt);
    
    \node at (-0.74,0.25) {$c_1$};
    
    \node at (-3.5,0.35) {$e_1$};
    \node at (-3.5,-0.35) {$e_2$};
    
    \node at (3.5,0.35) {$e_4$};
    \node at (3.5,-0.35) {$e_3$};
    
    \node at (-1.5,-0.35) {$P$};
    \node at (1.5, -0.35) {$Q$};
    
    \draw[thin, blue] (0, 0) arc [start angle=0, end angle=90 , radius = 1.5];
    \draw[thin, blue, <-] (-1.5, 1.5) arc [start angle=90, end angle=161 , radius = 1.5];
    \draw[thin, blue, ->] (0, 0) arc [start angle=180, end angle=270 , radius = 1.5];
    \draw[thin, blue] (1.5, -1.5) arc [start angle=270, end angle=377 , radius = 1.5];
    
    \node at (-1.5, 1.75) {$\alpha$};

    \end{tikzpicture}
    \caption{Realizing the intermediate structure $(Y,\xi)\in\mathcal H_1(3,3;\,-6)$. The orange edge is the saddle connection joining $P$ and $Q$. The red edges $e_1,\,e_2$ are the twins of $c_1$ leaving from $P$ and the edges $e_3$ and $e_4$ are the twins of $c_1$ leaving from $Q$. The dashed edges are those \textit{not} involved in the construction. The curve $\alpha$ has index $1$ because it winds clockwise around $P$ once and counterclockwise around $Q$ twice. Therefore the resulting structure has rotation number one. If we had chosen $e_3$ in place of $e_4$, then its index would have been $0$ because $\alpha$ would wind around $Q$ only once. In this way we would obtain a structure in $\mathcal H_1(3,3;\,-6)$ with rotation number $3$.}
    \label{fig:interstruc336}
\end{figure}
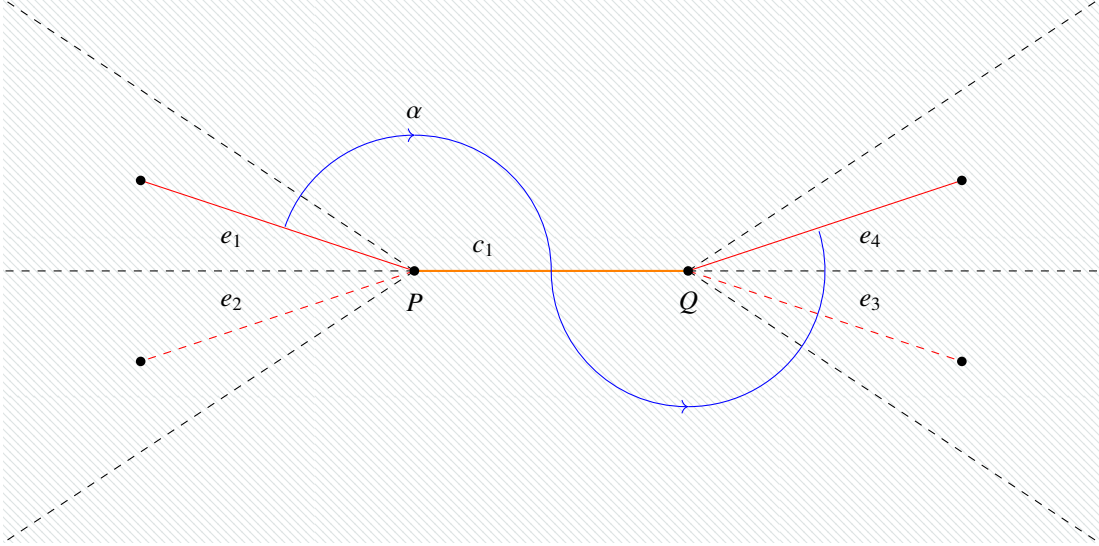

\noindent In fact, on $(\C,\,\omega)$ we consider an arc joining the midpoint of $e_1$ with the midpoint of $c_1$ that winds clockwise around $P$. Next, we prolong the arc above with another arc leaving from the midpoint of $c_1$ with the midpoint of $e_4$ that winds counterclockwise around $Q$. Notice that, since $c_1,e_3,e_4$ are ordered counterclockwise, this second arc winds twice around $Q$. In the resulting space $(Y,\xi)$ the union of these arc just described close up to a simple close curve, say $\alpha$ of index one, see Figure \ref{fig:interstruc336}. We define $\beta$ as any simple close loop around $e_1$. Notice that its index is equal to $3$.

\smallskip

\noindent In the resulting space $(Y,\xi)$, there is by construction a saddle connection, say $c_1$ with a small abuse of notation, joining the two zeros of order $3$ which is still parallel to $c_2,\dots,c_n$. We glue the remaining $n-1$ copies of $(\C,\,dz)$ by slitting the saddle connections $c_i$ as already explained in Paragraph \S\ref{par:allsameordertritwozer}. In this process, the indexes of $\alpha$ and $\beta$ are both altered by $\pm(n-1)$. Since $n$ is even it follows that $n-1$ is odd. In particular both $\textnormal{Ind}(\alpha)\pm(n-1)$ and $\textnormal{Ind}(\beta)\pm(n-1)$ are even. Therefore, the resulting translation surface lies in $\mathcal H_1(n+2,n+2;\,-2^{n-1},-6)$ and it has rotation number $2$ as desired.
\end{proof}

\noindent Notice that, if we had chosen $e_3$ in place of $e_4$ we would have got a translation surface in the same stratum with rotation number one. Finally,

\begin{lem}\label{lem:specstrat3}
Let $n=2k$ be an even positive integer. For every $k\ge1$, the trivial representation can be realized in both connected components of the stratum $\mathcal H_1(2k+2,2k+2;\,-2^{2k-2}, -4,-4)$.
\end{lem}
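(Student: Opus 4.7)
The plan is to proceed as in the proof of Lemma \ref{lem:specstrat2}, but with an asymmetric choice of Eskin--Masur--Zorich pairs so that both rotation numbers can be produced from the same basic configuration. Writing $n=2k$, the stratum $\mathcal H_1(n+2,n+2;-2^{n-2},-4,-4)$ has $\gcd(n+2,2,4)=2$ (for $n\ge 4$), so the two connected components are indexed by the rotation numbers $r\in\{1,2\}$.

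First I would realize the trivial representation on the genus-zero ancestor $\mathcal H_0(n+1,n+1;-2^{n-2},-4,-4)$ via the saddle-configuration recipe of \S\ref{ssec:genzero}, using the pairs $(a_1,b_1)=(3,1)$, $(a_2,b_2)=(1,3)$, and $(a_i,b_i)=(1,1)$ for $i=3,\dots,n$: these satisfy $a_i+b_i=p_i$ (with $p_1=p_2=4$ and $p_i=2$ for $i\ge 3$) and $\sum a_i=\sum b_i=n+2$. For each $i$ let $(\cp,\omega_i)$ be $(\C,z^{p_i-2}\,dz)$ with its unique zero broken into two zeros of orders $a_i-1$ and $b_i-1$ joined by a saddle connection $c_i$ parallel and isometric to a fixed vector $c\in\C^*$. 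Cyclically gluing $c_i^-$ to $c_{i+1}^+$ produces a genus-zero exact differential $(\cp,\omega)$ with two zeros $P_o,Q_o$ of order $n+1$; let $s$ denote the beginning saddle connection $c_1^+\sim c_n^-$.

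For each $r\in\{1,2\}$ I would then bubble a handle with trivial periods as in \S\ref{sssec:tripresrot}: slit the twin $c^P$ of $s$ at $P_o$ forming angle $2\pi$ on its right, slit the twin $c^Q$ of $s$ at $Q_o$ forming angle $2\pi(r+1)$ on its left, and identify $c^{P\pm}\sim c^{Q\mp}$. By construction the newborn handle generator $\beta$ has index $r$, while the companion generator $\alpha$ has index $n+2$; hence Lemma \ref{lem:addhandtrieven} places the result in $\mathcal H_1(n+2,n+2;-2^{n-2},-4,-4)$ with rotation number $\gcd(r,n+2,2,4)=\gcd(r,2)$, which equals $1$ if $r=1$ and $2$ if $r=2$. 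Thus both connected components of the target stratum are reached.

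The only non-routine point, which I expect to be the main obstacle, is to check that the prescribed twin slits $c^P$ and $c^Q$ do not coincide with any pre-existing saddle connection of $(\cp,\omega)$. Measuring angular positions around $P_o$ (respectively around $Q_o$) in units of $2\pi$ relative to $s$, the cumulative positions of the pre-existing saddle connections are $\{A_i\}_{i=0}^{n-1}=\{0,3,4,5,\dots,n+1\}$ at $P_o$ and $\{B_i\}_{i=0}^{n-1}=\{0,1,4,5,\dots,n+1\}$ at $Q_o$. One verifies directly that the required position $A=1$ lies in the gap of the first set, and that each of the positions $B=2,3$ lies in the gap of the second set; this is precisely why $(a_1,b_1)=(3,1)$ and $(a_2,b_2)=(1,3)$ are chosen asymmetrically, so as to open the needed angular gaps on the two sides simultaneously. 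Feasibility of the bubblings for both $r=1$ and $r=2$ then follows, completing the proof.
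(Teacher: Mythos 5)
Your proof is correct, but it follows a genuinely different route from the paper's. The paper splits the two components across two separate arguments: for rotation number $1$ it simply notes that $m_1=m_2=n+2<p-n-1=n+3$, so the generic reduction of \S\ref{par:genericgenusonetritwozeros} already applies; for rotation number $2$ it takes the \emph{symmetric} pairs $(2,2),(2,2),(1,1),\dots,(1,1)$, bubbles the trivial-period handle \emph{locally} inside the first copy of $(\C,z^2\,dz)$ so as to produce an intermediate surface in $\mathcal H_1(2,2;-4)$ with rotation number $2$, and only afterwards glues in the remaining slit planes (which, one checks, alter the indices of the handle generators by the even number $2+(n-2)=n$ and hence preserve the rotation number). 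You instead assemble the entire genus-zero surface first, with the \emph{asymmetric} pairs $(3,1),(1,3),(1,1),\dots$, and attach the handle globally by twin slits at $P_o$ and $Q_o$ in the style of \S\ref{par:allsameordertritwozer}; the asymmetry opens angular gaps of size two at both zeros ($\{1,2\}$ at $P_o$ and $\{2,3\}$ at $Q_o$), which is exactly the freedom needed to make $\mathrm{Ind}(\beta)$ of either parity, and since $\mathrm{Ind}(\alpha)=n+2$ and all pole orders are even, Lemma \ref{lem:addhandtrieven} yields both rotation numbers from a single configuration (the gap analysis is robust to the orientation conventions, since differences of elements taken from two gaps of size two realize both parities). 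Your version is more uniform, covering both components with one construction and bypassing the appeal to \S\ref{par:genericgenusonetritwozeros} for $r=1$; the paper's version isolates a small reusable intermediate piece in $\mathcal H_1(2,2;-4)$ and avoids the global angular bookkeeping. One caveat you share with the paper: for $k=1$ the stratum is $\mathcal H_1(4,4;-4,-4)$, whose gcd is $4$, so it in fact has three connected components; the rotation-number-$4$ component is reached by neither argument, though it is covered by the construction of \S\ref{par:allsameordertritwozer}.
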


\begin{proof}
This proof is quite similar to that of Lemma \ref{lem:specstrat2}. In fact, even in this case we only need to show that the trivial representation can be realized in the connected component of $\mathcal H_1(2k+2,2k+2;\,-2^{2k-2}, -4,-4)$ with rotation number $2$. In fact, since 
\begin{equation}
    n+2 < 2(n-2)+8-n-1=n+3
\end{equation} for every $n\ge0$, it follows that $m_1=m_2< p-n-1$ we can proceed as in \S\ref{par:genericgenusonetritwozeros} and hence we are already done if $r=1$. In order to realize the trivial representation as the period character of some exact differential with rotation number $2$ we proceed as follows. We use the saddle connection configuration description as in \cite{EMZ}. We need to find pairs $(a_i,b_i)$ such that $a_1 + b_1 = a_2 + b_2 = 4$ and $a_i + b_i = 2$ for all $3\leq i\leq n$, and $a_1 + \cdots + a_n = b_1 + \cdots + b_n = n+2$. We choose these pairs to be $(2,2),\, (2,2),\, (1,1),\, \dots,\, (1,1)$. More precisely, in this case consider two copies of $(\C,\,z^2dz)$ and break on each of them the sole zero into two zeros each of order one. In both case the sole zero of $z^2dz$ can be broken so that the resulting saddle connections, say $c_1$ and $c_2$ are parallel with the same length. Next we consider $n-2$ copies of $(\C,\,dz)$, each of which with a marked segment $c_i$, for $3\leq i\leq n$ so that $c_1,c_2,c_3,\dots,c_n$ are all parallel with the same length. The
desired handle with trivial periods can be bubbled inside the copy $(\C,\omega_1)$. We then proceed as in the proof of Lemma \ref{lem:specstrat2} to realize the desire structure in the stratum $\mathcal H_1(2k+2,2k+2;\,-2^{2k-2}, -4,-4)$ with rotation number $2$ and trivial periods. More precisely, we first realize an intermediate structure $(Y,\xi)\in\mathcal H_1(2,2;-4)$ with rotation number $2$ and then we glue the remaining translation surfaces by slitting the saddle connection as done in Paragraph \S\ref{par:allsameordertritwozer}.
\end{proof}

\begin{lem}\label{lem:specstrat1}
Let $n=2k+1$ be an odd positive integer. For every $k\ge1$, the trivial representation can be realized in both connected components of the stratum $\mathcal H_1(2k+2,2k+2;\,-2^{2k}, -4)$.
\end{lem}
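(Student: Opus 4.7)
The plan mimics Lemmas~\ref{lem:specstrat2} and~\ref{lem:specstrat3}. The key difference is that since $n-1=2k$ is \emph{even} here, the $(\C,dz)$-gluings that grow the pole count preserve the parity of the handle-generator indices. Consequently the rotation number of the final structure is inherited from the intermediate one, and I will use two different intermediate constructions, one for each connected component: the stratum $\mathcal H_1(2,2;-4)$ for rotation number $2$, and $\mathcal H_1(3,3;-2,-4)$ for rotation number $1$. The former lies among the exceptional cases in Theorem~\ref{thm:mainthm2} and admits the trivial representation only in the rotation-number-$2$ component. The latter is connected (since $\gcd(3,2,4)=1$) and admits the trivial representation since $m_i=3=p-n-1$ satisfies the Hurwitz-type inequality~\eqref{eq:triconstr} with equality.

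For rotation number $2$, build $(X,\omega)\in\mathcal H_1(2,2;-4)$ exactly as in the intermediate step of Lemma~\ref{lem:specstrat3}: starting from $(\C,z^2\,dz)\in\mathcal H_0(2;-4)$, break the zero into two zeros of order $1$ joined by a small saddle connection, then bubble a handle with trivial periods via~\S\ref{sssec:tripresrot} with parameter $r=0$. By Lemma~\ref{lem:addhandtrieven} the rotation number is $\gcd(0,2,2,4)=2$ and the handle-generator indices are $(\mathrm{Ind}(\alpha),\mathrm{Ind}(\beta))=(2,0)$. Then glue $n-1=2k$ copies of $(\C,dz)$ along parallel saddle connections as in~\S\ref{par:allsameordertritwozer}: each gluing adds one order-$2$ pole and raises both zero orders by $1$, so after $2k$ gluings the surface lies in $\mathcal H_1(2k+2,2k+2;-2^{2k},-4)$. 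Both handle indices change by $\pm 2k$ (even), so they remain even and the rotation number stays~$2$.

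For rotation number $1$, build a genus-zero intermediate $(\cp,\omega)\in\mathcal H_0(2,2;-2,-4)$ via the saddle-connection configuration of~\cite{EMZ}, with the forced pairs $(a_1,b_1)=(2,2)$ for the order-$4$ pole and $(a_2,b_2)=(1,1)$ for the order-$2$ pole. At each zero (of angle $6\pi$) the two existing saddle connections occupy the angular positions $\{0,4\pi\}$, leaving $2\pi$ as the unique free slot; bubble a handle with trivial periods using slits at these positions, which in the notation of~\S\ref{sssec:tripresrot} corresponds to $A=B=1$ and hence parameter $r=0$. By Lemma~\ref{lem:addhandtrieven}, the resulting $(Y,\xi)\in\mathcal H_1(3,3;-2,-4)$ has indices $(\mathrm{Ind}(\alpha),\mathrm{Ind}(\beta))=(3,0)$ and rotation number $\gcd(3,0,3,2,4)=1$. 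Then glue $n-2=2k-1$ copies of $(\C,dz)$ along parallel saddle connections; by the same computation as in Lemma~\ref{lem:specstrat2}, both handle indices change by $\pm(2k-1)$, so the first becomes even (odd $\pm$ odd) while the second becomes odd (even $\pm$ odd). Hence the final rotation number in $\mathcal H_1(2k+2,2k+2;-2^{2k},-4)$ is $\gcd(\mathrm{even},\mathrm{odd},2k+2,2,4)=1$.

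The only non-routine step is to verify the existence of valid slit positions for the handle bubbling in the intermediate construction for $\mathcal H_1(3,3;-2,-4)$; this reduces to the angle count just given, together with the observation that the EMZ pair choices are \emph{forced} by the degree constraints $a_1+b_1=4$, $a_2+b_2=2$, $a_1+a_2=b_1+b_2=3$. All remaining steps are direct computations via Lemmas~\ref{lem:addhandtrieven} and~\ref{lem:techlemtwins}, exactly along the lines of Lemmas~\ref{lem:specstrat2} and~\ref{lem:specstrat3}.
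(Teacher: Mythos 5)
Your proof is correct, and the two halves deserve different verdicts. The rotation-number-$2$ half is essentially the paper's own argument: the paper likewise observes that the construction of Lemma~\ref{lem:specstrat3} applies verbatim, with the handle with trivial periods bubbled inside the single copy of $(\C,z^2dz)$, so nothing more needs to be said there. The rotation-number-$1$ half, however, takes a genuinely different route. The paper works in one shot: it starts from $(\C,\omega)\in\mathcal H_0(1,1;-4)$, chain-glues $n-2$ of the auxiliary copies of $(\C,dz)$ cyclically through the saddle connection $c_1$, and crucially uses the \emph{last} copy $c_n$ to close up the handle through the twins $e_P,e_Q$; this makes the handle generator $\alpha$ have index exactly $\pm(n-2)$, which is odd since $n$ is odd, so the rotation number is forced to be $1$ by a single explicit curve. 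You instead absorb the order-$2$ pole into the intermediate step, land in the \emph{connected} stratum $\mathcal H_1(3,3;-2,-4)$ with handle indices $(3,0)$, and then propagate oddness of an index through the remaining $2k-1$ plane-gluings by a parity count, exactly in the style of Lemmas~\ref{lem:specstrat2} and~\ref{lem:specstrat3}. Both approaches work; the paper's buys a self-contained index computation from one curve, while yours buys uniformity with the neighbouring lemmas and exploits that the intermediate stratum is connected so no care is needed in choosing the slit positions there. The one point you should pin down (it is at the same level of detail as the paper's own "both indices are altered by $\pm(n-1)$" claims) is that the $2k-1$ additional planes must be chain-glued along the saddle connection $s_1$ that \emph{both} handle generators cross — $\beta$ crosses it by construction and $\alpha$ crosses it because it winds all the way around $Q_o$ — since gluing along the other twin saddle connection $s_2$ could leave $\mathrm{Ind}(\beta)=0$ unchanged and break the parity bookkeeping; with that choice made explicit, $\mathrm{Ind}(\beta)$ becomes odd and the conclusion $\gcd(\mathrm{Ind}(\alpha),\mathrm{Ind}(\beta),2k+2,2,4)=1$ follows.
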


\begin{proof}
This is the most delicate case to consider. Once again, we wish to realize the trivial representation by using the saddle connection configuration description as in \cite{EMZ}. We need to find $n$ pairs $(a_i,b_i)$ such that $a_1 + b_1 = 4$ and $a_i + b_i = 2$ for all $2\leq i\leq n$, that satisfy $a_1 + \cdots + a_n = b_1 + \cdots + b_n = n+1$. The only possible case is given by the pairs $(2,2),\,(1,1),\dots,(1,1)$. We shall need to distinguish two cases depending in which connected component of $\mathcal H_1(2k+2,2k+2;\,-2^{2k}, -4)$ we aim to realize the trivial representation.

\smallskip

\noindent Suppose we aim to realize a translation surface with trivial periods and rotation number two in that stratum. Then we can proceed exactly as in Lemma \ref{lem:specstrat3} above. We may observe that the only difference is that there is only one pair $(2,2)$ but this is irrelevant to the construction because the handle with trivial periods is realized in $(\C,\omega_1)$ -- in the above notation. Therefore this case is essentially subsumed in the proof of Lemma \ref{lem:specstrat3}.

\smallskip

\begin{figure}
    \centering
    \begin{tikzpicture}[scale=1, every node/.style={scale=0.8}]
    
    \definecolor{pallido}{RGB}{221,227,227}
   
    \pattern [pattern=north west lines, pattern color=pallido]
    (-6,-3)--(-6,3)--(6,3)--(6,-3)--(-6,-3);
    
    \draw[thick, orange](-1.5,0)--(0,0);
    \draw[thick, orange](0,0)--(1.5,0);
    
    \draw[thin, dashed] (-1.5,0)--(-1.5, 3);
    \draw[thin, dashed] (-1.5,0)--(-1.5,-3);
    
    \draw[thin, dashed] (1.5,0)--(1.5, 3);
    \draw[thin, dashed] (1.5,0)--(1.5,-3);
    
    \draw[thin, red] (-1.5,0)--(-4.5,0);
    
    \draw[thin, red] (1.5,0)--(4.5,0);
    
    \fill [black] (-1.5,0) circle (1.5pt);
    \fill [black] ( 1.5,0) circle (1.5pt);
    
    \fill [black] (-4.5,0) circle (1.5pt);
    
    \fill [black] (4.5,0) circle (1.5pt);
    
    \node at (-0.74, 0.25) {$c_1^-$};
    \node at (-0.74,-0.25) {$c_1^+$};
    
    \node at (-3.5,0.35) {$e_P^+$};
    \node at (-3.5,-0.35) {$e_P^-$};
    
    \node at (3.5,0.35) {$e_Q^+$};
    \node at (3.5,-0.35) {$e_Q^-$};
    
    \node at (-1.75,-0.35) {$P$};
    \node at (1.75, -0.35) {$Q$};
    
    \draw[thin, blue] (0, 0) arc [start angle=0, end angle=90 , radius = 1.5];
    \draw[thin, blue, <-] (-1.5, 1.5) arc [start angle=90, end angle=180 , radius = 1.5];
    \draw[thin, blue, ->] (0, 0) arc [start angle=180, end angle=270 , radius = 1.5];
    \draw[thin, blue] (1.5, -1.5) arc [start angle=270, end angle=360 , radius = 1.5];
    
    \node at (-1.75, 1.75) {$\alpha$};

    \foreach \x [evaluate=\x as \coord using  3*\x] in {-1.5,0.5} 
    {
    \pattern [pattern=north west lines, pattern color=pallido]
    (\coord, -3.5)--(\coord+4.5,-3.5)--(\coord+4.5,-6.5)--(\coord,-6.5)--(\coord,-3.5);
    
    \draw[thin, blue] (-2.25, -5) arc [start angle=0, end angle=180 , radius = 1];
    \draw[thin, blue, ->] (-2.25, -5) arc [start angle=0, end angle=-180 , radius = 1];
    
    \draw[thick, orange] (-3.25, -5)--(-1.25, -5);
    
    \draw[thick, red] (2.75, -5)--(4.75, -5);
    
    \fill [black] (\coord+1.25, -5) circle (1.5pt);
    \fill [black] (\coord+3.25, -5) circle (1.5pt);
    }
    
    \node at (-5.25,-5) {$\cdots$};
    \node at ( 0.75,-5) {$\cdots$};

    \node at (-3.25, -6.25) {$\alpha$};
    
    \node at (5.5, -2.75) {$(\mathbb C,\, \omega)$};
    
    \node at (-0.5, -6.25) {$(\mathbb C,\, \omega_i)$};
    
    \node at (5.5, -6.25) {$(\mathbb C,\, \omega_n)$};
    
    \node at (-1.5, -4.75) {$c_i^-$};
    \node at (-1.5, -5.25) {$c_i^+$};
    
    \node at (3.75, -4.75) {$c_n^-$};
    \node at (3.75, -5.25) {$c_n^+$};

    \end{tikzpicture}
    \caption{Realizing a translation surface with trivial periods and rotation number one in the stratum $\mathcal H_1(2k+2,2k+2;\,-2^{2k}, -4)$. The blue curve $\alpha$ closes up to a simple close curve in the resulting space and has index $\textnormal{Ind}(\alpha)=\pm(n-2)$.}
    \label{fig:specstrat1}
\end{figure}
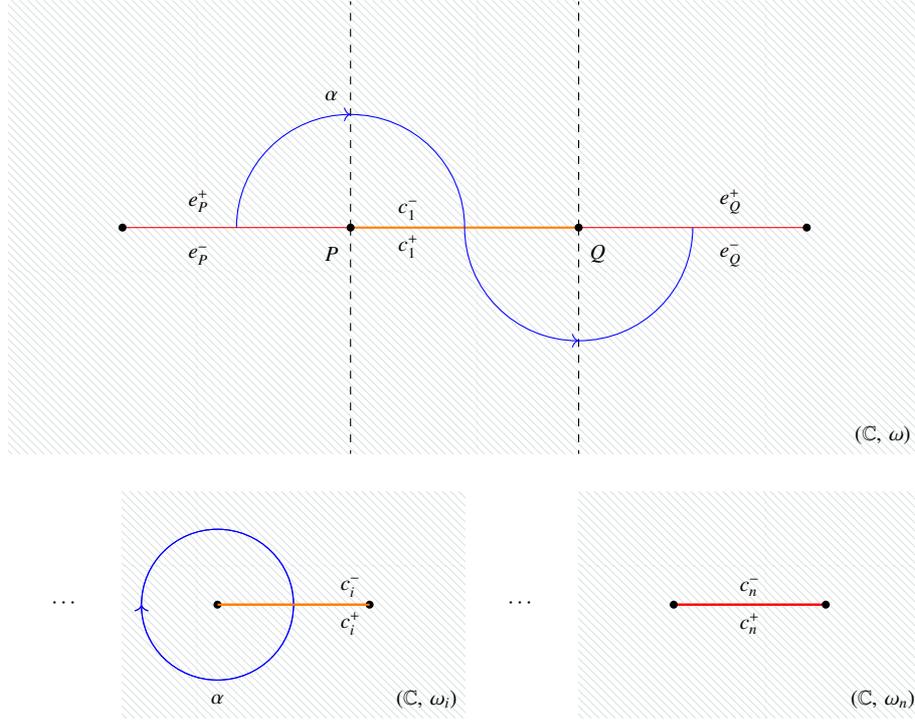

\noindent We just show how to realize a translation surface with trivial periods in $\mathcal H_1(2k+2,2k+2;\,-2^{2k}, -4)$ with rotation number one. Let $(\C,\omega)$ be a translation with trivial periods, two zeros, say $P$ and $Q$, each of order one and a single pole of order $4$. Denote by $c_1$ the saddle connection joining them. According to our technical Lemma \ref{lem:techlemtwins} there is a segment $e_P$ at $P$ such that $e_P$ and $c_1$ are twins. By the same lemma, there is a segment $e_Q$ at $Q$ such that $e_Q$ and $c_1$ are twins. Next pick $n-1$ copies of $(\C,\,dz)$ and denote them by $(\C,\omega_i)$ for $2\leq i\leq n$. On each copy of $(\C,\,dz)$ we consider an edge $c_i$ such that $c_i,\,c_j$ are parallel and with the same length. On $(\C,\,\omega)$ slit $c_1,\, e_P,$ and $e_Q$ and denote by $c_1^{\pm}$, $e_P^{\pm}$ and $e_Q^{\pm}$ the resulting edges according to our convention. Next, for every $2\leq i\leq n$, slit $(\C,\,\omega_i)$ along $c_i$ and denote the resulting sides as $c_i^{\pm}$. We glue the sides as follows: For $1\leq i \leq n-1$ identify $c_i^+$ with $c_{i+1}^-$ and identify $c_{n-1}^+$ with $c_1^-$. Then identify the edge $e_P^+$ with $e_Q^-$, next identify $e_P^-$ with $c_n^+$ and finally identify $e_Q^+$ with $c_n^-$. The resulting space is a translation surface in the stratum $\mathcal H_1(2k+2,2k+2;\,-2^{2k}, -4)$ as desired. It remains to show it has rotation number $1$. We can define $\alpha$ as the simple closed curve shown in Figure \ref{fig:specstrat1}; since $n$ is odd it follows that $\textnormal{Ind}(\alpha)=\pm(n-2)$ is also odd and hence the rotation number is necessarily one.
\end{proof}

\noindent We finally consider the exceptional cases. The following one corresponds to the case $k=0$ of Lemma \ref{lem:specstrat1} above. We first introduce the following definition.

\begin{defn}
Let $\mathcal H_1(m_1,\dots,m_k;\,-p_1,\dots,-p_n)$ be a stratum of genus-one differentials. We define the \textit{primitive component} of this stratum as the unique connected component with rotation number (equivalently torsion number) one. 
\end{defn}

\noindent Notice that, if a stratum is connected then it coincides with its primitive component. In Lemma \ref{lem:excaseone}, Lemma \ref{lem:excasetwo}, and Lemma \ref{lem:222-2-2-2} we shall consider strata with exactly two connected components; one of which is primitive and the other is \textit{non-primitive}.

\begin{lem}\label{lem:excaseone}
The trivial representation can only be realized in the connected component of $\mathcal H_1(2,2;\,-4)$ with rotation number $2$, that is the non-primitive component.
\end{lem}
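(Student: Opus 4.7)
My plan is to establish the lemma in two steps: first exhibit a realization of the trivial representation in the non-primitive component, and then show by an algebraic monodromy argument that every realization is forced to lie in this component.

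For the realization, I would begin with the genus-zero exact differential $(\cp, df)$, where $f(z) = z^3 - 3z$ is a cubic polynomial with two distinct critical points $\pm 1$ and critical values $\mp 2$. This pair lies in $\mathcal{H}_0(1,1;\,-4)$, has trivial periods, and contains a saddle connection joining its two simple zeros. I would then apply Lemma~\ref{lem:addhandtrieven} to bubble a handle with trivial periods along this saddle connection, obtaining a genus-one translation surface in $\mathcal{H}_1(2, 2; -4)$ with trivial period character. Since each endpoint is a zero of order $1$ (cone angle $4\pi$), at each endpoint there is only one twin of the saddle connection other than itself, so the angular parameter $r$ of the construction in Section~\S\ref{sssec:tripresrot} is uniquely determined; a direct inspection gives $r = 0$, so the rotation number is $\gcd(0,2,2,4) = 2$ and the resulting surface lies in the non-primitive component.

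The main content of the lemma is the non-realization in the primitive component, and I expect this to be the main obstacle; my plan is to use the algebraic characterization of the rotation number as a torsion number, as outlined in Section~\S\ref{mscc}. Let $(X,\omega)$ be any exact meromorphic differential in $\mathcal{H}_1(2, 2; -4)$ and write $\omega = df$ for a meromorphic function $f\colon X \to \cp$. By Riemann--Hurwitz $f$ has degree $3$ and is totally ramified at exactly three points: the two zeros $Z_1, Z_2$ (with ramification index $3$ over two distinct finite critical values) and the pole $P$ (with ramification index $3$ over $\infty$). After composing with a Möbius transformation I may assume the three branch values are $0, 1, \infty$, so the monodromy representation $\pi_1(\cp \setminus \{0,1,\infty\}) \to S_3$ sends the three standard generators to $3$-cycles whose product is the identity. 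A short enumeration shows the only such triples in $S_3$ are of the form $(\sigma, \sigma, \sigma)$ with $\sigma \in \{(1\,2\,3),(1\,3\,2)\}$, so the monodromy image is cyclic of order $3$ and $f$ is a Galois cover with $\operatorname{Gal}(f) \cong \Z/3\Z$.

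The Galois group then acts on $X$ by an order-$3$ automorphism $\tau$ whose fixed locus is exactly $\{Z_1, Z_2, P\}$. Choosing $P$ as the origin of the elliptic curve makes $\tau$ a group automorphism of order $3$, which forces $X$ to have complex multiplication by $\Z[\zeta_3]$ and identifies $\tau$ with multiplication by a primitive cube root of unity $\zeta_3$. The fixed-point set of multiplication by $\zeta_3$ on $X$ is the cyclic subgroup of order $3$ inside $X[3]$, which can be written as $\{0, T, -T\}$ for some nontrivial $3$-torsion point $T$. Hence $\{Z_1, Z_2\} = \{T, -T\}$ and $Z_1 + Z_2 - 2P \sim 0$ in the divisor class group of $X$, so the torsion number of $(X,\omega)$ is exactly $2$ and $(X,\omega)$ lies in the non-primitive component. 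Combined with the explicit realization above this establishes the lemma. The technical heart of the argument is thus the combination of the monodromy enumeration ruling out a non-abelian image, together with the classical identification of an order-$3$ automorphism of an elliptic curve fixing a point with multiplication by $\zeta_3$; once these are in place the impossibility of realization in the primitive component follows formally.
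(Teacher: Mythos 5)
Your proposal is correct, but the exclusion argument takes a considerably longer route than the paper's. Both proofs begin with the same observation: exactness of $\omega$ is equivalent to a degree-$3$ map $f\colon X\to\cp$ totally ramified at $Z_1$, $Z_2$ and $P$ (and, as you note, Riemann--Hurwitz forces there to be no further ramification). The paper stops essentially there: the three totally ramified fibers are linearly equivalent, giving $3Z_1\sim 3Z_2\sim 3P$, while $2Z_1+2Z_2-4P=\operatorname{div}(\omega)\sim 0$ since the canonical class of a genus-one curve is trivial; subtracting yields $Z_1+Z_2\sim 2P$, which is exactly the torsion-number-$2$ condition defining the non-primitive component. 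You instead classify the monodromy triple in $S_3$, deduce that $f$ is cyclic Galois, identify the deck transformation with complex multiplication by $\zeta_3$, and compute its fixed locus inside $X[3]$ to arrive at the same relation $Z_1+Z_2\sim 2P$. Each step of your chain is sound (the two finite critical values are indeed forced to be distinct, and the only triples of $3$-cycles in $S_3$ with identity product are the constant ones), and it buys strictly more information --- it pins down $X$ as the $j=0$ curve --- but none of that is needed: the linear equivalence of fibers, which is available the moment you write down $f$, closes the argument in two lines. Your realization step is also fine but is not something the lemma's proof must supply; realizability of the trivial representation in $\mathcal H_1(2,2;\,-4)$ already follows from the Hurwitz-type inequality together with \cite[Theorem B]{CFG} and the constructions of Section \S\ref{ssec:meroexdiffgenone}, so the content of the lemma is purely the exclusion from the primitive component.
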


\begin{proof}
Suppose that $(S,\omega)$ is a translation surface in $\mathcal H_1(2,2;\,-4)$ with trivial periods. Let $Z_1$, $Z_2$, and $P$ be the two zeros and the unique pole of $\omega$, respectively. The trivial period condition is equivalent to the existence of a triple cover of $S$ to $\mathbb{C}\mathbf{P}^1$ totally ramified at $Z_1$, $Z_2$, and $P$, where $\omega$ arises from differentiating the rational function associated to the cover. This implies the linear equivalence relation $3Z_1\sim 3Z_2\sim 3P$. On the other hand, by assumption $2Z_1 + 2Z_2\sim 4P$. It follows that 
$Z_1+Z_2 \sim 3Z_1+3Z_2-(2Z_1 + 2Z_2) \sim 2P$, which implies by definition that $(S,\omega)$ belongs to the non-primitive component of $\mathcal H_1(2,2;\,-4)$.  
\end{proof}

\begin{lem}\label{lem:excasetwo}
The trivial representation can only be realized in the connected component of $\mathcal H_1(3,3;\,-3,-3)$ with rotation number $3$, that is the non-primitive component.
\end{lem}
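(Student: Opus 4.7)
The plan is to mimic the argument of Lemma \ref{lem:excaseone}, exploiting the fact that an exact differential arises as the differential of a meromorphic function together with the triviality of the canonical bundle on a genus-one curve.

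Suppose $(S,\omega)\in\mathcal H_1(3,3;\,-3,-3)$ has trivial periods, and let $Z_1, Z_2, P_1, P_2$ denote its two zeros of order $3$ and two poles of order $3$, respectively. Since $\omega$ is exact, we may write $\omega = df$ for some meromorphic function $f\colon S\longrightarrow \cp$. Reading off the orders locally, $f$ has a pole of order $2$ at each $P_i$, so $f$ has degree $2+2=4$; likewise $\omega$ vanishing to order $3$ at $Z_i$ means $f-f(Z_i)$ vanishes to order $4$ there, so $f$ is totally ramified at each $Z_i$ with ramification index $4$. (As a consistency check, Riemann--Hurwitz gives $2(4)-2 = 0 = 2g-2$ with ramification contribution $2\cdot 3 + 2\cdot 1 = 8$.)

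Next I derive the two key linear equivalences on $S$. Because $f$ is totally ramified at $Z_i$, one has $f^{-1}(f(Z_i)) = \{Z_i\}$ with multiplicity $4$, so
\begin{equation*}
\textnormal{div}(f - f(Z_i)) \;=\; 4Z_i - 2P_1 - 2P_2, \qquad i=1,2,
\end{equation*}
yielding $4Z_1 \sim 2P_1 + 2P_2 \sim 4Z_2$. Summing these relations gives $4(Z_1+Z_2)\sim 4(P_1+P_2)$. On the other hand, the canonical divisor on a genus-one curve is trivial, so from $\textnormal{div}(\omega) = 3Z_1 + 3Z_2 - 3P_1 - 3P_2$ we obtain $3(Z_1+Z_2) \sim 3(P_1+P_2)$.

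Since $\gcd(3,4)=1$, combining the two equivalences produces $Z_1+Z_2 \sim P_1+P_2$. By the characterization of torsion numbers recalled just before Remark at the end of \S\ref{mscc}, this is exactly the condition that $(S,\omega)$ has torsion number $3$, equivalently rotation number $3$. Hence $(S,\omega)$ cannot lie in the primitive component, and must lie in the non-primitive one. The main (and essentially only) delicate point is the coprime combination $\gcd(3,4)=1$, which upgrades the a priori relation $4(Z_1+Z_2)\sim 4(P_1+P_2)$ (available because $\deg f = 4$) to the required $Z_1+Z_2\sim P_1+P_2$; the rest is bookkeeping of divisors under the exact-differential hypothesis. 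The existence of a realization in the non-primitive component is handled independently, since the stratum satisfies the Hurwitz-type inequality $m_j \le \sum p_i - n - 1$ with equality $3 = 3$.
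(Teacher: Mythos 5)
Your argument is correct and is essentially the paper's own proof: both derive the linear equivalences $4Z_i \sim 2P_1+2P_2$ (the paper phrases this via the quadruple cover totally ramified at $Z_1,Z_2$ with fiber $2P_1+2P_2$ over infinity, you via $\textnormal{div}(f-f(Z_i))$ for $f$ with $\omega=df$) and $3Z_1+3Z_2\sim 3P_1+3P_2$ from the trivial canonical class, then subtract to obtain $Z_1+Z_2\sim P_1+P_2$, i.e.\ torsion number $3$. The only blemish is the parenthetical Riemann--Hurwitz sanity check, whose arithmetic as written ($2(4)-2=0$) is garbled (the correct bookkeeping is $0 = 4\cdot(-2) + 8$), but this is immaterial to the proof.
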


\begin{proof}
Suppose that $(S,\omega)$ is a translation surface in $\mathcal H_1(3,3;\,-3,-3)$ with trivial periods. Let $Z_1$, $Z_2$, $P_1$, and $P_2$ be the zeros and poles of $\omega$, respectively. The trivial period condition is equivalent to the existence of a quadruple cover of $S$ to $\mathbb{C}\mathbf{P}^1$ totally ramified at $Z_1$, $Z_2$, and having $2P_1+2P_2$ as the fiber over infinity. This implies the linear equivalence relation $4Z_1\sim 4Z_2\sim 2P_1+2P_2$. On the other hand, by assumption $3Z_1 + 3Z_2\sim 3P_1+3P_2$. It follows that 
$Z_1+Z_2 \sim 4Z_1+4Z_2-(3Z_1 + 3Z_2) \sim P_1 + P_2$, which implies by definition that $(S,\omega)$ belongs to the non-primitive component of $\mathcal H_1(3,3;\,-3,-3)$.  
\end{proof}

\smallskip

\subsubsection{Strata with more than two zeros} We now consider the general case of $\mathcal H_1(m_1, \dots, m_k; -p_1, \dots, -p_n)$ with $k\ge3$. We begin with the following observation reported here as

\begin{lem}\label{lem:redlemtricase}
Let $\kappa=(m_1,\dots,m_k)$ and $\nu=(p_1,\dots,p_n)$ be tuples such that $m_1+\cdots+m_k=p_1+\cdots+p_n$. Suppose there is a pair $m_i,\,m_j$ such that $m_i+m_j<p-n$. Let $\overline\kappa=(m_1,\dots,\widehat m_i,\widehat m_j,\dots,m_i+m_j,\dots,m_n)$. Finally, let $r$ be a divisor of $\gcd(m_1,\dots,m_k,p_1,\dots,p_n)$. If the trivial representation can be realized in the connected component of $\mathcal H_1(\,\overline\kappa;-\nu)$ with rotation number $r$, then it can also be realized in the connected component of $\mathcal H_1(\kappa;-\nu)$ with rotation number $r$.
\end{lem}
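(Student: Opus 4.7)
The plan is to derive this lemma as a direct application of the zero-breaking surgery introduced in Section \ref{sec:zerobreak}. First I would fix a translation surface $(X,\omega)\in\mathcal H_1(\overline\kappa;-\nu)$ with trivial periods and rotation number $r$, whose existence is granted by hypothesis. Observe that the auxiliary condition $m_i+m_j<p-n$ is exactly what guarantees that the combined order $m_i+m_j$ satisfies the Hurwitz-type inequality \eqref{eq:triconstr} in $\mathcal H_1(\overline\kappa;-\nu)$, so there is no issue of vacuous hypothesis.

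Next I would apply the zero-breaking procedure of Section \S\ref{sec:zerobreak} to the zero of $\omega$ of order $m_i+m_j$, splitting it into two zeros of orders $m_i$ and $m_j$ respectively, joined by a short saddle connection of length $\varepsilon$. Since the surgery is entirely supported in a small metric neighbourhood of the zero and does not alter the surface outside, the resulting structure $(Y,\xi)$ lies in $\mathcal H_1(\kappa;-\nu)$, has the same absolute periods as $(X,\omega)$ (in particular still trivial), and differs from $(X,\omega)$ only by the local modification.

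The key point is that the rotation number is preserved by this surgery. This is exactly the content of the first Lemma stated in Section \S\ref{sec:zerobreak}, which asserts that if $r$ divides $\gcd$ of all the orders of zeros and poles appearing in the new stratum, then breaking a zero on a genus-one differential with rotation number $r$ produces a genus-one differential with rotation number $r$. By hypothesis, $r$ divides $\gcd(m_1,\dots,m_k,p_1,\dots,p_n)$, which is precisely the required divisibility condition for the stratum $\mathcal H_1(\kappa;-\nu)$. Hence $(Y,\xi)$ realizes the trivial representation in the connected component of $\mathcal H_1(\kappa;-\nu)$ with rotation number $r$, as desired.

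There is no real obstacle here: the lemma is essentially a formal consequence of the preservation properties of the zero-breaking surgery and the fact that $r$ divides the required gcd in both strata. The only point worth spelling out in the written proof is why the hypothesis $m_i+m_j<p-n$ is consistent with the assumed realizability of the trivial representation in $\mathcal H_1(\overline\kappa;-\nu)$, namely that it is precisely the Hurwitz-type inequality for the merged zero; once this is observed, the rest follows verbatim from Section \S\ref{sec:zerobreak}.
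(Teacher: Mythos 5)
Your proposal is correct and takes essentially the same approach as the paper: the paper treats this lemma as an immediate consequence of the zero-breaking surgery of Section \S\ref{sec:zerobreak}, pointing out right after the statement that one ``can do induction by splitting a zero of order $m_i+m_j$'' while the Hurwitz-type inequality \eqref{eq:triconstr} remains valid for the merged zero. Your invocation of the unnumbered rotation-number-preservation lemma in \S\ref{sec:zerobreak} and the observation that $r\mid\gcd(m_1,\dots,m_k,p_1,\dots,p_n)$ implies $r\mid m_i$ and $r\mid m_j$ (so $\gcd(r,m_i,m_j)=r$) is exactly the needed justification.
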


\noindent Explicitly, if there exist a pair $m_i,\, m_j$ such that $m_i + m_j < p - n$, then we can do induction by splitting a zero of order $m_i + m_j$ as the Hurwitz type inequality \eqref{eq:triconstr} still holds for the merged zero order. It remains to determine whether the trivial representation can be realized in the stratum $\mathcal H_1(\,\overline\kappa;-\nu)$ above. In the case there is another pair of zeros, say $m_h,\,m_l$ such that $m_h+m_l<p-n$, then Lemma \ref{lem:redlemtricase} applies once again. This process iterated finitely many times leads to one of the following situations:

\smallskip

\begin{itemize}
    \item[1.] We fall in a stratum with two zeros and hence we may apply the results of Subsection \S\ref{ssec:tricasetwozerostrata}, or
    \smallskip
    \item[2.] we end up with a stratum $\mathcal H_1(m_1, \dots, m_k; -\nu)$ in which $m_i + m_j \geq p - n$ for all pairs $i, j$.
\end{itemize}

\smallskip

\noindent In Paragraph \S\ref{par:exceptionalgenusonetritwozeroslowerr} we have shown the existence of two strata with two zeros that are exceptional in the sense the trivial representation cannot be realized in every connected component. These are $\mathcal H_1(2,2;-4)$ and $\mathcal H_1(3,3;-3,-3)$. We need to make sure that, by iterating the process just described, if we fall in a stratum with two zeros then this stratum cannot be exceptional. It is easy to check that, by merging zeros, one gets the access to $\mathcal H_1(2,2;-4)$ only from strata $\mathcal H_1(1,1,1,1;\,-4)$ and $\mathcal H_1(1,1,2;\,-4)$ which are connected. Similarly, by merging zeros one gets the access to $\mathcal H_1(3,3;\,-3,-3)$ from $\mathcal H_1(1,2,3;\,-3,-3)$ and $\mathcal H_1(1,1,2,2;\,-3,-3)$ which are also connected. Since we begin with strata $\mathcal H_1(m_1, \dots, m_k; -\nu)$ with more than one connected component we never fall in those exceptional cases.

\medskip

\noindent Suppose to be in the second case above. If $m_i + m_j \geq p - n$ for all pairs $i, j$, then summing over all pairs we obtain that 
\begin{equation}\label{eq:condone}
    (k-1)p \geq \frac{k(k-1)}{2}\,(p-n)
\end{equation} that is $(k-2)p \leq kn$.  Since 
\begin{equation}\label{eq:condtwo}
    (k-2)nr \leq (k-2)p
\end{equation} and $k \ge 3$, it necessarily follows that $r \leq 3$. 

\begin{rmk}
Let $l=\gcd(p_1,\dots,p_n)$ and notice that $n\,l\leq p$. It is easy to check that the equality holds if and only if $p_i=l$ for all $i=1,\dots,n$. The equation \eqref{eq:condtwo} above also holds if we replace $r$ with $l$, in fact 
\begin{equation}
    (k-2)nl\leq (k-2)p
\end{equation}
and, since $(k-2)p\leq kn$ and $k\ge3$, we get that $l\leq 3$. On the other hand,
\begin{equation}
    \gcd(m_1,\dots,m_k,p_1,\dots,p_n)\leq \gcd(p_1,\dots,p_n)
\end{equation} holds and, as a consequence, $\gcd(m_1,\dots,m_k,p_1,\dots,p_n)$ cannot exceed $l\leq3$. Therefore, for a given stratum $\mathcal H_1(\kappa;-\nu)$ that satisfies the condition $m_i + m_j \geq p - n$ for all pairs $i, j$, it necessarily follows $\gcd(\kappa, \nu)$ cannot be greater than $3$.
\end{rmk}

\smallskip

\noindent Assume in the first place $\gcd(p_1,\dots,p_n)=r=3$. In this case the inequalities \eqref{eq:condone} and \eqref{eq:condtwo} above readily implies $k\le 3$; on the other hand $k$ is supposed to be at least $3$ by assumption and hence the only possible case is $k=3$. For $n=3h$ and $h\ge1$, the collection of strata $\mathcal H_1(3h,3h,3h;\,-3^{3h})$ is the only one that satisfies all these conditions. In principle one might need to deal with strata of the form $\mathcal H_1(3h_1,3h_2,3h_3;-3^{h_1+h_2+h_3})$ with $h_1\le h_2\le h_3$. If we assume $3h_1+3h_2\ge p-n=2(h_1+h_2+h_3)$, we readily obtain $h_1+h_2\ge 2h_3$ and hence $h_1+h_2= 2h_3$. This implies $h_1=h_2=h_3$. We have the following

\begin{lem}\label{lem:triplen}
The trivial representation can be realized in each connected component of the stratum of differentials $\mathcal H_1(3h,3h,3h;\,-3^{3h})$.
\end{lem}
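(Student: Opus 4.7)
Since $\gcd(3h,3)=3$ the stratum $\mathcal{H}_1(3h,3h,3h;\,-3^{3h})$ has exactly two connected components, distinguished by rotation numbers $1$ and $3$. The plan is to realize the trivial representation in each component by starting from an exact genus--zero differential with an additional broken zero and then bubbling a handle with trivial periods, as in Sections \S\ref{ssec:genzero} and \S\ref{sssec:tripresrot}, choosing the angle parameter so that Lemma \ref{lem:addhandtrieven} yields the desired rotation number.

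More precisely, I would first realize the trivial representation as the period character of some $(\cp,\eta)\in\mathcal{H}_0(3h-1,3h-1,3h;\,-3^{3h})$. The Hurwitz type inequality \eqref{eq:triconstr} reads $3h\le 9h-3h-1=6h-1$, which holds for every $h\ge1$, so the construction of \S\ref{ssec:genzero} applies. By the last paragraph of Section \S\ref{ssec:genzero}, the differential $\eta$ can be arranged so that the two zeros $P_1,P_2$ of order $3h-1$ are joined by a saddle connection $s$, the three zeros of $\eta$ have pairwise distinct developing images, and consequently every twin of $s$ leaving from $P_1$ or $P_2$ avoids the remaining zero $P_3$.

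I would then invoke the handle-bubbling construction of \S\ref{sssec:tripresrot} at $P_1,P_2$ along $s$. Pick a non--negative integer $t$ and cut the twin $c_1$ of $s$ at $P_1$ forming angle $2\pi$ on the right, together with the twin $c_2$ of $s$ at $P_2$ forming angle $2\pi(t+1)$ on the left; regluing as prescribed produces an exact differential $(X,\omega)\in\mathcal{H}_1(3h,3h,3h;\,-3^{3h})$. Applying Lemma \ref{lem:addhandtrieven} with the present numerical data gives
\[
\mathrm{rot}(X,\omega)=\gcd\bigl(t,\,3h,3h,3h,\,3,\ldots,3\bigr)=\gcd(t,3).
\]
Taking $t=1$ therefore yields rotation number $1$, while taking $t=0$ yields rotation number $3$; in both cases the required twin exists because the total angle at $P_2$ is $2\pi\cdot 3h\ge 4\pi$, so the angular constraint $2\pi(t+1)\le 2\pi\cdot 3h$ is satisfied for every $h\ge1$.

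The only delicate point in the argument is the angular bookkeeping: one must check that for every $h\ge1$ there is an admissible choice of $t$ in each residue class modulo $3$, so that both rotation numbers are actually attained. This is immediate from the inequality $t+1\le 3h$, which gives $t\in\{0,1,\ldots,3h-1\}$ and therefore covers both $t\equiv 0$ and $t\not\equiv 0$ mod $3$; thus this obstacle is mild. Once this is settled, the two explicit choices $t=0$ and $t=1$ produce translation surfaces with poles in the two distinct connected components, completing the proof.
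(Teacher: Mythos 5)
Your proof is correct and follows essentially the same route as the paper: both realize the trivial representation on a genus-zero differential with two zeros of order $3h-1$ joined by a saddle connection (the paper via $\mathcal H_0(6h-2,3h;\,-3^{3h})$ followed by breaking the large zero, you directly in $\mathcal H_0(3h-1,3h-1,3h;\,-3^{3h})$), and then bubble a handle with trivial periods along twin slits whose angular offset, taken divisible by $3$ or not modulo $2\pi$, selects the rotation number. The only quibble is the angular bound: the admissible twins at $P_2$ force $t+1\le 3h-1$ rather than $t+1\le 3h$, but since you only need $t\in\{0,1\}$ and $3h-1\ge 2$ for all $h\ge1$, the conclusion is unaffected.
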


\begin{proof}
Let $n=3h\ge3$. We can first realize the trivial representation in the stratum $\mathcal H_0(2n-2, n; -3^n)$. This is possible because the Hurwitz type inequality \eqref{eq:triconstr} is satisfied. Then split the first zero into two nearby zeros each of order $n-1$. By construction there is a saddle connection, say $s$, joining the new-born zeros, say $P_1$ and $P_2$. Moreover, our Lemma \ref{lem:techlemtwins} applies and hence there are $n-1$ paths $c_1,\dots,c_{n-1}$ all leaving from $P_1$ such that $s$ and $c_i$ are twins. Similarly, there are $n-1$ paths $c_{n+1},\dots, c_{2n-1}$, all leaving from $P_2$ such that $s$ and $c_{n+i}$ are twins for $i=1,\dots,n-1$. We may suppose without loss of generality that both collections of paths $c_1,\dots,c_{n-1},s$, all based at $P_1$, and $s,c_{n+1},\dots,c_{2n-1}$, all based at $P_2$, are in positive cyclic order. For any $i=1,\dots,n-1$, slit $c_{i}$ and $c_{n+i}$ and denote $c_i^{\pm}$ and $c_{n+i}^{\pm}$ the resulting edges. Then glue $c_i^+$ with $c_{n+i}^-$ and glue $c_i^-$ with $c_{n+i}^+$. The resulting space is a genus one differential with rotation number $3$. Notice that the choice of the slit is crucial here because the angle difference divisible by $3$ modulo $2\pi$. A different choice of paths, so that the angle difference is no longer divisible by $3$ modulo $2\pi$ would have produced a genus one-differential with rotation number $1$.
\end{proof}

\noindent Since there are no other strata $\mathcal H_1(m_1,\dots,m_k;-p_1,\dots,-p_n)$ with $\gcd(m_1,\dots,m_k;p_1,\dots,p_n)=3$, we next consider the case of strata with $\gcd(m_1,\dots,m_k;p_1,\dots,p_n)=2$.

\medskip

\noindent We now assume $r=\gcd(m_1,\dots,m_k;p_1,\dots,p_n)=2$ and let $\mathcal H_1(m_1,\dots,m_k;\, -p_1,\dots,-p_n)$ be a stratum of genus one differentials. The key observation here is the following reduction process: If there is some pole of order $p_i > 2$ and if there is at most one zero, say $P_j$, of order $m_j \geq p-n-2$, then we can do induction from the connected component of the reduce stratum $\mathcal H_1( \dots, m_j-2, \dots ; \dots, -p_i+2, \dots )$ with the rotation number $1$ or $2$, by bubbling $2$ copies of $(\C,\,dz)$ along a ray from $P_j$ to the pole of order $p_i-2$. 

\noindent The reduced stratum now satisfies only of the following conditions:

\smallskip

\begin{itemize}
    \item[1.] In the reduced stratum there is at most one zero of order $m_h=m_j-2\ge p-n-4$. In this case we can reduce the stratum one more time.
    \smallskip
    \item[2.] There are now at least two zeros of orders $m_h,\,m_l\ge p-n-4$.
\end{itemize}

\smallskip

\noindent Suppose to be in the second case, \textit{i.e.} suppose there are two zeros of orders $m_1, m_2 \geq p-n-2$.  Then
\begin{equation}
    2p - 2n - 4\leq m_1+m_2 \leq p-2(k-2),
\end{equation}
\noindent where the second inequality holds because $m_i\ge2$ for all $i=1,\dots,k$. There is only one case we need to consider given by the stratum $\mathcal H_1(n, n, 2; -2, \dots, -2, -4)$ with $n$ even. The following holds

\begin{lem}\label{lem:h1222-2-4}
Let $n=2h$. The trivial representation can be realized in each connected component of the stratum $\mathcal H_1(2h,\,2h, 2; -2^{2h-1}, -4)$.
\end{lem}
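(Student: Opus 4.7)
The plan is to realize the trivial representation in $\mathcal H_1(2h,2h,2;-2^{2h-1},-4)$ with prescribed rotation number by combining the genus-zero construction reviewed in \S\ref{ssec:genzero} with the handle-bubbling-with-trivial-periods surgery of Lemma \ref{lem:addhandtrieven}. For the base step I would realize the trivial representation in $\mathcal H_0(2,2h-1,2h-1;-2^{2h-1},-4)$, which is permitted by \cite[Theorem B]{CFG} since the Hurwitz type inequality \eqref{eq:triconstr} requires each zero order to be at most $p-n-1=2h+1$, while our largest zero order is $2h-1$. Following the saddle-connection construction described in \S\ref{ssec:genzero} I would moreover arrange, without loss of generality, that the two zeros of order $2h-1$ are joined by a single saddle connection $s$.

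Applying the bubbling surgery of \S\ref{sssec:tripresrot} along $s$ then produces, by Lemma \ref{lem:addhandtrieven}, a genus-one exact differential in $\mathcal H_1(2h,2h,2;-2^{2h-1},-4)$ whose rotation number equals
\begin{equation*}
\gcd\bigl(r,\,2h,\,2h,\,2,\,\underbrace{2,\ldots,2}_{2h-1},\,4\bigr)=\gcd(r,2),
\end{equation*}
where $r\in\{0,1,\dots,2h-2\}$ is the angle parameter $2\pi(r+1)$ chosen at the second zero. Taking $r$ even (for instance $r=0$) produces rotation number $2$, and taking $r$ odd produces rotation number $1$. For every $h\ge 2$ the range of $r$ contains both parities and the lemma is proved in that regime.

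The main obstacle is the borderline case $h=1$, where the range of $r$ collapses to $\{0\}$ and the surgery above yields only rotation number $2$. For this case I would proceed via the richer base stratum $\mathcal H_0(1,1,1,1;-2,-4)$, which is permissible since $1\le p-n-1=3$: realize trivial periods there with a saddle connection between a chosen pair of order-one zeros, and bubble a handle along it via Lemma \ref{lem:addhandtrieven} to reach the intermediate stratum $\mathcal H_1(2,2,1,1;-2,-4)$ with trivial periods. This intermediate stratum is connected, so its automatic rotation number $1$ places no constraint on parities, and by isotoping a handle generator $\alpha$ once across one of the remaining order-one zeros its index changes by $\pm 1$ and can therefore be made odd. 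Finally, colliding the two remaining order-one zeros into a single zero of order $2$ (the inverse of the breaking-a-zero surgery of \S\ref{sec:zerobreak}) can be performed in a region disjoint from $\alpha$ and $\beta$, so the odd parity of $\mathrm{Ind}(\alpha)$ survives the specialisation. The resulting exact differential lies in $\mathcal H_1(2,2,2;-2,-4)$ with rotation number $\gcd(\mathrm{Ind}(\alpha),\mathrm{Ind}(\beta),2)=1$, which together with the $r=0$ construction for rotation number $2$ completes the case $h=1$.
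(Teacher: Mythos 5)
For $h\ge 2$ your construction is correct and is essentially the paper's argument in a slightly different dressing: the paper starts from $\mathcal H_0(2h,2h;-2^{2h-1},-4)$ and breaks one zero into zeros of orders $2h-1$ and $1$ joined by a saddle connection of length $\varepsilon$, which has the advantage that Lemma \ref{lem:techlemtwins} keeps all the twins used in the surgery of \S\ref{sssec:tripresrot} inside a small ball, so their free endpoints are automatically regular. With your base stratum $\mathcal H_0(2,2h-1,2h-1;-2^{2h-1},-4)$ you should add a word on why the twins $c_1,c_2$ of $s$ can be chosen with regular endpoints: the construction recalled in \S\ref{ssec:genzero} only rules out zeros in their interiors, not that some twin closes up into a second saddle connection between the same pair of zeros. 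Modulo that, the bookkeeping $\gcd(r,2)$ with $r\in\{0,\dots,2h-2\}$ is right and matches the paper's parameter $i\in\{1,\dots,2h-1\}$.

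The case $h=1$ is where the proposal genuinely breaks down, and it is exactly the case the paper must treat by two separate explicit constructions (Figures \ref{fig:h1222-2-41} and \ref{fig:h1222-2-42}). The step ``isotope $\alpha$ across an order-one zero to make $\textnormal{Ind}(\alpha)$ odd, then collide the two order-one zeros in a region disjoint from $\alpha$ and $\beta$'' is internally inconsistent. Any admissible merging region $U$ is a connected neighbourhood of a saddle connection joining $Z_3$ and $Z_4$, so it contains \emph{both} order-one zeros. If $\alpha_0$ and $\alpha_1$ are homologous handle generators both disjoint from $U$, the $2$-chain they cobound has locally constant multiplicity off $\alpha_0\cup\alpha_1$, hence assigns the same winding number to $Z_3$ and to $Z_4$; since every other singularity has even order, $\textnormal{Ind}(\alpha_0)\equiv\textnormal{Ind}(\alpha_1)\pmod 2$. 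Thus the representative whose index has been shifted by $\pm1$ (i.e.\ has crossed $Z_3$ but not $Z_4$) necessarily meets every candidate region $U$: you cannot both flip the parity and keep the curve away from the collision. The parity of the $U$-avoiding generators, and hence the rotation number of the merged surface, is an invariant of the specific surface you built, not a free choice, and nothing in your construction shows it is odd. (There is a secondary issue: for an exact differential all saddle connections between $Z_3$ and $Z_4$ share the same holonomy vector, so ``colliding'' them is not automatically a legitimate local surgery inverse to \S\ref{sec:zerobreak}.) To settle $h=1$ you need to exhibit explicit surfaces in each component of $\mathcal H_1(2,2,2;-2,-4)$, as the paper does.
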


\begin{proof}
We begin by realizing the trivial representation as the period character of genus $0-$differential in the stratum $\mathcal H_0(n,n;\,-2^{n-1},-4)$. Notice that this is possible because the Hurwitz type inequality \eqref{eq:triconstr} holds and the realization is guaranteed by \cite[Theorem B]{CFG}. Let $(Y,\xi)$ such a structure. Let $Q$ be a zero of order $n$ and let $\varepsilon>0$ such that the open ball $B_{4\varepsilon}(Q)$ does not contain the other zero. Break $Q$ into two zeros of order $n-1$ and $1$ as described in \S\ref{sec:zerobreak}. Since $n=2h>0$ both zeros have order at least one. Denote the newborn zeros as $P_1$ and $P_2$ respectively. Moreover, we can break $Q$ so that the resulting saddle connection, say $s$, joining $P_1$ and $P_2$ has length $\varepsilon$. According to our Lemma \ref{lem:techlemtwins}, there are $n-1$ edges, say $e_1,\dots,e_{n-1}$, leaving from $P_1$ so that $s,\,e_1,\dots,e_{n-1}$ are pairwise twins. By the same Lemma, there is an edge $e_n$ leaving from $P_2$ so that $s,\,e_n$ are twins. Orient $s$ from $P_1$ to $P_2$ and denote by $s^+$ and $s^-$ the right and left respectively. Let $e_i$ be any edge from $P_1$, slit the edges $e_i$ and $e_n$ and denote the resulting sides as $e_i^{\pm}$ and $e_n^{\pm}$ according to our convention. Then identify $e_i^+$ with $e_n^-$ and identify $e_i^-$ with $e_n^+$. The resulting space is a genus one differential in the stratum $\mathcal H_1(2h,\,2h, 2; -2^{2h-1}, -4)$. Suppose, without loss of generality that $e_1,\dots,e_{n-1}$ are ordered counterclockwise around $P_1$. If $i$ is odd, then it is easy to check that the resulting structure has rotation number $2$ whereas, if $i$ is even the resulting structure has rotation number one. This construction works as long as $n=2h\ge4$ and the case $n=2$, \textit{i.e.} $h=1$, needs a special treatment as follows. 

\smallskip

\begin{figure}[!ht]
    \centering
    \begin{tikzpicture}[scale=1, every node/.style={scale=0.9}]
    
    \definecolor{pallido}{RGB}{221,227,227}

    \foreach \x [evaluate=\x as \coord using  \x] in {0,8} 
    {
    \pattern [pattern=north west lines, pattern color=pallido]
    (\coord,6.5)--(\coord+7,6.5)--(\coord+7,0.5)--(\coord,0.5)--(\coord,6.5);
    
    \draw[thin, orange] (2,2.5)--(2,4.5);
    \draw[thin, red] (13,4.5)--(13,6.5);
    
    \fill [black] (2,2.5) circle (1.5pt); 
    \fill [black] (2,4.5) circle (1.5pt); 
    \fill [black] (13,4.5) circle (1.5pt); 
    
    \node at (\coord+6.5, 1) {$\mathbb C$};
    
    }
    
    \foreach \x [evaluate=\x as \coord using  \x] in {0,8} 
    {
    \pattern [pattern=north west lines, pattern color=pallido]
    (\coord,-6.5)--(\coord+7,-6.5)--(\coord+7,-0.5)--(\coord,-0.5)--(\coord,-6.5);
    
    \draw[thin, orange] (\coord+2,-2.5)--(\coord+2,-4.5);
    \draw[thin, red] (\coord+5, -2.5)--(\coord+5,-0.5);
    
    \fill [black] (\coord+2,-2.5) circle (1.5pt); 
    \fill [black] (\coord+2,-4.5) circle (1.5pt); 
    \fill [black] (\coord+5,-2.5) circle (1.5pt); 
    
    \node at (\coord+6.5, -6) {$\mathbb C$};
    
    }
    
    \draw[thin, blue] (1,-2.5)--(1,-4.5);
    \draw[thin, blue] (1, -2.5) arc [start angle=180, end angle=90 , radius = 1];
    \draw[thin, blue] (2, -1.5) arc [start angle=90, end angle=0 , radius = 1];
    \draw[thin, blue, ->] (3,-2.5)--(3,-4.5);
    \draw[thin, blue] (3, -4.5) arc [start angle=360, end angle=270 , radius = 1];
    \draw[thin, blue] (2, -5.5) arc [start angle=270, end angle=180 , radius = 1];
    
    \draw[thin, violet, ->] (2,-3.5)--(3.5, -3.5);
    \draw[thin, violet] (3.5, -3.5) arc [start angle=270, end angle=360 , radius = 0.5];
    \draw[thin , violet] (4,-3)--(4,-2);
    \draw[thin, violet] (4, -2) arc [start angle=180, end angle=90 , radius = 0.5];
    \draw[thin, violet] (4.5,-1.5)--(5, -1.5);
    \draw[thin, violet] (13,-1.5)--(13.5, -1.5);
    \draw[thin, violet] (13.5, -1.5) arc [start angle=90, end angle=0 , radius = 0.5];
    \draw[thin , violet] (14,-2)--(14,-5);
    \draw[thin, violet] (14, -5) arc [start angle=360, end angle=270 , radius = 0.5];
    \draw[thin, violet, ->] (13.5,-5.5)--(9.5, -5.5);
    \draw[thin, violet] (9.5, -5.5) arc [start angle=270, end angle=180 , radius = 0.5];
    \draw[thin , violet] (9,-5)--(9,-4);
    \draw[thin, violet] (9, -4) arc [start angle=180, end angle=90 , radius = 0.5];
    \draw[thin, violet] (9.5,-3.5)--(10,-3.5);
    \draw[thin, violet, <-] (3, 2.5) arc [start angle=0, end angle=360 , radius = 1];
    
    \node at (1.75, 4) {$e_1^-$};
    \node at (2.25, 4) {$e_1^+$};
    
    \node at (1.75, -4) {$e_3^-$};
    \node at (2.25, -4) {$e_3^+$};
    
    \node at (9.75, -4) {$e_4^-$};
    \node at (10.25, -4) {$e_4^+$};
    
    \node at (12.75, 5) {$r_2^-$};
    \node at (13.25, 5) {$r_2^+$};
    
    \node at (4.75, -2) {$r_3^-$};
    \node at (5.25, -2) {$r_3^+$};
    
    \node at (12.75, -2) {$r_4^-$};
    \node at (13.25, -2) {$r_4^+$};
    
    \node at (3.25, -4.5) {$\alpha$};
    \node at (8.75, -4.5) {$\beta$};

    \end{tikzpicture}
    \caption{Realizing a translation surface with trivial periods and rotation number one in the stratum $\mathcal H_1(2,2,2;\,-2,-4)$. The curve $\alpha$ has index one whereas the curve $\beta$ has index two.}
    \label{fig:h1222-2-41}
\end{figure}
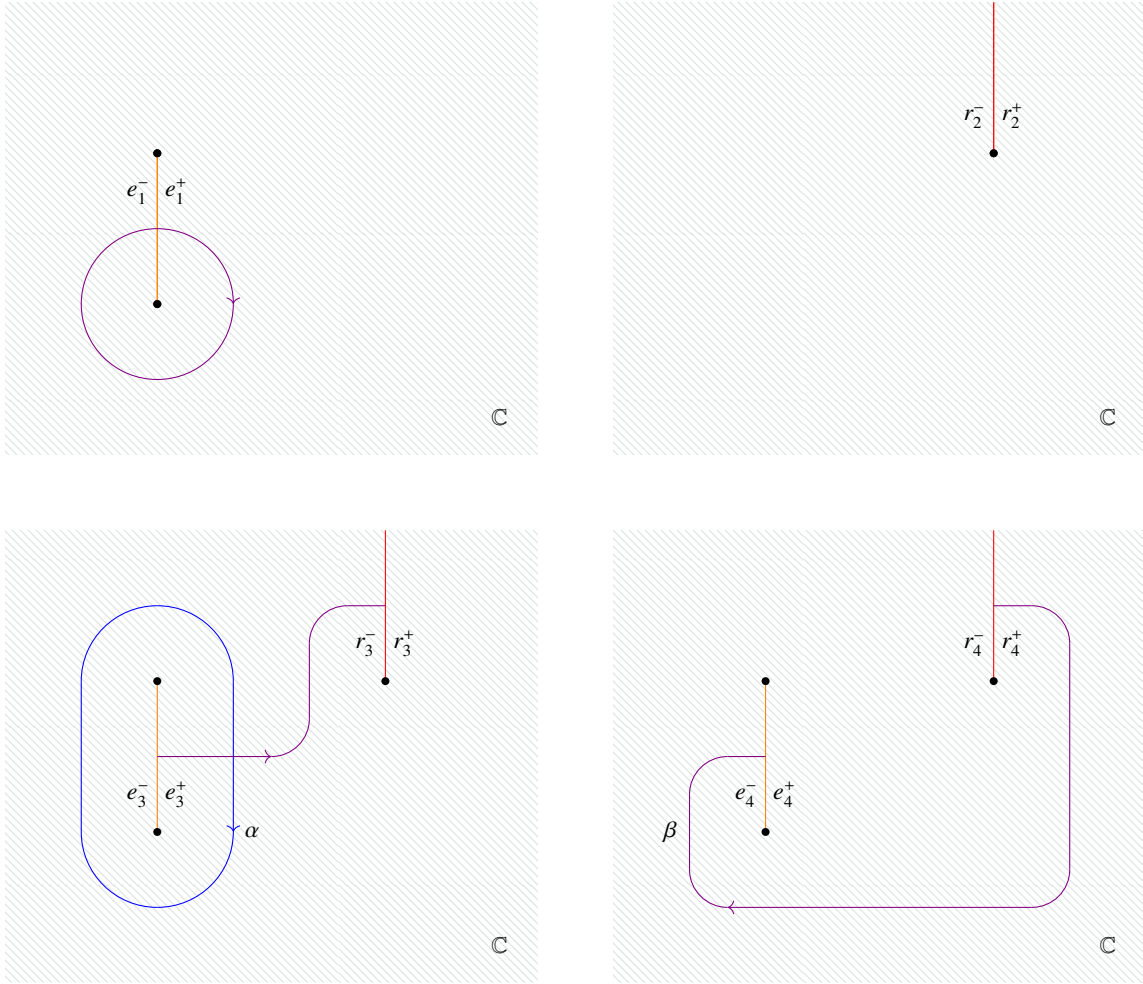

\noindent We provide explicit constructions for realizing the trivial representation in both connected components of the stratum $\mathcal H_1(2,2,2;\,-2,-4)$. Let us start by realizing a translation surface in this stratum with rotation number $1$. Let $P,\,Q,\,R$ be three not aligned points on the complex plane equipped with the standard differential $dz$. Let $e$ be the edge joining $P$ and $Q$ and let $r$ be a half-ray leaving from $R$ such that $e\,\cap\,r=\phi$. Consider four copies of $(\C,\,dz)$ and perform the following surgeries:
\begin{itemize}
    \item[1.] Slit the first copy along $e_1$ and denote the resulting sides as $e_1^{\pm}$,
    \smallskip
    \item[2.] slit the second copy along $r_2$ and denote the resulting sides as
    $r_2^{\pm}$, finally
    \smallskip
    \item[3.] slit the third and fourth copy along the edges $e_3$ and $e_4$ respectively and along the rays $r_3$ and $r_4$ respectively. Denote the resulting sides as $e_3^{\pm}$, $e_4^{\pm}$, $r_3^{\pm}$ and $r_4^{\pm}$;
\end{itemize}
where the signs are always taken according our convention. Identify the edge $e_1^+$ with $e_3^-$, then $e_3^+$ with $e_4^-$ and, finally, identify $e_4^+$ with $e_1^-$. Next identify the half-ray $r_2^+$ with $r_3^-$, then $r_3^+$ with $r_4^-$ and, finally, identify $r_4^+$ with $r_2^-$. The resulting surface is a genus one differential with trivial periods and rotation number $1$ as desired. In fact, a close loop around anyone of the $e_i$'s, say $\alpha$, has index one and this forces the rotation number to be one as well. See Figure \ref{fig:h1222-2-41}.

\smallskip

\noindent We finally realize the trivial representation in the connected component of $\mathcal H_1(2,2,2;\,-2,-4)$ with rotation number $2$. Once again let $P,\,Q,\,R$ be three not aligned points on the complex plane equipped with the standard differential $dz$. Let $a$ be the edge joining $P$ and $Q$, let $b$ be the edge joining $Q$ and $R$ and, finally, let $c$ the edge joining $P$ and $R$. As above, let $r$ be a half-ray leaving from $R$ such that it does not intersect none of the segments $a,b,c$. Consider four copies of $(\C,\,dz)$ and perform the following surgeries:
\begin{itemize}
    \item[1.] Slit the first copy along the edges $a_1,\,b_1$ and $c_1$ and the ray $r_1$. Denote the resulting sides as $a_1^{\pm}$, $b_1^{\pm}$, $c_1^{\pm}$ and $r_1^{\pm}$.
    \smallskip
    \item[2.] slit the second copy along $b_2$ and $r_2$ and denotes the resulting sides as $b_2^{\pm}$ and $r_2^{\pm}$, 
    \smallskip
    \item[3.] slit the third copy along the edges $c_3$ and $r_3$. Denote the resulting sides as $c_3^{\pm}$ and $r_3^{\pm}$, finally
    \smallskip
    \item[4.] slit the fourth copy along $a_4$ and denote the resulting sides as $a_4^{\pm}$, 
\end{itemize} 
where the signs are always taken according our convention. Identify the edge $a_1^+$ with $a_4^-$ and $a_1^-$ with $a_4^+$. Next, identify $b_1^+$ with $b_2^-$ and $b_1^-$ with $b_2^+$. Finally, identify $c_1^+$ with $c_3^-$ and $c_1^-$ with $c_3^+$. Then identify the half-ray $r_1^+$ with $r_2^-$, then $r_2^+$ with $r_3^-$ and finally $r_3^+$ with $r_1^-$. The resulting surface is a genus one differential with trivial periods and rotation number $2$ as desired. In fact, we can find two simple closed curves $\alpha,\,\beta$ such that $\textnormal{Ind}(\alpha)=\textnormal{Ind}(\beta)=2$. See Figure \ref{fig:h1222-2-42}.
\end{proof}

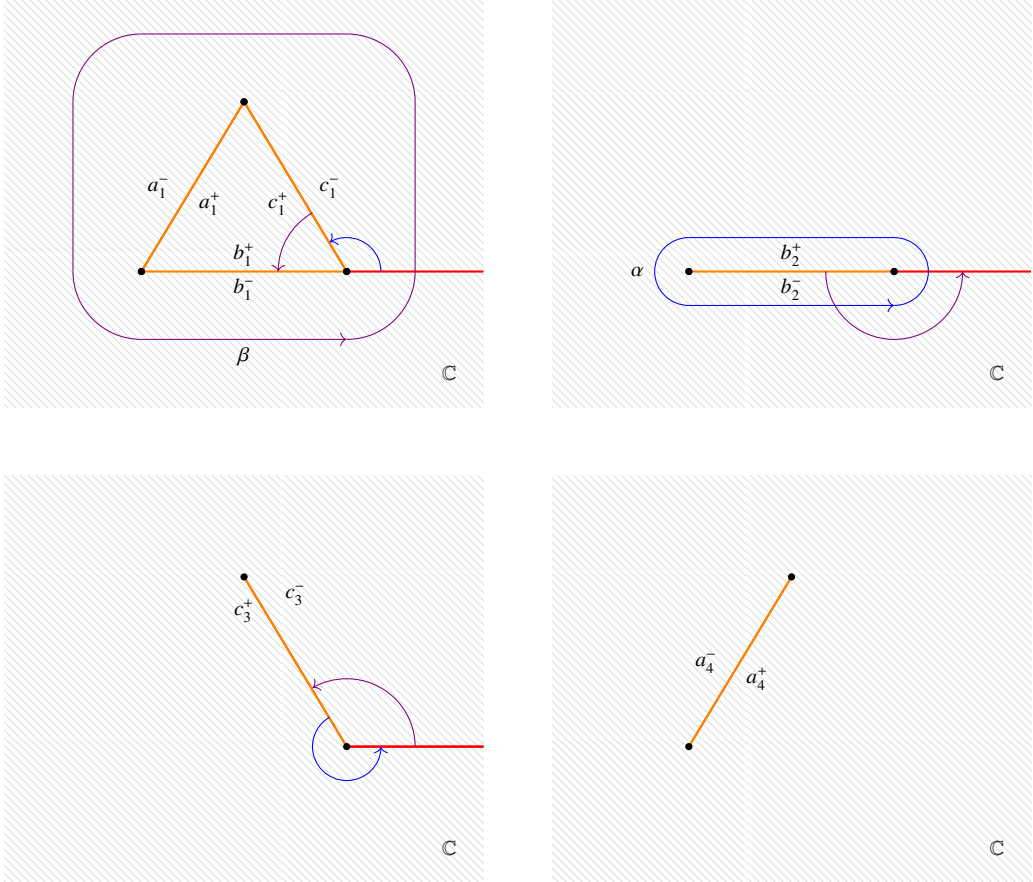
\begin{figure}[!ht]
    \centering
    \begin{tikzpicture}[scale=0.9, every node/.style={scale=0.8}]
    
    \definecolor{pallido}{RGB}{221,227,227}

    \foreach \x [evaluate=\x as \coord using  \x] in {0,8} 
    {
    \pattern [pattern=north west lines, pattern color=pallido]
    (\coord,6.5)--(\coord+7,6.5)--(\coord+7,0.5)--(\coord,0.5)--(\coord,6.5);
    
    \draw[thick, orange] (\coord+2, 2.5)--(\coord+5, 2.5);
    \draw[thick, orange] (2, 2.5)--(3.5, 5);
    \draw[thick, orange] (5, 2.5)--(3.5, 5);
    
    \draw[thick, red] (\coord+5, 2.5)--(\coord+7, 2.5);
    
    \fill [black] (\coord+2, 2.5) circle (1.5pt);
    \fill [black] (\coord+5, 2.5) circle (1.5pt);
    
    \fill [black] (3.5, 5) circle (1.5pt);

    \pattern [pattern=north west lines, pattern color=pallido]
    (\coord,-6.5)--(\coord+7,-6.5)--(\coord+7,-0.5)--(\coord,-0.5)--(\coord,-6.5);
    
    \draw[thick, orange] (5, -4.5)--(3.5, -2);
    \draw[thick, orange] (10, -4.5)--(11.5, -2);
    
    \draw[thick, red] (5, -4.5)--(7, -4.5);

    \node at (\coord+6.5,1) {$\mathbb C$};
    \node at (\coord+6.5,-6) {$\mathbb C$};
    
    }
    
    \draw[thin, blue] (10, 3)--(13, 3);
    \draw[thin, blue] (13, 3) arc [start angle=90, end angle=-90 , radius = 0.5];
    \draw[thin, blue, ->] (10, 2)--(13, 2);
    \draw[thin, blue] (10, 2) arc [start angle=270, end angle=90 , radius = 0.5];
    \draw[thin, blue, ->] (5.5, 2.5) arc [start angle=0, end angle=120 , radius = 0.5];
    \draw[thin, blue, <-] (5.5, -4.5) arc [start angle=360, end angle=120 , radius = 0.5];
    
    \draw[thin, violet] (6, 2.5)--(6, 5);
    \draw[thin, violet] (6,5) arc [start angle=0, end angle=90 , radius = 1];
    \draw[thin, violet] (5,6)--(2,6);
    \draw[thin, violet] (2,6) arc [start angle=90, end angle=180 , radius = 1];
    \draw[thin, violet] (1,5)--(1,2.5);
    \draw[thin, violet] (1,2.5) arc [start angle=180, end angle=270 , radius = 1];
    \draw[thin, violet,->] (2,1.5)--(5,1.5);
    \draw[thin, violet] (5,1.5) arc [start angle=270, end angle=360 , radius = 1];
    
    \draw[thin, violet, ->] (6, -4.5) arc [start angle=0, end angle=120 , radius = 1];
    
    \draw[thin, violet, <-] (4, 2.5) arc [start angle=180, end angle=120 , radius = 1];
    
    \draw[thin, violet, ->] (12, 2.5) arc [start angle=180, end angle=360 , radius = 1];
    
    \fill [black] ( 2, 2.5) circle (1.5pt);
    \fill [black] ( 5, 2.5) circle (1.5pt);
    \fill [black] (10, 2.5) circle (1.5pt);
    \fill [black] (13, 2.5) circle (1.5pt);
    
    \fill [black] (3.5, -2) circle (1.5pt);
    \fill [black] (11.5, -2) circle (1.5pt);
    \fill [black] (5, -4.5) circle (1.5pt);
    \fill [black] (10, -4.5) circle (1.5pt);
    
    \node at (3, 3.5) {$a_1^+$};
    \node at (2.25, 3.75) {$a_1^-$};
    
    \node at (11, -3.5) {$a_4^+$};
    \node at (10.25, -3.25) {$a_4^-$};
    
    \node at (3.5, 2.75) {$b_1^+$};
    \node at (3.5, 2.25) {$b_1^-$};
    
    \node at (11.5, 2.75) {$b_2^+$};
    \node at (11.5, 2.25) {$b_2^-$};
    
    \node at (4, 3.5) {$c_1^+$};
    \node at (4.75, 3.75) {$c_1^-$};
    
    \node at (3.5, -2.5) {$c_3^+$};
    \node at (4.25, -2.25) {$c_3^-$};
    
    \node at (9.25,2.5) {$\alpha$};
    \node at (3.5,1.25) {$\beta$};

    \end{tikzpicture}
    \caption{realizing a translation surface with trivial periods and rotation number one in the stratum $\mathcal H_1(2,2,2;\,-2,-4)$. It is not hard to check that both curves $\alpha$ and $\beta$ have index two.}
    \label{fig:h1222-2-42}
\end{figure}

\medskip

\noindent Finally, we still assume $\gcd(m_1,\dots,m_k,p_1,\dots,p_n)=2$ and we suppose that $p_i=2$ for all $i=1,\dots,n$, that is we consider the case of strata $\mathcal H_1(m_1,\dots,m_k;\,-2^n)$. In this particular case, since $m_i + m_j \geq n$ for all possible pairs $(i, j)$, we just have a few isolated cases to consider corresponding to $k=3,4$. In fact, equation \eqref{eq:condone} implies that $2n(k-2)\leq kn$; that is $k\leq 4$. Since $k\ge3$, it follows that $3\leq k \leq 4$.

\smallskip

\noindent If $k=4$, the only case to consider is the stratum $\mathcal H_1(m, m, m, m; -2^n)$ where $n = 2m$ and $m$ is even. In principle, one might need to deal with strata of the form $\mathcal H_1(2h_1,2h_2,2h_3,2h_4;\,-2^{h_1+h_2+h_3+h_4})$, where, without loss of generality, $h_1\leq h_2\leq h_3\leq h_4$. If we assume $2h_1+2h_2\geq n=h_1+h_2+h_3+h_4$, we readily obtain that the only possibility is that $h_1=h_2=h_3=h_4$.

\begin{lem}
Let $n=2m$. The trivial representation can be realized in each connected component of the stratum $\mathcal H_1(m,m,m,m; -2^n)$.
\end{lem}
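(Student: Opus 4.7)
The plan is to adapt the strategy of Lemma \ref{lem:triplen}: first realize the trivial representation on a genus-zero surface with four zeros of appropriate orders, then bubble a handle with trivial periods via the surgery of Section \S\ref{sssec:tripresrot} to move to genus one while increasing the orders of two of the zeros by one. Concretely, I would first realize the trivial representation in $\mathcal{H}_0(m, 2m-2, m;\,-2^n)$; the Hurwitz-type inequality \eqref{eq:triconstr} reduces here to $2m-2 \le 2n-n-1 = 2m-1$, so this stratum supports the trivial representation by \cite[Theorem B]{CFG} together with the construction of Section \S\ref{ssec:genzero}. I would then break the middle zero of order $2m-2$ into two nearby zeros $P_1, P_2$ of order $m-1$ each, joined by a short saddle connection $s$ (Section \S\ref{sec:zerobreak}), landing in $\mathcal{H}_0(m, m-1, m-1, m;\,-2^n)$ with trivial periods.

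Next, I would bubble a handle with trivial periods at $P_1, P_2$ along $s$ as in Section \S\ref{sssec:tripresrot}. By Lemma \ref{lem:addhandtrieven}, the resulting genus-one differential lies in $\mathcal{H}_1(m, m, m, m;\,-2^n)$ and has rotation number $\gcd(r, m, 2) = \gcd(r, 2)$, since $m$ is even; here $r\in\{0,1,\ldots,m-2\}$ is the free parameter determining the twin $c_2$ at $P_2$, because the total angle at $P_2$ is $2\pi m$ and the allowed angles $2\pi(r+1)$ must be strictly smaller. For $m \ge 4$, both parities of $r$ are available, so taking $r = 0$ gives rotation number $2$ and taking $r = 1$ gives rotation number $1$, realizing the trivial representation in each connected component.

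The main obstacle is the extremal base case $m = 2$, i.e., the stratum $\mathcal{H}_1(2,2,2,2;\,-2,-2,-2,-2)$. Here $P_1, P_2$ have order $m-1 = 1$ and total angle $4\pi$, so the only available twin is the one with $r = 0$, yielding only rotation number $2$ from the above construction. For rotation number $1$ in this case I would give a direct ad hoc construction modeled on Figure \ref{fig:h1222-2-41} and Lemma \ref{lem:h1222-2-4}: glue four copies of $(\mathbb{C}, dz)$ along a carefully chosen asymmetric pattern of slits so that the resulting translation surface belongs to the target stratum with trivial period character, and so that a pair of simple closed handle generators $\alpha, \beta$ can be exhibited with $\gcd\bigl(\textnormal{Ind}(\alpha), \textnormal{Ind}(\beta), 2\bigr) = 1$, forcing the rotation number to equal $1$.
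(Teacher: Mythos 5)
Your proposal is correct and follows essentially the same route as the paper: realize the trivial representation in the genus-zero stratum $\mathcal H_0(2m-2,m,m;\,-2^n)$, break the order-$(2m-2)$ zero into two zeros of order $m-1$ joined by a saddle connection, and bubble a handle with trivial periods whose twin-angle parameter controls the parity of the resulting rotation number, with both parities available precisely when $m\ge 4$. You also correctly isolate $\mathcal H_1(2,2,2,2;\,-2^4)$ with rotation number $1$ as the one case needing a separate explicit gluing of four copies of $(\C,dz)$; the only thing left unexhibited in your write-up is that concrete slit pattern, which is exactly the square construction the paper supplies.
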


\begin{proof}
In the first place we notice that if $m$ if odd then $\mathcal H_1(m,m,m,m;\,-2^n)$ is connected and hence the trivial representation is realisable in this stratum if and only if $m\leq n-1$, see \cite[Theorem B]{CFG}; that is satisfied for $m\geq1$ -- compare the Hurwitz type inequality \eqref{eq:triconstr} with the equation $4m=2n$. Therefore, in what follows we assume $m$ to be even. 

\smallskip

\noindent We first provide a generic construction that works as long as $m\geq4$ and subsequently we handle the case $m=2$ in a different way. Therefore suppose in the first place that $m\geq4$.  We realize the trivial representation as the period character of genus $0-$differential in the stratum $\mathcal H_0(2m-2,m,m;\,-2^n)$. Notice that this is possible because the Hurwitz type inequality \eqref{eq:triconstr} holds and the realization is guaranteed by \cite[Theorem B]{CFG}. Let $(Y,\xi)$ such a structure. Let $Q$ be a zero of order $2m-2$ and let $\varepsilon>0$ such that the open ball $B_{4\varepsilon}(Q)$ does not contain the other zero. Break $Q$ into two zeros of order $m-1$ and $m-1$ as described in \S\ref{sec:zerobreak}. Since $m>0$ is even, both zeros have order at least one. Denote the newborn zeros as $P_1$ and $P_2$ respectively. Moreover, we can break $Q$ so that the resulting saddle connection, say $s$, has length $\varepsilon$. Now we can proceed as in the first part of Lemma \ref{lem:h1222-2-4}. By Lemma \ref{lem:techlemtwins}, there are $m-1$ edges, say $e_1,\dots,e_{m-1}$, leaving from $P_1$ so that $s,\,e_1,\dots,e_{m-1}$ are pairwise twins and there are $m-1$ edges, say $e_m, \dots,e_{2m-2}$, leaving from $P_2$ so that $s,\,e_m,\dots,e_{2m-2}$ are twins. Orient $s$ from $P_1$ to $P_2$ and denote by $s^+$ and $s^-$ the right and left respectively. Let $e_i$ be any edge from $P_1$, slit the edges $e_i$ and $e_m$ and denote the resulting sides as $e_i^{\pm}$ and $e_m^{\pm}$ according to our convention. Then identify $e_i^+$ with $e_m^-$ and identify $e_i^-$ with $e_n^+$. The resulting space is a genus one differential in the stratum $\mathcal H_1(m,m,m,m;\,-2^n)$ with trivial periods as desired. Suppose, without loss of generality that $e_1,\dots,e_{m-1}$ are ordered counterclockwise around $P_1$. If $i$ is odd, then it is easy to check that the resulting structure has rotation number $2$ whereas, if $i$ is even the resulting structure has rotation number one.

\smallskip

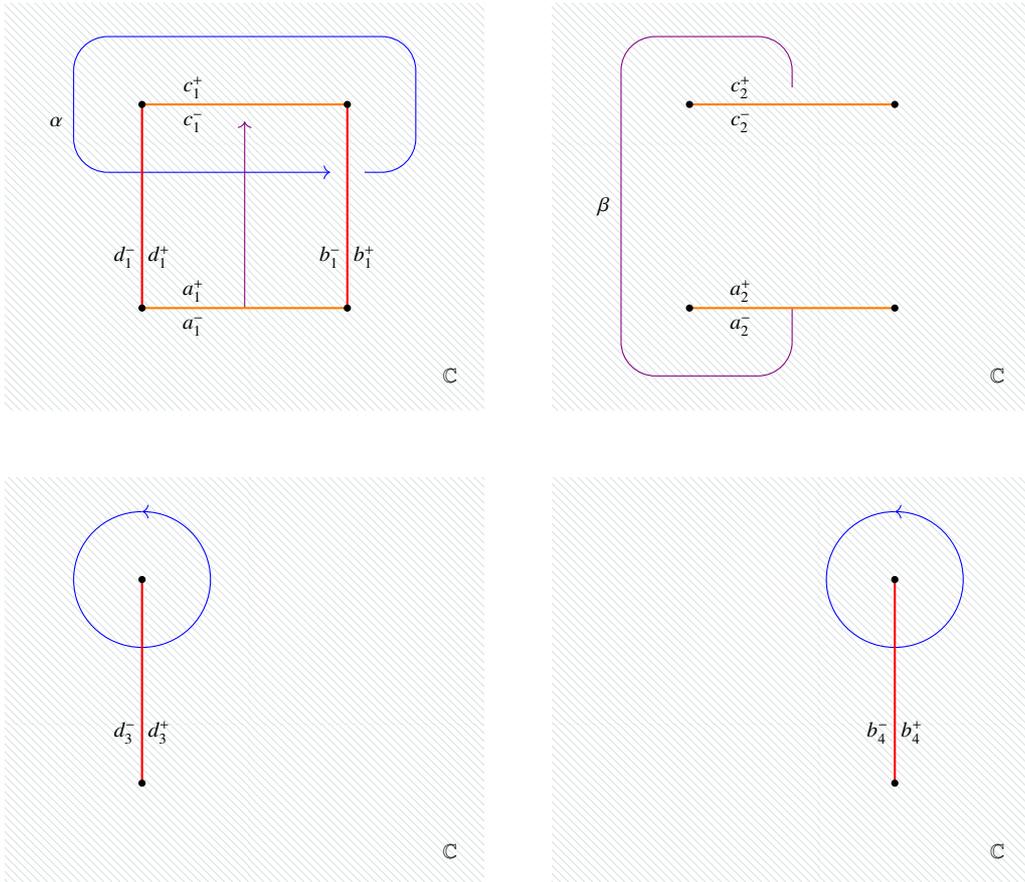
\begin{figure}[!ht]
    \centering
    \begin{tikzpicture}[scale=0.9, every node/.style={scale=0.8}]
    
    \definecolor{pallido}{RGB}{221,227,227}

    \foreach \x [evaluate=\x as \coord using  \x] in {0,8} 
    {
    \pattern [pattern=north west lines, pattern color=pallido]
    (\coord,6.5)--(\coord+7,6.5)--(\coord+7,0.5)--(\coord,0.5)--(\coord,6.5);

    \pattern [pattern=north west lines, pattern color=pallido]
    (\coord,-6.5)--(\coord+7,-6.5)--(\coord+7,-0.5)--(\coord,-0.5)--(\coord,-6.5);
    
    \node at (\coord+6.5, +1) {$\mathbb C$};
    \node at (\coord+6.5, -6) {$\mathbb C$};
    
    }
    
    \draw[blue, thin, ->] (1.5,4)--(4.75,4);
    \draw[blue, thin] (5.25,4)--(5.5,4);
    \draw[thin, blue] (5.5, 4) arc [start angle=270, end angle=360 , radius = 0.5];
    \draw[blue, thin] (6,4.5)--(6, 5.5);
    \draw[thin, blue] (6, 5.5) arc [start angle=0, end angle=90 , radius = 0.5];
    \draw[blue, thin] (1.5,6)--(5.5,6);
    \draw[thin, blue] (1.5, 6) arc [start angle=90, end angle=180 , radius = 0.5];
    \draw[blue, thin] (1,4.5)--(1, 5.5);
    \draw[thin, blue] (1, 4.5) arc [start angle=180, end angle=270 , radius = 0.5];
    
    \draw[thin, blue, ->] ( 2, -1) arc [start angle=90, end angle=450 , radius = 1];
    \draw[thin, blue, ->] (13, -1) arc [start angle=90, end angle=450 , radius = 1];
    
    \draw[thin, violet, ->] (3.5,2)--(3.5,4.75);
    \draw[thin, violet] (11.5, 5.25)--(11.5, 5.5);
    \draw[thin, violet] (11.5, 5.5) arc [start angle=0, end angle=90 , radius = 0.5];
    \draw[violet, thin] (11,6)--(9.5,6);
    \draw[thin, violet] (9.5, 6) arc [start angle=90, end angle=180 , radius = 0.5];
    \draw[violet, thin] (9,5.5)--(9, 1.5);
    \draw[thin, violet] (9, 1.5) arc [start angle=180, end angle=270 , radius = 0.5];
    \draw[thin, violet] (9.5, 1)--(11,1);
    \draw[thin, violet] (11, 1) arc [start angle=270, end angle=360 , radius = 0.5];
    \draw[violet, thin] (11.5, 1.5)--(11.5, 2);

    \draw[thick, orange] (2, 2)--(5,2);
    \draw[thick, orange] (2, 5)--(5,5);
    \draw[thick, orange] (10,2)--(13,2);
    \draw[thick, orange] (10,5)--(13,5);
    
    \draw[thick, red] (2,2)--(2,5);
    \draw[thick, red] (5,2)--(5,5);
    \draw[thick, red] (2,-2)--(2,-5);
    \draw[thick, red] (13,-2)--(13,-5);
    
    \fill [black] (2,2) circle (1.5pt);
    \fill [black] (5,2) circle (1.5pt);
    \fill [black] (2,5) circle (1.5pt);
    \fill [black] (5,5) circle (1.5pt);
    
    \fill [black] (10,2) circle (1.5pt);
    \fill [black] (13,2) circle (1.5pt);
    \fill [black] (10,5) circle (1.5pt);
    \fill [black] (13,5) circle (1.5pt);
    
    \fill [black] ( 2,-2) circle (1.5pt);
    \fill [black] ( 2,-5) circle (1.5pt);
    \fill [black] (13,-2) circle (1.5pt);
    \fill [black] (13,-5) circle (1.5pt);
    
    \node at (2.75, 1.75) {$a_1^-$};
    \node at (2.75, 2.25) {$a_1^+$};
    
    \node at (2.75, 4.75) {$c_1^-$};
    \node at (2.75, 5.25) {$c_1^+$};
    
    \node at (10.75, 1.75) {$a_2^-$};
    \node at (10.75, 2.25) {$a_2^+$};
    
    \node at (10.75, 4.75) {$c_2^-$};
    \node at (10.75, 5.25) {$c_2^+$};
    
    \node at (1.75, 2.75) {$d_1^-$};
    \node at (2.25, 2.75) {$d_1^+$};
    
    \node at (4.75, 2.75) {$b_1^-$};
    \node at (5.25, 2.75) {$b_1^+$};
    
    \node at (1.75, -4.25) {$d_3^-$};
    \node at (2.25, -4.25) {$d_3^+$};
    
    \node at (12.75, -4.25) {$b_4^-$};
    \node at (13.25, -4.25) {$b_4^+$};
    
    \node at (0.75, 4.75) {$\alpha$};
    \node at (8.75, 3.5) {$\beta$};
    
    \end{tikzpicture}
    \caption{Realizing a translation surface with trivial periods and rotation number one in the stratum $\mathcal H_1(2,2,2,2;\,-2^4)$. It is not hard to check that $\alpha$ has index three and $\beta$ has index one.}
    \label{fig:h12222-24}
\end{figure}

\noindent It remains to deal with the special case $m=2$. The argument above fails because, for $m=2$, we do not have any edge for realizing a translation surface with rotation number one in the stratum $\mathcal H_1(2,2,2,2;\,-2^4)$. However, the same proof still works for realizing a translation surface with trivial periods and rotation number two. 
We only realize the trivial representation in the connected component of $\mathcal H_1(2,2,2,2;\,-2^4)$ with rotation number one. Let $A,B,C$ and $D$ be the vertices of a square in $\C$. Let $a,b,c$ and $d$ denote the edges $AB,\,BC,\,CD$ and $DA$ respectively. Consider four copies of $(\C,\,dz)$ and perform the following slits.
\begin{itemize}
    \item[1.] In the first copy we slit all the edges $a_1,\,b_1,\,c_1$ and $d_1$,
    \smallskip
    \item[2.] then slit in the second copy the edges $a_2$ and $c_2$,
    \smallskip
    \item[3.] slit in the third copy the edge $d_3$ and, finally,
    \smallskip
    \item[4.] slit in the fourth copy the edge $b_4$.
\end{itemize}

\noindent Identify the edge $a_1^+$ with $a_2^-$ and $a_1^-$ with $a_2^+$. Similarly identify $c_1^+$ with $c_2^-$ and $c_1^-$ with $c_2^+$.  Then identify the edge $d_1^+$ with $d_3^-$ and $d_1^-$ with $d_3^+$. Finally identify $b_1^+$ with $b_4^-$ and $b_1^-$ with $b_4^+$. The resulting space is a genus one differential with trivial periods and it lies in the stratum $\mathcal H_1(2,2,2,2;\,-2^4)$. It remains to show that it has rotation number one as desired. This follows because we can find a pair of curves $\alpha,\,\beta$ so that $\textnormal{Ind}(\alpha)=3$ and $\textnormal{Ind}(\beta)=1$ as depicted in Figure \ref{fig:h12222-24}. 
\end{proof}

\noindent It remains the case $k=3$, that is the case of strata of the form $\mathcal H_1(m_1, m_2, m_3; -2^n)$ with $m_i$ all even. We can do induction from the two components of the lower stratum $\mathcal H_1(m_1 - 2, m_2 - 2, m_3; -2^{n-2})$, slit a saddle connection, say $s$ joining two zeros $P_1$ and $P_2$, and then add two copies of $(\C,\,dz)$ with the same slit.  If a closed curve crosses $s$, then after the operation its index alters by $2$, hence the rotation number remains unaltered. For this construction we need to check a few conditions.  First, we need 
\begin{equation}
    m_3 < 2(n-2) - (n-2) = n-2.
\end{equation} 

\noindent Notice that at least one of the three zeros satisfies this bound as soon as $n \geq 7$. In fact, in the case $m_i\geq n-2$ we get that $2n \geq 3(n-2)$ that implies $n\leq6$. We check the cases of small $n$ directly. Next, we need to find a saddle connection joining two specified zeros.  Indeed we can assume that there is a saddle connection joining any two of the three zeros to start with, and after adding more slit planes, clearly the statement of existing saddle connections between any two zeros still holds.  

\smallskip

\noindent So it reduces to check the induction base cases for strata $\mathcal H_1(2^3;\,-2^3)$, $\mathcal H_1(2,4,4,-2^5)$ and $\mathcal H_1(4,4,4,-2^6)$. Notice that, combinatorially, the stratum $\mathcal H_1(2,2,4;\,-2^4)$ is allowed but the trivial representation cannot be realized inside it because the Hurwitz type inequality \eqref{eq:triconstr} fails. We have the following Lemmas.

\begin{lem}
The trivial representation can be realized in each connected component of $\mathcal H_1(2,4,4,-2^5)$.
\end{lem}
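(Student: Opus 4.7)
Since $\gcd(2,4,4,2,\dots,2)=2$, by Boissy's classification the stratum $\mathcal H_1(2,4,4;\,-2^5)$ has exactly two connected components, distinguished by the rotation number $r_0\in\{1,2\}$. The Hurwitz type inequality \eqref{eq:triconstr} is satisfied with equality here, $\max\{m_j\}=4\le p-n-1=4$, so the trivial representation is realizable in this stratum by \cite[Theorem B]{CFG}, and what needs to be shown is that both components are achievable.

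The plan is to pass through an auxiliary genus zero construction and then bubble a handle with trivial periods carrying a tunable rotation parameter. First, I would realize the trivial representation as the period character of some $(X,\omega)\in\mathcal H_0(2,3,3;\,-2^5)$; this is permitted by \cite[Theorem B]{CFG}, since the Hurwitz inequality $\max\{2,3,3\}=3\le p-n-1=4$ is satisfied (strictly). Using the chain configuration recalled in Section \S\ref{ssec:genzero}, I would arrange the three zeros of $\omega$ in a linear order with the two zeros of order $3$ adjacent, so that a saddle connection $s$ joins them; denote these two endpoints by $P_1$ and $P_2$.

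Next, I would apply the handle-bubbling surgery of Section \S\ref{sssec:tripresrot} along $s$. Since each of $P_1$ and $P_2$ has total conical angle $8\pi$, the saddle connection $s$ admits three twins at each endpoint other than itself. I choose $c_1$ to be the twin of $s$ at $P_1$ forming angle $2\pi$ on its right, and $c_2$ to be the twin at $P_2$ forming angle $2\pi(r+1)$ on its left, where the parameter $r$ ranges over $\{0,1,2\}$ (the values $r+1=0$ and $r+1=4$ being excluded as they would force $c_2=s$). Slitting $c_1$ and $c_2$ and identifying $c_1^+$ with $c_2^-$ and $c_1^-$ with $c_2^+$ produces a translation surface $(Y_r,\xi_r)\in\mathcal H_1(2,4,4;\,-2^5)$ with trivial periods.

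By Lemma \ref{lem:addhandtrieven}, the rotation number of $(Y_r,\xi_r)$ equals $\gcd(r,\,2,4,4,\,2,2,2,2,2)=\gcd(r,2)$; hence $r=1$ realizes the primitive component while $r=2$ (or $r=0$) realizes the non-primitive one. The construction has no serious obstacle: the only mildly delicate point is the choice of ordering in the genus zero chain so that the two zeros of order $3$ are adjacent, which the flexibility of the construction in Section \S\ref{ssec:genzero} readily accommodates.
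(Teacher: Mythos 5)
Your proof is correct in its conclusions but takes a genuinely different route from the paper. The paper gives an explicit slit-and-glue construction from five copies of $(\C,dz)$ and a triangle configuration $\{A,B,C\}$, which implicitly passes through the intermediate genus-zero differential $\mathcal{H}_0(1,3,4;-2^5)$ before the final "handle" identification $a_1\leftrightarrow a_3$ or $a_1\leftrightarrow a_4$; the rotation number is then read off directly from the index of an explicit curve $\beta$. You instead pass through $\mathcal{H}_0(2,3,3;-2^5)$ and invoke Lemma~\ref{lem:addhandtrieven} as a black box, which is cleaner and more systematic: the $\gcd$ formula immediately yields rotation number $\gcd(r,2)$, and the three available twins at the order-3 zero $P_2$ give you $r\in\{0,1,2\}$, hitting both components. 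Your rotation-number computation and the stratum bookkeeping (genus-zero degree $2+3+3=8=10-2$, Hurwitz bound $3\le 4$) are all correct.

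The one step that deserves more care is the assertion that the genus-zero realization of $\mathcal{H}_0(2,3,3;-2^5)$ can be arranged so the two order-3 zeros are joined by a saddle connection. The paper's construction in \S\ref{ssec:genzero} is stated to produce saddle connections between \emph{consecutive} zeros, but that construction builds spheres with a small number of zeros at intermediate stages and then breaks a zero at the end; if one tried to reach $\mathcal{H}_0(2,3,3;-2^5)$ from a two-zero sphere, the candidates $\mathcal{H}_0(2,6;-2^5)$ and $\mathcal{H}_0(3,5;-2^5)$ violate the Hurwitz bound, so the only two-zero intermediary allowed is $\mathcal{H}_0(4,4;-2^5)$, from which you cannot obtain $\{2,3,3\}$ by breaking a single zero. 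So the realization of $\mathcal{H}_0(2,3,3;-2^5)$ with the required saddle connection between the two order-3 zeros needs its own small argument rather than a direct appeal to \S\ref{ssec:genzero}. This is addressable (for instance, by an explicit slit configuration tailored to $\{2,3,3\}$, analogous to the paper's $\{1,3,4\}$ construction), but you cannot lean on the "chain" flexibility quite as freely as claimed. The rest of the argument is sound.
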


\begin{proof}
\noindent In this case provide a unique construction and, as we shall see, the realization of the rotation number only depends on the choice of certain slits. The idea is to get the access to such a connected component from the stratum of genus zero differentials $\mathcal H_0(1,3,4,-2^5)$ as follows.
Let $A,\,B,\,C\in \C$ be distinct and not aligned points. Notice that the convex hull of $\{A,B,C\}$ is a non-degenerate triangle. Let $a,b,c$ denote the edges $AB$, $BC$ and $CA$. Consider six copies of $(\C,\,dz)$ and slit them as follows:
\begin{itemize}
    \item[1.] slit the first copy along the edge $b_1$,
    \smallskip
    \item[2.] slit the second copy along the edges $b_2$ and $c_2$, then
    \smallskip
    \item[3.] slit the remaining copies along $c_3$, $c_4$ and $c_5$ respectively;
    \smallskip
    \item[4.] finally slit the first copy along $a_1$ and either
    \smallskip
    \begin{itemize}
        \item[i.] slit the third copy along $a_3$ if we aim to realize a translation surface with rotation number one, otherwise
        \smallskip
        \item[ii.] slit the fourth copy along $a_4$ if we aim to realize a translation surface with rotation number two.
    \end{itemize}
\end{itemize}

\begin{figure}[!ht]
    \centering
    \begin{tikzpicture}[scale=1, every node/.style={scale=0.8}]
    
    \definecolor{pallido}{RGB}{221,227,227}
   
    \foreach \x [evaluate=\x as \coord using  \x] in {0,8} 
    {
    \pattern [pattern=north west lines, pattern color=pallido]
    (\coord,6.5)--(\coord+7,6.5)--(\coord+7,0.5)--(\coord,0.5)--(\coord,6.5);
    
    \draw[thin, red] (2, 5)--(5, 5);
    \draw[thin, orange] (10, 5)--(11.5, 2.5);
    \draw[thin, orange] (\coord+5, 5)--(\coord+3.5, 2.5);
    
    \fill [black] (\coord+2,5) circle (1.5pt);
    \fill [black] (\coord+5,5) circle (1.5pt);
    \fill [black] (\coord+3.5, 2.5) circle (1.5pt);
    
    \pattern [pattern=north west lines, pattern color=pallido]
    (\coord,-6.5)--(\coord+7,-6.5)--(\coord+7,-0.5)--(\coord,-0.5)--(\coord,-6.5);
    
    \draw[thin, red] (\coord+2, -2)--(\coord+5, -2);
    \draw[thin, orange] (\coord+2, -2)--(\coord+3.5, -4.5);
    
    \fill [black] (\coord+2, -2) circle (1.5pt);
    \fill [black] (\coord+5, -2) circle (1.5pt);
    \fill [black] (\coord+3.5, -4.5) circle (1.5pt);
    
    \node at (\coord+6.5, +1) {$\mathbb C$};
    \node at (\coord+6.5, -6) {$\mathbb C$};
    
    }
    
    \foreach \x [evaluate=\x as \coord using  \x] in {4} 
    {
    \pattern [pattern=north west lines, pattern color=pallido]
    (\coord,-13.5)--(\coord+7,-13.5)--(\coord+7,-7.5)--(\coord,-7.5)--(\coord,-13.5);
    
    \draw[thin, orange] (\coord+2, -9)--(\coord+3.5, -11.5);
    
    \fill [black] (\coord+2, -9) circle (1.5pt);
    \fill [black] (\coord+3.5, -11.5) circle (1.5pt);
    
    \node at (\coord+6.5, -13) {$\mathbb C$};
    
    }
    
    \draw[thin, blue] (2,5.5)--(5,5.5);
    \draw[thin, blue] (2,5.5) arc [start angle=90, end angle=270, radius = 0.5];
    \draw[thin, blue, <-] (2,4.5)--(4.5,4.5);
    \draw[thin, blue] (5,5.5) arc [start angle=90, end angle=-90, radius = 0.5];
    \draw[thin, blue] (4.825,4.5)--(5,4.5);
    
    \draw[thin, blue, <-] (13.5,5) arc [start angle=0, end angle=235, radius = 0.5];
    \draw[thin, blue] (13.5,5) arc [start angle=0, end angle=-115, radius = 0.5];
    
    \draw[thin, violet] (12.125, 3.25)--(13.5, 3.25);
    \draw[thin, violet] (13.5, 3.25) arc [start angle=270, end angle=360, radius = 0.5];
    \draw[thin, violet, ->] (14, 3.75)--(14, 5.5);
    \draw[thin, violet] (14, 5.5) arc [start angle=0, end angle=90, radius = 0.5];
    \draw[thin, violet] (13.5, 6)--(11, 6);
    \draw[thin, violet] (11, 6) arc [start angle=90, end angle=180, radius = 0.5];
    \draw[thin, violet] (10.5, 5.5)--(10.5, 4.35);
    
    \draw[thin, violet] (2.5, -3)--(2.5, -4.5);
    \draw[thin, violet] (2.5, -4.5) arc [start angle=180, end angle=270, radius = 1];
    \draw[thin, violet, dashed] (3.5, -5.5) arc [start angle=270, end angle=480, radius = 1];
    \draw[thin, violet] (3.5, -5.5)--(5, -5.5);
    \draw[thin, violet] (5, -5.5) arc [start angle=270, end angle=360, radius = 1];
    \draw[thin, violet] (6, -4.5)--(6, -2);
    \draw[thin, violet] (6,-2) arc [start angle=0, end angle=90, radius = 1];
    \draw[thin, violet] (5, -1)--(4,-1);
    \draw[thin, violet] (4, -1) arc [start angle=90, end angle=177.5, radius = 1];
    
    \draw[thin, violet, ->] (3, 5) arc [start angle=180, end angle=237.5, radius = 2];
    
    \draw[thin, violet, dashed] (10.5, -4.5) arc [start angle=180, end angle=120, radius = 1];
    \draw[thin, violet, dashed] (10.5, -4.5) arc [start angle=180, end angle=270, radius = 1];
    \draw[thin, violet, dashed] (11.5, -5.5)--(13, -5.5);
    \draw[thin, violet, dashed] (13, -5.5) arc [start angle=270, end angle=360, radius = 1];
    \draw[thin, violet, dashed] (14, -4.5)--(14, -2);
    \draw[thin, violet, dashed] (14, -2) arc [start angle=0, end angle=90, radius = 1];
    \draw[thin, violet, dashed] (13,-1)--(12,-1);
    \draw[thin, violet, dashed] (12, -1) arc [start angle=90, end angle=180, radius = 1];

    \node at (3.5, 4.75) {$a_1^-$};
    \node at (3.5, 5.25) {$a_1^+$};
        
    \node at (4.5, 3.5) {$b_1^+$};
    \node at (4, 4) {$b_1^-$};
    
    \node at (12.5, 3.5) {$b_2^+$};
    \node at (12, 4) {$b_2^-$};

    \node at (10.5, 3.5) {$c_2^+$};
    \node at (11, 4) {$c_2^-$};
    
    \node at (3.5, -2.25) {$a_3^-$};
    \node at (3.5, -1.75) {$a_3^+$};
    
    \node at (2.25, -2.85) {$c_3^+$};
    \node at (2.5,  -2.35) {$c_3^-$};
    
    \node at (11.5, -2.25) {$a_4^-$};
    \node at (11.5, -1.75) {$a_4^+$};
    
    \node at (10.5, -3.5) {$c_4^+$};
    \node at (11, -3) {$c_4^-$};

    \node at (6.5, -10.5) {$c_5^+$};
    \node at (7, -10) {$c_5^-$};
    
    \node at (1.25, 5) {$\alpha$};
    \node at (6.25, -3.5) {$\beta$};
    \node at (14.25, -3.5) {$\beta$};
    \end{tikzpicture}
    \caption{Realization of a genus one differential in $\mathcal H_1(2,4,4;\,-2^5)$ with trivial periods and prescribed rotation number. In both cases, the curve $\alpha$ can be taken as the blue curve depicted. The curve $\beta$ depends on which edge we decide to slit between $a_3$ and $a_4$. In the former case $\beta$ has index $2$ and the rotation number of the final structure will be two. In the latter case $\beta$ has index $3$ and the rotation number of the final structure will be one.}
    \label{fig:h12442-5}
\end{figure}
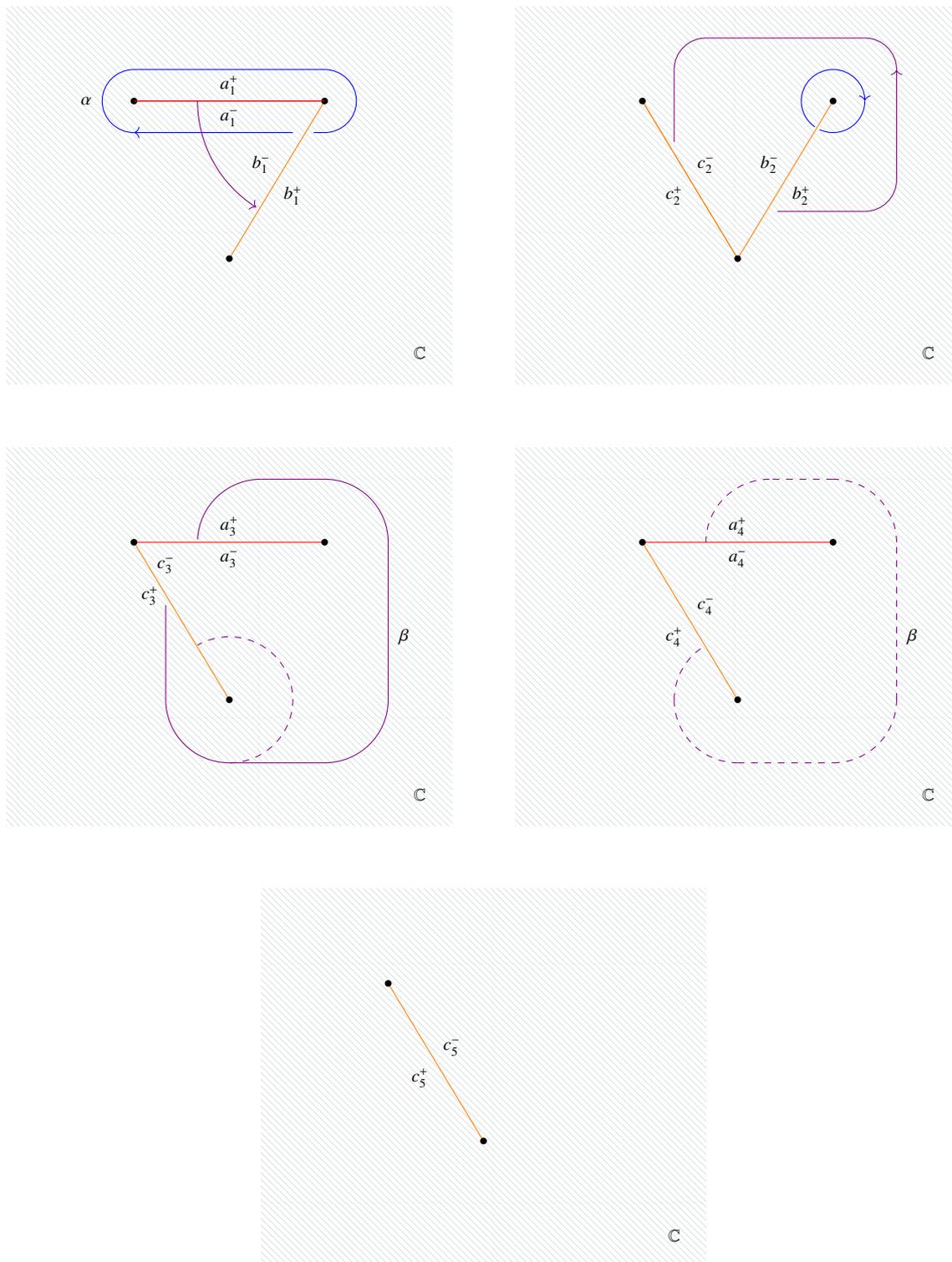

\noindent Label the resulting sides with $\pm$ according to our convention. Identify the edges as follows: $c_1^+$ with $c_2^-$ and $c_1^-$ with $c_2^+$. Next, identify $c_i^-$ with $c_{i+1}^+$ for $i=2,3,4$ and $c_5^-$ with $c_2^+$. The resulting surface is a genus zero differential in $\mathcal H_0(1,3,4;\,-2^5)$. The last step is to identify $a_1^+$ with $a_4^-$ and $a_1^-$ with $a_4^+$ in the case we aim to realize a genus one differential in $\mathcal H_1(2,4,4;\,-2^5)$ with trivial periods and rotation number one. Otherwise identify $a_1^+$ with $a_3^-$ and $a_1^-$ with $a_3^+$ in the case we aim to realize a genus one differential in $\mathcal H_1(2,4,4;\,-2^5)$ with trivial periods and rotation number two. See Figure \ref{fig:h12442-5}.
\end{proof}

\begin{lem}
The trivial representation can be realized in each connected component of $\mathcal H_1(4,4,4,-2^6)$.
\end{lem}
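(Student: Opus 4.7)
The strategy parallels that of the preceding lemma. We first realize the trivial representation in the auxiliary genus-zero stratum $\mathcal{H}_0(3, 3, 4; -2^6)$. The Hurwitz-type inequality \eqref{eq:triconstr} is satisfied, since $\max\{3,3,4\} = 4 \leq 12 - 6 - 1 = 5$, and the Gauss-Bonnet condition also holds. Using the saddle-connection configuration construction of \S\ref{ssec:genzero}, we choose pairs $(a_i, b_i)$ with $a_i + b_i = 2$, $\sum a_i = 4$, $\sum b_i = 4$, and glue six slit copies of $(\C, dz)$ along parallel segments, keeping the zero of order $4$ separate from a designated saddle connection $s$ joining the two zeros of order $3$. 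This yields a genus-zero exact differential $(Y, \eta) \in \mathcal{H}_0(3,3,4; -2^6)$ in which the two zeros $P_1, P_2$ of order $3$ are joined by $s$, and moreover, as in \S\ref{par:allsameordertritwozer}, the pairs can be selected so that the twin paths at $P_1$ and $P_2$ remain free of other zeros.

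Next, we apply the bubbling construction of \S\ref{sssec:tripresrot} at the pair $(P_1, P_2)$. By Lemma~\ref{lem:techlemtwins}, the saddle connection $s$ has exactly three twin paths emanating from each of $P_1$ and $P_2$, arranged at angular positions $2\pi, 4\pi, 6\pi$ from $s$ (since each zero has order $3$ and thus total angle $8\pi$). Orient $s$ from $P_1$ to $P_2$, and select the twin $c_1$ leaving $P_1$ that forms angle $2\pi$ on its right with $s$, together with a twin $c_2$ leaving $P_2$ that forms angle $2\pi(r+1)$ on its left. Slit both $c_1$ and $c_2$, identify $c_1^+$ with $c_2^-$ and $c_1^-$ with $c_2^+$, and identify the regular endpoints with $P_2$ and $P_1$ respectively. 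By Lemma~\ref{lem:addhandtrieven}, the resulting surface $(X, \omega)$ lies in $\mathcal{H}_1(4, 4, 4; -2^6)$, has trivial period character, and rotation number $\gcd(r, 4, 4, 4, 2, 2, 2, 2, 2, 2) = \gcd(r, 2)$.

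The choice $r = 2$ (corresponding to the twin at angle $6\pi$) gives a structure in the component with rotation number $2$, while the choice $r = 1$ (corresponding to the twin at angle $4\pi$) gives rotation number $1$. The main technical point to verify is that for both choices the selected twin paths $c_1, c_2$ do not coincide with any pre-existing saddle connection of $(Y, \eta)$; this is ensured precisely as in the corresponding verification of Paragraph~\S\ref{par:allsameordertritwozer}, where the pairs $(a_i, b_i)$ can be tuned so that angles of available twins avoid the cumulative sums $A_i = a_1 + \cdots + a_i$ and $B_i = b_1 + \cdots + b_i$ modulo the pole order. Since both parities of $r$ are achievable, the trivial representation is realized in each of the two connected components of $\mathcal{H}_1(4, 4, 4; -2^6)$, completing the proof.
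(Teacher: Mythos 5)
Your overall strategy — realize an exact genus-zero differential in $\mathcal{H}_0(3,3,4;-2^6)$ and then add a handle with trivial periods along a pair of twin paths at the two order-$3$ zeros, controlling the rotation number via the angular position of the second twin — is in fact the very route the paper takes: the paper's explicit slitting of six copies of $(\C,dz)$ along the edges of a triangle first produces $\mathcal{H}_0(3,3,4;-2^6)$, and the final identification of $a_3$ with either $a_5$ or $a_6$ is exactly the bubbling you have in mind, with the two choices giving rotation numbers $1$ and $2$. The rotation number bookkeeping you carry out, $\gcd(r,4,4,4,2,\dots,2)=\gcd(r,2)$, is correct. However, the details you supply for the genus-zero step do not hold up, and this is a real gap rather than a matter of phrasing.

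First, the configuration data you prescribe is inconsistent: with six poles of order $2$ and pairs $(a_i,b_i)$ satisfying $a_i+b_i=2$, you necessarily have $\sum a_i+\sum b_i=12$, which is incompatible with $\sum a_i=\sum b_i=4$. More fundamentally, the chaining of \S\ref{par:allsameordertritwozer} produces a genus-zero differential with exactly two zeros (the two endpoints of the chain), and there is no mechanism in that construction to hold "the zero of order $4$ separate"; to get a third zero one must either break one of the two zeros (impossible here, since the Hurwitz bound forces the merged zero order not to exceed $5$, which cannot split into $3+4$) or use a genuinely different gluing pattern, as the paper does by slitting along the three edges $a,b,c$ of a triangle rather than along a single chain. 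Second, your appeal to the twin-regularity verification of \S\ref{par:allsameordertritwozer} is not justified in this setting: that argument relies on the precise cumulative sums $A_i,B_i$ arising from the $(a_i,b_i)$-chain (which you do not actually have), and Lemma~\ref{lem:techlemtwins} is proved for the local zero-breaking situation, where the twins are contained in a small ball $B_{4\varepsilon}(P)$; for two order-$3$ zeros that arise as separate points of a general genus-zero configuration, the twin paths may be long and may terminate at the order-$4$ zero or close up with one another. The paper's explicit construction (choosing the slits $a_3$, $a_5$, $a_6$ inside designated copies of $\C$) is designed precisely to guarantee that the final identification is along embedded parallel segments with no interference from the third zero, and this is the content that your argument is missing. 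In short, the plan is right, but you need an honest construction of $(Y,\eta)$ in $\mathcal{H}_0(3,3,4;-2^6)$ exhibiting a saddle connection between the two order-$3$ zeros together with two available parallel twins of the requisite angular offsets having regular far endpoints.
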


\begin{proof}
Even in this case provide a unique construction and, as we shall see, the realization of the rotation number only depends on the choice of certain slits. Let $A,\,B,\,C\in \C$ be distinct and not aligned points. Notice that the convex hull of $\{A,B,C\}$ is a non-degenerate triangle. 

\smallskip

\noindent Let $a,b,c$ denote the edges $AB$, $BC$ and $CA$. Consider six copies of $(\C,\,dz)$ and slit them as follows:
\begin{itemize}
    \item[1.] slit the first copy along the edge $a_1$,
    \smallskip
    \item[2.] slit the second copy along the edges $a_2$, $b_2$ and $c_2$,
    \smallskip
    \item[3.] slit the third and fourth copy along the edges $a_3$ and $a_4$ and the edges $b_3$ and $b_4$
    \smallskip
    \item[4.] slit the fifth and sixth copy along the edges $c_5$ and $c_6$, finally
    \smallskip
    \begin{itemize}
        \item[i.] if we aim to realize a translation surface with rotation number one then slit the fifth copy along the edge $a_5$, otherwise
        \item[ii.] if we aim to realize a translation surface with rotation number two slit the sixth copy along the edge $a_6$.
    \end{itemize}
\end{itemize}

\begin{figure}[!ht]
    \centering
    \begin{tikzpicture}[scale=1, every node/.style={scale=0.8}]
    
    \definecolor{pallido}{RGB}{221,227,227}

    \foreach \x [evaluate=\x as \coord using  \x] in {0,8} 
    {
    \pattern [pattern=north west lines, pattern color=pallido]
    (\coord,6.5)--(\coord+7,6.5)--(\coord+7,0.5)--(\coord,0.5)--(\coord,6.5);
    
    \draw[thin, orange] (\coord+2, 5)--(\coord+5,5);
    \draw[thin, orange] (10, 5)--(11.5, 2.5);
    \draw[thin, orange] (13, 5)--(11.5, 2.5);
    
    \fill [black] (\coord+2,5) circle (1.5pt);
    \fill [black] (\coord+5,5) circle (1.5pt);
    \fill [black] (11.5, 2.5) circle (1.5pt);
    
    \pattern [pattern=north west lines, pattern color=pallido]
    (\coord,-6.5)--(\coord+7,-6.5)--(\coord+7,-0.5)--(\coord,-0.5)--(\coord,-6.5);
    
    \draw[thin, red] (\coord+2, -2)--(\coord+5, -2);
    \draw[thin, orange] (\coord+5, -2)--(\coord+3.5, -4.5);
    
    \fill [black] (\coord+2, -2) circle (1.5pt);
    \fill [black] (\coord+5, -2) circle (1.5pt);
    \fill [black] (\coord+3.5, -4.5) circle (1.5pt);

    \pattern [pattern=north west lines, pattern color=pallido]
    (\coord,-13.5)--(\coord+7,-13.5)--(\coord+7,-7.5)--(\coord,-7.5)--(\coord,-13.5);
    
    \draw[thin, red] (\coord+2, -9)--(\coord+5, -9);
    \draw[thin, orange] (\coord+2, -9)--(\coord+3.5, -11.5);
    
    \fill [black] (\coord+2, -9) circle (1.5pt);
    \fill [black] (\coord+5, -9) circle (1.5pt);
    \fill [black] (\coord+3.5, -11.5) circle (1.5pt);
    
    \node at (\coord+6.5,   1) {$\mathbb C$};
    \node at (\coord+6.5,  -6) {$\mathbb C$};
    \node at (\coord+6.5, -13) {$\mathbb C$};
    
    }
    
    \draw[thin, blue, <-] (4.5,5) arc [start angle=180, end angle=-180 , radius = 0.5];
    \draw[thin, blue, <-] (12.5,5) arc [start angle=180, end angle=-120 , radius = 0.5];
    \draw[thin, blue, ->] (12.5,5) arc [start angle=180, end angle=240 , radius = 0.5];
    \draw[thin, blue, ->] (4.5,-2) arc [start angle=180, end angle=240 , radius = 0.5];
    \draw[thin, blue] (4.5,-2) arc [start angle=180, end angle=-120 , radius = 0.5];
    \draw[thin, blue] (13,-1.5) arc [start angle=90, end angle=-120 , radius = 0.5];
    \draw[thin, blue] (13, -1.5)--(10,-1.5);
    \draw[thin, blue] (10,-1.5) arc [start angle=90, end angle=270 , radius = 0.5];
    \draw[thin, blue, <-] (12.5, -2.5)--(10,-2.5);
    
    \draw[thin, violet] (4,-4.5) arc [start angle=0, end angle=50 , radius = 0.5];
    \draw[thin, violet] (4,-4.5) arc [start angle=0, end angle=-300, radius = 0.5];
    \draw[thin, violet] (11.5,-5) arc [start angle=-90, end angle=50, radius = 0.5];
    \draw[thin, violet] (11.5,-5)--(11, -5);
    \draw[thin, violet] (11,-5) arc [start angle=270, end angle=180, radius = 0.5];
    \draw[thin, violet, <-] (10.5, -4.5)--(10.5, -2);
    
    \draw[thin, violet, dashed] (2.5, -9) arc [start angle=180, end angle=90, radius = 0.5];
    \draw[thin, violet, dashed] (3, -8.5)--(5, -8.5);
    \draw[thin, violet, dashed] (5, -8.5) arc [start angle=90, end angle=0, radius = 0.5];
    \draw[thin, violet, dashed] (5.5, -9)--(5.5, -11.5);
    \draw[thin, violet, dashed] (5.5, -11.5) arc [start angle=0, end angle=-90, radius = 0.5];
    \draw[thin, violet, dashed] (3.5, -12)--(5, -12);
    
    \draw[thin, violet] (3.5, -12) arc [start angle=270, end angle=130, radius = 0.5];
    \draw[thin, violet] (3.5, -12) arc [start angle=-90, end angle=110, radius = 0.5];
    
    \draw[thin, violet] (11.5, -12) arc [start angle=270, end angle=130, radius = 0.5];
    \draw[thin, violet] (11.5, -12)--(13, -12);
    \draw[thin, violet] (13, -12) arc [start angle=-90, end angle=0, radius = 0.5];
    \draw[thin, violet] (13.5, -11.5)--(13.5, -9);
    \draw[thin, violet] (13.5, -9) arc [start angle=0, end angle=90, radius = 0.5];
    \draw[thin, violet] (13, -8.5)--(11, -8.5);
    \draw[thin, violet] (11, -8.5) arc [start angle=90, end angle=180, radius = 0.5];
    
    \draw[thin, violet] (11.75, 3) arc [start angle=60, end angle=120, radius = 0.5];
    
    \node at (3.5, 4.75) {$a_1^-$};
    \node at (3.5, 5.25) {$a_1^+$};
    
    \node at (11.5, 4.75) {$a_2^-$};
    \node at (11.5, 5.25) {$a_2^+$};
    
    \node at (12.5, 3.5) {$b_2^+$};
    \node at (12, 4) {$b_2^-$};
    
    \node at (10.5, 3.5) {$c_2^+$};
    \node at (11, 4) {$c_2^-$};
    
    \node at (3.5, -2.25) {$a_3^-$};
    \node at (3.5, -1.75) {$a_3^+$};
    
    \node at (4.5, -3.5) {$b_3^+$};
    \node at (4, -3) {$b_3^-$};
    
    \node at (11.5, -2.25) {$a_4^-$};
    \node at (11.5, -1.75) {$a_4^+$};
    
    \node at (12.5, -3.5) {$b_4^+$};
    \node at (12, -3) {$b_4^-$};
    
    \node at (3.5, -9.25) {$a_5^-$};
    \node at (3.5, -8.75) {$a_5^+$};
    
    \node at (2.5, -10.5) {$c_5^+$};
    \node at (3, -10) {$c_5^-$};
    
    \node at (11.5, -9.25) {$a_6^-$};
    \node at (11.5, -8.75) {$a_6^+$};
    
    \node at (10.5, -10.5) {$c_6^+$};
    \node at (11, -10) {$c_6^-$};
    
    \node at (9.25, -2) {$\alpha$};
    \node at (13.75, -10.5) {$\beta$};
    \node at (5.75, -10.5) {$\beta$};

    \end{tikzpicture}
    \caption{Realization of a genus one differential in $\mathcal H_1(4,4,4;\,-2^6)$ with trivial periods and prescribed rotation number. In both cases, the curve $\alpha$ can be taken as the blue curve depicted. The curve $\beta$ depends on which edge we decide to slit between $a_5$ and $a_6$. In the former case $\beta$ has index $3$ and the rotation number of the final structure will be one. In the latter case $\beta$ has index $4$ and the rotation number of the final structure will be two.}
    \label{fig:h14442-6}
\end{figure}
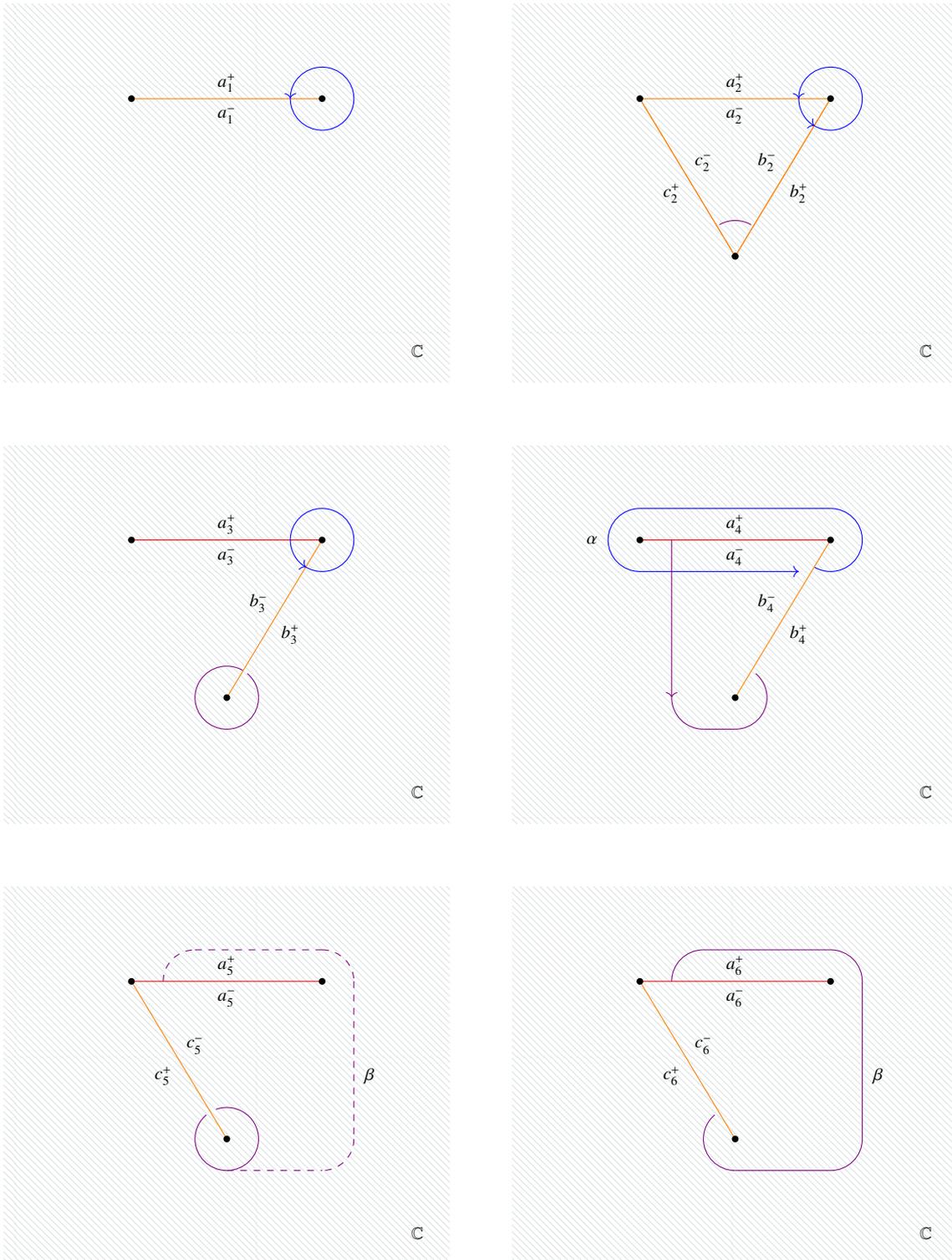

\noindent Label the resulting sides with $\pm$ according to our convention. Identify the edges as follows: $a_1^+$ with $a_2^-$ and $a_1^-$ with $a_2^+$. Next identify the edges $b_2^+$ with $b_3^-$, then $b_3^+$ with $b_4^-$ and, finally, $b_4^+$ with $b_2^-$. Similarly, identify the edges $c_2^+$ with $c_5^-$, then $c_5^+$ with $c_6^-$ and, finally, $c_6^+$ with $c_2^-$. The resulting surface is a genus zero differential in the stratum $\mathcal H_0(3,3,4;-2^6)$. The last step is to identify $a_3^+$ with $a_5^-$ and $a_3^-$ with $a_5^+$ in the case we aim to realize a genus one differential in $\mathcal H_1(4,4,4;\,-2^6)$ with trivial periods and rotation number one. Otherwise identify $a_3^+$ with $a_6^-$ and $a_3^-$ with $a_6^+$ in the case we aim to realize a genus one differential in $\mathcal H_1(4,4,4;\,-2^6)$ with trivial periods and rotation number two. See Figure \ref{fig:h14442-6}.
\end{proof}

\begin{lem}\label{lem:222-2-2-2}
The stratum $\mathcal H_1(2^3;\,-2^3)$ is exceptional; in fact the trivial representation can only be realized in the non-primitive connected component.
\end{lem}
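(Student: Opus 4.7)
The plan is to mirror the algebraic arguments used in Lemma \ref{lem:excaseone} and Lemma \ref{lem:excasetwo}. Suppose $(S,\omega) \in \mathcal H_1(2^3;\,-2^3)$ has trivial period character. Since all absolute periods, and in particular all residues, vanish, the differential $\omega$ is exact, so $\omega = df$ for a non-constant rational function $f\colon \overline S \longrightarrow \cp$. The local model $\omega \sim 3(z-Z_j)^2\,dz$ at each zero $Z_j$ of order $2$ identifies $Z_j$ as a totally ramified point of $f$ of ramification index $3$, while at each pole $P_i$ of order $2$ with zero residue, $f$ has a simple pole. Consequently $f$ is a degree-$3$ branched cover of $\cp$, totally ramified at $Z_1,Z_2,Z_3$ over three necessarily distinct finite critical values, with fiber $P_1+P_2+P_3$ over $\infty$. (Distinctness of the critical values follows from a degree count: if two coincided, a single fiber would contain at least $3+3=6$ points, contradicting $\deg f = 3$.)

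The fiber relations of $f$ immediately yield the linear equivalences $3 Z_i \sim P_1+P_2+P_3$ for every $i\in\{1,2,3\}$, and summing over $i$ produces $3(Z_1+Z_2+Z_3)\sim 3(P_1+P_2+P_3)$. Independently, since the canonical class of the elliptic curve $\overline S$ is trivial, the divisor of $\omega$ gives $2(Z_1+Z_2+Z_3)\sim 2(P_1+P_2+P_3)$. Subtracting the second from the first then isolates
\[
Z_1+Z_2+Z_3 \,\sim\, P_1+P_2+P_3.
\]
By the torsion-number description recalled in Section \S\ref{mscc} (see also the remark that torsion numbers coincide with rotation numbers), this relation exactly says that $(S,\omega)$ has torsion number $l=2$, equivalently rotation number $2$, and therefore lies in the non-primitive component of the stratum.

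To finish, I will observe that the Hurwitz-type inequality \eqref{eq:triconstr} reads $m_j \le \sum p_i - n - 1$, which is satisfied with equality $2 = 6-3-1$ for $\mathcal H_1(2^3;\,-2^3)$, so by \cite[Theorem B]{CFG} the trivial representation does arise as the period character of \emph{some} translation surface in this stratum. Combined with the rotation-number computation above, every such realization must sit in the non-primitive component, establishing both halves of the claim.

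The main obstacle is purely bookkeeping: one must verify the correspondence between the orders of zeros/poles of $\omega$ (with vanishing residues) and the local degrees of $f$, and then combine the two linear equivalences with coefficients $+3$ and $-2$ so as to isolate the divisor $Z_1+Z_2+Z_3 - P_1 - P_2 - P_3$. After these routine checks, the deduction is entirely parallel to the arguments of Lemmas \ref{lem:excaseone} and \ref{lem:excasetwo}, and no new geometric construction is required.
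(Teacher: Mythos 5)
Your proposal is correct and follows essentially the same route as the paper's own proof: interpret the trivial-period condition as exactness, obtain the degree-$3$ cover totally ramified at the $Z_i$ with $P_1+P_2+P_3$ over infinity, derive $3Z_i\sim P_1+P_2+P_3$, combine with the canonical relation $2\sum Z_i\sim 2\sum P_i$ to isolate $Z_1+Z_2+Z_3\sim P_1+P_2+P_3$, and read off torsion number $2$. The only (harmless) additions are your explicit check that the critical values are distinct and the verification via the Hurwitz-type inequality that the trivial representation is realizable in the stratum at all.
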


\begin{proof}
Suppose that $(S,\omega)$ is a translation surface in $\mathcal H_1(2^3;\,-2^3)$ with trivial periods. Let $Z_i$ and $P_i$ be the zeros and poles of $\omega$ for $i=1,2,3$. The trivial period condition is equivalent to the existence of a triple cover of $S$ to $\mathbb{C}\mathbf{P}^1$ totally ramified at the $Z_i$ and having $P_1+P_2+P_3$ as the fiber over infinity. This implies the linear equivalence relation $3Z_1\sim 3Z_2\sim 3Z_3 \sim P_1+P_2+ P_3$. On the other hand, by assumption $2Z_1 + 2Z_2 + 2Z_3\sim 2P_1 + 2P_2 + 2P_3$. It follows that 
$Z_1+Z_2+Z_3 \sim 3Z_1+3Z_2+3Z_3-(2Z_1 + 2Z_2 +2Z_3) \sim P_1+P_2+P_3$, which implies by definition that $(S,\omega)$ belongs to the non-primitive component of $\mathcal H_1(2^3;\,-2^3)$.  
\end{proof}

\noindent This complete the proof of Proposition \ref{prop:genusonetri} and indeed the proof of Theorem \ref{thm:mainthm2} for genus one-differentials.

\medskip

\subsection{Hyperelliptic exact differentials}\label{ssec:hyptritwozeros} In subsection \S\ref{ssec:trirephyp} we have seen that the trivial representation cannot appear as the period character of a hyperelliptic translation surface with a single zero. However, it is still possible to realize trivial representation as the period character of some exact hyperelliptic differentials in some cases. As alluded above, the order of zeros of a meromorphic exact differential $\omega$, on a Riemann surface $X$, are subject to the constraint provided by the Hurwitz type inequality \eqref{eq:triconstr}:
\begin{equation*}
    m_j\le \sum_{i=1}^n p_i-n-1.
\end{equation*}

\noindent Such a formula, for strata $\mathcal{H}_g(m,m;-2p)$ and $\mathcal{H}_g(m,m;-p,-p)$ simplifies to 
\begin{equation}\label{eq:hypcondone}
    m\le 2p-2 \qquad m\le 2p-3
\end{equation} respectively. We also recall that, according to our Remark \ref{gbcond}, the order of zeros and poles are subject to the so-called Gauss-Bonnet condition
\begin{equation}\label{eq:hypcondtwo}
    2m=2g+2p-2, \,\,\textit{ i.e. }\,\, m=g+p-1;
\end{equation} 
otherwise the stratum would be empty. Equations \eqref{eq:hypcondone} and \eqref{eq:hypcondtwo} above combined together readily imply the following:

\begin{lem}
The trivial representation appears as the period of a translation surface in $\mathcal{H}_g(m,m;-2p)$ if and only if $g\le p-1$. Similarly, the trivial representation appears as the period of a translation surface with poles in $\mathcal{H}_g(m,m;-p,-p)$ if and only if $g\le p-2$.
\end{lem}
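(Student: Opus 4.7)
The plan is to prove both directions by combining the two numerical constraints already highlighted in the excerpt. The only if direction is a direct algebraic consequence of the necessary conditions, while the if direction will follow from the realization results in \cite[Theorem B]{CFG}.

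For the only if direction, I would argue as follows. Suppose the trivial representation is realized by some $(X,\omega)\in\mathcal{H}_g(m,m;-2p)$. Then $\omega$ is an exact meromorphic differential, so the Hurwitz type inequality \eqref{eq:triconstr} applies to each of its zeros, giving $m\le 2p-2$, which is the first inequality in \eqref{eq:hypcondone}. Combining this with the Gauss-Bonnet condition \eqref{eq:hypcondtwo}, namely $m=g+p-1$, yields
\begin{equation*}
g+p-1\le 2p-2 \quad \Longleftrightarrow \quad g\le p-1,
\end{equation*}
as required. The case of $\mathcal{H}_g(m,m;-p,-p)$ is completely analogous: here \eqref{eq:triconstr} specializes to $m\le 2p-n-1=2p-3$, and substituting $m=g+p-1$ yields $g\le p-2$.

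For the if direction, one simply invokes \cite[Theorem B]{CFG}, which provides necessary and sufficient conditions for the trivial representation to be realized as the period character of an exact meromorphic differential in a prescribed stratum of even type. The conditions therein reduce, for strata of the form $\mathcal{H}_g(m,m;-2p)$ or $\mathcal{H}_g(m,m;-p,-p)$, to the Gauss-Bonnet equality and the Hurwitz type inequality \eqref{eq:triconstr}. Whenever $g\le p-1$ (respectively $g\le p-2$), both $m=g+p-1\le 2p-2$ (respectively $m\le 2p-3$) and the Gauss-Bonnet relation automatically hold, so \cite[Theorem B]{CFG} guarantees the existence of a translation surface in the prescribed stratum with trivial period character.

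I expect no real obstacle in the argument: once the two necessary conditions from \eqref{eq:hypcondone} and \eqref{eq:hypcondtwo} are put side by side, the equivalence with $g\le p-1$ (respectively $g\le p-2$) is purely arithmetic, and the realizability statement is already known from \cite{CFG}. The only care required is to verify that no additional obstruction in \cite[Theorem B]{CFG} kicks in for these particular signatures; since all poles have order at least two in both cases, the remaining combinatorial conditions in that theorem are vacuous, and the Hurwitz inequality is indeed the only substantive constraint.
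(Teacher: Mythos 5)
Your proposal is correct and follows exactly the paper's (implicit) argument: the lemma is stated there as an immediate consequence of combining the Hurwitz type inequality \eqref{eq:hypcondone} with the Gauss--Bonnet relation \eqref{eq:hypcondtwo}, with realizability in the converse direction supplied by \cite[Theorem B]{CFG}. Your added check that no further obstruction from that theorem intervenes (the only conditions being the degree equality and the Hurwitz inequality, and simple poles being excluded automatically for zero residues) is the right point of care and matches the paper's setup.
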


\noindent We next wonder whether the trivial representation appears as the period of some hyperelliptic translation surface. 

\begin{prop}\label{prop:hgdiffhyptriv}
Suppose the trivial representation can be realized in a stratum admitting a hyperelliptic component. Then it can be realized as the period character of some hyperelliptic translation surfaces with poles in the same stratum.
\end{prop}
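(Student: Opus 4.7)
By the Hurwitz-type inequality \eqref{eq:triconstr} recalled in §\ref{ssec:trirephyp}, the trivial representation cannot be realised in $\mathcal{H}_g(2m;-2p)$ or $\mathcal{H}_g(2m;-p,-p)$ as soon as $g\ge 1$. Hence, among the strata \eqref{hypcomp}, only the two families $\mathcal{H}_g(m,m;-2p)$ with $g\le p-1$ and $\mathcal{H}_g(m,m;-p,-p)$ with $g\le p-2$ need to be treated, and in both cases $m=g+p-1$ by Gauss--Bonnet.

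The plan is to construct the desired hyperelliptic exact differentials on the hyperelliptic curve $X\colon y^{2}=R(x)$, with $R$ a squarefree polynomial of degree $2g+1$ in Case~1 and of degree $2g+2$ in Case~2. For any polynomial $Q$, the rational function $f=y\,Q(x)$ is anti-invariant under the hyperelliptic involution $\tau\colon(x,y)\mapsto(x,-y)$, so $\omega\defeq df$ is an exact meromorphic differential satisfying $\tau^{*}\omega=-\omega$; in particular $(X,\omega)$ is hyperelliptic in the sense of Definition~\ref{hypdef} and has trivial period character. The identity $2y\,dy=R'\,dx$ yields
\[
\omega \,=\, \frac{G(x)}{2y}\,dx, \qquad G(x)\defeq Q(x)R'(x)+2R(x)Q'(x),
\]
and a direct divisor computation at the fibre over $\infty$ forces $\deg Q = p-g-1$ in Case~1 and $\deg Q = p-g-2$ in Case~2 -- non-negative precisely under the hypotheses on $g,p$. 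With these choices the polar part of $\omega$ automatically matches the prescribed signature, and the only remaining requirement is that the $2\deg G = 2(p+g-1)$ zeros of $\omega$ collapse into two zeros of order $m=g+p-1$ interchanged by $\tau$, i.e.\
\[
G(x) \,=\, c\,(x-x_0)^{p+g-1}, \qquad c\neq 0, \quad x_0\notin R^{-1}(0).
\]

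The main step is therefore the existence of a pair $(R,Q)$ satisfying this last equation. In the extreme case $g=p-1$ of Case~1 the choice $Q\equiv 1$ reduces the equation to $R'(x)= c\,(x-x_0)^{2g}$, giving the explicit solution $R(x)=(x-x_0)^{2g+1}+C$ with $C\neq 0$, which defines a smooth hyperelliptic curve at which $x_0$ is not a root of $R$; an analogous low-degree example handles the extreme case of Case~2. For general $g$, I would establish existence via a parameter count: modulo the two gauges given by rescaling $Q$ and translating the $x$-coordinate, the coefficients of $R$ and $Q$ together with $c$ and $x_0$ provide $p+g+1$ free parameters, while the identification of $G$ with $c(x-x_0)^{p+g-1}$ imposes $p+g$ polynomial equations, so a one-parameter family of solutions is expected. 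Equivalently, using the isomorphism between the hyperelliptic locus and a stratum of quadratic differentials on $\cp$ \textit{à la} Boissy, the hyperelliptic stratum has complex dimension $2g+1$ (respectively $2g+2$), the $\tau$-anti-invariant periods provide $2g$ (respectively $2g+1$) local coordinates, and submersivity of the period map yields a complex one-dimensional sublocus of exact hyperelliptic differentials. The principal obstacle I anticipate is carrying out this existence argument rigorously throughout the range $g<p-1$ (respectively $g<p-2$), for which either an implicit function theorem argument deforming from the extreme case, or a direct recursive analysis of the algebraic system above after translating $x_0$ to $0$, should suffice.
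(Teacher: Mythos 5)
Your reduction to the strata $\mathcal{H}_g(m,m;-2p)$ with $g\le p-1$ and $\mathcal{H}_g(m,m;-p,-p)$ with $g\le p-2$ is correct and matches the paper, and your algebraic setup is sound: $f=yQ(x)$ is $\tau$-anti-invariant, $\omega=df$ is exact with $\tau^*\omega=-\omega$, the degree conditions on $Q$ are computed correctly, and the problem does reduce to finding $(R,Q)$ with $R$ squarefree and $QR'+2RQ'=c(x-x_0)^{m}$, $R(x_0)\neq0$. The extreme cases ($Q\equiv1$, giving $R=(x-x_0)^{\deg R}+C$) are genuinely complete.

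However, there is a real gap exactly where you flag it: the existence of such a pair $(R,Q)$ for $1\le g<p-1$ (resp.\ $g<p-2$) is not established, and neither of your two proposed remedies closes it as stated. The parameter count is only a heuristic — the incidence variety $\{QR'+2RQ'=c(x-x_0)^m\}$ could a priori be empty, or could meet only the locus where $R$ has multiple roots or where $R(x_0)=0$ (note, for instance, that the natural first guess $Q=x^{d}$, $x_0=0$ forces $R=r\,x^{2g+1}$, which is degenerate, so the constraints are genuinely restrictive and some care is needed). The alternative via ``submersivity of the period map'' on the hyperelliptic component is close to circular: period coordinates show that the isoperiodic fiber over the trivial character is at most one-dimensional \emph{if nonempty}, but nonemptiness is precisely the content of the proposition. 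An implicit-function-theorem deformation from the extreme case would also have to be justified (one is deforming $\deg Q$, a discrete invariant, not a continuous parameter), so as written the argument does not go through for the intermediate range of $g$. The paper sidesteps this entirely with an explicit flat construction: it glues $2p-1$ (resp.\ $2(p-1)$) copies of $(\C,dz)$ along segments $s_k$ arranged symmetrically about a central saddle connection, so that the cyclic ordering of the sectors forces an order-two isometric involution with $2g+2$ fixed points; hyperellipticity and triviality of periods are then visible by inspection for every admissible $g$. If you want to keep your algebraic route, you need to actually solve the equation $(Q^2R)'=cQ(x-x_0)^m$ for each $g$ in the allowed range, or convert the paper-style flat model back into a branched cover to extract $(R,Q)$.
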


\smallskip

\noindent The remaining part of this subsection is devoted to prove Proposition \ref{prop:hgdiffhyptriv}. We shall distinguish two cases as follows.

\subsubsection{Single higher order pole}\label{sssec:trihypsinglepole} Let $(X,\omega)$ be a genus $0$-meromorphic differential in $\mathcal{H}_0(p-1,p-1;-2p)$, where $p\ge2$. This structure can be easily realized as follows. On $(\C,\,dz)$ we consider two distinct points, say $P,\,Q\in\C$ and let $r$ be the unique straight line passing through them. Denote by $s_0$ the segment joining $P$ and $Q$ and let $r_1$ and $r_2$ be the sub-rays of $r$ leaving from $P$ and $Q$ respectively. We next bubble $p-1$ copies of $(\C,\,dz)$ along $r_1$ and, in the same fashion, we bubble $p-1$ copies of $(\C,\,dz)$ along $r_2$. The resulting structure $(X,\omega)$ is a translation surface with poles in $\mathcal{H}_0(p-1,p-1;-2p)$ having trivial periods. Notice that the straight line $r$ splits the complex plane $\C$ into two half-planes. We orient $r$ so that $s_0$ is oriented from $P$ to $Q$. According to this orientation, we denote by $H_1$ the half plane bounded by $r^-$ and by $H_2$ the half plane bounded by $r^+$. Around $P\in(X,\omega)$, we can single out $p+1$ sectors, say $H_1,\, S_1,\dots, S_{p-1},\,H_2$, in cyclic order. For $k=1,\dots,p-1$, each sector $S_k$ comes from a copy of $(\C,\,dz)$ glued along $r_1$ by construction. In particular, $S_k$ contains a segment, say $s_k$, based at $P$ that bounds a wedge of angle $2k\pi$ with $s_0$. Let $Q_k$ be extremal point of $s_k$ other than $P$.

\smallskip

\noindent Similarly, around $Q\in(X,\omega)$, we can single out other $p+1$ sectors, say $H_2,\, S_{p},\dots, S_{2p-2},\,H_1$, in cyclic order. In this case each sector comes from a copy of $(\C,\,dz)$ glued along the half-ray $r_2$. For $k=1,\dots,p-1$, each sector $S_{p+k-1}$, for $k=1,\dots,p-1$, comes from a copy of $(\C,\,dz)$ glued along $r_2$ by construction. In this case, $S_{p+k-1}$ contains a segment, say $s_{p+k-1}$, based at $Q$ that bounds a wedge of angle $2k\pi$ with $s_0$. Finally, let $P_k$ be extremal point of $s_{p+k-1}$ other than $Q$.

\smallskip

\noindent Any orientation on $s_0$ naturally yields a preferred orientation on each segment $s_k$ and $s_{p+k-1}$, for any $k$, because they all have the same image under the developing map. Recall that $s_0$ is oriented from $P$ to $Q$. As a consequence, $s_k$ is oriented from $P$ to $Q_k$ and $s_{p+k-1}$ is oriented from $P_k$ to $Q$ for all $k=1,\dots,p-1$. Let $1\le g\le p-1$. For $k=1,\dots,g$, slit $(X,\omega)$ along $s_k$ and $s_{p+k-1}$ and denote the resulting edges by $s_k^{\pm},\,s_{p+k-1}^{\pm}$, where the sign is taken according to our convention. Finally, identify $s_k^+$ with $s_{p+k-1}^-$ and $s_k^-$ with $s_{p+k-1}^+$. The resulting space is surface of genus $g$ equipped with a translation structure $(Y,\,\xi)\in\mathcal{H}_g(m,m;-2p)$, where $m=g+p-1$. 

\smallskip

\noindent By construction, $\xi$ is an exact differential on $Y$; that is $(Y,\xi)$ has trivial absolute periods. Since all sectors are cyclically ordered as $H_1,S_1,\dots,S_{p-1},H_2,S_{p},\dots,S_{2p-2}$, the structure $(Y,\xi)$ is also naturally equipped with an involution of order two with $2g+2$ fixed points given by: the mid-point of $s_0$, the mid-points of $s_k^{\pm}$ and of $s_{p+k-1}^{\pm}$ for $k=1,\dots,g$ and the pole, see Figure \ref{fig:hyptrigenusone}. Therefore the structure $(Y,\xi)$ is hyperelliptic with trivial periods as desired.

\begin{figure}[!ht] 
\centering
\begin{tikzpicture}[scale=1.05, every node/.style={scale=0.85}]
\definecolor{pallido}{RGB}{221,227,227}
\definecolor{pallido2}{RGB}{221,227,240}
    \pattern [pattern=north east lines, pattern color=pallido]
    (-6,3)--(-6,0)--(6,0)--(6,3)--(-6,3);
    
    \pattern [pattern=north east lines, pattern color=pallido]
    (-6,-3)--(-6,0)--(6,0)--(6,-3)--(-6,-3);

    \draw[thick, orange] (-2,0)--(2,0);
    \draw[ultra thin, dashed] (-2,0)--(-6,  0);
    \draw[ultra thin, dashed] ( 2,0)--( 6,  0);
    \draw[ultra thin, dashed] (-2,0)--(-2,  3);
    \draw[ultra thin, dashed] (-2,0)--(-2, -3);
    \draw[ultra thin, dashed] ( 2,0)--( 2,  3);
    \draw[ultra thin, dashed] ( 2,0)--( 2, -3);
    
    \draw[thin, black] (-2,0) to [bend left] (-4.5,2);
    \draw[thin, black] (-2,0) to [bend right] (-4.5,2);
    
    \draw[thin, black] (-2,0) to [bend left] (-4.5,-2);
    \draw[thin, black] (-2,0) to [bend right] (-4.5,-2);
    
    \draw[thin, black] (2,0) to [bend left] (4.5,2);
    \draw[thin, black] (2,0) to [bend right] (4.5,2);
    
    \draw[thin, black] (2,0) to [bend left] (4.5,-2);
    \draw[thin, black] (2,0) to [bend right] (4.5,-2);
    
    \fill [white] (-2,0) to [bend left] (-4.5,2) to [bend left] (-2,0);
    \fill [white] (-2,0) to [bend left] (-4.5,-2) to [bend left] (-2,0);
    
    \fill [white] (2,0) to [bend left] (4.5,2) to [bend left] (2,0);
    \fill [white] (2,0) to [bend left] (4.5,-2) to [bend left] (2,0);
    
    \node at (0,0) {$\times$};
    \draw[thin, black, ->] (0, 0.5) arc [start angle = 90, end angle = -260,radius = 0.5];
    
    \node [red]  at (-2.9, 1.3) {$\times$};
    \node [blue] at (-3.5, 0.6) {$\times$};
    \node [red]  at (2.9, -1.3) {$\times$};
    \node [blue] at (3.5, -0.6) {$\times$};
    \node [green] at (-3.5, -0.6) {$\times$};
    \node [green] at ( 3.5, 0.6) {$\times$};
    \node [violet] at (2.9, 1.3) {$\times$};
    \node [violet] at (-2.9, -1.3) {$\times$};
    
    \draw [thin, black] (4.5, 2) circle (2.5pt);
    \draw [thin, black] (4.5,-2) circle (2.5pt);
    \fill [white] ( 4.5, 2) circle (2.5pt);
    \fill [white] ( 4.5,-2) circle (2.5pt);
    \fill [black] (-4.5, 2) circle (2.5pt);
    \fill [black] (-4.5,-2) circle (2.5pt);
    
    \fill [black] (2,0) circle (2.5pt);
    \draw [thin, black] (-2,0) circle (2.5pt);
    \fill [white] (-2,0) circle (2.5pt);
    
\end{tikzpicture}
\caption{Realization of an exact hyperelliptic genus two-differential in $\mathcal{H}_2(4,4;-6)$. In this case the hyperelliptic involution has $6$ fixed points. Three out of four of these points are drawn in the picture with the symbol $\times$. Two symbols with the same color are identified on the final surface and hence they need to be counted as a single fix point. The remaining fix point is the puncture corresponding to the pole (in this case $2p=6$). }
\label{fig:hyptrigenusone}
\end{figure}
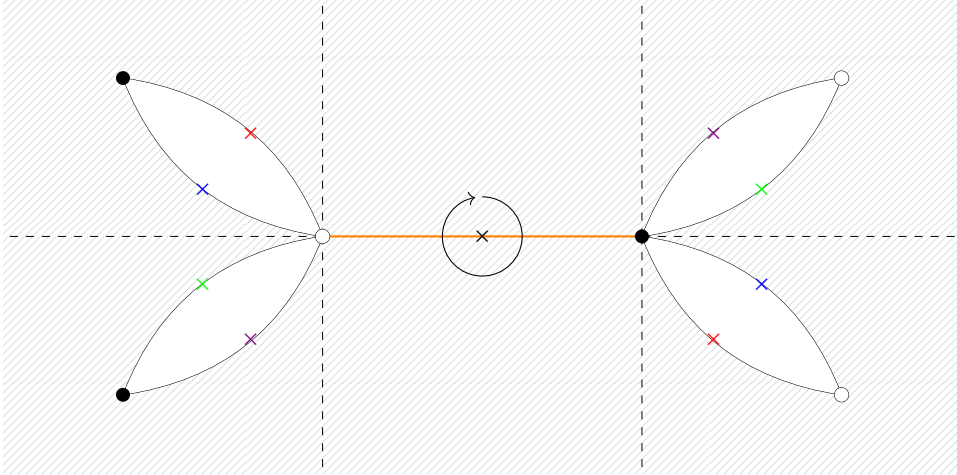

\subsubsection{Two higher order poles}\label{sssec:trihyptwopole}

This case is similar to the previous one and, in fact, the main difference is that we begin with two genus $0$-meromorphic differential $(X_i,\,\omega_i)\in\mathcal{H}_0(p-2;-p)$. These structures can be realized as follows: We begin with two copies of $(\C,\,dz)$ and we consider on each one two distinct points, say $P_i,\,Q_i\in\C$ such that
\begin{equation*}
    d\big(P_1,\,Q_1\big)=d\big(P_2,\,Q_2\big)
\end{equation*}

\noindent where $d(\cdot\,,\,\cdot)$ denotes the usual Euclidean metric. Let $l_i$ denote the unique straight line passing through $P_i$ and $Q_i$ and denote by $e_i$ the unique segment joining them. For $i=1,2$ we orient the line $l_i$ so that $e_i$ is oriented from $P_i$ to $Q_i$. On the first copy  of $(\C,\,dz)$, we define $r_1\subset l_1$ as the ray leaving from $P_1$ not passing through $Q_1$. Similarly, on the second copy of $(\C,\,dz)$ we define $r_2\subset l_2$ as the ray leaving from $Q_2$ and not passing through $P_2$. Next, we bubble $p-2$ copies of $(\C,\,dz)$ along $r_1$ and, similarly, we bubble $p-2$ copies of $(\C,\,dz)$ along $r_2$. Define the resulting structures as $(X_1,\,\omega_1)$ and $(X_2,\,\omega_2)$ respectively. Both structures have a pole of order $p$ and $\xi_1$ has a zero of order $p-2$ at $P_1$ whereas $\xi_2$ has a zero of order $p-2$ at $Q_2$. 

\begin{figure}[!ht]
    \centering
    \begin{tikzpicture}[scale=1.2, every node/.style={scale=0.8}]
    \definecolor{pallido}{RGB}{221,227,227}

    \foreach \x [evaluate=\x as \coord using  7*\x] in {0} 
    {
    \pattern [pattern=north west lines, pattern color=pallido]
    (\coord, -2)--(\coord+6, -2)--(\coord+6, 2)--(\coord, 2)--(\coord,-2);
    \draw[thin, orange] (\coord+3,0) to [out=20, in=160] (\coord+5,0);
    \draw[thin, orange] (\coord+3,0) to [out=-20, in=-160] (\coord+5,0);
    \draw[thin, black, dashed] (\coord+3,0)--(\coord+3, 2);
    \draw[thin, black, dashed] (\coord+3,0)--(\coord+3, -2);
    \draw[thin, black, dashed] (\coord+3,0)--(\coord, 0);
    
    \draw[ultra thin, black] (\coord+3,0) to [bend left] (\coord+1.5,1.5) to [bend left] (\coord+3,0);
    \draw[ultra thin, black] (\coord+1.5,1.5) to [bend left] (\coord+3,0);
    \draw[ultra thin, black] (\coord+3,0) to [bend left] (\coord+1.5,-1.5) to [bend left] (\coord+3,0);
    \draw[ultra thin, black] (\coord+1.5,-1.5) to [bend left] (\coord+3,0);
    
    \fill [white] (\coord+3,0) to [bend left] (\coord+1.5,1.5) to [bend left] (\coord+3,0);
    \fill [white] (\coord+3,0) to [bend left] (\coord+1.5,-1.5) to [bend left] (\coord+3,0);
    \fill [white] (\coord+5,0) to [out=160, in=20] (\coord+3,0) to [out=340, in=200] (\coord+5,0);

    \fill [black] (\coord+3,0) circle (2.25pt);
    \fill [black] (\coord+5,0) circle (2.5pt);
    \fill [white] (\coord+5,0) circle (2.25pt);
    \fill [black] (\coord+1.5,1.5) circle (2.5pt);
    \fill [white] (\coord+1.5,1.5) circle (2.25pt);
    \fill [black] (\coord+1.5,-1.5) circle (2.5pt);
    \fill [white] (\coord+1.5,-1.5) circle (2.25pt);
    \node[green] at (\coord+2.05, 0.5) {$\times$};
    \node[purple] at (\coord+2.5, 0.95) {$\times$};
    \node[red] at (\coord+2.05, -0.5) {$\times$};
    \node[blue] at (\coord+2.5, -0.95) {$\times$};
    \node[black] at (\coord+4, 0.21) {$\times$};
    \node[violet] at (\coord+4, -0.21) {$\times$};
    
    \node at (\coord+4.5, 0.35) {$e_1^-$};
    \node at (\coord+4.5, -0.35) {$e_1^+$};
    \node at (\coord+0.25, 1.75) {$\mathbb C$};
    \node at (\coord+0.25, -1.75) {$\mathbb C$};
    \node at (\coord+5, 1.5) {$(X_1,\,\omega_1)$};
    \node at (\coord+3.25, -1.5) {$S_1$};
    \node at (\coord+0.25, 0.5) {$S_2$};
    \node at (\coord+0.25, -0.5) {$S_3$};
    \node at (\coord+3.25, 0.375) {$P_1$};
    \node at (\coord+5.375, 0) {$Q_1$};
    }
    
    \foreach \x [evaluate=\x as \coord using  -0.625+ 7*\x] in {1} 
    {
    \pattern [pattern=north west lines, pattern color=pallido]
    (\coord, -2)--(\coord+6, -2)--(\coord+6, 2)--(\coord, 2)--(\coord,-2);
    \draw[thin, orange] (\coord+1,0) to [out=20, in=160] (\coord+3,0);
    \draw[thin, orange] (\coord+1,0) to [out=-20, in=-160] (\coord+3,0);
    \draw[thin, black, dashed] (\coord+3,0)--(\coord+3, 2);
    \draw[thin, black, dashed] (\coord+3,0)--(\coord+3, -2);
    \draw[thin, black, dashed] (\coord+3,0)--(\coord+6, 0);
    
    \draw[ultra thin, black] (\coord+3,0) to [bend left] (\coord+4.5,1.5) to [bend left] (\coord+3,0);
    \draw[ultra thin, black] (\coord+4.5,1.5) to [bend left] (\coord+3,0);
    \draw[ultra thin, black] (\coord+3,0) to [bend left] (\coord+4.5,-1.5) to [bend left] (\coord+3,0);
    \draw[ultra thin, black] (\coord+4.5,-1.5) to [bend left] (\coord+3,0);
    
    \fill [white] (\coord+3,0) to [bend left] (\coord+4.5,1.5) to [bend left] (\coord+3,0);
    \fill [white] (\coord+3,0) to [bend left] (\coord+4.5,-1.5) to [bend left] (\coord+3,0);
    \fill [white] (\coord+1,0) to [out=20, in=160] (\coord+3,0) to [out=200, in=340] (\coord+1,0);

    \fill [black] (\coord+3,0) circle (2.5pt);
    \fill [white] (\coord+3,0) circle (2.25pt);
    \fill [black] (\coord+1,0) circle (2.5pt);
    \fill [black] (\coord+4.5,1.5) circle (2.5pt);
    \fill [black] (\coord+4.5,-1.5) circle (2.5pt);
    \node[red] at (\coord+3.95, 0.5) {$\times$};
    \node[blue] at (\coord+3.5, 0.95) {$\times$};
    \node[green] at (\coord+3.95, -0.5) {$\times$};
    \node[purple] at (\coord+3.5, -0.95) {$\times$};
    \node[violet] at (\coord+2, 0.21) {$\times$};
    \node[black] at (\coord+2, -0.21) {$\times$};
    \node at (\coord+1.5, 0.35) {$e_2^-$};
    \node at (\coord+1.5, -0.35) {$e_2^+$};
    \node at (\coord+5.75, 1.75) {$\mathbb C$};
    \node at (\coord+5.75, -1.75) {$\mathbb C$};
    \node at (\coord+1, 1.5) {$(X_2,\,\omega_2)$};
    \node at (\coord+1, -1.5) {$S_4$};
    \node at (\coord+5.75, 0.5) {$S_6$};
    \node at (\coord+5.75, -0.5) {$S_5$};
    \node at (\coord+2.75, 0.375) {$Q_2$};
    \node at (\coord+0.625, 0) {$P_2$};
    }

    \end{tikzpicture}
    \caption{Realization of a hyperelliptic exact differential in the stratum $\mathcal{H}_2(5,5;-4,-4)$. The hyperelliptic involution has $6$ fixed points; all of them are drawn in the picture with the symbol $\times$. Two symbols with the same color are identified on the final surface and hence they need to be counted as a single fix point. }
    \label{fig:hyptrigenusonetwopoles}
\end{figure}
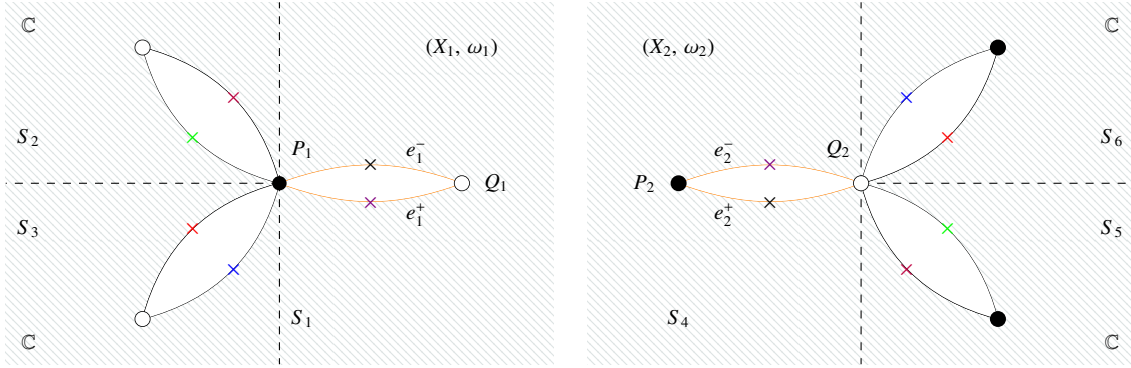

\noindent 

\smallskip

\noindent Let us focus on $(X_1,\,\omega_1)$. Around $P_1$ we can single out $p-1$ sectors, say $S_1,\dots,S_{p-1}$ in cyclic positive order, each of which is a wedge of angle $2\pi$. One of these sector, say $S_1$, is given by the first starting copy of $(\C,\,dz)$. All the other sectors $S_2,\dots,S_{p-1}$ come from the $p-2$ copies of $(\C,\,dz)$ bubbled along $r_1$. For every $k=2,\dots,p-1$, inside the sector $S_k$ there is a segment, say $s_k$, based at $P_1$ that forms a wedge of angle $2k\pi$ with $e_1$. Let ${Q_k}'$ be the extremal point of $s_k$ other than $P_1$.  Furthermore, the fixed orientation on the edge $e_1$ determines a preferred orientation on $s_k$ from $P_1$ to ${Q_k}'$. 

\smallskip

\noindent Similar considerations hold for $(X_2,\,\omega_2)$. Around $Q_2$ we can single out $p-1$ sectors, say $S_{p},\dots,S_{2p-2}$ in cyclic positive order, each of which is a wedge of angle $2\pi$. One of these sector, say $S_p$, is given by the second starting copy of $(\C,\,dz)$. The other sectors $S_{p+1},\dots,S_{2p-2}$ come from the $p-2$ copies of $(\C,\,dz)$ bubbled along $r_2$. For every $k=1,\dots,p-2$, each sector $S_{p+k}$ contains a segment, say $s_{p+k}$, based at $Q_2$ that forms a wedge of angle $2k\pi$ with $e_2$. Let ${P_k}'$ be the extremal point of $s_{p+k}$ other than $Q_2$. Even in this case, the already fixed orientation on $e_2$ determines a preferred orientation on $s_{p+k}$ from $Q_2$ to ${P_k}'$.

\smallskip

\noindent Slit $(X_1,\omega_1)$ and $(X_2,\omega_2)$ along $e_1$ and $e_2$ respectively, denote $e_1^{\pm}$ and $e_2^{\pm}$ the resulting sides and identify the edge $e_1^+$ with $e_2^-$ and the edge $e_1^-$ with $e_2^+$. The resulting space is a translation surface, say $(X,\omega)$, in the stratum $\mathcal{H}_0(p-1,p-1;-p,-p)$. In fact, recall that $e_i$ is oriented from $P_i$ to $Q_i$ for $i=1,2$; hence $P_1$ is identified with $P_2$ and, similarly, $Q_1$ is identified with $Q_2$ on $(X,\omega)$. If we denote $P,\,Q$ the resulting points, respectively, it is easy to check that $\omega$ has two zeros of order $p-1$ at $P$ and $Q$.

\smallskip

\noindent Let $1\le g\le p-2$. Slit $(X,\omega)$ along the segments $s_2,\dots,s_{g+1}$ (all leaving from $P$) and then slit along the segments $s_{p+1},\dots,s_{p+g}$ (leaving from $Q$). Denote the resulting edges as ${s_{k+1}}^{\pm}$ and ${s_{p+k}}^{\pm}$ as usual, according to our convention, for $k=1,\dots,g$. Next, identify $s_{k+1}^+$ with $s_{p+k}^-$ and identify $s_{k+1}^-$ with $s_{p+k}^+$. 

\smallskip 

\noindent The resulting space is a genus $g$ translation surface, say $(Z,\eta)\in\mathcal{H}_g(m,m;-p,-p)$, where $m=g+p-1$. By construction $(Z,\eta)$ has trivial periods and it admits a hyperelliptic involution with $2g+2$ fixed points given by: the mid-points of $e_1,e_2$ and the mid-points of $s_{k+1}^{\pm}$ and of $s_{p+k+1}^{\pm}$ for all $k=1,\dots,g$; see Figure \ref{fig:hyptrigenusonetwopoles}. This concludes the case with two higher order poles and indeed the proof of Proposition \ref{prop:hgdiffhyptriv}. 

\subsubsection{Realizing the trivial representation} As a consequence of subsections \S\ref{sssec:trihypsinglepole} and \S\ref{sssec:trihyptwopole} we have the following

\begin{cor}\label{cor:hgdiffhyptriv} Suppose the trivial representation can be realized in a stratum admitting two connected components, one of which is hyperelliptic and the other is not. Then it can be realized in each connected component of the same stratum. 
\end{cor}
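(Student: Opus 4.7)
The proof naturally splits in two halves. By the discussion in Section \S\ref{ssec:trirephyp}, the Hurwitz type inequality \eqref{eq:triconstr} rules out realizing the trivial representation in every stratum of the form $\mathcal{H}_g(2m;-2p)$ or $\mathcal{H}_g(2m;-p,-p)$ when $g\ge1$. Hence the only hyperelliptic-type strata that could possibly admit the trivial representation and two connected components are $\mathcal{H}_g(m,m;-2p)$ and $\mathcal{H}_g(m,m;-p,-p)$, with $m=g+p-1$ and $g\le p-1$ (respectively $g\le p-2$). For these strata, realization in the hyperelliptic component is exactly the content of Proposition \ref{prop:hgdiffhyptriv}, so it remains to realize the trivial representation in the non-hyperelliptic component.

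The plan is to modify the explicit constructions in Sections \S\ref{sssec:trihypsinglepole} and \S\ref{sssec:trihyptwopole} by introducing an asymmetric gluing pattern that destroys the order-two involution but preserves both the stratum and the trivial-period condition. Concretely, adopting the notation of \S\ref{sssec:trihypsinglepole}, after realizing $(X,\omega)\in\mathcal H_0(p-1,p-1;-2p)$ with the sectors $H_1,S_1,\dots,S_{p-1},H_2,S_p,\dots,S_{2p-2}$ in cyclic order, instead of pairing $s_k^{\pm}$ with $s_{p+k-1}^{\mp}$ for $k=1,\dots,g$, we choose two $g$-subsets $\{k_1,\dots,k_g\}\subset\{1,\dots,p-1\}$ and $\{l_1,\dots,l_g\}\subset\{1,\dots,p-1\}$ and a pairing $\{s_{k_i},s_{p+l_i-1}\}$ that is \emph{not} invariant under the rotation which sends $S_j\mapsto S_{p+j-1}$. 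Since each segment $s_j$ is a twin of $s_0$ of the same length and the surgery is local, the resulting structure is still an exact differential of genus $g$ in the same stratum $\mathcal H_g(m,m;-2p)$. The analogous modification works for $\mathcal H_g(m,m;-p,-p)$ starting from the two pairs $(X_i,\omega_i)\in\mathcal H_0(p-2;-p)$ of \S\ref{sssec:trihyptwopole}, by choosing an asymmetric matching of the slits around the two zeros $P$ and $Q$ of $(X,\omega)\in\mathcal H_0(p-1,p-1;-p,-p)$.

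The main obstacle is the verification that the resulting structure is genuinely non-hyperelliptic. When the stratum is additionally of even type, namely $m$ is even, the hyperelliptic and non-hyperelliptic components are distinguished also by the spin parity (see \S\ref{mscc}). In this case the verification reduces to a direct computation of $\varphi$ via formula \eqref{eq:spinparity} for an explicit symplectic basis cutting across the chosen slits, in the spirit of the computations in Figures \ref{fig:simplepolesdisranktwowithindex} and \ref{fig:simplepolesdisranktwowithindexeven}: one shows that the asymmetric choice of $\{k_i\},\{l_i\}$ flips the parity relative to the hyperelliptic construction. For the remaining hyperelliptic-type strata that are not of even type, the hyperellipticity is the only invariant distinguishing the two components, and we argue directly that any conformal involution $\tau$ with $\tau^*\xi=-\xi$ must swap the two zeros (as they have the same order) and fix the pole(s); such a $\tau$ would have to identify the cyclic ordering of the sectors around $P$ with that around $Q$ via the rotation $S_j\mapsto S_{p+j-1}$, and hence its existence forces the chosen pairing to be preserved by that rotation, contradicting our asymmetric choice.

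Finally, in the low-complexity genus-two cases, the claim follows more directly from Corollary \ref{cor:genustwohyp} combined with the exact realizations produced in \S\ref{sssec:trihypsinglepole} and \S\ref{sssec:trihyptwopole}, as the two connected components of $\mathcal H_2(m,m;-2p)$ and $\mathcal H_2(m,m;-p,-p)$ are distinguished by the spin parity (Remark \ref{rmk:specialgenus2strata}), and the asymmetric construction above manifestly alters the parity computation by one modulo two.
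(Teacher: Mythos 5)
Your proposal is correct and follows essentially the same route as the paper's own (sketched) proof: realize the hyperelliptic representative via Proposition \ref{prop:hgdiffhyptriv} and then re-glue the twin slits of the constructions in \S\ref{sssec:trihypsinglepole} and \S\ref{sssec:trihyptwopole} in a pattern not invariant under the order-two rotation, which lands in the non-hyperelliptic component. You in fact supply more justification for non-hyperellipticity than the paper, which simply asserts it; just note that the corollary's hypothesis of exactly two components already excludes the even-type strata (outside the vacuous genus-two cases), so the spin-parity branch of your argument is not needed.
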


\begin{proof}[Sketch of the proof] Suppose a stratum $\mathcal{H}_g(m,m;-\nu)$, where $\nu=\{2m\}$ or $\nu=\{m,m\}$ admits two connected components on of which is hyperelliptic; \textit{e.g.} $\mathcal H_2(5,5;-4,-4)$. We have seen above how to realize the trivial representation as the period of some hyperelliptic translation surface. We adopt the same notation above. In order to get an exact differential which is no longer hyperelliptic, it is sufficient to identify $s_{k+1}^+$ with $s_{2p-2-k}^-$ and $s_{k+1}^-$ with $s_{2p-2-k}^+$. The resulting space is also a genus $g$ translation surface in $\mathcal{H}_g(m,m;-p,-p)$ but it does not admit any hyperelliptic involution.
\end{proof}

\smallskip

\subsection{Meromorphic exact differentials with prescribed parity}\label{ssec:meroexdiffspin} We aim to realize the trivial representation as the period character of an exact meromorphic differential with prescribed spin parity. The gist of the idea is, once again, to use the induction on the genus. Similarly to the other cases, the process we shall develop here consists in adding one handle with trivial periods at each step, see subsection \S\ref{sssec:meroexdiffgencase}. Once again, in order for inductive process to start, we need to treat genus one surfaces in the first place and then explain how the induction works, see subsections \S\ref{ssec:meroexdiffgenone}. The main result of the present section is the following

\begin{prop}\label{prop:hgdiffspintriv}
Suppose the trivial representation can be realized in a stratum admitting two connected components distinguished by the spin parity. Then it can be realized as the period character of some translation surfaces with poles in both components of the same stratum with the only exceptions being the strata $\mathcal{H}_g(2^{g+2};\,-2^3)$ and $\mathcal{H}_g(2^{g+1};\,-4)$ for $g\ge2$. 
\end{prop}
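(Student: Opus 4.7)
The plan is to induct on the genus $g$, using the two surgeries that bubble a handle with trivial periods, introduced in Section \S\ref{ssec:gluingtrihand}. The base case $g=1$ is already covered by Proposition \ref{prop:genusonetri}, which realizes the trivial representation in every non-exceptional genus-one stratum with any prescribed rotation number; this information supplies the seed for the parity via the formula \eqref{eq:spinparity}. Strata with two simple poles are converted to strata of even type via the cylinder-closing construction of Remark \ref{twosimplepoles}, so it suffices to realize the trivial representation with prescribed parity in every non-exceptional stratum $\mathcal H_g(2m_1,\dots,2m_k;-2p_1,\dots,-2p_n)$ of even type.

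The inductive tool is the surgery from \S\ref{sssec:trialternhand}: by Lemma \ref{lem:addhandtriodd}, bubbling a handle with trivial periods at a zero of order $\ge 2$ preserves the even-type condition, increases the genus by one, creates one new zero of order $2$, and flips the spin parity by $1\pmod 2$. Given a target stratum $\mathcal H_g(\kappa;-\nu)$ of even type satisfying the Hurwitz type inequality \eqref{eq:triconstr}, I will choose a partition $\kappa'$ with $\sum\kappa'=\sum\kappa-2$ such that $\kappa'\cup\{2\}$ refines to $\kappa$ by successive breaking of zeros and such that $\mathcal H_{g-1}(\kappa';-\nu)$ is non-exceptional; the induction hypothesis then supplies a realization in each of the two parity components of $\mathcal H_{g-1}(\kappa';-\nu)$, and applying the surgery of Lemma \ref{lem:addhandtriodd} followed by breaking the newborn zero (which preserves parity) produces a realization in $\mathcal H_g(\kappa;-\nu)$ with either parity. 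For target strata admitting at most one natural reduction, ray-bubbling copies of $(\C,\,z\,dz)$, which preserves parity by Lemma \ref{lem:invaspin}, provides the needed flexibility to realize every admissible partition of zero orders.

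For the exceptional families $\mathcal H_g(2^{g+2};-2^3)$ and $\mathcal H_g(2^{g+1};-4)$ with $g\ge 2$, the Hurwitz type inequality is saturated: $2=\sum p_i-n-1$. Consequently, every valid reduction either lies inside the same exceptional family or gives a stratum where the trivial representation cannot be realized. The inductive chain is thus confined to the exceptional family and terminates at the genus-one exceptional strata $\mathcal H_1(2,2,2;-2^3)$ and $\mathcal H_1(2,2;-4)$, which by Lemmas \ref{lem:222-2-2-2} and \ref{lem:excaseone} are realizable only in their non-primitive components. Tracking the parity through each application of Lemma \ref{lem:addhandtriodd}, the induction delivers exactly the parity $g\pmod 2$ as predicted. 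To rule out the opposite parity, I will generalise the divisor-theoretic argument of Lemmas \ref{lem:excaseone}, \ref{lem:excasetwo} and \ref{lem:222-2-2-2}. An exact differential $\omega=df$ in either exceptional stratum corresponds to a degree-three cover $f\colon X\longrightarrow \cp$: in $\mathcal H_g(2^{g+1};-4)$ totally ramified at every zero $Z_i$ and at the unique pole $P$, yielding $3Z_i\sim 3P$, and in $\mathcal H_g(2^{g+2};-2^3)$ totally ramified at every zero with unramified simple poles, yielding $3Z_i\sim P_1+P_2+P_3$. Combined with the canonical class $K_X=\mathrm{div}(\omega)$, these relations pin down the theta characteristic $D_\omega$ with $2D_\omega=K_X$ up to three-torsion in $\mathrm{Pic}(X)$.

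The main obstacle will be the algebraic calculation of $h^0(\mathcal O_X(D_\omega))\pmod 2$ on the resulting trigonal curve $X$ for arbitrary $g\ge 2$. Whereas the genus-one case (Lemmas \ref{lem:excaseone}, \ref{lem:excasetwo}, \ref{lem:222-2-2-2}) is settled simply because $K_X\sim 0$, the higher-genus case requires a Riemann--Roch / Mumford-type computation on $X$, likely exploiting the splitting type of $f_*\mathcal O_X$ as a sum of line bundles on $\cp$, to establish the uniform congruence $h^0(\mathcal O_X(D_\omega))\equiv g\pmod 2$. Completing this cohomological step will yield the sharpness statement of Proposition \ref{prop:hgdiffspintriv} and hence of Theorem \ref{thm:mainthm2}.
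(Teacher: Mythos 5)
Your plan for the statement proper follows the paper's own route almost exactly: reduce the zero partition to a genus-one stratum, seed the parity there via Proposition \ref{prop:genusonetri} and the rotation number, and climb back up by bubbling handles with trivial periods via the surgery of \S\ref{sssec:trialternhand}, using Lemma \ref{lem:addhandtriodd} to flip the parity at each step and the saturation of the Hurwitz type inequality \eqref{eq:triconstr} to isolate $\mathcal H_g(2^{g+1};-4)$ and $\mathcal H_g(2^{g+2};-2^3)$ as the only families whose reductions terminate in the exceptional genus-one strata. Two remarks. First, your preliminary step about strata with two simple poles is vacuous here: an exact differential has no residues, so the trivial representation is never realizable when a simple pole is present, and no conversion via Remark \ref{twosimplepoles} is needed. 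Second, the part of your proposal devoted to ruling out the opposite parity in the exceptional families is really the content of the companion Proposition \ref{prop:hgdiffspintriveset}, and there your route diverges from the paper's: you propose computing $h^0$ of the theta characteristic on the trigonal curve via Riemann--Roch and the splitting type of $f_*\mathcal O_X$, and you correctly flag that computation as open; the paper instead classifies the connected components of the space of totally ramified triple covers by their monodromy patterns and computes the parity by degenerating to an admissible cover of compact type, where Cornalba's additivity of the Arf invariant reduces everything to the genus-one case. The paper's argument avoids any cohomology computation, so if you pursue your version you will need to actually establish the congruence $h^0\equiv g\pmod 2$, which is the one genuinely missing step in the proposal; for the statement of Proposition \ref{prop:hgdiffspintriv} itself, however, your argument is complete in outline.
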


\noindent The exceptional cases are handled by the following

\begin{prop}\label{prop:hgdiffspintriveset}
In the strata $\mathcal{H}_g(2^{g+2};\,-2^3)$ and $\mathcal{H}_g(2^{g+1};\,-4)$ for $g\ge2$, the trivial representation can only be realized in the connected component with parity equal to $g \,(\textnormal{mod }2)$.
\end{prop}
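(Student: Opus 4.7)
The approach is algebraic-geometric: identify each exact differential in the exceptional strata with a cyclic Galois triple cover of $\cp$, then read off the spin parity from the cohomology of the induced theta characteristic.

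An exact differential $\omega=df$ in either stratum yields a rational function $f\colon X\to\cp$ of degree $3$: in $\mathcal{H}_g(2^{g+2};-2^3)$ the three order-$2$ poles of $\omega$ become three simple poles of $f$, while in $\mathcal{H}_g(2^{g+1};-4)$ the unique order-$4$ pole of $\omega$ becomes a triple pole of $f$. Each order-$2$ zero of $\omega$ is a totally ramified triple branch point of $f$, so the local monodromy is a $3$-cycle in $S_3$. As $3$-cycles generate $A_3\cong\Z/3$ and the monodromy acts transitively, the cover is cyclic Galois of degree $3$, and $X$ admits an algebraic model $y^3=\prod_{i}(z-z_i)^{a_i}$ with $a_i\in\{1,2\}$, where $s:=\sum a_i\equiv 0\pmod 3$ in the first stratum (three unramified preimages of $\infty$) and $s\not\equiv 0\pmod 3$ in the second (one fully ramified preimage of $\infty$).

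The spin structure determined by $\omega$ is the theta characteristic $\eta=\frac{1}{2}(\omega)$---explicitly $\eta=\sum_{i=1}^{g+2}Z_i-\sum_{j=1}^{3}P_j$ or $\eta=\sum_{i=1}^{g+1}Z_i-2P$ in the two strata respectively---and its parity equals $h^0(\eta)\pmod 2$. Since $\eta$ is $\Z/3$-invariant as a divisor class, $H^0(X,\mathcal{O}_X(\eta))$ decomposes into three character eigenspaces $V_0\oplus V_1\oplus V_2$ in which a section takes the form $y^k r_k(z)$ for some rational function $r_k\in\C(z)$. The pointwise condition $\operatorname{div}(y^k r_k)+\eta\ge 0$ forces $r_k$ to have at most simple poles at the $z_i$ (and none elsewhere on $\C$) together with a prescribed minimum order of vanishing at $\infty$; this reduces to a linear system governed by a Vandermonde-type matrix.

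Carrying out the computation, the constraints at infinity kill $V_0$ and $V_1$ in both strata. Choosing in each stratum the admissible $(a_i)$ with the fewest $2$'s (none if possible, otherwise one or two), a direct dimension count gives
\[
\dim V_2=
\begin{cases}
g/3 & \text{if } g\equiv 0\pmod 3,\\
(g+2)/3 & \text{if } g\equiv 1\pmod 3,\\
(g-2)/3 & \text{if } g\equiv 2\pmod 3,
\end{cases}
\]
and each of these values is congruent to $g$ modulo $2$. Hence $h^0(\eta)\equiv g\pmod 2$, which forces the exact differential to lie in the component with parity $g\pmod 2$. The principal obstacle is verifying this uniformly across the three residue classes of $g\pmod 3$ and across both strata, since different admissible $(a_i)$ yield superficially different linear systems; the resolution is that parity is a deformation invariant and all admissible covers within a given stratum are connected by deformations, so a single representative $(a_i)$ per residue class suffices.
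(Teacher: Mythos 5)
Your route is genuinely different from the paper's: you compute the parity directly as $h^0$ of the theta characteristic $\eta=\tfrac12\operatorname{div}(\omega)$ via the character decomposition of sections on the cyclic triple cover $y^3=\prod(z-z_i)^{a_i}$, whereas the paper argues by induction on $g$, degenerating to an admissible cover glued from an element of the genus-$(g-1)$ locus and an element of the genus-one locus at a separating node and invoking additivity of the spin parity on curves of compact type. Your local computations are correct as far as they go (in particular $V_0=0$, and for the exponent vector with the fewest $2$'s the dimension count of $V_2$ and the congruence $\dim V_2\equiv g\pmod 2$ check out in all three residue classes).

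However, the final step contains a genuine gap. You dispose of the other admissible exponent vectors by asserting that ``all admissible covers within a given stratum are connected by deformations,'' so that one representative per residue class of $g\bmod 3$ suffices. This is false, and in fact circular: the locus of exact differentials in $\mathcal{H}_g(2^{g+2};-2^3)$ is \emph{disconnected}, with components indexed exactly by the monodromy data --- equivalently by the number $n_2$ of indices with $a_i=2$ --- and the proposition requires showing that \emph{every} such component lands in the parity-$g$ spin component. One cannot deform between covers with different $(n_1,n_2)$ while staying in the exact locus, and deforming inside the stratum without preserving exactness is precisely what would allow a jump between spin components. The gap is repairable by running your computation for an arbitrary admissible pair $(n_1,n_2)$ with $n_1+n_2=g+2$ and $n_1\equiv n_2\pmod 3$: one finds $\dim V_1=\max\bigl(0,(n_2-n_1)/3\bigr)$ and $\dim V_2=\max\bigl(0,(n_1-n_2)/3\bigr)$, hence $h^0(\eta)=|n_1-n_2|/3$, and writing $n_1-n_2=3d$ the integrality of $n_1=(g+2+3d)/2$ forces $d\equiv g\pmod 2$, so $h^0(\eta)\equiv g\pmod 2$ for every component. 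With that replacement (and the analogous check, or the paper's linear-equivalence trick $3Q\sim P_1+P_2+P_3$, for $\mathcal{H}_g(2^{g+1};-4)$), your argument becomes a complete and attractive alternative to the paper's degeneration proof.
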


\noindent We begin with a few considerations.

\subsubsection{Reducing to lower genus surfaces}\label{sssec:reducing}

\noindent Suppose the trivial representation appear as the period character of some differential in a stratum $\mathcal{H}_g(2m_1,\dots,2m_k, -2p_1,\dots,-2p_n)$. For the moment we do not consider any parity.
According to \cite[Theorem B]{CFG}, this is possible if and only if 
\begin{equation}\label{eq:ordcond}
2m_j +1 \le \sum_{i=1}^n 2p_i-n
\end{equation}
and 
\begin{equation}\label{eq:degcond}
\sum_{j=1}^k 2m_j=\sum_{i=1}^n 2p_i+2g-2.
\end{equation}

\noindent Let us consider the string $\kappa=(2m_1,\dots,2m_k)$. We now introduce an algorithm to reduce $\kappa$ to a new, possibly shorter, string $\lambda=(2d_1,\dots,2d_l)$ such that
\begin{align}\label{eq:ordcondred}
    \sum_{j=1}^l 2d_j & \,=\,\sum_{i=1}^n 2p_i;
\end{align}
that is $\mathcal H_1(2d_1,\dots,2d_l;\,-2p_1,\dots,-2p_n)$ is a non-empty stratum of genus one-differentials.

\smallskip 

\noindent Such a reduction process is defined as follow. Let $\{2e_1,\dots,2e_t\}$ be any string of positive integers indexed in non-increasing order, we distinguish two cases according to the list below
\begin{itemize}
    \item we reduce as $\{2e_1,\dots,2e_{t-1}, 2e_t\}\longmapsto\{2e_1,\dots,2e_{t-1}, 2e_t-2\}$ in the case the last integer is than $2$, that is $e_t\geq 2$, otherwise
    \smallskip
    \item if the last integer is equal to $2$, then reduce as $\{2e_1,\dots, e_{t-1}, 2\}\longmapsto\{2e_1,\dots,2e_{t-1}\}$.
\end{itemize}

\smallskip

\noindent Starting with a string $\kappa$ as above, after $g-1$ reductions we end up with a new string $\lambda=(2d_1,\dots,2d_l)$ such that \eqref{eq:ordcondred} holds. Notice in the first place that, if $\kappa$ is indexed in non-increasing order then even $\lambda$ is indexed in the same way. We shall always assume that $2d_1\ge 2d_2\ge \cdots \ge 2d_l$ at each step, in particular in the constructions below the assumption $2m_i\ge 2m_j$ for $i\le j$ plays an important role. 

\smallskip

\noindent Observe that, if the trivial representation $\chi\colon\shomolzn\longrightarrow\C$ can be realized in a stratum $\mathcal{H}_g(\kappa;-\nu)$ then the trivial representation $\shomolzon\longrightarrow\C$ can be realized in the stratum $\mathcal{H}_1(\lambda;\,-\nu)$. Therefore, the gist of the proof is to undo the reduction by adding one handle with trivial periods time by time. The way this handle will be glued depends on how the reduction has been performed. We obtain in this way a finite sequence of translation surfaces with the last one being the desired structure.

\subsubsection{Inductive process: higher genus surfaces}\label{sssec:meroexdiffgencase} Let the induction start. The idea is to get the access to a genus $g+1$ stratum starting from a genus $g$ one, similarly to what was done in Section \S\ref{sec:hgcpar}, that is
\begin{equation}
    \mathcal{H}_{g}(2m_1,\dots,2m_k-2;\,-2p_1,\dots,-2p_n)\longmapsto \mathcal{H}_{g+1}(2m_1,\dots,2m_k;\,-2p_1,\dots,-2p_n)
\end{equation}
where $2m_k-2\ge0$. For genus one surfaces, we have already seen in subsection \S\ref{ssec:meroexdiffgenone} that as soon as the trivial representation $\chi\colon\shomolzon\longrightarrow \C$ can be realized in a given stratum then it can be realized as the period character of some translation surface with poles in each connected component of the same stratum with only three exceptions. By ignoring the exceptional cases for a moment, we can use these structures as the base case for an inductive foundation. In fact, every non-connected stratum $\mathcal H_1(2\kappa;\,-2\nu)$ admits at least one component with odd rotation number and at least one component with even rotation number. We shall use the former case for realizing genus $g$ meromorphic differentials with odd spin parity whereas we shall use the latter case for realizing genus $g$ meromorphic differentials with even spin parity. 

\smallskip

\noindent However, by applying the reduction just described above, it is possible to land to an exceptional stratum that we recall to be $\mathcal H_1(2,2;-4)$, $\mathcal H_1(2^3;\,-2^3)$ or $\mathcal H_1(3,3;\,-3,-3)$. Since we consider strata of even type, the polar part is necessarily a string of the form $(2p_1,\dots,2p_n)$. Since the reduction above preserves the polar part and only alters the string $\kappa=(2m_1,\dots,2m_k)$, it follows that we never land in the stratum $\mathcal H_1(3,3;\,-3,-3)$ of genus one differentials.

\smallskip

\noindent We next wonder: \textit{from which strata do we land on an exceptional stratum?} Once again, the key observation is that the reduction above preserves the polar part. In particular, we land in the stratum $\mathcal H_1(2,2;\,-4)$ if and only if the reduction starts from a stratum of the form $\mathcal H_g(2m_1,\dots,2m_k;-4)$ with $g\ge2$. The Gauss-Bonnet condition, see Remark \ref{gbcond}, implies that $m_1+\cdots+m_k=g+1$ and the Hurwitz type inequality \eqref{eq:triconstr} implies that $2m_i\leq 2$. Therefore, we land in the exceptional stratum $\mathcal H_1(2,2;\,-4)$ if and only if the reduction starts from a stratum of the form $\mathcal H_g(2^{g+1};\,-4)$ for $g\geq2$. Finally, we observe that the same argument shows that we land in the exceptional stratum $\mathcal H_1(2^3;\,-2^3)$ if and only if the reduction starts from strata of the form $\mathcal H_g(2^{g+2};\,-2^3)$ for $g\geq2$. For $g\geq2$, we define every stratum $\mathcal H_g(2^{g+1};\,-4)$ and $\mathcal H_g(2^{g+2};\,-2^3)$ as an \textit{exceptional stratum of even type} and we shall consider them in \S\ref{sssec:eset}. The remaining part of this subsection is devoted to prove Proposition \ref{prop:hgdiffspintriv} which handle non-exceptional strata of even type.

\begin{proof}[Proof of Proposition \ref{prop:hgdiffspintriv}] Let $\mathcal H_g(2m_1,\dots,2m_k;\,-2p_1,\dots,-2p_n)$ be a non-exceptional stratum of even type and suppose the Hurwitz type inequality \eqref{eq:triconstr} holds, that means the trivial representation can be realized in this stratum as the period character of some translation surface with trivial periods. Let $\kappa$ be the string encoding the order of zeros and reduce it as described in \S\ref{sssec:reducing}. After $g-1$ steps we get a new string, say $\lambda=(2d_1,\dots,2d_l)$ with $l\leq k$. In the reduced stratum $\mathcal H_1(\lambda;\,-\nu)$, the trivial representation can be realized as the period character of some genus one-differential with rotation number $1$ or $2$, say $(X_1,\,\omega_1)$. We then undo the reduction in order to realize an exact genus $g$ meromorphic differential in the initial stratum. Finally, it remains to compute the parity of the spin structure. We bubble a handle with trivial periods as described in subsection \S\ref{sssec:trialternhand} and, as we have already seen, such a surgery alters the spin parity. Therefore, for a given stratum $\mathcal H_g(2m_1,\dots,2m_k;\,-2p_1,\dots,-2p_n)$ we get the access to the even component from the odd component of the lower stratum of differentials $\mathcal H_{g-1}(2m_1,\dots,2m_k-2;\,-2p_1,\dots,-2p_n)$. Similarly, we get the access to the odd component of the stratum above from the even one of the lower stratum of differentials $\mathcal H_{g-1}(2m_1,\dots,2m_k-2;\,-2p_1,\dots,-2p_n)$. 

\smallskip

\noindent We now explain how to undo the reduction. For $1\le h\le g-1$, at each step we bubble a handle with trivial periods by using the alternative construction provided at subsection \S\ref{sssec:trialternhand} and denote by $(X_{h+1},\omega_{h+1})$ the resulting structure. By using the same notation, suppose first that at the $h^{th}$ step the reduction is of the form $\{2e_1,\dots,2e_{t-1},2\}\mapsto \{2e_1,\dots,2e_{t-1}\}$ -- this is the easiest case to handle. Let $P_{t-1}$ be the zero of order $2e_{t-1}$ and let $c_1,\,c_2,\,c_3$ be three paths all leaving from $P_{t-1}$ with length $\varepsilon$ and such that the angle between the paths $c_1$ and $c_2$ and the angle between the paths $c_2$ and $c_3$ are both $2\pi$. The angle between the paths $c_3$ and $c_1$ is $(4e_{t-1}-2)\pi$. Bubble a handle with trivial periods as described in \S\ref{sssec:trialternhand}. The resulting structure $(X_{h+1},\,\omega_{h+1})$ is a translation surface with poles and trivial periods. From now on, suppose that at the $h^{th}$ step the reduction is of the form $\{2e_1,\dots,2e_t\}\mapsto \{2e_1,\dots,2e_t-2\}$ with $2e_t-2\geq2$. We need the following 

\begin{lem}\label{lem:gemtechlem}
Let $(X,\omega)\in\mathcal H_g(m_1,\dots,m_k;\,-p_1,\dots,-p_n)$ be a translation surface with poles and trivial absolute periods. Let $s_1$ be a saddle connection joining two zeros, say $P$ and $Q$. Let $s_2$ be a geodesic segment leaving from $P$ such that $s_1$ and $s_2$ are twins. Then $s_2$ is an embedded geodesic segment such that $s_1\,\cap\,s_2\subseteq \{P,\,Q\}$.
\end{lem}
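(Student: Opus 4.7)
The strategy is to exploit the hypothesis of trivial absolute periods. Since $\omega$ is exact, there exists a holomorphic primitive $f\colon X\longrightarrow\cp$ satisfying $df=\omega$, which is a local isometry away from the branch points of $\omega$. The assumption that $s_1$ and $s_2$ are twins translates into the fact that $f\circ s_1$ and $f\circ s_2$ are both injective maps onto the \emph{same} arc $\widehat{c}\subset \C$, parametrized by the developing direction $v\in\mathbb{S}^1$ starting at $f(P)$. Using arc length parametrization, this reads $f(s_i(t))=f(P)+tv$ for $i=1,2$.

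The embeddedness of $s_2$ follows at once from the definition of an $m$-pod: the twin path $s_2$ is injectively developed, which forces $s_2\colon[0,|s_2|]\to X$ to be injective and hence embedded as a subset of $X$.

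For the intersection claim, I will argue by contradiction. Suppose some $R\in s_1\cap s_2$ satisfies $R\notin\{P,Q\}$. Writing $R=s_1(t_1)=s_2(t_2)$, the equality $f(P)+t_1v=f(R)=f(P)+t_2v$ together with the injectivity of $\widehat{c}$ forces $t_1=t_2=:t$. Two cases arise: either $R$ is interior to both $s_1$ and $s_2$, or it is the endpoint of one of them (necessarily $s_2$, since $R\neq Q$) lying in the interior of the other. In the first case, since saddle connections and twin paths in a pod are branch-point-free in their interior, $f$ is a local isometry at $R$; both $s_1$ and $s_2$ therefore have tangent vector at $R$ equal to the pullback of $v$, and uniqueness of the geodesic through $R$ with this tangent forces $s_1\equiv s_2$ on a neighborhood of $t$. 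The second case is treated analogously by matching incoming tangents. In either case, the locus in $[0,t]$ where $s_1$ and $s_2$ coincide is nonempty, open and closed, hence equal to $[0,t]$. Taking the limit $t'\to 0^+$ shows that $s_1$ and $s_2$ leave $P$ along the same tangent direction in $T_PX$, contradicting the fact that distinct elements of a pod at $P$ lie in distinct $2\pi$-sheets of the tangent cone at $P$.

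The main obstacle is making the backward propagation rigorous up to the branch point $P$: while the interiors of $s_1$ and $s_2$ are by construction free of branch points, the backward limit lands precisely on $P$ itself, so one has to argue that the uniqueness of geodesic continuation extends to this limit in a way that distinguishes the distinct sheets of the pod. This is essentially the standard fact that a geodesic segment without interior branch points is determined by one endpoint together with an initial direction in the tangent cone at that endpoint. A separate minor technical point is the use of exactness in deducing $t_1=t_2$: without a globally defined primitive $f$, one would only be able to conclude that the two arcs have the same developed image in the universal cover, which is too weak.
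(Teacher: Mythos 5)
Your proof is correct, and it takes a genuinely different route from the paper's. The paper's argument begins by (implicitly) assuming that $P$ and $Q$ realize the minimal distance $d$ between zeros of $\omega$ --- a hypothesis that appears in the context where the lemma is applied but not in the statement itself --- and then argues that a self-intersection of $s_1$, or an intersection of $s_1$ with $s_2$ away from $\{P,Q\}$, would produce a closed piecewise-geodesic loop with trivial period; such a loop must carry extra branch points, which would contradict the minimality of $d$. Your argument instead leans entirely on the global primitive $f$ with $df=\omega$: the identity $f(s_i(t))=f(P)+tv$ immediately kills self-intersections of each $s_i$ (distinct parameters have distinct $f$-images) and forces any common point of $s_1$ and $s_2$ to occur at equal arc-length parameters; the local-isometry/open-closed propagation back to $P$ then contradicts the fact that distinct twins occupy distinct $2\pi$-sheets of the cone at $P$. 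This buys you two things: the lemma is proved in the generality in which it is actually stated (no minimality of $d(P,Q)$ is needed), and the embeddedness of both segments comes for free rather than via the loop argument. The one place where your write-up is thinner than it could be is the appeal to the definition of twins for embeddedness of $s_2$ (Definition~\ref{def:twins} already builds embeddedness into the notion of a pod, so the real content is that the continued developed image yields such a path); but your injectivity-of-$f$ observation covers this anyway, and your closing remarks correctly identify both the role of exactness in forcing $t_1=t_2$ and the only delicate point, namely that the contradiction is extracted at small positive parameter, where all points are regular, rather than at $P$ itself.
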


\noindent Suppose the Lemma holds. Let $(X_h,\omega_h) \in \mathcal H_g(2m_1,\dots,2m_k-2;\,-2p_1,\dots,-2p_n)$ and let $P_1,\dots, P_k$ be the zeros of $\omega$ of orders $2m_1,\dots,2m_{k-1},2m_k-2$ respectively. Let $P_i$ be a zero that realises
\begin{equation}
    d=\min_{\text{zeros of }\omega} d(\,P_i\,,\,P_k\,).
\end{equation}
Let $s_1$ be a saddle connection joining them and orient it from $P_i$ to $P_k$. Let $s_o$ and $s_2$ be two paths leaving from $P_i$ such that $s_o,\,s_1,\,s_2$ are pairwise twins; $s_o$ forms a wedge of angle $2\pi$ with $s_1$ on its left and $s_2$ forms a wedge of angle $2\pi$ with $s_1$ on its right. For simplicity, rename momentarily $P_k$ as $Q_1$ and then denote by $Q_o$ and $Q_2$ the extremal points of $s_o$ and $s_2$ other than $P_i$. There are three mutually disjoint possibilities that we now discuss
\begin{itemize}
    \item[1.] The extremal points $Q_o$, $Q_1$ and $Q_2$ are pairwise distinct,
    \smallskip
    \item[2.] two out of three extremal points coincides,
    \smallskip
    \item[3.] $Q_o$, $Q_1$ and $Q_2$ all coincide.
\end{itemize}

\begin{rmk}
Notice that, in principle, the extremal points $Q_o$ and $Q_2$ might be zeros of $\omega$. In the case where the extremal points $Q_o$ and $Q_2$ do not coincide with $Q_1$ but they are not regular points. For simplicity, suppose $Q_o=P_j$ for some $j\neq i,k$. We proceed exactly as above by ignoring the fact that $Q_o$ is not regular. The resulting structure lies in the stratum 
\begin{equation}
    \mathcal H_{h+1}(2m_1,\dots,\widehat{2m_j},\dots, 2m_j+2m_k;\,-2p_1,\dots,-2p_n).
\end{equation}
We eventually break the the zero of order $2m_j+2m_k$ into two zeros of orders $2m_j$ and $2m_k$ to get the desired result. A similar consideration holds if both $Q_o$ and $Q_2$ are not regular points. Since the constructions we are going to described do not depend on the nature of the extremal points, we suppose them to be regular for simplicity, unless otherwise specified, as the general case follows from the present remark. As an alternative, one may deform a little the given structure by "moving" the branch points so that the vertices $Q_o$ and $Q_2$ are both regular. Topologically, this kind of deformation has been described in \cite[Section \S6]{TA} and it is the geometric counterpart of the well-known Schiffer variations, see \cite{NS}; a classical tool in the study of Riemann surfaces and their moduli spaces. 
\end{rmk}

\smallskip

\noindent The case $1$ is the simplest one to deal with and it does not have exceptional sub-cases. We bubble a handle with trivial periods as described in \S\ref{sssec:trialternhand}. After slitting we get an hexagon with vertices $V_1,\dots,V_6$; see Figure \ref{fig:addtrihandleoddrot} above for a reference. We may define $V_1$ as the vertex arising from the branch point $P_i$ and then we label the remaining vertices in cyclic positive order. Two of these vertices, namely $V_1$ and $V_4$, have magnitude $(4m_{i}-2)\pi$ and $(4m_k-2)\pi$ respectively. All the other vertices, instead, have magnitude $2\pi$. Once we glue the "opposite" we get the desired handle with trivial periods and hence the resulting space is a translation surface $(X_{h+1},\,\omega_{h+1})\in \mathcal H_{h+1}(2m_1,\dots,2m_k;\,-2p_1,\dots,-2p_n)$ with trivial absolute periods. 

\smallskip

\begin{figure}[!ht]
    \centering
    \begin{tikzpicture}[scale=0.75, every node/.style={scale=1}]
    \definecolor{pallido}{RGB}{221,227,227}
    \pattern [pattern=north east lines, pattern color=pallido, rounded corners=5pt]  (30:7cm) -- (90:7cm) -- (150:7cm) -- (210:7cm) -- (270:7cm)--(330:7cm);
    
    \fill [white] (30:4cm) -- (90:4cm) -- (150:4cm) -- (210:4cm) -- (270:4cm)--(330:4cm);

    \newdimen\R
    \R=4cm
    \draw[thick] (330:\R) foreach \x in {30,90,...,330} { -- (\x:\R) };
    \foreach \x in {30,150,270} { 
        \draw[line width=0.5pt,black,fill=white] (\x:\R) circle (3pt);
    }
    \foreach \x in {90,210,330} { 
        \draw[line width=0.5pt,black,fill=black] (\x:\R) circle (3pt);
    }

    \node at (90:4.75cm) {$0$};
    \node at (330:4.75cm) {$0$};
    \node at (150:4.5cm) {$2$};
    \node at (30:5cm) {$2m_k-4$};
    \node at (270:4.5cm) {$0$};
    \node at (210:5cm) {$2m_{i}-2$};

    \end{tikzpicture}
    \caption{Magnitude of the vertices $V_1,\dots,V_6$ in the second case after breaking the zero $P_k$ of order $2m_k-2$. By drawing a blue and violet curve as in Figure \ref{fig:addtrihandleoddrot} it is possible to verify that both curves have even index and hence the spin structure changes after bubbling.}
    \label{fig:bubtrihandmagn}
\end{figure}
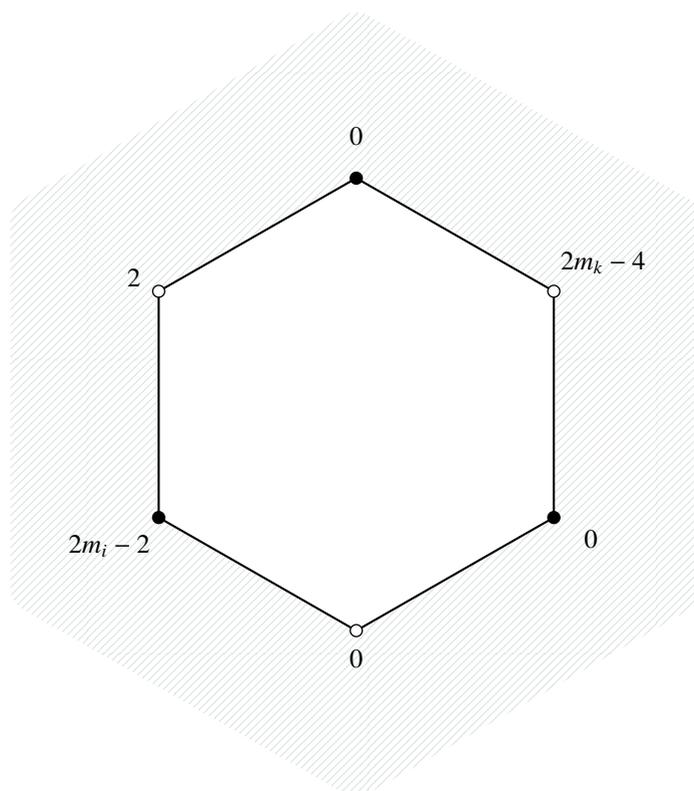

\noindent We now discuss the second case of the list above. Suppose in the first place that $Q_1$ coincides with $Q_o$ without loss of generality. Since two twins close up, we cannot immediately bubble a handle with trivial periods as in the previous case. We first need the following intermediate step. Suppose $2m_k-2\ge4$, that is $m_k\ge3$. The residual case $m_k=2$ needs a special treatment we shall consider later. Break $Q_1$ into two zeros $Q_{11}$ and $Q_{12}$ of orders $2$ and $2m_k-4\ge2$ respectively. Once the zero is broken, there are three twins with extremal points given by $Q_o$, $Q_{11}$ and $Q_{12}$ which are now all distinct. In this way we have reduced the construction to the case $1$ above with the only difference that, after slitting, the vertex $V_4$ of the resulting hexagon has magnitude $(4m_k-6)\pi$ and there is now another vertex, say $V_2$, with magnitude $6\pi$, see Figure \ref{fig:bubtrihandmagn}. Once we glue the "opposite" we get the desired handle with trivial periods and hence a translation surface with poles $(X_{h+1},\,\omega_{h+1})\in \mathcal H_{h+1}(2m_1,\dots,2m_k;\,-2p_1,\dots,-2p_n)$ with trivial absolute periods. As a second possibility, we next suppose the edges $s_o$ and $s_2$ close up, that means $Q_o$ and $Q_2$ now coincide. In this case, $Q_o$ turns out to be a branch point of order $2m_j$, with $j<k$. Since $2m_j\ge 2m_k> 2m_k-2\ge 2$, it follows that $2m_j\ge4$ and hence we can break it into two zeros of even order each; \textit{e.g.} $2$ and $2m_j-2$. We now proceed as above in order to get the desired structure with trivial absolute periods.

\smallskip

\noindent The third case is similar to the second one. Once again we cannot immediately bubble a handle with trivial periods. Suppose $2m_k-2\ge6$, that is $m_k\ge4$. The remaining cases $m_k=2,3$ need a special treatment and we shall consider them below. Break $Q_1$ into three zeros of orders $2$, $2m_k-6$ and $2$ respectively. Notice that $2m_k-6\ge2$. After slitting, the vertices $V_1$ and $V_4$ have magnitude $(4m_i-2)\pi$ and $(4m_k-10)\pi$ and there are now two vertices, say $V_2$ and $V_5$ with magnitude $6\pi$. Once again, we glue the opposite sides and the resulting structure is a translation surface $(X_{h+1},\,\omega_{h+1})$ with trivial periods in $\mathcal H_{h+1}(2m_1,\dots,2m_k;\,-2p_1,\dots,-2p_n)$.

\smallskip

\noindent In order to complete the proof we treat the residual cases mentioned above. Let us summarise them for the reader's convenience

\begin{itemize}
    \item[1.] $Q_1$ coincides with $Q_o$ or $Q_2$ and $m_k=2$,
    \smallskip
    \item[2.] $Q_o$, $Q_1$ and $Q_2$ all coincides and $m_k=2$,
    \smallskip
    \item[3.] $Q_o$, $Q_1$ and $Q_2$ all coincides and $m_k=3$.
\end{itemize}

\noindent All constructions above were performed by defining $s_1$ as the saddle connection that joins $P_k$ with the closest zero and then defining $s_o$ and $s_2$ as the twin $2\pi$ far away from $s_1$ on the left and right respectively. In order to deal with these residual cases we just need to find suitable twins on order to perform the desired bubbling.

\smallskip

\noindent Let $s_1$ be a saddle connection that joins $P_k$ with the nearest $P_i$ and let $s_2,\dots,s_{2m_i+1}$ be paths leaving from $P_i$ such that $s_i,\,s_j$ are twins for all $1\le i,\,j\le 2m_i+1$. In the case $m_k=2$, then there are at most three edges having $P_i$ and $Q_1$ as extremal points. If $s_1$ is the sole saddle connection joining $P_i$ and $Q_1$ then there is nothing to prove. Moreover, since $P_i$ is a branch point of even order $2m_i$, and $2m_i\ge 2m_k=4$, there two adjacent edges, say $s_l$ and $s_{l+1}$ such that
\begin{itemize}
    \item[\textit{i.}] their extremal points other than $P_i$ are both different from $Q_1$, and
    \smallskip
    \item[\textit{ii.}] they respectively form an angle $2\pi$ and $4\pi$ with respect to a saddle connection joining $P_i$ and $Q_1$. In fact, this is always possible unless $2m_i=4$. However, this case has already been covered above because we can find three edges with distinct extremal points.
\end{itemize}

\noindent If $s_l$ and $s_{l+1}$ close up, then the common extremal point other than $P_i$ is a branch point, say $P_j$, of order $2m_j\ge 2m_k>2$. Split $P_j$ into two branch points of orders $2$ and $2m_j-2$. Then we can bubble a handle with trivial periods as already described above. The resulting space is a genus $h+1$ translation surface with trivial absolute periods and prescribed parity as desired. Finally, the case $m_k=3$ is similar. The only difference here is that, in this case, there are at most five edges having $P_i$ and $Q_1$ as the extremal points. Once again, since $2m_i\ge 2m_k\ge6$, there are two adjacent edges $s_l$ and $s_{l+1}$ such conditions \textit{i.} and \textit{ii.} above holds. Then we may proceed as above to bubble a handle with trivial periods and then obtain a translation surface with poles and trivial absolute periods with the desire parity. This last case completes the proof of Proposition \ref{prop:hgdiffspintriv}.
\end{proof}

\noindent We are left with the following

\begin{proof}[Proof of Lemma \ref{lem:gemtechlem}]
In the first place we recall that any translation surface with poles and trivial absolute periods has at least two zeros otherwise the Hurwitz type inequality \eqref{eq:triconstr} does not hold. Let $P$ and $Q$ be a pair of zeros that realize 
\begin{equation}
    d=\min_{\text{zeros of }\omega} d(\,\cdot\,,\,\cdot\,)
\end{equation}
where $d(\,\cdot\,,\,\cdot\,)$ denotes the usual Euclidean distance. Let $s_1$ be a saddle connection joining them. The segment $s_1$ is embedded as it cannot have self-intersections. In fact, if there was a self-intersection at $R\in s_1$ then $s_1$ would contain a closed geodesic loop with trivial period. Denote the segments $P \rightarrow R\rightarrow R\rightarrow Q$ by $\gamma_1,\gamma_2,\gamma_3$ respectively. The intersection at $R$ must be transverse: if not, the geodesic segments would coincide. Moreover, the closed loop $\gamma_2$ has trivial period by assumption and it necessarily passes through at least two branch points. Then $R$ is a zero for $\omega$ but this contradicts the minimality of $d$ unless $R\in\{P,\,Q\}$. On the other hand, $R$ cannot coincide with $P$ or $Q$ otherwise $s_1$ can be shortened. Therefore $s_1$ is embedded. Let $s_2$ be another segment leaving from $P$ such that $s_1$ and $s_2$ are twins, see Definition \ref{def:twins}. For the same reason as above, $s_2$ cannot intersects itself and hence it is embedded. It remains to show that $s_1\,\cap\,s_2\subseteq \{P,\,Q\}$. In the case that $s_1\,\cap\,s_2= \{P,\,Q\}$ we are done and $s_2$ is another saddle connection joining $P$ and $Q$. Otherwise, in the case $s_1\,\cap\,s_2\subseteq \{P,\,R\}$ with $R\neq Q$, then $P\rightarrow R\rightarrow P$ is a simple closed geodesic loop with trivial period. Once again, $R$ must be a branch point but this contradicts the minimality of $d$.
\end{proof}

\smallskip

\subsubsection{Exceptional strata of even type}\label{sssec:eset} It remains to consider the exceptional cases of Proposition \ref{prop:hgdiffspintriv}, that is to prove Proposition \ref{prop:hgdiffspintriveset}. We have the following

\begin{lem}
For every $g\ge2$, the trivial representation can only be realized in the connected component of $\mathcal{H}_g(2^{g+2};\,-2^3)$ with parity equal to $g \,(\textnormal{mod }2)$.
\end{lem}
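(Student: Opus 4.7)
The plan is to interpret an exact differential in $\mathcal{H}_g(2^{g+2};-2^3)$ as $df$ for a degree-three rational function $f\colon\overline{X}\to\cp$, observe by a monodromy argument that the cover is necessarily cyclic Galois, and then compute the spin parity through the Atiyah--Mumford formula by decomposing the relevant space of sections under the resulting $\Z/3$-action.

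First I would note that if $\omega=df$ lies in $\mathcal{H}_g(2^{g+2};-2^3)$, then each zero of $\omega$ of order $2$ forces $f$ to be totally ramified there with index $3$, while each double pole corresponds to an unramified preimage of $\infty$; hence $\deg f=3$ and $f$ has exactly $n\defeq g+2$ branch points in $\cp$, all totally ramified. Since every local monodromy is a $3$-cycle in $S_3$, the monodromy lies in $A_3\cong\Z/3$, so $f$ is a cyclic Galois cover with an order-three automorphism $\tau$ that fixes each zero $P_i$ pointwise, permutes the three poles $Q_1,Q_2,Q_3$ cyclically, and satisfies $\tau^*\omega=\omega$. Concretely one obtains an affine model
\[
y^3=\prod_{i=1}^{n}(x-x_i)^{a_i},\qquad a_i\in\{1,2\},\qquad \sum_i a_i\equiv 0\pmod{3},
\]
on which $f=x$, $\tau y=\zeta y$ for a primitive cube root of unity $\zeta$, and $\omega=dx$.

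Next, by Atiyah--Mumford the spin parity $\varphi(\omega)$ equals $h^0(\overline{X},\mathcal{O}(D))\bmod 2$, where $D=\tfrac12\operatorname{div}(\omega)=\sum_i P_i-\sum_j Q_j$ is a theta characteristic of degree $g-1$. Since $D$ is $\tau$-invariant, $H^0(\mathcal{O}(D))$ decomposes as $V_0\oplus V_\zeta\oplus V_{\zeta^2}$. The invariant summand $V_0$ consists of functions pulled back from $\cp$ which must be regular at every $x_i$ (total ramification forbids any pole at $P_i$) and vanish at $\infty$, so $V_0=0$. A $\zeta$-eigensection has the form $g(x)y$ and a $\zeta^2$-eigensection has the form $g(x)y^2$; translating $\operatorname{div}(h)+D\geq 0$ into explicit bounds on the poles of $g$ at the $x_i$ and on its vanishing order at $\infty$ yields, after a routine calculation with rational functions on $\cp$,
\[
\dim V_\zeta=\max\!\Bigl(0,\,\tfrac{n-2k}{3}\Bigr),\qquad \dim V_{\zeta^2}=\max\!\Bigl(0,\,\tfrac{2k-n}{3}\Bigr),
\]
where $k=\#\{i:a_i=1\}$. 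Hence $h^0=|n-2k|/3$, and since $n-2k\equiv n\equiv g\pmod 2$ while division by $3$ preserves parity, this integer has the same parity as $g$. Thus every exact differential in the stratum lies in the spin component of parity $g\bmod 2$, and no exact differential lies in the other component.

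Realizability in that component is immediate: any choice of $n$ distinct points $x_i\in\C$ and a valid assignment of $(a_i)$, which exists whenever $n\geq 3$ by solving $k\equiv 2n\pmod 3$ with $0\leq k\leq n$, yields an explicit exact differential in the stratum whose parity is $g\bmod 2$ by the computation above. The main obstacle in carrying out this plan is the explicit eigenspace calculation: one must track carefully the local order of $y$ at each $P_i$ (where it equals $a_i$) and at each $Q_j$ (where it equals $-\sum a_i/3$), and then count rational functions on $\cp$ with prescribed simple poles and prescribed vanishing at $\infty$. A parallel argument, incorporating total ramification also at $\infty$ so that $\sum a_i\not\equiv 0\pmod{3}$, handles the companion exceptional stratum $\mathcal{H}_g(2^{g+1};-4)$ and yields the same parity conclusion.
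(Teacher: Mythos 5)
Your proof is correct but takes a genuinely different route from the paper. You observe (as the paper does implicitly through its monodromy description) that all local monodromies are $3$-cycles, hence the degree-$3$ cover is cyclic Galois with group $\Z/3$; but rather than deducing the parity from a degeneration, you compute $h^0$ of the theta characteristic $D=\sum P_i-\sum Q_j$ directly, decomposing $H^0(\mathcal{O}(D))$ into $\Z/3$-eigenspaces and counting rational functions $g(x)$ on $\cp$ with prescribed pole/vanishing bounds. I checked the eigenspace bounds: writing $s=\sum a_i/3=(2n-k)/3$ with $n=g+2$, one indeed finds $\dim V_0=0$, $\dim V_\zeta=\max(0,(n-2k)/3)$ and $\dim V_{\zeta^2}=\max(0,(2k-n)/3)$, giving $h^0=|n-2k|/3$, which is $\equiv n-2k\equiv n\equiv g\pmod 2$. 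The paper instead argues by induction on $g$: it identifies the components $\mathcal{TR}_g(k)$ by the count of $c$ versus $c^{-1}$ monodromy cycles, degenerates to a compact-type admissible cover by colliding two branch points with the same cycle, and applies Cornalba's additivity of spin parity on compact-type nodal curves to reduce to the $g=1$ base case (Lemma~\ref{lem:222-2-2-2}, itself a linear-equivalence argument). Your approach is more self-contained and explicit — it gives the actual value of $h^0$ for each component $k$, not merely its parity, and avoids both the induction and the theory of admissible covers — at the cost of a slightly more delicate divisor computation; the paper's degeneration argument is shorter once the base case is in hand and sits more naturally alongside the rest of its toolkit. For the companion stratum $\mathcal{H}_g(2^{g+1};-4)$ the paper has a slicker trick (the two theta characteristics are linearly equivalent because $3Q\sim P_1+P_2+P_3$), whereas your method would repeat the eigenspace count with total ramification at $\infty$; both work.
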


\begin{proof}
By Lemma~\ref{lem:222-2-2-2} the claim holds for the case of $g = 1$. We will prove the general case by induction on $g$. Denote by $\mathcal{TR}_g$ the locus of exact differentials in $\mathcal{H}_g(2^{g+2};\,-2^3)$. Note that $\mathcal{TR}_g$ equivalently parameterises triple covers of $\mathbb{C} \mathbf{P}^1$ totally ramified at $Z_1,\ldots, Z_{g+2}$ and having $P_1+P_2+P_3$ as the fiber over infinity.  

\smallskip

\noindent First we show that connected components of $\mathcal{TR}_g$ correspond to integers $0\leq k \leq (g+2)/2$ such that $2k \equiv g+2 \pmod{3}$. To see this, we use the monodromy description of triple covers parameterized in a connected component of $\mathcal{TR}_g$. Up to permuting the branch points and relabeling the three sheets, we can assume that the monodromy cycles at the first $k$ branch points are given by the permutation $c = (1,2,3)$ and the monodromy cycles at the last $g+2-k$ branch points are given by the permutation $c^{-1} = (1,3,2)$, where $c^k (c^{-1})^{g+2-k} = {\rm id}$. This is equivalent to requiring that $2k - g - 2$ is divisible by $3$. 

\smallskip

\noindent Denote by $\mathcal{TR}_g(k)$ the connected component by using $k$ monodromy cycles of $c$ and $g+2-k$ monodromy cycles of $c^{-1}$. Without loss of generality we can assume that $k \geq 2$. Next we will exhibit a degenerate cover in the boundary of each $\mathcal{TR}_g(k)$ (in the sense of admissible covers \cite[\S3G]{HarrisMorrison}) by gluing an element in $\mathcal{TR}_{g-1}(k-2)$ and an element in $\mathcal {TR}_1(3)$ at a separating node. To see this, let two branch points both with monodromy cycle $c$ approach each other in the target $\mathbb{C} \mathbf{P}^1$.  The resulting (admissible) cover is given by gluing the following two sub-covers. One is over $\mathbb{C} \mathbf{P}^1$ with the two chosen branch points of monodromy cycle $c$, and with an extra branch point at the node out of the vanishing cycle enclosing the two merged points whose monodromy cycle is $(c \cdot c)^{-1} = c$.  The other is over $\mathbb{C} \mathbf{P}^1$ with the remaining branch points of $k-2$ monodromy cycles $c$ and $g+2-k$ monodromy cycles of $c^{-1}$, with an extra branch point at the node whose monodromy cycle is $c^{-1}$. 

\smallskip

\noindent Finally we show that the spin parity of differentials in $\mathcal{TR}_g(k)$ is $g \pmod{2}$. By the induction hypothesis, the spin parities of $\mathcal{TR}_{g-1}(k-2)$ and $\mathcal{TR}_1(3)$ are $g-1$ and $1 \pmod{2}$, respectively.  Moreover, the degenerate differential in the boundary of $\mathcal{TR}_g(k)$ described above has the same parity as those in the interior since it is of compact type, i.e., the gluing node separates the two sub-surfaces. Therefore, by the fact that the spin parity on a curve of compact type is given by the sum of parities of its components \cite[(3.2)]{Cornalba}, the parity of the interior of $\mathcal{TR}_g(k)$ is equal to $(g-1) + 1 = g \pmod{2}$. 
\end{proof}

\smallskip

\noindent We finally show the following

\begin{lem}
For every $g\ge2$, the trivial representation can only be  realized in the connected component of $\mathcal{H}_g(2^{g+1};\,-4)$ with parity equal to $g \,(\textnormal{mod }2)$.
\end{lem}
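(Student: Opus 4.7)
Our plan is to follow the algebro-geometric strategy used in the proof of the previous lemma, adapted to $\mathcal{H}_g(2^{g+1};-4)$. Denote by $\mathcal{TR}_g$ the locus of exact differentials in this stratum; it parameterises degree-three covers $f\colon X\to \cp$ totally ramified at the $g+1$ zeros $Z_1,\ldots,Z_{g+1}$ over distinct finite branch values and totally ramified over $\infty$ at the pole (where $f$ has a triple pole, so $df$ has a pole of order four). As before, we would classify the connected components of $\mathcal{TR}_g$ via monodromy. Each of the $g+2$ branch points (including $\infty$) carries a $3$-cycle, either $c=(1,2,3)$ or $c^{-1}$; writing $k$ for the number of type-$c$ monodromies, the product-one relation reads $2k\equiv g+2\pmod 3$. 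The $S_3$-action swapping $c\leftrightarrow c^{-1}$ identifies $k$ with $g+2-k$, and for $g\ge 2$ we may always choose $k\ge 2$. Denote the resulting component by $\mathcal{TR}_g(k)$.

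For each such $\mathcal{TR}_g(k)$ we would construct a compact-type admissible-cover degeneration by letting two finite branch points of monodromy $c$ collide in the target. The base becomes $\cp\cup\cp$ glued at a node, with the two merged branch points on the bubble and the remaining $g$ branch values (including $\infty$) on the main component. The local monodromy at the base node is $c^2=c^{-1}$ on the bubble side and $c$ on the main side, so a single cover node appears with triple ramification on each branch. A standard degree count on the dualising sheaf then identifies the rescaled limit differential on the bubble with an element of $\mathcal{H}_1(2,2;-4)$ with trivial periods: two zeros of order two at the colliding points and a pole of order four at the cover node. By Lemma~\ref{lem:excaseone} the bubble differential must lie in the non-primitive (torsion-$2$) component. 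The corresponding cover on the main side sits in $\mathcal{TR}_{g-1}(k-2)\subset\mathcal{H}_{g-1}(2^g;-4)$.

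Since the degeneration is of compact type, the Arf invariant of the limit equals the sum of the Arf invariants of its two components and agrees with the spin parity of the nearby smooth interior of $\mathcal{TR}_g(k)$. The bubble contributes $1\pmod 2$: on any representative of the non-primitive component of $\mathcal{H}_1(2,2;-4)$ a pair of handle generators has both indices divisible by $2$, so formula~\eqref{eq:spinparity} evaluates to $(\textnormal{Ind}(\alpha)+1)(\textnormal{Ind}(\beta)+1)\equiv 1\pmod 2$. By induction on $g$ (with base case $g=2$, where the unique component $\mathcal{TR}_2(2)$ degenerates into two bubbles each contributing parity $1$, for a total of $0\equiv 2\pmod 2$) the main component has parity $g-1\pmod 2$, whence $\mathcal{TR}_g(k)$ has parity $g\pmod 2$. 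The hardest step will be confirming the precise admissible-cover computation identifying the rescaled limit differentials at the node: we must verify that the bubble genuinely lands in $\mathcal{H}_1(2,2;-4)$ rather than in some neighbouring stratum, so that Lemma~\ref{lem:excaseone} applies, and that the polar behaviour at the node on the main side produces exactly the prescribed polarity $-4$ over $\infty$ plus a new zero of order two at the node. Once these local rescaling computations are settled, the parity bookkeeping runs identically to the previous lemma.
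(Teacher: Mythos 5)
Your argument is correct in outline and is exactly the route the paper relegates to its final sentence (``Alternatively, one can just apply the same monodromy and degeneration argument as in the preceding lemma''). The paper's primary proof is much shorter and sidesteps the degeneration entirely: it observes that the triple covers underlying $\mathcal{H}_g(2^{g+1};-4)$ and $\mathcal{H}_g(2^{g+2};-2^3)$ are literally the same (degree-three covers totally ramified at the $g+2$ points $Z_1,\dots,Z_{g+1},Q$), and that the two spin line bundle classes $Z_1+\cdots+Z_{g+1}+Q-P_1-P_2-P_3$ and $Z_1+\cdots+Z_{g+1}-2Q$ are linearly equivalent because $3Q\sim P_1+P_2+P_3$, both being fibers of the cover; equal spin classes have equal parity, so the statement follows at once from the preceding lemma. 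Your version buys logical independence from that lemma at the cost of re-verifying the admissible-cover local computation, which, as you yourself flag, is the delicate step; the paper's linear-equivalence reduction avoids it completely.

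One small slip in your bookkeeping: the monodromy at the node must be $(c\cdot c)^{-1}=c$ on the bubble side (so that the product of the three monodromies on the bubble is trivial) and $c^{-1}$ on the main side; you have these reversed. The reversal is harmless for the ramification profile, since either $3$-cycle makes the node totally ramified on both branches, but only the correct assignment is consistent with your own claim that the main component lands in $\mathcal{TR}_{g-1}(k-2)$: with monodromy $c$ at the node the main component would carry $k-1$ cycles of type $c$, and the congruence $2(k-1)\equiv (g-1)+2\pmod 3$ fails. The rest — the bubble landing in the non-primitive component of $\mathcal{H}_1(2,2;\,-4)$ by Lemma~\ref{lem:excaseone}, its Arf contribution of $1$ since both handle generators have even index, and the compact-type additivity of the parity — is sound and matches the template of the preceding lemma.
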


\begin{proof}
This case essentially follows from the one in the preceding lemma. In both cases, the related triple covers arise from the same description, that is, totally ramified at $g+2$ points.  Let $Z_1, \ldots, Z_{g+1}$ and $Q$ be the $g+2$ ramification points. Let $P_1 + P_2 + P_3$ be an unramified fiber divisor. Then in the former case we consider the spin line bundle associated to the divisor $Z_1 + \cdots + Z_{g+1} + Q - P_1 - P_2 - P_3$, while in the latter case it is $Z_1 + \cdots + Z_{g+1} - 2Q$.  Since $3Q \sim P_1 + P_2 + P_3$, the two spin line bundle classes are linearly equivalent, hence they have the same parity. Alternatively, one can just apply the same monodromy and degeneration argument as in the preceding lemma.  
\end{proof}

\noindent In $g=4$, for instance, there are two distinct monodromy patterns corresponding to $k=0$ and $k=3$. That means $\mathcal{TR}_4$ has two connected components in $\mathcal H_4(2^6;\,-2^3)$ as well as in $\mathcal H_4(2^5;\,-4)$. The pictures below depict a realization of translation surfaces with trivial periods in both components of $\mathcal{TR}_4\subset \mathcal H_4(2^6;\,-2^3)$. A direct computation of the Arf-invariant shows that in both cases the parity, see Section \S\ref{sssec:spinpar}, is even as expected. See Figures \ref{fig:genus4evenspin1} and \ref{fig:genus4evenspin2} for these constructions.

\begin{figure}[!ht]
    \centering
    \begin{tikzpicture}[scale=1, every node/.style={scale=0.8}]
    
    \definecolor{pallido}{RGB}{221,227,227}
   
    \foreach \x [evaluate=\x as \coord using  \x] in {0,8} 
    {
    \pattern [pattern=north west lines, pattern color=pallido]
    (\coord,7)--(\coord+7,7)--(\coord+7,0)--(\coord,0)--(\coord,7);
    
    \draw[thin, orange] (\coord+2.25,5.5)--(\coord+4.75,5.5);
    \draw[thin, orange] (\coord+1.5, 4)--(\coord+1.5,2);
    \draw[thin, orange] (\coord+5.5, 4)--(\coord+5.5,2);
    
    \draw[thin, violet,->] (\coord+2.25,6.25)--(\coord+4.75,6.25);
    \draw[thin, violet] (\coord+2.25,4.75)--(\coord+4.75,4.75);
    \draw[thin, violet] (\coord+2.25,6.25) arc [start angle=90, end angle=270, radius = 0.75];
    \draw[thin, violet] (\coord+4.75,6.25) arc [start angle=90, end angle=-90, radius = 0.75];
    
    \draw[thin, violet, <-] (4.75,4)--(4.75,2);
    \draw[thin, violet] (6.25,4)--(6.25,2);
    \draw[thin, violet] (4.75, 4) arc [start angle=180, end angle=0, radius = 0.75];
    \draw[thin, violet] (4.75, 2) arc [start angle=180, end angle=360, radius = 0.75];
    
    \draw[thin, blue, <-] (1.55,3)--(5.45,3);
    
    \fill [black] (\coord+2.25,5.5) circle (1.5pt);
    \fill [black] (\coord+4.75,5.5) circle (1.5pt);
    
    \fill [black] (\coord+1.5,2) circle (1.5pt);
    \fill [black] (\coord+1.5,4) circle (1.5pt);
    \fill [black] (\coord+5.5,2) circle (1.5pt);
    \fill [black] (\coord+5.5,4) circle (1.5pt);
    }

    \draw[thin, blue, <-] (9.6,3)--(13.4,3);
    
    \draw[thin, blue] (1.45, 3.5)--(1.25, 3.5);
    \draw[thin, blue] (1.25, 3.5) arc [start angle=270, end angle=180, radius = 0.25];
    \draw[thin, blue, ->] (1,3.75)--(1, 6.25);
    \draw[thin, blue] (1, 6.25) arc [start angle=180, end angle=90, radius = 0.25];
    \draw[thin, blue] (1.25, 6.5)--(2.75,6.5);
    \draw[thin, blue] (2.75, 6.5) arc [start angle=90, end angle=0, radius = 0.25];
    \draw[thin, blue] (3, 6.25)--(3,5.5);
    
    \draw[thin, blue,->] (11, 5.5)--(11,4);
    \draw[thin, blue] (11,4) arc [start angle=0, end angle=-90, radius = 0.5];
    \draw[thin, blue] (10.5, 3.5)--(9.5,3.5);
    
    \draw[thin, blue] (9.5, 3)--(9,3);
    \draw[thin, blue] (9,3) arc [start angle=270, end angle=180, radius = 0.25];
    \draw[thin, blue] (8.75, 3.25)--(8.75, 6.25);
    \draw[thin, blue] (8.75,6.25) arc [start angle=180, end angle=90, radius = 0.25];
    \draw[thin, blue, ->] (9, 6.5)--(13.75,6.5);
    \draw[thin, blue] (13.75,6.5) arc [start angle=90, end angle=0, radius = 0.25];
    \draw[thin, blue] (14, 3.25)--(14, 6.25);
    \draw[thin, blue] (14, 3.25) arc [start angle=0, end angle=-90, radius = 0.25];
    \draw[thin, blue] (13.55, 3)--(13.75, 3);
    
    \foreach \x [evaluate=\x as \coord using  \x] in {4} 
    {
    \pattern [pattern=north west lines, pattern color=pallido]
    (\coord,-8)--(\coord+7,-8)--(\coord+7,-1)--(\coord,-1)--(\coord,-8);
    
    \draw[thin, orange] (\coord+2.25,-2.5)--(\coord+4.75,-2.5);
    \draw[thin, orange] (\coord+1.5, -4)--(\coord+1.5,-6);
    \draw[thin, orange] (\coord+5.5, -4)--(\coord+5.5,-6);
    
    \fill [black] (\coord+2.25, -2.5) circle (1.5pt);
    \fill [black] (\coord+4.75, -2.5) circle (1.5pt);
    \fill [black] (\coord+1.5,-6) circle (1.5pt);
    \fill [black] (\coord+1.5,-4) circle (1.5pt);
    \fill [black] (\coord+5.5,-6) circle (1.5pt);
    \fill [black] (\coord+5.5,-4) circle (1.5pt);
    
    }
    
    \draw[thin, violet, <-] (8.75,-4)--(8.75,-6);
    \draw[thin, violet] (10.25,-4)--(10.25,-6);
    \draw[thin, violet] (8.75, -4) arc [start angle=180, end angle=0, radius = 0.75];
    \draw[thin, violet] (8.75, -6) arc [start angle=180, end angle=360, radius = 0.75];
    
    \draw[thin, blue] (5.45, -5.5)--(5.25, -5.5);
    \draw[thin, blue] (5.25, -5.5) arc [start angle=90, end angle=180, radius = 0.25];
    \draw[thin, blue] (5, -5.75)--(5,-6.75);
    \draw[thin, blue] (5, -6.75) arc [start angle=180, end angle=270, radius = 0.25];
    \draw[thin, blue, ->] (5.25, -7)--(10.5,-7);
    \draw[thin, blue] (10.5, -7)arc [start angle=270, end angle=360, radius = 0.25];
    \draw[thin, blue] (10.75, -6.75)--(10.75, -5.75);
    \draw[thin, blue] (10.75, -5.75) arc [start angle=0, end angle=90, radius = 0.25];
    \draw[thin, blue] (10.5, -5.5)--(9.55, -5.5);
    
    \draw[thin, blue, ->] (7, -2.55)--(7,-4.25);
    \draw[thin, blue] (7,-4.25) arc [start angle=0, end angle=-90, radius = 0.25];
    \draw[thin, blue] (6.75, -4.5) -- (5.75, -4.5);
    \draw[thin, blue] (5.25, -4.5) arc [start angle=270, end angle=180, radius = 0.25];
    \draw[thin, blue, ->] (5, -4.25) -- (5, -2.25);
    \draw[thin, blue] (5, -2.25) arc [start angle=180, end angle=90, radius = 0.25];
    \draw[thin, blue] (5.25, -2)--(6.75,-2);
    \draw[thin, blue] (6.75, -2) arc [start angle=90, end angle=0, radius = 0.25];
    
    \node at (1.25, 2.5) {$a_1^+$};
    \node at (1.75, 2.5) {$a_2^-$};
    \node at (9.25, 2.5) {$a_2^+$};
    \node at (9.75, 2.5) {$a_3^-$};
    \node at (5.25,  -5) {$a_3^+$};
    \node at (5.75,  -5) {$a_1^-$};
    
    \node at ( 5.25, 2.5) {$b_1^+$};
    \node at ( 5.75, 2.5) {$b_2^-$};
    \node at (13.25, 2.5) {$b_2^+$};
    \node at (13.75, 2.5) {$b_3^-$};
    \node at ( 9.25,  -5) {$b_3^+$};
    \node at ( 9.75,  -5) {$b_1^-$};
    
    \node at ( 3.5, 5.75) {$c_1^+$};
    \node at ( 3.5, 5.25) {$c_2^-$};
    \node at (11.5, 5.75) {$c_2^+$};
    \node at (11.5, 5.25) {$c_3^-$};
    \node at ( 7.5,-2.25) {$c_3^+$};
    \node at ( 7.5,-2.75) {$c_1^-$};
    
    \node at (0.75, 5.5) {$\alpha_1$};
    \node at (6.75, -3.75) {$\alpha_1$};
    \node at (8.5, 5.5) {$\alpha_2$};
    \node at (3.5, 3.25) {$\alpha_2$};
    \node at (10.75, 4.25) {$\alpha_3$};
    \node at (4.75, -3) {$\alpha_3$};
    \node at (11.5, 3.25) {$\alpha_4$};
    \node at ( 7.5,-6.75) {$\alpha_4$};
    
    \node at (3.5, 6.5) {$\beta_1$};
    \node at (6.5,3) {$\beta_2$};
    \node at (9.25, 5.5) {$\beta_3$};
    \node at (10.5,-5) {$\beta_4$};
    
    \end{tikzpicture}
    \caption{Realization of a translation surface with poles and even parity in $\mathcal H_4(2^6;\,-2^3)$. Here the monodromy pattern is equal to $k=0$. Unlike the notation used so far, here the edges $a_i^+$, $b_i^+$ and $c_i^+$ are identified with $a_i^-$, $b_i^-$ and $c_i^-$ respectively. Moreover, the indices of the curves $\alpha_i$ and $\beta_i$ are according to the Table \ref{tab:indeces1} below.
    }
    \label{fig:genus4evenspin1}
\end{figure}
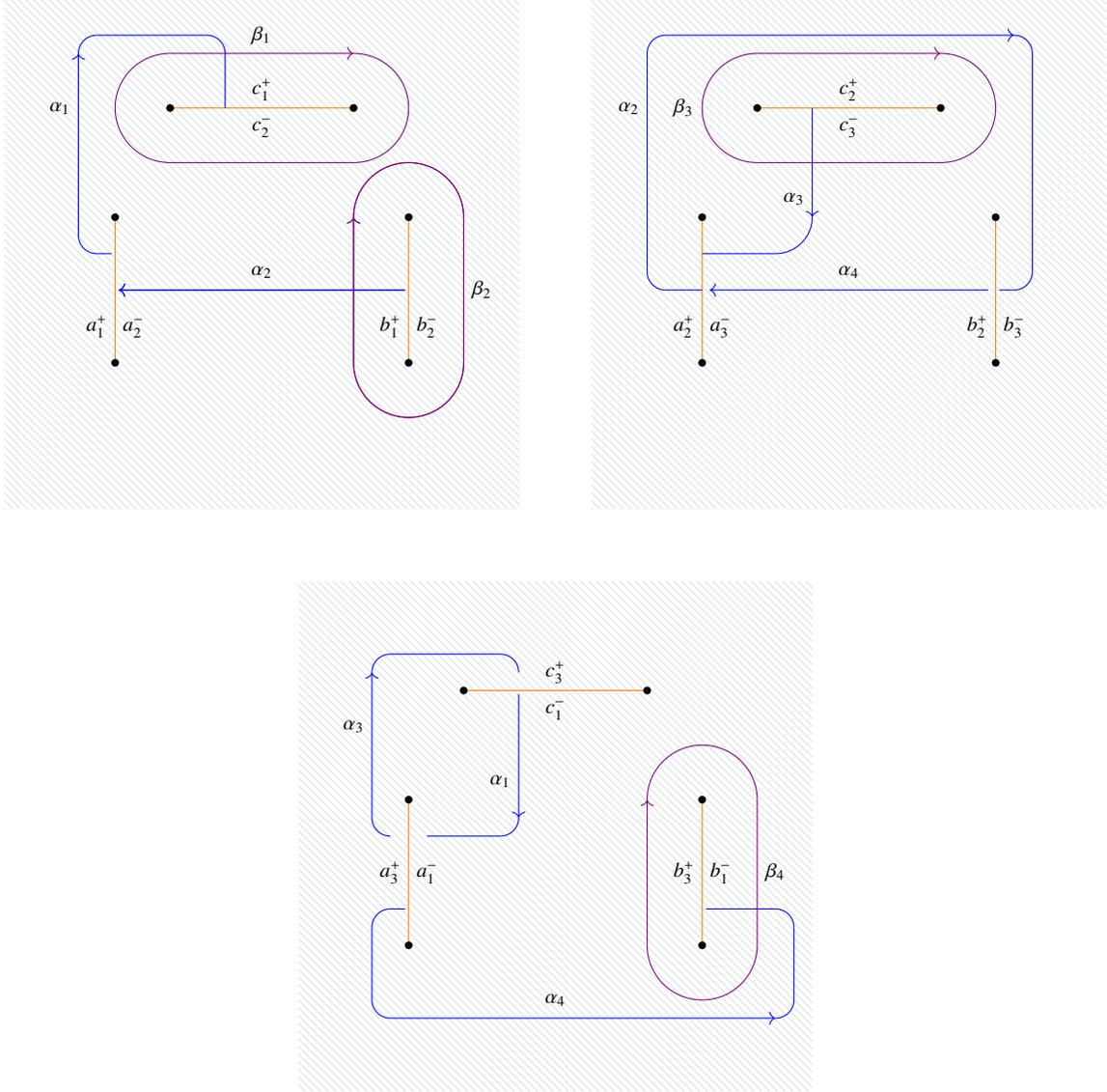
\begin{table}[!ht]
    \centering
    \begin{tabular}{lcc}
    \midrule
      $i$ & $\textnormal{Ind}(\alpha_i)$ & $\textnormal{Ind}(\beta_i)$\\ \midrule
      1 & 1 & 1\\ \midrule
      2 & 1 & 1\\ \midrule
      3 & 1 & 1\\ \midrule
      4 & 1 & 1\\ \midrule
    \end{tabular}
    \caption{Indices of curves in Figure \ref{fig:genus4evenspin1}.}
    \label{tab:indeces1}
\end{table}

\begin{figure}[!ht]
    \centering
    \begin{tikzpicture}[scale=0.8, every node/.style={scale=0.7}]
    
    \definecolor{pallido}{RGB}{221,227,227}
   
    \foreach \x [evaluate=\x as \coord using  \x] in {0,10} 
    {
    \pattern [pattern=north west lines, pattern color=pallido]
    (\coord,9)--(\coord+9,9)--(\coord+9,0)--(\coord,0)--(\coord,9);
    
    \draw[thin, orange] (3.5, 2.5)--(3.5,6.5);
    \draw[thin, orange] (\coord+5.5, 2.5)--(\coord+5.5,6.5);
    
    \draw[thin, orange] (\coord+3.5, 6.5)--(\coord+1.5, 4.5);
    \draw[thin, orange] (11.5, 4.5)--(13.5, 2.5);
    
    \draw[thin, orange] (15.5, 2.5)--(17.5, 4.5);
    \draw[thin, orange] (5.5, 6.5)--(7.5, 4.5);
    
    \fill [black] (\coord+5.5, 6.5) circle (1.5pt);
    \fill [black] (\coord+5.5, 2.5) circle (1.5pt);
    \fill [black] (\coord+3.5, 6.5) circle (1.5pt);
    \fill [black] (\coord+3.5, 2.5) circle (1.5pt);
    \fill [black] (\coord+1.5, 4.5) circle (1.5pt);
    \fill [black] (\coord+7.5, 4.5) circle (1.5pt);
    
    }
    
    \foreach \x [evaluate=\x as \coord using  \x] in {5} 
    {
    \pattern [pattern=north west lines, pattern color=pallido]
    (\coord-1,-1)--(\coord+9,-1)--(\coord+9,-10)--(\coord-1,-10)--(\coord-1,-1);
    
    \draw[thin, orange] (8.5, -7.5)--(8.5, -3.5);
    \draw[thin, orange] (8.5, -7.5)--(6.5, -5.5);
    
    \draw[thin,orange] (10.5, -7.5)--(12.5, -5.5);
    \draw[thin,orange] (12.5, -5.5)--(10.5, -3.5);
    
    \fill [black] (\coord+5.5, -3.5) circle (1.5pt);
    \fill [black] (\coord+5.5, -7.5) circle (1.5pt);
    \fill [black] (\coord+3.5, -3.5) circle (1.5pt);
    \fill [black] (\coord+3.5, -7.5) circle (1.5pt);
    \fill [black] (\coord+1.5, -5.5) circle (1.5pt);
    \fill [black] (\coord+7.5, -5.5) circle (1.5pt);
    
    }
    
    
    \draw[thin, blue] (2.5, 5.5)--(2.5,8);
    \draw[thin, blue] (2.5,8) arc [start angle=180, end angle=90, radius = 0.5];
    \draw[thin, blue, <-] (3,8.5)--(8,8.5);
    \draw[thin, blue] (8,8.5) arc [start angle=90, end angle=0, radius = 0.5];
    \draw[thin, blue] (8.5,8)--(8.5,4);
    \draw[thin, blue] (8.5,4) arc [start angle=0, end angle=-90, radius = 0.5];
    \draw[thin, blue] (8,3.5)--(6.5,3.5);
    \draw[thin, blue] (6.5,3.5) arc [start angle=270, end angle=180, radius = 0.5];
    \draw[thin, blue] (6,4)--(6,5.5);
    \draw[thin, blue] (5.5,6) arc [start angle=90, end angle=0, radius = 0.5];
    
    \draw[thin, blue] (15.5, 6)--(15,6);
    \draw[thin, blue, <-] (15,6) arc [start angle=90, end angle=180, radius = 0.5];
    \draw[thin, blue] (14.5,5.5) arc [start angle=360, end angle=270, radius = 0.5];
    \draw[thin, blue] (13,5)--(14,5);
    \draw[thin, blue] (12.5,5.5) arc [start angle=180, end angle=270, radius = 0.5];
    
    \node at (5.5, 8.25) {$\alpha_1$};
    \node at (15, 6.25) {$\alpha_1$};
    
    \draw[thin, blue] (15, 2.5) arc [start angle=180, end angle=405, radius = 0.5];
    \draw[thin, blue] (15, 2.5)--(15, 4);
    \draw[thin, blue] (15, 4) arc [start angle=0, end angle=90, radius = 0.5];
    \draw[thin, blue, ->] (13,4.5)--(14.5,4.5);
    \draw[thin, blue] (13, 4.5) arc [start angle=90, end angle=180, radius = 0.5];
    \draw[thin, blue] (12.5,4)--(12.5,3.5);
    
    \draw[thin, blue] (7.5, -8)--(7.5,-6.5);
    \draw[thin, blue] (8,-8.5) arc [start angle=270, end angle=180, radius = 0.5];
    \draw[thin, blue, <-] (8,-8.5)--(9.5,-8.5);
    \draw[thin, blue] (9.5,-8.5) arc [start angle=270, end angle=360, radius = 0.5];
    \draw[thin, blue] (10, -7.5)--(10,-8);
    \draw[thin, blue] (10,-7.5) arc [start angle=180, end angle=45, radius = 0.5];
    
    \node at (13.5, 4.25) {$\alpha_2$};
    \node at (8.75, -8.25) {$\alpha_2$};
    
    \draw[thin, blue, ->] (3.5, 3) arc [start angle=90, end angle=360, radius = 0.5];
    \draw[thin, blue] (4,2.5)--(4,7);
    \draw[thin, blue] (4, 7) arc [start angle=180, end angle=90, radius = 0.5];
    \draw[thin, blue] (4.5,7.5)--(6,7.5);
    \draw[thin, blue] (6,7.5) arc [start angle=90, end angle=0, radius = 0.5];
    \draw[thin, blue] (6.5,7)--(6.5,5.5);
    \draw[thin, blue] (11.5,-5)--(11.5, -4.5);
    \draw[thin, blue] (11.5,-5) arc [start angle=0, end angle=-90, radius = 0.25];
    \draw[thin, blue, ->] (11.25,-5.25)--(9.25, -5.25);
    \draw[thin, blue] (9.25,-5.25) arc [start angle=90, end angle=180, radius = 0.25];
    \draw[thin, blue] (9, -6.75)--(9, -5.5);
    \draw[thin, blue] (8.75,-7) arc [start angle=270, end angle=360, radius = 0.25];
    \draw[thin, blue] (8.5, -7)--(8.75,-7);
    
    \node at (4.25, 4.5) {$\alpha_3$};
    \node at (10.25, -5) {$\alpha_3$};
    
    \draw[thin, blue] (12, 4.5) arc [start angle=0, end angle=45, radius = 0.5];
    \draw[thin, blue, <-] (12, 4.5) arc [start angle=0, end angle=-45, radius = 0.5];
    \draw[thin, blue] (6, -5.5) arc [start angle=180, end angle=315, radius = 0.5];
    \draw[thin, blue, <-] (6, -5.5)--(6, -3.5);
    \draw[thin, blue] (6, -3.5) arc [start angle=180, end angle=90, radius = 0.5];
    \draw[thin, blue] (6.5,-3)--(8.5,-3);
    \draw[thin, blue] (8.5,-3) arc [start angle=90, end angle=-90, radius = 0.5];
    \draw[thin, blue] (3.5, 6) arc [start angle=90, end angle=180, radius = 0.5];
    \draw[thin, blue] (3,5.5)--(3,4.5);
    \draw[thin, blue] (3, 4.5) arc [start angle=0, end angle=-90, radius = 0.5];
    \draw[thin, blue] (2.5, 4)--(1.5,4);
    \draw[thin, blue, <-] (1.5, 4) arc [start angle=270, end angle=45, radius = 0.5];
    
    \node at (0.75, 4.5) {$\alpha_4$};
    \node at ( 7.5, -2.75) {$\alpha_4$};
    \node at (12.25, 4.75) {$\alpha_4$};
    
    
    \draw[thin, violet] (3.5, 3.5)--(2.5, 3.5);
    \draw[thin, violet, ->] (2.5, 3.5) arc [start angle=90, end angle=270, radius = 1.5];
    \draw[thin, violet] (2.5, 0.5)--(6.25,0.5);
    \draw[thin, violet] (6.25, 0.5) arc [start angle=270, end angle=360, radius = 0.5];
    \draw[thin, violet] (6.75, 1)--(6.75, 5.25);
    
    \draw[thin, violet] (11.75, -4.5)--(11.75, -3);
    \draw[thin, violet] (11.75,-3) arc [start angle=0, end angle=180, radius = 1];
    \draw[thin, violet, ->] (9.75,-3)--(9.75,-4);
    \draw[thin, violet] (9.75,-4) arc [start angle=0, end angle=-90, radius = 0.5];
    \draw[thin, violet] (9.25,-4.5) arc [start angle=90, end angle=180, radius = 0.5];
    \draw[thin, violet] (8.75,-5)--(8.75,-6.25);
    \draw[thin, violet] (8.75,-6.25) arc [start angle=0, end angle=-90, radius = 0.25];
    
    \node at (0.75, 1.75) {$\beta_1$};
    \node at (10.75, -1.75) {$\beta_1$};
    
    \draw[thin, violet] (15.35, 3.5)--(15, 3.5);
    \draw[thin, violet] (15, 3.5) arc [start angle=90, end angle=180, radius = 0.5];
    \draw[thin, violet] (14.5,3)--(14.5,2);
    \draw[thin, violet] (14.5, 2) arc [start angle=180, end angle=270, radius = 0.5];
    \draw[thin, violet, <-] (15, 1.5)--(17.5, 1.5);
    \draw[thin, violet] (17.5, 1.5) arc [start angle=270, end angle=360, radius = 0.5];
    \draw[thin, violet] (18,2)--(18,5);
    \draw[thin, violet] (18,5) arc [start angle=0, end angle=90, radius = 0.5];
    \draw[thin, violet] (17.5, 5.5)--(17, 5.5);
    \draw[thin, violet] (17,5.5) arc [start angle=90, end angle=180, radius = 0.5];
    \draw[thin, violet] (16.5,5)--(16.5,4);
    \draw[thin, violet] (16.5, 4) arc [start angle=0, end angle=-90, radius = 0.5];
    \draw[thin, violet] (15.75, 3.5)--(16, 3.5);
    \draw[thin, violet, <-] (5.5, 3.5) arc [start angle=90, end angle=445, radius = 1];
    
    \node at (5.5, 1.25) {$\beta_2$};
    \node at (16.5, 1.25) {$\beta_2$};
    
    \draw[thin, violet] (15.5,5.5)--(16,5.5);
    \draw[thin, violet] (16,5.5) arc [start angle=270, end angle=360, radius = 0.5];
    \draw[thin, violet] (16.5,6)--(16.5,7);
    \draw[thin, violet] (16.5, 7) arc [start angle=0, end angle=90, radius = 0.5];
    \draw[thin, violet, <-] (16, 7.5)--(14.5,7.5);
    \draw[thin, violet] (14.5, 7.5) arc [start angle=90, end angle=180, radius = 0.5];
    \draw[thin, violet] (14, 7)--(14,6);
    \draw[thin, violet] (14, 6) arc [start angle=360, end angle=180, radius = 0.5];
    
    \draw[thin, violet] (3,6)--(3,7.5);
    \draw[thin, violet] (3, 7.5) arc [start angle=180, end angle=90, radius = 0.5];
    \draw[thin, violet] (3.5, 8)--(4,8);
    \draw[thin, violet] (4, 8) arc [start angle=90, end angle=0, radius = 0.5];
    \draw[thin, violet] (4.5, 7.5)--(4.5, 7);
    \draw[thin, violet, <-] (4.5, 7)--(4.5, 6);
    \draw[thin, violet] (4.5, 6) arc [start angle=180, end angle=270, radius = 0.5];
    \draw[thin, violet] (5,5.5)--(5.5,5.5);
    
    \node at (4.75, 6) {$\beta_3$};
    \node at (15.25, 7.75) {$\beta_3$};
    
    \draw[thin, violet, <-] (8, 4.5) arc [start angle=360, end angle=135, radius = 0.5];
    \draw[thin, violet] (8, 4.5) arc [start angle=0, end angle=130, radius = 0.5];
    
    \draw[thin, violet] (7.25, -6)--(7.25,-5);
    \draw[thin, violet] (7.25, -5) arc [start angle=0, end angle=90, radius = 0.5];
    \draw[thin, violet] (6.75, -4.5)--(5.5, -4.5);
    \draw[thin, violet] (5.5, -4.5) arc [start angle=90, end angle=180, radius = 0.5];
    \draw[thin, violet] (5,-5)--(5,-8.5);
    \draw[thin, violet] (5, -8.5) arc [start angle=180, end angle=270, radius = 0.5];
    \draw[thin, violet, ->] (5.5, -9)--(12.5,-9);
    \draw[thin, violet] (12.5, -9) arc [start angle=270, end angle=360, radius = 0.5];
    \draw[thin, violet] (13,-8.5)--(13,-5.5);
    \draw[thin, violet] (13, -5.5) arc [start angle=0, end angle=135, radius = 0.5];
    
    \draw[thin, violet] (12, -5.5) arc [start angle=180, end angle=135, radius = 0.5];
    \draw[thin, violet] (12, -5.5) arc [start angle=0, end angle=-90, radius = 0.5];
    \draw[thin, violet, ->] (11.5, -6)--(10,-6);
    \draw[thin, violet] (10,-6) arc [start angle=90, end angle=180, radius = 0.5];
    \draw[thin, violet] (9.5,-6.5)--(9.5, -7.5);
    \draw[thin, violet] (9.5, -7.5) arc [start angle=0, end angle=-90, radius = 0.5];
    \draw[thin, violet] (9,-8)--(8.5,-8);
    \draw[thin, violet] (8.5, -8) arc [start angle=270, end angle=180, radius = 0.5];
    \draw[thin, violet] (8,-7.5)--(8,-7);
    
    \draw[thin, violet] (14, 2.5) arc [start angle=0, end angle=135, radius = 0.5];
    \draw[thin, violet] (14, 2.5) arc [start angle=0, end angle=-90, radius = 0.5];
    \draw[thin, violet, ->] (13.5, 2)--(12.75,2);
    \draw[thin, violet] (12.75, 2) arc [start angle=270, end angle=180, radius = 0.5];
    \draw[thin, violet] (12.25, 2.5)--(12.25, 3.5);
    
    \node at (7.5, 5.25) {$\beta_4$};
    \node at (13.25, 1.75) {$\beta_4$};
    \node at (8.75, -9.25) {$\beta_4$};
    \node at (10.75, -6.25) {$\beta_4$};

    
    \node at (2.5, 5) {$b^+$};
    \node at (2, 5.25) {$a^-$};
    \node at (13, 5.5) {$a^+$};
    \node at (12.5, 5.75) {$b^-$};
    
    \node at (3.75, 4) {$d^+$};
    \node at (3.25, 4) {$c^-$};
    \node at (8.75, -4.375) {$c^+$};
    \node at (8.25, -4.375) {$d^-$};
    
    \node at (12.75, 3) {$e^-$};
    \node at (13, 3.5) {$f^+$};
    \node at (7.75, -7) {$f^-$};
    \node at (8, -6.5) {$e^+$};
    
    \node at (5.75, 4) {$h^+$};
    \node at (5.25, 4) {$g^-$};
    \node at (15.75, 5) {$g^+$};
    \node at (15.25, 5) {$h^-$};
    
    \node at (6.625, 5.125) {$l^+$};
    \node at (6.875, 5.5) {$i^-$};
    \node at (11.25, -3.75) {$l^-$};
    \node at (11, -4.25) {$i^+$};
    
    \node at (11, -6.75) {$m^-$};
    \node at (11.25, -7.25) {$n^+$};
    \node at (17.5, 4) {$m^+$};
    \node at (17.25, 4.5) {$n^-$};
    \end{tikzpicture}
    \caption{Realization of a translation surface with poles and even parity in $\mathcal H_4(2^6;\,-2^3)$. Here the monodromy pattern is equal to $k=3$. Unlike the notation used so far, here the edges $a^+,\dots,n^+$ are identified with $a^-,\dots,n^-$ respectively. Moreover, the indices of the curves $\alpha_i$ and $\beta_i$ are according to the Table \ref{tab:indeces2} below.}
    \label{fig:genus4evenspin2}
\end{figure}
\begin{table}[!ht]
    \centering
    \begin{tabular}{lcc}
    \midrule
      $i$ & $\textnormal{Ind}(\alpha_i)$ & $\textnormal{Ind}(\beta_i)$\\ \midrule
      1 & 1 & 1\\ \midrule
      2 & 0 & 2\\ \midrule
      3 & 0 & 0\\ \midrule
      4 & 2 & 3\\ \midrule
    \end{tabular}
    \caption{Indices of curves in Figure \ref{fig:genus4evenspin2}.}
    \label{tab:indeces2}
\end{table}

\noindent In both cases, by using formula \eqref{eq:spinparity}, it is possible to see that both structures have even parity and belong to different connected components of $\mathcal{TR}_4$.

\appendix

\section{Proof strategy flowcharts}\label{appfdgo}

\noindent The proof of Theorem \ref{mainthm} is long and it involves a case-by-case discussion according to the diagrams below.

\subsection{Genus-one surfaces} Let $\chi\colon\shomolzon\longrightarrow \C$ be a representation. Suppose we want to realize $\chi$ in a connected component of the stratum $\mathcal{H}_1(m;-p_1,\dots,-p_n)$, namely as the holonomy of some translation surface with a single zero of maximal order. In Section \S\ref{genusonemero} we provide a way for realizing $\chi$ as the holonomy of some translation surface with poles in a prescribed stratum with prescribed rotation number. Table \ref{fig:flowdiagram} provides an outline of the strategy adopted. The multiple zero case is handled by Corollary \ref{multzeros}.

\begin{table}[!ht]
    \centering
    \begin{tikzpicture}[scale=1, node distance = 2cm]
    \node (reptype) [rb] {$\chi$ of trivial-ends type?};
    
    \node (allzres) [rb, right of=reptype, xshift=2cm ] {Section \S\ref{resnotzero}\\ All residues different from zero?};
    \node (2punct) [rb, right of=allzres, xshift=2cm ] {Only two punctures?};
    \node (a3punct) [rb, below of=2punct, xshift=0cm ] {Subsection \S\ref{apntr} \\ Is $\chi_n$ rational?};
    \node (2pntr) [sb, right of=2punct, xshift=2cm ] {Subsection \S\ref{2pntr}};
    \node (mpntr1) [sb, right of=a3punct, xshift=2cm, yshift=0cm ] {Paragraph \S\ref{chinonrat}};
    \node (mpntr2) [sb, below of=mpntr1, yshift=0.75cm ] {Paragraph \S\ref{chirat}};
    \node (opzr) [sb, below of=allzres, yshift=0cm ] {Subsection \S\ref{optr}};
    
    \node (order)  [rb, below of=reptype, yshift=-3cm] {All poles of order two?};
    \node (order2) [rb, right of=order, xshift=2cm] {Section \S\ref{genusoneordertwo} \\ Rotation number $1$?};
    \node (rot1)   [rb, right of=order2, xshift=2cm] {Subsection \S\ref{rot1} \\ Positive volume?};
    \node (rot1p)  [sb, right of=rot1, xshift=2cm, yshift=0cm] {Paragraph \S\ref{pk1}};
    \node (rot1n)  [sb, below of=rot1p, xshift=0cm, yshift=0.75cm] {Paragraph \S\ref{nk1}};
    \node (rot2)   [rb, below of=order2, yshift=-1cm] {Subsection \S\ref{rot2}\\ Rotation number $2$ \\ Positive volume?};
    \node (rot2p)  [rb, right of=rot2, xshift=2cm] {Even number of punctures?};
    \node (rot2n)  [rb, below of=rot2p, yshift=-1cm] {Even number of punctures?};
    \node (rot2pe) [sb, right of=rot2p, xshift=2cm, yshift=0cm] {Paragraph \S\ref{pvep}};
    \node (rot2po) [sb, below of=rot2pe, xshift=0cm, yshift=0.75cm] {Paragraph \S\ref{pvop}};
    \node (rot2ne) [sb, right of=rot2n, xshift=2cm, yshift=0cm] {Paragraph \S\ref{npvonp}};
    \node (rot2no) [sb, below of=rot2ne, xshift=0cm, yshift=0.75cm] {Paragraph \S\ref{nvep}};
    \node (orderho) [rb, below of=order, yshift=-6cm] {Poles of the same order?};
    \node (orderhogen) [sb, right of=orderho, xshift=2cm] {Section \S\ref{morezeros} \\ Corollary \ref{genpol}};
    \node (orderhop) [rb, below of=orderho] {Section \S\ref{genusoneorderhigherthantwo}\\ Positive volume?};
    \node (orderhop3) [rb, right of=orderhop, xshift=2cm] {More than two punctures?};
    \node (orderhop3y)  [sb, right of=orderhop3, xshift=2cm, yshift=0cm] {Paragraph \S\ref{pv2p}};
    \node (orderhop3n)  [sb, below of=orderhop3y, xshift=0cm, yshift=0.75cm] {Paragraph \S\ref{pvmp}};
    \node (orderhop3bis) [rb, below of=orderhop3, yshift=-1cm] {More than two punctures?};
    \node (orderhop3bissp) [rb, right of=orderhop3bis, xshift=2cm] {Single puncture?};
    \node (orderhop3bisspc)  [sb, right of=orderhop3bissp, xshift=2cm, yshift=0cm] {Paragraph \S\ref{npvop}};
    \node (orderhop3bisspno)  [sb, below of=orderhop3bisspc, yshift=0.75cm] {Paragraph \S\ref{npv2p}};
    \node (orderhop3bisy)  [sb, below of=orderhop3bisspno, yshift=0.75cm] {Paragraph \S\ref{npvmp}};

    \draw[->] (reptype) -- node[anchor=east] {yes} (order);
    \draw[->] (reptype) -- node[anchor=south] {no} (allzres);
    \draw[->] (allzres) -- node[anchor=south] {yes} (2punct);
    \draw[->] (2punct) -- node[anchor=south] {yes} (2pntr);
    \draw[->] (2punct) -- node[anchor=east] {no} (a3punct);
    \draw[->] (allzres) -- node[anchor=east] {no} (opzr);
    \draw[->] (a3punct) -- node[anchor=south] {no} (mpntr1);
    \draw[->] (a3punct) |- node[anchor=south] {\qquad \qquad \qquad yes} (mpntr2);
    \draw[->] (order) -- node[anchor=south] {yes} (order2);
    \draw[->] (order2) -- node[anchor=south] {yes} (rot1);
    \draw[->] (rot1) -- node[anchor=south] {yes} (rot1p);
    \draw[->] (rot1) |- node[anchor=south] {\qquad \qquad \qquad no} (rot1n);
    \draw[->] (order2) -- node[anchor=east] {no} (rot2);
    \draw[->] (rot2) -- node[anchor=south] {yes} (rot2p);
    \draw[->] (rot2) |- node[anchor=south] {\qquad \qquad \qquad no} (rot2n);
    
    \draw[->] (rot2p) -- node[anchor=south] {yes} (rot2pe);
    \draw[->] (rot2p) |- node[anchor=south] {\qquad \qquad \qquad no} (rot2po);
    \draw[->] (rot2n) -- node[anchor=south] {no} (rot2ne);
    \draw[->] (rot2n) |- node[anchor=south] {\qquad \qquad \qquad yes} (rot2no);
    \draw[->] (order) -- node[anchor=east]{no} (orderho);
    \draw[->] (orderho) -- node[anchor=south]{no} (orderhogen);
    \draw[->] (orderho) -- node[anchor=east]{yes} (orderhop);
    \draw[->] (orderhop) -- node[anchor=south]{yes} (orderhop3);
    \draw[->] (orderhop3) -- node[anchor=south] {no} (orderhop3y);
    \draw[->] (orderhop3) |- node[anchor=south] {\qquad \qquad \qquad yes} (orderhop3n);
    \draw[->] (orderhop) |- node[anchor=south] {\qquad \qquad \qquad no} (orderhop3bis);
    \draw[->] (orderhop3bis) -- node[anchor=south] {no} (orderhop3bissp);
    \draw[->] (orderhop3bissp) -- node[anchor=south] {yes} (orderhop3bisspc);
    \draw[->] (orderhop3bissp) |- node[anchor=south] {\qquad \qquad \qquad no} (orderhop3bisspno);
    \draw[->] (orderhop3bis) |- node[anchor=south] {\qquad \qquad \qquad yes} (orderhop3bisy);

    \end{tikzpicture}
    \caption{Flowchart for the proof of Theorem \ref{mainthm} for genus-one differentials with a single zero in Section \S\ref{genusonemero}.}  
    \label{fig:flowdiagram}
\end{table}

\subsection{Hyperelliptic translation surfaces} Let $\chi\colon\shomolzn\longrightarrow \C$ be a representation. We want to realize $\chi$ as the holonomy of some translation surfaces with poles and a hyperelliptic involution, see Definition \ref{hypdef}. Table \ref{fig:flowdiagramhyp} provides an outline of the strategy adopted.

\begin{table}[!ht]
    \centering
    \begin{tikzpicture}[scale=0.95, node distance = 2cm]
    \node (reptype) [rb] {$\chi$ non-trivial \\ trivial-ends type?};
    
    
    \node (onepunct) [rb, right of=reptype, xshift=3cm ] {Section \S\ref{ssec:hypzeres}\\ One punctured surface?};
    \node (1punct) [sb, right of=onepunct, xshift=3cm ] {Subsection \S\ref{sssec:hyponehop}};
    \node (2punct) [sb, right of=onepunct, xshift=3cm, yshift=-1.25cm ] {Subsection \S\ref{sssec:hyptwohop}};

    
    \node (simplepole) [rb, below of=reptype, yshift=-0.5cm] {There are two punctures\\ Simple poles?};
    \node (nosimplepoles) [sb, right of=simplepole, xshift=3cm, yshift=0cm ] {Section \S\ref{ssec:hypnotzeres}};
    \node (simplepoles) [rb, below of=simplepole, yshift=0cm] {Section \S\ref{ssec:hypnotzeressimp}\\ Is $\chi$ discrete?};
    \node (nodis) [rb, right of=simplepoles, xshift=3cm] {Subsection \S\ref{sssec:hypnotzeressp}\\ Is $\chi$ real-collinear?};
    \node (nodisnocoll) [sb, right of=nodis, xshift=3cm] {Paragraph \S\ref{par:hypnotcol}};
    \node (nodiscoll) [sb, right of=nodis, xshift=3cm, yshift=-1.25cm ] {Paragraph \S\ref{par:hypcol}};
    \node (dis) [rb, below of=simplepoles, yshift=-0.5cm ] {$\text{Im}(\chi)\subset\Z\oplus i\,\Z$\\ Rank one?};
    \node (distwo) [sb, right of=dis, xshift=3cm ] {Subsection \S\ref{sssec:disranktwo}};
    \node (disone) [rb, below of=dis, yshift=0cm ] {Subsection \S\ref{sssec:disrankone}\\ Is the residue greater than $2g$?};
    \node (disonegreater) [sb, right of=disone, xshift=3cm ] {Paragraph \S\ref{par:easy}};
    \node (disone2) [rb, below of=disone, yshift=0cm ] {Is the residue in\\ $g < r \le 2g$?};
    \node (disonelower) [sb, right of=disone2, xshift=3cm ] {Paragraph \S\ref{par:hard}};
    \node (notrealizable) [sb, below of=disone2] {Not realizable\\ \cite[Proposition 12.10]{CFG}};

    \draw[->] (reptype) -- node[anchor=east] {no} (simplepole);
    \draw[->] (reptype) -- node[anchor=south] {yes} (onepunct);
    \draw[->] (onepunct) -- node[anchor=south] {yes} (1punct);
    \draw[->] (onepunct) |- node[anchor=south] {\qquad \qquad \qquad \qquad no} (2punct);
    \draw[->] (simplepole) -- node[anchor=south] {no} (nosimplepoles);
    \draw[->] (simplepole) -- node[anchor=east] {yes} (simplepoles);
    \draw[->] (simplepoles) -- node[anchor=south] {no} (nodis);
    \draw[->] (nodis) -- node[anchor=south] {no} (nodisnocoll);
    \draw[->] (nodis) |- node[anchor=south] {\qquad \qquad \qquad \qquad yes}(nodiscoll);

    \draw[->] (simplepoles) -- node[anchor=east] {yes} (dis);
    \draw[->] (dis) -- node[anchor=south] {no} (distwo);
    \draw[->] (dis) -- node[anchor=east] {yes} (disone);
    \draw[->] (disone) -- node[anchor=south] {yes} (disonegreater);
    \draw[->] (disone) -- node[anchor=east] {no} (disone2);
    \draw[->] (disone2) -- node[anchor=south] {yes} (disonelower);
    \draw[->] (disone2) -- node[anchor=east] {no} (notrealizable);

    \end{tikzpicture}
    \caption{Flowchart for the proof of Theorem \ref{mainthm} for genus-$g$ differentials in Section \S\ref{sec:hgchyp}.}  
    \label{fig:flowdiagramhyp}
\end{table}

\bibliographystyle{amsalpha}
\bibliography{pccs}

\end{document}